\DeclareRobustCommand{\gobblefour}[4]{}
\newcounter{braid}
\newcounter{strands}
\def\cross{%
  \@ifnextchar^{\message{Got sup}\cross@sup}{\cross@sub}}
\def\cross@sup^#1_#2{\render@cross{#2}{#1}}
\def\cross@sub_#1{\@ifnextchar^{\cross@@sub{#1}}{\render@cross{#1}{1}}}
\def\cross@@sub#1^#2{\render@cross{#1}{#2}}
\def\render@cross#1#2{
  \def\strand{#1}
  \def\crossing{#2}
  \pgfmathsetmacro{\cross@y}{-\value{braid}*\braid@h}
  \pgfmathtruncatemacro{\nextstrand}{#1+1}
  \foreach \thread in {1,...,\value{strands}}
  {
    \pgfmathsetmacro{\strand@x}{\thread * \braid@w}
    \ifnum\thread=\strand
    \pgfmathsetmacro{\over@x}{\strand * \braid@w + .5*(1 - \crossing) * \braid@w}
    \pgfmathsetmacro{\under@x}{\strand * \braid@w + .5*(1 + \crossing) * \braid@w}
    \draw[braid] \pgfkeysvalueof{/tikz/braid start} +(\under@x pt,\cross@y pt) to[out=-90,in=90] +(\over@x pt,\cross@y pt -\braid@h);
    \draw[braid] \pgfkeysvalueof{/tikz/braid start} +(\over@x pt,\cross@y pt) to[out=-90,in=90] +(\under@x pt,\cross@y pt -\braid@h);
    \else
    \ifnum\thread=\nextstrand
    \else
     \draw[braid] \pgfkeysvalueof{/tikz/braid start} ++(\strand@x pt,\cross@y pt) -- ++(0,-\braid@h);
    \fi
   \fi
  }
  \stepcounter{braid}
}
\tikzset{braid/.style={double=\pgfkeysvalueof{/tikz/braid colour},double distance=1pt,line width=2pt,white}}
\newcommand{\braid}[2][]{%
  \begingroup
  \pgfkeys{/tikz/strands=2}
  \tikzset{#1}
  \pgfkeysgetvalue{/tikz/braid width}{\braid@w}
  \pgfkeysgetvalue{/tikz/braid height}{\braid@h}
  \setcounter{braid}{0}
  \let\sigma=\cross
  #2
  \endgroup
}
\newtheorem{theorem}{Theorem}[section]
\numberwithin{theorem}{section}
\newtheorem{proposition}[theorem]{Proposition}
\newtheorem{corollary}[theorem]{Corollary}
\newtheorem{convention}[theorem]{Convention}
\newtheorem{lemma}[theorem]{Lemma}
\newtheorem{conjecture}[theorem]{Conjecture}
\theoremstyle{plain}
\newtheorem{definition}[theorem]{Definition}
\newtheorem{assumption}[theorem]{Assumption}
\newtheorem{remark}[theorem]{Remark}
\newtheorem{example}[theorem]{Example}
\newtheorem{claim}[theorem]{Claim}
\newtheorem{choice}[theorem]{Choice}
\def\Z{\mathbb{Z}}
\def\Pi{\mathbb{P}^{\infty}}
\def\Zpk{\mathbb{Z}/p^{k}}
\def\Zpk1{\mathbb{Z}/p^{k-1}}
\def\sl2{\widetilde{SL_{2}(\Z)}}
\DeclareMathOperator{\Nm}{Nm}
\DeclareFontFamily{U}{wncy}{}
    \DeclareFontShape{U}{wncy}{m}{n}{<->wncyr10}{}
    \DeclareSymbolFont{mcy}{U}{wncy}{m}{n}
    \DeclareMathSymbol{\Sh}{\mathord}{mcy}{"58}
\title{Supersingular main conjectures, Sylvester's conjecture and Goldfeld's conjecture}
\author{Daniel Kriz}
\address{Institut de Math\'{e}matiques de Jussieu - Paris Rive Gauche\\
Sorbonne Universit\'{e} - Campus Pierre et Marie Curie\\
4, place Jussieu - Boite Courrier 247\\
75252 Paris Cedex 05}
\email{kriz@imj-prg.fr}
\thanks{The author was supported by an NSF Mathematical Sciences Postdoctoral Research Fellowship, award number 1803388. This work was supported by a grant from the Simons Foundation (Grant Number 814268, MSRI). \\
{\euflag}This project has received funding from the European Union's Horizon 2020 research and innovation programme under the Marie Sk\l odowska-Curie grant agreement No 101034255}
\begin{document}

\begin{abstract}We prove a $p$-converse theorem for elliptic curves $E/\mathbb{Q}$ with complex multiplication by the ring of integers $\mathcal{O}_K$ of an imaginary quadratic field $K$ in which $p$ is ramified. Namely, letting $r_p = \mathrm{corank}_{\mathbb{Z}_p}\mathrm{Sel}_{p^{\infty}}(E/\mathbb{Q})$, we show that $r_p \le 1 \implies \mathrm{rank}_{\mathbb{Z}}E(\mathbb{Q}) = \mathrm{ord}_{s = 1}L(E/\mathbb{Q},s) = r_p$ and $\#\Sh(E/\mathbb{Q}) < \infty$. In particular, this has applications to two classical Diophantine problems. First, it resolves Sylvester's conjecture on rational sums of cubes, showing that for all primes $\ell \equiv 4,7,8 \pmod{9}$, there exists $(x,y) \in \mathbb{Q}^{\oplus 2}$ such that $x^3 + y^3 = \ell$. Second, combined with work of Smith, it resolves the congruent number problem in 100\% of cases and establishes Goldfeld's conjecture on ranks of quadratic twists for the congruent number family. The method for showing the above $p$-converse theorem relies on new interplays between Iwasawa theory for imaginary quadratic fields at nonsplit primes and relative $p$-adic Hodge theory. In particular, we show that a certain de Rham period $q_{\mathrm{dR}}$ can be used to construct anticyclotomic $p$-adic $L$-functions for Hecke characters and newforms, interpolating anticyclotomic twists of positive Hodge-Tate weight in the central critical range. Moreover, one can relate the Iwasawa module of elliptic units to these anticyclotomic $p$-adic $L$-functions via a new ``Coleman map'', which is, roughly speaking, the $q_{\mathrm{dR}}$-expansion of the Coleman power series map. Using this, we formulate and prove a new Rubin-type main conjecture for elliptic units, which is eventually related to Heegner points in order to prove the $p$-converse theorem.
\end{abstract}


\maketitle

\tableofcontents

\pagebreak

\section{Introduction}

\subsection{$p$-converse theorems}\label{Introduction}One of the main motivational problems in modern number theory has been the Birch and Swinnerton-Dyer (BSD) conjecture. Given an elliptic curve $E/\mathbb{Q}$, the conjecture states
$$r_{\mathrm{alg}}(E/\mathbb{Q}) := \mathrm{rank}_{\mathbb{Z}}E(\mathbb{Q}) = \mathrm{ord}_{s = 1}L(E/\mathbb{Q},s) =: r_{\mathrm{an}}(E/\mathbb{Q}),$$
where $L(E/\mathbb{Q},s)$ is the $L$-function of $E/\mathbb{Q}$. To date, the strongest general result toward this conjecture is the following 
implication which follows from the formula of Gross-Zagier \cite{GrossZagier} and Kolyvagin's method of Euler systems \cite{Kolyvagin}: for $r = 0$ or 1,
\begin{equation}\label{GZKimplication}r_{\mathrm{an}}(E/\mathbb{Q}) = r \implies r_{\mathrm{alg}}(E/\mathbb{Q}) = r \hspace{.25cm} \text{and} \hspace{.25cm} \#\Sh(E/\mathbb{Q}) < \infty 
\end{equation}
where $\Sh(E/\mathbb{Q})$ is the Shafarevich-Tate group. One can view this result as ``one direction of BSD'' in the rank $r = 0, 1$ case.  A good approximation for $r_{\mathrm{alg}}(E/\mathbb{Q})$ is the corank of the $p^{\infty}$-Selmer group for any prime $p$: we have
$$r_{\mathrm{alg}}(E/\mathbb{Q}) \le r_p(E/\mathbb{Q}) := \mathrm{corank}_{\mathbb{Z}_p}\mathrm{Sel}_{p^{\infty}}(E/\mathbb{Q}) = \mathrm{rank}_{\mathbb{Z}_p}\mathrm{Hom}_{\mathbb{Z}_p}(\mathrm{Sel}_{p^{\infty}}(E/\mathbb{Q}),\mathbb{Q}_p/\mathbb{Z}_p)$$
with equality if and only if $\#\Sh(E/\mathbb{Q})[p^{\infty}] < \infty$ (which is conjectured to be true). Thus  (\ref{GZKimplication}) implies, for $r = 0, 1 $, 
\begin{equation}\label{GZKimplication2}r_{\mathrm{an}}(E/\mathbb{Q}) = r \implies r_{\mathrm{alg}}(E/\mathbb{Q}) = r\hspace{.25cm} \text{and} \hspace{.25cm} \#\Sh(E/\mathbb{Q}) < \infty \implies r_p(E/\mathbb{Q}) = r.
\end{equation}
However, the analytic rank $r_{\mathrm{an}}(E/\mathbb{Q})$ is often hard to compute in practice, and so a natural question is whether one can prove anything in the ``converse'' direction to (\ref{GZKimplication2}): for $r = 0,1$,
\begin{equation}\label{pconverseimplication}r_p(E/\mathbb{Q}) = r \overset{?}{\implies} r_{\mathrm{an}}(E/\mathbb{Q}) = r.
\end{equation}
To emphasize the dependence on the choice of auxiliary prime $p$, such implications, once proven, have come to be known as ``$p$-converse theorems''. The natural approach for proving $p$-converse theorems is via Iwasawa theory. Rubin \cite{RubinMC} used Iwasawa theory for imaginary quadratic fields to prove the first $p$-converse theorems in the $r = 0$  complex multiplication (CM) case.\footnote{We note that in the $r = 0$ CM case, (\ref{GZKimplication}) was first proven in the landmark work of Coates-Wiles \cite{CoatesWiles}. In the $r = 0$ non-CM case, (\ref{GZKimplication}) also follows from the celebrated work of Kato \cite{Kato}.}  Recently, there has been great progress on $p$-converse theorems. The work of Skinner-Urban \cite{SkinnerUrban} and Wan \cite{Wan} on $GL_2$-main conjectures give $p$-converse theorems in the $r = 0$ non-CM case. Skinner \cite{Skinner} used ``vertical (i.e. $p$-aspect)'' Iwasawa theory and Zhang \cite{WZhang} ``horizontal (i.e. prime-to-$p$-aspect)'' Iwasawa theory in order to prove the first $p$-converse theorems in the $r = 1$ ordinary non-CM case. See also \cite{SkinnerZhang} for the case of multiplicative reduction. The assumptions of these works, however, rule out cases such as supersingular primes $p$ and CM elliptic curves. More recently, substantial progress has been made in the supersingular non-CM case by Castella-Wan \cite{CastellaWan}, and in the ordinary CM case by Burungale-Tian \cite{BurungaleTian} (see also \cite{BurungaleCastella}). See also the work of Castella-Grossi-Lee-Skinner \cite{CastellaGrossiLeeSkinner} for recent considerable progress in the $r = 0, 1$ case when $E[p]$ is reducible as a $\mathrm{Gal}(\overline{\mathbb{Q}}/\mathbb{Q})$-module. 

However, the case of CM at (potentially) supersingular primes $p$, i.e. those primes inert or ramified in the CM field $K$, entails certain technical difficulties in the Iwasawa-theoretic approach, and so significantly less progress was hitherto made in this case. The main result of this paper is the following $p$-converse theorem in the case where $p$ is ramified in $K$. 

\begin{theorem}[Theorem \ref{BSDrank0theorem} and Theorem \ref{BSDrank1theorem}]\label{MainTheorem}Let $E/\mathbb{Q}$ be an elliptic curve and $K/\mathbb{Q}$ an imaginary quadratic field such that the base change $E/K$ has CM by $\mathcal{O}_K$ (i.e. $\mathrm{End}(E/K) \cong \mathcal{O}_K$). Let $p$ be the unique finite prime ramified in $K/\mathbb{Q}$. Then for $r = 0, 1$ the $p$-converse implication (\ref{pconverseimplication}) is true for $E/\mathbb{Q}$. 
\end{theorem}

Throughout the rest of the paper, we will often say ``an elliptic curve $E/\mathbb{Q}$ has CM by $\mathcal{O}_K$'' if $E/\mathbb{Q}$ and $K$ satisfy the conditions of Theorem \ref{MainTheorem}. Note that $K$ necessarily has class number 1 in this setting, and thus there is a unique finite prime $p$ ramified in $K/\mathbb{Q}$.

Broadly speaking, our approach involves relating objects and constructions from relative $p$-adic Hodge theory to classical Iwasawa-theoretic objects. We use a suitable supersingular substitute $q_{\mathrm{dR}}$ for the ordinary Serre-Tate coordinate, coming from the relative $p$-adic de Rham comparison theorem applied to the universal (false) elliptic curve at infinite level. We then consider expansions in the coordinate $q_{\mathrm{dR}}-1$, called $q_{\mathrm{dR}}$-expansions, which give analogues of Serre-Tate expansions on the supersingular locus. These power series can naturally be viewed as sections of a certain completion of the period sheaf $\mathcal{O}\mathbb{B}_{\mathrm{dR}}$ from \cite{Scholze} over the infinite-level Shimura curve. Using $q_{\mathrm{dR}}$-expansions, we are able to ``geometrically'' realize norm-compatible systems of local units infinite towers of modular curves. When applied to the Coleman power series of the norm-compatible systems of elliptic units, these $q_{\mathrm{dR}}$-expansions give rise to $p$-adic $L$-functions which are supersingular analogues of those of Katz (\cite{KatzImQuad}, \cite{KatzCM}), interpolating $L$-values of anticyclotomic twists of positive Hodge-Tate weight of a fixed Hecke character.

In the next section, we give a detailed outline of the main steps and novelties of our approach. 

\subsection{Outline of the argument}Henceforth fix an imaginary quadratic field $K$ and a prime $p$ \emph{ramified} in $K$. Our proof of Theorem \ref{MainTheorem} is broadly divided into three steps:
\begin{enumerate}
\item Establishing a 2-variable Rubin-type main conjecture ``without $p$-adic $L$-function'' using the Euler system of elliptic units. This relates the structure of $\mathrm{Sel}_{p^{\infty}}(E/\mathbb{Q})$ to elliptic units. 
\item Constructing a Coleman map from (a submodule of) norm-compatible systems of local units to a power series ring, satisfying a suitable ``explicit reciprocity law''. This construction involves taking the $q_{\mathrm{dR}}$-expansion of a universal (or ``thickened'') version of the Coleman power series map, using a certain canonical coordinate $q_{\mathrm{dR}}$ coming from relative $p$-adic Hodge theory on the infinite-level Shimura curve. The explicit reciprocity law states that the Coleman map sends elliptic units to a Katz-type $p$-adic $L$-function, which for the remainder of the Introduction we will denote by $\mathcal{L}_E$. We divide this step into three substeps 2.1-2.3 in our description below.
\item Using (1) and (2), the $r = 0$ $p$-converse theorem immediately follows by following similar methods to those of \cite[Section 11]{RubinMC}. For $r = 1$, we use an additional argument relating $\mathcal{L}_E$ to a $p$-adic $L$-function $\mathcal{L}_{g,\chi_0}$ attached to a Rankin-Selberg pair $(g,\chi_0)$ that is a ``good'' twist of the modular form attached to $E$. This $\mathcal{L}_{g,\chi_0}$ has value at the trivial character equal to the logarithm of a Heegner point at the trivial character. Using the interpolation properties of $\mathcal{L}_E$ and $\mathcal{L}_{g,\chi_0}$, one shows that the Heegner point is non-torsion if and only if a value of $\mathcal{L}_E$ outside the range of interpolation is nonzero. Using (1) and (2), this latter value is shown to be nonzero under the assumption $r_p(E/\mathbb{Q}) = 1$. 
\end{enumerate}

\subsubsection{\textbf{Step (1)}}Step (1) involves classical methods of Iwasawa theory, such as those developed by Rubin \cite{RubinMC}, with new inputs from work on equivariant main conjectures of Johnson-Leung-Kings \cite{JohnsonLeungKings} and a new argument involving Gross-Kersey's method for ``factorizing''\footnote{It is interesting that there are two factorization identities of $p$-adic $L$-functions that show up in the proof of Theorem \ref{MainTheorem}; a ``cyclotomic factorization identity'' in Step (1), and an ``anticyclotomic factorization identity'' in Step (3).} logarithms of elliptic units (cf. \cite{Gross}, \cite{Kersey} and \cite{Rubincongruence}) to show the vanishing of the algebraic cyclotomic $\mu$-invariant.\footnote{This step, proving both divisibilities for the Euler system of elliptic units, is the only place where cyclotomic Iwasawa theory intervenes, as the rest of our methods primarily involve Iwasawa theory of the $\mathbb{Z}_p^{\oplus 2}$-extension and anticyclotomic Iwasawa theory. This argument - using the 1-variable cyclotomic main conjecture, the vanishing of the cyclotomic $\mu$-invariant, and the Euler system divisibility of the 2-variable elliptic units main conjecture to prove the full 2-variable elliptic units main conjecture - is reminiscent of the ``easy lemma'' of Skinner-Urban, see \cite[Lemma 3.1.7]{SkinnerUrban}.} From class field theory, one gets an exact sequence
$$0 \rightarrow \mathcal{E} \rightarrow \mathbb{U} \xrightarrow{\mathrm{rec}} \mathcal{X} \rightarrow \mathcal{Y} \rightarrow 0$$
attached to the tower $\mathcal{K}_{\infty}/K = K(E[p^{\infty}])/K$, where, for this Introduction, $\mathbb{U}$ the module of norm-compatible systems of principal semi-local units, $\mathcal{E}$ is the $p$-adic closure in $\mathbb{U}$ of the module of norm-compatible systems of principal global units, $\mathcal{X}$ is the Galois group of the maximal pro-$p$ abelian extension of $\mathcal{K}_{\infty}$ unramified outside places above $p$, $\mathcal{Y}$ is the Galois group of the maximal pro-$p$ abelian extension of $\mathcal{K}_{\infty}$ unramified everywhere and $\mathrm{rec}$ is the Artin reciprocity map. Denote the $p$-adic closure in $\mathbb{U}$ of the module of norm-compatible systems of principal elliptic units by $\mathcal{C} \subset \mathcal{E}$. (See Section \ref{RMCsection} for precise definitions; outside of this Introduction, $\mathcal{E}, \mathbb{U}$ and $\mathcal{C}$ will denote modules of norm-compatible systems of units (not necessarily principal), and $\mathcal{E}^1, \mathbb{U}^1$ and $\mathcal{C}^1$ will denote their principal counterparts.) Dividing out by $\mathcal{C}$, we get an exact sequence of $\Lambda_E = \mathbb{Z}_p\llbracket \mathrm{Gal}(\mathcal{K}_{\infty}/K) \rrbracket$-modules.
$$0 \rightarrow \mathcal{E}/\mathcal{C} \rightarrow \mathbb{U}/\mathcal{C} \xrightarrow{\mathrm{rec}} \mathcal{X} \rightarrow \mathcal{Y} \rightarrow 0,$$
where the $\Lambda_E$-ranks are 0, 1, 1 and 0 respectively. By the aforementioned $\mu$-invariant result and equivariant main conjecture, one gets 
\begin{equation}\label{EMCintro}\mathrm{char}_{\Lambda_E}(\mathcal{E}/\mathcal{C}) = \mathrm{char}_{\Lambda_E}(\mathcal{Y})
\end{equation}
(see Theorem \ref{EMC} for a precise statement; more accurately, we will only consider a certain isotypic component of (\ref{EMCintro})). Now one \emph{chooses} a $\Lambda_E$-rank 1 submodule $U' \subset \mathbb{U} \otimes_{\mathbb{Z}_p}F$, $F$ a finite extension of $\mathbb{Q}_p$ such that $U' \cap \mathcal{E} = 0$. The choice of $U'$ will depend on whether we are in the $r_p(E/\mathbb{Q}) = 0$ or 1 case. In the $r_p(E/\mathbb{Q}) = 0$ case, $U'$ is the kernel of the reciprocity map (see Section \ref{rank0section}). In the $r_p(E/\mathbb{Q}) = 1$ case, $U'$ is any subspace with $U' \cap \mathcal{E} = 0$ (and thus the intersection of $U'$ with the kernel of the Atrin reciprocity map $\mathrm{rec}$, the latter which contains $\mathcal{E}$, is 0). Let $\Lambda_{E,F} = \Lambda_E \otimes_{\mathbb{Z}_p}F$. Then (\ref{EMCintro}) implies a Rubin-type main conjecture 
\begin{equation}\label{RMCintro}\mathrm{char}_{\Lambda_{E,F}}((\mathbb{U}\otimes_{\mathbb{Z}_p}F)/(\mathcal{C},U')) = \mathrm{char}_{\Lambda_{E,F}}((\mathcal{X}\otimes_{\mathbb{Z}_p}F)/\mathrm{rec}(U'))
\end{equation}
(see Theorem \ref{RMC} for a precise statement; again, we will only consider a certain isotypic component of (\ref{RMCintro}), and in our setting $F$ will be the $p$-adic completion of $K$), which does \emph{not} explicitly involve $p$-adic $L$-functions.\footnote{The distinction between main conjectures ``without $p$-adic $L$-functions'' (involving only ``generalized Kato classes'' coming from Euler systems) and classical main conjectures ``with $p$-adic $L$-functions'' was first made by Kato \cite[Chapter III and IV]{Kato}. In particular, this distinction is fruitful for the purposes of relating different main conjectures (where the Kato class more transparently relates to different Selmer groups), and has led to much progress on the BSD formula in recent years. (See, for example, \cite{SkinnerAWS}, \cite{JetchevSkinnerWan}.)}

\subsubsection{\textbf{Step (2).1: Thickened Coleman power series}} Step (2) is the most novel part of the argument. Coleman power series (see \cite[Chapter I.2]{deShalit}) have long been a starting point in studying the Iwasawa theory of imaginary quadratic fields $K$ and relating elliptic units to $p$-adic analytic objects. In the height 1 setting ($p$ split in $K$), it is well-known how to obtain measures from Coleman power series (see Chapter I.3 of op. cit.), and reconstruct the Katz $p$-adic $L$-function from Coleman power series of elliptic units (see Chapter II.4 of op. cit.). Essentially, one uses the fact that all height 1 formal groups are isomorphic (after base change) to the formal multiplicative group $\hat{\mathbb{G}}_m$, whose coordinate ring is naturally the Iwasawa algebra on $\mathbb{Z}_p$ via the Amice transform (see \cite[I.3.1 (1)]{deShalit}). Expanding the Coleman power series into a power series in the natural coordinate on $\hat{\mathbb{G}}_m$, one thus obtains measures; this gives the ``Coleman map''
\begin{equation}\label{Colemanmapintro}\mathbb{U} \xrightarrow{\text{Coleman power series}} \mathbf{\Gamma}(\text{ht. 1 formal group over $W(\overline{\mathbb{F}}_p)$}) \cong \mathbf{\Gamma}(\hat{\mathbb{G}}_{m,W(\overline{\mathbb{F}}_p)}) = W(\overline{\mathbb{F}}_p)\llbracket \mathbb{Z}_p\rrbracket
\end{equation}
where $\mathbf{\Gamma}$ denotes the global section functor, $W(\overline{\mathbb{F}}_p)$ denotes the Witt vectors of $\overline{\mathbb{F}}_p$, and the last equality above is given by the Amice transform. 

However, in the case where the underlying formal group of the Coleman power series has height 2 ($p$ nonsplit in $K$), relatively little was known on how to ``convert'' Coleman power series into (bounded) $p$-adic $L$-functions. This is because the completed group ring of $\mathbb{Z}_p^{\oplus 2}$ can be viewed, via the Amice transform, as the coordinate ring of the $2$-dimensional (component-wise height 1) formal group $\hat{\mathbb{G}}_m^{\oplus 2}$, and is not in any obvious way related to the height 2 formal group. Thus there does not seem to be an obvious map like (\ref{Colemanmapintro}) into bounded measures. However, there is such a map into distributions, as was constructed by Schneider-Teitelbaum in \cite{SchneiderTeitelbaum} by naturally interpreting Coleman power series as Fourier transforms of distributions. However, for our purposes, we wish to at least relate the module of elliptic units $\mathcal{C}$ to a $p$-adic $L$-function with $p$-adically bounded values, and hence some modification is needed to get a Coleman map with this property.

The starting point of our strategy for constructing an appropriate Coleman map is to follow an idea of Tsuji \cite[Section I.4]{Tsuji} and ``thicken'' Coleman power series to an element in the coordinate ring of the universal deformation of a height 2 formal group over $\overline{\mathbb{F}}_p$. Pulling back via a universal torsion point section, one gets a function on the underlying deformation space (viewed as a residue disc in an appropriate Shimura curve). We then study the $p$-adic variation of this function using the geometry of the deformation space. In the ordinary setting, one has a natural Serre-Tate coordinate on the base, and one would then consider the Serre-Tate expansion of this function around ordinary CM points in order to construct measures (see \cite{Brakocevic}, for example). In the supersingular setting, such a natural coordinate in the structure sheaf of the moduli space does not exist; this is due to the more complicated nature of the deformation space of supersingular $p$-divisible groups, particularly the lack of a distinguished \'{e}tale line (which completely parametrizes deformations in the ordinary case, see \cite{KatzST}). 

A natural strategy then is to introduce relative $p$-adic Hodge theory, and enlarge the structure sheaf to a de Rham period sheaf. One has a canonical coordinate $q_{\mathrm{dR}} \in \mathbb{B}_{\mathrm{dR}}\llbracket X\rrbracket$ on the $p$-adic universal cover of the moduli space, where $\mathbb{B}_{\mathrm{dR}}\llbracket X\rrbracket$ is a certain completion (Definition \ref{BdR+Xtdefinition}) of the period sheaf $\mathcal{O}\mathbb{B}_{\mathrm{dR}}$ from \cite[Section 6]{Scholze}. Let $\mathcal{O}\mathbb{B}_{\mathrm{dR}}^+, \mathcal{O}\mathbb{B}_{\mathrm{dR}} = \mathcal{O}\mathbb{B}_{\mathrm{dR}}^+[1/t]$ be as in loc. cit., where $t \in \mathrm{Fil}^1\mathbb{B}_{\mathrm{dR}}^+$ is some ``relative Fontaine $2\pi i$'' (see (\ref{'tdefinition})) that exists on the $p$-adic universal cover. The aforementioned coordinate $q_{\mathrm{dR}}$ is obtained by exponentiating (see (\ref{qdR}) and (\ref{zqw2})) the \emph{fundamental de Rham period} $z_{\mathrm{dR}} \in \mathbb{P}^1(\mathcal{O}\mathbb{B}_{\mathrm{dR}})$ (see Definition \ref{'zqwdefinition}, (\ref{gluezdRsection}) and (\ref{zqw2})), which measures the position of the Hodge filtration in universal de Rham cohomology
$$H^{1,0} \subset H_{\mathrm{dR}}^1 \subset H_{\text{\'{e}t}}^1 \otimes_{\mathbb{Z}_p} \mathcal{O}\mathbb{B}_{\mathrm{dR}} \cong (\mathcal{O}\mathbb{B}_{\mathrm{dR}})^{\oplus 2}.$$
Here, the last isomorphism in the previous displayed map is defined only over the $p$-adic universal cover (i.e. the $\Gamma(p^{\infty})$-level Shimura curve viewed as an adic space). The period $z_{\mathrm{dR}}$ is the natural counterpart to Scholze's Hodge-Tate period $z_{\mathrm{HT}} \in \mathbb{P}^1(\hat{\mathcal{O}})$ (see \cite{ScholzeTorsion}), where $\hat{\mathcal{O}}$ is the $p$-adically completed structure sheaf, measuring the position of the Hodge-Tate filtration 
$$H^{0,1} \subset  H_{\text{\'{e}t}}^1 \otimes_{\mathbb{Z}_p} \hat{\mathcal{O}} \cong \hat{\mathcal{O}}^{\oplus 2}$$
in universal \'{e}tale cohomology. Thus, working on the pro\'{e}tale site and in the period sheaf $\mathcal{O}\mathbb{B}_{\mathrm{dR}}$, one can expand functions on the moduli space as a power series in the coordinate $q_{\mathrm{dR}}-1$, whose coefficients are themselves \emph{functions} on the $p$-adic universal cover. In all, one gets a diagram
\begin{equation}\label{qdRexpansionintro}\begin{split}\mathbb{U} &\xrightarrow{\text{Coleman power series}} \mathcal{O}(\text{ht. 2 formal group}) \\
&\xrightarrow{\text{thicken}} \mathcal{O}(\text{universal ht. 2 formal group}) \\
&\xrightarrow{\text{torsion section}} \mathcal{O}(\text{moduli space of ht. 2 formal groups}) \\&\xrightarrow{\text{$q_{\mathrm{dR}}$-expansion}} \hat{\mathcal{O}}(\text{$p$-adic universal cover of moduli space})\llbracket q_{\mathrm{dR}}-1\rrbracket
\end{split}
\end{equation}
where $\mathcal{O}$ denotes the structure sheaf on the pro\'{e}tale site of \cite[Definition 3.9]{Scholze}, and $\hat{\mathcal{O}}$ the $p$-adic completion thereof.

\subsubsection{\textbf{Step (2).2: $p$-adic boundedness of generalized $p$-adic modular forms}}
 Let $Y$ be a finite-level quaternionic Shimura curve over $\mathbb{Q}$ of prime-to-$p$ level $\Gamma$, with universal object $\pi : \mathcal{E} \rightarrow Y$, and let $\omega := \pi_*\Omega_{\mathcal{E}/Y}$ be the Hodge bundle. Let $Y(p^n)$ be the Shimura curve of level $\Gamma \cap \Gamma(p^n)$ where $\Gamma(p^n)$ is the principal congruence subgroup of level $p^n$ (i.e. the subgroup of elements of $GL_2(\hat{\mathbb{Z}})$ congruent to the identity modulo $p^n$), and let $Y_{\infty} = \varprojlim_n Y(p^n)$. Then the $p$-adic completion $\hat{Y}_{\infty}$ of $Y_{\infty}$ is the infinite-level Shimura curve (\cite{ScholzeTorsion}), viewed as an adic space. Recall Scholze's Hodge-Tate period $z_{\mathrm{HT}}$. For the remainder of the introduction, a \emph{generalized $p$-adic modular form of weight $k$} (see Definition \ref{generalizedpadicmodularformdefinition}) will be a function defined on the open subset 
\begin{equation}\label{Vxintro}\mathcal{V}_x  = \{z_{\mathrm{HT}} \neq 0\} \subset Y_{\infty}
\end{equation}
arising from trivializing a global section $w \in \omega^{\otimes k}(Y(p^n))$ for some $n \in \mathbb{Z}_{\ge 0}$ via the \emph{canonical differential} 
$$w_{\mathrm{can}} \in \omega^{\otimes k} \otimes_{\mathcal{O}_Y}\mathcal{B}(\mathcal{V}_x)$$
(see Definition \ref{'zqwdefinition}, (\ref{gluezdRsection}) and (\ref{zqw2})). The section $w_{\mathrm{can}}$ generator of this locally free rank-1 $\mathbb{B}(\mathcal{V}_x)$-module $\omega^{\otimes k} \otimes_{\mathcal{O}_Y}\mathcal{B}(\mathcal{V}_x)$. Here $\mathcal{B}$ is a certain period sheaf that in particular contains the structure sheaf $\mathcal{O}_{\mathcal{V}_x}$; see Section \ref{padicmodularformsection}. The generalized $p$-adic modular form obtained from trivializing $w$ is defined as
$$F := \frac{w}{w_{\mathrm{can}}^{\otimes k}} \in \mathcal{B}(\mathcal{V}_x).$$
Moreover, $F$ satisfies a weight $k$ transformation property analogous to that of complex analytic modular forms:
\begin{equation}\label{introweightidentity}\left(\begin{array}{ccc} a & b\\
c & d\\
\end{array}\right)^*F = \left(\frac{cz_{\mathrm{dR}} + a}{ad -bc}\right)^kF
\end{equation}
for all $\left(\begin{array}{ccc} a & b\\
c & d\\
\end{array}\right)$ in $\Gamma_p(p^n) \subset GL_2(\mathbb{Z}_p)$, where $\Gamma_p(p^n)$ denotes the $p$-component of $\Gamma(p^n)$; see Theorem \ref{weighttheorem}, in particular (\ref{Ftransformationidentity}).
 Here $z_{\mathrm{dR}}$ is the de Rham period from (\ref{zqw2}).

There is a distinguished subset $\mathcal{Y}^{\mathrm{Ig}} = \{z_{\mathrm{HT}} = \infty\} \subset \mathcal{V}_x$, called the \emph{big Igusa tower}, such that the restriction $F|_{\mathcal{Y}^{\mathrm{Ig}}}$ is a $p$-adic modular form of weight $k$ in Katz's sense (\cite{Katzpamf}). See Theorem \ref{Katzpadicmodularformtheorem}. The $q_{\mathrm{dR}}$-expansion of $F$ is a power series
$$F(q_{\mathrm{dR}}) \in \hat{\mathcal{O}}(\mathcal{V}_x)\llbracket q_{\mathrm{dR}}-1\rrbracket.$$
(More precisely, this is the image of the $q_{\mathrm{dR}}$-expansion $F(q_{\mathrm{dR}})$ from Definition \ref{'zqexpansions} under a certain map $\theta_t : \mathcal{B} \rightarrow \hat{\mathcal{O}}(\mathcal{V}_x)\llbracket q_{\mathrm{dR}} -1\rrbracket$, see (\ref{thetat}) and Definition \ref{thetatpowerseriesdefinition}.) Specializing to any point $y \in \mathcal{Y}^{\mathrm{Ig}} \cap \{z_{\mathrm{HT}}  = \infty\}$, the image of $F(q_{\mathrm{dR}})$ under 
\begin{equation}\label{restrictionintro2}\hat{\mathcal{O}}(\mathcal{V}_x)\llbracket q_{\mathrm{dR}}-1\rrbracket \rightarrow \hat{\mathcal{O}}(\mathcal{Y}^{\mathrm{Ig}})\llbracket q_{\mathrm{dR}}-1\rrbracket \xrightarrow{\hat{\mathcal{O}}(\mathcal{Y}^{\mathrm{Ig}}) \rightarrow \hat{\mathcal{O}}(y) \subset \mathbb{C}_p} \mathbb{C}_p\llbracket q_{\mathrm{dR}}-1\rrbracket
\end{equation}
is the Serre-Tate expansion of $F$ centered at the point $y$. (Note that $y$ does not have to be a CM point, and so the Serre-Tate expansion at $y$ is not necessarily centered at the Serre-Tate canonical lifting.) 

In general, the coefficients to $q_{\mathrm{dR}}$-expansions in (\ref{qdRexpansionintro}) of thickened Coleman power series of norm-compatible systems of semi-local units can be $p$-adically unbounded. This stems from the fact that thickened Coleman power series are local objects (i.e. only defined over a formal neighborhood of a CM point and not over the entire Shimura curve), which means that one has little control over their $p$-adic behavior outside the ordinary locus. However, in the modular curve case, the thickened Coleman power series arising from a norm-compatible system of elliptic units is a Siegel modular function, which is a global object, i.e. a function on the universal elliptic curve. Its ``vertical derivative'', i.e. derivative along a coordinate on the formal group of the universal object, is a weight 1 Eisenstein series. Pulling back this Eisenstein series via a universal torsion section, one gets a global section on the infinite-level modular curve. The coordinate $q_{\mathrm{dR}}$, on the other hand, is itself defined on the large open subset $\mathcal{V}_x$ of the infinite-level modular curve. Hence, taking the $q_{\mathrm{dR}}$-expansion, one gets a power series with coefficients that are functions defined on all of $\mathcal{V}_x$, and not just on a small neighborhood of a CM point. In all, one gets a ``$q_{\mathrm{dR}}$-expansion map''
$$\mathcal{C} \rightarrow \hat{\mathcal{O}}(\mathcal{V}_x)\llbracket q_{\mathrm{dR}}-1\rrbracket.$$
As mentioned above, given an elliptic unit $\xi_E \in \mathcal{C}$ the vertical derivative of the thickened Coleman power series of $\xi_E$ is a weight 1 Eisenstein series $F$, 
and so the image of $\xi_E$ under the above map is the $q_{\mathrm{dR}}$-expansion of a weight 1 Eisenstein series $F$. The $q_{\mathrm{dR}}$-expansion of the Eisenstein series $F$ is an element
$$F(q_{\mathrm{dR}}) \in \hat{\mathcal{O}}(\mathcal{V}_x)\llbracket q_{\mathrm{dR}}-1\rrbracket \hookrightarrow \hat{\mathcal{O}}(\mathcal{Y}^{\mathrm{Ig}})\llbracket q_{\mathrm{dR}}-1\rrbracket.$$
The fact that the coefficients of $F(q_{\mathrm{dR}})$ are defined on $\mathcal{V}_x \supset \mathcal{Y}^{\mathrm{Ig}}$ allows them to inherit some $p$-adic boundedness properties from the $p$-adic boundedness of the coefficients of the Serre-Tate expansions on $\mathcal{Y}^{\mathrm{Ig}}$. For the purposes of constructing the desired $p$-adic $L$-function, we will only need to $p$-adically bound the constant term of $d_1^jF(q_{\mathrm{dR}})$ where $d_1^jF$ is the $p$-adic Maass-Shimura derivative (see Section \ref{operatorsection}, in particular (\ref{dkjdefinition})) of the weight 1 Eisenstein series $F$. We explain how we do this in the remainder of this section.

Let $\hat{\mathcal{Y}}^{\mathrm{Ig}} \subset \hat{Y}_{\infty}$ denote the $p$-adic completion of $\mathcal{Y}^{\mathrm{Ig}}$ from above; $\hat{\mathcal{Y}}^{\mathrm{Ig}}$ can also be defined as the fiber $\pi_{\mathrm{HT}}^{-1}(\{\infty\})$ of the Hodge-Tate period map $\pi_{\mathrm{HT}} : \hat{Y}_{\infty} \rightarrow \mathbb{P}^1$ of \cite{ScholzeTorsion}, see \cite[Section 4]{CaraianiScholze}. Then $\hat{\mathcal{Y}}^{\mathrm{Ig}}$ is a pro-finite \'{e}tale cover of the usual Igusa tower $Y^{\mathrm{Ig}}$ of \cite{KatzIgusa} and \cite{KatzCM}. (See Section \ref{furtherIgusasection} where we recall the definition of $Y^{\mathrm{Ig}}$, and see Definition \ref{mathcalYIgDefinition} and (\ref{hatmathcalYIg}) for the definition of $\hat{\mathcal{Y}}^{\mathrm{Ig}}$.) After multiplying by a constant if needed, we may assume without loss of generality that $d_1^jF$ extends to a section on the natural formal model (see Definition \ref{formalmodeldefinition} for our notion of formal model) of $Y^{\mathrm{Ig}}$, which parametrizes deformations of $p$-divisible groups of false elliptic curves together with trivialization of their connected parts. In this case we call $d_1^jF$ ``normalized''. By (\ref{restrictionintro2}), the restriction $d_1^jF|_{\hat{\mathcal{Y}}^{\mathrm{Ig}}}(q_{\mathrm{dR}})$ interpolates all ordinary Serre-Tate expansions. As Serre-Tate expansions of normalized $p$-adic modular forms are well-known to have $p$-integral coefficients (\cite{KatzST}), the restricted $q_{\mathrm{dR}}$-expansion $d_1^jF|_{\hat{\mathcal{Y}}^{\mathrm{Ig}}}(q_{\mathrm{dR}})$ has $p$-integral coefficients, i.e. coefficients in $\mathcal{O}^+(\hat{\mathcal{Y}}^{\mathrm{Ig}})$. Let $\hat{\mathcal{V}}_x \subset \hat{Y}_{\infty}$ be the adic space obtained by $p$-adically competing $\mathcal{V}_x$ from (\ref{Vxintro}), and let 
$$U^{\mathrm{can}} \subset \hat{\mathcal{V}}_x$$
be the canonical locus; for our purposes, this is the locus where the canonical subgroup exists and is parametrized by the first basis element of the $\Gamma(p^{\infty})$-level structure on $\hat{Y}_{\infty}$. (See (\ref{Ucan}) for the definition of $U^{\mathrm{can}}$.) Then $\hat{\mathcal{Y}}^{\mathrm{Ig}} \subset U^{\mathrm{can}}$ and $U^{\mathrm{can}}$ is perfectoid in the sense of \cite{Scholzeperf}. And one can hope to extend the $p$-integrality of $d_1^jF|_{\hat{\mathcal{Y}}^{\mathrm{Ig}}}(q_{\mathrm{dR}})$ to the $p$-integrality of $d_1^jF|_{U^{\mathrm{can}}}(q_{\mathrm{dR}})$ by using the perfectoidness of $U^{\mathrm{can}}$.

We will show that this is indeed true for the constant term of $d_1^jF|_{U^{\mathrm{can}}}(q_{\mathrm{dR}})$. To this end, one first defines certain affinoid perfectoid neighborhoods $\hat{\mathcal{Y}}^{\mathrm{Ig}}(\epsilon) \subset U^{\mathrm{can}}$ indexed by $\epsilon > 0$ (see (\ref{U}) and (\ref{hatmathcalYIg})) such that $\hat{\mathcal{Y}}^{\mathrm{Ig}}(0) = \hat{\mathcal{Y}}^{\mathrm{Ig}}$ and 
\begin{equation}\label{Ucanintro}U^{\mathrm{can}} = \bigcup_{0 \le \epsilon < p/(p+1)}\hat{\mathcal{Y}}^{\mathrm{Ig}}(\epsilon).
\end{equation}
These affinoid perfectoids have natural integral models $\hat{\mathcal{Y}}^{\mathrm{Ig}}(\epsilon)^+$ (see (\ref{Uplus}) and Definition \ref{hatmathcalYIgepsilonDefinition}) defined using the Katz-Mazur model $Y_{\infty}^+$ of $Y_{\infty}$ and the theory of the canonical subgroup (\cite[Chapter 3]{Katzpamf}, \cite{Kassaei}). Using the universal property of the universal objects lying over the $\hat{\mathcal{Y}}^{\mathrm{Ig}}(\epsilon)$, one can show that the $\hat{\mathcal{Y}}^{\mathrm{Ig}}(\epsilon)$ satisfy the relation $\varprojlim_{0 < \epsilon < p/(p+1)}\hat{\mathcal{Y}}^{\mathrm{Ig}}(\epsilon) \sim \mathcal{Y}^{\mathrm{Ig}}$ in the notation of \cite[Definition 2.4.1]{ScholzeWeinstein} (see (\ref{thirdpresentation}) and (\ref{daggerequalities})). This in turn implies via an argument using the constructible topology that the constant term $f \in \hat{\mathcal{O}}(\mathcal{V}_x)$ of $d_1^jF(q_{\mathrm{dR}}) \in \hat{\mathcal{O}}(\mathcal{V}_x)\llbracket q_{\mathrm{dR}}-1\rrbracket$ has $|f| \le 1$ on $\hat{\mathcal{Y}}^{\mathrm{Ig}}(\epsilon/p^m)$ for some $m \gg 0$, where $|\cdot |$ denotes the supremum norm. We prove a ``spreading out $p$-integrality theorem'' which shows that this $p$-integrality $|f|\le 1$ on $\hat{\mathcal{Y}}^{\mathrm{Ig}}(\epsilon/p^m)$ extends to $|f| \le 1$ on all of $\hat{\mathcal{Y}}^{\mathrm{Ig}}(\epsilon)$, and then via (\ref{Ucanintro}) to all of $U^{\mathrm{can}}$. The crux of the proof of this spreading out theorem arises from the affinoid perfectoidness of $\hat{\mathcal{Y}}^{\mathrm{Ig}}(\epsilon)^+$ (Theorem \ref{perfectoidpropertytheorem}): we have a distinguished element (see (\ref{gdefinition}))
$$g = \left(\begin{array}{ccc} 1 & 0\\
0 & p 
\end{array}\right) \in GL_2(\mathbb{Q}_p)$$
which induces a map
$$g : \hat{\mathcal{Y}}^{\mathrm{Ig}}(\epsilon/p^m)^+ \rightarrow \hat{\mathcal{Y}}^{\mathrm{Ig}}(\epsilon/p^{m-1})^+$$
via the $GL_2(\mathbb{Q}_p)$-action on $Y_{\infty}$. This gives a sequence of isomorphisms
$$\cdots \rightarrow \hat{\mathcal{Y}}^{\mathrm{Ig}}(\epsilon/p^m)^+ \xrightarrow{g} \hat{\mathcal{Y}}^{\mathrm{Ig}}(\epsilon/p^{m-1})^+ \rightarrow \cdots$$
for $|\epsilon/p^{m-1}| < p^{-p/(p+1)}$, whose pullbacks $g^*$ each act as a lift of relative Frobenius on $f$.\footnote{For this, we two crucial properties of $f$ to show that the pullback $g^*$ acts as a lift of Frobenius on $f$: (1) $f$ is defined on $\hat{\mathcal{V}}_x$ and (2) the weight $k$ transformation property (\ref{introweightidentity}). See Theorems \ref{crystalline2}, \ref{pintegraltheorem} and \ref{pintegraltheorem2}.} Using the action of $g^*$ and the corresponding congruence it satisfies (see (\ref{Frobeniuslift})), one can transport $p$-integrality of $f$ on $\hat{\mathcal{Y}}^{\mathrm{Ig}}(\epsilon/p^m)$ to $p$-integrality on $\hat{\mathcal{Y}}^{\mathrm{Ig}}(\epsilon/p^{m-1})$, and then by induction to $p$-integrality on all of $\hat{\mathcal{Y}}^{\mathrm{Ig}}(\epsilon)$. See Section \ref{spreadingoutsection} for details on this argument, and Theorem \ref{crystalline2} for a precise statement of the spreading out $p$-integrality theorem. See Theorem \ref{pintegraltheorem} for the precise statement on the $p$-integrality of the constant term of $d_1^jF(q_{\mathrm{dR}})$ on $U^{\mathrm{can}}$.  

Let $F^{\flat}$ denote the $p$-depletion of the weight 1 Eisenstein series $F$ (see Section \ref{depletionsection} and \cite[Equation (35)]{BCDDPR}), and let $d_1^jF^{\flat}$ denote the $j$-fold $p$-adic Maass-Shimura derivative of $F$. It can be shown that $d_1^jF^{\flat}|_{\hat{\mathcal{Y}}^{\mathrm{Ig}}}$ is equal to the $j$-fold Atkin-Serre derivative $\theta_{\mathrm{AS}}^jF^{\flat}|_{\hat{\mathcal{Y}}^{\mathrm{Ig}}}$ (see Proposition \ref{prop1}). Thus, by known continuity properties of $\theta_{\mathrm{AS}}^jF^{\flat}|_{\hat{\mathcal{Y}}^{\mathrm{Ig}}}$, $d_1^jF^{\flat}|_{\hat{\mathcal{Y}}^{\mathrm{Ig}}}$ varies continuously in $j \in \mathbb{Z}/(p-1) \times \mathbb{Z}_p$ (see Proposition \ref{prop4}). This continuity extends, again by an argument using the constructible topology (see proof of Theorem \ref{pintegraltheorem2} (3)), to continuity in $j$ of $d_1^jF^{\flat}|_{\hat{\mathcal{Y}}^{\mathrm{Ig}}(\epsilon/p^m)}$ for some $m \gg 0$. By the density of the formal models $\hat{\mathcal{Y}}^{\mathrm{Ig}}(\epsilon/p^m)^+ \subset \hat{\mathcal{Y}}^{\mathrm{Ig}}(\epsilon)^+$ (see Proposition \ref{prop2}), this extends to continuity in $j$ of $d_1^jF^{\flat}$ (see Theorem \ref{pintegraltheorem2} (3)).

Specializing the coefficients of $d_1^jF^{\flat}|_{U^{\mathrm{can}}}(q_{\mathrm{dR}})$ to a CM point $y \in U^{\mathrm{can}}$, we thus obtain a power series
$$d_1^jF^{\flat}(y)(q_{\mathrm{dR}}) \in \mathcal{O}_{\mathbb{C}_p} + (q_{\mathrm{dR}}-1)\mathbb{C}_p\llbracket q_{\mathrm{dR}}-1\rrbracket.$$
Recall that $F$ was assumed to be normalized above; in general, the above argument shows that 
$$d_1^jF^{\flat}(y)(q_{\mathrm{dR}}) \in \left(\mathcal{O}_{\mathbb{C}_p} + (q_{\mathrm{dR}}-1)\mathbb{C}_p\llbracket q_{\mathrm{dR}}-1\rrbracket\right) [1/p].$$
By taking a character-twisted sum of $d_1^jF^{\flat}(y^{\sigma})(q_{\mathrm{dR}})$ over a finite set $\{y^{\sigma}\}$ of $\mathrm{Gal}(\overline{K}/K)$-conjugates of $y$, one gets a power series
$$\mathcal{L}_E(j)(q_{\mathrm{dR}}) \in \mathbb{C}_p\llbracket q_{\mathrm{dR}}-1\rrbracket.$$
The constant term $\mathcal{L}_E(j)$ of this power series is our $p$-adic $L$-function $\mathcal{L}_E$. By the continuity in $j$ of $d_1^jF^{\flat}$, we get a $p$-adically continuous function 
$$\mathcal{L}_E : \mathbb{Z}/(p-1) \times \mathbb{Z}_p \rightarrow \mathbb{C}_p, \hspace{1cm} j \mapsto \mathcal{L}_E(j).$$
See Theorem \ref{interpolation2} for the construction of $\mathcal{L}_E$ (where it is denoted by $\mathcal{L}_{\lambda_E}$, for $\lambda_E$ the infinity type $(1,0)$ Hecke character attached to $E$).

\subsubsection{\textbf{Step (2).3: finishing the construction of the Coleman map}}By composing (\ref{qdRexpansionintro}) with a specialization of coefficients to an appropriate CM point $y \in U^{\mathrm{can}}$
$$\hat{\mathcal{O}}(\mathcal{V}_x) \rightarrow \hat{\mathcal{O}}(\mathcal{V}_x)(y) \subset \mathbb{C}_p,$$
we get a map
\begin{equation}\label{desiredColemanmapintro}\delta : \mathbb{U} \rightarrow \mathbb{C}_p\llbracket q_{\mathrm{dR}}-1\rrbracket
\end{equation}
whose restriction to some rank 1 subspace $\mathbb{U}_0 \subset \mathbb{U}$ gives a map
\begin{equation}\label{desiredColemanmapintro2}\delta : \mathbb{U}_0 \rightarrow \mathbb{C}_p\llbracket q_{\mathrm{dR}}-1\rrbracket^{\circ}
\end{equation}
where the target denotes the $\mathbb{C}_p$-submodule of power series of $\mathbb{C}_p\llbracket q_{\mathrm{dR}}-1\rrbracket$ whose $j$-fold $p$-adic Maass-Shimura derivatives (see Section \ref{operatorsection}) evaluated at $q_{\mathrm{dR}} = 1$ vary $p$-adically continuously in $j \in \mathbb{Z}/(p-1) \times \mathbb{Z}_p$. (For a precise statement, see Theorem \ref{Colemanmapthm} below.)
As mentioned above, one can compute the Coleman power series of elliptic units in terms of Kato-Siegel $\Theta$-functions. Pulling back the logarithmic derivatives of these $\Theta$-functions to $Y_{\infty}$ via universal torsion sections, one obtains weight 1 Eisenstein series. Letting $F$ be an appropriate weight 1 Eisenstein series, and taking this to be the global $p$-adic modular form in the previous section, we see that (\ref{desiredColemanmapintro}) sends the elliptic unit $\xi_E$ from (\ref{xiE}) to 
$$C_E \cdot \mathcal{L}_E \in \mathbb{C}_p\llbracket q_{\mathrm{dR}}-1\rrbracket$$
for some constant $C_E \in \mathbb{C}_p^{\times}$:
\begin{equation}\label{introreciprocity}\delta(\xi_E) = C_E \cdot \mathcal{L}_E.
\end{equation}
This identity is the ``explicit reciprocity law'' to which we alluded in the outline of Step (2) above.

\subsubsection{\textbf{Step (3)}}The $r = 0$ case now follows almost immediately from Step (1) and arguments similar to those of \cite[Section 11]{RubinMC}. We will focus on describing the $r = 1$ case in this section. Step (3) is analogous to a step in previous results on $p$-converse theorems. Namely, we wish to relate elliptic units over the CM field $K$ to Heegner points over $K$, and this is done through relating $GL_1/K$ and $GL_2/\mathbb{Q}$ anticyclotomic $p$-adic $L$-functions through a factorization of $p$-adic $L$-functions. The idea relating elliptic units and Heegner points via factorizations of Rankin-Selberg $p$-adic $L$-functions into Katz-type $p$-adic $L$-functions was introduced in \cite{BDP2}, from which our method draws inspiration. See also \cite{BurungaleTian} and \cite{BurungaleCastella} in the $r = 1$ ordinary case. 

Recall $E/\mathbb{Q}$ is our elliptic curve with CM by $\mathcal{O}_K$, and let $\lambda_E$ be the Hecke character over $K$ of infinity type $(1,0)$ attached to $E/K$ by the theory of complex multiplication. In particular, $L(E,s) = L(\lambda_E,s)$. First, we choose a Rankin-Selberg pair $(g,\chi_0)$ which is a suitable twist of $E$ (in the sense of Section \ref{goodtwistsection}); in particular, $(g,\chi_0)$ is chosen in a way (see Choice \ref{twistchoice}) so that
\begin{equation}\label{intrononvanishing}L(\lambda_E^c(\chi_0/\chi_0^c),1) \neq 0
\end{equation}
where for a Hecke character $\chi : \mathbb{A}_K^{\times}/K^{\times} \rightarrow \mathbb{C}^{\times}$, $\chi^c(x) = \chi(x^c)$ where $c : \mathbb{A}_K^{\times} \xrightarrow{\sim} \mathbb{A}_K^{\times}$ is complex conjugation. The existence of such a $\chi_0$ is highly nontrivial, using the result of Rohrlich \cite{Rohrlich} on the non-vanishing of twists of $L$-values in anticyclotomic towers. In particular, the results of op. cit. guaranteeing the existence of an appropriate $\chi_0$ are computationally ineffective in our situation. By the Artin formalism, we have the factorization
\begin{equation}\label{introfactorization}L(\pi_g \times \pi_{\chi_0},s-\frac{1}{2}) = L(\lambda_E,s)L(\lambda_E^c(\chi_0/\chi_0^c),s)
\end{equation}
where $\pi_g$ and $\pi_{\chi_0}$ are the automorphic representations of $GL_2(\mathbb{A}_{\mathbb{Q}})$ attached to $g$ and the theta series $\theta_{\chi_0}$ of $\chi_0$, respectively, and $L(\pi_g\times \pi_{\chi_0},s)$ is the (unitarily normalized, i.e. centered at $s = 1/2$) Rankin-Selberg $L$-function. (This slightly differs from the notation we will use in the main body of the paper: in Section \ref{RSsection} we will let $\pi_g$ denote the Jacquet-Langlands correspondence, which is a $D$-automorphic representation where $D$ is a quaternion algebra, of the $GL_2$-automorphic representation $\mathrm{JL}(\pi_g)$ attached to $g$. We use this notation in order to conform with the notation of \cite{YZZ} and \cite{Brooks}.) Then (\ref{intrononvanishing}) along with the above factorization 
\begin{equation}\label{analyticrankintro}\begin{split}\mathrm{ord}_{s = 1}L(E/\mathbb{Q},s) = \mathrm{ord}_{s = 1}L(\lambda_E,s) &\overset{(\ref{intrononvanishing})}{=} \mathrm{ord}_{s = 1}L(\lambda_E,s) + \mathrm{ord}_{s = 1}L(\lambda_E^c(\chi_0/\chi_0^c),s) \\
&\overset{(\ref{introfactorization})}{=} \mathrm{ord}_{s = 1/2}L(\pi_g \times \pi_{\chi_0},s).
\end{split}
\end{equation}
 
In all, we will have three $p$-adic $L$-functions: $\mathcal{L}_E$ (from Theorem \ref{interpolation2}, where it is denoted by $\mathcal{L}_{\lambda_E}$), $\mathcal{L}_E'$ (from Theorem \ref{interpolation2}, where it is denoted by $\mathcal{L}_{\psi^c\chi_0}$, where $\psi = \lambda_E/\chi_0$), and $\mathcal{L}_{g,\chi_0}$ (from Theorem \ref{GL2maintheorem}, where it is denoted by $\mathcal{L}_{g \times \chi_0}$). By comparing interpolation properties of these $p$-adic $L$-functions (see Theorems \ref{interpolation2} and \ref{GL2maintheorem}), one can prove a factorization of $p$-adic $L$-functions: for all $j \in \mathbb{Z}/(p-1) \times \mathbb{Z}_p$,
\begin{equation}\label{factorizationintro}\mathcal{L}_E(j-1)\cdot \mathcal{L}_E'(j) = C\cdot \mathcal{L}_{g,\chi_0}(j)^2
\end{equation}
(see (\ref{factorization})) where $C \in \mathbb{C}_p^{\times}$ is some constant. We have a $p$-adic Waldspurger formula (see (\ref{padicWaldspurger}))
$$\mathcal{L}_{g,\chi_0}(0) = \log_g(P_{g,\chi_0}) \cdot C'$$
for some Heegner point $P_{g,\chi_0}$, where $C' \in \mathbb{C}_p^{\times}$. Hence specializing (\ref{factorizationintro}) to $j = 0$, we get
$$\mathcal{L}_E(-1)\cdot \mathcal{L}_E'(0) = CC'^2\cdot\log_g^2(P_{g,\chi_0}).$$
Using the interpolation property of $\mathcal{L}_E'$ (Proposition \ref{nonzerotheorem}), one can show that there exists $C'' \in \mathbb{C}_p^{\times}$ such that 
$$\mathcal{L}_E'(0) = C''\cdot L(\lambda_E^c(\chi_0/\chi_0^c),1) \overset{(\ref{intrononvanishing})}{\neq} 0.$$
Therefore
$$\mathcal{L}_E(-1) \neq 0 \iff P_{g,\chi_0} \; \text{is non-torsion},$$
which by Yuan-Zhang-Zhang's Gross-Zagier formula on Shimura curves (\cite{YZZ}) happens if and only if 
$$\mathrm{ord}_{s = 1}L(E/\mathbb{Q},s) \overset{(\ref{analyticrankintro})}{=} \mathrm{ord}_{s = 1/2}L(\pi_g \times \pi_{\chi_0},s) = 1.$$ 
We also construct a $p$-adic primitive of the map (\ref{desiredColemanmapintro2}) satisfying an the explicit reciprocity law (\ref{introreciprocity}) relating the value on elliptic units to $\mathcal{L}_E(-1)$:
\begin{equation}\label{desiredColemanmapintro3}\delta' := D_1^{-1}\delta|_{q_{\mathrm{dR}}  = 1} : \mathbb{U}_0 \hat{\otimes}_{\mathcal{O}_{K_p}} \mathcal{O}_{\mathbb{C}_p} \rightarrow \mathbb{C}_p, \hspace{1cm} \delta'(\xi_E) = C_E \cdot \mathcal{L}_E(-1)
\end{equation}
(see Corollary \ref{Colemanmapthm}, extended to $j = -1$ by (\ref{delta'}), for a precise statement). Using Steps (1) and (2),\footnote{In the $r = 0$ case, our Rubin-type main conjecture is the main conjecture of \cite[Corollary 11.13(i)]{RubinMC} (under the assumptions of loc. cit.) after tensoring with $\otimes_{\mathbb{Z}_p}\mathbb{Q}_p$. When $r = 1$, the left-hand side of (\ref{RMCintro}) in our Rubin-type main conjecture is isomorphic to $\mathbb{U}_0/\mathcal{C}$, where $\mathbb{U}_0$ is the saturation of the module of elliptic units $\mathcal{C}$ (see (\ref{saturation}), where $\mathbb{U}_0$ is denoted $U_0$). This $\mathbb{U}_0$ is also equal to the domain of our map (\ref{desiredColemanmapintro3}). Thus the map (\ref{desiredColemanmapintro3}), which is intrinsic to the module $\mathcal{C}$, ``cuts out'' the local condition of the $\Lambda_E$-adic Selmer group in our Rubin-type main conjecture. See Theorem \ref{RMC} for a precise statement of the Rubin-type main conjectures. In the $r = 0$ case, we take $\mathbb{U}_0$ as in (\ref{kernelequalities}), and in the $r = 1$ we take $U_2$ as in Choice \ref{U2finallyfixed}. See also Remark \ref{U2remark} for a discussion on the different roles $\delta$ from (\ref{introreciprocity}) plays in the rank 0 and 1 cases; note that $\delta$ is equal to a nonzero multiple of $\delta_E$ by (\ref{deltascoincide}).}  one can prove that 
$$\mathrm{corank}_{\mathbb{Z}_p}\mathrm{Sel}_{p^{\infty}}(E/\mathbb{Q}) = 1 \implies \mathcal{L}_E(-1) \neq 0.$$ From this, the rank 1 $p$-converse theorem follows. See Section \ref{rank1section} for details.

\subsection{Applications to classical Diophantine problems}Theorem \ref{MainTheorem} has applications to certain classical Diophantine equations. First, let $E_d : x^3 + y^3 = d$ denote the cubic twist family, which has CM by $\mathbb{Z}[\sqrt{-3}]$. It is known that there exists $(x,y) \in \mathbb{Q}^{\oplus 2}$ such that $x^3 + y^3 = d$ if and only if $\mathrm{rank}_{\mathbb{Z}}E_d(\mathbb{Q}) > 0$. Sylvester (\cite{Sylvester}) and later Selmer (\cite{Selmer}) and Satg\'{e} (\cite{Satge}) computed the 3-descent of $E_d$ for primes $d$ and proved
$$\mathrm{corank}_{\mathbb{Z}_3}\mathrm{Sel}_{3^{\infty}}(E_d/\mathbb{Q}) = \begin{cases} 0 & d \equiv 2, 5 \pmod{9}\\
1 & d \equiv 4,7,8 \pmod{9}
\end{cases}.$$
A conjecture that has come to be known as ``Sylvester's conjecture'' (an attribution stemming from the work of Sylvester \cite{Sylvester}, and Selmer \cite{Selmer} on the 3-descent of the family $E_d$) states:
\begin{conjecture}[Sylvester's Conjecture]For primes $d$,
$$d \equiv 4,7,8 \pmod{9} \implies \mathrm{rank}_{\mathbb{Z}}E_d(\mathbb{Q}) = 1,$$
and in particular there exist $x,y \in \mathbb{Q}$ such that $x^3 + y^3 = d$.
\end{conjecture}
Applying Theorem \ref{MainTheorem} to $K = \mathbb{Q}(\sqrt{-3})$ and $p = 3$, we resolve Sylvester's conjecture.

\begin{theorem}[Theorem \ref{Sylvesterthm}]Sylvester's conjecture is true.
\end{theorem}

See Section \ref{Sylvestersection} for more details.

Another family of elliptic curves of arithmetic interest is the congruent number family $E^d : y^2 = x^3 - d^2x$. Recall that $d \in \mathbb{Z}_{> 0}$ is called a congruent number if it is the area of a right triangle with rational side lengths. It is known that for a positive integer $d$, $\mathrm{rank}_{\mathbb{Z}}E^d(\mathbb{Q}) > 0$ if and only if $d$ is a congruent number. (See \cite[Introduction]{Tunnell}.) Moreover, it is clear that $dn^2$ is congruent if and only if $d$ is congruent, and thus the study of congruent numbers reduces to the study of squarefree congruent numbers. The ``congruent number problem'' states:
\begin{conjecture}[Congruent Number Problem]\label{congruentnumberproblem}Under the natural ordering, 100\% of squarefree positive integers $d \equiv 1,2,3 \pmod{8}$ are not congruent numbers, and all squarefree positive integers $d \equiv 5,6,7 \pmod{8}$ are congruent numbers.
\end{conjecture}
Smith \cite{Smith} was able to vastly improve and extend previously known 2-descent methods of Heath-Brown \cite{HeathBrown}, Monsky \cite{Monsky} and Kane \cite{Kane} in order to get precise distribution results on $\mathrm{corank}_{\mathbb{Z}_2}\mathrm{Sel}_{2^{\infty}}(E^d/\mathbb{Q})$, showing for 100\% of squarefree $d$: 
$$\mathrm{corank}_{\mathbb{Z}_2}\mathrm{Sel}_{2^{\infty}}(E^d/\mathbb{Q}) = \begin{cases} 0 & d \equiv 1,2,3 \pmod{8}\\
1 & d \equiv 5,6,7 \pmod{8}
\end{cases}.$$
In particular, the $d\equiv 1,2,3 \pmod{8}$ part of the congruent number problem immediately follows. 
 By combining Theorem \ref{MainTheorem} (with $K = \mathbb{Q}(i)$ and $p = 2$) with the results in \cite{Smith}, we obtain the following.

\begin{theorem}[\cite{Smith} and Theorem \ref{congruentnumberthm}]Under the natural ordering, 100\% of positive squarefree integers $d \equiv 1,2,3 \pmod{8}$ are not congruent numbers, and 100\% of positive squarefree integers $d \equiv 5,6,7 \pmod{8}$ are congruent numbers.
\end{theorem}

Moreover, this shows that Goldfeld's conjecture (\cite{Goldfeld}, or see Conjecture \ref{Goldfeldconjecture} below) on ranks of elliptic curves in quadratic twist families is true for the congruent number family $E^d$, and gives the first case for which the full Goldfeld conjecture is established. See Section \ref{Goldfeldsection} for more details. 

\subsection{Structure of the paper}In Section \ref{Ysection}, we recall background on Shimura curves when viewed in various categories, such as schemes, adic spaces, and formal schemes, as well as in pro-categories. We also study the perfectoid structure of infinite-level Shimura curves in the latter half of this section, in particular the action of lifts of Frobenius and how it spreads out $p$-integrality on the infinite-level canonical locus. In Section \ref{ShimuraCurveSection}, we recall Scholze's period sheaves and study their structure on the infinite-level Shimura curve. In Section \ref{zdRqdRqdRexpsection}, we introduce the coordinates $z_{\mathrm{dR}}$ and $q_{\mathrm{dR}}$, as well as the notion of $q_{\mathrm{dR}}$-expansions. Later on in this section, we introduce a notion of ``generalized $p$-adic modular forms'', which generalizes Katz's classical notion of $p$-adic modular forms as functions on the Igusa tower. We also define weight-raising (or ``$p$-adic Maass-Shimura operators'') acting on these forms. In Sections \ref{padicLfunctionsection} and \ref{padicLfunctionsection2}, we use $q_{\mathrm{dR}}$-expansions of Eisenstein series and cusp forms in order to construct the relevant $p$-adic $L$-functions that appear in Steps (2) and (3), and establish the factorization identity of Step (3). In Section \ref{backgroundsection}, we recall some notions from classical Iwasawa theory such as Coleman power series, and also Tsuji's method for constructing relative versions of Coleman power series. This gives the basis for constructing the maps (\ref{desiredColemanmapintro2}) and (\ref{desiredColemanmapintro3}) above. In Section \ref{RMCsection}, we carry out Step (1) of our argument, assuming the elliptic units main conjecture (the proof of which is relegated to the Appendix, Section \ref{EMCsection}). We also prove an explicit reciprocity law, relating the images of elliptic units under the maps (\ref{desiredColemanmapintro2}) and (\ref{desiredColemanmapintro3}) to our $p$-adic $L$-functions. In Section \ref{rank0section}, we establish the $r = 0$ part of Theorem \ref{MainTheorem}, and in Section \ref{rank1section}, we establish the $r = 1$ part. In the Appendix (Section \ref{EMCsection}), we prove the elliptic units main conjecture used in Section \ref{RMCsection} by invoking Johnson-Leung-Kings's work on the equivariant main conjecture. As input for applying the results of op. cit., we prove a vanishing result for the total $\mu$-invariant of the cyclotomic tower over the CM field. 
\\

\noindent\textbf{Acknowledgements} The author would like to thank Shilin Lai, Congling Qiu, Peter Scholze, Christopher Skinner, Ye Tian, Shou-Wu Zhang and Wei Zhang for helpful discussions.

\section{Adic Shimura Curves and Their Formal Models}\label{Ysection}

Fix an algebraic closure $\overline{\mathbb{Q}}$ of $\mathbb{Q}$, and view all number fields as embedded in $\overline{\mathbb{Q}}$. Let $p$ be a prime and fix an algebraic closure $\overline{\mathbb{Q}}_p$ of $\mathbb{Q}_p$. Let $\mathbb{C}_p$ denote the $p$-adic completion of $\overline{\mathbb{Q}}_p$, with valuation ring $\mathcal{O}_{\mathbb{C}_p}$. Fix an embedding 
\begin{equation}\label{fixembeddings}i_p : \overline{\mathbb{Q}} \hookrightarrow \overline{\mathbb{Q}}_p,
\end{equation}
and denote the $p$-adic completion of any field $M \subset \overline{\mathbb{Q}} \overset{i_p}{\hookrightarrow} \overline{\mathbb{Q}}_p \subset \mathbb{C}_p$ by $M_p$. Let $K$ be an imaginary quadratic field, and let $\frak{p}$ be fixed by this embedding so that $K_p$ is the completion of $K$ at $\frak{p}$. We will let $\mathrm{ord}_p : \overline{\mathbb{Q}}_p \rightarrow \mathbb{Q} \cup \{\infty\}$ denote the $p$-adic valuation with $\mathrm{ord}_p(p) = 1$. Given any element $\pi$ in the maximal ideal of $\mathcal{O}_{\mathbb{C}_p}$, we will let $\mathrm{ord}_{\pi} : \mathbb{C}_p \rightarrow \mathbb{Q} \cup \{\infty\}$ denote the $p$-adic valuation $\mathrm{ord}_{\pi}(\pi) = 1$. Letting $(\pi)$ denote the $\mathcal{O}_{\mathbb{C}_p}$-ideal generated by $\pi$, we also let $\mathrm{ord}_{(\pi)} := \mathrm{ord}_{\pi}$. 

Given a number field $L$, let $\mathbb{A}_L$ denote the ad\`{e}les and $\mathbb{A}_L^{\times}$ denote the id\`{e}les over $L$. Given a formal product of (finite or infinite) primes $\mathcal{S}$ of $\mathcal{O}_L$, we let 
$$\mathbb{A}_L^{(\mathcal{S})} := \left(\prod_{v\nmid \mathcal{S}}L_v \times \prod_{v|\mathcal{S}}\{0\}\right) \cap \mathbb{A}_L, \hspace{1cm} \mathbb{A}_L^{\times,(\mathcal{S})} = \left(\prod_{v\nmid \mathcal{S}}L_v^{\times} \times \prod_{v|\mathcal{S}}\{1\}\right) \cap \mathbb{A}_L^{\times}$$
with the subspace topologies induced by the standard topologies on $\mathbb{A}_L$ and $\mathbb{A}_L^{\times}$. In particular, $\mathbb{A}_L^{(\infty)}$ is the finite ad\`{e}les over $L$ and $\mathbb{A}_L^{\times,(\infty)}$ is the finite id\`{e}les over $L$.

\subsection{Algebraic Shimura curves}\label{algebraicYsection}
\begin{definition}Given
\begin{itemize}
\item an indefinite quaternion algebra $D$ over $\mathbb{Q}$ (i.e. $D \otimes_{\mathbb{Q}} \mathbb{R} \cong M_2(\mathbb{R})$) with discriminant $\mathrm{disc}(D)$ (which is in this case a product of an even number of rational primes) and with $p \nmid \mathrm{disc}(D)$, and
\item a compact open subgroup $\Gamma \subset D^{\times}(\mathbb{A}_{\mathbb{Q}}^{(\infty)})$,
\end{itemize}
we will let 
$$\mathbb{Y}(\Gamma) \rightarrow \mathrm{Spec}(\mathbb{Q})$$
denote the associated Shimura curve (see \cite[Chapter 3.1]{YZZ}, or \cite[Section 2]{Buzzard} and \cite[Example 3.4]{Milne} for more detailed expositions). It is a scheme over $\mathrm{Spec}(\mathbb{Q})$. 
\end{definition}
We will sometimes denote $\mathbb{Y}(\Gamma)$ by $\mathbb{Y}$ when the above data is obvious. When $D \neq M_2(\mathbb{Q})$, $\mathbb{Y}(\Gamma)$ is compact. When $D = M_2(\mathbb{Q})$, $\mathbb{Y}(\Gamma)$ is not compact. In any case, fix an isomorphism 
$$\iota_{\infty} : D \otimes_{\mathbb{Q}} \mathbb{R} \xrightarrow{\sim} M_2(\mathbb{R}),$$
and use this to identify $D \otimes_{\mathbb{Q}}\mathbb{R}$ and $M_2(\mathbb{R})$. We have the complex analytic uniformization
\begin{equation}\label{doublequotient}\mathbb{Y}(\Gamma)(\mathbb{C}) = D^{\times} \backslash \mathcal{H}^{\pm} \times D^{\times}(\mathbb{A}_{\mathbb{Q}}^{(\infty)}) /\Gamma,
\end{equation}
where $\mathcal{H}^{\pm} = \mathbb{C} \setminus \mathbb{R}$. Here, $D^{\times}$ acts on the direct product by the left diagonal action. Let $\tau \in \mathcal{H}^{\pm}$ denote the standard coordinate. Then $D^{\times}$ acts by the left modular action of $D^{\times} \subset (D\otimes_{\mathbb{Q}} \mathbb{R})^{\times} \overset{\iota_{\infty}}{=} GL_2(\mathbb{R})$ on $\mathcal{H}^{\pm}$,
$$\left(\begin{array}{ccc} a & b\\
c & d\\
\end{array}\right) \cdot \tau = \frac{a\tau + b}{c\tau + d}, \hspace{1cm} \left(\begin{array}{ccc} a & b\\
c & d\\
\end{array}\right) \in (D\otimes_{\mathbb{Q}}\mathbb{R})^{\times},
$$
and by left multiplication on $D^{\times}(\mathbb{A}_{\mathbb{Q}}^{(\infty)})$. Also, $\Gamma$ acts on the second factor of the direct product in (\ref{doublequotient}) by right multiplication in $D^{\times}(\mathbb{A}_{\mathbb{Q}}^{(\infty)})$. 

\begin{definition}\label{abelianvarietydefinition}Let $S$ be a scheme. Recall that a \emph{$g$-dimensional abelian variety} $A \rightarrow S$ is a smooth, proper group scheme of relative dimension $g$ and with geometrically connected fibers of dimension $g$. An \emph{elliptic curve} is a 1-dimensional abelian variety, and an abelian surface is a 2-dimensional abelian variety. Let $\mathcal{O} \subset D$ be an order, i.e. a subring that is finitely generated as a $\mathbb{Z}$-module. Suppose that $\mathrm{disc}(D) \in \mathcal{O}_S(S)^{\times}$. A \emph{false elliptic curve} is an abelian surface $A \rightarrow S$ together with an embedding $i : \mathcal{O} \hookrightarrow \mathrm{End}_S(A)$. We call this $i$ an \emph{$\mathcal{O}$-endomorphism structure}. 
\end{definition}

Recall that an order is called maximal if it is not strictly contained in another order. 
\begin{choice}\label{maximalorderchoice}Henceforth, pick and fix a maximal order 
$$\mathcal{O}_D \subset D.$$
We will henceforth take $\mathcal{O} = \mathcal{O}_D$ in the context of Definition \ref{abelianvarietydefinition}.
\end{choice}

Now for each $\ell|N$, fix a trivialization $\iota_{\ell} : D\otimes_{\mathbb{Q}}\mathbb{Q}_{\ell} \xrightarrow{\sim}M_2(\mathbb{Q}_{\ell})$ such that $\iota_{\ell}(\mathcal{O}_D \otimes_{\mathbb{Z}} \mathbb{Z}_{\ell}) = M_2(\mathbb{Z}_{\ell})$. This induces an isomorphism
$$\iota_N  : D \otimes_{\mathbb{Q}}\prod_{\ell|N}\mathbb{Q}_{\ell} \xrightarrow{\sim} M_2(\prod_{\ell|N}\mathbb{Q}_{\ell}), \hspace{1cm} \iota_N(\mathcal{O}_D \otimes_{\mathbb{Z}} \prod_{\ell|N}\mathbb{Z}_{\ell}) = M_2(\prod_{\ell|N}\mathbb{Z}_{\ell}),$$
which in turn gives an isomorphism
\begin{equation}\label{iotan}\iota_N : \mathcal{O}_D\otimes_{\mathbb{Z}}\mathbb{Z}/N \xrightarrow{\sim} M_2(\mathbb{Z}/N).
\end{equation}
Throughout, let $\hat{\mathbb{Z}} = \prod_{\ell < \infty}\mathbb{Z}_{\ell}$, the profinite completion of $\mathbb{Z}$. Then (\ref{iotan}) induces a natural map
$$\mathcal{O}_D \otimes_{\mathbb{Z}}\hat{\mathbb{Z}}\twoheadrightarrow \mathcal{O}_D\otimes_{\mathbb{Z}}\mathbb{Z}/N  \xrightarrow{\iota_N} M_2(\mathbb{Z}/N)$$
which we also denote by $\iota_N$. 

\begin{definition}\label{neatdefinition}We say that 
$$\Gamma \subset D^{\times}(\mathbb{A}_{\mathbb{Q}}^{(\infty)}),$$
is \emph{neat} if, or some integer $N \ge 3$,
$$\Gamma \subset \{\gamma \in \left(\mathcal{O}_D \otimes_{\mathbb{Z}} \hat{\mathbb{Z}}\right)^{\times} : \gamma \equiv 1 \pmod{N\hat{\mathbb{Z}}}\}.$$

\end{definition}

\begin{definition}\label{congruencesubgroups}Suppose $(N,\mathrm{disc}(D)) = 1$. Let
\begin{align*}\Gamma_0(N) := \left\{\gamma \in \left(\mathcal{O}_D \otimes_{\mathbb{Z}}\hat{\mathbb{Z}}\right)^{\times} : \iota_N(\gamma) \equiv \left(\begin{array}{ccc} * & *\\
0 & *\\
\end{array}\right) \pmod{N\hat{\mathbb{Z}}}\right\},\\
\Gamma_1(N) := \left\{\gamma \in \left(\mathcal{O}_D \otimes_{\mathbb{Z}}\hat{\mathbb{Z}}\right)^{\times} : \iota_N(\gamma) \equiv \left(\begin{array}{ccc} * & *\\
0 & 1\\
\end{array}\right) \pmod{N\hat{\mathbb{Z}}}\right\},\\
\Gamma(N) := \left\{\gamma \in \left(\mathcal{O}_D \otimes_{\mathbb{Z}}\hat{\mathbb{Z}}\right)^{\times} : \iota_N(\gamma) \equiv \left(\begin{array}{ccc} 1 & 0\\
0 & 1\\
\end{array}\right) \pmod{N\hat{\mathbb{Z}}}\right\}.
\end{align*}
These are compact open subgroups of $D^{\times}(\mathbb{A}_{\mathbb{Q}}^{(\infty)})$. 
\end{definition}

In particular, $\Gamma$ is neat in the sense of Definition \ref{neatdefinition} if and only if $\Gamma \subset \Gamma(N)$ for some $N \ge 3$. 

Assume that $\Gamma$ is one of the groups in Definition \ref{congruencesubgroups}. We define the notion of $\Gamma$-level structure (see \cite[Definition 1.1 and discussion below]{Buzzard}). 

\begin{definition}\begin{enumerate}
\item Given an elliptic curve $A/S$,  a $\Gamma_0(N)$-level structure is a choice of rank $N$ sub-group scheme $C \subset A[N]$ over $S$ such that $C$ is f.p.p.f. locally cyclic, a $\Gamma_1(N)$-level structure is a choice of embedding of group schemes $\mathbb{Z}/N \hookrightarrow A[N]$ over $S$, and a $\Gamma(N)$-level structure is a choice of isomorphism of group schemes $(\mathbb{Z}/N)^{\oplus 2} \cong A[N]$ over $S$. 

\item Given a false elliptic curve $A/S$, a $\Gamma_0(N)$-level structure is a choice of rank $N^2$ sub-group scheme $C \subset A[N]$ over $S$ commuting with the action of $i : \mathcal{O}_D \hookrightarrow \mathrm{End}_S(A)$ and such that $C$ is f.p.p.f. locally cyclic (i.e. admits a generator) as an $\mathcal{O}_D$-module, a $\Gamma_1(N)$-level structure is an embedding of group schemes $\mathbb{Z}/N \times \mathbb{Z}/N \hookrightarrow A[N]$ commuting with the action of $i : \mathcal{O}_D \hookrightarrow \mathrm{End}_S(A)$ (where $\mathcal{O}_D$ acts on the source via (\ref{iotan})), and a $\Gamma(N)$-level structure is a choice of isomorphism of group schemes $\mathcal{O}_D \otimes_{\mathbb{Z}}\mathbb{Z}/N \xrightarrow{\sim} A[N]$ over $S$ commuting with the action of $i : \mathcal{O}_D \hookrightarrow \mathrm{End}_S(A)$. 
\end{enumerate}
\end{definition}

Continue supposing $\Gamma$ is as in Definition \ref{congruencesubgroups}. Then by \cite[Theorem 11.16]{Milne2} applied to \cite[Example 3.4]{Milne}, $\mathbb{Y}(\Gamma)$ is the solution to the following moduli problem: If $D = M_2(\mathbb{Q})$, it represents the functor sending a (commutative) $\mathbb{Q}$-algebra $R$ to the set of isomorphism classes of triples $(A,P)$ where $A$ is an elliptic curve over $R$ and $P$ is a $\Gamma$-level structure on $A \rightarrow \mathrm{Spec}(R)$. If $D \neq M_2(\mathbb{Q})$, it represents the functor sending a $\mathbb{Q}$-algebra $R$ to the set of isomorphism classes of triples $(A,i,P)$ where $A$ is a false elliptic curve over $R$, $i : \mathcal{O}_D \hookrightarrow \mathrm{End}_R(A)$ is an embedding, and $P$ is a $\Gamma$-level structure on $A/R$ compatible with $i$. 

Now assume further that $\Gamma$ is neat in the sense of Definition \ref{neatdefinition}. By \cite[Sections 2-2 and 2-3, Theorem 1]{Morita}, the curve $\mathbb{Y}(\Gamma)$ is a fine moduli space. Thus it has a universal object, i.e. universal elliptic curve when $D = M_2(\mathbb{Q})$ (resp. universal false elliptic curve when $D \neq M_2(\mathbb{Q})$) together with $\Gamma$-level structure (resp. $\mathcal{O}_D$-endomorphism structure and $\Gamma$-level structure) 
$$\pi : \mathcal{E}(\Gamma) \rightarrow \mathbb{Y}(\Gamma).$$
When $\Gamma$ is obvious from context, we will denote $\mathcal{E}(\Gamma)$ by $\mathcal{E}$ for simplicity. We will also sometimes abuse notation and let $\mathcal{E}(\Gamma)$ denote the abelian variety underlying $\mathcal{E}(\Gamma)$ in situations where no confusion should arise. 

As in \cite[Chapter 1]{YZZ}, we let (for a given $\Gamma$ as above)
\begin{equation}\label{algebraicYinftyGamma}\mathbb{Y}_{\infty}(\Gamma) := \varprojlim_n \mathbb{Y}(\Gamma \cap \Gamma(p^n)),
\end{equation}
which is an inverse limit of schemes over $\mathrm{Spec}(\mathbb{Q})$, and thus $\mathbb{Y}_{\infty}(\Gamma)$ itself is a scheme over $\mathrm{Spec}(\mathbb{Q})$. (Note that we only take an inverse limit over $p$-level structures, unlike in loc. cit. which takes an inverse limit over all level structures.) 

\subsection{Adic spaces and the pro\'{e}tale site}In this section and throughout the rest of the paper, we will use the notions and notations of adic spaces from \cite[Section 2]{ScholzeWeinstein}; see also \cite[Lectures 2-3]{ScholzeWeinstein2} for more details and generalizations. In particular, given an affinoid ring $(R,R^+)$ as in \cite[Definition 2.1.1]{ScholzeWeinstein}, we will let $\mathrm{Spa}(R,R^+)$ denote the adic spectrum of $(R,R^+)$ as in Definition 2.1.5 of op. cit. The underlying topological space $|\mathrm{Spa}(R,R^+)|$ (denoted by $\mathrm{Spa}(R,R^+)_{\mathrm{top}}$ in loc. cit.) of $\mathrm{Spa}(R,R^+)$ consists of equivalence classes of continuous valuations $|\cdot|$ (see Definition 2.1.2 of op. cit.) on $R$ such that $|R^+| \le 1$, and with open sets generated by rational subsets (see Definition 2.1.3 of op. cit.). The precise definition of adic spaces we will use is contained in Definition 2.1.5 of op. cit. Given an adic space $X$, we will let $|X|$ denote the underlying topological space.

\begin{remark}In particular, Scholze-Weinstein's notion of adic space in \cite[Section 2.1]{ScholzeWeinstein} is more general than Huber's original definition (\cite{Huber}), i.e. locally ringed spaces $X$ which are locally isomorphic to $\mathrm{Spa}(R,R^+)_{\mathrm{top}}$ together with the sheafification of its natural presheaf and a choice of valuation $v_x$ for every $x \in X$. Huber's notion is called an \emph{honest adic space} in \cite[discussion above Definition 2.1.5]{ScholzeWeinstein}. We will use this terminology when we wish to emphasize that a particular adic space is an honest adic space. 
\end{remark}

Suppose $X$ is an honest adic space or a scheme that is locally noetherian, i.e. is locally of the form $\mathrm{Spa}(R,R^+)$ where $R$ is strongly noetherian or admits a noetherian ring of definition (see \cite{Huber} for definitions of these notions). Let $X_{\text{\'{e}t}}$ denote the (small) \'{e}tale site of $X$ as in op. cit., whose objects are \'{e}tale maps $U \rightarrow X$ and whose coverings are \'{e}tale coverings. Since $X$ is locally noetherian, then such a $U\rightarrow X$ is locally connected. Let $X_{\text{pro\'{e}t}}$ denote the pro\'{e}tale site attached to $X$ as in \cite[Definition 3.9]{Scholze} (see also \cite{Scholzecorrigendum} for a slight modification). Thus, the category of objects of $X_{\text{pro\'{e}t}}$ is the full subcategory of $\mathrm{pro}-X_{\text{\'{e}t}}$ (see Definition 3.1 of op. cit. for a definition of $\mathrm{pro}-X_{\text{\'{e}t}}$) consisting of objects $U \rightarrow X$ that are pro-\'{e}tale. Here, the definition of pro-\'{e}tale map $U \rightarrow X$ we use is that given in the first paragraph of Definition 3.9 of op. cit. The covers of $X_{\text{pro\'{e}t}}$ are given by families of pro-\'{e}tale morphisms $\{f_i : U_i \rightarrow U\}$ such that $|U| = \bigcup_if(|U_i|)$.

\begin{remark}\label{whyremark}The definitions of the pro\'{e}tale site and pro-categories that we recall here are rather technical, and the reader may ask if these concepts are really necessary for our purposes, particularly in light of traditional approaches to $p$-adic $L$-functions which largely use classical rigid analytic geometry. We will require these notions in order to work with infinite-level Shimura curves (Definition \ref{YinftyGammadefinition}); these live naturally in the pro\'{e}tale site $Y(\Gamma)_{\text{pro\'{e}t}}$ of the finite-level Shimura curves $Y(\Gamma)$ (see Definition \ref{YGammadefinition}). We will need to consider these infinite-level Shimura curves in order to define the periods $z_{\mathrm{dR}}$ and $q_{\mathrm{dR}}$ of Section \ref{zqwsection}; such objects require a natural and functorial trivialization of the \'{e}tale cohomology of the universal (false) elliptic curve (see (\ref{etaletrivialization})), which does not exist on any finite level Shimura curve. Moreover, the period sheaves that we work with (Section \ref{periodsheavessection}) are defined only on the pro\'{e}tale site and are usually not sheaves, for example, on the \'{e}tale site. Thus, our need to involve the pro\'{e}tale site, pro-adic spaces and pro-formal schemes stems from the fact that our methods essentially use infinite level, and we cannot ``get away'' with finite level. 

However, from Section \ref{padicmodularformsection} and on, we will predominantly restrict our attention to the fixed infinite-level Shimura curve $Y_{\infty}$ (see Convention \ref{Yconvention}) and open subsets of $Y_{\infty}$. Thus our discussion from Section \ref{padicmodularformsection} onward requires considerably less technical background on the pro\'{e}tale site. The main output we will need from discussion before Section \ref{padicLfunctionsection} will be the notion of generalized $p$-adic modular forms (Definition \ref{generalizedpadicmodularformdefinition}), their $q_{\mathrm{dR}}$-expansions (Definition \ref{'zqexpansions}) and the weight-raising operators (or ``$p$-adic Maass-Shimura operators'') that act on them (Section \ref{operatorsection}). As generalized $p$-adic modular forms admit a natural notion of weight (see Definition \ref{generalizedpadicmodularformdefinition}, Theorem \ref{weighttheorem} and Corollary \ref{weightcorollary}), the study of these objects amounts to the study of $p$-adic representations of the \'{e}tale Galois group $\mathrm{Gal}(Y_{\infty}/Y) \cong GL_2(\mathbb{Z}_p)$ and the action of explicit elements of $GL_2(\mathbb{Q}_p)$ on generalized $p$-adic modular forms, where $GL_2(\mathbb{Q}_p)$ is naturally viewed in the local Hecke algebra of $Y_{\infty}$ at $p$.  
\end{remark}

\begin{convention}\label{objectconvention}Following \cite{Scholze}, given a site $\mathcal{S}$, we will write $U \in \mathcal{S}$ to denote that $U$ is an object of $\mathcal{S}$, i.e. an element of the underlying category of $\mathcal{S}$. Let $\mathcal{S}/U$ denote the localization of the site $\mathcal{S}$ to $U$. That is $\mathcal{S}/U$ is the site consisting of the subcategory of objects $V \in \mathcal{S}$ with a morphism $V \rightarrow U$, and covers of $V$ are covers coming from $\mathcal{S}$. In particular, ``$U \in Y_{\text{pro\'{e}t}}$'' denotes $U$ being an object of $Y_{\text{pro\'{e}t}}$. 
\end{convention}


\subsection{Integral and formal Shimura curves}\label{formalShimurasection}
Throughout the paper, we will use the definition of formal schemes from \cite{Stacks0AHY}. In particular, no noetherianness or finite-generatedness of ideals of definition assumptions are made \emph{a priori}. 

We will henceforth adopt the following Convention allowing us, under certain assumptions, to view formal schemes as adic spaces. 

\begin{convention}\label{formaladicfunctorconvention}\begin{enumerate}
\item Henceforth, we will ubiquitously use the fully faithful functor of \cite[Proposition 2.2.1]{ScholzeWeinstein} from formal schemes (that are defined over the ring of integers of a complete nonarchimedean field and admit a finitely generated ideal of definition) to adic spaces, which sends $\mathrm{Spf}(R) \mapsto \mathrm{Spa}(R,R)$, in order to view formal schemes as adic spaces. However, we will sometimes still use the notation $\mathrm{Spf}(R)$ for an affine formal scheme when we wish to emphasize that a particular adic space comes from a formal scheme. We will also sometimes abuse notation and identify write $\mathrm{Spf}(R)$ with its image $\mathrm{Spa}(R,R)$ under the above functor.

\item Similarly, consider any pro-formal scheme that can be written as an inverse limit $\varprojlim U_i$ where each $U_i$ is a formal scheme admitting a finitely generated ideal of definition. Applying the aforementioned functor to each $U_i$, we obtain a pro-adic space. We view such pro-formal schemes as pro-adic spaces throughout the rest of our discussion. 
\end{enumerate}
\end{convention}

\begin{definition}\label{formalmodeldefinition}Here and throughout the paper, a \emph{formal model of a (pro-)adic space $X$ over $\mathrm{Spa}(F,\mathcal{O}_F)$}, $F$ a complete nonarchimedean field, will mean a (pro-)adic scheme $\frak{X}$ over $\mathrm{Spa}(\mathcal{O}_F,\mathcal{O}_F)$ whose fiber over the generic point $\mathrm{Spa}(F,\mathcal{O}_F) \in \mathrm{Spa}(\mathcal{O}_F,\mathcal{O}_F)$ (i.e. adic generic fiber) is equal to $X$. Often, $\frak{X}$ will be induced by a (pro-)formal scheme via the functor from formal schemes to adic spaces as in Convention \ref{formaladicfunctorconvention}, or an inverse limit of adic spaces arising from formal schemes as in \cite[Proposition 2.4.2]{ScholzeWeinstein}.
\end{definition}

Recall the quaternion algebra $D/\mathbb{Q}$ underlying our Shimura curves.

\begin{assumption}\label{pdiscassumption}Let $p$ be any prime number, and henceforth assume that $p\nmid \mathrm{disc}(D)$. Thus 
$$D \otimes_{\mathbb{Q}}\mathbb{Q}_p \cong M_2(\mathbb{Q}_p).$$
\end{assumption}

\begin{convention}\label{idempotentconvention}Recall that $p \nmid \mathrm{disc}(D)$ (see Assumption \ref{pdiscassumption}). 
\begin{enumerate}
\item Throughout the rest of our discussion, fix an isomorphism 
$$\iota_p : D \otimes_{\mathbb{Q}}\mathbb{Q}_p \cong M_2(\mathbb{Q}_p)$$
such that $\iota_p(\mathcal{O}_D \otimes_{\mathbb{Z}}\mathbb{Z}_p) = M_2(\mathbb{Z}_p)$. Define
the idempotent
$$e^1 := \left(\begin{array}{ccc} 1 & 0\\
0 & 0 \\
\end{array}\right) \in M_2(\mathbb{Z}_p) \overset{\iota_p}{\subset} \mathcal{O}_D\otimes_{\mathbb{Z}} \mathbb{Z}_p.$$
\item For any false elliptic curve $A$ over a scheme over $\mathrm{Spec}(\mathbb{Z}_p)$, a formal scheme over $\mathrm{Spf}(\mathbb{Z}_p)$ or an adic space over $\mathrm{Spa}(\mathbb{Q}_p,\mathbb{Z}_p)$, we will let $A[p^{\infty}]$ denote \emph{$e^1$ applied to the $p$-divisible group of $A$} and \emph{not} the $p$-divisible group itself. Similarly, we will let $A[p^n]$ denote $e^1$ applied to the $p^n$-torsion sub-group scheme of $A$ and let
$$T_pA := \varprojlim_nA[p^n]$$
denote $e^1$ applied to the $p$-adic Tate module of $A$ (i.e. the inverse limit of all the $p^n$-torsion group schemes). For $A$ an elliptic curve, $A[p^{\infty}]$ will denote the usual $p$-divisible group of $A$, $A[p^n]$ the usual $p^n$-torsion sub-group scheme of $A$, and $T_pA$ the usual $p$-adic Tate module (defined by the same previous displayed equation).

\end{enumerate}
\end{convention}

\begin{definition}\label{YGamma+definition}Let $\Gamma$ be as in Definition \ref{congruencesubgroups}, and let
$$Y(\Gamma)^+ \rightarrow \mathrm{Spec}(\mathbb{Z}_p)$$
denote the integral model of $\mathbb{Y}(\Gamma) \times_{\mathrm{Spec}(\mathbb{Q})}\mathrm{Spec}(\mathbb{Q}_p)$ defined using Drinfeld level structures (\cite[Chapter 3.1-3.4]{KatzMazur}) and following the methods of Chapter 4 of op. cit., see \cite{Buzzard}. For the reader's convenience, we recall the definitions of Drinfeld level structures here.

Let $S$ be a scheme, formal scheme or an adic space, and let $N \in \mathbb{Z}_{> 0}$.

\begin{enumerate}
\item A $\Gamma_0(N)$-Drinfeld level structure on an elliptic curve $A/S$ is a finite flat sub-group scheme $H \subset A[N]$ of rank $N$ that is f.p.p.f. locally cyclic (i.e. admits a generator), a $\Gamma_1(N)$-Drinfeld level structure is a homomorphism $\phi : \mathbb{Z}/N \rightarrow A[N](S)$ such that the effective Cartier divisor
$$\sum_{a \pmod{N}}[\phi(a)]$$
is a sub-group scheme of $A[N]$, and a $\Gamma(N)$-Drinfeld level structure is a homomorphism $\phi : (\mathbb{Z}/N)^{\oplus 2} \rightarrow A[N](S)$ such that we have the following equality of effective Cartier divisors:
$$\sum_{(a,b) \pmod{N}}[\phi(a,b)] = A[N].$$
\item A $\Gamma_0(N)$-Drinfeld level structure on a false elliptic curve $A/S$ is a finite flat sub-group scheme $H \subset A[N]$ of rank $N^2$ that is f.p.p.f. locally cyclic (i.e. admits a generator) as an $\mathcal{O}_D$-module, a $\Gamma_1(N)$-Drinfeld level structure is a homomorphism $\phi : (\mathbb{Z}/N)^{\oplus 2} \rightarrow A[N](S)$ commuting with the action of $\mathcal{O}_D$ (where $\mathcal{O}_D$ acts on the source via (\ref{iotan})) such that the effective Cartier divisor
$$\sum_{(a,b)\pmod{N}}[\phi(a,b)]$$
is a sub-group scheme of $A[N]$, and a $\Gamma(N)$-Drinfeld level structure is a homomophism $\phi : \mathcal{O}_D\otimes_{\mathbb{Z}}\mathbb{Z}/N \rightarrow A[N]$ commuting with the action of $\mathcal{O}_D$ such that we have the following equality of effective Cartier divisors:
$$\sum_{a \in \mathcal{O}_D \pmod{N}}[\phi(a)] = A[N].$$

\item A $\Gamma_0(p^{\infty}), \Gamma_1(p^{\infty})$ or $\Gamma(p^{\infty})$-Drinfeld level structure on a (false) elliptic curve $A/S$ is given by the same definitions as above after replacing $\mathbb{Z}/N$ above with $\mathbb{Z}_p$ and $A[N]$ with the $p$-adic Tate module of $A$ (see Convention \ref{idempotentconvention} (2); when $A$ is a false elliptic curve, recall $T_pA$ is equal to $e^1$ applied to the $p$-adic Tate module of $A$). 

\item For $N \in \mathbb{Z}_{> 0}$ with $(N,p) = 1$, a $\Gamma_0(Np^{\infty})$ (resp. $\Gamma_1(Np^{\infty})$, resp. $\Gamma(Np^{\infty})$)-Drinfeld level structure is a pair consisting of a $\Gamma_0(N)$ (resp. $\Gamma_1(N)$, resp. $\Gamma(N)$)-Drinfeld level structure and a $\Gamma_0(p^{\infty})$ (resp. $\Gamma_1(p^{\infty})$, resp. $\Gamma(p^{\infty})$)-Drinfeld level structure.

\item We will also make the following slight abuse of notation, when $\Gamma(p^{\infty})$-level structures (either usual level structures or Drinfeld level structures, see Definition \ref{YGamma+definition} below) are considered. Let $e^1$ be the idempotent of Convention \ref{idempotentconvention} (1). Suppose we are given $(A,P,\phi)$ where $A$ is a (false) elliptic curve, $P$ is a $\Gamma$-level structure prime to $p$ and $\phi$ is a $\Gamma(p^{\infty})$-level structure (either Drinfeld or usual level structure). 

\begin{itemize}
\item If $A$ is an elliptic curve over $S$, then $\phi : \mathbb{Z}_p^{\oplus 2} \rightarrow T_pA(S) = \varprojlim_n A[p^n](S)$, and so we write $(A,P,\phi) = (A,P,e_1,e_2)$ where $e_1 = \phi|_{\mathbb{Z}_p\oplus \{0\}}$ and $e_2 = \phi|_{\{0\} \oplus \mathbb{Z}_p}$. 

\item If $A$ is a false elliptic curve, then applying the idempotent $e^1$ to the Drinfeld level structure $\phi$, we obtain a homomorphism $e^1\phi : \mathbb{Z}_p^{\oplus 2} \rightarrow T_pA(S) = \varprojlim_nA[p^n](S)$ (using the notation of Convention \ref{idempotentconvention} (2) for $T_pA$). Let $e_1 = e^1\phi|_{\mathbb{Z}_p\oplus \{0\}} : \mathbb{Z}_p \rightarrow T_pA$ and $e_2 = e^1\phi|_{\{0\} \oplus \mathbb{Z}_p} : \mathbb{Z}_p \rightarrow T_pA$, and write $(A,P,\phi) = (A,P,e_1,e_2)$. 

\item Let 
$$e_{i,n} := e_i \pmod{p^n} : \mathbb{Z}/p^n  \rightarrow A[p^n](S).$$
When $D = M_2(\mathbb{Q})$, given $e \in A[p^n]$, let $\langle e\rangle \subset A[p^n]$ denote the subgroup generated by $e$. Thus $A/\langle e\rangle$ is a well-defined elliptic curve and there is a natural isogeny of elliptic curves $A \rightarrow A/\langle e\rangle$. When $D \neq M_2(\mathbb{Q})$, given $e$ in the $p^n$-torsion group scheme of $A$, let $\langle e\rangle$ denote the $\mathcal{O}_D$-submodule of the $p^n$-torsion group scheme of $A$ generated by $e$. Thus $A/\langle e\rangle$ is a well-defined false elliptic curve and there is a natural isogeny of false elliptic curves $A \rightarrow A/\langle e\rangle$. 
\end{itemize}
\end{enumerate}

In particular $Y(\Gamma)^+$ is a scheme over $\mathrm{Spec}(\mathbb{Z}_p)$ 
and represents the moduli problem classifying isomorphism classes of tuples consisting of (false) elliptic curves with $\mathcal{O}_D$-endomorphism structure and $\Gamma$-Drinfeld level structure.
\end{definition}

If $\Gamma$ is neat (see Definition \ref{neatdefinition}), then the results of \cite[Chapter 5]{KatzMazur} imply that $Y(\Gamma)^+$ represents a fine moduli space, which implies the existence of a universal object
\begin{equation}\label{integraluniversalobject}\mathcal{E}(\Gamma)^+ \rightarrow Y(\Gamma)^+,
\end{equation}
which is the universal (false) elliptic curve with $\mathcal{O}_D$-endomorphism structure and $\Gamma$-Drinfeld level structure.

\begin{definition}\label{YinftyGamma+definition}Define a scheme over $\mathrm{Spec}(\mathbb{Z}_p)$
$$Y_{\infty}(\Gamma)^+ = \varprojlim_n Y(\Gamma \cap \Gamma(p^n))^+,$$
where the inverse limit is taken in the category of schemes. 
\end{definition}

\begin{definition}\label{YGammahat+definition}The ind-scheme given by ``$p$-adically completing'' $Y(\Gamma)^+$
$$\widehat{Y(\Gamma)}^+ := \varinjlim Y(\Gamma)^+ \times_{\mathrm{Spec}(\mathbb{Z}_p)} \mathrm{Spec}(\mathbb{Z}/p^n)$$
defines a formal scheme over $\mathrm{Spf}(\mathbb{Z}_p)$. 
\end{definition}

If $\Gamma$ is neat, then let
\begin{equation}\label{formaluniversalobject}\widehat{\mathcal{E}(\Gamma)}^+ = \varinjlim_n \mathcal{E}(\Gamma)^+ \times_{\mathrm{Spec}(\mathbb{Z}_p)}\mathrm{Spec}(\mathbb{Z}/p^n)
\end{equation}
denote the $p$-adic completion of $\mathcal{E}(\Gamma)^+$ from (\ref{integraluniversalobject}). It inherits an $\mathcal{O}_D$-endomorphism structure and $\Gamma$-Drinfeld level structure from $\mathcal{E}(\Gamma)^+$. If $\Gamma$ is obvious from context we will sometimes write $\widehat{\mathcal{E}}^+$ instead of $\widehat{\mathcal{E}(\Gamma)}^+$.

Recall the definition of a pro-category given in \cite[Definition 3.1]{Scholze}, and consider the category of pro-formal schemes. 
\begin{definition}\label{YinftyGammahat+definition}Define
$$\widehat{Y_{\infty}(\Gamma)}^+ := \varprojlim_n \widehat{Y(\Gamma \cap \Gamma(p^n))}^+,$$
which is a pro-formal scheme over $\mathrm{Spf}(\mathbb{Z}_p)$. 
\end{definition}

\subsection{Adic Shimura curves}\label{adicShimurasection} 
In this section, we will define certain honest adic spaces obtained by taking the adic generic fibers (Definition \ref{formalmodeldefinition}) of the formal Shimura curves $\widehat{Y(\Gamma)}^+ \rightarrow \mathrm{Spf}(\mathbb{Z}_p) = \mathrm{Spa}(\mathbb{Z}_p,\mathbb{Z}_p)$ defined in Section \ref{formalShimurasection}. Here, recall that we use Convention \ref{formaladicfunctorconvention} when identifying $\mathrm{Spf}(\mathbb{Z}_p) = \mathrm{Spa}(\mathbb{Z}_p,\mathbb{Z}_p)$.

\begin{definition}\label{YGammadefinition}Given a $\mathbb{Y}(\Gamma)^+$ and $\widehat{Y(\Gamma)}^+$ as in Section \ref{formalShimurasection}, where $\Gamma$ is not necessarily neat, let 
$$Y(\Gamma) \rightarrow \mathrm{Spa}(\mathbb{Q}_p,\mathbb{Z}_p)$$
denote the adic generic fiber of $\widehat{Y(\Gamma)}^+ \rightarrow \mathrm{Spa}(\mathbb{Z}_p,\mathbb{Z}_p)$. This is a locally noetherian honest adic space. 
\end{definition}

Using Convention \ref{formaladicfunctorconvention}, we can view $\widehat{Y(\Gamma)}^+$ as an adic space over $\mathrm{Spa}(\mathbb{Z}_p,\mathbb{Z}_p)$. We have a natural diagram of adic spaces
$$
\begin{tikzcd}[column sep =large]
Y(\Gamma) \arrow{r}{} \arrow{d}{}& \widehat{Y(\Gamma)}^+ \arrow{d} \\
\mathrm{Spa}(\mathbb{Q}_p,\mathbb{Z}_p) \arrow{r}{} & \mathrm{Spa}(\mathbb{Z}_p,\mathbb{Z}_p)
\end{tikzcd}.$$

Given $Y(\Gamma)$ as in Definition \ref{YGammadefinition}, $Y(\Gamma)$ is a locally noetherian honest adic space, as it is the adic generic fiber of the $p$-adic completion $\widehat{Y(\Gamma)}^+ \rightarrow \mathrm{Spf}(\mathbb{Z}_p)$ of the locally noetherian scheme $Y(\Gamma)^+ \rightarrow \mathrm{Spec}(\mathbb{Z}_p)$. Thus $Y(\Gamma)_{\text{pro\'{e}t}}$ is well-defined.

\begin{convention}We will often refer to a morphism $V \rightarrow U$ in $Y(\Gamma)_{\text{pro\'{e}t}}$ as a \emph{pro\'{e}tale open}. This terminology should result in no confusion, as any pro-\'{e}tale map in $\mathrm{pro}-Y(\Gamma)_{\text{\'{e}t}}$ is open (\cite[Lemma 3.10 (iv)]{Scholze}). 
\end{convention}

\begin{definition}[Definition 4.1 of \cite{Scholze}]\label{OYdefinition}Suppose $X$ is a locally noetherian adic space over $\mathrm{Spa}(\mathbb{Q}_p,\mathbb{Z}_p)$. 
\begin{enumerate}
\item Let $\mathcal{O}_{X}$ denote the structure sheaf on $X_{\text{pro\'{e}t}}$, let $\mathcal{O}_{X}^+ \subset \mathcal{O}_{X}$ denote the integral subsheaf, let $\hat{\mathcal{O}}_{X}^+ = \varprojlim_n \mathcal{O}_{X}^+/p^n$, and let $\hat{\mathcal{O}}_{X}= \hat{\mathcal{O}}_{X}^+[1/p]$. 
\item Thus there is a natural map of sheaves $\mathcal{O}_{X}^{(+)} \rightarrow \hat{\mathcal{O}}_{X}^{(+)}$, where ``$(+)$'' denotes the optional presence of a ``$+$''. We call this the \emph{$p$-adic completion map}.
\item For any object $X' \in X_{\text{pro\'{e}t}}$, we let $\mathcal{O}_{X'}$ denote the pullback of $\mathcal{O}_{X}$ along $X' \rightarrow X$, and similarly for $\mathcal{O}_{X'}^+$, $\hat{\mathcal{O}}_{X'}^+$ and $\hat{\mathcal{O}}_{X'}$. In particular, these are sheaves on the localized site $X_{\text{pro\'{e}t}}/X'$. 
\item Given a formal scheme $\frak{X}$, we will let $\mathcal{O}_{\frak{X}}$ denote the structure sheaf on $\frak{X}$ on the formal site. 
\end{enumerate}
\end{definition}

\begin{definition}\label{YinftyGammadefinition}Let 
$$Y_{\infty}(\Gamma) = \varprojlim_nY(\Gamma \cap \Gamma(p^n)) \in Y(\Gamma)_{\text{pro\'{e}t}}.$$
As each of the transition maps in the above inverse limit is finite \'{e}tale, $Y_{\infty}(\Gamma)$ is indeed an object of $Y_{\text{pro\'{e}t}}$ (see also Convention \ref{objectconvention}). When $\Gamma$ is obvious from context, we will often write $Y_{\infty}$ instead.
\end{definition}

We have a commutative diagram of locally ringed spaces
$$\begin{tikzcd}[column sep =large]
Y_{\infty}(\Gamma) \arrow{r}{} \arrow{d}{}& \mathbb{Y}_{\infty}(\Gamma) \times_{\mathrm{Spec}(\mathbb{Q})}\mathrm{Spec}(\mathbb{Q}_p) \arrow{d} \\
\mathrm{Spa}(\mathbb{Q}_p,\mathbb{Z}_p) \arrow{r}{} & \mathrm{Spec}(\mathbb{Q}_p)
\end{tikzcd}$$
induced by putting together the maps of locally ringed spaces
$$Y(\Gamma \cap \Gamma(p^n)) \rightarrow \mathbb{Y}(\Gamma \cap \Gamma(p^n))^{\mathrm{ad}} \rightarrow \mathbb{Y}(\Gamma \cap \Gamma(p^n))^{\mathrm{rig}} \rightarrow \mathbb{Y}(\Gamma \cap \Gamma(p^n)) \times_{\mathrm{Spec}(\mathbb{Q})}\mathrm{Spec}(\mathbb{Q}_p)$$
for every $n \in \mathbb{Z}_{\ge 0}$. Here $\mathbb{Y}(\Gamma \cap \Gamma(p^n))^{\mathrm{ad}} \rightarrow \mathbb{Y}(\Gamma \cap \Gamma(p^n))^{\mathrm{rig}}$ is the adic space associated with the rigid analytification 
$$\mathbb{Y}(\Gamma \cap \Gamma(p^n))^{\mathrm{rig}} \rightarrow \mathbb{Y}(\Gamma \cap \Gamma(p^n)) \times_{\mathrm{Spec}(\mathbb{Q})}\mathrm{Spec}(\mathbb{Q}_p).$$
See \cite[Chapter 5.4]{Bosch} for the definition of the rigic analytification of a $\mathbb{Q}_p$-scheme, and \cite[(1.1.11)]{Huber} for the adic space associated with a rigid analytic space.

When $\Gamma$ is neat so that there is a universal object $\mathcal{E}(\Gamma) \rightarrow \mathbb{Y}(\Gamma)$, we will make a slight abuse of notation and let 
\begin{equation}\label{adicuniversalobject}\mathcal{E}(\Gamma) \rightarrow Y(\Gamma)
\end{equation}
denote the adic generic fiber (i.e. fiber along $\mathrm{Spa}(\mathbb{Q}_p,\mathbb{Z}_p) \rightarrow \mathrm{Spa}(\mathbb{Z}_p,\mathbb{Z}_p)$) of $\widehat{\mathcal{E}(\Gamma)}^+ \rightarrow \widehat{Y(\Gamma)}^+$. If $\Gamma$ is obvious from context we will simply denote $\mathcal{E}(\Gamma)$ by $\mathcal{E}$.

Now work in the category of pro-adic spaces. In this category, we have a natural diagram
$$
\begin{tikzcd}[column sep =large]
Y_{\infty}(\Gamma) \arrow{r}{} \arrow{d}{}& \widehat{Y_{\infty}(\Gamma)}^+ \arrow{d} \\
\mathrm{Spa}(\mathbb{Q}_p,\mathbb{Z}_p) \arrow{r}{} & \mathrm{Spa}(\mathbb{Z}_p,\mathbb{Z}_p)
\end{tikzcd}.$$

\begin{definition}\label{kdefinition}\begin{enumerate}
\item Henceforth, let
$$\mathcal{O}_k = \widehat{\mathbb{Z}_p[\mu_{p^{\infty}}]} := \varprojlim_n \mathbb{Z}[\mu_{p^{\infty}}]/(p^n), \hspace{1cm} k = \mathcal{O}_k[1/p].$$
\item Let
$$Y(\Gamma)_k = Y(\Gamma) \times_{\mathrm{Spa}(\mathbb{Q}_p,\mathbb{Z}_p)}\mathrm{Spa}(k,\mathcal{O}_k), \hspace{1cm} \widehat{Y(\Gamma)}_{\mathcal{O}_k}^+ = \widehat{Y(\Gamma)}^+ \times_{\mathrm{Spf}(\mathbb{Z}_p)} \mathrm{Spf}(\mathcal{O}_k)$$
denote the base changes, where the fiber product is taken in the category of adic spaces (note that on coordinate rings, this involves a $p$-adically completed tensor product $\hat{\otimes}$). We note that here and throughout the text, $\mathrm{Spf}(\mathcal{O}_k)$ denotes the $p$-adic completion of $\mathrm{Spec}(\mathcal{O}_k)$. We may view this base-change $Y(\Gamma)_k$ as an object $Y(\Gamma)_k \in Y(\Gamma)_{\text{pro\'{e}t}}$, see \cite[Proposition 3.15]{Scholze}. Since $k$ is a perfectoid field we may consider affinoid perfectoid objects in the localized site $Y(\Gamma)_{\text{pro\'{e}t}}/Y(\Gamma)_k$ as in Definition 4.3 of op. cit.
\end{enumerate}
\end{definition}

\begin{convention}\label{basechangeconvention}As we will mainly be working over $k$ and $\mathcal{O}_k$ in our discussion, we will henceforth base change all adic Shimura curves $Y(\Gamma)$ from $\mathrm{Spa}(\mathbb{Q}_p,\mathbb{Z}_p)$ to $\mathrm{Spa}(k,\mathcal{O}_k)$, base change all integral models $Y(\Gamma)^+$ from $\mathrm{Spec}(\mathbb{Z}_p)$ to $\mathrm{Spec}(\mathcal{O}_k)$, and base change all formal models $\widehat{Y(\Gamma)}^+$ from $\mathrm{Spf}(\mathbb{Z}_p)$ to $\mathrm{Spf}(\mathcal{O}_k)$. For ease of notation, we use the same symbols $Y(\Gamma)$, $Y(\Gamma)^+$ and $\widehat{Y(\Gamma)}^+$ to denote these base changes unless otherwise noted. Occasionally we will need to use the fact that these objects are originally defined over $\mathbb{Q}_p$ or $\mathbb{Z}_p$.  
\end{convention}

\subsection{Neighborhoods cut out by the Hasse invariant}\label{Hassenbhdsection}

\begin{definition}\label{reductionformalschemedefinition}For a formal scheme $\frak{U}$ over $\mathcal{O}_k$, and $a \in \mathcal{O}_k$ or $a \subset \mathcal{O}_k$ an ideal, let 
$$\frak{U}/a := \frak{U} \times_{\mathrm{Spf}(\mathcal{O}_k)}\mathrm{Spf}(\mathcal{O}_k/a)$$
as in \cite[line above Lemma III.2.14]{ScholzeTorsion}. 
\end{definition}

Consider any $0 \le \epsilon \le \infty$ in the valuation group of $\mathcal{O}_k$ (see Definition \ref{kdefinition}), and fix any element $p^{\epsilon} \in \mathcal{O}_k$ of valuation $\epsilon$. Assume that $\Gamma$ is neat in the sense of Definition \ref{neatdefinition}. We will briefly recall the notations of $\frak{X}$ and $\frak{X}^+$ in \cite[Section III.2]{ScholzeTorsion}; in our notation, $\frak{X}$ denotes our $\widehat{Y(\Gamma)}^+$, and $\frak{X}^*$ denotes the $p$-adic completion of the compactification of $Y(\Gamma)$ (which only differs from $Y(\Gamma)$ in the case $D = M_2(\mathbb{Q})$). Let $Y(\Gamma)^+(\epsilon)$ be the pullback along $\frak{X} \hookrightarrow \frak{X}^*$ of the formal scheme constructed in \cite[Definition III.2.12, Lemma III.2.13]{ScholzeTorsion}. For the reader's convenience, we recall the Definition from loc. cit. in our setting.

\begin{definition}[Definition III.2.12 of \cite{ScholzeTorsion}]\label{YGamma+epsilondefinition}
Suppose $\Gamma \subset D^{\times}(\mathbb{A}_{\mathbb{Q}}^{(\infty)})$ is neat (see Definition \ref{neatdefinition}). Recall the universal object $\pi : \widehat{\mathcal{E}(\Gamma)}^+ \rightarrow \widehat{Y(\Gamma)}^+$ and let 
\begin{equation}\label{omegaYGamma+}\widehat{\omega}:= \pi_*\Omega_{\widehat{\mathcal{E}(\Gamma)}^+/\widehat{Y(\Gamma)}^+}
\end{equation}
denote the usual Hodge bundle. 
\begin{enumerate}
\item Define
$$Y(\Gamma)^+(\epsilon) \subset \widehat{Y(\Gamma)}^+$$
as the open sub-formal scheme over $\mathrm{Spf}(\mathcal{O}_k)$ which represents the following functor: Send any $p$-adically complete, flat $\mathcal{O}_k$-algebra $S$ to the set of equivalence classes of pairs $(\rho,u)$, where $\rho : S \rightarrow \widehat{Y(\Gamma)}^+$ is a map of formal schemes over $\mathrm{Spf}(\mathcal{O}_k)$, $u \in H^0(\mathrm{Spf}(\mathcal{O}_K),\rho^*\widehat{\omega}^{\otimes (1-p)})$ is a section such that 
$$u \cdot \mathrm{Ha}(\bar{\rho}) = p^{\epsilon} \in S/p,$$
where $\bar{\rho} = \rho \otimes_{\mathbb{Z}_p}\mathbb{F}_p$, $\mathrm{Ha}$ is a choice of local lift of the Hasse invariant, and two pairs $(\rho,u)$, $(\rho',u')$ are considered equivalent if $\rho = \rho'$ and there exists some $s \in S$ with $u' = u(1+p^{1-\epsilon}s)$. As proven in \cite[Lemma III.2.13]{ScholzeTorsion}, the above definition of equivalence makes the definition of $Y(\Gamma)^+(\epsilon)$ independent of choice of local lift $\mathrm{Ha}$ of the Hasse invariant.

\item Let
$$Y(\Gamma(1))^+(\epsilon) \subset \widehat{Y(\Gamma(1))}^+$$
denote the image of the formal subscheme $Y(\Gamma)^+(\epsilon) \subset \widehat{Y(\Gamma)}^+$ under the map of formal schemes $\widehat{Y(\Gamma)}^+ \rightarrow \widehat{Y(\Gamma(1))}^+$. Note that this image is defined over $\mathrm{Spf}(\mathcal{O}_k)$ and is independent of the choice of neat $\Gamma \subset D^{\times}(\mathbb{A}_{\mathbb{Q}}^{(\infty)})$.
\end{enumerate}
\end{definition}

In our setting, $Y(\Gamma)^+(\epsilon)$ is equal to the integral model constructed in \cite[Section 2]{Katzpamf} ($D = M_2(\mathbb{Q})$) and \cite{Kassaei} ($D \neq M_2(\mathbb{Q})$), which classifies isomorphism classes of triples $(A,P,u)$, where $A$ is an elliptic curve, $P$ is a $\Gamma$-level structure and $u \in \omega^{\otimes (1-p)}$ is a section with $u\cdot \mathrm{Ha} = p^{\epsilon}$ for a local lift of the Hasse invariant $\mathrm{Ha}$.

\begin{definition}\label{YinftyGamma+epsilondefinition}Define the formal scheme over $\mathrm{Spf}(\mathcal{O}_k)$ 
$$Y_{\infty}(\Gamma)^+(\epsilon) := Y(\Gamma)^+(\epsilon) \times_{\widehat{Y(\Gamma)}^+}\widehat{Y_{\infty}(\Gamma)}^+,$$
the product being taken in the category of pro-formal schemes. 
\end{definition}

\begin{definition}\label{YGammaepsilondefinition}Let 
$$Y(\Gamma)(\epsilon) := Y(\Gamma)^+(\epsilon)\times_{\mathrm{Spa}(\mathcal{O}_k,\mathcal{O}_k)} \mathrm{Spa}(k,\mathcal{O}_k)$$
be the adic generic fiber of $Y(\Gamma)^+(\epsilon)$. In particular it is an adic space over $\mathrm{Spa}(k,\mathcal{O}_k)$ and $Y(\Gamma)(\epsilon) \subset Y(\Gamma)$ is an open subset.
\end{definition}

\begin{definition}\label{YinftyGammaepsilondefinition}Let
$$Y_{\infty}(\Gamma)(\epsilon) := Y(\Gamma)(\epsilon) \times_{Y(\Gamma)}Y_{\infty}(\Gamma).$$
\end{definition}


\subsection{Summary of conventions on Shimura curves}We collect our various notations and conventions regarding Shimura curves and summarize them below. 

\begin{convention}\label{Yconvention}For the remainder of the paper, we will fix 
$$\Gamma = \Gamma(N)$$ 
for some $N \ge 4$ that is prime to $p\cdot\mathrm{disc}(D)$.\footnote{Note that Definition \ref{neatdefinition} only requires $N \ge 3$. However, as we will be citing works such as \cite{Buzzard} which assume $N \ge 4$, we will also adopt this assumption.} As we continue to assume that $p \nmid \mathrm{disc}(D)$ (Assumption \ref{pdiscassumption}), we thus have that $p, \mathrm{disc}(D)$ and $N$ are pairwise coprime. Moreover, 
\begin{itemize}
\item let $Y$ denote $Y(\Gamma)$ from Definition \ref{YGammadefinition}, which is an adic space over $\mathrm{Spa}(\mathbb{Q}_p,\mathbb{Z}_p)$ (which we base change to $\mathrm{Spa}(k,\mathcal{O}_k)$ unless otherwise noted, see Convention \ref{basechangeconvention}),
\item let $Y_{\infty}$ denote $Y_{\infty}(\Gamma)$ from Definition \ref{YinftyGammadefinition}, which is an object of $Y_{\text{pro\'{e}t}}$ defied over $\mathrm{Spa}(\mathbb{Q}_p,\mathbb{Z}_p)$ (which base change to $\mathrm{Spa}(k,\mathcal{O}_k)$ unless otherwise noted, see Convention \ref{basechangeconvention}), 
\item let $Y(\epsilon)$ denote $Y(\Gamma)(\epsilon)$ from Definition \ref{YGammaepsilondefinition}, which is an adic space over $\mathrm{Spa}(k,\mathcal{O}_k)$, 
\item let $Y_{\infty}(\epsilon)$ denote $Y_{\infty}(\Gamma)(\epsilon)$ from Definition \ref{YinftyGammaepsilondefinition}, which is an object of $Y_{\text{pro\'{e}t}}$ over $\mathrm{Spa}(k,\mathcal{O}_k)$,
\item let $Y^+$ denote $Y(\Gamma)^+$ from Definition \ref{YGamma+definition}, which is scheme over $\mathrm{Spec}(\mathbb{Z}_p)$ (which we base change to $\mathrm{Spec}(\mathcal{O}_k)$ unless otherwise noted, see Convention \ref{basechangeconvention}),
\item let $\hat{Y}^+$ denote $\widehat{Y(\Gamma)}^+$ from Definition \ref{YGammahat+definition}, which is a formal scheme over $\mathrm{Spf}(\mathbb{Z}_p)$ (which we base change to $\mathrm{Spf}(\mathcal{O}_k)$ unless otherwise noted, see Convention \ref{basechangeconvention}),
\item let $Y_{\infty}^+$ denote $Y_{\infty}(\Gamma)^+$ from Definition \ref{YinftyGamma+definition}, which is a scheme over $\mathrm{Spec}(\mathbb{Z}_p)$ (which we view over $\mathrm{Spec}(\mathcal{O}_k)$ unless otherwise noted, see Convention \ref{basechangeconvention}),
\item let $\hat{Y}_{\infty}^+$ denote $\widehat{Y_{\infty}(\Gamma)}^+$ from Definition \ref{YinftyGammahat+definition}, which is a pro-formal scheme over $\mathrm{Spf}(\mathbb{Z}_p)$ (which we view over $\mathrm{Spf}(\mathcal{O}_k)$ unless otherwise noted, see Convention \ref{basechangeconvention}),
\item let $Y^+(\epsilon)$ denote $Y(\Gamma)^+(\epsilon)$ from Definition \ref{YGamma+epsilondefinition}, which is a formal scheme over $\mathrm{Spf}(\mathcal{O}_k)$,
\item let $Y_{\infty}^+(\epsilon)$ and denote $Y_{\infty}(\Gamma)^+(\epsilon)$ Definition \ref{YinftyGamma+epsilondefinition}, which is a pro-formal scheme over $\mathrm{Spf}(\mathcal{O}_k)$.
\item Let $\pi : \mathcal{E} \rightarrow Y$ denote the universal object, let $\pi_+ : \mathcal{E}^+ \rightarrow Y^+$ denote the universal object, and henceforth let
\begin{equation}\label{omegaY}\omega := \pi_*\Omega_{\mathcal{E}/Y}, \hspace{1cm} \omega_+ := \pi_{+,*}\Omega_{\mathcal{E}^+/Y^+}
\end{equation}
denote the usual Hodge bundles. For any $W \in Y_{\text{pro\'{e}t}}$, we will let $\omega_W = \omega|_W$, and similarly given any map of formal schemes $f : W \rightarrow Y^+$ we will let $\omega_{+,W} = f^*\omega_+$. 
\end{itemize}
\end{convention}

\subsection{Formal tori and $p$-adic constant sheaves over adic spaces}

\begin{definition}Given $S$ that is a scheme, (pro-)formal scheme, or a (pro-)adic space with structure sheaf $\mathcal{O}_S$, we say $S$ is \emph{$p$-adically complete} if 
$$\mathcal{O}_S \cong \varprojlim_n \mathcal{O}_S/p^n$$
as sheaves. 
\end{definition}

\begin{definition}\label{fixT}\begin{enumerate}
\item Given a $p$-adically complete $S$, let $\hat{\mathbb{G}}_{m,S}$ denote the formal multiplicative group over $S$, which is a height 1 Lubin-Tate formal $\mathbb{Z}_p$-module. 
\item Given a local affine/affinoid chart $U$ with coordinate ring $R$, the ring of sections of $\mathcal{O}_{\hat{\mathbb{G}}_{m,S}}(U)$ is abstractly isomorphic to $R\llbracket T\rrbracket$; this $T$ is unique up to multiplication by $[a]_{\hat{\mathbb{G}}_{m,S}}$, $a \in \mathbb{Z}_p^{\times}$, where $[\cdot ]_{\hat{\mathbb{G}}_{m,S}}$ denotes the formal $\mathbb{Z}_p$-module action. 
\item Taking an open cover $\{U_i\}$ of $\hat{\mathbb{G}}_{m,S}$, we can choose coordinates $T_i$ on each $U_i$ such that $T_i|_{U_i \cap U_j} = T_j|_{U_i \cap U_j}$. By gluing this gives a global coordinate 
$$T \in \mathcal{O}_{\hat{\mathbb{G}}_{m,S}}(\hat{\mathbb{G}}_{m,S}).$$
\item We henceforth fix such a coordinate $T$. By \cite[proof of Proposition 1]{Tate}, this amounts to picking a basis of the Tate module $\varprojlim_n \mu_{p^n,S}$, where $\mu_{p^n,S}$ is the $p^n$-torsion group scheme\footnote{Throughout the paper, ``group scheme'' over a general (not necessarily scheme) base $S$ will mean a group functor on $S$.} of $\hat{\mathbb{G}}_{m,S}$. \item When $S$ is an affine scheme, affine formal scheme, or affinoid adic space with coordinate ring $R$, we sometimes write $\hat{\mathbb{G}}_{m,S} = \hat{\mathbb{G}}_{m,R}$, and similarly $\mu_{p^n,S} = \mu_{p^n,R}$. 
\end{enumerate}
\end{definition}

\begin{definition}[cf. Definition 8.1 of \cite{Scholze}]\label{Zphatdefinition}\begin{enumerate}
\item Given any locally noetherian adic space or scheme $S$, we have an associated pro\'{e}tale site $S_{\text{pro\'{e}t}}$ from \cite[Definition 3.9]{Scholze}. Let $\nu : S_{\text{pro\'{e}t}} \rightarrow S_{\text{\'{e}t}}$ denote the natural projection of sites (see the discussion after Proposition 3.15 of op. cit.). Then the constant sheaf attached to $\mathbb{Z}/p^n$ on $S_{\text{\'{e}t}}$ pulls back under $\nu$ to a sheaf $(\mathbb{Z}/p^n)_S$. 
\item Let
$$\hat{\mathbb{Z}}_{p,S} = \varprojlim_n (\mathbb{Z}/p^n)_S,$$
which is a sheaf on $S_{\text{pro\'{e}t}}$. Given any object $S' \in S_{\text{pro\'{e}t}}$, we let $\hat{\mathbb{Z}}_{p,S'} = \hat{\mathbb{Z}}_{p,S}|_{S'}$. 

\item The sections of $\hat{\mathbb{Z}}_{p,S}$ can be viewed as a $p$-adically continuous constant sheaf attached to $\mathbb{Z}_p$ in the following sense: for any pro\'{e}tale open $U \rightarrow S$, 
$$\hat{\mathbb{Z}}_{p,S}(U) = \mathrm{Cont}(\pi_0(U),\mathbb{Z}_p),$$
where $\mathrm{Cont}$ denotes the set of continuous maps. Here, $\pi_0(U)$ denotes the set of connected components of $U$ which is endowed with the following natural topology: writing $U = \varprojlim_i U_i$ in some pro\'{e}tale presentation in $S_{\text{pro\'{e}t}}$ (see Definition 3.9 of op. cit. and \cite[p. 1-2]{Scholzecorrigendum}), we have
$$\pi_0(U) = \varprojlim_i\pi_0(U_i),$$
where each $U_i \rightarrow S$ is \'{e}tale (in the sense of Definition 3.9 of op. cit.) and each $U_{i'} \rightarrow U_i$ is finite \'{e}tale (in the sense of loc. cit.) for all $i' \ge i \gg 0$. Each $\pi_0(U_i)$ is thus endowed with a natural profinite topology, so that the above inverse limit has a natural topology (i.e. the coarsest topology such that all the projections $\pi_0(U) \rightarrow \pi_0(U_i)$ are continuous). Also, $\mathbb{Z}_p$ above is endowed with the \emph{$p$-adic topology} (as opposed to the discrete topology usually taken when defining constant sheaves).
\end{enumerate}
\end{definition}

\subsection{Review of the Igusa tower and Serre-Tate coordinates} \label{STreviewsection}

\begin{convention}\label{formalYconvention}For notational convenience, in this section (Section \ref{STreviewsection}), we will let $Y$ denote $\widehat{Y(\Gamma)}^+$ (see Section \ref{formalShimurasection}), viewed over $\mathrm{Spf}(\mathbb{Z}_p)$ (unlike what we will usually do per Convention \ref{basechangeconvention}). Thus $Y$ is a formal scheme over $\mathrm{Spf}(\mathbb{Z}_p)$ for the remainder of this section. We will also let $\mathcal{E} \rightarrow Y$ denote the universal (false) elliptic curve with $\Gamma$-level structure over $Y$, where $\Gamma = \Gamma(N)$ for some $N \ge 4$ with $(N,p) = 1$. (Thus $\Gamma$ is neat in the sense of Definition \ref{neatdefinition}.)
\end{convention}

\begin{definition}\label{Yorddefinition}Define the ordinary locus 
$$Y^{\mathrm{ord}} \subset Y$$
to be the open formal subscheme of $Y$ over $\mathrm{Spec}(\mathbb{Z}_p)$ obtained by deleting the finitely many supersingular points:
$$Y^{\mathrm{ord}} := Y \setminus \bigcup_{y = (A,P) \in Y(\overline{\mathbb{F}}_p),\; \text{$A$ supersingular}}\{y\}.$$
The fact that there are finitely many supersingular points follows from \cite[Chapter V.4]{Silverman} in the case $D = M_2(\mathbb{Q})$, and from \cite[Theorem 5.4 and 6.4]{Milnemodp} in general. Note that Theorem 5.4 of op. cit. implies there is one supersingular isogeny class, and the double coset of Theorem 6.4 corresponding to this isogeny class can be identified with the class group of a particular order of $D$, which has finite cardinality.
\end{definition}


Given a point $x = (A_0,P_0) \in Y^{\mathrm{ord}}(\overline{\mathbb{F}}_p)$, $P_0$ a $\Gamma$-level structure, let $M^{\mathrm{ord}}(x)$ denote the formal completion of $Y^{\mathrm{ord}}$ at $x$, which is itself a formal scheme defined over $\mathrm{Spf}(W(k(x)))$, where $k(x)$ denotes the residue field of $x$ (a finite extension of $\mathbb{F}_p$) and $W(k)$ denotes the Witt vectors of a perfect field $k$ of characteristic $p$. 

\begin{theorem}[Main Theorem of \cite{KatzST}]\label{KatzSTtheorem}Let $x = (A_0,P_0) \in Y^{\mathrm{ord}}(\overline{\mathbb{F}}_p)$. There is a canonical isomorphism of formal schemes over $\mathrm{Spf}(W(k(A_0,P_0)))$. 
\begin{equation}\label{KatzSTisomorphism}\mathrm{Hom}_{\mathbb{Z}_p}(T_pA_0(\overline{\mathbb{F}}_p) \otimes_{\mathbb{Z}_p} T_pA_0(\overline{\mathbb{F}}_p),\hat{\mathbb{G}}_{m,\mathbb{Z}_p}) \cong M^{\mathrm{ord}}(A_0,P_0),
\end{equation}
where $\mathrm{Hom}_{\mathbb{Z}_p}$ denotes the set of homomorphisms in the category of $\mathbb{Z}_p$-modules. This induces an isomorphism on the adic generic fibers of these formal schemes.
\end{theorem}

We recall the definition of the Igusa tower. Recall we are using the notation of Convention \ref{formalYconvention} throughout this section, as well as our Convention \ref{idempotentconvention} regarding the notation $A[p^n]$ for a (false) elliptic curve $A$; in particular that $A[p^n]$ is half of the full $p^n$-torsion subgroup of $A$ in the false elliptic curve case. 

\begin{definition}[Igusa tower]\label{Igusadefinition}\begin{enumerate}
\item Let $\mu_{p^n,Y}$ denote the $p^n$-torsion group scheme of the multiplicative group scheme $\hat{\mathbb{G}}_{m,Y}$. \item Let $\mathcal{E}[p^n]^0$ denote the connected component of $\mathcal{E}[p^n]$. 
\item Define the \emph{Igusa tower} as
$$Y^{\mathrm{Ig}} := \varprojlim_n \mathrm{Isom}_{\mathrm{grp-sch}}(\mu_{p^n,Y},\mathcal{E}[p^n]^0).$$
Each term of the above inverse system is the group of isomorphisms in the category of group schemes, and is a formal scheme over $\mathrm{Spf}(\mathbb{Z}_p)$; the transition maps are given by the natural inclusions $\mu_{p^n,Y} \subset \mu_{p^{n+1},Y}$. Hence $Y^{\mathrm{Ig}}$ is a pro-formal scheme over $\mathrm{Spf}(\mathbb{Z}_p)$. (Recall that we use the notion of pro-category from \cite[Definition 3.1]{Scholze}.) In fact, $Y^{\mathrm{Ig}}$ is a formal scheme since it admits an pro-finite \'{e}tale map $Y^{\mathrm{Ig}} \rightarrow Y^{\mathrm{ord}}$ over $\mathrm{Spf}(\mathbb{Z}_p)$ (see (\ref{naturalIgprojection}) below) and thus is equal to its strong completion. 
\item $Y^{\mathrm{Ig}}$ represents the moduli problem classifying deformations of $p$-divisible groups $A[p^{\infty}]$ attached to ordinary (false) elliptic curves $A$ together with an isomorphism of $p$-divisible groups
$$\mu_{p^{\infty}} \cong \hat{A}[p^{\infty}]$$ 
where $\hat{A}[p^{\infty}] \subset A[p^{\infty}]$ denotes the connected component. (Recall we continue to work under Convention \ref{idempotentconvention}.) Note that this latter isomorphism is equivalent to an isomorphism of $\mathbb{Z}_p$-modules
$$\mathbb{Z}_p \xrightarrow{\sim} T_pA^{\text{\'{e}t}}$$
where $T_pA \twoheadrightarrow T_pA^{\text{\'{e}t}}$ denotes the Tate module of the \'{e}tale quotient of $A[p^{\infty}]$. (See \cite[(2.2) (4)]{Tate} for definitions of connected component and \'{e}tale quotients of finite flat group schemes and $p$-divisible groups.)
\end{enumerate}
\end{definition}

\begin{remark}\label{genericremark}Outside of this section, we will let $Y^{\mathrm{ord}}$ and $Y^{\mathrm{Ig}}$ instead denote the adic generic fibers $Y^{\mathrm{ord}} \times_{\mathrm{Spa}(\mathbb{Z}_p,\mathbb{Z}_p)}\mathrm{Spa}(\mathbb{Q}_p,\mathbb{Z}_p)$ and $Y^{\mathrm{Ig}} \times_{\mathrm{Spa}(\mathbb{Z}_p,\mathbb{Z}_p)}\mathrm{Spa}(\mathbb{Q}_p,\mathbb{Z}_p)$ of $Y^{\mathrm{ord}}$ in Definition \ref{Yorddefinition} and $Y^{\mathrm{Ig}}$ from Definition \ref{Igusadefinition}. 
\end{remark}

For $(A_0,P_0) \in Y^{\mathrm{ord}}(\overline{\mathbb{F}}_p)$, let
$$M^{\mathrm{Ig}}(A_0,P_0) = M(A_0,P_0) \times_{Y^{\mathrm{ord}}}Y^{\mathrm{Ig}},$$
where the product is taken in the category of formal schemes. This is a formal scheme over $\mathrm{Spf}(k(A_0,P_0))$. 

There natural projection $Y^{\mathrm{Ig}} \rightarrow Y^{\mathrm{ord}}$ given by 
\begin{equation}\label{naturalIgprojection}(A,P,\mathbb{Z}_p \xrightarrow{\sim} T_pA^{\text{\'{e}t}}) \mapsto (A,P)
\end{equation}
(where $P$ is a $\Gamma$-level structure). This is evidently a pro-finite \'{e}tale map of formal schemes over $\mathrm{Spec}(\mathbb{Z}_p)$ with \'{e}tale Galois group 
$$\mathrm{Aut}(T_pA^{\text{\'{e}t}}) \cong \mathrm{Aut}(\mathbb{Z}_p) = \mathbb{Z}_p^{\times}.$$

Note that $Y^{\mathrm{ord}} \rightarrow \mathrm{Spf}(\mathbb{Z}_p)$ is an open formal subscheme of the formal scheme obtained by $p$-adically completing the locally noetherian scheme $Y^+ \rightarrow \mathrm{Spec}(\mathbb{Z}_p)$, and thus is locally noetherian. Therefore $Y^{\mathrm{ord}} \rightarrow \mathrm{Spa}(\mathbb{Z}_p,\mathbb{Z}_p)$ is locally noetherian when viewed as an adic space and Definition \ref{Zphatdefinition} can be applied to it. Since $Y^{\mathrm{Ig}} \rightarrow Y^{\mathrm{ord}}$ is pro-finite \'{e}tale, then $Y^{\mathrm{Ig}} \in Y_{\text{\'{e}t}}^{\mathrm{ord}}$ and thus the pullback 
$$\hat{\mathbb{Z}}_{p,Y^{\mathrm{Ig}}} := \hat{\mathbb{Z}}_{p,Y^{\mathrm{ord}}}|_{Y^{\mathrm{Ig}}}$$
is well-defined. Recall $\mathcal{E} \rightarrow Y$ is the universal object, and let
$$\mathcal{E}|_{Y^{\mathrm{Ig}}} := \mathcal{E} \times_YY^{\mathrm{Ig}}$$
is the universal object on $Y^{\mathrm{Ig}}$; we let $\mathcal{E}|_{M^{\mathrm{Ig}}(A_0,P_0)}$ denote the restriction to $M^{\mathrm{Ig}}(A_0,P_0) \subset Y^{\mathrm{Ig}}$. Thus we have a canonical section 
$$e : \hat{\mathbb{Z}}_{p,Y^{\mathrm{Ig}}} \xrightarrow{\sim} T_p\mathcal{E}|_{Y^{\mathrm{Ig}}}^{\text{\'{e}t}} = T_p\mathcal{E}|_{Y^{\mathrm{Ig}}}(\overline{\mathbb{F}}_p).$$
 Note that this induces an isomorphism of sheaves of $\mathbb{Z}_p$-modules on $M^{\mathrm{Ig}}(A_0,P_0)$:
\begin{equation}\label{eisomorphism}e : \hat{\mathbb{Z}}_{p,M^{\mathrm{Ig}}(A_0,P_0)} \xrightarrow{\sim} T_p\mathcal{E}|_{M^{\mathrm{Ig}}(A_0,P_0)}(\overline{\mathbb{F}}_p) = T_pA_0(\overline{\mathbb{F}}_p)|_{M^{\mathrm{Ig}}(A_0,P_0)}, 
\end{equation}
the last equality again being given by Hensel's lemma. In particular, the cover $Y^{\mathrm{Ig}} \rightarrow Y^{\mathrm{ord}}$ trivializes on $M(A_0,P_0) \subset Y^{\mathrm{ord}}$:
$$Y^{\mathrm{Ig}} \times_{Y^{\mathrm{ord}}}M(A_0,P_0) = \bigsqcup_{e_0 : \mathbb{Z}_p \xrightarrow{\sim} T_pA_0(\overline{\mathbb{F}}_p)}M(A_0,P_0,e_0) \cong \bigsqcup_{e_0 : \mathbb{Z}_p \xrightarrow{\sim} T_pA_0(\overline{\mathbb{F}}_p)}M(A_0,P_0)|_{M^{\mathrm{Ig}}(A_0,P_0)},$$
where $M(A_0,P_0,e_0)$ denotes the formal completion of $M^{\mathrm{Ig}}(A_0,P_0)$ at $(A_0,P_0,e_0) \in M^{\mathrm{Ig}}(A_0,P_0)(\overline{\mathbb{F}}_p)$ and the last isomorphism is given by compoenent-wise isomorphisms $$M(A_0,P_0,e_0) \cong M(A_0,P_0).$$
Here, the isomorphism $M(A_0,P_0,e_0) \cong M(A_0,P_0)$ is given by $(A,P,e) \mapsto (A,P)$ with inverse $(A,P) \mapsto (A,P,e)$, where 
$$e : \mathbb{Z}_p \underset{\sim}{\xrightarrow{e_0}} T_pA_0(\overline{\mathbb{F}}_p) = T_pA^{\text{\'{e}t}}$$
the last equality coming from the fact that $A$ is a smooth lifting of $A_0$ and Hensel's lemma. 

As the automorphism group of the \'{e}tale quotient $\hat{\mathbb{Z}}_{p,Y^{\mathrm{Ig}}} \overset{e}{\cong} T_p\mathcal{E}|_{Y^{\mathrm{Ig}}}^{\text{\'{e}t}}$ of $T_p\mathcal{E}|_{Y^{\mathrm{ord}}}$ is $\mathbb{Z}_p^{\times}$, we have
\begin{equation}\label{IgusaGaloisgroup}\mathrm{Gal}(Y^{\mathrm{Ig}}/Y^{\mathrm{ord}}) = \mathbb{Z}_p^{\times}.
\end{equation}

\begin{definition}
\begin{enumerate}
\item The isomorphism $e$ from (\ref{eisomorphism}) induces a canonical isomorphism of formal schemes over $\mathrm{Spf}(W(k(A_0,P_0)))$:
\begin{equation}\label{Igusatriv}\begin{split}\bigsqcup_{y \in M^{\mathrm{Ig}}(A_0,P_0)(\overline{\mathbb{F}}_p)}\hat{\mathbb{G}}_{m,W(k(y))} &= \mathrm{Hom}_{\mathbb{Z}_p}(\hat{\mathbb{Z}}_{p,M^{\mathrm{Ig}}(A_0,P_0)},\hat{\mathbb{G}}_m) \\
&\overset{e}{\cong} \mathrm{Hom}_{\mathbb{Z}_p}(T_pA_0(\overline{\mathbb{F}}_p) \otimes_{\mathbb{Z}_p}T_pA_0(\overline{\mathbb{F}}_p)|_{M^{\mathrm{Ig}}(A_0,P_0)},\hat{\mathbb{G}}_m) \overset{(\ref{KatzSTisomorphism})}{\cong} M^{\mathrm{Ig}}(A_0,P_0).
\end{split}
\end{equation}
Here the disjoint union on the left runs over all points in $y \in M^{\mathrm{Ig}}(A_0,P_0)(\overline{\mathbb{F}}_p)$, each of which corresponds to an isomorphism class of a triple $(A_0,P_0,\mathbb{Z}_p \xrightarrow{\sim} T_pA_0(\overline{\mathbb{F}}_p))$. From this we also see that the residue field $k(y) = \overline{\mathbb{F}}_p$ and its ring of Witt vectors $W(k(y)) = W(\overline{\mathbb{F}}_p)$. 

\item Denote the inverse of the isomorphism (\ref{Igusatriv}) by
$$e^{-1} : M^{\mathrm{Ig}}(A_0,P_0) \xrightarrow{\sim}  \bigsqcup_{y \in M^{\mathrm{Ig}}(A_0,P_0)(\overline{\mathbb{F}}_p)}\hat{\mathbb{G}}_{m,W(\overline{\mathbb{F}}_p)}.$$

\item This gives mutually inverse isomorphisms of sheaves
$$e^*:  \mathcal{O}_{M^{\mathrm{Ig}}(A_0,P_0)} \xrightarrow{\sim}  \prod_{y \in M^{\mathrm{Ig}}(A_0,P_0)(\overline{\mathbb{F}}_p)}\mathcal{O}_{\hat{\mathbb{G}}_{m,W(\overline{\mathbb{F}}_p)}} , \hspace{.5cm} e^{-1,*} : \prod_{y \in M^{\mathrm{Ig}}(A_0,P_0)(\overline{\mathbb{F}}_p)}\mathcal{O}_{\hat{\mathbb{G}}_{m,W(\overline{\mathbb{F}}_p)}} \xrightarrow{\sim} \mathcal{O}_{M^{\mathrm{Ig}}(A_0,P_0)}.$$
\end{enumerate}
\end{definition}

\begin{definition}\label{STexpansiondefinition}Recall our coordinate $T \in \mathcal{O}_{\hat{\mathbb{G}}_{m,W(\overline{\mathbb{F}}_p)}}(\hat{\mathbb{G}}_{m,W(\overline{\mathbb{F}}_p)})$ fixed in Definition \ref{fixT}. 
\begin{enumerate}
\item The \emph{Serre-Tate coordinate} on $M^{\mathrm{Ig}}(A_0,P_0)$ is defined as 
$$q_{\mathrm{ST}} := 1 + (e^{-1})^*T \in \mathcal{O}_{M^{\mathrm{Ig}}(A_0,P_0)}(M^{\mathrm{Ig}}(A_0,P_0)).$$
\item Given $F \in \mathcal{O}_{M^{\mathrm{Ig}}(A_0,P_0)}(M^{\mathrm{Ig}}(A_0,P_0))$, we have 
$$e^*F \in\prod_{y \in M^{\mathrm{Ig}}(A_0,P_0)(\overline{\mathbb{F}}_p)}\hat{\mathcal{O}}_{\hat{\mathbb{G}}_{m,W(\overline{\mathbb{F}}_p)}}(\hat{\mathbb{G}}_{m,W(\overline{\mathbb{F}}_p)}) =  \prod_{y \in M^{\mathrm{Ig}}(A_0,P_0)(\overline{\mathbb{F}}_p)}W(\overline{\mathbb{F}}_p)\llbracket T\rrbracket.$$
\item When viewing 
$$e^*F \in \prod_{y \in M^{\mathrm{Ig}}(A_0,P_0)(\overline{\mathbb{F}}_p)}W(\overline{\mathbb{F}}_p)\llbracket T\rrbracket$$
we write $e^*F = e^*F(T)$. Pulling this back by $(e^{-1})^*$ we get a power series in $q_{\mathrm{ST}}-1$
$$F(q_{\mathrm{ST}}) := (e^{-1})^*(e^*F(T)) \in \mathcal{O}_{M^{\mathrm{Ig}}(A_0,P_0)}(M^{\mathrm{Ig}}(A_0,P_0)) = \prod_{y \in M^{\mathrm{Ig}}(A_0,P_0)(\overline{\mathbb{F}}_p)}W(\overline{\mathbb{F}}_p)\llbracket q_{\mathrm{ST}}-1\rrbracket.$$
We call this the \emph{Serre-Tate expansion of $F$ at $(A_0,P_0)$}. 

\end{enumerate}
\end{definition}

\begin{definition}\label{STrecenter}\begin{enumerate}
\item Taking the adic generic fiber of Definition \ref{STexpansiondefinition}, we get induced objects on the adic generic fiber $$M^{\mathrm{Ig}}(A_0,P_0)_{\eta} \rightarrow \mathrm{Spa}(W(k(A_0,P_0))[1/p],W(k(A_0,P_0)))$$
of $M^{\mathrm{Ig}}(A_0,P_0)$. We continue to denote the section induced by $q_{\mathrm{ST}}$ as $q_{\mathrm{ST}}$. 
\item Given $y \in M^{\mathrm{Ig}}(A_0,P_0)(\overline{\mathbb{F}}_p)$, let 
$$M^{\mathrm{Ig}}(A_0,P_0)_y$$ denote the unique geometric connected component of $M^{\mathrm{Ig}}(A_0,P_0)$ containing $y$, which we view as defined over $\mathrm{Spf}(W(\overline{\mathbb{F}}_p))$. Let 
$$M^{\mathrm{Ig}}(A_0,P_0)_{y,\eta}$$
denote its adic generic fiber, which we view over $\mathrm{Spa}(W(\overline{\mathbb{F}}_p)[1/p],W(\overline{\mathbb{F}}_p))$. 
\item Given an affinoid $(W(\overline{\mathbb{F}}_p)[1/p],W(\overline{\mathbb{F}}_p))$-algebra $(R,R^+)$ and any point 
$$y' \in M^{\mathrm{Ig}}(A_0,P_0)_{y,\eta}(R,R^+),$$
define
$$q_{\mathrm{ST},y'} := \frac{q_{\mathrm{ST}}|_{M^{\mathrm{Ig}}(A_0,P_0)_{y,\eta}}}{q_{\mathrm{ST}}(y')} \in \mathcal{O}_{M^{\mathrm{Ig}}(A_0,P_0)_{y,\eta}}^+(M^{\mathrm{Ig}}(A_0,P_0)_{y,\eta}) \hat{\otimes}_{W(\overline{\mathbb{F}}_p)}R^+$$
which is well-defined since $q_{\mathrm{ST}}(y') \in (R^+)^{\times}$. Here, $\hat{\otimes}$ denotes $p$-adically completed tensor product.  We call $q_{\mathrm{ST},y'}$ the \emph{Serre-Tate coordinate centered at $y'$}. 
\item We can thus expand functions on $M^{\mathrm{Ig}}(A_0,P_0)_{y,\eta}$ as a power series in the coordinate $q_{\mathrm{ST},y'}-1$ by performing the change of variables on the usual Serre-Tate expansion
$$q_{\mathrm{ST}} -1 \mapsto q_{\mathrm{ST},y'} -1$$
on the usual Serre-Tate expansion (which by Definition \ref{STexpansiondefinition} is a power series in the coordinate $q_{\mathrm{ST}} -1$). We call this power series in $q_{\mathrm{ST},y'} -1$ the \emph{Serre-Tate expansion centered at $y'$}. In all we have an isomorphism 
$$\mathcal{O}_{M^{\mathrm{Ig}}(A_0,P_0)_{y,\eta}}(M^{\mathrm{Ig}}(A_0,P_0)_{y,\eta}) \cong W(\overline{\mathbb{F}}_p)\llbracket q_{\mathrm{ST},y'}-1\rrbracket.$$ 
\end{enumerate}
\end{definition}

\subsection{Galois groups of Igusa towers}
\label{furtherIgusasection}

We now return to the category of adic spaces; all results of Section \ref{STreviewsection} carry over to the adic generic fiber. Per Remark \ref{genericremark}, we will let $Y^{\mathrm{ord}}$ and $Y^{\mathrm{Ig}}$ denote the adic generic fibers of the formal schemes $Y^{\mathrm{ord}}$ and $Y^{\mathrm{Ig}}$ from Section \ref{STreviewsection}. No ambiguity will arise as we work entirely on the adic generic fiber in this section. 

Retain the notation of Convention \ref{Yconvention}. Briefly view $Y$ and $Y_{\infty}$ as defined over $\mathrm{Spa}(\mathbb{Q}_p,\mathbb{Z}_p)$ again (see Convention \ref{basechangeconvention}). Let 
\begin{equation}\label{hatYinftyYinfty}\hat{Y}_{\infty} \sim Y_{\infty}
\end{equation}
denote the strong completion of $Y_{\infty}$ (see \cite[Definition 2.4.1, Proposition 2.4.4]{ScholzeWeinstein}); in particular, we have a canonical identification of underlying topological spaces $|\hat{Y}_{\infty}| = |Y_{\infty}|$, and the restriction of $\hat{\mathcal{O}}_{Y_{\infty}}$ to the adic site of $|Y_{\infty}|$ is equal to $\mathcal{O}_{\hat{Y}_{\infty}}$ (\cite[discussion before and after Definition 4.3]{Scholze}). By \cite{ScholzeTorsion}, $\hat{Y}_{\infty}$ is a perfectoid space after base changing to $\mathrm{Spa}(k,\mathcal{O}_k)$ as in Convention \ref{basechangeconvention}. Let
$$\pi_{\mathrm{HT}} : \hat{Y}_{\infty} \rightarrow \mathbb{P}^1$$
be the Hodge-Tate period map of \cite{ScholzeTorsion} (see also \cite[Theorem 2.8]{ChojeckiHansenJohansson}), which is a map of adic spaces over $\mathrm{Spa}(\mathbb{Q}_p,\mathbb{Z}_p)$. This gives an identification of sheaves on the adic site of $\hat{Y}_{\infty}$
$$\pi_{\mathrm{HT}}^*\mathcal{O}_{\mathbb{P}^1}(1) \cong \omega \otimes_{\mathcal{O}_Y}\mathcal{O}_{\hat{Y}_{\infty}}$$
where $\mathcal{O}_{\hat{Y}_{\infty}}$ denotes the adic structure sheaf on $\hat{Y}_{\infty}$ and $\omega$ is as in (\ref{omegaY}). 

Let $x,y \in \mathcal{O}_{\mathbb{P}^1}(1)(\mathbb{P}^1)$ be the usual projective coordinates, let 
$$V_x = \{x \neq 0\} \subset \mathbb{P}^1, \hspace{1cm} V_y = \{y \neq 0\} \subset \mathbb{P}^1,$$
let 
\begin{equation}\label{hatVxVy}\hat{\mathcal{V}}_x = \pi_{\mathrm{HT}}^{-1}(V_x) \subset |\hat{Y}_{\infty}| = |Y_{\infty}|, \hspace{1cm} \hat{\mathcal{V}}_y = \pi_{\mathrm{HT}}^{-1}(V_y) \subset |\hat{Y}_{\infty}| = |Y_{\infty}|
\end{equation}
and let
\begin{equation}\label{zHT}z_{\mathrm{HT}} := \pi_{\mathrm{HT}}^*\left(-\frac{x}{y}\right) \in \hat{\mathcal{O}}_{Y_{\infty}}(\hat{\mathcal{V}}_y).
\end{equation}
This is the \emph{Hodge-Tate period} (\cite{ScholzeTorsion}, \cite[Section 2]{ChojeckiHansenJohansson}). Also let
\begin{equation}\label{fraks}\frak{s} := \pi_{\mathrm{HT}}^*x \in \pi_{\mathrm{HT}}^*\mathcal{O}_{\mathbb{P}^1}(1)(\mathbb{P}^1) \cong \omega \otimes_{\mathcal{O}_Y}\mathcal{O}_{\hat{Y}_{\infty}}(\hat{Y}_{\infty}) .
\end{equation}
This is the \emph{fake Hasse invariant} (\cite{ScholzeTorsion}, \cite[Section 2.4]{ChojeckiHansenJohansson}). 

Let
\begin{equation}\label{VxVy} \hat{\mathcal{V}}_x \sim \mathcal{V}_x \subset Y_{\infty}, \hspace{1cm} \hat{\mathcal{V}}_y \sim \mathcal{V}_y \subset Y_{\infty}
\end{equation}
be the open subsets corresponding to $\hat{\mathcal{V}}_x \subset \hat{Y}_{\infty}$ and $\hat{\mathcal{V}}_y \subset \hat{Y}_{\infty}$ under $\hat{Y}_{\infty} \sim Y_{\infty}$. Since $Y_{\infty} \in Y_{\text{pro\'{e}t}}$ is perfectoid (over $\mathrm{Spa}(k,\mathcal{O}_k)$) and $\mathcal{V}_x \subset Y_{\infty}$ and $\mathcal{V}_y \subset Y_{\infty}$ are open subsets.

Since $\mathcal{V}_x = \{\frak{s} = \pi_{\mathrm{HT}}^*x \neq 0\}$, we have that 
\begin{equation}\label{fraksgenerator}\frak{s}|_{\hat{\mathcal{V}}_x} \in \omega\otimes_{\mathcal{O}_Y}\mathcal{O}_{\hat{Y}_{\infty}}(\hat{\mathcal{V}}_x)
\end{equation}
is a generator (i.e. nowhere vanishing).

\begin{convention}\label{zdefineconvention}\begin{enumerate}
\item We have a natural supply of open subsets of $\hat{Y}_{\infty}$ defined by $\{|z_{\mathrm{HT}}| > p^c\}$ for $c \in \mathbb{Q}$. Using $\hat{Y}_{\infty} \sim Y_{\infty}$ and the underlying identification $|\hat{Y}_{\infty}| = |Y_{\infty}|$, each of these open subsets defines an open subset of $Y_{\infty} \in Y_{\text{pro\'{e}t}}$. We will henceforth make a slight abuse of notation and let $\{|z_{\mathrm{HT}}| > p^c\}$ also denote the corresponding open subset of $Y_{\infty}$. 
\item Note also that 
\begin{equation}\label{hatVz}\hat{\mathcal{V}}_x = \{z_{\mathrm{HT}} \neq 0\}, \hspace{1cm} \hat{\mathcal{V}}_y = \{z_{\mathrm{HT}} \neq \infty\}.
\end{equation}
As $\hat{\mathcal{V}}_x \sim \mathcal{V}_x$ and $\hat{\mathcal{V}}_y \sim \mathcal{V}_y$, we will also similarly make an abuse of notation and write
\begin{equation}\label{Vz}\mathcal{V}_x = \{z_{\mathrm{HT}} \neq 0\}, \hspace{1cm} \mathcal{V}_y = \{z_{\mathrm{HT}} \neq \infty\}.
\end{equation}
\end{enumerate}
\end{convention}

\begin{definition}\label{mathcalYIgDefinition}\begin{enumerate}
\item Define the \emph{big Igusa tower} as
$$\mathcal{Y}^{\mathrm{Ig}} := \pi_{\mathrm{HT}}^{-1}(\{\infty\}) = \{z_{\mathrm{HT}} = \infty\} \subset Y_{\infty}.$$
Then $\mathcal{Y}^{\mathrm{Ig}}$ classifies isomorphism classes of quintuples $(A,P,e_1,e_2,i)$ where $A$ is a (false) elliptic curve, $P$ is a $\Gamma$-level structure, and $(e_1,e_2) : \mathbb{Z}_p^{\oplus 2} \xrightarrow{\sim} T_pA$ is a $\Gamma(p^{\infty})$-level structure such that $e_1 : \mathbb{Z}_p \xrightarrow{\sim} T_pA^0$ where $T_pA^0 \subset T_pA$ is the (rank 1) connected component, and $i$ is an $\mathcal{O}_D$-level structure.
\item The element $g \in GL_2(\mathbb{Q}_p)$ from (\ref{gdefinition}) acts on $\mathcal{Y}^{\mathrm{Ig}}$, giving an automorphism $g : \mathcal{Y}^{\mathrm{Ig}} \xrightarrow{\sim} \mathcal{Y}^{\mathrm{Ig}}$ which is a lift of relative Frobenius. Hence $\mathcal{Y}^{\mathrm{Ig}}$ is preperfectoid in the terminology of \cite[Definition 2.3.4]{ScholzeWeinstein}. Applying Proposition 2.3.6 of op. cit. to $\mathcal{Y}^{\mathrm{Ig}}$, we get an associated strong completion which by Proposition 2.4.4 of op. cit. satisfies
$$\hat{\mathcal{Y}}^{\mathrm{Ig}} \sim \mathcal{Y}^{\mathrm{Ig}},$$
where ``$\sim$'' is defined in Definition 2.4.1 of op. cit. The above relation between $\hat{\mathcal{Y}}^{\mathrm{Ig}}$ and $\mathcal{Y}^{\mathrm{Ig}}$ follows from Proposition 2.4.4 of op. cit., which also shows that $\hat{\mathcal{Y}}^{\mathrm{Ig}}$ is a perfectoid space over $\mathrm{Spa}(k,\mathcal{O}_k)$, recalling that $k$ from Definition \ref{kdefinition} is a perfectoid field. In particular, on underlying topological spaces we have a natural identification $|\hat{\mathcal{Y}}^{\mathrm{Ig}}| = |\mathcal{Y}^{\mathrm{Ig}}|$.

\item We note that $\hat{\mathcal{Y}}^{\mathrm{Ig}}$ also has an intrinsic definition as one of Mantovan's Igusa varieties, see \cite[p. 716]{CaraianiScholze} and \cite{Mantovan}. In particular, the definition of op. cit. is a formal scheme over $\mathrm{Spf}(W(\overline{\mathbb{F}}_p))$ where $W(\overline{\mathbb{F}}_p)$ is the Witt vectors of $\overline{\mathbb{F}}_p$, but the definition also works over $\mathrm{Spf}(\mathbb{Z}_p)$. The adic generic fiber of this formal scheme is our $\hat{\mathcal{Y}}^{\mathrm{Ig}}$. We will also consider the formal scheme version $\hat{\mathcal{Y}}^{\mathrm{Ig}}(0)^+$ later, see Definition \ref{hatmathcalYIgepsilonDefinition}.
\end{enumerate}
\end{definition}


We have a natural sequence of surjective pro\'{e}tale maps 
\begin{equation}\label{Igusacovers}\mathcal{Y}^{\mathrm{Ig}} \rightarrow Y^{\mathrm{Ig}} \rightarrow Y^{\mathrm{ord}}, \hspace{1cm} (A,P,e_1,e_2) \mapsto (A,P,e_2 \pmod{T_pA^0}) \mapsto (A,P),
\end{equation}
where $Y^{\mathrm{Ig}}$ is the adic generic fiber of $Y^{\mathrm{Ig}}$ from Definition \ref{Igusadefinition}, $P$ is a $\Gamma$-level structure, $e_2 \pmod{T_pA^0} : \mathbb{Z}_p \xrightarrow{\sim} T_pA^{\text{\'e{t}}}$, and $Y^{\mathrm{ord}}$ is the ordinary locus of $Y$ (see Definition \ref{Yorddefinition} and Remark \ref{genericremark}). 

Recall that $Y^{\mathrm{ord}} \subset Y$ is an open sub-adic space (see Definition \ref{Yorddefinition} and Remark \ref{genericremark}). Thus $Y^{\mathrm{ord}} \in Y_{\text{pro\'{e}t}}$ (see Convention \ref{objectconvention}). Thus (\ref{Igusacovers}) shows
\begin{equation}\label{YIgopen}Y^{\mathrm{Ig}}, \mathcal{Y}^{\mathrm{Ig}} \in Y_{\text{pro\'{e}t}}.
\end{equation} 

\begin{definition}Given $\gamma \in GL_2(\mathbb{Q}_p)$, let $\gamma^*$ denote pullback by $\gamma$.
\end{definition}

As $\gamma = \left(\begin{array}{ccc} a & b\\
c & d\\
\end{array}\right) \in GL_2(\mathbb{Q}_p)$ acts on $z_{\mathrm{HT}}$ by 
\begin{equation}\label{zHTtransformationproperty}\gamma^*z_{\mathrm{HT}} = \frac{dz_{\mathrm{HT}} + b}{cz_{\mathrm{HT}} + a}
\end{equation}
(see Proposition \ref{modulartransformationproposition} below or \cite[Section 2.4]{ChojeckiHansenJohansson}; note that our object $z_{\mathrm{HT}}$ is equal to $1/\frak{z}$ in the notation of loc. cit.), we have
\begin{equation}\label{BQp}\mathrm{Stab}_{GL_2(\mathbb{Q}_p)}(\{z_{\mathrm{HT}} = \infty\}) = \left\{\left(\begin{array}{ccc} * & * \\
0 & *\\
\end{array}\right) \in GL_2(\mathbb{Q}_p)\right\} =:  B(\mathbb{Q}_p),
\end{equation}
\begin{equation}\label{BGalois}\mathrm{Gal}(\mathcal{Y}^{\mathrm{Ig}}/Y^{\mathrm{ord}}) = \left\{\left(\begin{array}{ccc} * & * \\
0 & *\\
\end{array}\right) \in GL_2(\mathbb{Z}_p)\right\} =: B.
\end{equation}
Here given a group $G$ acting on a set $S$ and subset $S'\subset S$, $\mathrm{Stab}_G(S') \subset G$ denotes the stabilizer subgroup of $S'$. From the map $\mathcal{Y}^{\mathrm{Ig}} \rightarrow Y^{\mathrm{Ig}}$ of (\ref{Igusacovers}), we thus see 
$$\mathrm{Gal}(\mathcal{Y}^{\mathrm{Ig}}/Y^{\mathrm{Ig}}) = \left\{\left(\begin{array}{ccc} * & * \\
0 & 1\\
\end{array}\right) \in GL_2(\mathbb{Z}_p)\right\} =: B^1.$$
Thus, 
\begin{equation}\label{YIgGaloisgroup}\mathrm{Gal}(Y^{\mathrm{Ig}}/Y^{\mathrm{ord}}) = B/B^1 \cong \mathbb{Z}_p^{\times}.
\end{equation}
Hence from (\ref{IgusaGaloisgroup}), we see that the exact sequence of Galois groups corresponding to (\ref{Igusacovers}) is 
$$1 \rightarrow B^1 \rightarrow B \rightarrow \mathbb{Z}_p^{\times} \rightarrow 1.$$

Since $\hat{\mathcal{Y}}^{\mathrm{Ig}} \sim \mathcal{Y}^{\mathrm{Ig}}$, the conclusions of the previous paragraph all hold with $\hat{\mathcal{Y}}^{\mathrm{Ig}}$ in place of $\mathcal{Y}^{\mathrm{Ig}}$. 

\subsection{Formal models of Igusa towers}

Return to the situation of Convention \ref{Yconvention}, so that $Y = Y(\Gamma)$ where $\Gamma = \Gamma(N)$ for some $N \ge 4$ that is prime to $p$.  In this section we will define generalized big Igusa towers $\mathcal{Y}^{\mathrm{Ig}}(\epsilon)$ and their formal models $\mathcal{Y}^{\mathrm{Ig}}(\epsilon)^+$. These towers are roughly defined as the loci in $Y_{\infty}$ on which the maximal level of canonical subgroup that exists on $|\mathrm{Ha}| \ge p^{-\epsilon}$ is trivialized and satisfy $\mathcal{Y}^{\mathrm{Ig}}(\epsilon)^{(+)} \subset \mathcal{Y}^{\mathrm{Ig}}(\epsilon')^{(+)}$ if $\epsilon \le \epsilon'$ (where ``$(+)$'' denotes the optional presence of a ``$+$''), and $\mathcal{Y}^{\mathrm{Ig}}(0) = \mathcal{Y}^{\mathrm{Ig}}$ where $\mathcal{Y}^{\mathrm{Ig}}$ is the big Igusa tower of Definition \ref{mathcalYIgDefinition}. In particular, the $\mathcal{Y}^{\mathrm{Ig}}(\epsilon)$ give a system of overconvergent neighborhoods of $\mathcal{Y}^{\mathrm{Ig}}$. Throughout the paper, the term ``canonical subgroup'' will refer to the notion first introduced by Lubin and Katz (see \cite[Chapter 3]{Katzpamf} for the modular curve case, and Kassaei \cite{Kassaei} and Goren-Kassaei \cite{GorenKassaei} for more general bases such as Shimura curves). In particular, when the canonical subgroup exists, it is the unique characteristic 0 lift of the kernel of relative Frobenius on the special fiber. We will give precise references to the definitions we will need in the discussion below. Although we work in the infinite-level Shimura curve setting later, all canonical subgroups at infinite level that we consider will be pullbacks of canonical subgroups on finite level Shimura curves over complete DVRs (as constructed in op. cit.) along the maps $Y_{\infty} \rightarrow Y$, $Y_{\infty}^+ \rightarrow Y^+$, or $\hat{Y}_{\infty} \rightarrow Y$ or $\hat{Y}_{\infty}^+ \rightarrow Y^+$ (see Convention \ref{Yconvention}). 

\begin{convention}We follow the convention of \cite[Section 2.2]{ChojeckiHansenJohansson}, and view canonical subgroups of false elliptic curves as subgroups of $A[p^{\infty}]$ where $A[p^{\infty}]$ is the $p$-divisible group defined in Convention \ref{idempotentconvention} (obtained by applying the idempotent $e^1$ to the usual $p$-divisible group of $A$). This will be compatible with the notation $(A,P,e_1,e_2)$ defined in Definition \ref{YGamma+definition}.
\end{convention}

\begin{convention}\label{tameconvention}Henceforth, we will often suppress the $\Gamma$-level structure when considering points in $Y$ or $Y_{\infty}$ when no confusion should arise. 
\end{convention}

Recall the notation of Convention \ref{Yconvention}. 

\begin{definition}\label{universaldefinition}Let $\mathcal{E} \rightarrow Y_{\infty}$, $\mathcal{E}^+ \rightarrow Y_{\infty}^+$ and $\hat{\mathcal{E}}^+ \rightarrow \hat{Y}_{\infty}^+$ denote the pullbacks of the universal object (\ref{adicuniversalobject}), (\ref{integraluniversalobject}) and (\ref{formaluniversalobject}) along $Y_{\infty} \rightarrow Y$, $Y_{\infty}^+ \rightarrow Y^+$ and $\hat{Y}_{\infty}^+ \rightarrow Y^+$, respectively. Let $\mathcal{E}(\epsilon) \rightarrow Y_{\infty}(\epsilon)$ and $\mathcal{E}^+(\epsilon) \rightarrow Y_{\infty}^+(\epsilon)$ denote the restrictions of these universal objects to $Y_{\infty}(\epsilon) \subset Y_{\infty}$ and $Y_{\infty}^+(\epsilon) \subset Y_{\infty}^+$. 
\end{definition}

We define a natural function which describes the maximal level of canonical subgroup that exists on the formal scheme $Y^+(\epsilon)$. 

\begin{definition}Given $\epsilon \ge 0$, let $n(\epsilon) \in \mathbb{Z}_{\ge 0} \cup \{\infty\}$ be the maximum level of canonical subgroup that exists in all false elliptic curves $A$ with $|\mathrm{Ha}(A)| \ge p^{-\epsilon}$. By the maximality statement of \cite[Theorem 3.9]{GorenKassaei}, we have the equivalent definition 
\begin{equation}\label{nepsilondefinition}n(\epsilon) := \mathrm{max}(\lceil 1 - \log_p(\epsilon) - \log_p(p+1)\rceil, 0),
\end{equation}
where $\log_p(x) = \log_e(x)/\log_e(p)$ and $\log_e$ is the usual archimedean natural logarithm. In other words, $n(\epsilon) \in \mathbb{Z}_{\ge 0}$ is equal to the maximum of the following two integers:
\begin{enumerate}
\item the maximum $n \in \mathbb{Z}$ such that $\epsilon < 1/(p^{n-2}(p+1))$, 
\item 0.
\end{enumerate}
\end{definition}

\begin{definition}\label{canonicaldefinition}\begin{enumerate}
\item Let 
$$C_{n(\epsilon)}^+\subset \mathcal{E}^+(\epsilon)[p^{n(\epsilon)}]$$
be the level $n(\epsilon)$ canonical subgroup (i.e. of order $p^{n(\epsilon)}$) over $Y_{\infty}^+(\epsilon)$. Let $$C_{n(\epsilon)} \subset \mathcal{E}(\epsilon)[p^{n(\epsilon)}]$$
denote its adic generic fiber. 
\item Let 
$$(e_1,e_2) : \hat{\mathbb{Z}}_{p,Y_{\infty}}^{\oplus 2} \xrightarrow{\sim} T_p\mathcal{E}$$
denote the $\Gamma(p^{\infty})$-level structure on $Y_{\infty}$. 

\item Let
\begin{equation}\label{Drinfeld1}(e_1^+,e_2^+) : \mathbb{Z}_p^{\oplus 2} \rightarrow T_p\mathcal{E}^+(Y_{\infty})
\end{equation}
be the $\Gamma(p^{\infty})$-Drinfeld level on $\mathcal{E}^+ \rightarrow Y_{\infty}^+$. 
\item Given $\Gamma_1(p^{\infty})$ (Drinfeld)-level structures, 
$$e_i : \hat{\mathbb{Z}}_{p,Y_{\infty}} \rightarrow T_p\mathcal{E}, \hspace{.5cm} e_i^+ : \hat{\mathbb{Z}}_{p,Y_{\infty}} \rightarrow T_p\mathcal{E}^+, \hspace{.5cm} e_i^+ : \hat{\mathbb{Z}}_{p,Y_{\infty}} \rightarrow T_p\hat{\mathcal{E}}^+,$$
let 
\begin{equation}\label{eindefinition}\begin{split}e_{i,n} &= e_i \pmod{p^n} : (\mathbb{Z}/p^n)_{Y_{\infty}} \rightarrow \mathcal{E}[p^n],\\
e_{i,n}^+ &= e_i^+ \pmod{p^n} : (\mathbb{Z}/p^n)_{Y_{\infty}^+} \rightarrow \mathcal{E}^+[p^n],\\
e_{i,n}^+ &= e_i^+ \pmod{p^n} : (\mathbb{Z}/p^n)_{Y_{\infty}^+} \rightarrow \hat{\mathcal{E}}^+[p^n].
\end{split}
\end{equation}
\end{enumerate}
\end{definition}

As per Definition \ref{YGamma+definition}, we will let $\langle e_{i,n}\rangle$ (resp. $\langle e_{i,n}^+\rangle$) denote the $\mathcal{O}_D$-module generated by $e_{i,n}$ (resp. $e_{i,n}^+$). Thus $\langle e_{i,n}\rangle$ (resp. $\langle e_{i,n}^+\rangle$) is an $\mathcal{O}_D$-submodule of the $p^n$-torsion group scheme of $\mathcal{E}$ (resp. $\mathcal{E}^+$). 

We introduce the following generalized Igusa tower $\mathcal{Y}^{\mathrm{Ig}}(\epsilon)^+$ indexed by $\epsilon > 0$ in the valuation group of $\mathcal{O}_k$ (see Definition \ref{kdefinition}) and satisfying $\mathcal{Y}^{\mathrm{Ig}}(\epsilon)^+ \subset \mathcal{Y}^{\mathrm{Ig}}(\epsilon')$ if $\epsilon \le \epsilon'$. In particular, $\mathcal{Y}^{\mathrm{Ig}}(0) = \mathcal{Y}^{\mathrm{Ig}}$ from Definition \ref{mathcalYIgDefinition}. 

\begin{definition}\label{plusdefinitions}\begin{enumerate}
\item Define the pro-formal scheme
\begin{equation}\label{Uplus}\mathcal{Y}^{\mathrm{Ig}}(\epsilon)^+ := (e_{1,n(\epsilon)}^+)^{-1}(C_{n(\epsilon)}^+) \cap Y_{\infty}^+(\epsilon),
\end{equation}
which is defined over $\mathrm{Spf}(\mathcal{O}_k)$ (where $k$ is as in Definition \ref{kdefinition}). 

\item Note that $Y_{\infty}^+(\epsilon)$ is the preimage of the open formal subscheme $Y^+(\epsilon) \subset Y$ under the map $Y_{\infty}^+ \rightarrow Y^+$. Also, $e_{1,n(\epsilon)}^{-1}(C_{n(\epsilon)}^+)$ is the inverse image under $e_{1,n}^+$ of the sub-group scheme $C_{n(\epsilon)}^+ \subset \mathcal{E}^+(\epsilon)[p^{n(\epsilon)}]$. Note that this inverse image is open and closed when $\epsilon > 0$, since $C_{n(\epsilon)}^+$ is the canonical subgroup and so in particular a connected component over $Y_{\infty}^+(\epsilon)$ of the group scheme $\mathcal{E}^+[p^{n(\epsilon)}]$. Hence $\mathcal{Y}^{\mathrm{Ig}}(\epsilon)^+ \subset Y_{\infty}^+(\epsilon)$ is an open sub-pro formal scheme when $\epsilon > 0$. 

\item Define the pro-adic space
\begin{equation}\label{U}\mathcal{Y}^{\mathrm{Ig}}(\epsilon) := e_{1,n(\epsilon)}^{-1}(C_{n(\epsilon)}) \cap Y_{\infty}(\epsilon)
\end{equation}
which is defined over $\mathrm{Spa}(k,\mathcal{O}_k)$. Since $\mathcal{Y}^{\mathrm{Ig}}(\epsilon)$ is the adic generic fiber of $\mathcal{Y}^{\mathrm{Ig}}(\epsilon)^+$, it is an open sub-pro adic space $\mathcal{Y}^{\mathrm{Ig}}(\epsilon) \subset Y_{\infty}$ when $\epsilon > 0$. Since $Y_{\infty}(\epsilon) \in Y_{\text{pro\'{e}t}}$ and $e_{1,n(\epsilon)}^{-1}(C_{n(\epsilon)})$ is open when $\epsilon > 0$, then $\mathcal{Y}^{\mathrm{Ig}}(\epsilon) \in Y_{\text{pro\'{e}t}}$ in this case. 

\item For $\epsilon \ge 0$, the above definitions show
$$\mathcal{Y}^{\mathrm{Ig}}(\epsilon) \in Y_{\text{pro\'{e}t}}.$$

\item Note also that 
$$\mathcal{Y}^{\mathrm{Ig}}(0) = \mathcal{Y}^{\mathrm{Ig}} \in Y_{\text{pro\'{e}t}}$$
by Definition \ref{mathcalYIgDefinition} and (\ref{YIgopen}). Note that this pro-adic space is defined over $\mathrm{Spa}(\mathbb{Q}_p,\mathbb{Z}_p)$, but per Convention \ref{basechangeconvention} we will often consider it as defined over $\mathrm{Spa}(k,\mathcal{O}_k)$. 

\end{enumerate}
\end{definition}

We will later show that $\mathcal{Y}^{\mathrm{Ig}}(\epsilon) \in Y_{\text{pro\'{e}t}}$ is affinoid perfectoid over $\mathrm{Spa}(k,\mathcal{O}_k)$ in the sense of \cite[Definition 4.3]{Scholze}, see Lemma \ref{Uaffinoidperfectoidlemma}.

\subsection{Strong completions of formal models of Igusa towers}\label{strongformalsection}

Now base change back to $\mathrm{Spa}(k,\mathcal{O}_k)$ (per Convention \ref{basechangeconvention}). In the next few sections (Sections \ref{strongformalsection} through  \ref{spreadingoutsection}) we will study formal schemes which arise as strong completions of the pro-formal schemes introduced in (\ref{Uplus}).


\begin{definition}\label{hatmathcalYIgepsilonDefinition}
\begin{enumerate}
\item For any $n \in \mathbb{Z}_{\ge 0}$, in the notation of Section \ref{formalShimurasection} and letting $\Gamma = \Gamma(N)$ as in Convention \ref{Yconvention},  in the notation of Section \ref{formalShimurasection} let 
$$Y_n^+ := Y(\Gamma \cap \Gamma(p^n))^+$$
(which is a scheme over $\mathrm{Spec}(\mathbb{Z}_p)$) and 
$$\hat{Y}_n^+ := \widehat{Y(\Gamma \cap \Gamma(p^n))}^+$$
(which is a formal scheme over $\mathrm{Spf}(\mathbb{Z}_p)$). 
\item Let $\hat{\mathcal{E}}^+ \rightarrow \hat{Y}^+$ be the universal object of (\ref{formaluniversalobject}). Recall $C_{n(\epsilon)}^+ \subset \hat{\mathcal{E}}^+[p^n]$ denotes the canonical subgroup of order $p^{n(\epsilon)}$ (Definition \ref{canonicaldefinition} (1)), and recall $e_{i,n}$ from (\ref{eindefinition}) and $n(\epsilon)$ from (\ref{nepsilondefinition}). Write 
$$Y_n^{\mathrm{Ig}}(\epsilon)^+ := \left(Y^+(\epsilon) \times_{\hat{Y}^+}\hat{Y}_n^+\right) \cap e_{1,n}^{-1}(C_{n(\epsilon)}^+),$$
the fiber product being taken in the category of formal schemes. Thus, $Y_n^{\mathrm{Ig}}(\epsilon)^+$ is formal scheme over $\mathrm{Spf}(k,\mathcal{O}_k)$. As  $Y^+(\epsilon)$, $\hat{Y}^+$, $\hat{Y}_n^+$ and $C_{n(\epsilon)}^+$ are formal completions of locally noetherian schemes along locally noetherian closed subschemes, each $Y_n^{\mathrm{Ig}}(\epsilon)^+$ is a locally noetherian formal scheme. Recall the pro-formal scheme $\mathcal{Y}^{\mathrm{Ig}}(\epsilon)^+$ from (\ref{Uplus}). Then 
\begin{equation}\label{firstpresentation}\mathcal{Y}^{\mathrm{Ig}}(\epsilon)^+ = \varprojlim_n Y_n^{\mathrm{Ig}}(\epsilon)^+.\end{equation}
\item Taking an affine cover of $\mathcal{Y}^{\mathrm{Ig}}(\epsilon)^+$ (which amounts to inverse systems of affine covers of each locally noetherian formal scheme $Y_n^{\mathrm{Ig}}(\epsilon)^+$; each member of such a cover is of the form $\mathrm{Spa}(A,A)$ where $A$ admits a ring of definition with finitely generated ideal of definition in the sense of \cite[Definition 2.1.1]{ScholzeWeinstein}), applying Proposition 2.4.2 of op. cit. to each member of this cover and then gluing the resulting affine formal schemes, we obtain a formal scheme over $\mathrm{Spf}(\mathcal{O}_k)$
$$\hat{\mathcal{Y}}^{\mathrm{Ig}}(\epsilon)^+ \sim \mathcal{Y}^{\mathrm{Ig}}(\epsilon)^+$$
where the relation ``$\sim$'' is as in Definition 2.4.1 of op. cit. Since $\mathcal{Y}^{\mathrm{Ig}}(\epsilon)^+ \subset Y_{\infty}^+$ is an open sub-pro-formal scheme if $\epsilon > 0$ (see Definition \ref{plusdefinitions}), $\hat{\mathcal{Y}}^{\mathrm{Ig}}(\epsilon)^+ \subset \hat{Y}_{\infty}^+$ is an open sub-formal scheme if $\epsilon > 0$. 

\item Let $Y_n^{\mathrm{Ig}}(\epsilon)$ denote the adic generic fiber (an adic space over $\mathrm{Spa}(k,\mathcal{O}_k)$) of $Y_n^{\mathrm{Ig}}(\epsilon)^+$. Recall the pro-adic space $\mathcal{Y}^{\mathrm{Ig}}(\epsilon)$ from (\ref{U}). Then
\begin{equation}\label{secondpresentation}\mathcal{Y}^{\mathrm{Ig}}(\epsilon) = \varprojlim_n Y_n^{\mathrm{Ig}}(\epsilon).
\end{equation}

\item Note that in the above description (\ref{secondpresentation}), each $Y_n^{\mathrm{Ig}}(\epsilon)$ is locally of topologically finite type, and so locally is of the form $\mathrm{Spf}(A,A^+)$ where $A$ admits a ring of definition with finitely generated ideal of definition (see \cite[Definition 2.1.1]{ScholzeWeinstein}). Thus, applying Proposition 2.4.2 of op. cit. to an affinoid open cover of $\varprojlim_n Y_n^{\mathrm{Ig}}(\epsilon)$ and then gluing, we have associated adic spaces $\hat{\mathcal{Y}}^{\mathrm{Ig}}(\epsilon)$ over $\mathrm{Spa}(k,\mathcal{O}_k)$ satisfying
\begin{equation}\label{hatmathcalYIg}\hat{\mathcal{Y}}^{\mathrm{Ig}}(\epsilon) \sim \mathcal{Y}^{\mathrm{Ig}}(\epsilon).
\end{equation}
By Proposition 2.4.4 of op. cit. this is the strong completion (in the sense of the discussion after Proposition 2.3.6 of op. cit.) of the affinoid perfectoid $\mathcal{Y}^{\mathrm{Ig}}(\epsilon)$. Here, ``$\sim$'' is the relation of Section 2.3 of op. cit.

\item We note that, by construction, $\hat{\mathcal{Y}}^{\mathrm{Ig}}(\epsilon)^+$ over $\mathrm{Spa}(\mathcal{O}_k,\mathcal{O}_k)$ is a formal model of the adic space $\hat{\mathcal{Y}}^{\mathrm{Ig}}(\epsilon)$ over $\mathrm{Spa}(k,\mathcal{O}_k)$, in the terminology of Definition \ref{formalmodeldefinition}.
\end{enumerate}
\end{definition}

By the discussion of Definition \ref{hatmathcalYIgepsilonDefinition}, each $Y_n^{\mathrm{Ig}}(\epsilon)$ is a locally noetherian adic space. Thus, by  (\ref{secondpresentation}) and (\ref{hatmathcalYIg}) and \cite[Theorem 2.4.7]{ScholzeWeinstein} (which is a restatement of \cite[Theorem 7.17]{Scholzeperf}), we have an equivalence of the \'{e}tale topos $\hat{\mathcal{Y}}^{\mathrm{Ig}}(\epsilon)_{\text{\'{e}t}}$ with the projective limit of the \'{e}tale topoi $Y_n^{\mathrm{Ig}}(\epsilon)_{\text{\'{e}t}}$.

\begin{definition}\begin{enumerate}
\item Let $\hat{\mathbb{Z}}_{p,\hat{\mathcal{Y}}^{\mathrm{Ig}}(\epsilon)}$ denote the sheaf on $\hat{\mathcal{Y}}^{\mathrm{Ig}}(\epsilon)_{\text{\'{e}t}}$ corresponding under this equivalence of topoi to the pullback of any $\hat{\mathbb{Z}}_{p,\hat{Y}_n^{\mathrm{Ig}}(\epsilon)}$ (defined in Definition \ref{Zphatdefinition} on the site $Y_n^{\mathrm{Ig}}(\epsilon)_{\text{\'{e}t}}$) to the projective limit of the \'{e}tale topoi $Y_n^{\mathrm{Ig}}(\epsilon)_{\text{\'{e}t}}$. 

\item Similarly, using $\hat{Y}_{\infty} \sim \varprojlim_n Y_n$ and remarking that each $Y_n$ is a locally noetherian adic space, we have an equivalence between the \'{e}tale topos $\hat{Y}_{\infty,\text{\'{e}t}}$ and the projective limit of \'{e}tale topoi $Y_{n,\text{\'{e}t}}$. We define $\hat{\mathbb{Z}}_{p,\hat{Y}_{\infty}}$ to be the sheaf corresponding under this equivalence to the pullback of any $\hat{\mathbb{Z}}_{p,\hat{Y}_n}$ (defined in Definition \ref{Zphatdefinition} on the site $Y_{n,\text{\'{e}t}}$) to the projective limit of the \'{e}tale topoi $Y_{n,\text{\'{e}t}}$. 
\end{enumerate}
\end{definition}

We will often make use of the following Convention from now on. 

\begin{convention}\label{Gammaconvention}\begin{enumerate}
\item Given a sheaf $\mathcal{F}$ on any site $\mathcal{S}$, we will sometimes let $\mathbf{\Gamma}(\mathcal{F})$ denote the global sections of $\mathcal{F}$. This $\mathbf{\Gamma}$ has nothing to do with the level $\Gamma$ appearing in Convention \ref{Yconvention}, or with the Galois group $\Gamma_K$ in (\ref{GammaK}) below. On the other hand, when we work with a sheaf $\mathcal{F}$ on $\mathcal{S}$ and consider various objects $U \in \mathrm{Obj}(\mathcal{S})$, we will use the usual notation $\mathcal{F}(U)$ to denote the sections on $U$. 
\item Given a point $\mathbf{y}$ of $\mathcal{S}$, we will let $\mathcal{F}(\mathbf{y})$ denote the evaluation of $\mathcal{F}$ to $\mathbf{y}$, provided this evaluation is well-defined (as is often the case when $\mathcal{F}$ is a structure sheaf or a period sheaf $\mathbf{O}$ on the pro\'{e}tale site as in Section \ref{periodsheavessection} (e.g. $\mathbf{O} = \mathcal{O}^{(+)}, \mathbb{B}_{\mathrm{dR}}^{(+)}, \mathcal{O}\mathbb{B}_{\mathrm{dR}}^{(+)}$), or $\mathcal{F}$ is a sheaf of $\mathbf{O}$-modules for such an $\mathbf{O}$).
\end{enumerate}
\end{convention}

\subsection{The overconvergent neighborhood}\label{overconvergentsection}

We continue to use the notation of Convention \ref{Yconvention}, Definition \ref{canonicaldefinition}, and Definition \ref{hatmathcalYIgepsilonDefinition}. From the inclusion $\mathcal{Y}^{\mathrm{Ig}}(0) \subset \mathcal{Y}^{\mathrm{Ig}}(\epsilon)$ (resp. $\hat{\mathcal{Y}}^{\mathrm{Ig}}(0) \subset \hat{\mathcal{Y}}^{\mathrm{Ig}}(\epsilon)$) for every $\epsilon \ge 0$, we get a natural maps of pro-adic spaces over $\mathrm{Spa}(k,\mathcal{O}_k)$
$$\mathcal{Y}^{\mathrm{Ig}}(0) \rightarrow \varprojlim_{0 < \epsilon < p/(p+1)}\mathcal{Y}^{\mathrm{Ig}}(\epsilon), \hspace{1cm} \hat{\mathcal{Y}}^{\mathrm{Ig}}(0) \rightarrow \varprojlim_{0 < \epsilon < p/(p+1)}\hat{\mathcal{Y}}^{\mathrm{Ig}}(\epsilon)$$
where the transition maps are given by the inclusions $\mathcal{Y}^{\mathrm{Ig}}(\epsilon) \subset \mathcal{Y}^{\mathrm{Ig}}(\epsilon')$ and $\hat{\mathcal{Y}}^{\mathrm{Ig}}(\epsilon) \subset \hat{\mathcal{Y}}^{\mathrm{Ig}}(\epsilon')$ for $\epsilon \le \epsilon'$. 

The $p$-adic completion of the inverse limit on the right-hand sides of these maps has the natural structure of an adic space. 

\begin{proposition}There is an adic space $\hat{\mathcal{Y}}^{\mathrm{Ig}}(0)^{\dagger}$ over $\mathrm{Spa}(k,\mathcal{O}_k)$ with
\begin{equation}\label{thirdpresentation}\hat{\mathcal{Y}}^{\mathrm{Ig}}(0)^{\dagger} \sim \varprojlim_{0 < \epsilon < p/(p+1)} \mathcal{Y}^{\mathrm{Ig}}(\epsilon) \sim \varprojlim_{0 < \epsilon < p/(p+1)} \hat{\mathcal{Y}}^{\mathrm{Ig}}(\epsilon),
\end{equation}
where ``$\sim$'' denotes the relation of \cite[Definition 2.4.1]{ScholzeWeinstein}. This induces natural maps of adic spaces 
\begin{equation}\label{daggermaps}i^{\dagger} : \hat{\mathcal{Y}}^{\mathrm{Ig}}(0) \rightarrow \hat{\mathcal{Y}}^{\mathrm{Ig}}(0)^{\dagger}, \hspace{1cm} \hat{\mathcal{Y}}^{\mathrm{Ig}}(0)^{\dagger} \rightarrow \hat{\mathcal{Y}}^{\mathrm{Ig}}(\epsilon), \hspace{1cm}\hat{\mathcal{Y}}^{\mathrm{Ig}}(0)^{\dagger} \rightarrow \hat{Y}_{\infty}
\end{equation}
over $\mathrm{Spa}(k,\mathcal{O}_k)$ for all $0 < \epsilon < p/(p+1)$, such that the composition of the second map of (\ref{daggermaps}) with the natural inclusion $\hat{\mathcal{Y}}^{\mathrm{Ig}}(\epsilon) \subset \hat{Y}_{\infty}$ is equal to the third map. 
\end{proposition}

\begin{proof}From (\ref{hatmathcalYIg}) we have $\hat{\mathcal{Y}}^{\mathrm{Ig}}(\epsilon) \sim \mathcal{Y}^{\mathrm{Ig}}(\epsilon)$ for each $\epsilon$. Thus, the second ``$\sim$'' in (\ref{thirdpresentation}) follows, and it suffices to show the existence of $\hat{\mathcal{Y}}^{\mathrm{Ig}}(0)^{\dagger}$ satisfying the first ``$\sim$''. For this, recall from the discussion in Definition \ref{hatmathcalYIgepsilonDefinition} that each $\mathcal{Y}^{\mathrm{Ig}}(\epsilon) = \varprojlim_n Y_n^{\mathrm{Ig}}(\epsilon)$ where each $Y_n^{\mathrm{Ig}}(\epsilon)$ is an adic space over $\mathrm{Spa}(k,\mathcal{O}_k)$ that is locally topologically of finite type, and so locally is of the form $\mathrm{Spa}(A,A^+)$ where $A$ admits a ring of definition with a finitely generated ideal of definition (in the sense of \cite[Definition 2.11]{ScholzeWeinstein}). Hence, applying Proposition 2.4.2 of op. cit. to an affinoid cover of the double inverse limit
\begin{equation}\label{doubleinversepresentation}\varprojlim_{0 < \epsilon < p/(p+1)}\mathcal{Y}^{\mathrm{Ig}}(\epsilon) = \varprojlim_{0 < \epsilon < p/(p+1)}\varprojlim_n Y_n^{\mathrm{Ig}}(\epsilon)
\end{equation}
and then gluing, we get an adic space $\hat{\mathcal{Y}}^{\mathrm{Ig}}(0)^{\dagger}$ over $\mathrm{Spa}(k,\mathcal{O}_k)$ satisfying the first ``$\sim$'' of (\ref{thirdpresentation}). This finishes the proof of (\ref{thirdpresentation}). 

Now we construct the maps (\ref{daggermaps}). Note that we have inclusions $\hat{\mathcal{Y}}^{\mathrm{Ig}}(0) \subset \hat{\mathcal{Y}}^{\mathrm{Ig}}(\epsilon)$ for every $\epsilon > 0$ (see Definition \ref{hatmathcalYIgepsilonDefinition}), which are compatible with the inclusions $\hat{\mathcal{Y}}^{\mathrm{Ig}}(\epsilon) \subset \hat{\mathcal{Y}}^{\mathrm{Ig}}(\epsilon')$ for every $\epsilon \le \epsilon'$. Hence we have a map $\hat{\mathcal{Y}}^{\mathrm{Ig}}(0) \rightarrow \varprojlim_{0 < \epsilon < p/(p+1)} \hat{\mathcal{Y}}^{\mathrm{Ig}}(\epsilon)$.  This extends to the $p$-adic completion, which by (\ref{thirdpresentation}) gives the first map of (\ref{daggermaps}). For the second map of (\ref{daggermaps}), first note that there is a natural map of pro-adic spaces $\varprojlim_{0 < \epsilon < p/(p+1)} \hat{\mathcal{Y}}^{\mathrm{Ig}}(\epsilon) \rightarrow \hat{\mathcal{Y}}^{\mathrm{Ig}}(\epsilon)$ for every $0 < \epsilon < p/(p+1)$, given by projection to the $\epsilon$-term of the inverse limit. These extend to the $p$-adic completion, and so from (\ref{thirdpresentation}) we get the map of adic spaces $\hat{\mathcal{Y}}^{\mathrm{Ig}}(0)^{\dagger} \rightarrow \hat{\mathcal{Y}}^{\mathrm{Ig}}(\epsilon)$ over $\mathrm{Spa}(k,\mathcal{O}_k)$. Composing this with the natural inclusion $\hat{\mathcal{Y}}^{\mathrm{Ig}}(\epsilon) \subset \hat{Y}_{\infty}$, we get the map of adic spaces $\hat{\mathcal{Y}}^{\mathrm{Ig}}(0)^{\dagger} \rightarrow \hat{Y}_{\infty}$ over $\mathrm{Spa}(k,\mathcal{O}_k)$. Note that since the composition $\hat{\mathcal{Y}}^{\mathrm{Ig}}(\epsilon) \subset \hat{\mathcal{Y}}^{\mathrm{Ig}}(\epsilon') \subset \hat{Y}_{\infty}$ is equal to $\hat{\mathcal{Y}}^{\mathrm{Ig}}(\epsilon) \subset \hat{Y}_{\infty}$ for every $\epsilon \le \epsilon'$, the map $\hat{\mathcal{Y}}^{\mathrm{Ig}}(0)^{\dagger} \rightarrow \hat{Y}_{\infty}$ does not depend on the $\epsilon$ used in its construction. This gives the third map of (\ref{daggermaps}). 

\end{proof}

One may think of $\hat{\mathcal{Y}}^{\mathrm{Ig}}(0)^{\dagger}$ as a kind of ``overconvergent neighborhood'' of $\hat{\mathcal{Y}}^{\mathrm{Ig}}(0)$ inside $\hat{Y}_{\infty}$. However, we will soon show that $\hat{\mathcal{Y}}^{\mathrm{Ig}}(0)^{\dagger}$ is equal to $\hat{\mathcal{Y}}^{\mathrm{Ig}}(0)$ (see (\ref{daggerequalities})). First, we define a natural contraction $\varPhi : \hat{\mathcal{Y}}^{\mathrm{Ig}}(0)^{\dagger} \rightarrow \hat{\mathcal{Y}}^{\mathrm{Ig}}(0)$ via the universal property of the universal object $\mathcal{E}(0)^{\dagger} \rightarrow \hat{\mathcal{Y}}^{\mathrm{Ig}}(0)$ (see (\ref{universalcanonicaldagger}) below). This map will be the inverse of $i^{\dagger}$ from (\ref{daggermaps}). 

Recall the notation of Definition \ref{universaldefinition}, (\ref{nepsilondefinition}) and Definition \ref{canonicaldefinition}. From (\ref{U}), the level structure $e_{1,n(\epsilon)} : \mathcal{Y}^{\mathrm{Ig}}(\epsilon) \rightarrow \mathcal{E}(\epsilon)[p^{n(\epsilon)}]$ trivializes the order-$p^{n(\epsilon)}$ canonical subgroup $C_{n(\epsilon)}$. It is clear that if $\epsilon \le \epsilon'$, then in the notation of (\ref{eindefinition}),
\begin{equation}\label{e1ncompatible}e_{1,n(\epsilon')}|_{\mathcal{Y}^{\mathrm{Ig}}(\epsilon)} = e_{1,n(\epsilon)} : \mathcal{Y}^{\mathrm{Ig}}(\epsilon) \rightarrow \mathcal{E}(\epsilon)[p^{n(\epsilon)}].
\end{equation}

\begin{definition}\begin{enumerate}
\item Define the fiber product adic spaces 
\begin{equation}\label{universalcanonicaldagger}\mathcal{E}(0)^{\dagger} := \hat{\mathcal{E}} \times_{\hat{Y}_{\infty}} \hat{\mathcal{Y}}^{\mathrm{Ig}}(0)^{\dagger}, \hspace{1cm} C_{n(\epsilon)}(0)^{\dagger} := C_{n(\epsilon)}(\epsilon) \times_{\hat{\mathcal{Y}}^{\mathrm{Ig}}(\epsilon)}\hat{\mathcal{Y}}^{\mathrm{Ig}}(0)^{\dagger} \subset \mathcal{E}(0)^{\dagger}[p^{n(\epsilon)}].
\end{equation}
These are well-defined by (\ref{thirdpresentation}) and \cite[Proposition 2.4.3]{ScholzeWeinstein}.  Note also that as $\epsilon \rightarrow 0$ in the valuation group of $\mathcal{O}_k$ (see Definition \ref{kdefinition}), $n(\epsilon)$ ranges over all non-negative integers, i.e. for every $n \in \mathbb{Z}_{\ge 0}$ there exists $\epsilon > 0$ with $n(\epsilon) = n$. Hence $C_n(0)^{\dagger}$ is well-defined for every $n \in \mathbb{Z}_{\ge 0}$. 
\item Thus, from the second map of (\ref{thirdpresentation}) and the compatibility (\ref{e1ncompatible}), we get a trivialization
\begin{equation}\label{e1trivialization}e_1 : \hat{\mathbb{Z}}_{p,\hat{\mathcal{Y}}^{\mathrm{Ig}}(0)^{\dagger}} \xrightarrow{\sim} \varprojlim_n C_n(0)^{\dagger} \subset T_p\mathcal{E}(0)^{\dagger}.
\end{equation}
Here, $\hat{\mathbb{Z}}_{p,\hat{\mathcal{Y}}^{\mathrm{Ig}}(0)^{\dagger}}$ is defined using Definition \ref{Zphatdefinition}, (\ref{thirdpresentation}), (\ref{doubleinversepresentation}) and Theorem 2.4.7 of op. cit. (or see \cite[Theorem 7.17]{Scholzeperf}). That is, since each $Y_n^{\mathrm{Ig}}(\epsilon)$ is a strongly noetherian adic space, by op. cit. we have an equivalence of the \'{e}tale topos $\hat{Y}^{\mathrm{Ig}}(0)_{\text{\'{e}t}}^{\mathrm{Ig}}$ with the projective limit of the \'{e}tale topoi $Y_n^{\mathrm{Ig}}(\epsilon)_{\text{\'{e}t}}$. Then $\hat{\mathbb{Z}}_{p,\hat{\mathcal{Y}}^{\mathrm{Ig}}(0)^{\dagger}}$ is the sheaf on $\hat{Y}^{\mathrm{Ig}}(0)_{\text{\'{e}t}}^{\dagger}$ corresponding under this equivalence of topoi to the pullback of any $\hat{\mathbb{Z}}_{p,\hat{Y}_n^{\mathrm{Ig}}(\epsilon)}$ (defined in Definition \ref{Zphatdefinition} on the site $Y_n^{\mathrm{Ig}}(\epsilon)_{\text{\'{e}t}}$) to the projective limit of the \'{e}tale topoi $Y_n^{\mathrm{Ig}}(\epsilon)_{\text{\'{e}t}}$. 

\item Note in (\ref{e1trivialization}) that $\mathcal{E}(0)^{\dagger}$ also inherits a natural (prime to $p$) $\Gamma$-level structure from the $\Gamma$-level structure on $\hat{E} \rightarrow \hat{Y}_{\infty}$ (see Convention \ref{Yconvention}). In keeping with Convention \ref{tameconvention}, we continue to suppress the prime to $p$ level structure $\Gamma$ as it will play a negligible role in the our arguments.
\end{enumerate}
\end{definition}

Recall the universal object $\mathcal{E}(0) \rightarrow \hat{\mathcal{Y}}^{\mathrm{Ig}}(0)$, i.e. the universal ordinary (false) elliptic curve with $\mathcal{O}_D$-endomorphism structure, $\Gamma$-level structure and trivialization 
$$e_1 : \hat{\mathbb{Z}}_{p,\hat{\mathcal{Y}}^{\mathrm{Ig}}(0)} \xrightarrow{\sim} \varprojlim_nC_n(0) \subset T_p\mathcal{E}(0).$$
From (\ref{e1trivialization}) and the universal property, there exist unique maps defined over $\mathrm{Spa}(k,\mathcal{O}_k)$
\begin{equation}\label{universalmaps}\varPhi : \hat{\mathcal{Y}}^{\mathrm{Ig}}(0)^{\dagger} \rightarrow \hat{\mathcal{Y}}^{\mathrm{Ig}}(0), \hspace{1cm}\varSigma : \mathcal{E}(0)^{\dagger} \rightarrow \mathcal{E}(0)
\end{equation}
which fit into a pullback diagram
$$\begin{tikzcd}[column sep =large]
\mathcal{E}(0)^{\dagger} \arrow{r}{\varSigma} \arrow{d}{}& \mathcal{E}(0) \arrow{d} \\
\hat{\mathcal{Y}}^{\mathrm{Ig}}(0)^{\dagger} \arrow{r}{\varPhi} & \hat{\mathcal{Y}}^{\mathrm{Ig}}(0)
\end{tikzcd}.$$

\begin{proposition}The maps $i^{\dagger} : \hat{\mathcal{Y}}^{\mathrm{Ig}}(0) \rightarrow \hat{\mathcal{Y}}^{\mathrm{Ig}}(0)^{\dagger}$ from (\ref{daggermaps}) and $\varPhi : \hat{\mathcal{Y}}^{\mathrm{Ig}}(0)^{\dagger}\rightarrow \hat{\mathcal{Y}}^{\mathrm{Ig}}(0)$ from (\ref{universalmaps}) are mutual inverses. Hence we have an equality
\begin{equation}\label{daggerequalities}\mathcal{E}(0)^{\dagger} = \mathcal{E}(0), \hspace{1cm} \hat{\mathcal{Y}}^{\mathrm{Ig}}(0)^{\dagger} = \hat{\mathcal{Y}}^{\mathrm{Ig}}(0).
\end{equation}
In particular,
\begin{equation}\label{presentation4}\varprojlim_{0 < \epsilon < p/(p+1)}\hat{\mathcal{Y}}^{\mathrm{Ig}}(\epsilon) \sim \hat{\mathcal{Y}}^{\mathrm{Ig}}.
\end{equation}
\end{proposition}

\begin{proof}We will first show that $\mathcal{E}(0)^{\dagger} \rightarrow \hat{\mathcal{Y}}^{\mathrm{Ig}}(0)^{\dagger}$ is itself a universal ordinary (false) elliptic curve with $\mathcal{O}_D$-endomorphism structure, $\Gamma$-level structure and trivialization $e_1 : \hat{\mathbb{Z}}_{p,\hat{\mathcal{Y}}^{\mathrm{Ig}}(0)^{\dagger}} \xrightarrow{\sim} \varprojlim_nC_n(0)^{\dagger}$. Given any ordinary (false) elliptic curve $A$ over an adic space $S$ over $\mathrm{Spa}(k,\mathcal{O}_k)$ with $\mathcal{O}_D$-endomorphism structure, $\Gamma$-level structure and trivialization $e_1 : \hat{\mathbb{Z}}_{p,S} \xrightarrow{\sim} \varprojlim_n C_n(A)$, where $C_n(A)$ denotes the order-$p^n$ canonical subgroup, by the universal property for $\mathcal{E}(\epsilon) \rightarrow \hat{\mathcal{Y}}^{\mathrm{Ig}}(\epsilon)$ for any $0 < \epsilon < p/(p+1)$, we get unique maps $S \rightarrow \hat{\mathcal{Y}}^{\mathrm{Ig}}(\epsilon)$ and $A \rightarrow \mathcal{E}(\epsilon)$ fitting into a pullback diagram
$$\begin{tikzcd}[column sep =large]
A\arrow{r}{\varSigma} \arrow{d}{}& \mathcal{E}(\epsilon) \arrow{d} \\
S \arrow{r}{\varPhi} & \hat{\mathcal{Y}}^{\mathrm{Ig}}(\epsilon)
\end{tikzcd}.$$
These diagrams are compatible along the natural maps $\mathcal{E}(\epsilon) \rightarrow \mathcal{E}(\epsilon')$ and $\hat{\mathcal{Y}}^{\mathrm{Ig}}(\epsilon) \rightarrow \hat{\mathcal{Y}}^{\mathrm{Ig}}(\epsilon')$ for any $\epsilon \le \epsilon'$. Hence we get unique maps $A \rightarrow \varprojlim_{0 < \epsilon < p/(p+1)}\mathcal{E}(\epsilon)$ and $S \rightarrow \varprojlim_{0 < \epsilon < p/(p+1)}\hat{\mathcal{Y}}^{\mathrm{Ig}}(\epsilon)$ fitting into a pullback diagram
$$\begin{tikzcd}[column sep =large]
A\arrow{r}{\varSigma} \arrow{d}{}& \varprojlim_{0 < \epsilon < p/(p+1)}\mathcal{E}(\epsilon) \arrow{d} \\
S \arrow{r}{\varPhi} & \varprojlim_{0 < \epsilon < p/(p+1)}\hat{\mathcal{Y}}^{\mathrm{Ig}}(\epsilon)
\end{tikzcd}.$$
Since $A \rightarrow S$ is an adic space and $\mathcal{O}_A^+$ is thus $p$-adically complete (\cite[Definition 2.1.4]{ScholzeWeinstein}), the horizontal maps extend to $p$-adic completions to give unique maps $A \rightarrow \mathcal{E}(0)^{\dagger}$ and $S \rightarrow \hat{\mathcal{Y}}^{\mathrm{Ig}}(0)^{\dagger}$ fitting into a pullback diagram
$$\begin{tikzcd}[column sep =large]
A\arrow{r}{\varSigma} \arrow{d}{}& \mathcal{E}(0)^{\dagger} \arrow{d} \\
S \arrow{r}{\varPhi} & \hat{\mathcal{Y}}^{\mathrm{Ig}}(0)^{\dagger}
\end{tikzcd}.$$
Thus we have shown that $\mathcal{E}(0)^{\dagger} \rightarrow \hat{\mathcal{Y}}^{\mathrm{Ig}}(0)^{\dagger}$ has the desired universal property. 

Now the universal property of $\mathcal{E}(0)^{\dagger} \rightarrow \hat{\mathcal{Y}}^{\mathrm{Ig}}(0)^{\dagger}$ and $\mathcal{E}(0) \rightarrow \mathcal{Y}^{\mathrm{Ig}}$ shows that $i^{\dagger}$ and $\varPhi$ are mutual inverses, and thus define isomorphisms $\hat{\mathcal{E}}(0)^{\dagger} \cong \mathcal{E}(0)$ and $\hat{\mathcal{Y}}^{\mathrm{Ig}}(0)^{\dagger}\cong \hat{\mathcal{Y}}^{\mathrm{Ig}}(0)$. However, since $i^{\dagger}$ is induced by the natural inclusions $\hat{\mathcal{Y}}^{\mathrm{Ig}}(0) \subset \hat{\mathcal{Y}}^{\mathrm{Ig}}(\epsilon)$, these isomorphisms are in fact identity maps. This gives (\ref{daggerequalities}). Finally, (\ref{presentation4}) follows immediately from (\ref{thirdpresentation}) and (\ref{daggerequalities}). 

\end{proof}

\begin{corollary}\label{completioncorollary0}$\mathcal{O}_{\hat{\mathcal{Y}}^{\mathrm{Ig}}(0)^+}(\hat{\mathcal{Y}}^{\mathrm{Ig}}(0)^+)$ is the $p$-adic completion of $\varinjlim_{\epsilon > 0}\mathcal{O}_{\hat{\mathcal{Y}}^{\mathrm{Ig}}(\epsilon)^+}(\hat{\mathcal{Y}}^{\mathrm{Ig}}(\epsilon)^+)$.
\end{corollary}

\begin{proof}From (\ref{thirdpresentation}) we have that $\mathcal{O}_{\hat{\mathcal{Y}}^{\mathrm{Ig}}(0)^{\dagger}}^+(\hat{\mathcal{Y}}^{\mathrm{Ig}}(0)^{\dagger})$ is the $p$-adic completion of 
$$\varinjlim_{\epsilon > 0}\mathcal{O}_{\hat{\mathcal{Y}}^{\mathrm{Ig}}(\epsilon)}^+(\hat{\mathcal{Y}}^{\mathrm{Ig}}(\epsilon)).$$
By the second equality of (\ref{daggerequalities}), $\mathcal{O}_{\hat{\mathcal{Y}}^{\mathrm{Ig}}(0)^{\dagger}}^+(\hat{\mathcal{Y}}^{\mathrm{Ig}}(0)^{\dagger})$ is also equal to 
$$\mathcal{O}_{\hat{\mathcal{Y}}^{\mathrm{Ig}}(0)}^+(\hat{\mathcal{Y}}^{\mathrm{Ig}}(0)).$$

\end{proof}

\subsection{The action of $GL_2(\mathbb{Q}_p)$ on $\mathcal{Y}^{\mathrm{Ig}}(\epsilon)$}

\begin{definition}
\begin{enumerate}
\item Let 
\begin{equation}\label{gdefinition}g := \left(\begin{array}{ccc} 1 & 0\\
0 & p\\
\end{array}\right) \in GL_2(\mathbb{Q}_p).
\end{equation}
\item Given any $\gamma \in GL_2(\mathbb{Q}_p)$, let 
$$\gamma^{\vee} := \mathrm{det}(\gamma)\gamma^{-1}$$
denote its contragredient.
\end{enumerate}
\end{definition}

\begin{theorem}\label{perfectoidpropertytheorem}
\begin{enumerate}
\item Suppose $0 \le \epsilon < p/(p+1)$.  We have a morphism of formal schemes over $\mathrm{Spf}(\mathcal{O}_k)$ induced by the $GL_2(\mathbb{Q}_p)$-action of $g$ from (\ref{gdefinition})
$$g : \hat{\mathcal{Y}}^{\mathrm{Ig}}(\epsilon)^+ \rightarrow \hat{\mathcal{Y}}^{\mathrm{Ig}}(p\epsilon)^+.$$
If $0 \le \epsilon < 1/(p+1)$, this is an isomorphism. 

\item For any $0 \le \epsilon < p/(p+1)$ we thus get an induced map
$$g^* : \mathbf{\Gamma}(\mathcal{O}_{\hat{\mathcal{Y}}^{\mathrm{Ig}}(p\epsilon)^+}) \rightarrow \mathbf{\Gamma}(\mathcal{O}_{\hat{\mathcal{Y}}^{\mathrm{Ig}}(\epsilon)^+})$$
which is an isomorphism of rings if $0 \le \epsilon < 1/(p+1)$. 
\end{enumerate}
\end{theorem}

\begin{proof}\textbf{(1)}: Recall $\omega_+$ from (\ref{omegaY}). From the moduli interpretation of $\hat{Y}_{\infty}^+$ in terms of Drinfeld level structures (see Section \ref{formalShimurasection}) and following the definition in \cite[End of Section 2.2]{ChojeckiHansenJohansson}, we see that there is a right $GL_2(\mathbb{Q}_p)$-action acting on $\hat{Y}_{\infty}^+$ acting invertibly on the universal $\Gamma(p^{\infty})$-Drinfeld level structures and tame $\Gamma$-level structures. 
Note that the kernel of $g^{\vee} \pmod{p} \in M_2(\mathbb{Z}/p)$ acting on the left of $\mathcal{E}[p^n]$ (where $\mathcal{E}[p^n]$ is as in Convention \ref{idempotentconvention}) is $\mathbb{Z}_p\cdot e_{1,1}$, the subgroup generated by $e_{1,1}$ (see (\ref{eindefinition})), which on $\mathcal{Y}^{\mathrm{Ig}}(\epsilon)$ is the order-$p$ canonical subgroup $C_1^+(\epsilon) \subset \mathcal{E}^+(\epsilon)$. Recall that for any $(A,e_1,e_2) \in \hat{\mathcal{Y}}^{\mathrm{Ig}}(\epsilon)^+(S)$, so that for any local lift $\mathrm{Ha} \in \omega_+^{\otimes (p-1)}$ of the Hasse invariant there is a section $u\in \omega_+^{\otimes (1-p)}(\mathrm{Spf}(S))$ such that $\mathrm{Ha}(A) \cdot u = p^{\epsilon} \in S/p$. (See Definitions \ref{YGamma+epsilondefinition} and \ref{hatmathcalYIgepsilonDefinition}.) Letting $(A',e_1',e_2') = (A,e_1,e_2)\cdot g$, then $A'$ is isomorphic to $A^{(p)}$ modulo $p$ since $C_1^+(\epsilon)$ is the canonical subgroup, and so $\mathrm{Ha}(A') = \mathrm{Ha}(A)^p \in S/p$, which implies $\mathrm{Ha}(A') \cdot u^{(p)} = p^{p\epsilon} S/p$. (Cf. with \cite[proof of Lemma III.2.14]{Scholze}.) Recall $n(\epsilon)$ from (\ref{nepsilondefinition}). By standard properties of order-$p^n$-canonical subgroups under division by the canonical subgroup (\cite[Proposition III.2.8 (iii)]{ScholzeTorsion}), since $\mathbb{Z}_p \cdot e_{1,n(\epsilon)}$ is the order-$p^{n(\epsilon)}$-canonical subgroup of $A$, $\mathbb{Z}_p\cdot e_{1,n(\epsilon)-1}'$ is the order-$p^{n(\epsilon)-1} = p^{n(p\epsilon)}$-canonical subgroup of $A'$. Hence, in all, $(A',e_1',e_2') \in \hat{\mathcal{Y}}^{\mathrm{Ig}}(p\epsilon)^+(S)$. Thus the action of $g$ on $\hat{Y}_{\infty}^+$ induces a map
$$g : \hat{\mathcal{Y}}^{\mathrm{Ig}}(\epsilon)^+ \rightarrow \hat{\mathcal{Y}}^{\mathrm{Ig}}(p\epsilon)^+.$$

Now assume $0 \le \epsilon < 1/(p+1)$. We will produce an inverse of the above map. On the other hand, given $(A',e_1',e_2') \in \hat{\mathcal{Y}}^{\mathrm{Ig}}(p\epsilon)^+(S)$, defining $(A,e_1,e_2) = (A',e_1',e_2') \cdot g^{-1}$, we see that $A/\langle e_{1,1}\rangle$ is isomorphic to $A'$. Moreover, since $\mathbb{Z}_p \cdot e_{1,1}'$ is the order-$p$ canonical subgroup of $A'$ and is disjoint with the kernel of the isogeny $A' \rightarrow A$, then $e_{1,1}$ is the order-$p$ canonical subgroup of $A$. Reversing the $\mathrm{Ha}$ calculation of the previous paragraph shows that there exists $u \in \omega_+^{\otimes (1-p)}$ with $\mathrm{Ha}(A) \cdot u = p^{\epsilon} \in S/p$. Moreover, $\mathbb{Z}_p \cdot e_{1,n(\epsilon)} = \mathbb{Z}_p \cdot e_{1,n(p\epsilon) + 1}$ maps under the division by the canonical subgroup isogeny $A \rightarrow A/\langle e_{1,1}\rangle \cong A'$ to $\mathbb{Z}_p \cdot e_{1,n(p\epsilon)}$ which is the order-$p^{n(p\epsilon)}$ canonical subgroup of $A'$. By standard properties of the canonical subgroup (\cite[Proposition III.2.8 (iii)]{ScholzeTorsion}), this implies $\mathbb{Z}_p \cdot e_{1,n(\epsilon)}$ is the order-$p^{n(\epsilon)}$ canonical subgroup of $A$. Hence in all, $(A,e_1,e_2) \in \hat{\mathcal{Y}}^{\mathrm{Ig}}(\epsilon)^+(S)$. Thus, the action of $g^{-1}$ on $\hat{Y}_{\infty}$ induces a map
$$g^{-1} : \hat{\mathcal{Y}}^{\mathrm{Ig}}(p\epsilon)^+ \rightarrow \hat{\mathcal{Y}}^{\mathrm{Ig}}(\epsilon)^+.$$
Going through the definitions, we see that this map and the map $g$ from the end of the previous paragraph are mutual inverses. Now the first isomorphism of the statement of the Theorem follows. \\


\textbf{(2)}: This follows immediately from (1).
\end{proof}

Note that on $\mathcal{Y}^{\mathrm{Ig}}(\epsilon)^+$ and $\mathcal{Y}^{\mathrm{Ig}}(\epsilon)$, if $\epsilon < p/(p+1)$ then under the moduli-theoretic interpretation of the $GL_2(\mathbb{Q}_p)$-action, the isogeny underlying $g \in GL_2(\mathbb{Q}_p)$ corresponds to division by the canonical subgroup (cf. \cite[Lemma 2.11]{ChojeckiHansenJohansson}). As a corollary of Theorem \ref{perfectoidpropertytheorem}, we have the following.

\begin{theorem}\begin{enumerate}
\item For any $0 \le \epsilon < p/(p+1)$, we have maps induced by the $GL_2(\mathbb{Q}_p)$-action of $g$ from (\ref{gdefinition})
\begin{equation}\label{gisomorphism}g : \mathcal{Y}^{\mathrm{Ig}}(\epsilon)^+ \rightarrow \mathcal{Y}^{\mathrm{Ig}}(p\epsilon)^+, \hspace{1cm} g : \mathcal{Y}^{\mathrm{Ig}}(\epsilon) \rightarrow \mathcal{Y}^{\mathrm{Ig}}(p\epsilon), 
\end{equation}
where the first map is a morphism of pro-formal schemes over $\mathrm{Spf}(\mathcal{O}_k)$ and the second is a morphism of pro-adic spaces over $\mathrm{Spa}(k,\mathcal{O}_k)$. If $0 \le \epsilon < 1/(p+1)$, these are isomorphisms with inverses given by 
\begin{equation}\label{g-1isomorphism}g^{-1} : \mathcal{Y}^{\mathrm{Ig}}(p\epsilon)^+ \rightarrow \mathcal{Y}^{\mathrm{Ig}}(\epsilon)^+, \hspace{1cm} g^{-1} : \mathcal{Y}^{\mathrm{Ig}}(p\epsilon) \rightarrow \mathcal{Y}^{\mathrm{Ig}}(\epsilon).
\end{equation}

\item For any $0 \le \epsilon < p/(p+1)$, this above maps induce maps of rings
$$g^* : \mathbf{\Gamma}(\mathcal{O}_{\mathcal{Y}^{\mathrm{Ig}}(p\epsilon)^+})\rightarrow \mathbf{\Gamma}(\mathcal{O}_{\mathcal{Y}^{\mathrm{Ig}}(\epsilon)^+}), \hspace{1cm} g^* : \mathbf{\Gamma}(\mathcal{O}_{\mathcal{Y}^{\mathrm{Ig}}(p\epsilon)})\rightarrow \mathbf{\Gamma}(\mathcal{O}_{\mathcal{Y}^{\mathrm{Ig}}(\epsilon)}).$$
If $0 \le \epsilon < 1/(p+1)$, these are isomorphisms with inverses given by
$$(g^{-1})^* : \mathbf{\Gamma}(\mathcal{O}_{\mathcal{Y}^{\mathrm{Ig}}(\epsilon)^+}) \rightarrow \mathbf{\Gamma}(\mathcal{O}_{\mathcal{Y}^{\mathrm{Ig}}(p\epsilon)^+}), \hspace{1cm} (g^{-1})^* : \mathbf{\Gamma}(\mathcal{O}_{\mathcal{Y}^{\mathrm{Ig}}(\epsilon)}) \rightarrow \mathbf{\Gamma}(\mathcal{O}_{\mathcal{Y}^{\mathrm{Ig}}(p\epsilon)}).$$
\end{enumerate}
\end{theorem}

\begin{proof}\textbf{(1)}: The first map of (\ref{gisomorphism}) is induced by the ring map in Theorem \ref{perfectoidpropertytheorem} and the relation $\hat{\mathcal{Y}}^{\mathrm{Ig}}(\epsilon)^+ \sim \mathcal{Y}^{\mathrm{Ig}}(\epsilon)^+$ (see Definition \ref{hatmathcalYIgepsilonDefinition}). The second map of (\ref{gisomorphism}) follows from taking the adic generic fiber of the ring map of Theorem \ref{perfectoidpropertytheorem}.\\

\textbf{(2)}: The statement for the maps of rings follows immediately from (\ref{gisomorphism}).
\end{proof}

\subsection{The canonical locus in terms of the Hodge-Tate period}
We will need a lemma describing the \emph{canonical locus} 
\begin{equation}\label{Ucan}U^{\mathrm{can}} := \bigcup_{0 \le \epsilon < p/(p+1)}\mathcal{Y}^{\mathrm{Ig}}(\epsilon),
\end{equation}
also referred to as $U^{\mathrm{can}}$ in the Introduction (Section \ref{Introduction}), in terms of the Hodge-Tate period $z_{\mathrm{HT}}$. We will follow Convention \ref{zdefineconvention} extensively throughout this section. 

\begin{lemma}\label{blemma}\begin{enumerate}
\item We have 
\begin{equation}\label{zUcan}\{|z_{\mathrm{HT}}| > p^{p/(p^2-1)}\} \subset U^{\mathrm{can}} \subset \{|1/z_{\mathrm{HT}}| > p^{p/(p^2-1)}|\} ^c 
\end{equation}
where the superscript ``$c$'' denotes set complement in $Y_{\infty}$. Moreover, there exists $b \in \mathbb{Z}_{> 0}$ such that 
\begin{equation}\label{zUb}\{|z_{\mathrm{HT}}| > p^b\} \subset U = \mathcal{Y}^{\mathrm{Ig}}(\epsilon_0).
\end{equation}
\item For any $0 \le \epsilon < p^2/(p+1)$ in the valuation group of $\mathcal{O}_k$, we have 
\begin{equation}\label{YIgVx}\mathcal{Y}^{\mathrm{Ig}}(\epsilon) \subset \{z_{\mathrm{HT}} \neq 0\} \overset{(\ref{Vz})}{=} \mathcal{V}_x, \hspace{1cm} \hat{\mathcal{Y}}^{\mathrm{Ig}}(\epsilon) \subset \{z_{\mathrm{HT}} \neq 0\} \overset{(\ref{hatVz})}{=} \hat{\mathcal{V}}_x.
\end{equation}
\end{enumerate}
\end{lemma}

\begin{proof}
\textbf{(1)}: By \cite[Lemma 2.14]{ChojeckiHansenJohansson}, $e_{1,1}$ (see (\ref{eindefinition})) parametrizes the canonical subgroup if $|z_{\mathrm{HT}}| > p^{p/(p^2-1)}$. Thus, since $U^{\mathrm{can}}$ is also exactly the locus in $Y_{\infty}$ on which $e_{1,1}$ parametrizes the canonical subgroup, we have the first inclusion of (\ref{zUcan}). For the second inclusion, note that since $e_{1,1}$ trivializes the canonical subgroup on $U_{\mathrm{can}}$, then $e_{2,1}$ does not trivialize the canonical subgroup on $U_{\mathrm{can}}$. Thus, 
$$U_{\mathrm{can}} \subset \{e_{2,1} : \mathbb{Z}/p \xrightarrow{\sim} C_1\}^c$$
where $C_1 \subset \mathcal{E}|_{U_{\mathrm{can}}}[p]$ is the canonical subgroup, $\{e_{2,1} : \mathbb{Z}/p \xrightarrow{\sim} C_1\}$ denotes the locus where $e_{2,1}$ (in the notation of (\ref{eindefinition})) trivializes $C_1$.  By Lemma 2.14 of op. cit. again, we have 
$$\{|1/z_{\mathrm{HT}}| > p^{p/(p^2-1)}\} \subset \{e_{2,1} : \mathbb{Z}/p \xrightarrow{\sim} C_1\},$$
and so
$$U_{\mathrm{can}} \subset \{e_{2,1} : \mathbb{Z}/p \xrightarrow{\sim} C_1\}^c \subset \{|1/z_{\mathrm{HT}}| > p^{p/(p^2-1)}|\} ^c.$$
This gives (\ref{zUcan}).

For any $m \in \mathbb{Z}_{\ge -1}$, applying the action of $g^{-m}$ to both sides of the previous equality (recall $g$ from (\ref{gdefinition})), and using $(g^{-m})^*z_{\mathrm{HT}} = z_{\mathrm{HT}}/p^m$ from (\ref{zHTtransformationproperty}) and the isomorphism $g^{-m} : \mathcal{Y}^{\mathrm{Ig}}(\epsilon) \rightarrow \mathcal{Y}^{\mathrm{Ig}}(\epsilon/p^m)$ from (\ref{g-1isomorphism}), we get 
\begin{equation}\label{zUcanm}\{|z_{\mathrm{HT}}| > p^{p/(p^2-1)+m}\} \subset \bigcup_{0 \le \epsilon < p^{1-m}/(p+1)}\mathcal{Y}^{\mathrm{Ig}}(\epsilon).
\end{equation}
The right-hand side is contained in $\mathcal{Y}^{\mathrm{Ig}}(p^{1-m}/(p+1))$, which is contained in $\mathcal{Y}^{\mathrm{Ig}}(\epsilon)$ for any $m \in \mathbb{Z}_{\ge 0}$ with $p^{1-m}/(p+1) \le \epsilon$. Taking $b = p/(p^2-1) + m$ for any such $m$ gives (\ref{zUb}).\\

\textbf{(2)}: Now (\ref{YIgVx}) follows from (\ref{zUcanm}), noting that $\mathcal{Y}^{\mathrm{Ig}}(\epsilon)$ for any $0 \le \epsilon < p^2/(p+1)$ is contained in the right-hand side of that equation specialized to $m = -1$, and then using (\ref{hatmathcalYIg}).

\end{proof}

\subsection{The affinoid perfectoidness of $\mathcal{Y}^{\mathrm{Ig}}(\epsilon)$}

In this section, we prove that $\mathcal{Y}^{\mathrm{Ig}}(\epsilon)$ is affinoid perfectoid (Lemma \ref{Uaffinoidperfectoidlemma}). We will also show that the $\mathcal{Y}^{\mathrm{Ig}}(\epsilon)$ form a collection of affinoid perfectoid neighborhoods of $\mathcal{Y}^{\mathrm{Ig}}$ such that for any intersection $W$ of rational subsets of $\mathcal{Y}^{\mathrm{Ig}}(\epsilon)$ such that $\mathcal{Y}^{\mathrm{Ig}} \subset W$, there exists some $0 < \epsilon' < \epsilon$ such that $\mathcal{Y}^{\mathrm{Ig}}(\epsilon') \subset  W \subset \mathcal{Y}^{\mathrm{Ig}}(\epsilon)$. See Corollary \ref{rationalintersectioncorollary}. First, let us isolate a certain ``negligible'' CM point on $\mathbb{Y}(\Gamma(1))$, which we will often delete from our Shimura curve in order to work on an affine curve.

\begin{choice}\label{badCMpointchoice}
Fix an imaginary quadratic field $K'$ such that $p$ is \emph{inert} in $K'$. 
\begin{enumerate}
\item When $D = M_2(\mathbb{Q})$, let 
$y_0 \in \mathbb{Y}(\Gamma(1))(\overline{\mathbb{Z}}_p)$
be the point corresponding to the isomorphism class of an elliptic curve $E_0'$ with CM by $\mathcal{O}_{K'}$. 
\item When $D \neq M_2(\mathbb{Q})$, let $y_0 \in \mathbb{Y}(\Gamma(1))(\overline{\mathbb{Z}}_p)$ be a point corresponding to the isomorphism class of a false elliptic curve $E_0'$ with CM by $\mathcal{O}_{K'}$. 
\item In both cases, let $\mathbf{S}_0 \subset \mathbb{Y}(\Gamma(1))(\overline{\mathbb{Z}}_p)$ be the $\mathrm{Gal}(\overline{\mathbb{Q}}_p/\mathbb{Q}_p)$-orbit of $y_0$. Thus $\mathbf{S}_0 \subset \mathbb{Y}(\Gamma(1))$ is a finite closed set defined over $\mathbb{Z}_p$. 
\end{enumerate}
\end{choice}

\begin{proposition}\label{y0proposition}Let $E_0'$ be as in Choice \ref{badCMpointchoice}. For any $\sigma \in \mathrm{Gal}(\overline{\mathbb{Q}}_p/\mathbb{Q}_p)$, $(E_0')^{\sigma}$ has no canonical subgroup. Thus the underlying (false) elliptic curve of any point of $\mathbf{S}_0$ has no canonical subgroup. 
\end{proposition}

\begin{proof}We argue by contradiction. Recall that $E_0'$ had CM by $\mathcal{O}_{K'}$, and hence $(E_0')^{\sigma}$ has CM by $\mathcal{O}_{K'}$. Suppose $(E_0')^{\sigma}$ had a canonical subgroup $C$. Then since $C$ is unique, it is fixed by the CM action of $\mathcal{O}_{K'}$. Since $C \subset (E_0')^{\sigma}[p]$, the kernel of this action contains $p\mathcal{O}_{K'}$, and thus $\mathcal{O}_{K'}/p \cong \mathbb{F}_{p^2}$ acts nontrivially on $C$, and since $\mathbb{Z}/p \subset \mathcal{O}_{K'}/p$ acts by multiplication on $C \cong \mathbb{Z}/p$, we see that $(\mathcal{O}_{K'}/p) \cdot C = C$. Thus $C$ is an $\mathbb{F}_{p^2}$-vector space, which is impossible since it has order $p$. 

\end{proof}

\begin{definition}\label{YGamma1definition}\begin{enumerate}
\item Let 
$$\mathbb{Y}(\Gamma(1)) \rightarrow \mathrm{Spec}(\mathbb{Q})$$
be the algebraic Shimura curve associated to $\Gamma = \Gamma(1)$ (see Section \ref{algebraicYsection}). Let
$$\mathbb{Y}(\Gamma(1))^+ \rightarrow  \mathrm{Spec}(\mathbb{Z}[1/\mathrm{disc}(D)])$$
denote the algebraic Shimura curve from \cite[Theorem 2.1]{Buzzard}; it is smooth and satisfies 
$$\mathbb{Y}(\Gamma(1))^+ \times_{\mathrm{Spec}(\mathbb{Z}[1/\mathrm{disc}(D)])}\mathrm{Spec}(\mathbb{Q}) = \mathbb{Y}(\Gamma(1)).$$
When $D = M_2(\mathbb{Q})$, these are simply the algebraic $j$-lines.
\item Recalling $p \nmid \mathrm{disc}(D)$, let
$$Y(\Gamma(1))^+ := \mathbb{Y}(\Gamma(1))^+ \times_{\mathrm{Spec}(\mathbb{Z}[1/\mathrm{disc}(D))]} \mathrm{Spec}(\mathbb{Z}_p) \rightarrow \mathrm{Spec}(\mathbb{Z}_p)$$
be the $p$-adic integral model of $\mathbb{Y}(\Gamma(1))$ (see Section \ref{formalShimurasection}). Let
$$\widehat{Y(\Gamma(1))}^+ \rightarrow \mathrm{Spf}(\mathbb{Z}_p)$$
be the $p$-adic completion of $Y(\Gamma(1))^+$. Let 
$$Y(\Gamma(1)) \rightarrow \mathrm{Spa}(\mathbb{Q}_p,\mathbb{Z}_p)$$
be the adic generic fiber of $\widehat{Y(\Gamma(1))}^+$ (see Section \ref{adicShimurasection}).
When $D = M_2(\mathbb{Q})$, these are simply the formal and adic  j-lines. 
\item Now we consider base changes to $k$ and $\mathcal{O}_k$. Let
\begin{equation}\label{mathbbYbasechange}Y(\Gamma(1))_{\mathcal{O}_k}^+ := \mathbb{Y}(\Gamma(1))^+ \times_{\mathrm{Spec}(\mathbb{Z}[1/\mathrm{disc}(D)])}\mathrm{Spec}(\mathcal{O}_k) = Y(\Gamma(1))^+ \times_{\mathrm{Spec}(\mathbb{Z}_p)}\mathrm{Spec}(\mathcal{O}_k). 
\end{equation}
Let
$$\widehat{Y(\Gamma(1))}_{\mathcal{O}_k}^+ \rightarrow \mathrm{Spf}(\mathcal{O}_k)$$
be the $p$-adic completion of $Y(\Gamma(1))_{\mathcal{O}_k}^+$. Then 
$$\widehat{Y(\Gamma(1))}_{\mathcal{O}_k}^+ = \widehat{Y(\Gamma(1))}^+ \times_{\mathrm{Spf}(\mathbb{Z}_p)}\mathrm{Spf}(\mathcal{O}_k).$$
Let 
$$Y(\Gamma(1))_k \rightarrow \mathrm{Spa}(k,\mathcal{O}_k)$$
be the adic generic fiber of $\widehat{Y(\Gamma(1))}_{\mathcal{O}_k}^+$. Then 
$$Y(\Gamma(1))_k = Y(\Gamma(1)) \times_{\mathrm{Spa}(\mathbb{Q}_p,\mathbb{Z}_p)} \mathrm{Spa}(k,\mathcal{O}_k).$$
\end{enumerate}
\end{definition}


We now define a certain affine open subscheme $V \subset \mathbb{Y}(\Gamma(1))_{\mathcal{O}_k}^+$ which will be needed for certain constructions in Section \ref{jsection}.

\begin{definition}\label{V0definition}\begin{enumerate}
\item Define the open set 
$$V_0 = \mathbb{Y}(\Gamma(1))^+ \setminus \mathbf{S}_0 \subset \mathbb{Y}(\Gamma(1))^+,$$
which has a natural structure as an open subscheme of the smooth scheme $\mathbb{Y}(\Gamma(1))^+$ over $\mathrm{Spec}(\mathbb{Z}_p)$. In fact, $V_0$ is smooth affine since it is the complement of a finite nonzero number of points in a projective curve.
\item Let
$$V:= V_0 \times_{\mathrm{Spec}(\mathbb{Z}_p)}\mathrm{Spec}(\mathcal{O}_k)\subset \mathbb{Y}(\Gamma(1))_{\mathcal{O}_k}^+$$
which by the above is an affine open defined over $\mathrm{Spec}(\mathcal{O}_k)$. 
\item Let $\widehat{V}_0$ denote the $p$-adic completion of $V_0$ (a formal scheme over $\mathrm{Spf}(\mathbb{Z}_p)$) and let $\widehat{V}$ denote the $p$-adic completion of $V$ (a formal scheme over $\mathrm{Spf}(\mathcal{O}_k)$). Then $\widehat{V}_0 \subset \widehat{Y(\Gamma(1))}^+$ is an open formal subscheme over $\mathrm{Spf}(\mathbb{Z}_p)$ and $\widehat{V} \subset \widehat{Y(\Gamma(1))}_{\mathcal{O}_k}^+$ is an open formal subscheme over $\mathrm{Spf}(\mathcal{O}_k)$. 
\end{enumerate}
\end{definition}

Now suppose that $0 \le\epsilon < p/(p+1)$ is in the valuation group of $\mathcal{O}_k$. Then since $n(\epsilon) \ge 1$ (see \ref{nepsilondefinition}) (or by \cite[Theorem 3.10.7]{Katzpamf} and \cite{Kassaei}) the canonical subgroup exists on $Y^+(\epsilon)$. Thus, by Proposition \ref{y0proposition}, the image $Y(\Gamma(1))^+(\epsilon)$ of $Y^+(\epsilon)$ under $\hat{Y}^+ \rightarrow \widehat{Y(\Gamma(1))}_{\mathcal{O}_k}^+$ is contained in $\widehat{V}$. Thus we have a sequence maps of formal schemes over $\mathrm{Spf}(\mathcal{O}_k)$
\begin{equation}\label{avoidy0inclusion}Y^+(\epsilon) \twoheadrightarrow Y(\Gamma(1))^+(\epsilon)_{\mathcal{O}_k} \subset \widehat{V}
\end{equation}
where the first map is finite \'{e}tale and the second map is an open immersion. 

\begin{lemma}\label{Uaffinoidperfectoidlemma}For any $0 \le \epsilon < p/(p+1)$ in the valuation group of $\mathcal{O}_k$ (see Definition \ref{kdefinition}), $\mathcal{Y}^{\mathrm{Ig}}(\epsilon) \in Y_{\text{pro\'{e}t}}$ is affinoid perfectoid over $\mathrm{Spa}(k,\mathcal{O}_k)$ (see \cite[Definition 4.3]{Scholze}).
\end{lemma}
\begin{proof} If $\epsilon = 0$, then $\mathcal{Y}^{\mathrm{Ig}}(0)$ is the perfection of $Y^{\mathrm{Ig}}$ from Definition \ref{Igusadefinition}, and so is affinoid perfectoid in $Y_{\text{pro\'{e}t}}$ over $\mathrm{Spa}(k,\mathcal{O}_k)$.  If $\epsilon > 0$, we will show that $\mathcal{Y}^{\mathrm{Ig}}(\epsilon)$ is a rational subset of an affinoid perfectoid, and hence is affinoid perfectoid by \cite[Theorem 6.3 (ii)]{Scholzeperf}. 

Now assume that $1/(p+1) \le \epsilon < p/(p+1)$, so that $n(\epsilon) = 1$ by (\ref{nepsilondefinition}). Let $Y(\Gamma_0(p) \cap \Gamma(N))^+$ be the scheme over $\mathrm{Spec}(\mathbb{Z}_p)$ defined as in Definition \ref{YGamma+definition} (where $N \ge 4$ with $(N,p) = 1$, as in Convention \ref{Yconvention}, so that in particular $N$ is neat in the sense of Definition \ref{neatdefinition}). Let $\mathcal{E}_0^+ \rightarrow Y(\Gamma_0(p)\cap \Gamma(N))^+$ be the universal object (which exists since $N$ is neat), and let $H \subset \mathcal{E}_0^+[p]$ be the order-$p$- group scheme determined by the $\Gamma_0(p)$-level structure on $Y_0^+$. Recall that $D$ is the quaternion algebra over $\mathbb{Q}$ used in the definition of $Y$. Let $\mathbf{S}_0 \subset Y(\Gamma(1))^+(\overline{\mathbb{Z}}_p)$ be as in Choice \ref{badCMpointchoice}. Then $Y(\Gamma_0(p) \cap \Gamma(N))^+ \rightarrow Y(\Gamma(1))^+$ is finite, so the preimage $S_0$ of $\mathbf{S}_0$ under this map is a union of finitely many points. Let
$$Y_0^+ = \begin{cases} Y(\Gamma_0(p) \cap \Gamma(N))^+ & D = M_2(\mathbb{Q})\\
Y(\Gamma_0(p) \cap \Gamma(N))^+ \setminus S_0 & D \neq M_2(\mathbb{Q})\\
\end{cases}.$$
In either case, the curve $Y_0^+$ is the complement of a nonempty set of finitely many points in the projective closure of $Y(\Gamma_0(p) \cap \Gamma_0(N))^+$ over $\mathrm{Spec}(\mathbb{Z}_p)$, and so $Y_0^+$ is an affine scheme over $\mathrm{Spec}(\mathbb{Z}_p)$. Hence $H$ is an order-$p$ group scheme over an affine scheme $Y_0^+ = \mathrm{Spec}(R)$ over $\mathrm{Spec}(\mathbb{Z}_p)$, and so applying the Oort-Tate classification (\cite[Theorem 2]{OortTate}), we can identify $H = \mathrm{Spec}(R[T]/(T^p-\mathcal{A}T))$ for some formal variable $T$ and $\mathcal{A} \in R$. Now let $Y_0^+(\epsilon) = Y(\Gamma_0(p) \cap \Gamma_1(N))^+(\epsilon)$ as in Definition \ref{YGamma+epsilondefinition}, which is a formal scheme over $\mathrm{Spf}(\mathcal{O}_k)$, and let $Y_0(\epsilon)$ be its adic generic fiber over $\mathrm{Spa}(k,\mathcal{O}_k)$. Let $\mathcal{E}_0^+(\epsilon) \rightarrow Y_0^+(\epsilon)$ and $\mathcal{E}_0(\epsilon) \rightarrow Y_0(\epsilon)$ be the corresponding universal objects. By our assumption that $\epsilon < p/(p+1)$ and (\ref{avoidy0inclusion}), we have 
$$Y_0^+(\epsilon) \subset Y_0^+, \hspace{1cm} Y_0(\epsilon) \subset Y_0.$$
Thus, the global section $\mathcal{A} \in R = \mathcal{O}_{Y_0^+}(Y_0^+)$ induces global sections on $Y_0^+(\epsilon)$ and $Y_0(\epsilon)$, which we continue to denote by $\mathcal{A}$. 

Consider the rational subset 
$$\mathcal{V} := \{|\mathcal{A}| \le p^{\epsilon -1}\} \subset Y_0^+(\epsilon).$$
By \cite[Theorem 1.2.8]{Kassaei2}, $\mathcal{V}$ is exactly the locus in $Y_0(\epsilon)$ such that $H$ is the canonical subgroup of $\mathcal{E}_0[p]$. Therefore, by (\ref{U}), $\mathcal{Y}^{\mathrm{Ig}}(\epsilon)$ is the inverse image of $\mathcal{V}$ under the projection 
$$Y_{\infty}\rightarrow Y(\Gamma_0(p) \cap \Gamma_1(N)), \hspace{1cm} (A,i,P,e_1,e_2) \mapsto (A,i,P,\mathbb{Z}_p \cdot e_{1,1}).$$
Since the $p$-adic completion $\hat{Y}_{\infty}(\epsilon) \rightarrow Y_0(\epsilon)$ is an adic morphism, $\mathcal{Y}^{\mathrm{Ig}}(\epsilon) \subset Y_{\infty}$ is a rational subset. By (\ref{zUcan}) we have 
$$\mathcal{Y}^{\mathrm{Ig}}(\epsilon) \subset \{|1/z_{\mathrm{HT}}| > p^{p/(p^2-1)}|\}^c \subset \{|z_{\mathrm{HT}}| \ge p^{1/(p-1)}\}.$$
Here, $p^{1/(p-1)}$ can be taken to be any element of $\mathcal{O}_k$ (see Definition \ref{kdefinition}) with $p$-adic valuation $1/(p-1)$. Now $\{|z_{\mathrm{HT}}| \ge 1\} \subset Y_{\infty}$ is an affinoid perfectoid subset of $Y_{\infty}$ over $\mathrm{Spa}(k,\mathcal{O}_k)$ by \cite[Theorem IV.1.1 (i)]{ScholzeTorsion}. Hence $\{|z_{\mathrm{HT}}| \ge p^{1/(p-1)}\} \subset \{|z_{\mathrm{HT}}| \ge 1\}$ is a rational subset of an affinoid perfectoid subset, and so is affinoid perfectoid over $\mathrm{Spa}(k,\mathcal{O}_k)$ by \cite[Theorem 6.3 (ii)]{Scholzeperf}. Thus $\mathcal{Y}^{\mathrm{Ig}}(\epsilon) \subset Y_{\infty}$ is a rational subset of an affinoid perfectoid, and so by the discussion at the end of the first paragraph we have that $\mathcal{Y}^{\mathrm{Ig}}(\epsilon)$ is affinoid perfectoid over $\mathrm{Spa}(k,\mathcal{O}_k)$. So we are done in the case $1/(p+1) \le \epsilon < p/(p+1)$. 


Now the case of general $0 < \epsilon < p/(p+1)$ now follows from (\ref{g-1isomorphism}), since the condition of being affinoid perfectoid is stable under the action of $g^{-1} \in GL_2(\mathbb{Q}_p)$ (see \cite[discussion after Definition 3.5]{ScholzeTorsion}).


\end{proof}

\begin{lemma}\label{Uhataffinoidperfectoidlemma}For any $0 \le \epsilon < p/(p+1)$, we have that $\hat{\mathcal{Y}}^{\mathrm{Ig}}(\epsilon)$ is an affinoid perfectoid space over $\mathrm{Spa}(k,\mathcal{O}_k)$. Moreover,
\begin{equation}\label{recallintegralsame}\mathbf{\Gamma}(\hat{\mathcal{O}}_{\mathcal{Y}^{\mathrm{Ig}}(\epsilon)}^{(+)}) = \mathbf{\Gamma}(\mathcal{O}_{\hat{\mathcal{Y}}^{\mathrm{Ig}}(\epsilon)}^{(+)})
\end{equation}
where the superscript ``$(+)$'' denotes the optional presence of a ``$+$''.
\end{lemma}

\begin{proof}
Recall that $\mathcal{Y}^{\mathrm{Ig}}(\epsilon)$, for any $0 \le \epsilon  < p/(p+1)$, was shown to be affinoid perfectoid (Lemma \ref{Uaffinoidperfectoidlemma}). Thus by \cite[discussion after Definition 4.3]{Scholze}, $\hat{\mathcal{Y}}^{\mathrm{Ig}}(\epsilon)$ is an affinoid perfectoid space over $\mathrm{Spa}(k,\mathcal{O}_k)$. Now (\ref{recallintegralsame}) follows from this and the discussion after Lemma 4.2 of op. cit.

\end{proof}

Next, we have the following result which says that the for any $0 < \epsilon < p/(p+1)$ and any intersection $W$ of rational subsets of the affinoid $\hat{\mathcal{Y}}^{\mathrm{Ig}}(\epsilon)$ (see Lemma \ref{Uhataffinoidperfectoidlemma} for a proof of the affinoid perfectoidness of $\hat{\mathcal{Y}}^{\mathrm{Ig}}(\epsilon)$) such that $\hat{\mathcal{Y}}^{\mathrm{Ig}} \subset W$, there exists some $\epsilon'$ such that for any $0 < \epsilon'' \le \epsilon'$, $\hat{\mathcal{Y}}^{\mathrm{Ig}}(\epsilon') \subset W$. The argument, similar to \cite[proof of Lemma III.3.8]{ScholzeTorsion}, uses the constructible topology on the affinoid adic $\hat{\mathcal{Y}}^{\mathrm{Ig}}(\epsilon)$. Thus the underlying topological space $|\hat{\mathcal{Y}}^{\mathrm{Ig}}(\epsilon)|$ is spectral by \cite[Theorem 7.30 (1)]{Wedhorn}. Moreover, recall that for any $0 < \epsilon'' \le \epsilon'$, $\hat{\mathcal{Y}}^{\mathrm{Ig}}(\epsilon'') \subset \hat{\mathcal{Y}}^{\mathrm{Ig}}(\epsilon')$ is affinoid open by Lemma \ref{Uhataffinoidperfectoidlemma}. 

\begin{corollary}\label{rationalintersectioncorollary}\begin{enumerate}
\item Fix any $0 < \epsilon < p/(p+1)$ in the valuation group of $\mathcal{O}_k$. For any intersection $W$ of rational subsets of $\hat{\mathcal{Y}}^{\mathrm{Ig}}(\epsilon)$ such that $\hat{\mathcal{Y}}^{\mathrm{Ig}} \subset W$, there exists $0 < \epsilon' \le \epsilon$ in the valuation group of $\mathcal{O}_k$ such that for all $0 < \epsilon'' \le \epsilon'$ in the valuation group of $\mathcal{O}_k$,
$$\hat{\mathcal{Y}}^{\mathrm{Ig}} \subset \hat{\mathcal{Y}}^{\mathrm{Ig}}(\epsilon'') \subset W.$$
\item The same statement holds for $\mathcal{Y}^{\mathrm{Ig}}(\epsilon)$, $\mathcal{Y}^{\mathrm{Ig}}(\epsilon')$, $\mathcal{Y}^{\mathrm{Ig}}(\epsilon'')$ and $\mathcal{Y}^{\mathrm{Ig}}$ in place of $\hat{\mathcal{Y}}^{\mathrm{Ig}}(\epsilon)$, $\hat{\mathcal{Y}}^{\mathrm{Ig}}(\epsilon')$, $\hat{\mathcal{Y}}^{\mathrm{Ig}}(\epsilon'')$ and $\hat{\mathcal{Y}}^{\mathrm{Ig}}$, respectively. 
\end{enumerate}
\end{corollary}

\begin{proof} We prove (1); in light of (\ref{hatmathcalYIg}), (2) then immediately follows. All cited references in this proof will be to \cite{Wedhorn}. We have
$$\bigcap_{0 < \epsilon' < p/(p+1)}|\hat{\mathcal{Y}}^{\mathrm{Ig}}(\epsilon')| = \left|\varprojlim_{0 < \epsilon' < p/(p+1)}\hat{\mathcal{Y}}^{\mathrm{Ig}}(\epsilon')\right| \overset{(\ref{presentation4})}{=} |\hat{\mathcal{Y}}^{\mathrm{Ig}}|,$$
recalling that $|X|$ denotes the underlying topological space of an adic space $X$. Now let $|X|_{\mathrm{const}}$ denote the underlying set of $X$ endowed with the constructible topology; recall that since an adic space $X$ is locally spectral by Theorem 7.35 (1), then $|X|_{\mathrm{const}}$ is finer than $|X|$ by Proposition 3.23. We
$$\bigcap_{0 < \epsilon' < \epsilon}|\hat{\mathcal{Y}}^{\mathrm{Ig}}(\epsilon')| = |\hat{\mathcal{Y}}^{\mathrm{Ig}}| \subset |W| \subset |\hat{\mathcal{Y}}^{\mathrm{Ig}}(\epsilon)|.$$
Taking complements in $|\hat{\mathcal{Y}}^{\mathrm{Ig}}(\epsilon)|$, this implies
\begin{equation}\label{topologicalcomplement}|\hat{\mathcal{Y}}^{\mathrm{Ig}}(\epsilon)| \setminus |W| \subset \bigcup_{0 < \epsilon' < \epsilon}\left(|\hat{\mathcal{Y}}^{\mathrm{Ig}}(\epsilon)| \setminus |\hat{\mathcal{Y}}^{\mathrm{Ig}}(\epsilon')|\right).
\end{equation}
Recall that since $\hat{\mathcal{Y}}^{\mathrm{Ig}}(\epsilon') \subset \hat{\mathcal{Y}}^{\mathrm{Ig}}(\epsilon)$ is an affinoid open subset, it is spectral (Theorem 7.35 (1)) and thus quasicompact open in $|\hat{\mathcal{Y}}^{\mathrm{Ig}}(\epsilon)|$. This implies that $|\hat{\mathcal{Y}}^{\mathrm{Ig}}(\epsilon')|$ is closed in $|\hat{\mathcal{Y}}^{\mathrm{Ig}}(\epsilon)|_{\mathrm{const}}$ (Definition 3.17 (1)). Thus (\ref{topologicalcomplement}) implies that $\bigcup_{0 < \epsilon' < \epsilon}\left(|\hat{\mathcal{Y}}^{\mathrm{Ig}}(\epsilon)| \setminus |\hat{\mathcal{Y}}^{\mathrm{Ig}}(\epsilon')|\right)$ is an open cover of $|\hat{\mathcal{Y}}^{\mathrm{Ig}}(\epsilon)| \setminus |W|$ in $|\hat{\mathcal{Y}}^{\mathrm{Ig}}(\epsilon)|_{\mathrm{const}}$. We will show that $|\hat{\mathcal{Y}}^{\mathrm{Ig}}(\epsilon)| \setminus |W|$ is quasicompact in $|\hat{\mathcal{Y}}^{\mathrm{Ig}}(\epsilon)|_{\mathrm{const}}$, which implies that there exists a finite subcover of $\bigcup_{0 < \epsilon' < \epsilon}\left(|\hat{\mathcal{Y}}^{\mathrm{Ig}}(\epsilon)| \setminus |\hat{\mathcal{Y}}^{\mathrm{Ig}}(\epsilon')|\right)$ that contains $|\hat{\mathcal{Y}}^{\mathrm{Ig}}(\epsilon)| \setminus |W|$. Since $\epsilon'' \le \epsilon'$ implies $\hat{\mathcal{Y}}^{\mathrm{Ig}}(\epsilon'') \subset \hat{\mathcal{Y}}^{\mathrm{Ig}}(\epsilon')$, this will give the assertion. 

Recall that any rational subset of the affinoid $\hat{\mathcal{Y}}^{\mathrm{Ig}}(\epsilon)$ is quasicompact open (Theorem 7.35 (2)), and hence is a closed set in $|\hat{\mathcal{Y}}^{\mathrm{Ig}}(\epsilon)|_{\mathrm{const}}$ (Definition 3.17 (1)). Since $|W|$ is an intersection of constructible closed sets, it is pro-constructible (Definition 3.17 (2)) and closed $|\hat{\mathcal{Y}}^{\mathrm{Ig}}(\epsilon)|_{\mathrm{const}}$. Since $|W|$ is pro-constructible, it is retrocompact in $|\hat{\mathcal{Y}}^{\mathrm{Ig}}(\epsilon)|_{\mathrm{const}}$ (Remark 3.21 (3)), but since $|\hat{\mathcal{Y}}^{\mathrm{Ig}}(\epsilon)|_{\mathrm{const}}$ is itself quasicompact, this implies $|W|$ is quasicompact in $|\hat{\mathcal{Y}}^{\mathrm{Ig}}(\epsilon)|_{\mathrm{const}}$ (Definition 3.11 (2)-(3)). Thus by Corollary 3.20 (2), $|\hat{\mathcal{Y}}^{\mathrm{Ig}}(\epsilon)| \setminus |W|$ is quasicompact in $|\hat{\mathcal{Y}}^{\mathrm{Ig}}(\epsilon)|_{\mathrm{const}}$. By the previous paragraph, we are done. 

\end{proof}


\subsection{Comparing integral structures at infinite level}\label{compareintegralsection}Recall the notation of Convention \ref{Yconvention} and Definition \ref{OYdefinition}. The main purpose of this section is to show that the ring of functions $\mathbf{\Gamma}(\mathcal{O}_{\hat{\mathcal{Y}}^{\mathrm{Ig}}(\epsilon)^+})$ on the formal scheme $\hat{\mathcal{Y}}^{\mathrm{Ig}}(\epsilon)^+$ is equal to the ring of power-bounded functions $\mathbf{\Gamma}(\hat{\mathcal{O}}_{\mathcal{Y}^{\mathrm{Ig}}(\epsilon)}^+) \overset{(\ref{recallintegralsame})}{=} \mathbf{\Gamma}(\mathcal{O}_{\hat{\mathcal{Y}}^{\mathrm{Ig}}(\epsilon)}^+)$ on the adic space $\hat{\mathcal{Y}}^{\mathrm{Ig}}(\epsilon)$. This will follow from the normality and flatness of Katz-Mazur models (\cite[First Main Theorem 5.1.1]{KatzMazur}). We will often use the notation of Convention \ref{Gammaconvention}, i.e. denote global sections of sheaves $\mathcal{F}$ by $\mathbf{\Gamma}(\mathcal{F})$ throughout Sections  \ref{compareintegralsection} and \ref{spreadingoutsection}.


In the notation of Convention \ref{Gammaconvention}, from the definitions (\ref{U}) and (\ref{hatmathcalYIg}) we have a map (see \cite[(7.1.8.2)]{deJong})
\begin{equation}\label{integralpowerboundedmap}\mathbf{\Gamma}(\mathcal{O}_{\hat{\mathcal{Y}}^{\mathrm{Ig}}(\epsilon)^+}) \rightarrow \mathbf{\Gamma}(\mathcal{O}_{\hat{\mathcal{Y}}^{\mathrm{Ig}}(\epsilon)}^+) \overset{(\ref{recallintegralsame})}{=} \mathbf{\Gamma}(\hat{\mathcal{O}}_{\mathcal{Y}^{\mathrm{Ig}}(\epsilon)}^+).
\end{equation}
This is an inclusion by the affinoidness of $\hat{\mathcal{Y}}^{\mathrm{Ig}}(\epsilon)$ (Lemma \ref{Uhataffinoidperfectoidlemma}). In fact, we have:

\begin{proposition}For any $0 \le \epsilon < p/(p+1)$ in the valuation group of $k$ (see Definition \ref{kdefinition}), the inclusion (\ref{integralpowerboundedmap}) is the identity, which induces the identification
\begin{equation}\label{integralpowerboundedsame}\mathbf{\Gamma}(\mathcal{O}_{\hat{\mathcal{Y}}^{\mathrm{Ig}}(\epsilon)^+}) = \mathbf{\Gamma}(\hat{\mathcal{O}}_{\mathcal{Y}^{\mathrm{Ig}}(\epsilon)}^+) = \mathbf{\Gamma}(\mathcal{O}_{\hat{\mathcal{Y}}^{\mathrm{Ig}}(\epsilon)}^+).
\end{equation}
\end{proposition}

\begin{proof}We will work in the notation and setting of Definition \ref{hatmathcalYIgepsilonDefinition} throughout this proof. By \cite[First Main Theorem 5.1.1]{KatzMazur}, $Y_n^+$ is regular (and hence normal) and flat as a scheme over $\mathrm{Spec}(\mathbb{Z}_p)$. Thus $\hat{Y}_n^+$ is regular (and hence normal) and flat as a formal scheme over $\mathrm{Spf}(\mathbb{Z}_p)$. Thus each $Y_n^{\mathrm{Ig}}(\epsilon)^+$ is regular (and hence normal) and flat as a formal scheme over $\mathrm{Spf}(\mathcal{O}_k)$. Recall that $Y_n^{\mathrm{Ig}}(\epsilon)$ denotes the adic generic fiber (an adic space over $\mathrm{Spa}(k,\mathcal{O}_k)$) of $Y_n^{\mathrm{Ig}}(\epsilon)^+$. Let $\mathcal{O}_{Y_n^{\mathrm{Ig}}(\epsilon)}^+$ be the integral sheaf of $Y_n^{\mathrm{Ig}}(\epsilon)$ as in Definition \ref{OYdefinition}. Therefore by \cite[Theorem 7.4.1]{deJong}, we have
$$\mathbf{\Gamma}(\mathcal{O}_{Y_n^{\mathrm{Ig}}(\epsilon)^+}) = \mathbf{\Gamma}(\mathcal{O}_{Y_n^{\mathrm{Ig}}(\epsilon)}^+).$$
Thus 
\begin{equation}\label{presentationssame}\varinjlim_n \mathbf{\Gamma}(\mathcal{O}_{Y_n^{\mathrm{Ig}}(\epsilon)^+}) = \varinjlim_n \mathbf{\Gamma}(\mathcal{O}_{Y_n^{\mathrm{Ig}}(\epsilon)}^+).
\end{equation}
By Lemma \ref{Uaffinoidperfectoidlemma} and \cite[Lemma 4.10 (iv)]{Scholze}, we have
\begin{equation}\label{completionscoincide}\widehat{\mathbf{\Gamma}(\mathcal{O}_{\mathcal{Y}^{\mathrm{Ig}}(\epsilon)}^+)} = \mathbf{\Gamma}(\hat{\mathcal{O}}_{\mathcal{Y}^{\mathrm{Ig}}(\epsilon)}^+)
\end{equation}
where the left-hand side is the $p$-adic completion of $\mathbf{\Gamma}(\mathcal{O}_{\mathcal{Y}^{\mathrm{Ig}}(\epsilon)}^+)$. Now taking $p$-adic completions of (\ref{presentationssame}), we get
\begin{align*}\mathbf{\Gamma}(\mathcal{O}_{\hat{\mathcal{Y}}^{\mathrm{Ig}}(\epsilon)^+}) \overset{(\ref{firstpresentation})}{=} \widehat{\left(\varinjlim_n\mathbf{\Gamma}(\mathcal{O}_{Y_n^{\mathrm{Ig}}(\epsilon)^+})\right)} \overset{(\ref{presentationssame})}{=} \widehat{\left(\varinjlim_n\mathbf{\Gamma}(\mathcal{O}_{Y_n^{\mathrm{Ig}}(\epsilon)}^+)\right)} \overset{(\ref{hatmathcalYIg})}{=} \widehat{\mathbf{\Gamma}(\mathcal{O}_{\mathcal{Y}^{\mathrm{Ig}}(\epsilon)}^+)} &\overset{(\ref{completionscoincide})}{=} \mathbf{\Gamma}(\hat{\mathcal{O}}_{\mathcal{Y}^{\mathrm{Ig}}(\epsilon)}^+) \\
&\overset{(\ref{recallintegralsame})}{=} \mathbf{\Gamma}(\mathcal{O}_{\hat{\mathcal{Y}}^{\mathrm{Ig}}(\epsilon)}^+),
\end{align*}
where all hats above rings in the above displayed equation denote $p$-adic completion. 

\end{proof}



We have the following immediate Corollary.

\begin{corollary}\label{completioncorollary}$\hat{\mathcal{O}}_{\mathcal{Y}^{\mathrm{Ig}}(0)}^+(\mathcal{Y}^{\mathrm{Ig}}(0))$ is the $p$-adic completion of $\varinjlim_{\epsilon > 0}\hat{\mathcal{O}}_{\mathcal{Y}^{\mathrm{Ig}}(\epsilon)}^+(\mathcal{Y}^{\mathrm{Ig}}(\epsilon))$.
\end{corollary}

\begin{proof}This follows from Corollary \ref{completioncorollary0} and (\ref{integralpowerboundedsame}). 

\end{proof}

\subsection{Lifts of Frobenius via the canonical subgroup}In this section, we study how the action of $g$ from (\ref{gdefinition}) gives rise to a characteristic 0 lift of a certain Frobenius operator on certain functions on $\hat{\mathcal{Y}}^{\mathrm{Ig}}(\epsilon)^+$. The precise statement is contained in Theorem \ref{Frobeniuslifttheorem}, with the ``lifting Frobenius'' property being described by (\ref{Frobeniuslift}). We note that since $g$ has inverse $g^{-1}$, we can view $g^{-1}$ as lifting the $p^{\mathrm{th}}$ root map (see (\ref{Frobeniuslift2})). This existence of a characteristic 0 lifting of a root of Frobenius is a salient feature of infinite level.


Recall the set $\mathcal{V}_x \overset{(\ref{Vz})}{=} \{z_{\mathrm{HT}} \neq 0\} \subset Y_{\infty}$. Recall $g \in GL_2(\mathbb{Q}_p)$ from (\ref{gdefinition}). By (\ref{zHTtransformationproperty}), we have $(g^m)^*z_{\mathrm{HT}} = p^m\cdot z_{\mathrm{HT}}$ for every $m \in \mathbb{Z}_{\ge 0}$. Hence $\mathcal{V}_x \cdot g^m = \mathcal{V}_x$ for every $m \in \mathbb{Z}$.

\begin{remark}\label{universalremark}Note that the restriction $\mathcal{E}_{\mathcal{V}_x} \rightarrow \mathcal{V}_x$ of the universal object $\mathcal{E} \rightarrow Y_{\infty}$ to $\mathcal{V}_x \subset Y_{\infty}$ is the universal object classifying quintuples $(A,i,P,e_1,e_2)$, $A$ a (false) elliptic curve, $i$ an $\mathcal{O}_D$-endomorphism structure, $P$ a $\Gamma$-level structure (recall Convention \ref{Yconvention}) and $(e_1,e_2)$ a $\Gamma(p^{\infty})$-level structure with $e_2$ not trivializing the Hodge-Tate filtration (by the definition of $z_{\mathrm{HT}}$, this last condition is equivalent to $z_{\mathrm{HT}} \neq 0$, see for example \cite[Section 2.4]{ChojeckiHansenJohansson} recalling that our $z_{\mathrm{HT}}$ is $1/\frak{z}$ in loc. cit.).
\end{remark} 

We will study properties of certain special functions on $\mathcal{V}_x$ in Theorems \ref{Frobeniuslifttheorem}, \ref{crystalline2}, \ref{pintegraltheorem} and \ref{pintegraltheorem2}.

\begin{convention}\label{Y1convention}Recall the group $\Gamma_1(p^n) \subset GL_2(\hat{\mathbb{Z}})$ from Definition \ref{congruencesubgroups}. Let $\Gamma$ continue to be the tame level structure from Convention \ref{Yconvention}. 
\begin{enumerate}
\item We have associated Shimura curves $\mathbb{Y}(\Gamma \cap \Gamma_1(p^n))$, $Y(\Gamma \cap \Gamma_1(p^n))$, $Y(\Gamma \cap \Gamma_1(p^n))^+$ and $\widehat{Y(\Gamma \cap \Gamma_1(p^n))}^+$ from Sections \ref{algebraicYsection},  \ref{formalShimurasection} and \ref{adicShimurasection}. For brevity, we will denote these objects by $\mathbb{Y}_1(p^n)$, $Y_1(p^n)$ and $Y_1^+(p^n)$, respectively.
\item For any $\epsilon$ in the valuation group of $\mathcal{O}_k$ (see Definition \ref{kdefinition}), we have the formal scheme $Y(\Gamma \cap \Gamma_1(p^n))^+(\epsilon)$ from Definition \ref{YGamma+epsilondefinition}. We will denote this by $Y_1^+(n,\epsilon)$ for brevity. 
\end{enumerate}
\end{convention}

\begin{remark}\begin{enumerate}
\item Note that for any $0 \le \epsilon < p/(p+1)$ in the valuation group of $\mathcal{O}_k$ and $n(\epsilon)$ as in (\ref{nepsilondefinition}), the natural projection 
\begin{equation}\label{IgusaY1map0}\hat{\mathcal{Y}}^{\mathrm{Ig}}(\epsilon')^+ \rightarrow Y_1^+(n(\epsilon'),\epsilon'), \hspace{1cm} (A,i,P,e_1,e_2) \mapsto (A,i,P,e_{1,n(\epsilon')})
\end{equation}
(where $A$ is a (false) elliptic curve, $i$ an $\mathcal{O}_D$-endomorphism structure, $P$ a $\Gamma = \Gamma(N)$-level structure, and $(e_1,e_2)$ a $\Gamma(p^{\infty})$-level structure) is a map of adic spaces over $\mathrm{Spa}(\mathcal{O}_k,\mathcal{O}_k)$. For any $0 \le n \le n(\epsilon')$, we have an inclusion $Y_1^+(n,\epsilon') \subset Y_1^+(n(\epsilon'),\epsilon')$ from the definition, and so composing (\ref{IgusaY1map0}) with this we get
\begin{equation}\label{IgusaY1map}\mathcal{Y}^{\mathrm{Ig}}(\epsilon')^+ \rightarrow Y_1^+(n,n(\epsilon')).
\end{equation}

\item For any $m \in \mathbb{Z}_{> 0}$, $n(\epsilon/p^m) = n(\epsilon) + m$,
$$\left(\mathcal{Y}^{\mathrm{Ig}}(\epsilon) \cap Y_{\infty}(\epsilon/p^m)\right) \rightarrow Y_1(n(\epsilon),\epsilon/p^m), \hspace{1cm} (A,i,P,e_1,e_2) \mapsto (A,i,P,e_{1,n(\epsilon)})$$
is a pro-finite \'{e}tale Galois cover of adic spaces with (pro-finite \'{e}tale) Galois group $\Gamma_{1,p}(p^{n(\epsilon)})$ as defined in (\ref{Gamma1pdefinition}). 

\item Recall that 
$$\mathcal{Y}^{\mathrm{Ig}}(\epsilon/p^m) \subset \left(\mathcal{Y}^{\mathrm{Ig}}(\epsilon) \cap Y_{\infty}(\epsilon/p^m)\right)$$
is a strict inclusion; $e_{1,n(\epsilon)+m}$ (see (\ref{eindefinition})) trivializes the order-$p^{n(\epsilon)+m}$-canonical subgroup on $\mathcal{Y}^{\mathrm{Ig}}(\epsilon/p^m)$, and $e_{1,n(\epsilon)}$ trivializes the order-$p^{n(\epsilon)}$ canonical subgroup on $\mathcal{Y}^{\mathrm{Ig}}(\epsilon)\cap Y_{\infty}(\epsilon/p^m)$ but $e_{1,n(\epsilon) + m}$ does not necessarily trivialize the order-$p^{n(\epsilon) + m}$-canonical subgroup. 
\end{enumerate}
\end{remark}

\begin{definition}For any $\epsilon' \le \epsilon$, let
$$\mathrm{res}_{\epsilon,\epsilon'} : \mathbf{\Gamma}(\mathcal{O}_{\hat{\mathcal{Y}}^{\mathrm{Ig}}(\epsilon)^+})\rightarrow \mathbf{\Gamma}(\mathcal{O}_{\hat{\mathcal{Y}}^{\mathrm{Ig}}(\epsilon')^+})$$
denote restriction induced by the inclusion $\hat{\mathcal{Y}}^{\mathrm{Ig}}(\epsilon')^+ \subset \hat{\mathcal{Y}}^{\mathrm{Ig}}(\epsilon)^+$.
\end{definition}

\begin{theorem}\label{Frobeniuslifttheorem}For any $0\le \epsilon < p/(p+1)$ in the valuation group of $\mathcal{O}_k$ and $\mathcal{V}_x$ as in (\ref{Vz}), suppose we are given
$$F \in \hat{\mathcal{O}}_{Y_{\infty}}(\mathcal{V}_x).$$
Let 
$$f := F|_{\mathcal{Y}^{\mathrm{Ig}}(\epsilon)} \in \mathbf{\Gamma}(\hat{\mathcal{O}}_{\mathcal{Y}^{\mathrm{Ig}}(\epsilon)})$$
denote the restriction to $\mathcal{Y}^{\mathrm{Ig}}(\epsilon) \subset \mathcal{V}_x$. 

Suppose that
\begin{enumerate}
\item $$\mathrm{res}_{\epsilon,\epsilon/p^m}(f) \in \mathbf{\Gamma}(\hat{\mathcal{O}}_{\mathcal{Y}^{\mathrm{Ig}}(\epsilon/p^m)}^+)$$
for some $m \in \mathbb{Z}_{\ge 0}$, and 
\item moreover assume that there exists
\begin{equation}\label{lowerlevelassumption}f_0 \in \mathbf{\Gamma}(\mathcal{O}_{Y_1^+(\alpha+1,\epsilon/p^m)/\frak{m}}) = \mathbf{\Gamma}(\mathcal{O}_{Y_1^+(\alpha+1,\epsilon/p^m)}) \otimes_{\mathcal{O}_k}\mathcal{O}_k/\frak{m}
\end{equation}
for some integer $0 \le \alpha \le m$ such that
\begin{equation}\label{congruenceassumption}\mathrm{res}_{\epsilon,\epsilon/p^m}(f) \equiv f_0|_{\mathcal{Y}^{\mathrm{Ig}}(\epsilon/p^m)^+/\frak{m}} \pmod{\frak{m}\mathbf{\Gamma}(\hat{\mathcal{O}}_{\mathcal{Y}^{\mathrm{Ig}}(\epsilon/p^m)}^+)}, 
\end{equation}
where $f_0|_{\mathcal{Y}^{\mathrm{Ig}}(\epsilon/p^m)^+/\frak{m}}$ denotes the pullback of $f_0$ along the map 
$$\mathcal{Y}^{\mathrm{Ig}}(\epsilon/p^m)^+/\frak{m} \rightarrow Y_1^+(\alpha+1,\epsilon/p^m)/\frak{m}$$
obtained by reducing (\ref{IgusaY1map}) (with $\epsilon' = \epsilon/p^m$ and $n = \alpha+1$) modulo $\frak{m}$. 
\end{enumerate}
(Here, recall $\frak{m} \subset \mathcal{O}_k$ is the maximal ideal, see Definition \ref{kdefinition}. See Convention \ref{Y1convention} for the definition of the formal scheme $Y_1^+(\alpha+1,\epsilon/p^m)$ and Definition \ref{reductionformalschemedefinition} for the notation of $Y_1^+(\alpha+1,\epsilon/p^m)$.)

Then we have the following congruence of elements of $\mathbf{\Gamma}(\hat{\mathcal{O}}_{\mathcal{Y}^{\mathrm{Ig}}(\epsilon/p^m)}^+)$: 
\begin{equation}\label{Frobeniuslift}(g^{m-\alpha})^*\mathrm{res}_{\epsilon,\epsilon/p^{\alpha}}(f) \equiv \mathrm{res}_{\epsilon,\epsilon/p^m}(f)^{p^{m-\alpha}} \pmod{\frak{m}\mathbf{\Gamma}(\hat{\mathcal{O}}_{\mathcal{Y}^{\mathrm{Ig}}(\epsilon/p^m)}^+)}.
\end{equation}
Moreover, 
$$\mathrm{res}_{\epsilon,\epsilon/p^{\alpha}}(f) \in \mathbf{\Gamma}(\hat{\mathcal{O}}_{\mathcal{Y}^{\mathrm{Ig}}(\epsilon/p^{\alpha})}^+)$$
and we have the following congruence of elements of $\mathbf{\Gamma}(\hat{\mathcal{O}}_{\mathcal{Y}^{\mathrm{Ig}}(\epsilon/p^{\alpha})}^+)$:
\begin{equation}\label{Frobeniuslift2}\mathrm{res}_{\epsilon,\epsilon/p^{\alpha}}(f) \equiv ((g^{-(m-\alpha)})^*\mathrm{res}_{\epsilon,\epsilon/p^m}(f))^{p^{m-\alpha}} \pmod{\frak{m}\mathbf{\Gamma}(\hat{\mathcal{O}}_{\mathcal{Y}^{\mathrm{Ig}}(\epsilon/p^{\alpha})}^+)}.
\end{equation}
\end{theorem}

\begin{proof}

Recall $\mathbf{\Gamma}(\hat{\mathcal{O}}_{\mathcal{Y}^{\mathrm{Ig}}(\epsilon/p^m)}^+) \overset{(\ref{integralpowerboundedsame})}{=} \mathbf{\Gamma}(\mathcal{O}_{\hat{\mathcal{Y}}^{\mathrm{Ig}}(\epsilon/p^m)^+})$. We will show the formula (\ref{Frobeniuslift}) holds on any test object on $\hat{\mathcal{Y}}^{\mathrm{Ig}}(\epsilon/p^m)^+$, which then gives the assertion. First, let us make a few observations. Suppose 
$$(A,i,P,e_1^+,e_2^+,u_{\epsilon/p^m}) \in \hat{\mathcal{Y}}^{\mathrm{Ig}}(\epsilon/p^m)^+(R^+,R^+)$$
for any affinoid $(\mathcal{O}_k,\mathcal{O}_k)$-algebra $(R^+,R^+)$; here, $A$ is a (false) elliptic curve over $\mathrm{Spa}(R^+,R^+)$, $i$ is an $\mathcal{O}_D$-endomorphism structure, $P$ is a $\Gamma$-level structure (where $\Gamma = \Gamma(N)$ is as in Convention \ref{Yconvention}), $(e_1^+,e_2^+)$ is a $\Gamma(p^{\infty})$-Drinfeld level structure over $\mathrm{Spa}(R^+,R^+)$, and $u_{\epsilon/p^m}$ is an equivalence class of section of $\omega^{\otimes (1-p)}$ such that $u_{\epsilon/p^m}\cdot \mathrm{Ha} = p^{\epsilon/p^m}$ in $R^+/p$ for a local lift of the Hasse invariant $\mathrm{Ha}$ (cf. Definition \ref{YGamma+epsilondefinition}). Then since for any $\epsilon' \ge \epsilon/p^m$ in the valuation group of $\mathcal{O}_k$ we have $\hat{\mathcal{Y}}^{\mathrm{Ig}}(\epsilon/p^m)^+ \subset \hat{\mathcal{Y}}^{\mathrm{Ig}}(\epsilon')^+$, there exists a local section $u_{\epsilon'}$ of $\omega_+^{\otimes (1-p)}$ ($\omega_+$ as in (\ref{omegaY})) such that 
$$(A,i,P,e_1^+,e_2^+,u_{\epsilon'}) \in \hat{\mathcal{Y}}^{\mathrm{Ig}}(\epsilon')^+(R^+,R^+).$$


Recall the universal object $\mathcal{E}_{\mathcal{V}_x} \rightarrow \mathcal{V}_x$ (Remark \ref{universalremark}) which is the universal (false) elliptic curve with $\mathcal{O}_D$-endomorphism structure, $\Gamma$-level structure and $\Gamma(p^{\infty})$-level structure $(e_1,e_2)$ such that $e_2$ does not trivialize the Hodge-Tate filtration. In particular, there is no data of a local section of $\omega_+^{\otimes (1-p)}$ in this universal object. Because of this, the points $(A,i,P,e_1^+,e_2^+,u_{\epsilon/p^m}) \in \hat{\mathcal{Y}}^{\mathrm{Ig}}(\epsilon/p^m)^+(R^+,R^+) $ and $(A,i,P,e_1^+,e_2^+,u_{\epsilon'}) \in \hat{\mathcal{Y}}^{\mathrm{Ig}}(\epsilon')^+(R^+,R^+)$
map under the inclusions
$$\hat{\mathcal{Y}}^{\mathrm{Ig}}(\epsilon/p^m)^+(R^+,R^+) \subset \hat{\mathcal{V}}_x(R,R^+), \hspace{1cm} \hat{\mathcal{Y}}^{\mathrm{Ig}}(\epsilon')^+(R^+,R^+) \subset \hat{\mathcal{V}}_x(R,R^+)$$
to the \emph{same point} 
$$y = (A,i,P,e_1,e_2)  \in \hat{\mathcal{V}}_x(R,R^+),$$
where here we make a slight abuse of notation and let $(A,P)$ denote $(A,P) \otimes_{R^+}R$. Thus, since $F \in \hat{\mathcal{O}}_{Y_{\infty}}(\mathcal{V}_x)$ by assumption,
$$F(A,i,P,e_1^+,e_2^+,u_{\epsilon/p^m}) = F(A,i,P,e_1,e_2) = F(A,i,P,e_1^+,e_2^+,u_{\epsilon'}).$$
That is, the value of $F$  on any $(A,i,P,e_1^+,e_2^+,u_{\epsilon/p^m}) \in \hat{\mathcal{Y}}^{\mathrm{Ig}}(\epsilon/p^m)^+(R^+,R^+)$ does not depend on the section $u_{\epsilon/p^m}$. Because of these values do not depend on $u_{\epsilon/p^m}$, we will often omit $u_{\epsilon/p^m}$ from the notation whenever we evaluate $F$ (or its restriction $f$) at test objects $(A,i,P,e_1^+,e_2^+,u_{\epsilon/p^m})$ for the remainder of the proof. We will also often omit the $\mathcal{O}_D$-endomorphism structure $i$ from notation below, since it will play no role in our arguments. By (\ref{congruenceassumption}), $f_0$ is also independent of the section $u_{\epsilon/p^m}$, so similarly we will often omit $u_{\epsilon/p^m}$ and $i$ from notation when we evaluate $f_0$ at test objects.

If $0 \le \epsilon < p/(p+1)$, then the first map of Theorem \ref{perfectoidpropertytheorem} as well as the first map of (\ref{gisomorphism}) are equal to relative Frobenius after reducing modulo $p^{1-p\epsilon}$, by \cite[Theorem 3.1]{Katzpamf}.\footnote{Theorem 3.1 of op. cit. is stated for the case $D = M_2(\mathbb{Q})$. In the $D \neq M_2(\mathbb{Q})$ case, for a false elliptic curve $A$, the formal group of $e^1A[p^{\infty}]$ is also 1-dimensional (see Convention \ref{idempotentconvention}), and so the arguments of Chapter 3 of op. cit. apply to this case as well. The assumptions on the level $n$ in op. cit. are to ensure the existence of a global lift of the Hasse invariant on all of $Y^+$. As we will work with Definition \ref{reductionformalschemedefinition}, which only uses local lifts of the Hasse invariant (which always exist by Serre's affineness criterion and the argument of \cite[Section 7]{Kassaei}), we do not need these extra assumptions on the tame level $\Gamma$.} 

Let $0 \le \epsilon < p/(p+1)$, and let $(A,P,e_1^+,e_2^+,u_{\epsilon/p^m}) \in \hat{\mathcal{Y}}^{\mathrm{Ig}}(\epsilon/p^m)^+(R^+,R^+)$ for any $p$-adically complete $(\mathcal{O}_k,\mathcal{O}_k)$-algebra $(R^+,R^+)$ as in the first paragraph of this proof. As mentioned at the end of the second paragraph, we will often omit $u_{\epsilon/p^m}$ from notation when evaluating $F$ (and thus $f$) at objects in $\hat{\mathcal{Y}}^{\mathrm{Ig}}(\epsilon/p^m)^+$, as this value does not depend on $u_{\epsilon/p^m}$, and we will often omit the $\mathcal{O}_D$-endomorphism structure $i$ from notation for brevity. Recall the notation of Convention \ref{idempotentconvention} and Definition \ref{YGamma+definition} (5). Then $\mathbb{Z}_p\cdot e_{1,m+1}^+\subset A[p^{m+1}]$ is the order-$p^{m+1}$ canonical subgroup, and its reduction modulo $\frak{m}R^+$ is the kernel of relative $p^{m+1}$-power Frobenius. Thus, 
$$e_{1,m+1}^+/(\mathbb{Z}_p \cdot e_{1,m-\alpha}^+)\subset A[p^{m+1}]/(\mathbb{Z}_p\cdot e_{1,m-\alpha}^+) \subset A^{(p^{m-\alpha})}[p^{\alpha+1}] \pmod{\frak{m}R^+}$$
is the kernel of relative $p$-power Frobenius, and so
\begin{equation}\label{Frobeniusquotient}e_{1,m+1}^+/(\mathbb{Z}_p \cdot e_{1,m-\alpha}^+) \cong e_{1,\alpha+1}^+\otimes_{R^+}R^+/\frak{m}R^+ \otimes_{R^+/\frak{m}R^+,\mathrm{Frob}^{m-\alpha}}R^+/\frak{m}R^+
\end{equation}
where $\mathrm{Frob}^{m-\alpha} : R^+/\frak{m}R^+ \rightarrow R^+/\frak{m}R^+$ is the $p^{m-\alpha}$-power absolute Frobenius. 

Let $f_0 \in \mathbf{\Gamma}(\mathcal{O}_{Y_1^+(\alpha+1,\epsilon/p^m)/\frak{m}})$ be as in (\ref{congruenceassumption}). Via pullback along the surjective map 
$$Y_1^+(m+1,\epsilon/p^m) \rightarrow Y_1^+(\alpha+1,\epsilon/p^m)$$
(recall $\alpha \le m$), we can view $f_0 \in \mathbf{\Gamma}(\mathcal{O}_{Y_1^+(m+1,\epsilon/p^m)/\frak{m}})$. We will use the same notation for viewing $f_0$ in $\mathbf{\Gamma}(\mathcal{O}_{Y_1^+(\alpha+1,\epsilon/p^m)/\frak{m}})$ or in $\mathbf{\Gamma}(\mathcal{O}_{Y_1^+(m+1,\epsilon/p^m)/\frak{m}})$, as the ambient ring will be obvious from context. 

Note
$$(A,P, e_1^+,e_2^+) \mapsto (A,P,e_{1,m+1}^+)$$
under the natural projection (given by $p$-adically completing (\ref{IgusaY1map0}))
$$\hat{\mathcal{Y}}^{\mathrm{Ig}}(\epsilon/p^m)^+ \rightarrow Y_1^+(m+1,\epsilon/p^m).$$
Moreover, since $\mathbb{Z}_p\cdot e_{1,m-\alpha}$ is the kernel of the $p^{m-\alpha}$-power relative Frobenius,
\begin{equation}\label{Frobeniuscalculation}\begin{split}&(A/\langle e_{1,m-\alpha}^+\rangle,P/\langle e_{1,m-\alpha}^+\rangle, e_{m+1}^+/(\mathbb{Z}_p\cdot e_{1,m-\alpha}^+)) \overset{(\ref{Frobeniusquotient})}{\equiv}  (A^{(p^{m-\alpha})},P^{(p^{m-\alpha})},(e_{1,\alpha+1}^+)^{(p^{m-\alpha})}) \\
&\equiv (A,P,e_{1,\alpha+1}^+) \otimes_{R^+}R^+/\frak{m}R^+ \otimes_{R^+/\frak{m}R^+,\mathrm{Frob}^{m-\alpha}}R^+/\frak{m}R^+\pmod{\frak{m}R^+},
\end{split}
\end{equation}
where 
$$A^{(p^{m-\alpha})} := A \otimes_{R^+}R^+/\frak{m}R^+\otimes_{R^+/\frak{m}R^+,\mathrm{Frob}^{m-\alpha}}R^+/\frak{m}R^+,$$
$$P^{(p^{m-\alpha})} := P \otimes_{R^+}R^+/\frak{m}R^+ \otimes_{R^+/\frak{m}R^+,\mathrm{Frob}^{m-\alpha}}R^+/\frak{m}R^+$$
and 
$$(e_{1,\alpha+1}^+)^{(p^{m-\alpha})} := e_{1,\alpha+1}^+ \otimes_{R^+}R^+/\frak{m}R^+ \otimes_{R^+/\frak{m}R^+,\mathrm{Frob}^{m-\alpha}}R^+/\frak{m}R^+$$
are the base changes along $\mathrm{Frob}^{m-\alpha}: R^+/\frak{m}R^+ \rightarrow R^+/\frak{m}R^+$. 

Recall that $\frak{m}$ is the maximal ideal of $\mathcal{O}_k$. Thus the $Y_1^+(\alpha+1,\epsilon')/\frak{m}$ (in the notation of Definition \ref{reductionformalschemedefinition}) are all isomorphic for $0 < \epsilon' < 1$ in the valuation group of $\mathcal{O}_k$ (see \cite[Lemma III.2.13]{ScholzeTorsion}). Therefore $f_0$
 can be evaluated at $(A/\langle e_{1,m-\alpha}^+\rangle,P/\langle e_{1,m-\alpha}^+\rangle, e_{m+1}^+/(\mathbb{Z}_p\cdot e_{1,m-\alpha}^+)) \pmod{\frak{m}R^+}$. We will use this fact in the second congruence of the chain of congruences in the next paragraph. 

By definition of the $GL_2(\mathbb{Q}_p)$-action (see \cite[End of Section 2.2]{ChojeckiHansenJohansson}) we have 
\begin{align*}(g^{m-\alpha})^*f(A,P,e_1^+,e_2^+) &\overset{(\ref{congruenceassumption})}{\equiv} (g^{m-\alpha})^*f_0(A,P,e_1^+,e_2^+)\\
&\equiv f_0(A/\langle e_{1,m-\alpha}^+\rangle,P/\langle e_{1,m-\alpha}^+\rangle,e_1^+/(\mathbb{Z}_p\cdot e_{1,m-\alpha}^+),e_2^+/(\mathbb{Z}_p\cdot e_{1,m-\alpha}^+)) \\
&\overset{(\ref{lowerlevelassumption})}{\equiv} f_0(A/\langle e_{1,m-\alpha}^+\rangle ,P/\langle e_{1,m-\alpha}^+\rangle,e_{1,m+1}^+/(\mathbb{Z}_p \cdot e_{1,m-\alpha}^+))\\
&\overset{(\ref{Frobeniuscalculation})}{\equiv} f_0((A,P,e_{1,\alpha+1}^+)\otimes_{R^+/\frak{m}R^+,\mathrm{Frob}^{m-\alpha}}R^+/\frak{m}R^+) \\
&\equiv f_0(A,P,e_{1,\alpha+1}^+) \otimes_{R^+/\frak{m}R^+,\mathrm{Frob}^{m-\alpha}}R^+/\frak{m}R^+\\
&\equiv f_0(A,P,e_{1,\alpha+1}^+)^{p^{m-\alpha}}\\
&\overset{(\ref{congruenceassumption})}{\equiv} f(A,P,e_1^+,e_2^+)^{p^{m-\alpha}}  \pmod{\frak{m}R^+}.
\end{align*}
Since the test object $(A,i,P,e_1^+,e_2^+,u_{\epsilon/p^m}) \in \hat{\mathcal{Y}}^{\mathrm{Ig}}(\epsilon/p^m)^+(R^+,R^+)$ was arbitrary, this proves (\ref{Frobeniuslift}).

Applying 
$$(g^{-(m-\alpha)})^* : \mathbf{\Gamma}(\hat{\mathcal{O}}_{\mathcal{Y}^{\mathrm{Ig}}(\epsilon/p^m)}^+) \rightarrow \mathbf{\Gamma}(\hat{\mathcal{O}}_{\mathcal{Y}^{\mathrm{Ig}}(\epsilon/p^{\alpha})}^+)$$
(the inverse of $(g^{m-\alpha})^*$ from Theorem \ref{perfectoidpropertytheorem}, using $\mathbf{\Gamma}(\mathcal{O}_{\hat{\mathcal{Y}}^{\mathrm{Ig}}(\epsilon/p^m)^+}) \overset{(\ref{integralpowerboundedsame})}{=} \mathbf{\Gamma}(\hat{\mathcal{O}}_{\mathcal{Y}^{\mathrm{Ig}}(\epsilon/p^m)}^+)$) to both sides of (\ref{Frobeniuslift}), we get (\ref{Frobeniuslift2}).

\end{proof}

\subsection{Spreading out $p$-integrality}\label{spreadingoutsection}Throughout this section, we continue to use Convention \ref{Gammaconvention}.

\begin{definition}\label{pintegralitydefinition}\begin{enumerate}
\item Given a formal scheme $\frak{X}$ with adic generic fiber $X$ (i.e. $\frak{X}$ is a formal model of $X$ in the sense of Definition \ref{formalmodeldefinition}), we say that $f \in \mathbf{\Gamma}(\mathcal{O}_X)$ is \emph{$p$-integral (with respect to $\frak{X}$)} if it is in the image of the map $\mathbf{\Gamma}(\mathcal{O}_{\frak{X}}) \rightarrow \mathbf{\Gamma}(\mathcal{O}_X^+)$ from (\ref{integralpowerboundedmap}). 
\item As $\frak{X}$ will be obvious from context, and in fact usually $\frak{X} = \hat{\mathcal{Y}}^{\mathrm{Ig}}(\epsilon)^+$ and $X = \hat{\mathcal{Y}}^{\mathrm{Ig}}(\epsilon)$, then we will often just use the term ``$p$-integral'' and suppress the dependence on $\frak{X}$. This should result in no confusion, particularly given (\ref{integralpowerboundedsame}).
\end{enumerate}
\end{definition}

The following Theorem says that (for appropriate $\epsilon$) given a function $f$ on $\hat{\mathcal{V}}_x$ as in (\ref{hatVz}) which is $p$-integral on the neighborhood $\hat{\mathcal{Y}}^{\mathrm{Ig}}(\epsilon/p^m)$ for some $m \in \mathbb{Z}_{\ge 0}$, and which descends modulo $\frak{m}$ to a function $f_0$ on $Y_1^+(\alpha+1,\epsilon/p^m)$ for some integer $0 \le \alpha \le m$ (see Convention \ref{Y1convention} for the definition of $Y_1^+(\alpha+1,\epsilon/p^m)$), then $f$ is $p$-integral on $\hat{\mathcal{Y}}^{\mathrm{Ig}}(\epsilon/p^{\alpha})$. 

\begin{theorem}\label{crystalline2}Suppose we are in the situation of Theorem \ref{Frobeniuslifttheorem}. That is, $\mathcal{V}_x \subset Y_{\infty}$ is as in (\ref{Vz}), $0 \le \epsilon < p/(p+1)$ is in the valuation group of $\mathcal{O}_k$, and suppose we are given
$$F \in \mathbf{\Gamma}(\hat{\mathcal{O}}_{\mathcal{V}_x}).$$
Let
$$f := F|_{\mathcal{Y}^{\mathrm{Ig}}(\epsilon)} \in \mathbf{\Gamma}(\hat{\mathcal{O}}_{\mathcal{Y}^{\mathrm{Ig}}(\epsilon)})$$
denote the restriction. Suppose further that
$$\mathrm{res}_{\epsilon,\epsilon/p^m}(f) \in \mathbf{\Gamma}(\hat{\mathcal{O}}_{\mathcal{Y}^{\mathrm{Ig}}(\epsilon/p^m)}^+),$$
for some $m \in \mathbb{Z}_{\ge 0}$ and that there is some 
$$f_0 \in \mathbf{\Gamma}(\mathcal{O}_{Y_1^+(\alpha+1,\epsilon/p^m)/\frak{m}})$$
for some integer $0 \le \alpha \le m$ (see Convention \ref{Y1convention} for the definition of $Y_1^+(\alpha+1,\epsilon/p^m)$) such that 
\begin{equation}\label{keyassumption}\mathrm{res}_{\epsilon,\epsilon/p^m}(f) \equiv f_0 \pmod{\frak{m}\mathbf{\Gamma}(\hat{\mathcal{O}}_{\mathcal{Y}^{\mathrm{Ig}}(\epsilon/p^m)}^+)}.
\end{equation}
Then
$$\mathrm{res}_{\epsilon,\epsilon/p^{\alpha}}(f) \in \mathbf{\Gamma}(\hat{\mathcal{O}}_{\mathcal{Y}^{\mathrm{Ig}}(\epsilon/p^{\alpha})}^+).$$
\end{theorem}




\begin{proof}
 By (\ref{gisomorphism}) we have 
\begin{equation}\label{isintegral}(g^{-(m-\alpha)})^*\mathrm{res}_{\epsilon,\epsilon/p^{\alpha}}(f) \in \mathbf{\Gamma}(\mathcal{O}_{\hat{\mathcal{Y}}^{\mathrm{Ig}}(\epsilon/p^{\alpha})^+}).
\end{equation}
Since $0\le \epsilon < p/(p+1)$, we get a congruence 
\begin{equation}\label{Frobeniuslift3}\begin{split} \mathrm{res}_{\epsilon,\epsilon/p^{\alpha}}(f) &\overset{(\ref{Frobeniuslift2})}{\equiv} ((g^{-(m-\alpha)})^*(\mathrm{res}_{\epsilon,\epsilon/p^m}(f)))^{p^{m-\alpha}} \\
&\equiv ((g^{-(m-\alpha)})^*\mathrm{res}_{\epsilon,\epsilon/p^m}(f))^{p^{m-\alpha}} \pmod{\frak{m}\mathbf{\Gamma}(\hat{\mathcal{O}}_{\mathcal{Y}^{\mathrm{Ig}}(\epsilon/p^{\alpha})}^+)}.
\end{split}
\end{equation}
By (\ref{isintegral}) and (\ref{Frobeniuslift3}) we get $\mathrm{res}_{\epsilon,\epsilon/p^{\alpha}}(f) \in \mathbf{\Gamma}(\hat{\mathcal{O}}_{\mathcal{Y}^{\mathrm{Ig}}(\epsilon/p^{\alpha})}^+)$. 


\end{proof}

\begin{remark}The assumption (\ref{keyassumption}) (which is (\ref{congruenceassumption}) in Theorem \ref{Frobeniuslifttheorem}) will be satisfied in our main application (Theorem \ref{pintegraltheorem2}) because $f$ will satisfy a certain transformation property under the Galois group 
$$\mathrm{Gal}(\left(\mathcal{Y}^{\mathrm{Ig}}(\epsilon/p^{\alpha})\cap Y_{\infty}(\epsilon/p^m)\right)/Y_1(\alpha+1,\epsilon/p^m)) \cong \Gamma_{1,p}(p^{\alpha+1})$$
which allows $f \pmod{\frak{m}}$ to descend to $Y_1(\alpha+1,\epsilon/p^m)$. See (\ref{Gamma1pdefinition}) and (\ref{weightidentity}). 
\end{remark}

We will later use the following Lemma in order to verify that the assumption $\mathrm{res}_{\epsilon,\epsilon/p^m}(f) \in \mathbf{\Gamma}(\mathcal{O}_{\hat{\mathcal{Y}}^{\mathrm{Ig}}(\epsilon/p^m)^+})$ for some $m \in \mathbb{Z}_{\ge 0}$. 

\begin{lemma}\label{findepsilonmlemma}Let $0 \le \epsilon < p/(p+1)$ and that we are given $f \in \mathbf{\Gamma}(\mathcal{O}_{\hat{\mathcal{Y}}^{\mathrm{Ig}}(\epsilon)}) \overset{(\ref{integralpowerboundedsame})}{=} \mathbf{\Gamma}(\hat{\mathcal{O}}_{\mathcal{Y}^{\mathrm{Ig}}(\epsilon)})$ such that
$$f|_{\hat{\mathcal{Y}}^{\mathrm{Ig}}(0)} \in \mathbf{\Gamma}(\mathcal{O}_{\hat{\mathcal{Y}}^{\mathrm{Ig}}(0)^+}) \overset{(\ref{integralpowerboundedsame})}{=} \mathbf{\Gamma}(\hat{\mathcal{O}}_{\mathcal{Y}^{\mathrm{Ig}}(0)}^+).$$
Then there exists $m \in \mathbb{Z}_{\ge 0}$ such that
$$f|_{\hat{\mathcal{Y}}^{\mathrm{Ig}}(\epsilon/p^m)} \in \mathbf{\Gamma}(\mathcal{O}_{\hat{\mathcal{Y}}^{\mathrm{Ig}}(\epsilon/p^m)^+}) \overset{(\ref{integralpowerboundedsame})}{=} \mathbf{\Gamma}(\hat{\mathcal{O}}_{\mathcal{Y}^{\mathrm{Ig}}(\epsilon/p^m)}^+).$$

\end{lemma}

\begin{proof}By assumption, we have $f|_{\hat{\mathcal{Y}}^{\mathrm{Ig}}(0)} \in \mathbf{\Gamma}(\mathcal{O}_{\hat{\mathcal{Y}}^{\mathrm{Ig}}(0)^+})$. Consider the rational subset $\{|f| \le 1\} \subset \hat{\mathcal{Y}}^{\mathrm{Ig}}(\epsilon)$. By Corollary \ref{rationalintersectioncorollary}, there exists $m \in \mathbb{Z}_{\ge 0}$ with $\mathrm{res}_{\epsilon,\epsilon/p^m}(f) \in \mathbf{\Gamma}(\mathcal{O}_{\hat{\mathcal{Y}}^{\mathrm{Ig}}(\epsilon/p^m)^+})$. 

\end{proof}

\subsection{The coordinates $j_i$}\label{jsection}Recall $\mathbb{Y}(\Gamma(1))$, $\mathbb{Y}(\Gamma(1))^+$ and $Y(\Gamma(1))_{\mathcal{O}_k}^+$ from Definition \ref{YGamma1definition} and the affines $V_0$ (defined over $\mathrm{Spec}(\mathbb{Z}_p)$) and $V$ (defined over $\mathrm{Spec}(\mathcal{O}_k)$) from Definition \ref{V0definition}. 
For later arguments, we will need to consider ``j-invariant-like coordinates'' which arise as pullbacks to $Y_{\infty}$ of affine coordinates on the Shimura curve $\mathbb{Y}(\Gamma(1))$. In particular, these coordinates will be used to trivialize the period sheaf $\mathcal{O}\mathbb{B}_{\mathrm{dR}}^+$ of \cite[Section 6]{Scholze} (recalled in Section \ref{periodsheavessection}) over the open neighborhood 
$$U = \mathcal{Y}^{\mathrm{Ig}}(\epsilon_0) \subset Y_{\infty} \in Y_{\text{pro\'{e}t}}$$
from Definition \ref{Udefinition} below; see Section \ref{trivializeOBdRsection}, and Theorem \ref{Utheorem} for the aforementioned trivialization of $\mathcal{O}\mathbb{B}_{\mathrm{dR},U}^+$. When $D = M_2(\mathbb{Q})$ and $\mathbb{Y}(\Gamma(1))$ is the modular j-line, then a single coordinate $j$ suffices which we choose to be the usual j-invariant. 

For any ring $R$, let
$$\mathbb{A}_R^n := \mathrm{Spec}(R[x_1,x_2,\ldots,x_n]).$$
Recall that $V_0$ is a smooth affine curve over $\mathrm{Spec}(\mathbb{Z}_p)$, and thus there is a closed embedding $V_0 \hookrightarrow \mathbb{A}_{\mathbb{Z}_p}^n$ for some $n \ge 1$. 
\begin{definition}\label{ndefinition}
Let $\mathbf{n} \in \mathbb{Z}_{\ge 1}$ be minimal such that there exists such a closed embedding $V_0 \hookrightarrow \mathbb{A}_{\mathbb{Z}_p}^{\mathbf{n}}$. 
\end{definition}
Base changing along $\times_{\mathrm{Spec}(\mathbb{Z}_p)}\mathrm{Spec}(\mathcal{O}_k)$, we get a closed embedding $V \hookrightarrow \mathbb{A}_{\mathcal{O}_k}^{\mathbf{n}}$. Then for all $1 \le i \le \mathbf{n}$, the projection onto the $i^{\mathrm{th}}$ coordinate
\begin{equation}\label{iso0}p_i : V_0 \rightarrow \mathbb{A}_{\mathbb{Z}_p}^1 = \mathrm{Spec}(\mathbb{Z}_p[x_i])
\end{equation}
is not constant (by minimality of $\mathbf{n}$). Let $V_{0,i}$ denote the \'{e}tale locus of $p_i$. 

\begin{proposition}We have
\begin{equation}\label{etalelocusunion}V_0 = \bigcup_{i = 1}^{\mathbf{n}}V_{0,i}.
\end{equation}
\end{proposition}

\begin{proof}Recall that $V_0 \subset \mathbb{A}_{\mathbb{Z}_p}^{\mathbf{n}}$ is an affine smooth curve over $\mathbb{Z}_p$. Then the immersion $V_0 \subset \mathbb{A}_{\mathbb{Z}_p}^{\mathbf{n}}$ is necessarily regular since both $V_0$ and $\mathbb{A}_{\mathbb{Z}_p}^{\mathbf{n}}$ are smooth over $\mathbb{Z}_p$, and thus is defined by some ideal $(f_1,\ldots f_{\mathbf{n}-1}) \subset \mathbb{Z}_p[x_1,\ldots,x_{\mathbf{n}}]$. By the Jacobian criterion of smoothness, the $(\mathbf{n}-1) \times \mathbf{n}$ Jacobian matrix $J = (\frac{\partial f_i}{\partial x_j})_{1 \le i \le \mathbf{n}-1, 1 \le j \le \mathbf{n}}$ has rank $\mathbf{n}-1$ at every point of $V_0$. At any point in $V_0 \setminus \bigcup_{i = 1}^{\mathbf{n}}V_{0,i}$, every $(\mathbf{n}-1) \times (\mathbf{n}-1)$ minor of $J$ vanishes, which means $J$ has rank less than $\mathbf{n}-1$. Thus $V_0 \setminus \bigcup_{i = 1}^{\mathbf{n}}V_{0,i} = \emptyset$, giving the assertion.

\end{proof}

When $D = M_2(\mathbb{Q})$ then $\mathbf{n} = 1$, and in fact we have a canonical inclusion
$$V_0  \subset \mathbb{Y}(\Gamma(1))^+ \times_{\mathrm{Spec}(\mathbb{Z}[1/\mathrm{disc}(D)])}\mathrm{Spec}(\mathbb{Z}_p) \xrightarrow{\sim} \mathbb{A}_{\mathbb{Z}_p}^1 = \mathrm{Spec}(\mathbb{Z}_p[x_1])$$
where $x_1 = j$ is the usual j-invariant. 

\begin{definition}Henceforth, for all $1 \le i \le \mathbf{n}$ let 
\begin{equation}\label{zerothj}j_i := x_i|_{V_0} \in \mathcal{O}_{\mathbb{A}_{\mathbb{Z}_p}^{\mathbf{n}}}(V_0).
\end{equation}
In particular, when $D = M_2(\mathbb{Q})$, then $j_1 = j$ is the usual \emph{j}-invariant.
\end{definition}

Base-changing $p_i$ to $\mathrm{Spec}(\mathcal{O}_k)$, we get a non-constant map
\begin{equation}\label{iso1}p_i : V \rightarrow \mathbb{A}_{\mathcal{O}_k}^1 = \mathrm{Spec}(\mathcal{O}_k[j_i])
\end{equation}
which we continue to denote by $p_i$. Now recall $\widehat{V}$ (a formal scheme over $\mathrm{Spf}(\mathcal{O}_k)$) is the $p$-adic completion of $V$ and let $\widehat{\mathbb{A}}_{\mathcal{O}_k}^1 = \mathrm{Spf}(\mathcal{O}_k\langle j_i\rangle)$ be the $p$-adic completion of $\mathbb{A}_{\mathcal{O}_k}^1 = \mathrm{Spec}(\mathcal{O}_k[j_i])$. Precomposing (\ref{iso1}) with (\ref{avoidy0inclusion}), we get a sequence of maps of formal schemes over $\mathrm{Spf}(\mathcal{O}_k)$
\begin{equation}\label{iso1'}Y^+(\epsilon) \subset \widehat{V} \xrightarrow{p_i} \widehat{\mathbb{A}}_{\mathcal{O}_k}^1 = \mathrm{Spec}(\mathcal{O}_k[j_i]).
\end{equation}

Letting 
$$V^{\mathrm{ad}} \subset Y(\Gamma(1))$$
denote the adic generic fiber of $\widehat{V} \rightarrow \mathrm{Spf}(\mathcal{O}_k)$ and letting $\mathbb{A}_k^{1,\mathrm{ad}} = \mathrm{Spa}(k\langle j_i\rangle, \mathcal{O}_k\langle j_i\rangle)$ denote the adic generic fiber of $\widehat{\mathbb{A}}_k^1$ (both adic spaces over $\mathrm{Spa}(k,\mathcal{O}_k)$), we get an smooth map of adic spaces over $\mathrm{Spa}(k,\mathcal{O}_k)$
\begin{equation}\label{iso2}p_i : V^{\mathrm{ad}} \rightarrow \mathbb{A}_k^{1,\mathrm{ad}}.
\end{equation}
Taking the adic generic fiber of (\ref{iso1'}) we get a sequence of maps in $Y_{\text{pro\'{e}t}}$ defined over $\mathrm{Spa}(k,\mathcal{O}_k)$ which, in a slight abuse of notation, we also denote by $\pi_i$:
\begin{equation}\label{iso3}
\pi_i : \mathcal{Y}^{\mathrm{Ig}}(\epsilon) \twoheadrightarrow Y(\epsilon) \subset V^{\mathrm{ad}} \xrightarrow{p_i} \mathbb{A}_k^{1,\mathrm{ad}}.
\end{equation}
Here, the first map is pro-finite \'{e}tale, the second map is an open immersion and the third map is smooth.

We also have sequences of morphisms of locally ringed spaces (see Convention \ref{Yconvention} and (\ref{mathbbYbasechange}) for definitions of the objects below)
$$Y_{\infty} \rightarrow Y \rightarrow Y(\Gamma(1)), \hspace{1cm} Y_{\infty}^+\rightarrow Y^+ \rightarrow Y(\Gamma(1))^+ \rightarrow \mathbb{Y}(\Gamma(1))_{\mathcal{O}_k}^+
$$
where in the first sequence, the first morphism is pro-finite \'{e}tale and the second morphism is a finite \'{e}tale map of adic spaces. Pulling back $j_i$ along (\ref{iso1'}), we get a section (which we also denote by $j_i$)
\begin{equation}\label{firstj}j_i \in \mathcal{O}_{Y^+(\epsilon)}(Y^+(\epsilon)).
\end{equation}
Pulling back along $\mathcal{Y}^{\mathrm{Ig}}(\epsilon)^+ \rightarrow Y^+(\epsilon)$, we get
\begin{equation}\label{finalj}j_i \in \mathbf{\Gamma}(\mathcal{O}_{\mathcal{Y}^{\mathrm{Ig}}(\epsilon)^+}),
\end{equation}
recalling $\mathbf{\Gamma}$ from Convention \ref{Gammaconvention}.


If $0 \le \epsilon < p/(p+1)$ is in the valuation group of $\mathcal{O}_k$, then by \cite[Proposition 3.10]{GorenKassaei} we have the following congruence of elements in $\mathcal{O}_{\mathcal{Y}^{\mathrm{Ig}}(\epsilon)^+}(\mathcal{Y}^{\mathrm{Ig}}(\epsilon)^+)$ (cf. (\ref{Frobeniuslift})):
\begin{equation}\label{jcongruence}g^*j_i \equiv j_i^p \pmod{p^{1-\epsilon}\mathbf{\Gamma}(\mathcal{O}_{\mathcal{Y}^{\mathrm{Ig}}(\epsilon)^+})}.
\end{equation}
The congruence in (\ref{jcongruence}) follows from the fact that $j_i$ is a pullback from a section on $V \subset Y(\Gamma(1))_{\mathcal{O}_k}^+$, and the action of $g$ on $\mathcal{Y}^{\mathrm{Ig}}(\epsilon)^+$ corresponds to division by the canonical subgroup. We explain the equality in (\ref{jcongruence}) in detail for the reader's convenience: 
Recall by (\ref{zerothj}) that $j_i$ is a $p$-adic modular form on $V_0$ over $\mathbb{Z}_p$ of weight 0 in the sense of \cite{Katzpamf} and \cite{Kassaei}. Let $\widehat{V}/p^{1-\epsilon}$ be defined as in Definition \ref{reductionformalschemedefinition} for the formal scheme $\widehat{V} \rightarrow \mathrm{Spf}(\mathcal{O}_k)$. Let 
$$\mathbf{F} : \widehat{V}/p^{1-\epsilon} \rightarrow \widehat{V}/p^{1-\epsilon}\times_{\mathrm{Spec}(\mathcal{O}_k/p^{1-\epsilon}),\mathrm{Frob}}\mathrm{Spec}(\mathcal{O}_k/p^{1-\epsilon})$$
denote relative Frobenius (where $\mathrm{Frob} : \mathrm{Spec}(\mathcal{O}_k/p^{1-\epsilon}) \rightarrow \mathrm{Spec}(\mathcal{O}_k/p^{1-\epsilon})$ denotes absolute $p$-power Frobenius). Then since $\mathbf{S}_0$ from (\ref{badCMpointchoice}) is defined over $\mathbb{Z}_p$ and $V_0 = Y(\Gamma(1))^+ \setminus \mathbf{S}_0$ (see Definition \ref{V0definition}) and $V = V_0 \times_{\mathrm{Spec}(\mathbb{Z}_p)} \mathrm{Spec}(\mathcal{O}_k)$, $\widehat{V}/p^{1-\epsilon}$ is preserved by $\mathbf{F}$. Thus, for any $p$-adically complete $\mathcal{O}_k$-algebra $S$ and test object $(A) \in \widehat{V}(S)$ (i.e. $(A)$ is the isomorphism class of a (false) elliptic curve), we have
\begin{align*}(\mathbf{F}^*j_i)(A \otimes_S S/p^{1-\epsilon}) &= j_i(A \otimes_SS/p^{1-\epsilon}\otimes_{S/p^{1-\epsilon},\mathrm{Frob}}S/p^{1-\epsilon}) \\
&= j_i(A)\otimes_S1 \otimes_{S/p^{1-\epsilon},\mathrm{Frob}}1\\
&\equiv 1 \otimes_S 1 \otimes_{S/p^{1-\epsilon},\mathrm{Frob}} j_i(A)^p = j_i(A)^p \pmod{p^{1-\epsilon}S}
\end{align*}
where the second equality follows from the base-change compatibility of $p$-adic modular forms in \cite{Katzpamf} and \cite{Kassaei}. On $\mathcal{Y}^{\mathrm{Ig}}(\epsilon)^+$, the kernel of the isogeny underlying $g$ in the moduli interpretation is the canonical subgroup, and so on $\mathcal{Y}^{\mathrm{Ig}}(\epsilon)^+/p^{1-\epsilon}$ (using the notation of Definition \ref{reductionformalschemedefinition} for the formal scheme $\mathcal{Y}^{\mathrm{Ig}}(\epsilon)^+ \rightarrow \mathrm{Spf}(\mathcal{O}_k)$), the isogeny underlying $g$ is relative Frobenius. Hence we have 
$$g^*j_i \equiv  \mathbf{F}^*j_i \pmod{p^{1-\epsilon}\mathbf{\Gamma}(\mathcal{O}_{\mathcal{Y}^{\mathrm{Ig}}(\epsilon)^+})}$$
when viewing $j_i \in \mathbf{\Gamma}(\mathcal{O}_{\mathcal{Y}^{\mathrm{Ig}}(\epsilon)^+})$. Now (\ref{jcongruence}) follows from the previous two displayed congruences.


\subsection{The \'{e}tale loci $\mathcal{Y}^{\mathrm{Ig}}(\epsilon)_i$}Finally, for later purposes, we will need to consider the \'{e}tale locus of (\ref{iso3}) for each $1 \le i \le \mathbf{n}$ ($\mathbf{n}$ as in Definition \ref{ndefinition}).  
\begin{definition}\label{etalelocusdefinition}Let 
$$\mathcal{Y}^{\mathrm{Ig}}(\epsilon)_i \subset \mathcal{Y}^{\mathrm{Ig}}(\epsilon)$$
denote the \'{e}tale locus of $\pi_i$ from (\ref{iso3}). In particular, $\mathcal{Y}^{\mathrm{Ig}}(\epsilon)_i \subset \mathcal{Y}^{\mathrm{Ig}}(\epsilon)$ is open. Note that when $D = M_2(\mathbb{Q})$ then $\mathbf{n} = 1$ and (\ref{iso3}) is \'{e}tale, so we simply have $\mathcal{Y}^{\mathrm{Ig}}(\epsilon)_1 = \mathcal{Y}^{\mathrm{Ig}}(\epsilon)$. 

\end{definition}

\begin{proposition}We have 
\begin{equation}\label{etalelocusunion2}\mathcal{Y}^{\mathrm{Ig}}(\epsilon) = \bigcup_{i = 1}^{\mathbf{n}}\mathcal{Y}^{\mathrm{Ig}}(\epsilon)_i.
\end{equation}
\end{proposition}

\begin{proof}This follows immediately from (\ref{etalelocusunion}) and (\ref{iso3}).

\end{proof}

\begin{remark}Note that in the modular curve case $D = M_2(\mathbb{Q})$, $\mathbf{n} = 1$ in the context of Definition \ref{ndefinition}. Hence $i = 1$ and we can ignore this subscript in our discussion in this case (see also Remark \ref{Uiremark}). 
\end{remark}

\section{Period Sheaves on Infinite-Level Shimura Curves}\label{ShimuraCurveSection}

\subsection{Period sheaves}\label{periodsheavessection} For the background concepts and details of the constructions recalled in this section, we refer to \cite[Section 4, Section 6]{Scholze} and \cite[(3)]{Scholzecorrigendum}.
Retain the notation of Convention \ref{Yconvention}. Let $\mathbb{B}_{\mathrm{dR},Y}^+$ denote the relative de Rham period sheaf on $Y_{\text{pro\'{e}t}}$, and let $\mathcal{O}\mathbb{B}_{\mathrm{dR},Y}^+$ denote the de Rham structure sheaf, which is equipped with inclusions
$$\mathcal{O}_Y \subset \mathcal{O}\mathbb{B}_{\mathrm{dR},Y}^+, \hspace{1cm} \mathbb{B}_{\mathrm{dR},Y}^+ \subset \mathcal{O}\mathbb{B}_{\mathrm{dR},Y}^+.$$
Let
\begin{equation}\label{'thetas}\theta : \mathbb{B}_{\mathrm{dR},Y}^+ \rightarrow \hat{\mathcal{O}}_Y, \hspace{1cm} \theta_{\mathrm{dR}} : \mathcal{O}\mathbb{B}_{\mathrm{dR},Y}^+ \rightarrow \hat{\mathcal{O}}_Y
\end{equation}
be the natural projections defined in \cite[Section 6]{Scholze}. Then we have natural filtrations 
$$\mathrm{Fil}^i\mathbb{B}_{\mathrm{dR},Y}^+ = (\ker\theta)^i\mathbb{B}_{\mathrm{dR},Y}^+, \hspace{1cm} \mathrm{Fil}^i\mathcal{O}\mathbb{B}_{\mathrm{dR},Y}^+ = (\ker\theta_{\mathrm{dR}})^i\mathcal{O}\mathbb{B}_{\mathrm{dR},Y}^+,$$
where by convention $(\ker\theta)^i = (1)$ and $(\ker\theta_{\mathrm{dR}})^i = (1)$ if $i \le 0$. Also let $\mathcal{O}\mathbb{B}_{\mathrm{dR},Y} = \mathcal{O}\mathbb{B}_{\mathrm{dR},Y}^+[1/t]$ as in loc. cit., where $t$ denotes a local generator of $\mathrm{Fil}^1\mathbb{B}_{\mathrm{dR},Y}^+$. (Later, we will define a distinguished generator $t \in \mathrm{Fil}^1\mathbb{B}_{\mathrm{dR},Y}^+(Y_{\infty})$, see (\ref{'tdefinition}).) Then we have a natural filtration
$$\mathrm{Fil}^i\mathcal{O}\mathbb{B}_{\mathrm{dR},Y} = \sum_{j \in \mathbb{Z}}\left(\mathrm{Fil}^{i+j}\mathcal{O}\mathbb{B}_{\mathrm{dR},Y}^+\right)\cdot t^{-j}.$$

Moreover, the composition
$$\mathbb{B}_{\mathrm{dR},Y}^+ \subset \mathcal{O}\mathbb{B}_{\mathrm{dR},Y}^+ \xrightarrow{\theta_{\mathrm{dR}}} \hat{\mathcal{O}}_Y$$
is equal to $\theta: \mathbb{B}_{\mathrm{dR},Y}^+ \twoheadrightarrow \hat{\mathcal{O}}_Y$, and the composition
$$\mathcal{O}_Y \subset \mathcal{O}\mathbb{B}_{\mathrm{dR},Y}^+ \xrightarrow{\theta_{\mathrm{dR}}} \hat{\mathcal{O}}_Y$$
is the natural $p$-adic completion map.

Let $\Omega_Y$ be the sheaf of K\"{a}hler differentials on $Y_{\text{pro\'{e}t}}$. We also have a connection 
$$\nabla : \mathcal{O}\mathbb{B}_{\mathrm{dR},Y}^+ \rightarrow \mathcal{O}\mathbb{B}_{\mathrm{dR},Y}^+ \otimes_{\mathcal{O}_Y} \Omega_Y.$$
By construction of $\nabla$, the inclusion
$$\mathcal{O}_Y \subset \mathcal{O}\mathbb{B}_{\mathrm{dR},Y}^+ \xrightarrow{\nabla} \mathcal{O}\mathbb{B}_{\mathrm{dR},Y}^+ \otimes_{\mathcal{O}_Y} \Omega_Y$$
factors through the natural derivation $d : \mathcal{O}_Y \rightarrow \Omega_Y$. We have that $\mathbb{B}_{\mathrm{dR},Y}^+$ is the sheaf of horizontal sections of $\nabla$:
$$\mathcal{O}\mathbb{B}_{\mathrm{dR},Y}^{+,\nabla = 0} = \mathbb{B}_{\mathrm{dR},Y}^+.$$

\subsection{Trivializing $\mathcal{O}\mathbb{B}_{\mathrm{dR}}^+$ on $\mathcal{Y}^{\mathrm{Ig}}(\epsilon)$} \label{trivializeOBdRsection}

Recall we view $Y$ over $\mathrm{Spa}(k,\mathcal{O}_k)$ per Conventions \ref{basechangeconvention} and \ref{Yconvention}. Define the tilted structure sheaf on $Y_{\text{pro\'{e}t}}$
$$\hat{\mathcal{O}}_{Y}^{+,\flat} := \varprojlim_{x \mapsto x^p}\hat{\mathcal{O}}_{Y}^+/p^{1-\epsilon}.$$
 It is easily checked that $\hat{\mathcal{O}}_{Y}^{+,\flat}$ coincides with the sheaf $\mathcal{O}_{Y^{\flat}}^+$ from \cite[Definition 5.9]{Scholze}. Given $W \in Y_{\text{pro\'{e}t}}$ we will let $\hat{\mathcal{O}}_W^{+,\flat} := \hat{\mathcal{O}}_Y^{+,\flat}|_W$. Now suppose $0 \le \epsilon < p/(p+1)$. Since $\mathcal{Y}^{\mathrm{Ig}}(\epsilon/p^n) \subset \mathcal{Y}^{\mathrm{Ig}}(\epsilon)$, applying $g^n$ we have 
$$\mathcal{Y}^{\mathrm{Ig}}(\epsilon) \overset{(\ref{gisomorphism})}{=} \mathcal{Y}^{\mathrm{Ig}}(\epsilon/p^n) \cdot g^n \subset \mathcal{Y}^{\mathrm{Ig}}(\epsilon) \cdot g^n$$
which gives a map $\mathbf{\Gamma}(\hat{\mathcal{O}}_{\mathcal{Y}^{\mathrm{Ig}}(\epsilon) \cdot g^n}^+) \rightarrow \mathbf{\Gamma}(\hat{\mathcal{O}}_{\mathcal{Y}^{\mathrm{Ig}}(\epsilon)}^+)$. Thus via pullback by $g^{-n}$ we get a map
$$(g^{-n})^* : \mathbf{\Gamma}(\hat{\mathcal{O}}_{\mathcal{Y}^{\mathrm{Ig}}(\epsilon)}^+) \xrightarrow{\sim} \mathbf{\Gamma}(\hat{\mathcal{O}}_{\mathcal{Y}^{\mathrm{Ig}}(\epsilon) \cdot g^n}^+) \rightarrow \mathbf{\Gamma}(\hat{\mathcal{O}}_{\mathcal{Y}^{\mathrm{Ig}}(\epsilon)}^+).$$
Recall $j_i \in \mathbf{\Gamma}(\hat{\mathcal{O}}_{\mathcal{Y}^{\mathrm{Ig}}(\epsilon)^+})$ from (\ref{finalj}). Therefore, for every $n \ge 0$, we have 
$$g^{-n,*}j_i = g^{-(n+1),*}(g^*j_i) \overset{(\ref{jcongruence})}{\equiv} g^{-(n+1),*}j_i^p = (g^{-(n+1),*}j_i)^p \pmod{p^{1-\epsilon}\mathbf{\Gamma}(\hat{\mathcal{O}}_{\mathcal{Y}^{\mathrm{Ig}}(\epsilon)}^+)}.$$
Hence we have a well-defined section
\begin{equation}\label{jflat}j_i^{\flat} := (j_i,(g^{-1})^*j_i,(g^{-2})^*j_i,\ldots) \in \mathbf{\Gamma}(\hat{\mathcal{O}}_{\mathcal{Y}^{\mathrm{Ig}}(\epsilon)\cdot}^{+,\flat}),
\end{equation}
whose Teichm\"{u}ller lift gives a section 
$$[j_i^{\flat}] \in \mathbf{\Gamma}(\mathbb{B}_{\mathrm{dR},\mathcal{Y}^{\mathrm{Ig}}(\epsilon)}^+).$$

We now introduce a special open subset $U \subset Y_{\infty}$ which will recur throughout the rest of the paper.

\begin{definition}\label{Udefinition}Fix 
$$0 < \epsilon_0 < p/(p+1)$$
in the valuation group of $\mathcal{O}_k$ (see Definition \ref{kdefinition}). For every $1 \le i \le \mathbf{n}$ (where $\mathbf{n}$ is as in Definition \ref{ndefinition}), define
$$U := \mathcal{Y}^{\mathrm{Ig}}(\epsilon_0), \hspace{1cm} U_i := \mathcal{Y}^{\mathrm{Ig}}(\epsilon_0)_i$$
where $\mathcal{Y}^{\mathrm{Ig}}(\epsilon_0)_i$ is as in Definition \ref{etalelocusdefinition} with $\epsilon = \epsilon_0$. In particular, (\ref{etalelocusunion2}) implies 
\begin{equation}\label{etalelocusunion3}U = \bigcup_{i = 1}^{\mathbf{n}}U_i.
\end{equation} 
\end{definition}

\begin{remark}Throughout the rest of the text, $\epsilon_0$ will denote the choice fixed in Definition \ref{Udefinition}, whereas $\epsilon, \epsilon'$ will denote general elements in the valuation group of $\mathcal{O}_k$.
\end{remark}

\begin{remark}\label{Uiremark}The sets $U_i$ and subsets thereof (such as those defined in (\ref{'U'definition})) will appear ubiquitously in the ensuing discussion until Lemma \ref{thetaXGamma0commutelemma}. In order to avoid becoming bogged down in notation, on a first reading the reader might specialize to the modular curve case $D = M_2(\mathbb{Q})$ in which we have $\mathbf{n} = 1$ and $U = U_1$. Thus one may eliminate the subscripts ``$i$'' entirely from the discussion in this case. The arguments in the general $D$ case are nearly the same except for the occasional need to glue certain objects initially defined on each $U_i$ to get objects on $U$. (See Section \ref{gluingsection}.)

\end{remark}

\begin{theorem}\label{Utheorem}
For every $1 \le i \le \mathbf{n}$ (where $\mathbf{n}$ is as in Definition \ref{ndefinition}), we have an isomorphism of sheaves on $Y_{\text{pro\'{e}t}}/U_i$
\begin{equation}\label{Utriv}\mathcal{O}\mathbb{B}_{\mathrm{dR},U_i}^+ \cong \mathbb{B}_{\mathrm{dR},U_i}^+\llbracket X_i\rrbracket
\end{equation}
sending $j_i \mapsto [j_i^{\flat}] + X_i$. This isomorphism is compatible with connections and filtrations (giving $X_i$ filtration degree 1).
\end{theorem}

\begin{proof}We will follow the strategy of \cite[proof of Proposition 6.10]{Scholze}. We have an obvious map
\begin{equation}\label{easymap}\mathbb{B}_{\mathrm{dR},U_i}^+\llbracket X_i\rrbracket \rightarrow \mathcal{O}\mathbb{B}_{\mathrm{dR},U_i}^+
\end{equation}
given by $X_i \mapsto j_i - [j_i^{\flat}]$ and extending $\mathbb{B}_{\mathrm{dR},U_i}^+$-linearly; note that this is well-defined since $X_i \in \ker\theta_{\mathrm{dR}}$ and $\mathcal{O}\mathbb{B}_{\mathrm{dR},U_i}^+$ is $\ker\theta_{\mathrm{dR}}$-adically complete. 

Now we construct the inverse map. Let 
$$\theta_{X_i} : \mathbb{B}_{\mathrm{dR},U_i}^+\llbracket X_i\rrbracket \rightarrow \mathbb{B}_{\mathrm{dR},U_i}^+$$
be reduction modulo $(X_i)$. As in \cite[proof of Proposition 6.10]{Scholze}, by the definition of $\mathcal{O}\mathbb{B}_{\mathrm{dR}}^+$ (Section 6 of op. cit. and \cite{Scholzecorrigendum}) it suffices to show that there is a unique map
\begin{equation}\label{desiredOmap}\mathcal{O}_{U_i} \rightarrow \mathbb{B}_{\mathrm{dR},U_i}^+\llbracket X_i\rrbracket
\end{equation}
whose composition with $\theta \circ \theta_{X_i} : \mathbb{B}_{\mathrm{dR},U_i}^+ \rightarrow \hat{\mathcal{O}}_{U_i}$ is the natural $p$-adic completion map $\mathcal{O}_{U_i} \rightarrow \hat{\mathcal{O}}_{U_i}$. 

Let $\mathbb{A}_{\mathbb{Q}_p}^{1,\mathrm{ad}} = \mathrm{Spa}(\mathbb{Q}_p\langle j_i\rangle, \mathbb{Z}_p\langle j_i\rangle)$, and note that we have a map
$$\mathcal{O}_{\mathbb{A}_{\mathbb{Q}_p}^{1,\mathrm{ad}}}(\mathbb{A}_{\mathbb{Q}_p}^{1,\mathrm{ad}}) \rightarrow \mathbb{B}_{\mathrm{dR},U_i}^+\llbracket X_i\rrbracket$$
given by $j_i \mapsto [j_i^{\flat}] + X_i$, where $j_i^{\flat}$ is defined in (\ref{jflat}). 

Suppose $U' = \varprojlim_m U_m' \in Y_{\text{pro\'{e}t}}/U_i$ is any affinoid perfectoid. We will construct the evaluation of the map (\ref{desiredOmap}) at any such affinoid perfectoid $U'$, and that such maps naturally glue over all affinoid perfectoid $U' \in Y_{\text{pro\'{e}t}}/U_i$. Then since affinoid perfectoids form a basis of $Y_{\text{pro\'{e}t}}/U_i$ by \cite[Proposition 4.8]{Scholze}, we get the sheaf map (\ref{desiredOmap}). 


We will need the following analogue of \cite[Lemma 6.13]{Scholze}. 

\begin{lemma}\label{intermediatelemma}Suppose $\mathrm{Spa}(R,R^+)$ is an affinoid adic space of finite type which is defined over $\mathrm{Spa}(\mathbb{Q}_p,\mathbb{Z}_p)$ and equipped with an \'{e}tale map $\mathrm{Spa}(R,R^+) \rightarrow \mathrm{Spa}(\mathbb{Q}_p\langle j_i\rangle,\mathbb{Z}_p\langle j_i \rangle)$ of adic spaces over $\mathrm{Spa}(\mathbb{Q}_p,\mathbb{Z}_p)$. Then there exists a finitely generated $\mathbb{Z}_p[j_i]$-algebra $R_0^+$ such that $R_0 = R_0^+[1/p]$ is \'{e}tale over $\mathbb{Q}_p[j_i]$
and $R^+$ is the $p$-adic completion of $R_0^+$.  
\end{lemma}

\begin{proof}[Proof of Lemma \ref{intermediatelemma}]We follow the proof of Lemma 6.13 of op. cit. almost verbatim. One uses \cite[Corollary 1.7.3 (iii)]{Huber} to construct $(R_0,R_0^+)$. It remains to see that $R_0^+$ is a finitely generated $\mathbb{Z}_p[j_i]$-algebra. Remark 1.2.6 (iii) of op. cit. shows that it is the integral closure of a finitely generated $\mathbb{Z}_p[j_i]$-algebra $S_0^+ \subset R_0^+$, such that $S_0^+ \subset R_0$ and $S_0^+[1/p] = R_0$. But $\mathbb{Z}_p$ is excellent, and so for any reduced, flat, finitely generated $\mathbb{Z}_p$-algebra $S^+$, the normalization of $S^+$ in $S^+[1/p]$ is finite over $S^+$, which gives the desired finite-generatedness. 
\end{proof}

Applying Lemma \ref{intermediatelemma} to the \'{e}tale map $U_m' \rightarrow \mathrm{Spa}(\mathbb{Q}_p\langle j_i\rangle, \mathbb{Z}_p\langle j_i\rangle)$, we get algebras $R_{m,0}^+$ such that $R_{m,0} = R_{m,0}^+[1/p]$ is \'{e}tale over $\mathbb{Q}_p[j_i]$, and $\mathcal{O}_U^+(U_{m}')$ is the $p$-adic completion of $R_{m,0}^+$. Applying Hensel's lemma, we get a $\mathbb{Q}_p[j_i]$-linear map 
$$R_{m,0} \rightarrow \mathbb{B}_{\mathrm{dR},U_i}^+(U')\llbracket X_i\rrbracket$$
sending $j_i \mapsto [j_i^{\flat}] + X_i$. 
This restricts to a $\mathbb{Z}_p[j_i]$-linear map $R_{m,0}^+ \rightarrow \mathbb{B}_{\mathrm{dR},U_i}^+(U')\llbracket X_i\rrbracket$. This extends by $p$-adic completion (cf. \cite[Lemma 6.11]{Scholze}) to a $\mathbb{Z}_p\langle j_i\rangle$-linear map 
\begin{equation}\label{previousdisplayedmap}\mathcal{O}_{U_i}^+(U_{m}') \rightarrow \mathbb{B}_{\mathrm{dR},U_i}^+(U')\llbracket X_i\rrbracket
\end{equation}
sending $j_i \mapsto [j_i^{\flat}] + X_i$. 
This gives a $\mathbb{Q}_p\langle j_i\rangle$-linear map 
$$\mathcal{O}_{U_i}(U_{m}') \rightarrow \mathbb{B}_{\mathrm{dR},U_i}^+(U')\llbracket X_i\rrbracket$$
sending $j_i \mapsto [j_i^{\flat}] + X_i$. 
Taking the direct limit over $m$ and $n$, we get a map 
\begin{equation}\label{affinoiddesiredOmap}\mathcal{O}_{U_i}(U') \rightarrow \mathbb{B}_{\mathrm{dR},U_i}^+(U')\llbracket X_i\rrbracket
\end{equation}
sending $j_i \mapsto [j_i^{\flat}] + X_i$ and whose composition with $\theta \circ \theta_{X_i}$ is the natural map $\mathcal{O}_{U_i}(U') \rightarrow \hat{\mathcal{O}}_{U_i}(U')$. By uniqueness of the maps (\ref{affinoiddesiredOmap}) satisfying the above properties, these maps glue over all affinoid perfectoids $U' \in Y_{\text{pro\'{e}t}}/U_i$. This finishes the proof.

\end{proof}


\begin{definition}\label{Xextenddefinition}
Define the collection of opens 
$$V_m := \{|z_{\mathrm{HT}}| > p^{-m}\} \subset \mathcal{V}_x$$
for $m \in \mathbb{Z}$. Then $V_m \subset V_{m+1}$ and for any $M \in \mathbb{Z}$,
\begin{equation}\label{Vxunion}\mathcal{V}_x = \bigcup_{m \in\mathbb{Z}_{\ge M}}V_m,
\end{equation}
and for $g$ as in (\ref{gdefinition}), since $g^*z_{\mathrm{HT}} = p\cdot z_{\mathrm{HT}}$ and $(g^{-1})^*z_{\mathrm{HT}} = z_{\mathrm{HT}}/p$ by (\ref{zHTtransformationproperty}), we have mutually inverse isomorphisms given by that $GL_2(\mathbb{Q}_p)$-action
\begin{equation}\label{Vmcompatibilities}g : V_m \xrightarrow{\sim} V_{m+1}, \hspace{1cm} g^{-1} : V_{m+1} \xrightarrow{\sim} V_m
\end{equation}
for all $m \in \mathbb{Z}$. 
\end{definition}

\begin{definition}For any $1 \le i \le \mathbf{n}$ (where $\mathbf{n}$ is as in Definition \ref{ndefinition}), recall the open subset $U_i \subset U$ from Definition \ref{Udefinition}. For any $m \in \mathbb{Z}$, let
\begin{equation}\label{Um}U_m := U \cdot g^m \overset{(\ref{YIgVx}), (\ref{zHTtransformationproperty})}{\subset} \{p^m\cdot z_{\mathrm{HT}} \neq 0\} \overset{(\ref{Vz})}{=} \mathcal{V}_x, \hspace{1cm} U_{i,m} := U_i \cdot g^m \subset U_m \subset \mathcal{V}_x.
\end{equation}
Then applying $g^m$ to (\ref{etalelocusunion3}), we see that
\begin{equation}\label{etalelocusunion4}U_m = \bigcup_{i = 1}^{\mathbf{n}}U_{i,m}.
\end{equation}
\end{definition}

Pullback by $g^{-m} : U_m \rightarrow U$ induces an isomorphism $(g^{-m})^* : \mathbf{\Gamma}(\hat{\mathcal{O}}_U^+) \xrightarrow{\sim} \mathbf{\Gamma}(\hat{\mathcal{O}}_{U_m}^+)$. For any $1 \le i \le \mathbf{n}$ (where $\mathbf{n}$ is as in Definition \ref{ndefinition}) and any $m \in \mathbb{Z}_{\ge 0}$, we have
\begin{equation}\label{gjflat}\begin{split}(g^{-m})^*j_i^{\flat} &\overset{(\ref{jflat})}{=} (g^{-m})^*(j_i,(g^{-1})^*j_i,(g^{-2})^*j_i,\ldots,(g^{-n})^*j_i,\ldots) \\
&= ((g^{-m})^*j_i,(g^{-m-1})^*j_i,(g^{-m-2})^*j_i,\ldots,(g^{-m-n})^*j_i,\ldots) \\
&\overset{(\ref{jcongruence})}{\equiv} ((g^{-m})^*j_i,((g^{-m})^*j_i)^{1/p},((g^{-m})^*j_i)^{1/p^2},\ldots,((g^{-m})^*j_i)^{1/p^n},\ldots) \pmod{p^{1-\epsilon_0}\mathbf{\Gamma}(\hat{\mathcal{O}}_{U_m}^+)}
\end{split}
\end{equation}
where $((g^{-m})^*j_i)^{1/p^n}$ denotes some section with $(((g^{-m})^*j_i)^{1/p^n})^{p^n} = (g^{-m})^*j_i$. Thus 
$$(g^{-m})^*j_i^{\flat}\in \mathbf{\Gamma}(\hat{\mathcal{O}}_{U_m}^{+,\flat}).$$

\begin{definition}\label{Xextenddefinition2}
If moreover $m \in \mathbb{Z}_{\ge 0}$, let
\begin{equation}\label{Xm}X_{i,m} := (g^{-m})^*X_i = (g^{-m})^*(j_i - [j_i^{\flat}]) \in \mathcal{O}\mathbb{B}_{\mathrm{dR},U_m}^+(U_m).
\end{equation}
\end{definition}

\begin{proposition}Let $U = \mathcal{Y}^{\mathrm{Ig}}(\epsilon_0)$ be as in Definition \ref{Udefinition}. 
 For any $m \in \mathbb{Z}$, we have 
\begin{equation}\label{Uminclusion}U_m \subset U_{m+1}.
\end{equation}
\end{proposition}

\begin{proof}First note that from the definitions (\ref{U}), we have 
$$\mathcal{Y}^{\mathrm{Ig}}(\epsilon_0/p) \subset \mathcal{Y}^{\mathrm{Ig}}(\epsilon_0) = U.$$
Applying $g^{m+1}$, we get
$$\mathcal{Y}^{\mathrm{Ig}}(\epsilon_0/p)\cdot g^{m+1} \subset U \cdot g^{m+1}.$$
Thus
$$U_m = U \cdot g^m = \mathcal{Y}^{\mathrm{Ig}}(\epsilon_0) \cdot g^m \overset{(\ref{gisomorphism})}{=} \mathcal{Y}^{\mathrm{Ig}}(\epsilon_0/p) \cdot g^{m+1} \subset U \cdot g^{m+1} = U_{m+1}.$$


\end{proof}

Letting $b$ be as in Lemma \ref{blemma}, we have $V_{-b} = \{|z_{\mathrm{HT}}| > p^b\} \overset{(\ref{zUb})}{\subset} U$. Applying $g^m$ to both sides, we have 
\begin{equation}\label{V-binclusion}V_{-b} \cdot g^m \subset U \cdot g^m = U_m.
\end{equation}
Thus for any $M \in \mathbb{Z}$,
$$\mathcal{V}_x \overset{(\ref{Vxunion})}{=} \bigcup_{m \in \mathbb{Z}_{\ge M-b}}V_m = \bigcup_{m \in \mathbb{Z}_{\ge M}}V_{-b+m} \overset{(\ref{Vmcompatibilities})}{=} \bigcup_{m \in \mathbb{Z}_{\ge M}}V_{-b}\cdot g^m \overset{(\ref{V-binclusion})}{\subset} \bigcup_{m \in \mathbb{Z}_{\ge M}}U_m.$$
Since $U_m \overset{(\ref{Um})}{\subset} \mathcal{V}_x$ for every $m \in \mathbb{Z}$, we thus have 
\begin{equation}\label{Vxunion'}\mathcal{V}_x = \bigcup_{m \in \mathbb{Z}_{\ge M}}U_m
\end{equation}
for any $M \in \mathbb{Z}$.

\begin{corollary}Let $X_{i,m}$ be as in (\ref{Xm}). We have an isomorphism of rings
\begin{equation}\label{Vxtriv}\mathcal{O}\mathbb{B}_{\mathrm{dR},U_{i,m}}^+(U_{i,m}) \cong \mathbb{B}_{\mathrm{dR},U_{i,m}}^+(U_{i,m})\llbracket X_{i,m}\rrbracket
\end{equation}
compatible with connections and filtrations (giving $X_{i,m}$ filtration degree 1). 
\end{corollary}

\begin{proof}

For any $m \in \mathbb{Z}_{\ge 0}$, define an isomorphism of rings 
$$\mathcal{O}\mathbb{B}_{\mathrm{dR},U_{i,m}}^+(U_{i,m}) \xrightarrow{\sim}  \mathbb{B}_{\mathrm{dR},U_{i,m}}^+(U_{i,m})\llbracket X_{i,m}\rrbracket$$
as the composition of isomorphisms of rings
$$\mathcal{O}\mathbb{B}_{\mathrm{dR},U_{i,m}}^+(U_{i,m}) \xrightarrow{(g^m)^*} \mathcal{O}\mathbb{B}_{\mathrm{dR},U_i}^+(U_i) \overset{(\ref{Utriv})}{\cong} \mathbb{B}_{\mathrm{dR},U_i}^+(U_i)\llbracket X_i\rrbracket \xrightarrow{(g^{-m})^*} \mathbb{B}_{\mathrm{dR},U_{i,m}}^+(U_{i,m})\llbracket X_{i,m}\rrbracket.$$
Since the $GL_2(\mathbb{Q}_p)$-action preserves filtrations and commutes with connections, the isomorphisms are compatible with connections and filtrations. 
 This gives (\ref{Vxtriv}). 

\end{proof}

\section{$z_{\mathrm{dR}}, q_{\mathrm{dR}}$ and $q_{\mathrm{dR}}$-Expansions}\label{zdRqdRqdRexpsection}Continuing the setting of Section \ref{Ysection} and Convention \ref{Yconvention}, recall that $Y$ is an adic space over $\mathrm{Spa}(k,\mathcal{O}_k)$ where $k$ is as in Definition \ref{kdefinition}, and $Y_{\infty} \in Y_{\text{pro\'{e}t}}$ is the uncompleted infinite-level modular curve, with strong completion $\hat{Y}_{\infty}$ (see \cite[Proposition 2.4.4]{ScholzeWeinstein}). In this section, we develop the theory of the de Rham period $z_{\mathrm{dR}}$ (Definition \ref{'zqwdefinition}, (\ref{gluezdRsection}) and (\ref{zqw2})) which measures the position of the Hodge filtration $H^{1,0} \subset H_{\mathrm{dR}}^1$ in de Rham cohomology, in analogy with the Hodge-Tate period $z_{\mathrm{HT}}$ of \cite{ScholzeTorsion} which measures the position of the Hodge-Tate filtration $H^{0,1} \subset H_{\text{\'{e}t}}^1$ in \'{e}tale cohomology. We will then introduce the notion of $q_{\mathrm{dR}}$-expansions (Definition \ref{'zqexpansions}), which generalizes the notion of Serre-Tate expansions on the ordinary locus and provides an analogous theory of power series expansions of $p$-adic modular forms on the supersingular locus. These latter expansions will provide a key step in our constructions of $p$-adic $L$-functions in Sections \ref{padicLfunctionsection} and \ref{padicLfunctionsection2}.

\subsection{Relative de Rham and \'{e}tale cohomologies}

Recall that $\pi : \mathcal{E} \rightarrow Y$ is the universal object (i.e. the universal (false) elliptic curve with $\Gamma$-level structure, $\Gamma(p^{\infty})$-level structure, and $\mathcal{O}_D$-endomorphism structure) and $\omega = \pi_*\Omega_{\mathcal{E}/Y}$ as in (\ref{omegaY})

Let us briefly view $Y$ and $Y_{\infty}$ as defined over $\mathrm{Spa}(\mathbb{Q}_p,\mathbb{Z}_p)$ again. We will base change back to $\mathrm{Spa}(k,\mathcal{O}_k)$ again (as in Convention \ref{basechangeconvention}) at the beginning of Section \ref{XthetaXsection}. Let
$$H_{\text{\'{e}t}}^1(\mathcal{E}) = R^1\pi_*\hat{\mathbb{Z}}_{p,\mathcal{E}}$$
be universal \'{e}tale cohomology. This is a $\hat{\mathbb{Z}}_{p,Y}$-local system of rank 2 on $Y_{\text{pro\'{e}t}}$, in the sense of \cite{Scholze}. Let 
$$T_p\mathcal{E} = \varprojlim_n \mathcal{E}[p^n]$$
be as in Convention \ref{idempotentconvention}. This is again a $\hat{\mathbb{Z}}_{p,Y}$-local system of rank 2 on $Y_{\text{pro\'{e}t}}$. Let
$$\langle \cdot, \cdot \rangle : T_p\mathcal{E} \times T_p\mathcal{E} \rightarrow \hat{\mathbb{Z}}_{p,Y}(1)$$
be the universal Weil pairing, where $``(1)''$ denotes the Tate twist. By its nondegeneracy, we get a natural identification 
$$T_p\mathcal{E} = H_{\text{\'{e}}t}^1(\mathcal{E})(1).$$
Let
\begin{equation}\label{etaletrivialization}(e_1,e_2) : \hat{\mathbb{Z}}_{p,Y_{\infty}}^{\oplus 2} \xrightarrow{\sim} T_p\mathcal{E}|_{Y_{\infty}} = H_{\text{\'{e}t}}^1(\mathcal{E})(1)|_{Y_{\infty}}
\end{equation}
be the $\Gamma(p^{\infty})$-level structure. In particular, we have a section
\begin{equation}\label{universalWeil}\langle e_1, e_2\rangle \in \hat{\mathbb{Z}}_{p,Y}(1)(Y_{\infty}).
\end{equation}
Let 
$$\hat{\mathcal{O}}_Y^{+,\flat} := \varprojlim_{x \mapsto x^p}\hat{\mathcal{O}}_Y^+/p$$
(cf. \cite[Definition 5.9]{Scholze}; $\hat{\mathcal{O}}_Y^{+,\flat}$ is equal to $\hat{\mathcal{O}}_{Y^{\flat}}^+$ in loc. cit.), and
$$[\cdot ] : \hat{\mathcal{O}}_Y^{+,\flat} \rightarrow W(\hat{\mathcal{O}}_Y^{+,\flat})$$
denote the Teichm\"{u}ller lift which, composed with the natural map $W(\hat{\mathcal{O}}_Y^{+,\flat}) \rightarrow \mathbb{B}_{\mathrm{dR},Y}^+$ from \cite[Section 6]{Scholze}, gives a map
$$[\cdot ] : \hat{\mathcal{O}}_Y^{+,\flat} \rightarrow \mathbb{B}_{\mathrm{dR},Y}^+.$$
Note that there is a natural inclusion $\hat{\mathbb{Z}}_{p,Y}(1) \subset \hat{\mathcal{O}}_Y^{+,\flat}$. Hence, from the Weil pairing (\ref{universalWeil}), we get a section
\begin{equation}\label{'tdefinition}t  := \log([\langle e_1,e_2\rangle]) \in \mathbb{B}_{\mathrm{dR},Y}^+(Y_{\infty}),
\end{equation}
which is a generator of $\mathrm{Fil}^1\mathbb{B}_{\mathrm{dR},Y}^+(Y_{\infty}) = \ker\theta$.

Let
$$H_{\mathrm{dR}}^1(\mathcal{E}) = R^1\pi_{\mathrm{dR},*}(0 \rightarrow \mathcal{O}_{\mathcal{E}} \rightarrow \Omega_{\mathcal{E}/Y} \rightarrow 0)$$
be the universal de Rham cohomology, a rank 2, locally free $\mathcal{O}_Y$-module on $Y_{\text{pro\'{e}t}}$. It is equipped with the Hodge filtration
$$\mathrm{Fil}^iH_{\mathrm{dR}}^1(\mathcal{E}) = \begin{cases} H_{\mathrm{dR}}^1(\mathcal{E}) & i \le 0\\
\omega & i = 1\\
0 & i \ge 2\\
\end{cases}.$$
The $\mathcal{O}_Y$-module $H_{\mathrm{dR}}^1(\mathcal{E})$ is also equipped with the Gauss-Manin connection
$$\nabla : H_{\mathrm{dR}}^1(\mathcal{E}) \rightarrow H_{\mathrm{dR}}^1(\mathcal{E}) \otimes_{\mathcal{O}_Y}\Omega_Y.$$
Using the Leibniz rule and the natural connection $\nabla : \mathcal{O}\mathbb{B}_{\mathrm{dR},Y}^{(+)} \rightarrow \mathcal{O}\mathbb{B}_{\mathrm{dR},Y}^{(+)} \otimes_{\mathcal{O}_Y}\Omega_Y$, we can extend $\nabla$ to a connection
$$\nabla : H_{\mathrm{dR}}^1(\mathcal{E}) \otimes_{\mathcal{O}_Y} \mathcal{O}\mathbb{B}_{\mathrm{dR},Y}^{(+)} \rightarrow H_{\mathrm{dR}}^1(\mathcal{E}) \otimes_{\mathcal{O}_Y}\mathcal{O}\mathbb{B}_{\mathrm{dR},Y}^{(+)} \otimes_{\mathcal{O}_Y}\Omega_Y.$$
Here, ``$(+)$'' denotes the optional presence of a ``$+$''. We give $H_{\mathrm{dR}}^1(\mathcal{E}) \otimes_{\mathcal{O}_Y}\mathcal{O}\mathbb{B}_{\mathrm{dR},Y}^{(+)}$ the filtration given by the convolution of the Hodge filtration with the filtration on $\mathcal{O}\mathbb{B}_{\mathrm{dR},Y}^{(+)}$. That is,
\begin{equation}\label{Hodgefiltration}\mathrm{Fil}^i\left(H_{\mathrm{dR}}^1(\mathcal{E}) \otimes_{\mathcal{O}_Y}\mathcal{O}\mathbb{B}_{\mathrm{dR},Y}^{(+)}\right) = \sum_{j \in \mathbb{Z}}\mathrm{Fil}^{i+j}H_{\mathrm{dR}}^1(\mathcal{E}) \otimes_{\mathcal{O}_Y}\mathrm{Fil}^{-j}\mathcal{O}\mathbb{B}_{\mathrm{dR},Y}^{(+)}.
\end{equation}
We sometimes refer to the inclusion
\begin{equation}\label{Hodgefiltrationinclusion}\omega = \mathrm{Fil}^1H_{\mathrm{dR}}^1(\mathcal{E}) \subset H_{\mathrm{dR}}^1(\mathcal{E})
\end{equation}
as the ``Hodge filtration''.

We give the $\hat{\mathbb{Z}}_{p,Y}$-local systems $H_{\text{\'{e}t}}^1(\mathcal{E})$ and $T_p\mathcal{E}$ the trivial filtrations
$$\mathrm{Fil}^iH_{\text{\'{e}t}}^1(\mathcal{E}) = \begin{cases} H_{\text{\'{e}t}}^1(\mathcal{E}) & i \ge 0\\
0 & i < 0\\
\end{cases}, \hspace{1cm} \mathrm{Fil}^iT_p\mathcal{E} = \begin{cases} T_p\mathcal{E} & i \ge 0\\
0 & i < 0\\
\end{cases}.$$
We get $\mathcal{O}\mathbb{B}_{\mathrm{dR},Y}^{(+)}$-modules of rank 2 
$$H_{\text{\'{e}t}}^1(\mathcal{E}) \otimes_{\hat{\mathbb{Z}}_{p,Y}} \mathcal{O}\mathbb{B}_{\mathrm{dR},Y}^{(+)}, \hspace{1cm} T_p\mathcal{E} \otimes_{\hat{\mathbb{Z}}_{p,Y}}\mathcal{O}\mathbb{B}_{\mathrm{dR},Y}^{(+)}$$
which are equipped with natural filtrations given by the convolutions of the trivial filtrations on $H_{\text{\'{e}t}}^1(\mathcal{E})$ and $T_p\mathcal{E}$ with the natural filtration on $\mathcal{O}\mathbb{B}_{\mathrm{dR},Y}^{(+)}$ (see Section \ref{periodsheavessection}). That is,
$$\mathrm{Fil}^i\left(H_{\text{\'{e}t}}^1(\mathcal{E}) \otimes_{\hat{\mathbb{Z}}_{p,Y}} \mathcal{O}\mathbb{B}_{\mathrm{dR},Y}^{(+)}\right) = \sum_{j \in \mathbb{Z}}\mathrm{Fil}^{i+j}H_{\text{\'{e}t}}^1(\mathcal{E}) \otimes_{\hat{\mathbb{Z}}_{p,Y}} \mathrm{Fil}^{-j}\mathcal{O}\mathbb{B}_{\mathrm{dR},Y}^{(+)},$$
and similarly with $T_p\mathcal{E}$. Moreover, the natural connection $\nabla : \mathcal{O}\mathbb{B}_{\mathrm{dR},Y}^{(+)} \rightarrow \mathcal{O}\mathbb{B}_{\mathrm{dR},Y}^{(+)} \otimes_{\mathcal{O}_Y}\Omega_Y$ induces natural connections
$$\nabla : H_{\text{\'{e}t}}^1(\mathcal{E}) \otimes_{\hat{\mathbb{Z}}_{p,Y}} \mathcal{O}\mathbb{B}_{\mathrm{dR},Y}^{(+)} \rightarrow H_{\text{\'{e}t}}^1(\mathcal{E}) \otimes_{\hat{\mathbb{Z}}_{p,Y}}\mathcal{O}\mathbb{B}_{\mathrm{dR},Y}^{(+)} \otimes_{\mathcal{O}_Y}\Omega_Y,$$
$$\nabla : T_p\mathcal{E} \otimes_{\hat{\mathbb{Z}}_{p,Y}} \mathcal{O}\mathbb{B}_{\mathrm{dR},Y}^{(+)} \rightarrow T_p\mathcal{E} \otimes_{\hat{\mathbb{Z}}_{p,Y}}\mathcal{O}\mathbb{B}_{\mathrm{dR},Y}^{(+)} \otimes_{\mathcal{O}_Y}\Omega_Y,$$
such that $H_{\text{\'{e}t}}^1(\mathcal{E}) \otimes_{\hat{\mathbb{Z}}_{p,Y}}\mathbb{B}_{\mathrm{dR},Y}^{(+)}$ (resp. $T_p\mathcal{E}\otimes_{\hat{\mathbb{Z}}_{p,Y}}\mathbb{B}_{\mathrm{dR},Y}^{(+)}$) is the sheaf of horizontal sections of $\nabla$. We have canonical identifications 
\begin{equation}\label{tidentifications}\hat{\mathbb{Z}}_{p,Y_{\infty}}(1) = \hat{\mathbb{Z}}_{p,Y_{\infty}}\cdot t, \hspace{1cm} H_{\text{\'{e}t}}^1(\mathcal{E})|_{Y_{\infty}} = T_p\mathcal{E}|_{Y_{\infty}} \cdot t^{-1}.
\end{equation}

\subsection{Review of Scholze's de Rham comparison theorem}\label{zdRqdRqdRexpansionssection}

We have two $\mathbb{B}_{\mathrm{dR},Y}^+$-local systems
$$\mathbb{M}_0 := (H_{\mathrm{dR}}^1(\mathcal{E}) \otimes_{\mathcal{O}_Y}\mathcal{O}\mathbb{B}_{\mathrm{dR},Y}^+)^{\nabla = 0}, \hspace{.5cm} \mathbb{M} := (H_{\text{\'{e}t}}^1(\mathcal{E}) \otimes_{\hat{\mathbb{Z}}_{p,Y}} \mathcal{O}\mathbb{B}_{\mathrm{dR},Y}^+)^{\nabla = 0} = H_{\text{\'{e}t}}^1(\mathcal{E}) \otimes_{\hat{\mathbb{Z}}_{p,Y}}\mathbb{B}_{\mathrm{dR},Y}^+.$$
From \cite[Proposition 7.9]{Scholze} (and \cite[Proposition 2.2.3]{CaraianiScholze}), we have a natural inclusion
\begin{equation}\label{MM0inclusion}\mathbb{M}_0 \subset \mathbb{M}.
\end{equation}
By \cite[Theorem 7.2]{Scholze}., we have 
$$\mathbb{M}_0 \otimes_{\mathbb{B}_{\mathrm{dR},Y}^+}\mathcal{O}\mathbb{B}_{\mathrm{dR},Y}^+ = H_{\mathrm{dR}}^1(\mathcal{E}) \otimes_{\mathcal{O}_Y}\mathcal{O}\mathbb{B}_{\mathrm{dR},Y}^+, \hspace{.5cm} \mathbb{M} \otimes_{\mathbb{B}_{\mathrm{dR},Y}^+}\mathcal{O}\mathbb{B}_{\mathrm{dR},Y}^+ = H_{\text{\'{e}t}}^1(\mathcal{E}) \otimes_{\hat{\mathbb{Z}}_{p,Y}}\mathcal{O}\mathbb{B}_{\mathrm{dR},Y}^+.$$
Hence, tensoring (\ref{MM0inclusion}) with $\otimes_{\mathbb{B}_{\mathrm{dR},Y}^+}\mathcal{O}\mathbb{B}_{\mathrm{dR},Y}^+$, we get a map 
\begin{equation}\label{comparisonmap}i_{\mathrm{dR}} : H_{\mathrm{dR}}^1(\mathcal{E}) \rightarrow H_{\text{\'{e}t}}^1(\mathcal{E}) \otimes_{\hat{\mathbb{Z}}_{p,Y}} \mathcal{O}\mathbb{B}_{\mathrm{dR},Y}^+.
\end{equation}
Tensoring with $\otimes_{\mathcal{O}\mathbb{B}_{\mathrm{dR},Y}^+}\mathcal{O}\mathbb{B}_{\mathrm{dR},Y}$, we get an isomorphism compatible with connections and filtrations
\begin{equation}\label{comparisonmap2}i_{\mathrm{dR}} : H_{\mathrm{dR}}^1(\mathcal{E}) \otimes_{\mathcal{O}\mathbb{B}_{\mathrm{dR},Y}^+}\mathcal{O}\mathbb{B}_{\mathrm{dR},Y} \xrightarrow{\sim} H_{\text{\'{e}t}}^1(\mathcal{E}) \otimes_{\hat{\mathbb{Z}}_{p,Y}} \mathcal{O}\mathbb{B}_{\mathrm{dR},Y}
\end{equation}
which is a special case of the isomorphism from Theorem 8.8 of op. cit. In particular, (\ref{comparisonmap}) respects filtrations and connections; here the filtration on the source is given by the (descending) Hodge filtration (\ref{Hodgefiltration}), and the filtration on the target is given by the natural filtration on $\mathcal{O}\mathbb{B}_{\mathrm{dR},Y}^+$ (see Section \ref{periodsheavessection}). Pulling back along $Y_{\infty} \rightarrow Y$, we get
$$i_{\mathrm{dR}} : H_{\mathrm{dR}}^1(\mathcal{E})|_{Y_{\infty}} \rightarrow H_{\text{\'{e}t}}^1(\mathcal{E})\otimes_{\hat{\mathbb{Z}}_{p,Y}}\mathcal{O}\mathbb{B}_{\mathrm{dR},Y_{\infty}}^+.$$

Let 
$$\langle \cdot,\cdot \rangle_{\mathrm{dR}} : H_{\mathrm{dR}}^1(\mathcal{E}) \times H_{\mathrm{dR}}^1(\mathcal{E}) \rightarrow \mathcal{O}_Y$$
be the $\mathcal{O}_Y$-linear Poincar\'{e} pairing. Tensoring with $\otimes_{\mathcal{O}_Y}\mathcal{O}\mathbb{B}_{\mathrm{dR},Y}^+$, we get an $\mathcal{O}\mathbb{B}_{\mathrm{dR},Y}^+$-linear pairing
$$\langle \cdot,\cdot \rangle_{\mathrm{dR}} : (H_{\mathrm{dR}}^1(\mathcal{E}) \otimes_{\mathcal{O}_Y}\mathcal{O}\mathbb{B}_{\mathrm{dR},Y}^+) \times (H_{\mathrm{dR}}^1(\mathcal{E}) \otimes_{\mathcal{O}_Y}\mathcal{O}\mathbb{B}_{\mathrm{dR},Y}^+) \rightarrow \mathcal{O}\mathbb{B}_{\mathrm{dR},Y}^+.$$

From $i_{\mathrm{dR}}$, get an induced pairing
$$\langle i_{\mathrm{dR}}(\cdot), i_{\mathrm{dR}}(\cdot)\rangle : (H_{\mathrm{dR}}^1(\mathcal{E}) \otimes_{\hat{\mathbb{Z}}_{p,Y}} \mathcal{O}\mathbb{B}_{\mathrm{dR},Y}^+) \times (H_{\mathrm{dR}}^1(\mathcal{E}) \otimes_{\hat{\mathbb{Z}}_{p,Y}} \mathcal{O}\mathbb{B}_{\mathrm{dR},Y}^+) \rightarrow \mathcal{O}\mathbb{B}_{\mathrm{dR},Y}^+.$$
It is known that the absolute de Rham comparison theorem identifies the Weil and Poincar\'{e} pairings (see, for example, \cite{Colmez}), and so on every classical point of $Y$ we have that the evaluations of $\langle i_{\mathrm{dR}}(\cdot),i_{\mathrm{dR}}(\cdot)\rangle$ and $\langle \cdot, \cdot \rangle_{\mathrm{dR}}$ are equal. Since $Y$ is induced by a rigid analytic space, and the sheaves $H_{\mathrm{dR}}^1(\mathcal{E})$ and $H_{\text{\'{e}t}}^1(\mathcal{E})$ are induced from sheaves on this rigid analytic space, we thus have
\begin{equation}\label{comparepairings}\langle i_{\mathrm{dR}}(\cdot),i_{\mathrm{dR}}(\cdot)\rangle = \langle \cdot,\cdot\rangle_{\mathrm{dR}}.
\end{equation}

\subsection{$X_i$, $\theta$ and $\theta_{X_i}$}\label{XthetaXsection}

Base change $Y$ back to $\mathrm{Spa}(k,\mathcal{O}_k)$ (per Convention \ref{basechangeconvention}). Let $0 < \epsilon_0 < p/(p+1)$, $U = \mathcal{Y}^{\mathrm{Ig}}(\epsilon_0) \subset Y_{\infty}$, $U_i = \mathcal{Y}^{\mathrm{Ig}}(\epsilon_0)_i \subset Y_{\infty}$ be as in Definition \ref{Udefinition}. By (\ref{Utriv}) and (\ref{Vxtriv}), we have isomorphisms for every $1 \le i \le \mathbf{n}$ ($\mathbf{n}$ as in Definition \ref{ndefinition})
\begin{equation}\label{'triv}\mathcal{O}\mathbb{B}_{\mathrm{dR},U_i}^+ \cong \mathbb{B}_{\mathrm{dR},U_i}^+\llbracket X_i\rrbracket, \hspace{1cm} \mathcal{O}\mathbb{B}_{\mathrm{dR},U_{i,m}}^+(U_{i,m}) \cong \mathbb{B}_{\mathrm{dR},U_{i,m}}^+(U_{i,m})\llbracket X_{i,m}\rrbracket
\end{equation}
which are compatible with connections and filtrations (giving $X_i$ and $X_{i,m}$ filtration degree 1).

\begin{choice}\label{'choice1}For the remainder of this section, we will identify 
\begin{equation}\label{Uidentify}\mathcal{O}\mathbb{B}_{\mathrm{dR},U_i}^+ = \mathbb{B}_{\mathrm{dR},U_i}^+\llbracket X_i\rrbracket, \hspace{1cm} \mathcal{O}\mathbb{B}_{\mathrm{dR},U_{i,m}}^+(U_{i,m}) = \mathbb{B}_{\mathrm{dR},U_{i,m}}^+(U_{i,m})\llbracket X_{i,m}\rrbracket
\end{equation}
using (\ref{'triv}).  
\end{choice}

\begin{definition}\label{BdR+Xtdefinition}\begin{enumerate}
\item The $X_i$-adic completion of 
$$\mathcal{O}\mathbb{B}_{\mathrm{dR},U_i} = \mathbb{B}_{\mathrm{dR},U_i}^+\llbracket X_i\rrbracket [1/t]$$ 
is the sheaf of formal power series in $X_i$ of $\mathbb{B}_{\mathrm{dR},U_i}$
$$\mathbb{B}_{\mathrm{dR},U_i}\llbracket X_i\rrbracket,$$
which still has an $\mathcal{O}\mathbb{B}_{\mathrm{dR},U_i}^+$-module structure and thus an $\mathcal{O}_{U_i}$-module structure by restriction of scalars $\mathcal{O}_{U_i} \subset \mathcal{O}\mathbb{B}_{\mathrm{dR},U_i}^+$. 
\item Similarly, the $X_{i,m}$-adic completion of 
$$\mathcal{O}\mathbb{B}_{\mathrm{dR},U_{i,m}}(U_{i,m}) = \mathbb{B}_{\mathrm{dR},U_{i,m}}^+(U_{i,m})\llbracket X_{i,m}\rrbracket [1/t]$$
is 
$$\mathbb{B}_{\mathrm{dR},U_{i,m}}(U_{i,m})\llbracket X_{i,m}\rrbracket.$$
\item We also have the $\mathcal{O}\mathbb{B}_{\mathrm{dR},U_i}^+$-submodule which is the subsheaf of $\mathbb{B}_{\mathrm{dR},U_i}\llbracket X_i\rrbracket$ given by power series in $X_i/t$ over $\mathbb{B}_{\mathrm{dR},U_i}^+$:
$$\mathbb{B}_{\mathrm{dR},U_i}^+\llbracket X_i/t\rrbracket \subset \mathbb{B}_{\mathrm{dR},U_i}\llbracket X_i\rrbracket.$$
This also an $\mathcal{O}_{U_i}$-submodule by restriction of scalars $\mathcal{O}_{U_i} \subset \mathcal{O}\mathbb{B}_{\mathrm{dR},U_i}^+$. Moreover, 
\begin{equation}\label{OBdRinclusion}\mathcal{O}\mathbb{B}_{\mathrm{dR},U_i} = \mathbb{B}_{\mathrm{dR},U_i}^+\llbracket X_i\rrbracket [1/t] \subset \mathbb{B}_{\mathrm{dR},U_i}^+\llbracket X_i/t\rrbracket.
\end{equation}
\item Similarly, we have the $\mathcal{O}\mathbb{B}_{\mathrm{dR},U_{i,m}}^+(U_{i,m})$-submodule which is given by adjoining $X_{i,m}/t$ to $\mathbb{B}_{\mathrm{dR},U_{i,m}}^+(U_{i,m})$ inside the ring $\mathbb{B}_{\mathrm{dR},U_{i,m}}(U_{i,m})\llbracket X_{i,m}\rrbracket$:
$$\mathbb{B}_{\mathrm{dR},U_{i,m}}^+(U_{i,m})\llbracket X_{i,m}/t\rrbracket \subset \mathbb{B}_{\mathrm{dR},U_{i,m}}(U_{i,m})\llbracket X_{i,m}\rrbracket.$$
This is also an $\mathcal{O}_{U_{i,m}}(U_{i,m})$-submodule by restriction of scalars $\mathcal{O}_{U_{i,m}}(U_{i,m}) \subset \mathcal{O}\mathbb{B}_{\mathrm{dR},U_{i,m}}^+(U_{i,m})$. Moreover, 
$$\mathcal{O}\mathbb{B}_{\mathrm{dR},U_{i,m}}^+(U_{i,m}) = \mathbb{B}_{\mathrm{dR},U_{i,m}}^+(U_{i,m})\llbracket X_{i,m}\rrbracket \subset \mathbb{B}_{\mathrm{dR},U_{i,m}}^+(U_{i,m})\llbracket X_{i,m}/t\rrbracket.$$
\end{enumerate}
\end{definition}

\begin{definition}\label{'thetadefinitions}\begin{enumerate}
\item Let 
$$\theta_{X_i} : \mathcal{O}\mathbb{B}_{\mathrm{dR},U_i} = \mathbb{B}_{\mathrm{dR},U_i}^+\llbracket X_i\rrbracket [1/t] \rightarrow \mathbb{B}_{\mathrm{dR},U_i},$$
$$\theta_{X_{i,m}} : \mathcal{O}\mathbb{B}_{\mathrm{dR},U_{i,m}}(U_{i,m}) = \mathbb{B}_{\mathrm{dR},U_{i,m}}^+(U_{i,m})\llbracket X_{i,m}\rrbracket [1/t] \rightarrow \mathbb{B}_{\mathrm{dR},U_{i,m}}(U_{i,m})$$
be given by reduction modulo $X_i\cdot\mathcal{O}\mathbb{B}_{\mathrm{dR},U_i}$ and $X_{i,m}\cdot\mathcal{O}\mathbb{B}_{\mathrm{dR},U_{i,m}}$, respectively. These extend to maps on the $X$-adic and $X_{i,m}$-adic completions 
$$\theta_{X_i} : \mathbb{B}_{\mathrm{dR},U_i}\llbracket X_i\rrbracket \rightarrow \mathbb{B}_{\mathrm{dR},U_i}, \hspace{1cm} \theta_{X_{i,m}} : \mathbb{B}_{\mathrm{dR},U_{i,m}}(U_{i,m})\llbracket X_{i,m}\rrbracket \rightarrow \mathbb{B}_{\mathrm{dR},U_{i,m}}(U_{i,m}).$$
\item Let 
$$\theta_{X_i}^+ : \mathcal{O}\mathbb{B}_{\mathrm{dR},U_i}^+ \rightarrow \mathbb{B}_{\mathrm{dR},U_i}^+, \hspace{1cm} \theta_{X_{i,m}}^+ : \mathcal{O}\mathbb{B}_{\mathrm{dR},U_{i,m}}^+(U_{i,m}) \rightarrow \mathbb{B}_{\mathrm{dR},U_{i,m}}^+(U_{i,m})$$
be the restriction of $\theta_{X_i}$ to the subsheaf 
$$\mathcal{O}\mathbb{B}_{\mathrm{dR},U_i}^+ \subset \mathbb{B}_{\mathrm{dR},U_i}\llbracket X_i\rrbracket$$
and the restriction of $\theta_{X_{i,m}}$ to the subring 
$$\mathcal{O}\mathbb{B}_{\mathrm{dR},U_{i,m}}^+(U_{i,m})\subset \mathbb{B}_{\mathrm{dR},U_{i,m}}(U_{i,m})\llbracket X_{i,m}\rrbracket,$$
respectively.
\end{enumerate}
\end{definition}

\begin{proposition}\label{'thetaproposition}$\theta \circ \theta_{X_i}^+ = \theta_{\mathrm{dR}}$ as maps of sheaves $\mathcal{O}\mathbb{B}_{\mathrm{dR},U_i}^+ \rightarrow \hat{\mathcal{O}}_{U_i}$, and $\theta \circ \theta_{X_{i,m}}^+ = \theta_{\mathrm{dR}}$ as maps of rings $\mathcal{O}\mathbb{B}_{\mathrm{dR},U_{i,m}}^+(U_{i,m}) \rightarrow \hat{\mathcal{O}}_{U_{i,m}}(U_{i,m})$. 
\end{proposition}

\begin{proof}This follows immediately from (\ref{'triv}).
\end{proof}

\subsection{The periods $x_{\mathrm{dR}}$ and $y_{\mathrm{dR}}$}\label{xysection}

Since $\mathrm{Fil}^1H_{\mathrm{dR}}^1(\mathcal{E}) = \omega$, by the compatibility of (\ref{comparisonmap}) with filtrations we have
\begin{equation}\label{Hodgecomparisoninclusion}i_{\mathrm{dR}}(\omega) \subset H_{\text{\'{e}t}}^1 \otimes_{\hat{\mathbb{Z}}_{p,Y}} \mathrm{Fil}^1\mathcal{O}\mathbb{B}_{\mathrm{dR},Y}^+ \rightarrow T_p\mathcal{E} \otimes_{\hat{\mathbb{Z}}_{p,Y}} (\mathrm{Fil}^1\mathcal{O}\mathbb{B}_{\mathrm{dR},Y_{\infty}}^+)\cdot t^{-1} \overset{(e_1,e_2)}{=} (\mathrm{Fil}^1\mathcal{O}\mathbb{B}_{\mathrm{dR},Y_{\infty}}^+\cdot t^{-1})^2.
\end{equation}

\begin{definition}\label{'xydefinition}\begin{enumerate}
\item For any $W \in Y_{\text{pro\'{e}t}}/Y_{\infty}$ and any $w' \in \omega(W)$, define $$x_{\mathrm{dR}}(w'),y_{\mathrm{dR}}(w') \in \mathrm{Fil}^1\mathcal{O}\mathbb{B}_{\mathrm{dR},Y_{\infty}}^+(W) \cdot t^{-1}$$ by
$$i_{\mathrm{dR}}(w') = (x_{\mathrm{dR}}(w'),y_{\mathrm{dR}}(w')).$$
\item Moreover, if $W \in Y_{\text{pro\'{e}t}}/U_{i,m}$ and $w' \in \omega \otimes_{\mathcal{O}_Y}\mathbb{B}_{\mathrm{dR},U_{i,m}}^+(W)\llbracket X_{i,m}/t\rrbracket$, define
$$x_{\mathrm{dR}}(w'), y_{\mathrm{dR}}(w') \in \mathbb{B}_{\mathrm{dR},U_{i,m}}^+(W)\llbracket X_{i,m}/t\rrbracket$$
by tensoring the left-hand side of (\ref{Hodgecomparisoninclusion}) with $\otimes_{\mathcal{O}_Y}\mathbb{B}_{\mathrm{dR},U_{i,m}}^+(W)\llbracket X_{i,m}/t\rrbracket$ and composing the right-hand side with the map 
$$\mathrm{Fil}^1\mathcal{O}\mathbb{B}_{\mathrm{dR},Y_{\infty}}^+\cdot t^{-1} \rightarrow \mathbb{B}_{\mathrm{dR},U_{i,m}}^+(W)\llbracket X_{i,m}/t\rrbracket.$$
\end{enumerate}
\end{definition}

Recall the open subsets
$$U_m \overset{(\ref{Um})}= U \cdot g^m \subset \mathcal{V}_x \overset{(\ref{VxVy})}{=} \{x \neq 0\} \overset{(\ref{Vz})}{=} \{z_{\mathrm{HT}} \neq 0\} \subset Y_{\infty}.$$
Since $U$ is affinoid perfectoid by Lemma \ref{Uaffinoidperfectoidlemma} and the action of $g \in GL_2(\mathbb{Q}_p)$ preserves the property of being affinoid perfectoid (\cite[discussion after Definition III.3.5]{ScholzeTorsion}), we have that $U_m$ is affinoid perfectoid. 
\begin{proposition} $\omega(U_m) \neq 0$ for any $m \in \mathbb{Z}_{\ge 0}$. 
\end{proposition}
\begin{proof} Recall the fake Hasse invariant $0 \neq \frak{s} \in \omega\otimes_{\mathcal{O}_Y}\hat{\mathcal{O}}_Y(Y_{\infty})$, which thus induces $\frak{s}|_{U_m} \in \omega\otimes_{\mathcal{O}_Y}\hat{\mathcal{O}}_Y(U_m)$ and shows $\omega \otimes_{\mathcal{O}_Y}\hat{\mathcal{O}}_Y(U_m) \neq 0$. Since $U_m \overset{(\ref{Um})}{=} U \cdot g^m$ is affinoid perfectoid (Lemma \ref{Uaffinoidperfectoidlemma} implies $U$ is affinoid perfectoid, and the action of $g^m \in GL_2(\mathbb{Q}_p)$ preserves the property of being affinoid perfectoid by \cite[discussion after Definition 3.5]{ScholzeTorsion}), by \cite[Lemma 4.10(iv)]{Scholze} we have that $\omega\otimes_{\mathcal{O}_Y}\hat{\mathcal{O}}_Y(U_m)$ is the $p$-adic completion of $\omega(U_m)$. Hence $\omega(U_m) \neq 0$. 
 \end{proof}

Many constructions in Section \ref{zqwsection} and Section \ref{wcansection} will depend (at least \emph{a priori}) on a choice as follows. 

\begin{choice}\label{'choicew}Given $m \in \mathbb{Z}_{\ge 0}$, fix $w \in \omega(U_m)$. 
\end{choice}


In what follows, we will often consider the restrictions of the sections $x_{\mathrm{dR}}, y_{\mathrm{dR}}$ to the open subset $U_{i,m} \subset U_m$. By a slight abuse of notation, we will use the same symbols to denote these restrictions, as the object of $Y_{\text{pro\'{e}t}}$ over which we work will be clear from context.

\begin{proposition}\label{'thetaxproposition}$\theta_{X_{i,m}}(y_{\mathrm{dR}}(w)) \in \mathbb{B}_{\mathrm{dR},U_{i,m}}^+(U_{i,m})$.
\end{proposition}

\begin{proof}We have $t \cdot y_{\mathrm{dR}}(w) \in \mathrm{Fil}^1\mathcal{O}\mathbb{B}_{\mathrm{dR},U_{i,m}}^+(U_{i,m})$, and we have
\begin{align*}t \cdot y_{\mathrm{dR}}(w) \in \mathrm{Fil}^1\mathcal{O}\mathbb{B}_{\mathrm{dR},U_{i,m}}^+(U_{i,m}) &= (\ker\theta_{\mathrm{dR}})\mathcal{O}\mathbb{B}_{\mathrm{dR},U_{i,m}}^+ \\
&= t\cdot \mathbb{B}_{\mathrm{dR},U_{i,m}}^+(U_{i,m}) + X_{i,m}\cdot\mathbb{B}_{\mathrm{dR},U_{i,m}}^+(U_{i,m})\llbracket X_{i,m}\rrbracket.
\end{align*}
So applying $\theta_{X_{i,m}}$, we see that $t\cdot \theta_{X_{i,m}}(y_{\mathrm{dR}}(w)) \in t\cdot\mathbb{B}_{\mathrm{dR},U_{i,m}}^+(U_{i,m})$, giving the statement of the Proposition.

\end{proof}

\subsection{Gluing over the $U_i$}\label{gluingsection}

\begin{definition}For $1 \le i \le \mathbf{n}$, $\mathbf{n}$ as in Definition \ref{ndefinition}, recall the sets $U_{i,m}$ from (\ref{Um}), and let 
$$U_{ij,m} := U_{i,m} \cap U_{j,m}.$$
When $m = 0$, so that $U_{i,m} = U_i$, we write $U_{ij,0} = U_{ij}$. Then from (\ref{'triv}) we have 
\begin{equation}\label{ijtriv}\mathbb{B}_{\mathrm{dR},U_{ij,m}}^+\llbracket X_{i,m}\rrbracket  = \mathcal{O}\mathbb{B}_{\mathrm{dR},U_{ij,m}}^+ = \mathbb{B}_{\mathrm{dR},U_{ij,m}}^+\llbracket X_{j,m}\rrbracket.
\end{equation}
\end{definition}



The next Proposition says that the $\mathcal{O}\mathbb{B}_{\mathrm{dR},U_{ij}}^+$-ideals generated by $X_i$ and $X_j$ for any $1 \le i,j\le \mathbf{n}$ are the same, and the $\mathcal{O}\mathbb{B}_{\mathrm{dR},U_{ij,m}}^+(U_{ij,m})$-ideals generated by $X_{i,m}$ and $X_{j,m}$ for any $1 \le i,j \le \mathbf{n}$ are the same. 

\begin{proposition}\label{Xiidealsameproposition}We have the equality of sheaves of ideals
\begin{equation}\label{Xiidealsame}X_i \cdot \mathcal{O}\mathbb{B}_{\mathrm{dR},U_{ij}}^+ = X_j \cdot \mathcal{O}\mathbb{B}_{\mathrm{dR},U_{ij}}^+
\end{equation}
and for every $m \in \mathbb{Z}_{\ge 0}$, the equality of ideals
\begin{equation}\label{Ximidealsame}X_{i,m} \cdot \mathcal{O}\mathbb{B}_{\mathrm{dR},U_{ij,m}}^+(U_{ij,m}) = X_{j,m} \cdot \mathcal{O}\mathbb{B}_{\mathrm{dR},U_{ij,m}}^+(U_{ij,m}).
\end{equation}
\end{proposition}

\begin{proof}From (\ref{Uidentify}) we get 
$$\mathbb{B}_{\mathrm{dR},U_{ij}}^+\llbracket X_j\rrbracket = \mathcal{O}\mathbb{B}_{\mathrm{dR},U_{ij}}^+ = \mathbb{B}_{\mathrm{dR},U_{ij}}^+\llbracket X_i\rrbracket.$$
Recall that in (\ref{Uidentify}) we have that the natural inclusion $\mathbb{B}_{\mathrm{dR},U_i}^+ \subset \mathcal{O}\mathbb{B}_{\mathrm{dR},U_i}^+$ composed with $\theta_{X_i} : \mathcal{O}\mathbb{B}_{\mathrm{dR},U_i}^+\rightarrow \mathbb{B}_{\mathrm{dR},U_i}^+$ is the identity. Similarly for $\mathbb{B}_{\mathrm{dR},U_j}^+$ and $\theta_{X_j}$. Thus the composition
$$\mathbb{B}_{\mathrm{dR},U_{ij}}^+ \subset \mathcal{O}\mathbb{B}_{\mathrm{dR},U_{ij}}^+ \xrightarrow{\theta_{X_i}} \mathbb{B}_{\mathrm{dR},U_{ij}}^+ \subset \mathcal{O}\mathbb{B}_{\mathrm{dR},U_{ij}}^+ \xrightarrow{\theta_{X_j}} \mathbb{B}_{\mathrm{dR},U_{ij}}^+$$
is the identity, which implies that the composition
$$\mathbb{B}_{\mathrm{dR},U_{ij}}^+ \subset \mathcal{O}\mathbb{B}_{\mathrm{dR},U_{ij}}^+ \twoheadrightarrow \mathcal{O}\mathbb{B}_{\mathrm{dR},U_{ij}}^+/(X_i,X_j),$$
where $(X_i,X_j) := (X_i,X_j) \cdot \mathcal{O}\mathbb{B}_{\mathrm{dR},U_{ij}}^+$, is an injection. On the other hand, this previous map factors through
$$\mathbb{B}_{\mathrm{dR},U_{ij}}^+ = \mathcal{O}\mathbb{B}_{\mathrm{dR},U_{ij}}^+/(X_i) \twoheadrightarrow \mathcal{O}\mathbb{B}_{\mathrm{dR},U_{ij}}^+/(X_i,X_j)$$
where $(X_i) := X_i \cdot \mathcal{O}\mathbb{B}_{\mathrm{dR},U_{ij}}^+$; thus the map is surjective. Therefore the map is both injective and surjective, which implies that it is an isomorphism and $(X_i) = (X_i,X_j)$, which gives (\ref{Xiidealsame}). 

The equality (\ref{Ximidealsame}) follows from essentially the same argument \emph{mutatis mutandis}, replacing $\mathcal{O}\mathbb{B}_{\mathrm{dR},U_{ij}}^+$ by $\mathcal{O}\mathbb{B}_{\mathrm{dR},U_{ij,m}}^+(U_{ij,m})$ and $X_i, X_j$ by $X_{i,m}, X_{j,m}$. 

\end{proof}

Thus, we are able to glue together the $\mathcal{O}\mathbb{B}_{\mathrm{dR},U_i}^+$-ideals generated by the $X_i$ as well as the maps $\theta_{X_i}$ from Definition \ref{'thetadefinitions}. 

\begin{definition}\label{gluingdefinition}\begin{enumerate}
\item Henceforth, let
$$(X) := (X_1,\ldots,X_{\mathbf{n}}) \cdot \mathcal{O}\mathbb{B}_{\mathrm{dR},U}^+ \subset \mathcal{O}\mathbb{B}_{\mathrm{dR},U}^+$$
to be the $\mathcal{O}\mathbb{B}_{\mathrm{dR},U}^+$-ideal generated by all of the $X_i$, $1 \le i \le \mathbf{n}$. By (\ref{Xiidealsame}) we have 
$$(X)|_{U_i} = X_i \cdot \mathcal{O}\mathbb{B}_{\mathrm{dR},U_i}^+.$$
Note that $\left(\mathrm{Fil}^1\mathcal{O}\mathbb{B}_{\mathrm{dR},U}^+\right)\cdot t^{-1}$ is an $\mathcal{O}\mathbb{B}_{\mathrm{dR},U}^+$-module. Let
\begin{equation}\label{thetaX0}\theta_X : \left(\mathrm{Fil}^1\mathcal{O}\mathbb{B}_{\mathrm{dR},U}^+\right)\cdot t^{-1} \rightarrow \mathbb{B}_{\mathrm{dR},U}^+
\end{equation}
be reduction modulo $(X)$; this is a map of $\mathcal{O}\mathbb{B}_{\mathrm{dR},U}^+$-modules. 

\item By construction, $\theta_X$ and $\theta_{X_i}$ restrict to the same map on the $\mathcal{O}\mathbb{B}_{\mathrm{dR},U_i}^+$-module $\left(\mathrm{Fil}^1\mathcal{O}\mathbb{B}_{\mathrm{dR},U_i}^+\right)\cdot t^{-1}$:
\begin{equation}\label{thetaX0specialize}\theta_X|_{\left(\mathrm{Fil}^1\mathcal{O}\mathbb{B}_{\mathrm{dR},U_i}^+\right)\cdot t^{-1}} = \theta_{X_i}|_{\left(\mathrm{Fil}^1\mathcal{O}\mathbb{B}_{\mathrm{dR},U_i}^+\right)\cdot t^{-1}}
\end{equation}
for all $1 \le i \le \mathbf{n}$. 

\item For every $m \in \mathbb{Z}_{\ge 0}$, let
$$(X_m) := (X_{1,m},\ldots,X_{\mathbf{n},m})\cdot \mathcal{O}\mathbb{B}_{\mathrm{dR},U_m}^+(U_m) \subset \mathcal{O}\mathbb{B}_{\mathrm{dR},U_m}^+(U_m)$$
be the $\mathcal{O}\mathbb{B}_{\mathrm{dR},U_m}^+(U_m)$-ideal generated by all of the $X_{i,m}$, $1 \le i \le \mathbf{n}$. By (\ref{Ximidealsame}) we have 
$$(X_m)|_{U_{i,m}} = X_{i,m} \cdot \mathcal{O}\mathbb{B}_{\mathrm{dR},U_{i,m}}^+(U_{i,m}).$$
Note that $\left(\mathrm{Fil}^1\mathcal{O}\mathbb{B}_{\mathrm{dR},U_m}^+\right)\cdot t^{-1}$ is an $\mathcal{O}\mathbb{B}_{\mathrm{dR},U_m}^+(U_m)$-module. Let 
\begin{equation}\label{thetaXm}\theta_{X_m} : \left(\mathrm{Fil}^1\mathcal{O}\mathbb{B}_{\mathrm{dR},U_m}^+(U_m)\right)\cdot t^{-1} \rightarrow \mathbb{B}_{\mathrm{dR},U_m}^+
\end{equation}
to be reduction modulo $(X_m)$; this is a map of $\mathcal{O}\mathbb{B}_{\mathrm{dR},U_m}^+(U_m)$-modules. 

\item By construction, $\theta_{X_m}$ and $\theta_{X_{i,m}}$ restrict to the same map on the $\mathcal{O}\mathbb{B}_{\mathrm{dR},U_{i,m}}^+(U_{i,m})$-module $\left(\mathrm{Fil}^1\mathcal{O}\mathbb{B}_{\mathrm{dR},U_{i,m}}^+(U_{i,m})\right)\cdot t^{-1}$:
\begin{equation}\label{thetaXmspecialize}\theta_{X_m}|_{\left(\mathrm{Fil}^1\mathcal{O}\mathbb{B}_{\mathrm{dR},U_{i,m}}^+(U_{i,m})\right)\cdot t^{-1}} = \theta_{X_{i,m}}|_{\left(\mathrm{Fil}^1\mathcal{O}\mathbb{B}_{\mathrm{dR},U_{i,m}}^+(U_{i,m})\right)\cdot t^{-1}}
\end{equation}
for all $1\le i \le \mathbf{n}$. 
\end{enumerate}
\end{definition}

We can similarly glue together all of the $\mathbb{B}_{\mathrm{dR},U_i}\llbracket X\rrbracket$ and $\mathbb{B}_{\mathrm{dR},U_i}^+\llbracket X_i/t\rrbracket$ from Definition \ref{BdR+Xtdefinition}. 

\begin{definition}\label{newcompletionsdefinition}Retain the notation of Definition \ref{gluingdefinition}.
\begin{enumerate}
\item Define the sheaf on $Y_{\text{pro\'{e}t}}/U$
$$\mathbb{B}_{\mathrm{dR},U}\llbracket X\rrbracket$$
to be the $(X)$-adic completion of $\mathcal{O}\mathbb{B}_{\mathrm{dR},U}$. 
\item Similarly, for any pro\'{e}tale open $W \rightarrow U_m$, define the ring
$$\mathbb{B}_{\mathrm{dR},U_m}(W)\llbracket X_m\rrbracket$$
to be the $(X_m)$-adic completion of $\mathcal{O}\mathbb{B}_{\mathrm{dR},U_m}(W)$. 
\item Define the sheaf on $Y_{\text{pro\'{e}t}}/U$ 
$$\mathbb{B}_{\mathrm{dR},U}^+\llbracket X/t\rrbracket$$
to be the subsheaf of $\mathbb{B}_{\mathrm{dR},U}\llbracket X\rrbracket$ obtained by gluing together the sheaves $\mathbb{B}_{\mathrm{dR},U_i}^+\llbracket X_i/t\rrbracket$ on $Y_{\text{pro\'{e}t}}/U_i$ over all $1 \le i \le \mathbf{n}$, using (\ref{Xiidealsame}) and the fact that $U \overset{(\ref{etalelocusunion3})}{=} \bigcup_{i = 1}^{\mathbf{n}}U_i$. In particular, since $\mathcal{O}\mathbb{B}_{\mathrm{dR},U_i}\overset{(\ref{OBdRinclusion})}{\subset} \mathbb{B}_{\mathrm{dR},U_i}\llbracket X_i/t\rrbracket$ for every $1 \le i \le \mathbf{n}$, we have 
\begin{equation}\label{newOBdRinclusion}\mathcal{O}\mathbb{B}_{\mathrm{dR},U} \subset \mathbb{B}_{\mathrm{dR},U}^+\llbracket X/t\rrbracket.
\end{equation}
\item Finally, for any pro\'{e}tale open $W \rightarrow U_m$, we define the ring
$$\mathbb{B}_{\mathrm{dR},U_m}^+(W)\llbracket X_m/t\rrbracket,$$
to be the $(X_m/t) := (X_{1,m}/t,\ldots,X_{\mathbf{n},m}/t)\cdot \mathcal{O}\mathbb{B}_{\mathrm{dR},U_m}^+(W)$-adic completion of 
$$\mathbb{B}_{\mathrm{dR},U_m}^+(W)[X_{1,m}/t,\ldots,X_{\mathbf{n},m}/t] \subset \mathcal{O}\mathbb{B}_{\mathrm{dR},U_m}(W).$$
From construction, we have an inclusion 
\begin{equation}\label{newOBdRinclusionm}\mathcal{O}\mathbb{B}_{\mathrm{dR},U}(W) \subset \mathbb{B}_{\mathrm{dR},U}^+(W)\llbracket X/t\rrbracket.
\end{equation}
\end{enumerate}




\end{definition}

\begin{remark}Note that despite the notation in Definition \ref{newcompletionsdefinition}, $X$ and $X_m$ do not denote formal variables, but are simply part of the notation for the sheaves and rings defined. On the other hand, the $X_{i,m}$ are the formal variables defined in (\ref{Xm}). When $D = M_2(\mathbb{Q})$, so that $\mathbf{n} = 1$ in Definition \ref{ndefinition} and $i = 1$, then we may take $X = X_1$, $X_m = X_{1,m}$ and thus treat $X$ and $X_m$ as actual formal variables in this case. This is compatible with Remark \ref{Uiremark}.
\end{remark}

\subsection{Inverting $y_{\mathrm{dR}}$}

In this section, we show that $y_{\mathrm{dR}}(w) \in \left(\mathrm{Fil}^1\mathcal{O}\mathbb{B}_{\mathrm{dR},U_m}^+(U_m) \right)\cdot t^{-1}$ is multiplicatively invertible in certain period rings defined over a certain open subset $U_{m,w}' \subset U_m$. This will be needed in order to define the de Rham period $z_{\mathrm{dR}}$ later (Definition \ref{'zqwdefinition} and (\ref{gluezdRsection})). 

\begin{definition}Let $w \in \omega(U_m)$ be as in Choice \ref{'choicew}, with associated $y_{\mathrm{dR}}(w)$ from Definition \ref{'xydefinition}. Define 
\begin{equation}\label{'U'definition}U_{i,m,w}' := \{\theta(\theta_{X_{i,m}}(y_{\mathrm{dR}}(w))) \neq 0\} \subset U_{i,m}, \hspace{1cm} U_{m,w}' = \bigcup_{i = 1}^{\mathbf{n}}U_{i,m,w}' \overset{(\ref{thetaXmspecialize})}{=} \{\theta(\theta_{X_m}(y_{\mathrm{dR}}(w))) \neq 0\}.
\end{equation}
\end{definition}


\begin{remark}Later we will consider the union of all $U_{i,m,w}'$,
$$\mathbf{U}' := \bigcup_{m \in \mathbb{Z}_{\ge 0}}\bigcup_{w \in \omega(U_m)}U_{m,w}'$$
and after some further setup we will show
$$\mathbf{U}' = \mathcal{V}_x.$$
See (\ref{fullU'}), where we formally introduce $\mathbf{U}'$, and Theorem \ref{U'Utheorem} where we prove it is in fact equal to $\mathcal{V}_x$. However, we will need results concerning the behavior of $y_{\mathrm{dR}}$ on the ordinary locus (see Corollary \ref{U'YIg}) before we can prove this Theorem, and so we will state many results in this section over $\mathbf{U}'$, and then generalize them to $\mathcal{V}_x$ later.
\end{remark}

 Recall that $U_m \subset U_{m+1}$ for all $m\in \mathbb{Z}_{\ge 0}$ by (\ref{Uminclusion}). The next result shows that this is true for the open subsets $U_{m,w}' \subset U_{m+1,w}'$ as well. 

\begin{proposition}\label{Um'proposition}For all $m \in \mathbb{Z}_{\ge 0}$ and any $w \in \omega(U_{m+1})$ as in Choice \ref{'choicew}, we have $U_{m,w}' \subset U_{m+1,w}'$. 
\end{proposition}

\begin{proof}
We need to show that $\theta(\theta_{X_m}(y_{\mathrm{dR}}(w)|_{U_m})) \neq 0$ implies 
$$\theta(\theta_{X_{m+1}}(y_{\mathrm{dR}}(w)))|_{U_m} \neq 0.$$
We will show that $X_{i,m+1}|_{U_m} \in \ker(\theta\circ \theta_{X_m})$ for all $1 \le i \le \frak{n}$. Admitting this, we then get 
$$\ker(\theta \circ \theta_{X_{m+1}}|_{U_m}) \subset \ker(\theta \circ \theta_{X_m})$$
from Definition \ref{'thetadefinitions}, which gives the assertion.


Recall $X_{i,m+1}|_{U_m} = (g^{-1})^*X_{i,m}$ by (\ref{Xm}). Since $(g^{-1})^*X_{i,m} \in \mathcal{O}\mathbb{B}_{\mathrm{dR},U_m}^+(U_m)$, we have
\begin{align*}\theta(\theta_{X_m}(X_{i,m+1}|_{U_m})) = \theta(\theta_{X_m}((g^{-1})^*X_{i,m})) &\overset{\text{Proposition \ref{'thetaproposition} }}{=} \theta_{\mathrm{dR}}((g^{-1})^*X_{i,m}) \\
&\overset{(\ref{Xm})}{=} \theta_{\mathrm{dR}}((g^{-(m+1)})^*j - (g^{-(m+1)})^*[j_i^{\flat}]) \\
&\overset{(\ref{jflat})}{=} (g^{-(m+1)})^*j_i - (g^{-(m+1)})^*j_i  = 0
\end{align*}
which implies $X_{i,m+1}|_{U_m} \in \ker(\theta \circ \theta_{X_m})$. By the previous paragraph, we are done.

\end{proof}

\begin{proposition}\label{'invertibleproposition}$$\theta_{X_m}(y_{\mathrm{dR}}(w)) \in \mathbb{B}_{\mathrm{dR},U_m}^+(U_{m,w}')^{\times}.$$
\end{proposition}

\begin{proof}Since $U_{m,w}' = \bigcup_{i = 1}^{\mathbf{n}}U_{i,m,w}'$ and $\mathbb{B}_{\mathrm{dR},U_m}^+$ is a sheaf of rings on $Y_{\text{pro\'{e}t}}/U_m$, by (\ref{thetaXmspecialize}) it suffices to show that
$$\theta_{X_{i,m}}(y_{\mathrm{dR}}(w)) \in \mathbb{B}_{\mathrm{dR},U_{i,m}}^+(U_{i,m,w}')^{\times}$$
for all $1 \le i \le \mathbf{n}$. 

Given any $s \in U_{i,m,w}'$, the stalk $\mathbb{B}_{\mathrm{dR},U_{i,m,w}',s}^+$ is a local ring with maximal ideal $\theta^{-1}(\frak{m}_s)$, where $\frak{m}_s \subset \hat{\mathcal{O}}_{U_{i,m,w}',s}$ is the maximal ideal. Since $s \in U_{i,m,w}'$, $\theta(\theta_{X_{i,m}}(y_{\mathrm{dR}}(w)))(s) \neq 0$, and so $\theta_{X_{i,m}}(y_{\mathrm{dR}}(w))_s \in (\mathbb{B}_{\mathrm{dR},U_{i,m,w}',s}^+)^{\times}$. Since $\mathbb{B}_{\mathrm{dR},U_{i,m,w}'}^+$ is a sheaf, we are done.

\end{proof}

\begin{corollary}\label{'unitcorollary}
$$y_{\mathrm{dR}}(w) \in \mathbb{B}_{\mathrm{dR},U_m}^+(U_{m,w}')\llbracket X_m/t\rrbracket^{\times}.$$
\end{corollary}
\begin{proof}Since $\mathbb{B}_{\mathrm{dR},U_m}^+\llbracket X_m/t\rrbracket$ is a sheaf on $Y_{\text{pro\'{e}t}}/U_m$, it suffices to show that 
$$y_{\mathrm{dR}}(w) \in \mathbb{B}_{\mathrm{dR},U_m}^+(U_{i,m,w}')\llbracket X_m/t\rrbracket^{\times}$$
for all $1 \le i \le \mathbf{n}$. This follows from Proposition \ref{'invertibleproposition} and the fact that 
$$y_{\mathrm{dR}}(w) \in \mathcal{O}\mathbb{B}_{\mathrm{dR},U_{i,m}}^+(U_{i,m}) \cdot t^{-1} = \mathbb{B}_{\mathrm{dR},U_{i,m}}^+(U_{i,m})\llbracket X_{i,m}\rrbracket \cdot t^{-1} \subset \mathbb{B}_{\mathrm{dR},U_{i,m}}^+(U_{i,m})\llbracket X_{i,m}/t\rrbracket.$$
\end{proof}


\begin{lemma}The inclusion 
$$\mathcal{O}_{\mathcal{V}_x}(U_{i,m,w}')^{\times} \subset \mathcal{O}\mathbb{B}_{\mathrm{dR},U_{i,m}}^+(U_{i,m,w}') = \mathbb{B}_{\mathrm{dR},\mathcal{V}_x}^+(U_{i,m,w}')\llbracket X_{i,m}\rrbracket$$
factors through
\begin{equation}\label{'laterinclusion}\mathcal{O}_{\mathcal{V}_x}(U_{i,m,w}')^{\times} \subset \mathbb{B}_{\mathrm{dR},U_{i,m}}^+(U_{i,m,w}')^{\times} + X_{i,m}\cdot\mathbb{B}_{\mathrm{dR},U_{i,m}}^+(U_{i,m,w}')\llbracket X_{i,m}\rrbracket \subset \mathbb{B}_{\mathrm{dR},U_{i,m}}^+(U_{i,m,w}')\llbracket X_{i,m}\rrbracket.
\end{equation}
\end{lemma}

\begin{proof}Recall $\mathcal{O}_{U_{i,m}} \subset \mathcal{O}\mathbb{B}_{\mathrm{dR},U_{i,m}}^+ \xrightarrow{\theta_{\mathrm{dR}}} \hat{\mathcal{O}}_{U_{i,m}}$ is the natural $p$-adic completion map. Suppose $f \in \mathcal{O}_{U_{i,m}}(U_{i,m,w}')^{\times} \subset \mathcal{O}\mathbb{B}_{\mathrm{dR},U_{i,m}}^+(U_{i,m,w}')$. Then $\theta(\theta_{X_{i,m}}(f)) = \theta_{\mathrm{dR}}(f) \in \hat{\mathcal{O}}_{U_{i,m}}(U_{i,m,w}')^{\times}$, and thus $\theta_{X_{i,m}}(f) \in \mathbb{B}_{\mathrm{dR},U_{i,m}}^+(U_{i,m,w}')^{\times}$ by the same argument as in the proof of Proposition \ref{'invertibleproposition}.

\end{proof}

\subsection{Trivializing $\mathbb{B}_{\mathrm{dR}}^+(U_m)$}The goal of this section is to show how the period ring $\mathbb{B}_{\mathrm{dR},U_m}^+(U_m)$ is isomorphic to $\hat{\mathcal{O}}_{U_m}(U_m)\llbracket t\rrbracket$. This isomorphism will be needed later to define a splitting (\ref{split}) of the relative Hodge filtration (\ref{Hodgefiltrationinclusion}), which we in turn use to define our $p$-adic Maass-Shimura operators (Definition \ref{padicMSdefinition}). The construction of this isomorphism essentially uses the affinoid perfectoidness of $U_m$.

\begin{corollary}\label{OBmapcorollary}\begin{enumerate}
\item There is a natural map of sheaves on $Y_{\text{pro\'{e}t}}/U$
\begin{equation}\label{OBmap}\mathcal{O}_U \rightarrow \mathbb{B}_{\mathrm{dR},U}^+
\end{equation}
whose composition with $\theta : \mathbb{B}_{\mathrm{dR},U}^+ \rightarrow \hat{\mathcal{O}}_U$ is the natural $p$-adic completion map $\mathcal{O}_U \rightarrow \hat{\mathcal{O}}_U$. This induces a map of $\hat{\mathcal{O}}_U(U)$-aglebras 
\begin{equation}\label{completedOBmap}\iota_U : \hat{\mathcal{O}}_U(U) \rightarrow \mathbb{B}_{\mathrm{dR},U}^+(U)
\end{equation}
which is a section of $\theta : \mathbb{B}_{\mathrm{dR},U}^+(U) \rightarrow \hat{\mathcal{O}}_U(U)$. This in turn induces an isomorphism of $\hat{\mathcal{O}}_U(U)$-algebras
\begin{equation}\label{Bdecomposition}\mathbb{B}_{\mathrm{dR},U}^+(U) \cong \hat{\mathcal{O}}_U(U)\llbracket t\rrbracket
\end{equation}
where $t \in \mathrm{Fil}^1\mathbb{B}_{\mathrm{dR},Y}^+(Y_{\infty})$ is as in (\ref{'tdefinition}).

\item Similarly, for any $m \in \mathbb{Z}_{\ge 0}$, letting $U_m$ be as in (\ref{Um}) there is a natural map of rings
\begin{equation}\label{OBmap'}\mathcal{O}_{U_m}(U_m) \rightarrow \mathbb{B}_{\mathrm{dR},U_m}^+(U_m)
\end{equation}
whose composition with $\theta : \mathbb{B}_{\mathrm{dR},U_m}^+(U_m) \rightarrow \hat{\mathcal{O}}_{U_m}(U_m)$ is the natural $p$-adic completion map $\mathcal{O}_{U_m}(U_m) \rightarrow \hat{\mathcal{O}}_{U_m}(U_m)$. This induces a map of $\hat{\mathcal{O}}_{U_m}(U_m)$-algebras
\begin{equation}\label{completedOBmap'}\iota_{U_m} : \hat{\mathcal{O}}_{U_m}(U_m) \rightarrow \mathbb{B}_{\mathrm{dR},U_m}^+(U_m)
\end{equation}
which is a section of $\theta : \mathbb{B}_{\mathrm{dR},U_m}^+(U_m) \rightarrow \hat{\mathcal{O}}_{U_m}(U_m)$. This in turn induces an isomorphism of $\hat{\mathcal{O}}_{U_m}(U_m)$-algebras
\begin{equation}\label{Bdecomposition'}\mathbb{B}_{\mathrm{dR},U_m}^+(U_m) \cong \hat{\mathcal{O}}_{U_m}(U_m)\llbracket t\rrbracket.
\end{equation}
\end{enumerate}
\end{corollary}

\begin{proof}\textbf{(1)}: Recall $\theta_X$ from (\ref{thetaX0}). Observe that there is a natural map $\mathcal{O}\mathbb{B}_{\mathrm{dR},U}^+ \rightarrow \left(\mathrm{Fil}^1\mathcal{O}\mathbb{B}_{\mathrm{dR},U}^+\right)\cdot t^{-1}$ provided by gluing together the maps
$$\mathcal{O}\mathbb{B}_{\mathrm{dR},U}^+ \rightarrow \mathcal{O}\mathbb{B}_{\mathrm{dR},U_i}^+ \overset{(\ref{Uidentify})}{=} \mathbb{B}_{\mathrm{dR},U_i}^+\llbracket X_i\rrbracket =\left(t\cdot\mathbb{B}_{\mathrm{dR},U_i}^+ + tX_i\cdot\mathbb{B}_{\mathrm{dR},U_i}^+\llbracket X \rrbracket\right)\cdot t^{-1}\subset \mathrm{Fil}^1\mathcal{O}\mathbb{B}_{\mathrm{dR},U_i}^+\cdot t^{-1}$$
for $1 \le i \le \mathbf{n}$. Precomposing $\theta_X : \left(\mathrm{Fil}^1\mathcal{O}\mathbb{B}_{\mathrm{dR},U}^+\right)\cdot t^{-1} \rightarrow \mathbb{B}_{\mathrm{dR},U}^+$ with this map, we get a map 
$$\theta_X : \mathcal{O}\mathbb{B}_{\mathrm{dR},U}^+ \rightarrow \mathbb{B}_{\mathrm{dR},U}^+.$$
The map (\ref{OBmap}) is provided composition
\begin{equation}\label{compositionU}\mathcal{O}_U \rightarrow \mathcal{O}\mathbb{B}_{\mathrm{dR},U}^+ \xrightarrow{\theta_X}\mathbb{B}_{\mathrm{dR},U}^+.
\end{equation}

By (\ref{completionscoincide}), recalling that $U = \mathcal{Y}^{\mathrm{Ig}}(\epsilon_0)$ from Definition \ref{Udefinition}, we have 
\begin{equation}\label{completionU}\hat{\mathcal{O}}_U^+(U) = \widehat{\mathcal{O}_U^+(U)} := \varprojlim_n \mathcal{O}_U^+(U)/p^n.
\end{equation}
This implies that $W(\hat{\mathcal{O}}_U^{+,\flat})(U) = W(\hat{\mathcal{O}}_U^{+,\flat}(U))$ is $p$-adically complete, where the equality follows from \cite[discussion above Lemma 6.3]{Scholze}. We have
$$\mathbb{B}_{\mathrm{dR},U}^+(U) = \varprojlim_n W(\hat{\mathcal{O}}_U^{+,\flat})(U)[1/p]/(\ker\theta)^n= \varprojlim_n W(\hat{\mathcal{O}}_U^{+,\flat}(U))[1/p]/(\ker\theta)^n$$
also from loc. cit. Thus for every $n \in \mathbb{Z}_{\ge 0}$, the evaluation of (\ref{compositionU}) at $U \in Y_{\text{pro\'{e}t}}$ induces
$$\mathcal{O}_U(U) \rightarrow \mathbb{B}_{\mathrm{dR},U}^+(U) = \varprojlim_n W(\hat{\mathcal{O}}_U^{+,\flat}(U))[1/p]/(\ker\theta)^n \rightarrow W(\hat{\mathcal{O}}_U^{+,\flat}(U))[1/p]/(\ker\theta)^n$$
where $\theta : W(\hat{\mathcal{O}}_U^{+,\flat}(U))[1/p] \twoheadrightarrow \hat{\mathcal{O}}_U(U)$ is the map from Definition 6.1 of op. cit. The image of the previous displayed map is contained in the ring $\left(W(\hat{\mathcal{O}}_U^{+,\flat}(U))/(\ker\theta)^n\right)[\frac{t}{p^k}]$ for some $k \in \mathbb{Z}_{\ge 0}$, where $t \in \ker\theta(U)$ is as in (\ref{'tdefinition}),
and this latter ring is $p$-adically complete (cf. Proof of Lemma 6.11 of op. cit.). 
Thus, the previous displayed map extends to the $p$-adic completion 
$$\hat{\mathcal{O}}_U(U) \overset{(\ref{completionU})}{=} \widehat{\mathcal{O}_U^+(U)}[1/p] \rightarrow W(\hat{\mathcal{O}}_U^{+,\flat}(U))[1/p]/(\ker\theta)^n.$$
These maps glue over all $n$ to give (\ref{completedOBmap}). By construction, it is a section of $\theta : \mathbb{B}_{\mathrm{dR},U}^+(U)\rightarrow \hat{\mathcal{O}}_U(U)$.


Finally, we prove (\ref{Bdecomposition}). We have a natural map induced by (\ref{completedOBmap}):
$$i_U : \hat{\mathcal{O}}_U(U)\llbracket t\rrbracket \rightarrow \mathbb{B}_{\mathrm{dR},U}^+(U), \hspace{1cm} i_U\left(\sum_{n = 0}^{\infty}\alpha_n\cdot t^n\right) := \sum_{n = 0}^{\infty} \iota_U(\alpha_n) \cdot t^n,$$
which is well-defined since by \cite[discussion above Lemma 6.3]{Scholze} (using the fact that $U$ is affinoid perfectoid)
$$\mathbb{B}_{\mathrm{dR},U}^+(U) = \varprojlim_n \mathbb{B}_{\mathrm{inf},U}(U)/t^n,$$
which is complete with respect to the $t$-adic topology. In the other direction, we define a map 
$$\vartheta_U: \mathbb{B}_{\mathrm{dR},U}^+(U) \rightarrow \hat{\mathcal{O}}_U(U) \llbracket t\rrbracket$$
by
$$f \mapsto \sum_{n = 0}^{\infty}\alpha_n \cdot t^n, \hspace{1cm} \alpha_0 = \theta(f), \hspace{.25cm} \alpha_N = \theta\left(\frac{f - \sum_{n = 0}^{N-1}\iota_U(\alpha_n) \cdot t^n}{t^N}\right), \; N\ge 1.$$
Since $\iota_U$ is a section of $\theta$, $i_U$ and $\vartheta_U$ are inverses of each other. \\

\textbf{(2)}: The assertions (\ref{OBmap'}), (\ref{completedOBmap'}) and (\ref{Bdecomposition'}) are proven completely analogously: Recall $\theta_{X_m}$ from (\ref{thetaXm}). Gluing together the maps 
\begin{align*}\mathcal{O}\mathbb{B}_{\mathrm{dR},U_m}^+(U_m) \rightarrow \mathcal{O}\mathbb{B}_{\mathrm{dR},U_m}^+(U_{i,m}) &\overset{(\ref{Uidentify})}{=} \mathbb{B}_{\mathrm{dR},U_{i,m}}^+(U_{i,m})\llbracket X_{i,m}\rrbracket \\
&=\left(t\cdot\mathbb{B}_{\mathrm{dR},U_{i,m}}^+(U_{i,m}) + tX_{i,m}\cdot\mathbb{B}_{\mathrm{dR},U_{i,m}}^+(U_{i,m})\llbracket X \rrbracket\right)\cdot t^{-1}\\
&\subset \mathrm{Fil}^1\mathcal{O}\mathbb{B}_{\mathrm{dR},U_{i,m}}^+(U_{i,m})\cdot t^{-1}
\end{align*}
for $1 \le i \le \mathbf{n}$, we get a natural map $\mathcal{O}\mathbb{B}_{\mathrm{dR},U_m}^+(U_m) \rightarrow \left(\mathrm{Fil}^1\mathcal{O}\mathbb{B}_{\mathrm{dR},U_m}^+(U_m)\right)\cdot t^{-1}$. Precomposing 
$$\theta_{X_m} : \mathrm{Fil}^1\mathcal{O}\mathbb{B}_{\mathrm{dR},U_m}^+(U_m) \cdot t^{-1} \rightarrow \mathbb{B}_{\mathrm{dR},U_m}^+(U_m)$$
with this map, we get a map 
$$\theta_{X_m} : \mathcal{O}\mathbb{B}_{\mathrm{dR},U_m}^+(U_m) \rightarrow \mathbb{B}_{\mathrm{dR},U_m}^+(U_m).$$ Now use the composition
$$\mathcal{O}_{U_m}(U_m) \rightarrow \mathcal{O}\mathbb{B}_{\mathrm{dR},U_m}^+(U_m) \xrightarrow{\theta_{X_m}}\mathbb{B}_{\mathrm{dR},U_m}^+(U_m)$$
to define (\ref{OBmap'}). One can $p$-adically complete this to get (\ref{completedOBmap'}), and then (\ref{Bdecomposition'}) is proven using the same argument as in the previous paragraph substituting $U$ with $U_m$.

\end{proof}

\subsection{$z_{\mathrm{dR}}, q_{\mathrm{dR}}, w_{\mathrm{can}}$}\label{zqwsection}

Recall $U_{m,w}'$ from (\ref{'U'definition}) and $x_{\mathrm{dR}}(w), y_{\mathrm{dR}}(w)$ from Definition \ref{'xydefinition}, all depending on the choice of $w \in \omega(U_m)$ made in Choice \ref{'choicew}. 

\begin{definition}\label{'zqwdefinition} For any $m \in \mathbb{Z}_{\ge 0}$ let
$$z_{\mathrm{dR}}(w) := -\frac{x_{\mathrm{dR}}(w)}{y_{\mathrm{dR}}(w)} \in \mathbb{B}_{\mathrm{dR},U_m
}^+(U_{m,w}')\llbracket X_m/t\rrbracket,$$
$$w_{\mathrm{can}}(w) := \frac{w}{y_{\mathrm{dR}}(w)} \in \omega \otimes_{\mathcal{O}_Y}\mathbb{B}_{\mathrm{dR},U_m}^+(U_{m,w}')\llbracket X_m/t\rrbracket.$$
Here we use Corollary \ref{'unitcorollary} in order to invert $y_{\mathrm{dR}}(w)$ on $U_{m,w}'$. Recall the comparison map $i_{\mathrm{dR}}$ from (\ref{comparisonmap2}).
Note that, by Definition \ref{'xydefinition} (1), 
\begin{equation}\label{'iwcan}i_{\mathrm{dR}}(w_{\mathrm{can}}(w)) = -z_{\mathrm{dR}}(w)e_1 + e_2.
\end{equation}
\end{definition}

\begin{remark}We will later show that $z_{\mathrm{dR}}(w)$ and $w_{\mathrm{can}}(w)$ are independent of $w \in \omega(U_m)$ and extend to sections defined on $\mathcal{V}_x$. See (\ref{zqw2}).
\end{remark}




\begin{definition}\label{expalphadefinition}\begin{enumerate}
\item In the notation of Definition \ref{newcompletionsdefinition}, let
$$(t,(X_m/t))\cdot \mathbb{B}_{\mathrm{dR},U_m}^+(U_{m,w}')\llbracket X_m/t\rrbracket := t\cdot\mathbb{B}_{\mathrm{dR},U_m}^+(U_{m,w}')\llbracket X_m/t\rrbracket + (X_m/t)\cdot \mathbb{B}_{\mathrm{dR},U_m}^+(U_{m,w}')\llbracket X_m/t\rrbracket.$$
\item Define
\begin{equation}\label{zbfdefinition}\mathbf{z}(w) := t\cdot z_{\mathrm{dR}}(w) \in (t,(X_m/t))\cdot \mathbb{B}_{\mathrm{dR},U_m}^+(U_{m,w}')\llbracket X_m/t\rrbracket .
\end{equation}
\item Also define
\begin{equation}\label{qdR}q_{\mathrm{dR}}(w) := \mathrm{exp}(\mathbf{z}(w)) \in 1 + (t,(X_m/t))\cdot \mathbb{B}_{\mathrm{dR},U_m}^+(U_{m,w}')\llbracket X_m/t\rrbracket ,
\end{equation}
where 
$$\mathrm{exp}(Y) = \sum_{n = 0}^{\infty}\frac{Y^n}{n!}$$
is formal exponentiation. Note that $q_{\mathrm{dR}}$ is a well-defined element of $\mathbb{B}_{\mathrm{dR},U_m}^+(U_{m,w}')\llbracket X_m/t\rrbracket$ because that ring is $(t,(X_m/t))$-adically complete. 
\end{enumerate}
\end{definition}

\begin{remark}We will later show that $\mathbf{z}(w)$ and $q_{\mathrm{dR}}(w)$ are independent of $w \in \omega(U_m)$ and extend to sections on $\mathcal{V}_x$, see (\ref{zqw2}). Note also that 
$$\nabla(q_{\mathrm{dR}}(w))/q_{\mathrm{dR}}(w) = \nabla(\mathbf{z}(w)) \hspace{.5cm} \text{and} \hspace{.5cm}\nabla(\mathbf{z}(w)) = t\cdot \nabla(z_{\mathrm{dR}}(w)).$$
\end{remark}

\subsection{Properties of $w_{\mathrm{can}}$}\label{wcansection}

Recall $z_{\mathrm{dR}}(w)$ and $w_{\mathrm{can}}(w)$ from Definition \ref{'zqwdefinition}. Note that both section \emph{a priori} depend on the choice of section $w$ as in Choice \ref{'choicew}. The next Proposition shows that $z_{\mathrm{dR}}(w)$ and $w_{\mathrm{can}}(w)$ are in fact independent of $w$. 

\begin{proposition}\label{wcangeneratorproposition}For any $m \in \mathbb{Z}_{\ge 0}$ and any  $w \in \omega(U_m)$, let $U_{m,w}'$ be the corresponding open sets defined by (\ref{'U'definition}). For any $w, w' \in \omega(U_m)$, we have
\begin{equation}\label{zdRequality}z_{\mathrm{dR}}(w)|_{U_{m,w}'\cap U_{m,w'}'} = z_{\mathrm{dR}}(w')|_{U_{m,w}' \cap U_{m,w'}'}
\end{equation}
and
\begin{equation}\label{wcanequality}w_{\mathrm{can}}(w)|_{U_{m,w}' \cap U_{m,w'}'} = w_{\mathrm{can}}(w')|_{U_{m,w}' \cap U_{m,w'}'} .
\end{equation}
In particular, defining
\begin{equation}\label{fullU'}\mathbf{U}' := \bigcup_{m \in \mathbb{Z}_{\ge 0}}\bigcup_{w \in \omega(U_m)}U_{m,w}',
\end{equation}
the $z_{\mathrm{dR}}(w), \mathbf{z}(w)$ and $q_{\mathrm{dR}}(w)$ glue to give an elements
\begin{equation}\label{gluezdRsection}z_{\mathrm{dR}}, \mathbf{z}, q_{\mathrm{dR}} \in \varprojlim_{m \in \mathbb{Z}_{\ge 0}}\varprojlim_{w \in \omega(U_m)}\mathbb{B}_{\mathrm{dR},U_m}^+(U_{m,w}')\llbracket X_m/t\rrbracket,
\end{equation}
and the $w_{\mathrm{can}}(w)$ glue to give a generator
\begin{equation}\label{wcangenerator0}w_{\mathrm{can}} \in \omega \otimes_{\mathcal{O}_Y}\varprojlim_{m \in \mathbb{Z}_{\ge 0}}\varprojlim_{w \in \omega(U_m)}\mathbb{B}_{\mathrm{dR},U_m}^+(U_{m,w}')\llbracket X_m/t\rrbracket.
\end{equation}
Moreover we have
\begin{equation}\label{iwcan}i_{\mathrm{dR}}(w_{\mathrm{can}}) = -z_{\mathrm{dR}}e_1 + e_2.
\end{equation}
\end{proposition}

\begin{proof}Let $w, w' \in \omega(U_m)$ be as in Choice \ref{'choicew}. Since the Hodge filtration (\ref{Hodgefiltrationinclusion}) on $U_{m,w}' \cap U_{m,w'}'$ is uniquely defined and thus has unique image under the comparison map (\ref{Hodgecomparisoninclusion}) measured by an element 
$$z_{\mathrm{dR}} \in \mathbb{P}^1(\mathbb{B}_{\mathrm{dR},U_m
}^+(U_{m,w}' \cap U_{m,w'}')\llbracket X_m/t\rrbracket),$$
we have (\ref{zdRequality}). This, (\ref{zbfdefinition}) and (\ref{qdR}), immediately imply the existence of the elements (\ref{gluezdRsection}). 
Now we have 
\begin{align*}i_{\mathrm{dR}}(w_{\mathrm{can}}(w)|_{U_{m,w}' \cap U_{m,w'}'}) \overset{(\ref{'iwcan})}{=} -z_{\mathrm{dR}}(w)|_{U_{m,w}' \cap U_{m,w'}'}e_1 + e_2 &= -z_{\mathrm{dR}}(w')|_{U_{m,w}' \cap U_{m,w'}'} + e_2 \\
&\overset{(\ref{'iwcan})}{=} i_{\mathrm{dR}}(w_{\mathrm{can}}(w')|_{U_{m,w}' \cap U_{m,w'}'}),
\end{align*}
which immediately implies (\ref{wcanequality}). Given $m \in \mathbb{Z}_{\ge 0}$, choose a set $\mathbb{S}\subset \omega(U_m)$ such that for every $w \in \mathbb{S}$, there is some open $V_{m,w} \subset U_m$ such that $w|_{V_{m,w}} \in \omega(V_{m,w})$ is a generator, and 
$$\bigcup_{w \in \mathbb{S}}V_{m,w} = U_m.$$
Note that such $\mathbb{S}$ exists because $\omega|_{U_m}$ is basepoint-free, which in turn follows from the fact that $\omega\otimes_{\mathcal{O}_Y}\hat{\mathcal{O}}_{\mathcal{V}_x}$ is basepoint-free since the section (\ref{fraksgenerator}) on $\mathcal{V}_x \supset U_m$ is nowhere vanishing. From Corollary \ref{'unitcorollary}, we have that 
$$w_{\mathrm{can}}(w)|_{V_{m,w}'} \in \omega \otimes_{\mathcal{O}_Y}\mathbb{B}_{\mathrm{dR},U_{m,w}'}^+(V_{m,w}')\llbracket X_m/t\rrbracket$$
is a generator. Let $w \in \omega(U_m)$ be any fixed choice as in Choice \ref{'choicew}. By (\ref{wcanequality}), we have 
$$w_{\mathrm{can}}(w)|_{V_{m,w}'  \cap V_{m,w}'} = w_{\mathrm{can}}(w')|_{V_{m,w}' \cap V_{m,w}'},$$
and so these glue together to give a global generator 
$$w_{\mathrm{can}} \in \omega \otimes_{\mathcal{O}_Y}\mathbb{B}_{\mathrm{dR},U_m}^+(\bigcup_{w \in \omega(U_m)}U_{m,w}')\llbracket X_m/t\rrbracket.$$
Again by (\ref{'iwcan}), these glue together along the inclusions $U_{m,w}' \subset U_{m+1,w}'$ from Proposition \ref{Um'proposition} to give the generator (\ref{wcangenerator0}). Now (\ref{iwcan}) immediately follows from (\ref{gluezdRsection}), (\ref{wcangenerator0}) and (\ref{'iwcan}). 

\end{proof}

Recall $\nabla : \mathcal{O}\mathbb{B}_{\mathrm{dR}} \rightarrow \mathcal{O}\mathbb{B}_{\mathrm{dR}} \otimes \Omega$ is the natural connection. Recall the Kodaira-Spencer isomorphism of sheaves (\cite[Appendix A1.3.17]{Katzpamf}, \cite{Yuan}; recall in our situation $p \nmid \mathrm{disc}(D)$)
\begin{equation}\label{'KS}\mathrm{KS} : \omega^{\otimes 2} \xrightarrow{\sim} \Omega_Y, \hspace{1cm} \mathrm{KS}(w^{\otimes 2}) = \langle w,\nabla(w)\rangle_{\mathrm{dR}}
\end{equation}
where $\Omega_Y$ is the sheaf of K\"{a}hler differentials on $Y$, $\langle \cdot, \cdot \rangle_{\mathrm{dR}}$ is the Poincar\'{e} pairing on $H_{\mathrm{dR}}^1(\mathcal{E})$, and $\nabla : H_{\mathrm{dR}}^1(\mathcal{E}) \rightarrow H_{\mathrm{dR}}^1(\mathcal{E}) \otimes_{\mathcal{O}_Y}\Omega_Y$ is the Gauss-Manin connection. 

\begin{proposition}\label{'KSproposition}$\mathrm{KS}(w_{\mathrm{can}}^{\otimes 2}) = \nabla\mathbf{z}$. 
\end{proposition}

\begin{proof}We have 
\begin{align*}\mathrm{KS}(w_{\mathrm{can}}^{\otimes 2}) = \langle w_{\mathrm{can}},\nabla(w_{\mathrm{can}})\rangle_{\mathrm{dR}} \overset{(\ref{comparepairings})}{=} \langle i_{\mathrm{dR}}(w_{\mathrm{can}}),\nabla(i_{\mathrm{dR}}(w_{\mathrm{can}}))\rangle &\overset{(\ref{iwcan})}{=} \langle -z_{\mathrm{dR}}e_1 + e_2, \nabla(-z_{\mathrm{dR}}e_1 + e_2)\rangle\\
& = \langle -z_{\mathrm{dR}}e_1+e_2,-e_1\nabla z_{\mathrm{dR}}\rangle \\
&= \langle e_2, -e_1\rangle \cdot \nabla z_{\mathrm{dR}} \\
&= \langle e_1,e_2\rangle \cdot \nabla z_{\mathrm{dR}}\\
&\overset{(\ref{'tdefinition}), (\ref{tidentifications})}{=} t\cdot \nabla z_{\mathrm{dR}} \overset{(\ref{zbfdefinition})}{=} \nabla\mathbf{z}.
\end{align*}
\end{proof}

\begin{corollary}\label{nablazgeneratorcorollary}For any $m \in \mathbb{Z}_{\ge 0}$, the element
$$\nabla\mathbf{z} \in \Omega_Y \otimes_{\mathcal{O}_Y}\mathbb{B}_{\mathrm{dR},\mathcal{V}_x}^+(U_{m,w}')\llbracket X_m/t\rrbracket$$
is a $\mathbb{B}_{\mathrm{dR},\mathcal{V}_x}^+(U_{m,w}')\llbracket X_m/t\rrbracket$-generator. Thus we have identifications
\begin{equation}\label{changeofvars}\mathbb{B}_{\mathrm{dR},U_m}^+(U_{m,w}')\llbracket X_m/t\rrbracket = \mathbb{B}_{\mathrm{dR},U_m}^+(U_{m,w}')\llbracket \mathbf{z}\rrbracket = \mathbb{B}_{\mathrm{dR},U_m}^+(U_{m,w}')\llbracket q_{\mathrm{dR}}-1\rrbracket.
\end{equation}
\end{corollary}

\begin{proof}
Since $U_{m,w}' = \bigcup_{i = 1}^{\mathbf{n}}U_{i,m,w}'$ and $\mathbb{B}_{\mathrm{dR},U_m}^+\llbracket X_m/t\rrbracket$ is a sheaf on $Y_{\text{pro\'{e}t}}/U_m$, it suffices to show that 
$$\mathbb{B}_{\mathrm{dR},U_m}^+(U_{i,m,w}')\llbracket X_{i,m}/t\rrbracket = \mathbb{B}_{\mathrm{dR},U_m}^+(U_{i,m,w}')\llbracket \mathbf{z}\rrbracket = \mathbb{B}_{\mathrm{dR},U_m}^+(U_{i,m,w}')\llbracket q_{\mathrm{dR}}-1\rrbracket$$
for all $1 \le i \le \mathbf{n}$. 

Note that $\theta_{X_{i,m}}(\mathbf{z}) = \theta_{X_{i,m}}(q_{\mathrm{dR}}-1) = 0$ by Definition \ref{'zqwdefinition}, (\ref{gluezdRsection}) and (\ref{thetaXmspecialize}). The equality now follows from making changes of variables $X_{i,m}/t \rightarrow \mathbf{z}$ and $X_{i,m}/t\rightarrow q_{\mathrm{dR}}-1$, using (\ref{wcangenerator0}) and Proposition \ref{'KSproposition} which imply
$$\frac{\nabla\mathbf{z}}{\nabla (X_{i,m}/t)}|_{U_{i,m,w}'} \in \mathbb{B}_{\mathrm{dR},U_{m,w}'}^+(U_{i,m,w}') \llbracket X_m/t\rrbracket^{\times}, \hspace{.9cm} \frac{\nabla (q_{\mathrm{dR}}-1)}{\nabla (X_{i,m}/t)}|_{U_{i,m,w}'} \in \mathbb{B}_{\mathrm{dR},U_m}^+(U_{i,m,w}') \llbracket X_{i,m}/t\rrbracket^{\times}.$$
 
\end{proof}

\begin{corollary}Consider the ring $\varprojlim_{m \in \mathbb{Z}_{\ge 0}}\varprojlim_{w \in \omega(U_m)}\mathbb{B}_{\mathrm{dR},U_m}^+(U_{m,w}')\llbracket X_m/t\rrbracket$ appearing in (\ref{wcangenerator0}), where the transition maps in the inverse limit are given by the inclusions $U_{m,w}' \subset U_{m+1,w}'$ from Proposition \ref{Um'proposition} for $w \in \omega(U_m)$ varying. Then we have 
\begin{equation}\label{fullchangeofvars}\varprojlim_{m \in \mathbb{Z}_{\ge 0}}\varprojlim_{w \in \omega(U_m)}\mathbb{B}_{\mathrm{dR},U_m}^+(U_{m,w}')\llbracket X_m/t\rrbracket = \mathbb{B}_{\mathrm{dR},\mathbf{U}'}^+(\mathbf{U}')\llbracket q_{\mathrm{dR}}-1\rrbracket.
\end{equation}

\end{corollary}

\begin{proof}This follows from the equalities (\ref{changeofvars}), which are clearly compatible with respect to the inclusions $U_{m,w}' \subset U_{m+1,w}'$ from Proposition \ref{Um'proposition}, and compatible for $w \in \omega(U_m)$ varying by (\ref{wcanequality}).

\end{proof}

\begin{proposition}\label{wcangeneratorproposition2}\begin{enumerate}
\item We have
\begin{equation}\label{wcangenerator}w_{\mathrm{can}} \in \omega \otimes_{\mathcal{O}_Y}\in \mathbb{B}_{\mathrm{dR},\mathbf{U}'}^+(\mathbf{U}')\llbracket q_{\mathrm{dR}}-1\rrbracket.
\end{equation}
Moreover, $z_{\mathrm{dR}}, \mathbf{z}, q_{\mathrm{dR}} \in \mathbb{B}_{\mathrm{dR},U_m}^+(U_{m,w}')\llbracket X/t\rrbracket$ extend to elements
\begin{equation}\label{zzqU'}z_{\mathrm{dR}}, \mathbf{z},q_{\mathrm{dR}} \in \mathbb{B}_{\mathrm{dR},\mathbf{U}'}^+(\mathbf{U}')\llbracket q_{\mathrm{dR}}-1\rrbracket.
\end{equation}
\item For all $m \in \mathbb{Z}_{\ge 0}$, we have
\begin{equation}\label{UmU'}U_m \cap \mathbf{U}' = \bigcup_{w \in \omega(U_m)}U_{m,w}'.
\end{equation}
\end{enumerate}
\item Moreover, for any $M \in \mathbb{Z}_{\ge 0}$, we have
\begin{equation}\label{fullU'2}\mathbf{U}' = \bigcup_{m \in \mathbb{Z}_{\ge M}}\bigcup_{w \in \omega(U_m)}U_{m,w}'.
\end{equation}
\end{proposition}

\begin{proof}
\textbf{(1)}: The inclusion (\ref{wcangenerator}) follows immediately from (\ref{wcangenerator0}) and (\ref{fullchangeofvars}). The $z_{\mathrm{dR}}|_{U_{m,w}'}$ glue together to give a section $z_{\mathrm{dR}} \in \mathbb{B}_{\mathrm{dR},U_{m,w}'}^+(U_{m,w}')\llbracket X_m/t\rrbracket$, and by the same reasoning $\mathbf{z}$ and $q_{\mathrm{dR}}$ extend to sections on $\mathbf{U}'$ to give (\ref{zzqU'}).\\

\textbf{(2)}: Both sides of (\ref{UmU'}) are equal to the locus in $U_m$ on which the section $w_{\mathrm{can}}$ from (\ref{wcangenerator}) is defined.\\

\textbf{(3)}: Applying $\bigcup_{m \in \mathbb{Z}_{\ge M}}$ to (\ref{UmU'}), we get
$$\mathbf{U}' = \mathcal{V}_x \cap \mathbf{U}' \overset{(\ref{Vxunion'})}{=} \left(\bigcup_{m \in M}U_m\right) \cap \mathbf{U}' = \bigcup_{m \in \mathbb{Z}_{\ge M}}\left(U_m \cap \mathbf{U}'\right) \overset{(\ref{UmU'})}{=} \bigcup_{m \in \mathbb{Z}_{\ge M}}\bigcup_{w \in \omega(U_m)}U_{m,w}'.$$

\end{proof}

The right $GL_2(\mathbb{Q}_p)$-action acts on $z_{\mathrm{dR}}$ and $z_{\mathrm{HT}}$ (from (\ref{zHT})) via linear fractional transformations.

\begin{proposition}\label{modulartransformationproposition}For $\gamma = \left(\begin{array}{ccc} a & b\\
c & d\\
\end{array}\right) \in GL_2(\mathbb{Q}_p)$ with $\mathbf{U}' \cdot \gamma \subset \mathbf{U}'$, we have 
\begin{equation}\label{modulartransformationidentity}\gamma^*z_{\mathrm{dR}} = \frac{dz_{\mathrm{dR}} + b}{cz_{\mathrm{dR}} + a}, \hspace{1cm} \gamma^*z_{\mathrm{HT}} = \frac{dz_{\mathrm{HT}} + b}{cz_{\mathrm{HT}} + a}.
\end{equation}
\end{proposition}

\begin{proof}This follows from the same argument as in \cite[Section 2.4]{ChojeckiHansenJohansson}, replacing $\frak{z}$ there by $1/z_{\mathrm{dR}}$ and $1/z_{\mathrm{HT}}$ accordingly. We give details for the reader's convenience. Since $\gamma^*$ acts on $H_{\mathrm{dR}}^1(\mathcal{E})$ by an isogeny, it preserves the Hodge filtration (\ref{Hodgefiltrationinclusion}). 
From (\ref{iwcan}) and \cite[diagram in proof of Lemma 2.11]{ChojeckiHansenJohansson}, we see that for some $f \in \mathcal{O}\mathbb{B}_{\mathrm{dR},\mathbf{U}'}(\mathbf{U}')^{\times}$,
\begin{equation}\label{intermediatetransformation}\begin{split}-fz_{\mathrm{dR}}e_1 + fe_2 &= f \cdot i_{\mathrm{dR}}(w_{\mathrm{can}}) = \gamma^*(i_{\mathrm{dR}}(w_{\mathrm{can}})) = \gamma^*(-z_{\mathrm{dR}}e_1 + e_2) \\
&= -(\gamma^*z_{\mathrm{dR}})(ae_1 + ce_2) + (be_1 + de_2) = (-a\gamma^*z_{\mathrm{dR}} + b)e_1 + (-c\gamma^*z_{\mathrm{dR}} + d)e_2.
\end{split}
\end{equation}
From this we get 
$$f\left(\begin{array}{ccc} -z_{\mathrm{dR}}\\
1\\
\end{array}\right) = \left(\begin{array}{ccc} a & b\\
c & d\\
\end{array}\right)\left(\begin{array}{ccc}-\gamma^*z_{\mathrm{dR}}\\
1\\
\end{array}\right).$$
Solving for $\gamma^*z_{\mathrm{dR}}$, we get the first equality of (\ref{modulartransformationidentity}). The proof for $z_{\mathrm{HT}}$ is entirely analogous.

\end{proof}

\begin{remark}As previously mentioned, we will soon show that 
$$\mathbf{U}' = \mathcal{V}_x,$$
see Theorem \ref{U'Utheorem}, but we will need some intermediate results (Corollary \ref{U'YIg}) before we can do this. Hence the results of Propositions \ref{wcangeneratorproposition} and \ref{wcangeneratorproposition2} hold for $\mathcal{V}_x$ in place of $\mathbf{U}'$, and similarly with all other results involving $\mathbf{U}'$. 
\end{remark}

\subsection{$q_{\mathrm{dR}}$-expansions}\label{qdRfirstsection}

\begin{remark}All the notions and results in this section (Section \ref{qdRfirstsection}) and in Section \ref{padicmodularformsection} are defined with respect to the base $\mathbf{U}'$ from (\ref{fullU'}). In light of Theorem \ref{U'Utheorem} below, we have $\mathbf{U}' = \mathcal{V}_x$, and so all the statements below hold for $\mathcal{V}_x \overset{(\ref{Vz})}{=} \{z_{\mathrm{HT}} \neq 0\}$ in place of $\mathbf{U}'$. 
\end{remark}

\begin{definition}\label{'zqfunctionexpansions}Given a pro\'{e}tale open $W \rightarrow \mathbf{U}'$ and $f \in \mathbb{B}_{\mathrm{dR},\mathbf{U}'}(W)\llbracket q_{\mathrm{dR}}-1\rrbracket$, let
$$f(q_{\mathrm{dR}}) = \sum_{n = 0}^{\infty} a_n \cdot (q_{\mathrm{dR}}-1)^n\in \mathbb{B}_{\mathrm{dR},\mathbf{U}'}(W)\llbracket q_{\mathrm{dR}}-1\rrbracket$$
where
\begin{equation}\label{Taylorcoefficient}a_n := \frac{1}{n!}\theta_X\left(\left(\frac{\nabla}{\nabla q_{\mathrm{dR}}}\right)^n(f)\right) \in \mathbb{B}_{\mathrm{dR},\mathbf{U}'}(W).
\end{equation}
We call this the \emph{$q_{\mathrm{dR}}$-expansion of $f$}. 
\end{definition}

We will show that $\mathbf{U}' = \mathcal{V}_x$ in Theorem \ref{U'Utheorem} below. Assume this equality briefly and let $W = \mathbf{U}' = \mathcal{V}_x$. From (\ref{Bdecomposition'}) we get the isomorphism of $\hat{\mathcal{O}}_{U_m}(U_m)$-algebras
\begin{equation}\label{BdecompositionUm}\mathbb{B}_{\mathrm{dR},U_m}(U_m) \cong \hat{\mathcal{O}}_{U_m}(U_m)(\!(t)\!).
\end{equation}
Henceforth identify $\mathbb{B}_{\mathrm{dR},U_m}(U_m) = \hat{\mathcal{O}}_{U_m}(U_m)(\!(t)\!)$.  
By gluing together (\ref{BdecompositionUm}) for all $m$, using $\mathcal{V}_x = \bigcup_{m \in \mathbb{Z}_{\ge 0}}U_m$ from (\ref{Vxunion'}) and the fact that $t \in \mathrm{Fil}^1\mathbb{B}_{\mathrm{dR},Y}^+(\mathcal{V}_x)$ is a generator, we have a natural isomorphism of $\hat{\mathcal{O}}_{\mathcal{V}_x}(\mathcal{V}_x)$-algebras
\begin{equation}\label{Bdecomposition''}\mathbb{B}_{\mathrm{dR},\mathcal{V}_x}(\mathcal{V}_x) \cong \hat{\mathcal{O}}_{\mathcal{V}_x}(\mathcal{V}_x)(\!(t)\!).
\end{equation}
Henceforth identify $\mathbb{B}_{\mathrm{dR},\mathcal{V}_x}(\mathcal{V}_x) = \hat{\mathcal{O}}_{\mathcal{V}_x}(\mathcal{V}_x)(\!(t)\!)$. Then in the notation of (\ref{Taylorcoefficient}) we can we can view
$$a_n = a_n(t) \in \hat{\mathcal{O}}_{\mathcal{V}_x}(\mathcal{V}_x)(\!(t)\!)$$
and
$$f(q_{\mathrm{dR}}) = \sum_{n = 0}^{\infty}a_n(t) \cdot (q_{\mathrm{dR}}-1)^n \in \hat{\mathcal{O}}_{\mathcal{V}_x}(\mathcal{V}_x)(\!(t)\!)\llbracket q_{\mathrm{dR}}-1\rrbracket.$$

\subsection{Generalized $p$-adic modular forms and their $q_{\mathrm{dR}}$-expansions}\label{padicmodularformsection}For certain $p$-adic analytic applications (e.g. constructing $\mathbb{C}_p$-valued $p$-adic $L$-functions), we want to consider power series expansions of functions with $\hat{\mathcal{O}}$-coefficients, which provide an analogue of Serre-Tate expansions on the supersingular locus. These objects will be provided by the notion of ``$q_{\mathrm{dR}}$-expansions of generalized $p$-adic modular forms'' defined in Definition \ref{'zqexpansions}. The comparison between these expansions and Serre-Tate expansions will be analyzed in Section \ref{'ordinarysection}, culminating in Theorem \ref{STanalyticcontinuationtheorem}.

We view $p$-adic modular forms as functions on covers of Shimura curves attached to sections of $\omega^{\otimes k}$ obtained by trivializing via a generator of $\omega^{\otimes k}$ which exists on the cover of the Shimura curve. This role of this latter generator is played by $w_{\mathrm{can}}^{\otimes k}$ where $w_{\mathrm{can}}$ is from (\ref{wcangenerator0}); note that this generator only exists on the $p$-adic universal cover $Y_{\infty} \rightarrow Y$, and is only defined after extending coefficients from the structure sheaf to a large period sheaf. For any pro\'{e}tale open $W \rightarrow \mathbf{U}'$, and any $k \ge 0$ and any 
$$w' \in \omega^{\otimes k} \otimes_{\mathcal{O}_Y}\mathbb{B}_{\mathrm{dR},\mathbf{U}'}(W)\llbracket q_{\mathrm{dR}}-1\rrbracket,$$ we have an associated ``function''
$$G := \frac{w'}{w_{\mathrm{can}}^{\otimes k}} \in \mathbb{B}_{\mathrm{dR},\mathbf{U}'}(W)\llbracket q_{\mathrm{dR}}-1\rrbracket,$$
which gives our notion of $p$-adic modular form. 

\begin{definition}\label{generalizedpadicmodularformdefinition}We call $G$ as above the \emph{generalized $p$-adic modular form attached to $w'$}. We call the set of all $G$ arising from some $w' \in \omega^{\otimes k} \otimes_{\mathcal{O}_Y}\mathbb{B}_{\mathrm{dR},U}(W)\llbracket q_{\mathrm{dR}}-1\rrbracket$ the \emph{space of generalized $p$-adic modular forms of weight $k$}. Note that we will later (Theorem \ref{U'Utheorem}) show that $\mathbf{U}' = \mathcal{V}_x$, and so generalized $p$-adic modular forms are defined for any pro\'{e}tale open $W \rightarrow \mathcal{V}_x$. 

\end{definition}
When $w'$ is pulled back from a modular form of weight $k$ on some finite level Shimura curve, the associated generalized $p$-adic modular form $G$ satisfies a weight $k$ modular transformation property analogous to that satisfied by classical modular forms, see Theorem \ref{weighttheorem}. As we will see in Section \ref{'ordinarysection}, $G|_{\mathcal{Y}^{\mathrm{Ig}}}$ is a $p$-adic modular form in Katz's sense (\cite{Katzpamf}); see Theorem \ref{Katzpadicmodularformtheorem}.

\begin{definition}Let 
\begin{equation}\label{Gamma0pdefinition}\Gamma_{0,p}(p^n) := \left\{\left(\begin{array}{ccc} a & b\\
c & d\\
\end{array}\right) \in GL_2(\mathbb{Z}_p) : c \equiv 0 \pmod{p^n}\right\} \subset GL_2(\mathbb{Z}_p).
\end{equation}
and 
\begin{equation}\label{Gamma1pdefinition}\Gamma_{1,p}(p^n) := \left\{\left(\begin{array}{ccc} a & b\\
c & d\\
\end{array}\right) \in GL_2(\mathbb{Z}_p) : c \equiv 0 \pmod{p^n}, \hspace{.25cm} d \equiv 1 \pmod{p^n}\right\} \subset \Gamma_{0,p}(p^n).
\end{equation}
\end{definition}

From the definition of $U = \mathcal{Y}^{\mathrm{Ig}}(\epsilon_0)$ in Definition \ref{Udefinition}, one can easily check that
$$\mathrm{Gal}(U/Y(\epsilon_0)) = \mathrm{Gal}(\mathcal{Y}^{\mathrm{Ig}}(\epsilon_0)/Y(\epsilon_0)) = \Gamma_{0,p}(p^{n(\epsilon_0)}),$$
and more generally from (\ref{U}) we have for any $0\le \epsilon < p/(p+1)$ in the valuation group of $\mathcal{O}_k$
\begin{equation}\label{generalIgusaGalois}\mathrm{Gal}(\mathcal{Y}^{\mathrm{Ig}}(\epsilon)/Y(\epsilon)) = \Gamma_{0,p}(p^{n(\epsilon)}).
\end{equation}
This implies that for any $\alpha \in \mathbb{Z}_{\ge 0}$,
\begin{equation}\label{Galoisepsilonalpha}\mathrm{Gal}(\mathcal{Y}^{\mathrm{Ig}}(\epsilon_0/p^{\alpha})/Y(\epsilon_0/p^{\alpha})) = \Gamma_{0,p}(p^{n(\epsilon_0/p^{\alpha})}) = \Gamma_{0,p}(p^{n(\epsilon_0) + \alpha}).
\end{equation}

\begin{proposition}By (\ref{wcangenerator0}), we can view 
$$w_{\mathrm{can}} \in \omega \otimes_{\mathcal{O}_Y}\mathbb{B}_{\mathrm{dR},\mathcal{V}_x}^+(\mathcal{Y}^{\mathrm{Ig}}(\epsilon))\llbracket q_{\mathrm{dR}}-1\rrbracket$$
for any $0 \le \epsilon < p/(p+1)$ in the valuation group of $\mathcal{O}_k$. Let $\gamma = \left(\begin{array}{ccc} a & b\\
c & d\\
\end{array}\right) \in \Gamma_{0,p}(p^{n(\epsilon)})$. Then 
\begin{equation}\label{wcantransformationidentity}\gamma^*w_{\mathrm{can}} = \frac{ad-bc}{cz_{\mathrm{dR}} + a}\cdot w_{\mathrm{can}}.
\end{equation}
\end{proposition}

\begin{proof}Recall that $i_{\mathrm{dR}}$ from (\ref{comparisonmap}) is $GL_2(\mathbb{Q}_p)$-equivariant. We have, by \cite[diagram in proof of Lemma 2.11]{ChojeckiHansenJohansson},
\begin{align*}i_{\mathrm{dR}}(\gamma^*w_{\mathrm{can}}) &= \gamma^*i_{\mathrm{dR}}(w_{\mathrm{can}}) = \gamma^*(-z_{\mathrm{dR}}e_1 + e_2) \\
&\overset{(\ref{modulartransformationidentity})}{=} - \frac{dz_{\mathrm{dR}} + b}{cz_{\mathrm{dR}} + a}(ae_1 + ce_2) + be_1 + de_2 \\
&= \frac{1}{cz_{\mathrm{dR}}+a}\left(-(dz_{\mathrm{dR}}+b)(ae_1 + ce_2) + (cz_{\mathrm{dR}} +a)(be_1 + de_2)\right)\\
&= \frac{ad-bc}{cz_{\mathrm{dR}}+a}(-z_{\mathrm{dR}}e_1 + e_2) = \frac{ad-bc}{cz_{\mathrm{dR}}+a}\cdot i_{\mathrm{dR}}(w_{\mathrm{can}}) = i_{\mathrm{dR}}\left(\frac{ad-bc}{cz_{\mathrm{dR}} + a}\cdot w_{\mathrm{can}}\right).
\end{align*}
This immediately implies (\ref{wcantransformationidentity}).
\end{proof}

\begin{theorem}\label{weighttheorem}Suppose 
\begin{enumerate}
\item $0 \le \epsilon < p/(p+1)$ is any element in the valuation group of $\mathcal{O}_k$ (see Definition \ref{kdefinition}),
\item $n \ge n(\epsilon)$ (see (\ref{nepsilondefinition})), 
\item $$\gamma = \left(\begin{array}{ccc} a & b\\
c & d\\
\end{array}\right) \in \Gamma_{0,p}(p^n),$$
\item $W$ is the inverse image under the $\Gamma_{0,p}(p^{n(\epsilon)})$-cover $\mathcal{Y}^{\mathrm{Ig}}(\epsilon) \rightarrow Y(\epsilon)$ of an open subset $W_0 \subset Y(\epsilon)$, 
\item $W \subset \mathbf{U}'$, and
\item $W \cdot \gamma \subset W$.
\end{enumerate}
In particular, this gives a map
$$\gamma^* : \mathbb{B}_{\mathrm{dR},\mathbf{U}'}(W)\llbracket q_{\mathrm{dR}}-1\rrbracket \rightarrow \mathbb{B}_{\mathrm{dR},\mathbf{U}'}(W)\llbracket q_{\mathrm{dR}}-1\rrbracket.$$

Then for any generalized $p$-adic modular form $G \in \mathbb{B}_{\mathrm{dR},\mathbf{U}'}(W)\llbracket q_{\mathrm{dR}}-1\rrbracket$ of weight $k$, we have
\begin{equation}\label{Ftransformationidentity}\gamma^*G = \left(\frac{cz_{\mathrm{dR}} + a}{ad-bc}\right)^kG.
\end{equation}
\end{theorem}

\begin{proof}
Recall that 
$$\mathrm{Gal}(\mathcal{Y}^{\mathrm{Ig}}(\epsilon)/Y(\epsilon)) \overset{(\ref{generalIgusaGalois})}{=} \Gamma_{0,p}(p^{n(\epsilon)}),$$
so that 
$$\mathrm{Gal}(W/W_0) = \Gamma_{0,p}(p^{n(\epsilon)}).$$
Let 
$$w' = G \cdot w_{\mathrm{can}}^{\otimes k} \in \omega^{\otimes k} \otimes_{\mathcal{O}_Y}\mathbb{B}_{\mathrm{dR},U}(W)\llbracket q_{\mathrm{dR}}-1\rrbracket.$$
Then $\gamma^*w' = w'$. Thus,
$$G \cdot w_{\mathrm{can}}^{\otimes k} = w' = \gamma^*w' = \gamma^*(G \cdot w_{\mathrm{can}}^{\otimes k}) = \gamma^*G\cdot \gamma^*w_{\mathrm{can}}^{\otimes k} \overset{(\ref{wcantransformationidentity})}{=} \gamma^*G \cdot \left(\frac{ad-bc}{cz_{\mathrm{dR}} + a}\right)^kw_{\mathrm{can}}^{\otimes k}.$$
This gives (\ref{Ftransformationidentity}).
\end{proof}

For example, one may take $\epsilon = \epsilon_0$, $W = \mathbf{U}' \overset{(\ref{U'U})}{=} U$, $W_0 = Y(\epsilon_0)$ and $n = n(\epsilon_0)$ in Theorem \ref{weighttheorem}. We now define the $q_{\mathrm{dR}}$-expansion of the $p$-adic modular form $G$ by applying Definition \ref{'zqfunctionexpansions} to $G$. This will be also called the $q_{\mathrm{dR}}$-expansion of $w'$. 



\begin{definition}\label{'zqexpansions}

For any pro\'{e}tale open $W \rightarrow \mathbf{U}'$, and any 
$$w' \in \omega^{\otimes k}\otimes_{\mathcal{O}_Y}\mathbb{B}_{\mathrm{dR},\mathbf{U}'}(W)\llbracket q_{\mathrm{dR}}-1\rrbracket,$$
define the \emph{$q_{\mathrm{dR}}$-expansion of $w'$} as
$$w'(q_{\mathrm{dR}}) := G(q_{\mathrm{dR}}) \in \mathbb{B}_{\mathrm{dR},\mathbf{U}'}(W)\llbracket q_{\mathrm{dR}}-1\rrbracket,$$
where 
$$G = \frac{w'}{w_{\mathrm{can}}^{\otimes k}} \in \mathbb{B}_{\mathrm{dR},\mathbf{U}'}(W)\llbracket q_{\mathrm{dR}}-1\rrbracket.$$



By Theorem \ref{U'Utheorem} below, we can replace ``$\mathbf{U}'$'' with $\mathcal{V}_x$ in the above definition. However, we will need the above definition to prove that Theorem. 

\end{definition}

\subsection{The action of $B(\mathbb{Q}_p)$ on $U$}
In this section, we obtain some intermediate results on how certain upper-triangular elements $B(\mathbb{Q}_p) \subset GL_2(\mathbb{Q}_p)$ (see (\ref{BQp})) act on $U = \mathcal{Y}^{\mathrm{Ig}}(\epsilon_0)$ (see Definition \ref{Udefinition}).

\begin{definition}
For any $m, n \in \mathbb{Z}_{\ge 0}$ and any $j \in \mathbb{Z}_p$, define the element
\begin{equation}\label{gammajpnm}\gamma_{j/p^n,m} = \left(\begin{array}{ccc} 1 & j/p^n\\
0 & 1/p^m\\
\end{array}\right) \in GL_2(\mathbb{Q}_p).
\end{equation}
\end{definition}

\begin{proposition}\label{shiftproposition}Recall $U = \mathcal{Y}^{\mathrm{Ig}}(\epsilon_0)$ from Definition \ref{Udefinition}. 
\begin{enumerate}
\item If $n \le n(\epsilon_0) \le m$ then $\gamma_{j/p^n,m}$ satisfies 
\begin{equation}\label{UU'gamma}U \cdot \gamma_{j/p^n,m} \subset U
\end{equation}
so that we get an induced map
$$\gamma_{j/p^n,m}^* : \mathcal{O}\mathbb{B}_{\mathrm{dR},U}(U) \rightarrow \mathcal{O}\mathbb{B}_{\mathrm{dR},U}(U)$$
via pullback. 
\item Letting $U^+ = \mathcal{Y}^{\mathrm{Ig}}(\epsilon_0)^+$, under the same assumptions we also have
$$U^+ \cdot \gamma_{j/p^n,m} \subset U^+.$$
\item If moreover $m = 0$, then the above inclusions are equalities. 
\end{enumerate}
\end{proposition}

\begin{proof}We show (1) and (3); the proof of (2) is entirely analogous to the proof of (1), replacing level structures by Drinfeld level structures. Recall that $\mathcal{E}(\epsilon_0) \rightarrow U = \mathcal{Y}^{\mathrm{Ig}}(\epsilon_0)$ is the universal object, with $\Gamma(p^{\infty})$-level structure $(e_1,e_2)$. From the definition of the $GL_2(\mathbb{Q}_p)$-action, $\mathcal{E}(\epsilon_0) \cdot \gamma_{j/p^n,m}$ has a canonical subgroup of maximal level $n(\epsilon_0)$, and $\Gamma(p^{\infty})$-level structure $(e_1',e_2')$ with $e_1' \pmod{p^{n(\epsilon_0)}}$ trivializing the order-$p^{n(\epsilon_0)}$ canonical subgroup. 

Moreover, since 
$$\gamma_{j/p^n,m} \cdot \left(\begin{array}{ccc} 1 & 0\\
0 & p^n\\
\end{array}\right) = \left(\begin{array}{ccc} 1 & 0\\
0 & p^{n-m}\\
\end{array}\right)\cdot \left(\begin{array}{ccc} 1 & j\\
0 & 1\\
\end{array}\right),$$
and $\left(\begin{array}{ccc} 1 & j\\
0 & 1\\
\end{array}\right) \in GL_2(\mathbb{Z}_p)$ has trivial underlying isogeny under the moduli interpretation of the right $GL_2(\mathbb{Q}_p)$-action on $Y_{\infty}$, the underlying (false) elliptic curves of $\mathcal{E}(\epsilon_0) \cdot \left(\begin{array}{ccc} 1 & 0\\
0 & p^{n-m}\\
\end{array}\right)$ and $\mathcal{E}(\epsilon_0) \cdot \gamma_{j/p^n,m} \cdot \left(\begin{array}{ccc} 1 & 0\\
0 & p^n\\
\end{array}\right)$ are isomorphic. 

Suppose we are given any $(A,i,P,e_1,e_2) \in U(R,R^+)$ where $A$ is a (false) elliptic curve, $i$ an $\mathcal{O}_D$-endomorphism structure, $P$ a $\Gamma$-level structure as in Convention \ref{Yconvention} and $(e_1,e_2)$ a $\Gamma(p^{\infty})$-level structure. Write 
$$(A',i',P',e_1',e_2') = (A,i,P,e_1,e_2) \cdot \gamma_{j/p^n,m}.$$
The previous paragraph shows that $A = A'/\langle e_{1,m-n}' \rangle$, where $e_{1,m-n}' : \mathbb{Z}/p^{m-n} \xrightarrow{\sim} C_{m-n} \subset A'[p^{m-n}]$ trivializes the order $p^{m-n}$-canonical subgroup $C_{m-n}$ (recall the notation (\ref{eindefinition})). This shows that 
$$\mathrm{Ha}(A')^{p^{m-n}} \equiv \mathrm{Ha}(A) \pmod{\frak{m}R^+},$$
and so $|\mathrm{Ha}(A')| = |\mathrm{Ha}(A)|^{1/p^{m-n}} \le |\mathrm{Ha}(A)|$. Moreover, since $\mathbb{Z}_p \cdot e_{1,m-n}' \subset A'[p^{m-n}]$ is the order-$p^{m-n}$ canonical subgroup and the image of $\mathbb{Z}_p \cdot e_{1,m}' \subset A'[p^m]$ under the isogeny $A' \rightarrow A$ is the order-$p^n$ canonical subgroup $\mathbb{Z}_p \cdot e_{1,n} \subset A[p^n]$ by the assumption $n \le n(\epsilon_0)$, from standard properties of the canonical subgroup (\cite[Proposition III.2.8 (iii)]{ScholzeTorsion}) we have that $\mathbb{Z}_p \cdot e_{1,m} \subset A'[p^m]$ is the order-$p^m$ canonical subgroup. Since $m \ge n(\epsilon_0)$, we thus have $(A',i',P',e_1',e_2') \in U(R,R^+)$. Since $(A,i,P,e_1,e_2) \in U(R,R^+)$ was arbitrary, this proves (\ref{UU'gamma}).

\end{proof}



\subsection{Comparison between $q_{\mathrm{dR}}$-expansions and Serre-Tate expansions on the ordinary locus}\label{'ordinarysection}

Let us explain how the objects in (\ref{gluezdRsection}) and Definition \ref{'zqexpansions} recover Katz's theory of $p$-adic modular forms (\cite{Katzpamf}) and the theory of Serre-Tate expansions (\cite{KatzST}) on the ordinary locus.


Recall the universal object $\pi : \mathcal{E} \rightarrow Y$. Then the universal trivialization $e_{\text{\'{e}t}} : \hat{\mathbb{Z}}_{p,Y^{\mathrm{Ig}}} \xrightarrow{\sim} T_p\mathcal{E}^{\text{\'{e}t}}|_{Y^{\mathrm{Ig}}}$ gives (via the Weil pairing, see \cite[Proof of Theorem 2.1]{KatzST}) a canonical isomorphism
$$e_{\text{\'{e}t}} : \hat{\mathcal{E}}|_{Y^{\mathrm{Ig}}} \xrightarrow{\sim} \hat{\mathbb{G}}_{m,Y^{\mathrm{Ig}}},$$
where $\hat{\mathcal{E}}$ is the formal group attached by \cite[Proposition 1]{Tate} to $\mathcal{E}[p^{\infty}]$ (where $\mathcal{E}[p^n]$ is as in Convention \ref{idempotentconvention}), and $\hat{\mathbb{G}}_{m,S}$ is the formal multiplicative group on $S$.
This pulls back to a canonical isomorphism
\begin{equation}\label{'torustrivialization}e_2 : \hat{\mathcal{E}}|_{\mathcal{Y}^{\mathrm{Ig}}} \xrightarrow{\sim} \hat{\mathbb{G}}_{m,\mathcal{Y}^{\mathrm{Ig}}}.
\end{equation}
Here, recall that $\mathcal{Y}^{\mathrm{Ig}} \rightarrow Y^{\mathrm{Ig}}$ is defined by $e_2 \mapsto e_2 \pmod{\mathbb{Z}_p \cdot e_1}$ (see (\ref{Igusacovers})).


Let $T$ be the coordinate on the formal group $\hat{\mathbb{G}}_{m,Y^{\mathrm{Ig}}}$ determined by 
$$\zeta := (1,\zeta_p,\zeta_{p^2},\ldots) \in \mathbf{\Gamma}(T_p\hat{\mathbb{G}}_{m,Y^{\mathrm{Ig}}}) = \mathbf{\Gamma}(\hat{\mathbb{Z}}_{p,Y^{\mathrm{Ig}}}(1))$$
via the procedure give in \cite[Proof of Proposition 1]{Tate}, where
$$\zeta_{p^n} := \langle e_1,e_2\rangle \pmod{p^n} = \langle e_1,e_{\text{\'{e}t}}\rangle \pmod{p^n} \in \mathbf{\Gamma}(\hat{\mathbb{Z}}_{p,Y^{\mathrm{Ig}}}(1)/p^n) = \mathbf{\Gamma}(\mu_{p^n,Y^{\mathrm{Ig}}}).$$
Then 
$$\frac{dT}{1+T} \in \mathbf{\Gamma}(\Omega_{\hat{\mathbb{G}}_{m,Y^{\mathrm{Ig}}}/Y^{\mathrm{Ig}}})$$
is a generator. (Here, we use the notation of Convention \ref{Gammaconvention} on the right-hand side.) Let 
$$\nu \in \mathbf{\Gamma}(\mathrm{Lie}(\hat{\mathbb{G}}_{m,Y^{\mathrm{Ig}}}))$$
be its Poincar\'{e} dual. The image of $\nu \cdot t$ under the relative Hodge-Tate filtration 
$$\mathbf{\Gamma}(\mathrm{Lie}(\hat{\mathbb{G}}_{m,Y^{\mathrm{Ig}}})) \cdot t = \mathbf{\Gamma}(\mathrm{Lie}(\hat{\mathbb{G}}_{m,Y^{\mathrm{Ig}}}))(1) \rightarrow  T_p\hat{\mathbb{G}}_{m,Y^{\mathrm{Ig}}} \otimes_{\hat{\mathbb{Z}}_{p,Y^{\mathrm{Ig}}}} \hat{\mathcal{O}}_{Y^{\mathrm{Ig}}}(Y^{\mathrm{Ig}}) \rightarrow T_p\hat{\mathbb{G}}_{m,\mathcal{Y}^{\mathrm{Ig}}} \otimes_{\hat{\mathbb{Z}}_{p,\mathcal{Y}^{\mathrm{Ig}}}}\hat{\mathcal{O}}_{\mathcal{Y}^{\mathrm{Ig}}}(\mathcal{Y}^{\mathrm{Ig}})$$
is 
\begin{equation}\label{zetae}\zeta = e_2(e_1),
\end{equation}
where the right-hand side is the image of $e_1 \in \mathbf{\Gamma}(T_p\mathcal{E}|_{\mathcal{Y}^{\mathrm{Ig}}})$ under the map $e_2 : T_p\mathcal{E}|_{\mathcal{Y}^{\mathrm{Ig}}} \rightarrow T_p\hat{\mathbb{G}}_{m,\mathcal{Y}^{\mathrm{Ig}}}$ induced by (\ref{'torustrivialization}).

Let 
\begin{equation}\label{'Katzgenerator}w_{\mathrm{can}}^{\mathrm{Katz}} := e_{\text{\'{e}t}}^*\frac{dT}{1+T}\in \omega(Y^{\mathrm{Ig}})
\end{equation}
be the generator constructed in \cite[Section 3.3]{KatzST} (where it is denoted ``$\omega(\alpha)$'' with $\alpha = e_{\text{\'{e}t}}$ in loc. cit.). Since $z_{\mathrm{HT}} = \infty$ on $\mathcal{Y}^{\mathrm{Ig}}$, $e_1$ generates the connected component $T_p\mathcal{E}^0$ of $T_p\mathcal{E}$ and so $e_2 \mapsto e_{\text{\'{e}t}}$ under the projection $\mathcal{Y}^{\mathrm{Ig}} \rightarrow Y^{\mathrm{Ig}}$ from (\ref{Igusacovers}). Thus we have
$$w_{\mathrm{can}}^{\mathrm{Katz}}|_{\mathcal{Y}^{\mathrm{Ig}}} = e_2^*\frac{dT}{1+T}.$$
By slight abuse of notation, we will let $w_{\mathrm{can}}^{\mathrm{Katz}}$ also denote $w_{\mathrm{can}}^{\mathrm{Katz}}|_{\mathcal{Y}^{\mathrm{Ig}}}$. Then the Main Theorem 3.7.1. of op. cit. shows
$$\mathrm{KS}(w_{\mathrm{can}}^{\mathrm{Katz}, \otimes 2}) = d\log q_{\mathrm{ST}}.$$

\begin{theorem}We have 
\begin{equation}\label{'y1}y_{\mathrm{dR}}(w_{\mathrm{can}}^{\mathrm{Katz}}) = 1.
\end{equation}
\end{theorem}

\begin{proof}For the rest of the proof, let $w = w_{\mathrm{can}}^{\mathrm{Katz}}$. Let $\zeta \in \mathbf{\Gamma}(T_p\hat{\mathbb{G}}_{m,\mathcal{Y}^{\mathrm{Ig}}}) = \mathbf{\Gamma}(\hat{\mathbb{Z}}_{p,\mathcal{Y}^{\mathrm{Ig}}}\cdot t)$ be the image of $e_1$ under (\ref{'torustrivialization}). Let 
$$i : \pi_*\Omega_{\hat{\mathbb{G}}_{m,\mathcal{Y}^{\mathrm{Ig}}}/\mathcal{Y}^{\mathrm{Ig}}} \rightarrow T_p\hat{\mathbb{G}}_{m,\mathcal{Y}^{\mathrm{Ig}}} \otimes_{\hat{\mathbb{Z}}_{p,\mathcal{Y}^{\mathrm{Ig}}}}\mathcal{O}\mathbb{B}_{\mathrm{dR},\mathcal{Y}^{\mathrm{Ig}}}^+$$
be the relative de Rham comparison map from \cite[Theorem 8.8]{Scholze} applied to $\hat{\mathbb{G}}_{m,\mathcal{Y}^{\mathrm{Ig}}} \rightarrow \mathcal{Y}^{\mathrm{Ig}}$. Thus, by the discussion immediately preceding (\ref{zetae}), we have
\begin{equation}\label{zetat}\langle \zeta,i(\frac{dT}{1+T})\rangle = \langle \nu \cdot t,\frac{dT}{1+T}\rangle_{\mathrm{dR}} = t,\end{equation}
where $\langle \cdot,\cdot\rangle_{\mathrm{dR}}$ is the Poincar\'{e} duality pairing as before. 

By functoriality of the Weil pairing along with (\ref{'torustrivialization}), we see that 
\begin{align*}y_{\mathrm{dR}}(w) = \frac{1}{t}\langle e_1,x_{\mathrm{dR}}(w)e_1 + y_{\mathrm{dR}}(w)e_2\rangle = \frac{1}{t}\langle e_1,i_{\mathrm{dR}}(e_2^*\frac{dT}{1+T})\rangle &= \frac{1}{t}\langle e_2(e_1),i(\frac{dT}{1+T})\rangle \\
&= \frac{1}{t}\langle \zeta,i(\frac{dT}{1+T})\rangle \overset{(\ref{zetat})}{=} 1.
\end{align*}

\end{proof}

Fix any $m \in \mathbb{Z}_{\ge 0}$, and let $w \in \omega(U_m)$ be as in Choice \ref{'choicew}. Note that since $\mathcal{Y}^{\mathrm{Ig}} = \mathcal{Y}^{\mathrm{Ig}}(0) \subset \mathcal{Y}^{\mathrm{Ig}}(\epsilon_0) = U$ (Definition \ref{Udefinition}), then $\mathcal{Y}^{\mathrm{Ig}} = \mathcal{Y}^{\mathrm{Ig}} \cdot g^m  \subset U \cdot g^m \overset{(\ref{Um})}{=} U_m$.






\begin{corollary}\label{U'YIg}$\mathcal{Y}^{\mathrm{Ig}} \subset \mathbf{U}'$.
\end{corollary}

\begin{proof}From (\ref{'y1}), we see that 
\begin{equation}\label{YIgwcan}\mathcal{Y}^{\mathrm{Ig}} \subset \{\theta(\theta_X(y_{\mathrm{dR}}(w_{\mathrm{can}}^{\mathrm{Katz}}))) = 1\}.
\end{equation}
\end{proof}

Recall $\mathcal{Y}^{\mathrm{Ig}}(0) = \mathcal{Y}^{\mathrm{Ig}}$, and $\mathcal{Y}^{\mathrm{Ig}}(0)^+$ is the integral model of $\mathcal{Y}^{\mathrm{Ig}}(0)$ (see Definition \ref{mathcalYIgDefinition} and (\ref{Uplus})). Letting $\mathbf{z}$ be from (\ref{zbfdefinition}), Proposition \ref{'KSproposition} shows that
\begin{equation}\label{'STequal}\nabla\mathbf{z}|_{\mathcal{Y}^{\mathrm{Ig}}} = \mathrm{KS}(w_{\mathrm{can}}^{\otimes 2}|_{\mathcal{Y}^{\mathrm{Ig}}}) = \mathrm{KS}(w_{\mathrm{can}}^{\mathrm{Katz},\otimes 2}) = d\log q_{\mathrm{ST}} \in \Omega_{\mathcal{Y}^{\mathrm{Ig}}(0)^+}(\mathcal{Y}^{\mathrm{Ig}}(0)^+),
\end{equation}
where the last inclusion follows from \cite[Main Theorem]{KatzST}.

Recall $U = \mathcal{Y}^{\mathrm{Ig}}(\epsilon_0)$ from Definition \ref{Udefinition}, and let $\pi_+ : \mathcal{E}^+ \rightarrow Y^+$ be the universal object (universal (false) elliptic curve with $\mathcal{O}_D$-endomorphism structure and $\Gamma = \Gamma(N)$-level structure, $N \ge 4$, $(N,p) = 1$ as in Convention \ref{Yconvention}). Let 
$$\omega_+ = \pi_{+,*}\Omega_{\mathcal{E}^+/Y^+}$$
be as in (\ref{omegaY}).

Recall the notation of Convention \ref{Yconvention}, and that $Y(0) = Y^{\mathrm{ord}}$, where $Y^{\mathrm{ord}}$ is the adic generic fiber of the ordinary locus as in Section \ref{furtherIgusasection}.

\begin{theorem}\label{STanalyticcontinuationtheorem}Suppose $k \ge 0$ and $w' \in \omega^{\otimes k} \otimes_{\mathcal{O}_Y}\mathbb{B}_{\mathrm{dR},U}(W)\llbracket q_{\mathrm{dR}}-1\rrbracket$ where $W \subset U$ is the inverse image under the $\Gamma_{0,p}(p^{n(\epsilon_0)})$-cover $U = \mathcal{Y}^{\mathrm{Ig}}(\epsilon_0) \rightarrow Y(\epsilon_0)$ of some open set $W_0 \subset Y(\epsilon_0)$ with $Y(0) \subset W_0$. 
Then
$$\mathcal{Y}^{\mathrm{Ig}} \subset W.$$
Recall that $\mathcal{Y}^{\mathrm{Ig}} = \mathcal{Y}^{\mathrm{Ig}}(0)$ and $\mathcal{Y}^{\mathrm{Ig}}(0)^+$ is the formal model of $\mathcal{Y}^{\mathrm{Ig}}(0)$. Assume moreover that 
$$w'|_{\mathcal{Y}^{\mathrm{Ig}}} \in \omega_+^{\otimes k}(\mathcal{Y}^{\mathrm{Ig}}(0)^+).$$
Then:
\begin{enumerate}
\item The $q_{\mathrm{dR}}$-expansion $w'(q_{\mathrm{dR}})$ (Definition \ref{'zqexpansions}) satisfies
\begin{equation}\label{ordinaryintegral}w'(q_{\mathrm{dR}})|_{\mathcal{Y}^{\mathrm{Ig}}} \in \hat{\mathcal{O}}_{\mathcal{Y}^{\mathrm{Ig}}}^+(\mathcal{Y}^{\mathrm{Ig}})\llbracket q_{\mathrm{dR}}-1\rrbracket, 
\end{equation}
which we view inside $\mathbb{B}_{\mathrm{dR},\mathcal{Y}^{\mathrm{Ig}}}^+(\mathcal{Y}^{\mathrm{Ig}})\llbracket q_{\mathrm{dR}}-1\rrbracket$ via the restriction of (\ref{completedOBmap}) to $\mathcal{Y}^{\mathrm{Ig}} \subset U$
$$\hat{\mathcal{O}}_{\mathcal{Y}^{\mathrm{Ig}}}^+(\mathcal{Y}^{\mathrm{Ig}}) \rightarrow \mathbb{B}_{\mathrm{dR},\mathcal{Y}^{\mathrm{Ig}}}^+(\mathcal{Y}^{\mathrm{Ig}}).$$
\item Moreover, for any $y \in \mathcal{Y}^{\mathrm{Ig}}$, the power series
$$w'(y)(q_{\mathrm{dR}})  \in \hat{\mathcal{O}}_{\mathcal{Y}^{\mathrm{Ig}}}^+(y)\llbracket q_{\mathrm{dR}}-1\rrbracket,$$
obtained by specializing the coefficients of $w'(q_{\mathrm{dR}})|_{\mathcal{Y}^{\mathrm{Ig}}}$ along the map 
$$\hat{\mathcal{O}}_{\mathcal{Y}^{\mathrm{Ig}}}^+(\mathcal{Y}^{\mathrm{Ig}}) \rightarrow \hat{\mathcal{O}}_{\mathcal{Y}^{\mathrm{Ig}}}^+(y),$$
is equal, after formally replacing the variable $q_{\mathrm{dR}}-1$ with $q_{\mathrm{ST},y}-1$ (see Definition \ref{STrecenter} (3)), to the Serre-Tate expansion of $w'$ centered at $y$ (Definition \ref{STrecenter} (4)). 
\end{enumerate}
\end{theorem}

\begin{proof}The fact that $\mathcal{Y}^{\mathrm{Ig}} \subset W$ follows immediately from the assumptions. From the last inclusion of (\ref{'STequal}), we get (\ref{ordinaryintegral}). For the final assertion, note that the Serre-Tate expansion centered at $y$ is computed the Taylor expansion in the variable $q_{\mathrm{ST},y}-1$ centered at $q_{\mathrm{ST},y} - 1 = 0$, which by (\ref{'STequal}), (\ref{Taylorcoefficient}), the fact that $\nabla\mathbf{z} = \nabla(q_{\mathrm{dR}})/q_{\mathrm{dR}}$ and Definition \ref{'zqexpansions} is easily seen to coincide with $w'(y)(q_{\mathrm{dR}}-1)$ after formally replacing the variable $q_{\mathrm{ST},y}-1$ with $q_{\mathrm{dR}}-1$. 
\end{proof}


\subsection{Extending the domain of $q_{\mathrm{dR}}$-expansions}
All of the constructions of the previous sections, including $q_{\mathrm{dR}}$-expansions (Definition \ref{'zqexpansions}), are \emph{a priori} only defined on the open subset 
$$\mathbf{U}' \subset \mathcal{V}_x = \{z_{\mathrm{HT}} \neq 0\}.$$
The purpose of this section is to in fact show that $\mathbf{U}' = \mathcal{V}_x$ (see Theorem \ref{U'Utheorem} below), thus showing that all these constructions are defined over $\mathcal{V}_x$. 

\begin{definition}\label{thetaqdefinition}\begin{enumerate}
\item For any pro\'{e}tale open $W \rightarrow \mathbf{U}'$, define the $\mathbb{B}_{\mathrm{dR},\mathbf{U}'}(W)$-algebra homomorphism 
\begin{equation}\label{thetaq}\theta_q : \mathbb{B}_{\mathrm{dR},\mathbf{U}'}(W)\llbracket q_{\mathrm{dR}}-1\rrbracket \rightarrow \mathbb{B}_{\mathrm{dR},\mathbf{U}'}(W)
\end{equation}
to be reduction modulo the ideal $(q_{\mathrm{dR}}-1) \cdot \mathbb{B}_{\mathrm{dR},\mathbf{U}'}(W)\llbracket q_{\mathrm{dR}}-1\rrbracket$. In other words, 
$$\theta_q(F(q_{\mathrm{dR}})) \in \mathbb{B}_{\mathrm{dR},\mathbf{U}'}(W)$$
is the constant term of 
$$F(q_{\mathrm{dR}}) \in \mathbb{B}_{\mathrm{dR},\mathbf{U}'}(W)\llbracket q_{\mathrm{dR}}-1\rrbracket.$$
In particular, 
$$\theta_q(q_{\mathrm{dR}}-1) = 0.$$
\item Recall the identification of $\hat{\mathcal{O}}_{U_m}(U_m)$-algebras
$$\mathbb{B}_{\mathrm{dR},U_m}(U_m) \overset{(\ref{BdecompositionUm})}{=} \hat{\mathcal{O}}_{U_m}(U_m)(\!(t)\!).$$
Define a $\hat{\mathcal{O}}_{U_m}(U_m)$-module homomorphism
$$\theta_t : \mathbb{B}_{\mathrm{dR},U_m}(U_m) \rightarrow \hat{\mathcal{O}}_{U_m}(U_m)$$
to be reduction modulo the $\hat{\mathcal{O}}_{U_m}(U_m)$-ideal
$$\{t^n\}_{n \in \mathbb{Z} \setminus \{0\}} \cdot \hat{\mathcal{O}}_{U_m}(U_m) \subset \hat{\mathcal{O}}_{U_m}(U_m)(\!(t)\!) = \mathbb{B}_{\mathrm{dR},U_m}(U_m).$$
Note that 
$$\theta_t|_{\mathbb{B}_{\mathrm{dR},U_m}^+(U_m)} = \theta$$
as maps $\mathbb{B}_{\mathrm{dR},U_m}^+(U_m) \rightarrow \hat{\mathcal{O}}_{U_m}(U_m)$. 
\item Recall the identification of $\hat{\mathcal{O}}_{\mathcal{V}_x}(\mathcal{V}_x)$-algebras
$$\mathbb{B}_{\mathrm{dR},\mathcal{V}_x}(\mathcal{V}_x) \overset{(\ref{Bdecomposition''})}{=} \hat{\mathcal{O}}_{\mathcal{V}_x}(\mathcal{V}_x)(\!(t)\!).$$
Define a $\hat{\mathcal{O}}_{\mathcal{V}_x}(\mathcal{V}_x)$-module homomorphism
$$\theta_t : \mathbb{B}_{\mathrm{dR},\mathcal{V}_x}(\mathcal{V}_x) =  \hat{\mathcal{O}}_{\mathcal{V}_x}(\mathcal{V}_x)(\!(t)\!) \rightarrow \hat{\mathcal{O}}_{\mathcal{V}_x}(\mathcal{V}_x)
$$
to be reduction modulo the $\hat{\mathcal{O}}_{\mathcal{V}_x}(\mathcal{V}_x)$-ideal 
$$\{t^n\}_{n \in \mathbb{Z} \setminus \{0\}}\cdot \hat{\mathcal{O}}_{\mathcal{V}_x}(\mathcal{V}_x) \subset \hat{\mathcal{O}}_{\mathcal{V}_x}(\mathcal{V}_x)(\!(t)\!) = \mathbb{B}_{\mathrm{dR},\mathcal{V}_x}(\mathcal{V}_x).$$
Note that 
$$\theta_t|_{\mathbb{B}_{\mathrm{dR},\mathcal{V}_x}^+(\mathcal{V}_x)} = \theta$$
as maps $\mathbb{B}_{\mathrm{dR},\mathcal{V}_x}^+(\mathcal{V}_x) \rightarrow \hat{\mathcal{O}}_{\mathcal{V}_x}(\mathcal{V}_x)$. 
\item In all, we get maps
\begin{equation}\label{thetat}\begin{split}&\theta_t : \mathbb{B}_{\mathrm{dR},U_m}(U_m) = \hat{\mathcal{O}}_{U_m}(U_m)(\!(t)\!) \rightarrow \hat{\mathcal{O}}_{U_m}(U_m) \rightarrow \hat{\mathcal{O}}_{U_m}(U_m), \\
&\theta_t : \mathbb{B}_{\mathrm{dR},\mathcal{V}_x}(\mathcal{V}_x) = \hat{\mathcal{O}}_{\mathcal{V}_x}(\mathcal{V}_x)(\!(t)\!) \rightarrow \hat{\mathcal{O}}_{\mathcal{V}_x}(\mathcal{V}_x)
\end{split}
\end{equation}
where the first map is an $\hat{\mathcal{O}}_{U_m}(U_m)$-module homomorphism and the second map is an $\hat{\mathcal{O}}_{\mathcal{V}_x}(\mathcal{V}_x)$-module homomorphism. The use of the same notation $\theta_t$ for both maps is compatible: It is clear from construction that under the inclusion 
$$U_m \subset \bigcup_{m \in \mathbb{Z}_{\ge 0}}U_m \overset{(\ref{Vxunion'})}{\subset} \mathcal{V}_x$$
we have that the composition
$$\mathbb{B}_{\mathrm{dR},\mathcal{V}_x}(\mathcal{V}_x) \rightarrow \mathbb{B}_{\mathrm{dR},U_m}(U_m) \xrightarrow{\theta_t} \hat{\mathcal{O}}_{U_m}(U_m)$$
is equal to $\theta_t : \mathbb{B}_{\mathrm{dR},U_m}(U_m) \rightarrow \hat{\mathcal{O}}_{U_m}(U_m)$. 
\end{enumerate}
\end{definition}

\begin{proposition}\label{'thetaproposition'}For any $m \in \mathbb{Z}_{\ge 0}$ such that 
$$U_m \subset \mathbf{U}',$$
we have that the restriction of $\theta_t \circ \theta_q$ to $\mathcal{O}\mathbb{B}_{\mathrm{dR},\mathcal{V}_x}^+(U_m)$ is equal to $\theta_{\mathrm{dR}} : \mathcal{O}\mathbb{B}_{\mathrm{dR},\mathcal{V}_x}^+(U_m) \rightarrow \hat{\mathcal{O}}_{\mathcal{V}_x}(U_m)$ from Definition \ref{'thetadefinitions}.
\end{proposition}

\begin{proof}Since $U_m \subset \mathbf{U}'$ by assumption, then 
\begin{equation}\label{Umunion}U_m = U_m \cap \mathbf{U}' \overset{(\ref{UmU'})}{=}  \bigcup_{w \in \omega(U_m)}U_{m,w}'.
\end{equation}
We have compatible natural inclusions
\begin{align*}\mathcal{O}\mathbb{B}_{\mathrm{dR},U_m}^+(U_{m,w}') &\overset{(\ref{newOBdRinclusionm})}{\subset} \mathbb{B}_{\mathrm{dR},U_m}^+(U_{m,w}') \llbracket X_m/t\rrbracket \overset{(\ref{changeofvars})}{=} \mathbb{B}_{\mathrm{dR},U_m}^+(U_{m,w}')\llbracket q_{\mathrm{dR}}-1\rrbracket
\end{align*}
which satisfy $\theta_{\mathrm{dR}} = \theta \circ \theta_{X_m} = \theta \circ \theta_q$. Letting $w \in \omega(U_m)$ vary, and then gluing using (\ref{Umunion}), we get a natural inclusion 
$$\mathcal{O}\mathbb{B}_{\mathrm{dR},U_m}^+(U_m) \subset \mathbb{B}_{\mathrm{dR},U_m}^+(U_m) \llbracket q_{\mathrm{dR}}-1\rrbracket$$
satisfying $\theta_{\mathrm{dR}} = \theta \circ \theta_q$. 

\end{proof}

\begin{lemma}\label{thetaXGamma0commutelemma}

Let $U_m \overset{(\ref{Vxunion'})}{\subset} \mathcal{V}_x$ be the open set as in (\ref{Um}) for some $m \in \mathbb{Z}_{\ge 0}$, and suppose 
$$U_m \subset \mathbf{U}'.$$
Let $\gamma \in GL_2(\mathbb{Q}_p)$ be any element with 
$$U_m \cdot \gamma \subset U_m.$$
Then for any 
$$f \in \mathbb{B}_{\mathrm{dR},\mathbf{U}'}(U_m)\llbracket q_{\mathrm{dR}}-1\rrbracket$$
we have 
\begin{equation}\label{gammathetacommute}\theta_t(\theta_q(\gamma^*f)) = \gamma^*\theta_t(\theta_q(f)).
\end{equation}
\end{lemma}

\begin{remark}As we will soon show $\mathbf{U}' = \mathcal{V}_x$ in Theorem \ref{U'Utheorem}, the assumption $U_m \subset \mathbf{U}'$ always holds \emph{a posteriori}. We note however that Lemma \ref{thetaXGamma0commutelemma} will be used in the proof of Theorem \ref{U'Utheorem}. 

\end{remark}

\begin{proof}[Proof of Lemma \ref{thetaXGamma0commutelemma}]

Recall that $X_i = j_i - [j_i^{\flat}]$ from Theorem \ref{Utheorem}, and 
$$X_{i,m} = (g^{-m})^*X_i = (g^{-m})^*(j_i - [j_i^{\flat}]) = (g^{-m})^*j_i - [(g^{-m})^*j_i^{\flat}]$$
from (\ref{Xm}). 
Recall from (\ref{etalelocusunion4}) that $U_m = \bigcup_{i = 1}^{\mathbf{n}}U_{i,m}$ ($\mathbf{n}$ as in Definition \ref{ndefinition}). Since $U_m \subset \mathbf{U}'$ by assumption, then 
$$U_m = U_m \cap \mathbf{U}' \overset{(\ref{UmU'})}{=}  \bigcup_{w \in \omega(U_m)}U_{m,w}'.$$
Thus by (\ref{changeofvars}) for $U_{m,w}'$ and $w \in \omega(U_m)$ varying, we have 
$$\mathbb{B}_{\mathrm{dR},U_m}^+(U_m)\llbracket X_m/t\rrbracket = \mathbb{B}_{\mathrm{dR},U_m}^+(U_m)\llbracket q_{\mathrm{dR}}-1\rrbracket$$
which implies
$$\mathbb{B}_{\mathrm{dR},U_m}^+(U_m) + (X_{1,m},\ldots,X_{\mathbf{n},m})\cdot \mathbb{B}_{\mathrm{dR},U_m}(U_m)\llbracket X_m\rrbracket = \mathbb{B}_{\mathrm{dR},U_m}^+(U_m) + (q_{\mathrm{dR}}-1) \cdot \mathbb{B}_{\mathrm{dR},\mathbf{U}'}(U_m)\llbracket q_{\mathrm{dR}}-1\rrbracket.$$
Thus the previous inclusion would follow from the inclusions
\begin{equation}\label{theassertion}\gamma^*t \in \ker\left(\theta_t \circ \theta_q\right) \hspace{.5cm} \text{and} \hspace{.5cm} \gamma^*X_{i,m} \in \ker\left(\theta_t \circ \theta_q\right)
\end{equation}
for all $1 \le i \le \mathbf{n}$. We will show this last assertion. From (\ref{'tdefinition}) and the moduli interpretation of the $GL_2(\mathbb{Q}_p)$ action, we see that in fact $\gamma^*t \in t \cdot \mathbb{B}_{\mathrm{dR},U_m}^+(U_m)$, which implies $\theta_t(\gamma^*t) = 0$ and gives (\ref{theassertion}) for $t$. 

Now we show (\ref{theassertion}) for $X_{i,m}$. Since $\gamma^*X_{i,m} \in\mathcal{O}\mathbb{B}_{\mathrm{dR},U_m}^+(U_m)$, we have 
$$\theta_t(\theta_q(\gamma^*X_{i,m})) = \theta(\theta_q(\gamma^*X_{i,m})).$$
Thus
\begin{align*}\theta_t(\theta_q(\gamma^*X_{i,m})) &= \theta(\theta_q(\gamma^*(g^{-m})^*j_i - \gamma^*[(g^{-m})^*j_i^{\flat}])) = \theta(\theta_q(\gamma^*(g^{-m})^*j_i - [\gamma^*(g^{-m})^*j_i^{\flat}])) \\
&\overset{\text{Proposition \ref{'thetaproposition'}}}{=} \theta_{\mathrm{dR}}(\gamma^*(g^{-m})^*j_i - [\gamma^*(g^{-m})^*j_i^{\flat}]) \\
&= \theta_{\mathrm{dR}}(\gamma^*(g^{-m})^*j_i - [(\gamma^*(g^{-m})^*j_i,\gamma^*(g^{-(m+1)})^*j_i,\ldots)]) \\
&=  \gamma^*(g^{-m})^*j_i - \gamma^*(g^{-m})^*j_i = 0.
\end{align*}
This gives (\ref{theassertion}) for $X_{i,m}$.

\end{proof}

\begin{proposition}\begin{equation}\label{thetaqzzero}\theta_q(z_{\mathrm{dR}}) = 0.
\end{equation}
\end{proposition}
\begin{proof}Recall that since $q_{\mathrm{dR}} = \mathrm{exp}(t\cdot z_{\mathrm{dR}})$ (see (\ref{finalqzrelation})), we have 
$$z_{\mathrm{dR}} = \frac{1}{t}\left(q_{\mathrm{dR}}-1 - \frac{(q_{\mathrm{dR}}-1)^2}{2} + \frac{(q_{\mathrm{dR}}-1)^3}{3} - \ldots\right).$$
The assertion now follows since $\theta_q$ is reduction modulo $(q_{\mathrm{dR}}-1)\cdot\mathbb{B}_{\mathrm{dR},\mathcal{V}_x}(\mathcal{V}_x)\llbracket q_{\mathrm{dR}}-1\rrbracket$ (see (\ref{thetaq})).

\end{proof}

We can finally prove that the open subset $\mathbf{U}' \subset \mathcal{V}_x$ from (\ref{fullU'}) is equal to all of $\mathcal{V}_x$. The key input to the proof will be (\ref{YIgwcan}) which is used to prove Claim \ref{U'Uclaim} (1) below.  

\begin{theorem}\label{U'Utheorem}We have 
\begin{equation}\label{U'U}\mathbf{U}' = \mathcal{V}_x.
\end{equation}
\end{theorem}

\begin{proof} 

Let $g \in GL_2(\mathbb{Q}_p)$ be as in (\ref{gdefinition}). We will show:
\begin{claim}\label{U'Uclaim}
\begin{enumerate}
\item For some $m \in \mathbb{Z}_{\ge 0}$,
\begin{equation}\label{Uminside}U_{-m} \overset{(\ref{Um})}{=} \mathcal{Y}^{\mathrm{Ig}}(\epsilon_0/p^m) \subset \mathbf{U}'.
\end{equation}

\item $\mathbf{U}' \cdot g \subset \mathbf{U}'$. 
\end{enumerate}
\end{claim}
Then by induction, (2) implies 
\begin{equation}\label{U'g-r}\mathbf{U'} \cdot g^{r} \subset \mathbf{U}'
\end{equation}
for all $r \in \mathbb{Z}_{\ge 0}$. Admitting Claim \ref{U'Uclaim}, we thus have 
$$U_{-m+r} \overset{(\ref{Um})}{=} U_{-m} \cdot g^{r} \overset{(\ref{Uminside})}{\subset} \mathbf{U}' \cdot g^{r} \overset{(\ref{U'g-r})}{\subset} \mathbf{U'}$$
for all $r \in \mathbb{Z}_{\ge 0}$. This implies
$$\mathcal{V}_x \overset{(\ref{Vxunion'})}{=} \bigcup_{k \ge -m}U_k =\bigcup_{r \in \mathbb{Z}_{\ge 0}}U_{-m+r} \subset \mathbf{U}' \overset{(\ref{fullU'})}{\subset} \mathcal{V}_x,$$
which then gives (\ref{U'U}). 

We now prove Claim \ref{U'Uclaim}. 

\begin{proof}[Proof of Claim \ref{U'Uclaim}]\textbf{(1)}: Pick an arbitrary $M \in \mathbb{Z}_{\ge 0}$. Then pick $w \in \omega(U_M)$ as in Choice \ref{'choicew} and let 
\begin{equation}\label{yinclusion'}y_{\mathrm{dR}}(w) \in \left(\mathrm{Fil}^1\mathcal{O}\mathbb{B}_{\mathrm{dR},U_M}(U_M)\right)\cdot t^{-1} \overset{(\ref{changeofvars})}{\rightarrow} \mathbb{B}_{\mathrm{dR},U_M}^+(U_M') + (q_{\mathrm{dR}}-1)\cdot \mathbb{B}_{\mathrm{dR},U_M}(U_M')\llbracket q_{\mathrm{dR}}-1\rrbracket
\end{equation}
be as in Definition \ref{'xydefinition}. Again by (\ref{changeofvars}), we have 
\begin{equation}\label{thetaXq}\theta(\theta_X(y_{\mathrm{dR}}(w))) = \theta(\theta_q(y_{\mathrm{dR}}(w))).
\end{equation}
The assertion follows if we can show the existence of $m \in \mathbb{Z}_{\ge 0}$ such that $U_{-m} \subset U_M'$, i.e. the existence of $m \in \mathbb{Z}_{\ge 0}$ such that $\theta(\theta_q(y_{\mathrm{dR}}(w))) \neq 0$ on $U_{-m}$. 

Recall the generator $w_{\mathrm{can}}^{\mathrm{Katz}} \in \omega(\mathcal{Y}^{\mathrm{Ig}})$ from (\ref{'Katzgenerator}). Then since 
$$\mathcal{Y}^{\mathrm{Ig}} \subset \mathcal{Y}^{\mathrm{Ig}}(\epsilon_0) = U \overset{(\ref{Uinclusion})}{\subset} U_M,$$
we have 
$$y_{\mathrm{dR}}(w)|_{\mathcal{Y}^{\mathrm{Ig}}} = y_{\mathrm{dR}}(w|_{\mathcal{Y}^{\mathrm{Ig}}}) \overset{(\ref{'y1})}{=} \frac{y_{\mathrm{dR}}(w|_{\mathcal{Y}^{\mathrm{Ig}}})}{y_{\mathrm{dR}}(w_{\mathrm{can}}^{\mathrm{Katz}})} = \frac{w|_{\mathcal{Y}^{\mathrm{Ig}}}}{w_{\mathrm{can}}} \in \mathcal{O}_{\mathcal{Y}^{\mathrm{Ig}}}(\mathcal{Y}^{\mathrm{Ig}}).$$
Here in the third equality, we use the fact that the assignment $w \mapsto y_{\mathrm{dR}}(w)$ is $\mathcal{O}$-linear since (\ref{comparisonmap}) is $\mathcal{O}$-linear (see Definition \ref{'xydefinition}) and the fact that $w|_{\mathcal{Y}^{\mathrm{Ig}}}$ is an $\mathcal{O}_{\mathcal{Y}^{\mathrm{Ig}}}(\mathcal{Y}^{\mathrm{Ig}})$-multiple of the generator $w_{\mathrm{can}}^{\mathrm{Katz}} \in \omega(\mathcal{Y}^{\mathrm{Ig}})$. In particular,
$$\theta(\theta_X(y_{\mathrm{dR}}(w|_{\mathcal{Y}^{\mathrm{Ig}}}))) = y_{\mathrm{dR}}(w)|_{\mathcal{Y}^{\mathrm{Ig}}} \in \mathcal{O}_{\mathcal{Y}^{\mathrm{Ig}}}(\mathcal{Y}^{\mathrm{Ig}}).$$
Let $C_1$ be in the valuation group of $k$ such that (recall Definition \ref{kdefinition}) such that the supremum norm $|\cdot |_{\mathrm{sup},\mathcal{Y}^{\mathrm{Ig}}}$ on $\mathcal{Y}^{\mathrm{Ig}}$ satisfies 
\begin{equation}\label{YIggoodbound}\left|\theta(\theta_X(y_{\mathrm{dR}}(w|_{\mathcal{Y}^{\mathrm{Ig}})}))\right|_{\mathrm{sup},\mathcal{Y}^{\mathrm{Ig}}} \le p^{C_1}.
\end{equation}

Consider the rational subset 
$$U_y := \{|\theta(\theta_q(y_{\mathrm{dR}}(w)))| \le p^{C_1}\} \subset U_M.$$
We have 
$$\mathcal{Y}^{\mathrm{Ig}} \overset{(\ref{YIggoodbound})}{\subset} \{\theta(\theta_X(y_{\mathrm{dR}}(w))) = 1\} \overset{(\ref{thetaXq})}{=} \{\theta(\theta_q(y_{\mathrm{dR}}(w))) = 1\} \subset U_y.$$
Thus, by Corollary \ref{rationalintersectioncorollary} applied to the rational subset $U_y$, we have $\mathcal{Y}^{\mathrm{Ig}}(\epsilon_0/p^m) \subset U_y$ for some $m \ge 0$ in the valuation group of $\mathcal{O}_k$. Then $\theta(\theta_q(y_{\mathrm{dR}}(w))) \neq 0$ on $\mathcal{Y}^{\mathrm{Ig}}(\epsilon_0/p^m) \overset{(\ref{gisomorphism})}{=} \mathcal{Y}^{\mathrm{Ig}}(\epsilon_0) \cdot g^{-m} \overset{(\ref{Um})}{=} U_{-m}$, which gives Claim \ref{U'Uclaim} (1).\\ 



\textbf{(2)}: 
We will show that 
\begin{equation}\label{showthat}U_{m,w}' \cdot g \subset U_{m+1,w}'
\end{equation}
for every $m \in \mathbb{Z}_{\ge 0}$ and every $w \in \omega(U_m)$. Admitting (\ref{showthat}), we then have
$$\mathbf{U}' \cdot g \overset{(\ref{fullU'})}{=} \bigcup_{m \in \mathbb{Z}_{\ge 0}}\bigcup_{w \in \omega(U_m)}U_{m,w}'\cdot g \overset{(\ref{showthat})}{\subset} \bigcup_{m \in \mathbb{Z}_{\ge 0}}\bigcup_{w \in \omega(U_m)}U_{m+1,w}' = \bigcup_{m \in \mathbb{Z}_{\ge 1}}\bigcup_{w \in \omega(U_m)}U_{m,w}' \overset{(\ref{fullU'2})}{=} \mathbf{U}'$$
which gives (2). 

We now show (\ref{showthat}). 
Let $m\in \mathbb{Z}_{\ge 0}$, $w \in \omega(U_{m+1})$ as in Choice \ref{'choicew} and $y_{\mathrm{dR}}(w)$ be as in Definition \ref{'xydefinition}. By (\ref{yinclusion'}), (\ref{thetaq}) and (\ref{thetat}) we have
\begin{equation}\label{needtheta1}\theta(\theta_q(y_{\mathrm{dR}}(w))) = \theta_t(\theta_q(y_{\mathrm{dR}}(w))).
\end{equation}
Moreover, we have $U_{m+1}\cdot g^{-1} \overset{(\ref{Um})}{=} U_m$ and so
$$(g^{-1})^*y_{\mathrm{dR}}(w) \in \left(\mathrm{Fil}^1\mathcal{O}\mathbb{B}_{\mathrm{dR},U_m}(U_m)\right)\cdot t^{-1} \overset{(\ref{changeofvars})}{\rightarrow} \mathbb{B}_{\mathrm{dR},U_m}^+(U_{m,w}') + (q_{\mathrm{dR}}-1)\cdot \mathbb{B}_{\mathrm{dR},U_m}(U_{m,w}')\llbracket q_{\mathrm{dR}}-1\rrbracket.$$
Thus
\begin{equation}\label{needtheta2}\theta(\theta_q((g^{-1})^*y_{\mathrm{dR}}(w))) = \theta_t(\theta_q((g^{-1})^*y_{\mathrm{dR}}(w))).
\end{equation}
By (\ref{Um}) we have $U_{m+1} \cdot g^{-1} = U_m \overset{(\ref{Uminclusion})}{\subset} U_{m+1}$, and so the assumptions of Lemma \ref{thetaXGamma0commutelemma} are satisfied for $\gamma = g^{-1}$ and $U_{m+1}$. Thus 
\begin{equation}\label{gammathetacommute'}(g^{-1})^*\theta_t(\theta_q(y_{\mathrm{dR}}(w)))\overset{(\ref{gammathetacommute})}{=} \theta_t(\theta_q((g^{-1})^*y_{\mathrm{dR}}(w))).
\end{equation}
Therefore
\begin{equation}\label{g-1calc}\begin{split}(g^{-1})^*\theta(\theta_q(y_{\mathrm{dR}}(w))) \overset{(\ref{needtheta1})}{=} (g^{-1})^*\theta_t(\theta_q(y_{\mathrm{dR}}(w)))&\overset{(\ref{gammathetacommute'})}{=} \theta_t(\theta_q((g^{-1})^*y_{\mathrm{dR}}(w))) 
\overset{(\ref{intermediatetransformation})}{=} \theta_t(\theta_q(f \cdot y_{\mathrm{dR}}(w))) \\
& \overset{(\ref{needtheta1})}{=}  \theta(\theta_q(f \cdot y_{\mathrm{dR}}(w))) = \theta(\theta_q(f)) \cdot \theta(\theta_q(y_{\mathrm{dR}}(w))) 
\end{split}
\end{equation}
for some $f \in \mathcal{O}\mathbb{B}_{\mathrm{dR},U_{m+1}}(U_{m+1})$ (in particular implying that $\theta(\theta_q(f)) \in \hat{\mathcal{O}}_{U_{m+1}}(U_{m+1})$). 
Restricting (\ref{g-1calc}) to $U_m \overset{(\ref{Uminclusion})}{\subset} U_{m+1}$ and applying $g^*$ to the previous equality, we get the following equality of elements of $\hat{\mathcal{O}}_{U_m}(U_m)$
\begin{equation}\label{gcalc}\theta(\theta_q(y_{\mathrm{dR}}(w)))|_{U_m} = g^*\theta(\theta_q(f)) \cdot g^*\theta(\theta_q(y_{\mathrm{dR}}(w))).
\end{equation}
Therefore (\ref{gcalc}) gives the implication
$$\theta(\theta_q(y_{\mathrm{dR}}(w)))|_{U_m} \in \hat{\mathcal{O}}_{U_m}(U_m)^{\times} \implies g^*\theta(\theta_q(y_{\mathrm{dR}}(w))) \in \hat{\mathcal{O}}_{U_m}(U_m)^{\times}.$$
This implies $U_{m,w}'\cdot g \subset U_{m+1,w}'$, which is (\ref{showthat}). 

\end{proof}

This finishes the proof of Theorem \ref{U'Utheorem}.

\end{proof}

\begin{corollary}\label{wcangeneratorproposition3}We have 
\begin{equation}\label{zqw2}z_{\mathrm{dR}},\mathbf{z}, q_{\mathrm{dR}} \in \mathbb{B}_{\mathrm{dR},\mathcal{V}_x}^+(\mathcal{V}_x)\llbracket q_{\mathrm{dR}}-1\rrbracket, \hspace{1cm} w_{\mathrm{can}} \in \omega \otimes_{\mathcal{O}_Y} \mathbb{B}_{\mathrm{dR},\mathcal{V}_x}^+(\mathcal{V}_x)\llbracket q_{\mathrm{dR}}-1\rrbracket
\end{equation}
where $w_{\mathrm{can}}$ is a $\mathbb{B}_{\mathrm{dR},\mathcal{V}_x}^+(\mathcal{V}_x)\llbracket q_{\mathrm{dR}}-1\rrbracket$ generator. Moreover,
\begin{equation}\label{finalqzrelation}q_{\mathrm{dR}} = \mathrm{exp}(\mathbf{z}) = \mathrm{exp}(t\cdot z_{\mathrm{dR}}).
\end{equation}
\end{corollary}

\begin{proof}This follows immediately from Proposition \ref{wcangeneratorproposition2} (particularly (\ref{wcangenerator}) and (\ref{zzqU'})) and Theorem \ref{U'Utheorem}.
\end{proof}

\begin{corollary}
We have a natural embedding of rings
\begin{equation}\label{changeofvarsV}\mathcal{O}\mathbb{B}_{\mathrm{dR},\mathcal{V}_x}(\mathcal{V}_x) \hookrightarrow \mathbb{B}_{\mathrm{dR},\mathcal{V}_x}(\mathcal{V}_x)\llbracket q_{\mathrm{dR}}-1\rrbracket.
\end{equation}
compatible with connections. The restriction of (\ref{changeofvarsV}) to $\mathcal{O}\mathbb{B}_{\mathrm{dR},\mathcal{V}_x}^+(\mathcal{V}_x) \subset \mathcal{O}\mathbb{B}_{\mathrm{dR},\mathcal{V}_x}(\mathcal{V}_x)$ factors through
\begin{equation}\label{changeofvarsV2}\mathcal{O}\mathbb{B}_{\mathrm{dR},\mathcal{V}_x}^+(\mathcal{V}_x) \hookrightarrow \mathbb{B}_{\mathrm{dR},\mathcal{V}_x}^+(\mathcal{V}_x)\llbracket q_{\mathrm{dR}}-1\rrbracket.
\end{equation}
Moreover,
$$\nabla \mathbf{z} \in \Omega_Y \otimes_{\mathcal{O}_Y}\mathbb{B}_{\mathrm{dR},\mathcal{V}_x}(\mathcal{V}_x)\llbracket q_{\mathrm{dR}}-1\rrbracket$$ 
is a $\mathbb{B}_{\mathrm{dR},\mathcal{V}_x}(\mathcal{V}_x)\llbracket q_{\mathrm{dR}}-1\rrbracket$-generator.
\end{corollary}

\begin{proof}The embedding (\ref{changeofvarsV}) is constructed by gluing (\ref{changeofvars}) over all $m \in \mathbb{Z}_{\ge 0}$ and using (\ref{Vxunion'}), (\ref{fullU'}) and (\ref{U'U}). The induced map (\ref{changeofvarsV2}) is constructed from the gluing together the compositions
$$\mathcal{O}\mathbb{B}_{\mathrm{dR},U_m'}^+(U_m') = \mathbb{B}_{\mathrm{dR},U_m'}^+(U_m')\llbracket X\rrbracket \subset \mathbb{B}_{\mathrm{dR},U_m'}^+(U_m')\llbracket X_m/t \rrbracket \overset{(\ref{changeofvars})}{=} \mathbb{B}_{\mathrm{dR},U_m'}^+(U_m')\llbracket q_{\mathrm{dR}}-1\rrbracket.$$

The generator statement follows from gluing together the generator statement of Corollary \ref{nablazgeneratorcorollary} over all $m \in \mathbb{Z}_{\ge 0}$.

\end{proof}

Although we will not need it in the remainder of our discussion, we also note the following. 

\begin{corollary}We have 
$$z_{\mathrm{dR}}|_{\mathcal{Y}^{\mathrm{Ig}}} = -x_{\mathrm{dR}}(w_{\mathrm{can}}^{\mathrm{Katz}}) \in \mathrm{Fil}^1\mathcal{O}\mathbb{B}_{\mathrm{dR},\mathcal{Y}^{\mathrm{Ig}}}^+(\mathcal{Y}^{\mathrm{Ig}}) \cdot t^{-1}.$$
Hence
$$\mathbf{z}|_{\mathcal{Y}^{\mathrm{Ig}}} \overset{(\ref{zbfdefinition})}{=} t\cdot z_{\mathrm{dR}}|_{\mathcal{Y}^{\mathrm{Ig}}} \in \mathrm{ker}(\theta_{\mathrm{dR}})|_{\mathcal{Y}^{\mathrm{Ig}}}(\mathcal{Y}^{\mathrm{Ig}}) = (t,X) \cdot \mathcal{O}\mathbb{B}_{\mathrm{dR},\mathcal{Y}^{\mathrm{Ig}}}^+(\mathcal{Y}^{\mathrm{Ig}}).$$
\end{corollary}

\begin{proof}This follows from 
\begin{align*}z_{\mathrm{dR}}|_{\mathcal{Y}^{\mathrm{Ig}}} \overset{(\ref{zqw2}), \;\text{Definition \ref{'zqwdefinition}}}{=} -\frac{x_{\mathrm{dR}}(w)}{y_{\mathrm{dR}}(w)} \overset{(\ref{zqw2})}{=} -\frac{x_{\mathrm{dR}}(w_{\mathrm{can}}^{\mathrm{Katz}})}{y_{\mathrm{dR}}(w_{\mathrm{can}}^{\mathrm{Katz}})} &\overset{(\ref{'y1})}{=} -x_{\mathrm{dR}}(w_{\mathrm{can}}^{\mathrm{Katz}})|_{\mathcal{Y}^{\mathrm{Ig}}} \\
&\overset{\text{Definition \ref{'xydefinition}}}{\in}  \mathrm{Fil}^1\mathcal{O}\mathbb{B}_{\mathrm{dR},\mathcal{Y}^{\mathrm{Ig}}}^+(\mathcal{Y}^{\mathrm{Ig}}) \cdot t^{-1}.
\end{align*}

\end{proof}

We will henceforth make ubiquitous use of the embeddings (\ref{changeofvarsV}) and (\ref{changeofvarsV2}). We will need the following ``evaluation'' maps for the remainder of our discussion. 

\begin{definition}\label{evaluationdefinition}
Recall the $\mathbb{B}_{\mathrm{dR},\mathcal{V}_x}(\mathcal{V}_x)$-algebra homomorphism 
$$\theta_q : \mathbb{B}_{\mathrm{dR},\mathcal{V}_x}(\mathcal{V}_x)\llbracket q_{\mathrm{dR}}-1\rrbracket \rightarrow \mathbb{B}_{\mathrm{dR},\mathcal{V}_x}(\mathcal{V}_x)$$
from (\ref{thetaq}), and the $\hat{\mathcal{O}}_{\mathcal{V}_x}(\mathcal{V}_x)$-module homomorphism 
$$\theta_t : \mathbb{B}_{\mathrm{dR},\mathcal{V}_x}(\mathcal{V}_x) \rightarrow \hat{\mathcal{O}}_{\mathcal{V}_x}(\mathcal{V}_x)$$
from (\ref{thetat}). Composing $\theta_q$ with $\theta_t$, we get a $\hat{\mathcal{O}}_{\mathcal{V}_x}(\mathcal{V}_x)$-module homomorphism
$$\theta_t \circ \theta_q : \mathbb{B}_{\mathrm{dR},\mathcal{V}_x}(\mathcal{V}_x)\llbracket q_{\mathrm{dR}}-1\rrbracket \rightarrow \hat{\mathcal{O}}_{\mathcal{V}_x}(\mathcal{V}_x).$$
\end{definition}

\subsection{Generalized $p$-adic modular forms recover Katz $p$-adic modular forms}

Letting $w_{\mathrm{can}}$ be from (\ref{zqw2}), by (\ref{'y1}) we have
\begin{equation}\label{wcanspecialize}w_{\mathrm{can}}|_{\mathcal{Y}^{\mathrm{Ig}}} = w_{\mathrm{can}}^{\mathrm{Katz}}.\end{equation}

Recall the notations of Convention \ref{Yconvention}, in particular the definition of $Y(\epsilon)$ and the fact that $Y(0) = Y^{\mathrm{ord}}$. The next result says that if $w'$ is the pullback along $W \rightarrow W_0$ of some element $w_0' \in \omega^{\otimes k}(W_0)$ where $W_0 \subset Y(\epsilon_0)$ is an open subset with $Y(0) \subset W_0$, then $\frac{w'}{w_{\mathrm{can}}^{\otimes k}}|_{\mathcal{Y}^{\mathrm{Ig}}}$ is a Katz $p$-adic modular form of weight $k$ (see \cite{KatzIgusa}, \cite{Katzpamf}, and \cite[Section 1.3]{BDP} for an overview of Katz $p$-adic modular forms).

\begin{theorem}\label{Katzpadicmodularformtheorem}
Let $U = \mathcal{Y}^{\mathrm{Ig}}(\epsilon_0)$ be as in Definition \ref{Udefinition}. Let $W \subset U$ be an open subset such that $W$ is the pullback along the $\Gamma_{0,p}(p^{n(\epsilon_0)})$-cover $\mathcal{Y}^{\mathrm{Ig}}(\epsilon_0) \rightarrow Y(\epsilon_0)$ of some open subset $W_0 \subset Y(\epsilon_0)$ satisfying $Y(0) \subset W_0$; this in particular implies $\mathcal{Y}^{\mathrm{Ig}} \subset W$.
Suppose 
$$w' \in \omega^{\otimes k} \otimes_{\mathcal{O}_Y}\mathbb{B}_{\mathrm{dR},U}(W)\llbracket q_{\mathrm{dR}}-1\rrbracket$$
is the pullback of a section $w_0' \in \omega^{\otimes k}(W_0)$, and let
$$G := \frac{w'}{w_{\mathrm{can}}^{\otimes k}} \in \mathbb{B}_{\mathrm{dR},U}(W)\llbracket q_{\mathrm{dR}}-1\rrbracket.$$

Then:
\begin{enumerate}
\item $$G|_{\mathcal{Y}^{\mathrm{Ig}}} \in \mathcal{O}_{\mathcal{Y}^{\mathrm{Ig}}}(\mathcal{Y}^{\mathrm{Ig}}) \subset \mathcal{O}\mathbb{B}_{\mathrm{dR},\mathcal{Y}^{\mathrm{Ig}}}^+(\mathcal{Y}^{\mathrm{Ig}}) \overset{(\ref{changeofvars})}{\subset} \mathbb{B}_{\mathrm{dR},\mathcal{Y}^{\mathrm{Ig}}}^+(\mathcal{Y}^{\mathrm{Ig}})\llbracket q_{\mathrm{dR}}-1\rrbracket.$$
\item For any
$$\gamma = \left(\begin{array}{ccc} a & b\\
0 & d\\
\end{array}\right) \in \Gamma_{0,p}(p^{\infty}) \overset{(\ref{BGalois})}{=} B \overset{(\ref{BGalois})}{=} \mathrm{Gal}(\mathcal{Y}^{\mathrm{Ig}}/Y^{\mathrm{ord}})$$
we have
\begin{equation}\label{dequivariance}\gamma^*G|_{\mathcal{Y}^{\mathrm{Ig}}} = d^{-k}\cdot G|_{\mathcal{Y}^{\mathrm{Ig}}}.
\end{equation}
In other words, $G|_{\mathcal{Y}^{\mathrm{Ig}}}$ is a Katz $p$-adic modular form of weight $k$. 
\end{enumerate}
\end{theorem}

\begin{proof}(1) follows immediately from (\ref{wcanspecialize}) and the fact that $w_{\mathrm{can}}^{\mathrm{Katz}} \in \omega(\mathcal{Y}^{\mathrm{Ig}})$ is a generator (\cite[Main Theorem]{KatzST}). (2) follows from (\ref{Ftransformationidentity}) with $n = \infty$. 
\end{proof}

\begin{remark}Note that in the situation of Theorem \ref{Katzpadicmodularformtheorem} (2), (\ref{dequivariance}) implies that the action of $B = \mathrm{Gal}(\mathcal{Y}^{\mathrm{Ig}}/Y^{\mathrm{ord}})$ on $G|_{\mathcal{Y}^{\mathrm{Ig}}}$ factors through 
$$\mathbb{Z}_p^{\times} \cong B/B^1 \overset{(\ref{YIgGaloisgroup})}{=} \mathrm{Gal}(Y^{\mathrm{Ig}}/Y^{\mathrm{ord}}).$$ Thus $G|_{\mathcal{Y}^{\mathrm{Ig}}}$ descends to an element of $G \in \mathcal{O}_{Y^{\mathrm{Ig}}}(Y^{\mathrm{Ig}})$. This conforms with the viewpoint of \cite{KatzIgusa}, which views Katz $p$-adic modular forms as functions on $Y^{\mathrm{Ig}}$ transforming under the action of $\mathrm{Gal}(Y^{\mathrm{Ig}}/Y^{\mathrm{ord}}) \cong \mathbb{Z}_p^{\times}$ via some character on $\mathbb{Z}_p^{\times}$. 
\end{remark}

\subsection{$p$-integrality of generalized $p$-adic modular forms}

In this section, we will show that under certain assumptions, the constant term of the $q_{\mathrm{dR}}$-expansion (Definition \ref{'zqexpansions}) $w'(q_{\mathrm{dR}})$ of a weight $k \in \mathbb{Z}_{\ge 0}$ modular form 
$$w' \in \omega^{\otimes k}(Y)$$
(where $Y$ continues to be as in Convention \ref{Yconvention}) is $p$-integral (in the terminology of Definition \ref{pintegralitydefinition}), i.e. belongs to $\hat{\mathcal{O}}_{\mathcal{V}_x}^+(\mathcal{Y}^{\mathrm{Ig}}(\epsilon_0/p^{\alpha}))$ for some $\alpha \in \mathbb{Z}_{\ge 0}$. 

Let us summarize our main results here before giving the detailed results below: Assume without loss of generality (i.e. after possibly rescaling) that 
$$w'|_{\mathcal{Y}^{\mathrm{Ig}}} \in \omega^{\otimes k} \otimes_{\mathcal{O}_Y} \mathcal{O}_{\mathcal{Y}^{\mathrm{Ig}}}^+(\mathcal{Y}^{\mathrm{Ig}}),$$
i.e. $w'$ is ``normalized'' in the terminology of Section \ref{Introduction}. Viewing 
$$w' \in \omega^{\otimes k}(Y) \subset \omega^{\otimes k} \otimes_{\mathcal{O}_Y}\mathcal{O}\mathbb{B}_{\mathrm{dR},\mathcal{V}_x}^+(\mathcal{V}_x) \overset{(\ref{changeofvarsV})}{\subset} \omega^{\otimes k} \otimes_{\mathcal{O}_Y} \mathbb{B}_{\mathrm{dR},\mathcal{V}_x}(\mathcal{V}_x)\llbracket q_{\mathrm{dR}}-1\rrbracket,$$
let 
$$G = w'/w_{\mathrm{can}}^{\otimes k} \overset{(\ref{zqw2})}{\in} \mathbb{B}_{\mathrm{dR},\mathcal{V}_x}(\mathcal{V}_x)\llbracket q_{\mathrm{dR}}-1\rrbracket$$
be the generalized $p$-adic modular form attached to $w'$ (Definition \ref{generalizedpadicmodularformdefinition}). Recall the $\hat{\mathcal{O}}_{\mathcal{V}_x}(\mathcal{V}_x)$-module homomorphism
$$\theta_t \circ \theta_q : \mathbb{B}_{\mathrm{dR},\mathcal{V}_x}(\mathcal{V}_x)\llbracket q_{\mathrm{dR}}-1\rrbracket \rightarrow \hat{\mathcal{O}}_{\mathcal{V}_x}(\mathcal{V}_x)$$
from Definition \ref{evaluationdefinition}. We will show that 
$$\theta_t(\theta_q(G)) \in \hat{\mathcal{O}}_{\mathcal{Y}^{\mathrm{Ig}}}^+(\mathcal{Y}^{\mathrm{Ig}}).$$
 We remark that the proof of this Theorem will crucially use the identity (\ref{Ftransformationidentity}) in order to show that $w'(q_{\mathrm{dR}})$ has constant term satisfying the assumptions of Theorem \ref{crystalline2}, which will ultimately give the $p$-integrality statements. 

\begin{theorem}\label{pintegraltheorem}Assume that $0 \le \epsilon_0 < p/(p+1)$ as in Definition \ref{Udefinition} satisfies $n(\epsilon_0) = 1$ (see (\ref{nepsilondefinition})), i.e. $1/(p+1) \le \epsilon_0 < p/(p+1)$. Suppose $k \ge 0$ is an integer, that we are given 
$$w' \in \omega^{\otimes k} \otimes_{\mathcal{O}_Y}\mathbb{B}_{\mathrm{dR},\mathcal{V}_x}(\mathcal{V}_x)\llbracket q_{\mathrm{dR}}-1\rrbracket,$$
and suppose 
$$G := w'/w_{\mathrm{can}}^{\otimes k} \in \mathbb{B}_{\mathrm{dR},\mathcal{V}_x}(\mathcal{V}_x)\llbracket q_{\mathrm{dR}}-1\rrbracket$$
satisfies the identity (\ref{Ftransformationidentity}) for all $\gamma \in \Gamma_{1,p}(p^{\epsilon_0/p^{\alpha}})$ and $W = \mathcal{Y}^{\mathrm{Ig}}(\epsilon_0/p^{\alpha})$ for some $\alpha \in \mathbb{Z}_{\ge 0}$. Further suppose that
$$\theta_t(\theta_q(G))|_{\mathcal{Y}^{\mathrm{Ig}}} \in \omega_+^{\otimes k}(\mathcal{Y}^{\mathrm{Ig}}(0)^+)$$
where $\omega_+$ is as in (\ref{omegaY}). Then 
$$\theta_t(\theta_q(G))|_{\mathcal{Y}^{\mathrm{Ig}}(\epsilon_0/p^{\alpha})} \in \hat{\mathcal{O}}_{\mathcal{V}_x}^+(\mathcal{Y}^{\mathrm{Ig}}(\epsilon_0/p^{\alpha})).$$
\end{theorem}

\begin{remark}\label{assumptionsatisfiedremark}For example, by Theorem \ref{weighttheorem} the assumption that $G$ satisfy (\ref{Ftransformationidentity}) in Theorem \ref{pintegraltheorem} holds if $w'$ is the pullback along $\mathcal{Y}^{\mathrm{Ig}}(\epsilon_0/p^{\alpha}) \rightarrow Y_1(\alpha+1,\epsilon_0/p^{\alpha})$ of an element of $\omega^{\otimes k}(Y_1(\alpha+1,\epsilon_0/p^{\alpha}))$; note that under our assumption, $n(\epsilon_0/p^{\alpha}) = \alpha + 1$. The assumption is also satisfied for the $p$-adic Maass-Shimura derivatives $\partial_k^jw'$ of $w'$ as in the previous sentence by Corollary \ref{weightcorollary} below. This will be the case in our main applications of Theorem \ref{pintegraltheorem} in Sections \ref{padicLfunctionsection} and \ref{padicLfunctionsection2}. 
\end{remark}

\begin{proof}[Proof of Theorem \ref{pintegraltheorem}]We want to apply Theorem \ref{crystalline2}, whose hypotheses we must show are satisfied. Adopt the notation of the statement of that Theorem, and recall $\mathcal{V}_x = \{z_{\mathrm{HT}} \neq 0\}$ (see (\ref{Vz})). 
Let 
\begin{equation}\label{FG}F := \theta_t(\theta_q(G)).
\end{equation} Now we need to show the existence of 
$$f_0 \in \mathbf{\Gamma}(Y_1^+(\alpha+1,\epsilon_0/p^m)/\frak{m})$$
satisfying the necessary congruence 
$$\mathrm{res}_{\epsilon_0,\epsilon_0/p^m}(f) \equiv f_0 \pmod{\frak{m}\mathbf{\Gamma}(\hat{\mathcal{O}}_{\mathcal{Y}^{\mathrm{Ig}}(\epsilon_0/p^m)}^+)}$$
as in (\ref{keyassumption}).

For any
$$\gamma = \left(\begin{array}{ccc} a & b\\
c & d\\
\end{array}\right) \in \Gamma_{0,p}(p^{\alpha + 1}) \subset GL_2(\mathbb{Z}_p) \subset GL_2(\mathbb{Q}_p)$$
and $m = \alpha+1$ we have $U_m = \mathcal{Y}^{\mathrm{Ig}}(\epsilon_0/p^{\alpha})$ and (\ref{Galoisepsilonalpha}) implies $U_m \cdot \gamma \subset U_m$. Thus the assumptions of Lemma \ref{thetaXGamma0commutelemma} are satisfied. 
Consider the setting of Lemma \ref{thetaXGamma0commutelemma} with $m = 0$ (so that $U_m = U = \mathcal{Y}^{\mathrm{Ig}}(\epsilon_0)$) and note that 
\begin{enumerate}
\item $G|_U \in \mathbb{B}_{\mathrm{dR},U}(U)\llbracket q_{\mathrm{dR}}-1\rrbracket$ and 
\item any $\gamma \in \Gamma_{0,p}(p^{\alpha+1}) \subset \Gamma_{0,p}(p) \overset{(\ref{Galoisepsilonalpha})}{=} \mathrm{Gal}(U/Y(\epsilon_0))$ satisfies $U\cdot \gamma = U$,
\end{enumerate}
so that the assumptions of Lemma \ref{thetaXGamma0commutelemma} are satisfied for $f = G|_U$ and in particular we can apply (\ref{gammathetacommute}) to $G|_U$. Moreover, since $n(\epsilon_0) = 1$ by our assumptions, the moduli interpretation of $\mathcal{Y}^{\mathrm{Ig}}(\epsilon_0/p^{\alpha})$ (see (\ref{U})) and the definition of $Y_1(\alpha+1,\epsilon_0/p^{\alpha})$ (Convention \ref{Y1convention}) shows
$$\Gamma_{1,p}(p^{\alpha+1}) =  \mathrm{Gal}(\mathcal{Y}^{\mathrm{Ig}}(\epsilon_0/p^{\alpha})/Y_1(\alpha+1,\epsilon_0/p^{\alpha}))$$
we can apply (\ref{Ftransformationidentity}) to $G|_{\mathcal{Y}^{\mathrm{Ig}}(\epsilon_0/p^{\alpha})}$ for and $\gamma \in \Gamma_{1,p}(p^{n(\epsilon_0/p^{\alpha})}) = \Gamma_{1,p}(p^{\alpha+1})$. 
Therefore, for any $\left(\begin{array}{ccc} a & b \\
c  & d \\
\end{array}\right)\in \Gamma_{1,p}(p^{\alpha+1})$ we have
\begin{equation}\label{weightidentity}\begin{split}\left(\begin{array}{ccc} a & b\\
c & d\\
\end{array}\right)^*F|_{\mathcal{Y}^{\mathrm{Ig}}(\epsilon_0/p^{\alpha})} = \left(\begin{array}{ccc} a & b\\
c & d\\
\end{array}\right)^*\theta_t(\theta_q(G|_{\mathcal{Y}^{\mathrm{Ig}}(\epsilon_0/p^{\alpha})})) &= \left(\begin{array}{ccc} a & b\\
c & d\\
\end{array}\right)^*\theta_t(\theta_q(G|_U))|_{\mathcal{Y}^{\mathrm{Ig}}(\epsilon_0/p^{\alpha})} \\
&\overset{(\ref{gammathetacommute})}{=} \theta_t\left(\theta_q\left(\left(\begin{array}{ccc} a & b\\
c & d\\
\end{array}\right)^*G|_U\right)\right)|_{\mathcal{Y}^{\mathrm{Ig}}(\epsilon_0/p^{\alpha})} \\
&= \theta_t\left(\theta_q\left(\left(\begin{array}{ccc} a & b\\
c & d\\
\end{array}\right)^*G|_{\mathcal{Y}^{\mathrm{Ig}}(\epsilon_0/p^{\alpha})}\right)\right) \\
&\overset{(\ref{Ftransformationidentity})}{=} \theta_t\left(\theta_q\left(\frac{cz_{\mathrm{dR}} + a}{ad-bc}\right)^k\cdot G|_{\mathcal{Y}^{\mathrm{Ig}}(\epsilon_0/p^{\alpha})}\right) \\
&\overset{(\ref{FG})}{=} \left(\frac{c\theta_t(\theta_q(z_{\mathrm{dR}})) + a}{ad-bc}\right)^k \cdot F|_{\mathcal{Y}^{\mathrm{Ig}}(\epsilon_0/p^{\alpha})}\\ &\overset{(\ref{thetaqzzero})}{=} \left(\frac{a}{ad-bc}\right)^k \cdot F|_{\mathcal{Y}^{\mathrm{Ig}}(\epsilon_0/p^{\alpha})}.
\end{split}
\end{equation}
 Using Lemma \ref{findepsilonmlemma} and increasing $m$ if necessary, we may assume that 
 $$\mathrm{res}_{\epsilon_0,\epsilon_0/p^m}(F|_{\mathcal{Y}^{\mathrm{Ig}}(\epsilon_0)}) \in \mathbf{\Gamma}(\hat{\mathcal{O}}_{\mathcal{Y}^{\mathrm{Ig}}(\epsilon_0/p^m)}^+).$$
 Thus (\ref{weightidentity}) implies that 
$$f := F|_{\mathcal{Y}^{\mathrm{Ig}}(\epsilon_0/p^{\alpha}) \cap Y_{\infty}(\epsilon_0/p^m)} \in \mathbf{\Gamma}(\hat{\mathcal{O}}_{\mathcal{Y}^{\mathrm{Ig}}(\epsilon_0/p^{\alpha}) \cap Y_{\infty}(\epsilon_0/p^m)}^+).$$
Moreover, (\ref{weightidentity}) implies that 
$$f_0' := f \pmod{\frak{m}\mathbf{\Gamma}(\hat{\mathcal{O}}_{\mathcal{Y}^{\mathrm{Ig}}(\epsilon_0/p^{\alpha}) \cap Y_{\infty}(\epsilon_0/p^m)}^+)}$$ satisfies $\gamma^*f_0' = f_0'$ for every $\gamma \in \Gamma_{1,p}(p^{\alpha+1})$. Thus by \'{e}tale descent of 
$$\mathbf{\Gamma}(\hat{\mathcal{O}}_{\mathcal{Y}^{\mathrm{Ig}}(\epsilon_0/p^{\alpha}) \cap Y_{\infty}(\epsilon_0/p^m)}^+)\otimes_{\mathcal{O}_k}\mathcal{O}_k/\frak{m}$$
along the $\Gamma_{1,p}(p^{\alpha+1})$-Galois pro-\'{e}tale cover 
$$\left(\mathcal{Y}^{\mathrm{Ig}}(\epsilon_0/p^{\alpha}) \cap Y_{\infty}(\epsilon_0/p^m) \right)\rightarrow Y_1(\alpha+1,\epsilon_0/p^m),$$
we see that $f_0'$ descends to an element
$$f_0 \in \mathbf{\Gamma}(\hat{\mathcal{O}}_{Y_1(\alpha+1,\epsilon_0/p^m)}^+) \otimes_{\mathcal{O}_k}\mathcal{O}_k/\frak{m} = \mathbf{\Gamma}(\mathcal{O}_{Y_1^+(\alpha+1,\epsilon_0/p^m)}/\frak{m})$$
where the equality follows from \cite[Theorem 7.4.1]{deJong}. Now all the assumptions of Theorem \ref{crystalline2} are satisfied. Applying it, we get $F|_{\mathcal{Y}^{\mathrm{Ig}}(\epsilon_0/p^{\alpha})} \in \hat{\mathcal{O}}_{\mathcal{V}_x}^+(\mathcal{Y}^{\mathrm{Ig}}(\epsilon_0/p^{\alpha}))$.

\end{proof}

Let us end this section by summarizing how we will use these results later in our discussion: We will apply Theorem \ref{pintegraltheorem} to a family of $p$-adic modular forms $d_k^jw'$ of weights $k+ 2j$ for $j$ varying $p$-adically. Here, the $d_k^j$ are the $p$-adic Maass-Shimura derivatives defined in Section \ref{operatorsection} (Definition \ref{padicMSdefinition}) which specialize to the classical Atkin-Serre operators $\theta_{\mathrm{AS}}^j$ (see \cite[Section 3.4]{Brooks}) on $\mathcal{Y}^{\mathrm{Ig}} \subset \mathcal{Y}^{\mathrm{Ig}}(\epsilon_0/p^{\alpha})$ (see Remark \ref{recoverASremark}). The Theorem shows that 
$$\theta_t(\theta_q(d_k^jw')) \in \hat{\mathcal{O}}_{\mathcal{V}_x}(\mathcal{Y}^{\mathrm{Ig}}(\epsilon_0/p^{\alpha}))$$
are $p$-integral, i.e 
$$\theta_t(\theta_q(d_k^jw')) \in \hat{\mathcal{O}}_{\mathcal{V}_x}^+(\mathcal{Y}^{\mathrm{Ig}}(\epsilon_0/p^{\alpha})),$$
for all $j \in \mathbb{Z}_{\ge 0}$. The classical theory of the ordinary $p$-adic Maass-Shimura derivatives $\theta_{\mathrm{AS}}^jw'$ shows that the restriction to $\mathcal{Y}^{\mathrm{Ig}} = \mathcal{Y}^{\mathrm{Ig}}(0) \subset \mathcal{Y}^{\mathrm{Ig}}(\epsilon_0/p^{\alpha})$ of the above family 
$$\theta_t(\theta_q(d_k^jw'))|_{\mathcal{Y}^{\mathrm{Ig}}} \in \hat{\mathcal{O}}_{\mathcal{V}_x}^+(\mathcal{Y}^{\mathrm{Ig}})$$ 
has good $p$-adic continuity properties in $j$. Using a rigidity argument, we will show that these good continuity properties in $j$ extend to the family $\theta_t(\theta_q(d_k^jw'))$ itself. This will be the basis of the construction of our $p$-adic $L$-functions. See Theorem \ref{pintegraltheorem2} for a precise statement. 

\subsection{$p$-adic Maass-Shimura operators}\label{operatorsection}
For certain purposes, it will be necessary to apply ``weight raising'' operators to generalized $p$-adic modular forms in order to obtain generalized $p$-adic modular forms of higher weights. Varying families of generalized $p$-adic modular forms in weights will ultimately be the cornerstone of our construction of $p$-adic $L$-functions.

Recall the Hodge-Tate filtration (\cite[Proposition 2.2.5]{CaraianiScholze})
$$\omega^{\otimes -1} \otimes_{\mathcal{O}_Y}\hat{\mathcal{O}}_Y \hookrightarrow H_{\text{\'{e}t}}^1(\mathcal{E}) \otimes_{\hat{\mathbb{Z}}_{p,Y}}\hat{\mathcal{O}}_Y.$$
Pulling back to $Y_{\infty} \rightarrow Y$, and recalling the $\Gamma(p^{\infty})$-level structure $(e_1,e_2)$ induces
$$(e_1 \cdot t^{-1},e_2 \cdot t^{-1}) : \left(\hat{\mathbb{Z}}_{p,Y_{\infty}} \cdot t^{-1}\right)^{\oplus 2} \xrightarrow{\sim} H_{\text{\'{e}t}}^1(\mathcal{E}),$$
we get
$$i_{\mathrm{HT}} : \omega^{\otimes -1} \otimes_{\mathcal{O}_Y}\hat{\mathcal{O}}_{Y_{\infty}} \hookrightarrow H_{\text{\'{e}t}}^1(\mathcal{E}) \otimes_{\hat{\mathbb{Z}}_{p,Y}}\hat{\mathcal{O}}_{Y_{\infty}} \overset{(e_1 \cdot t^{-1},e_2 \cdot t^{-1})^{-1}}{\cong} \left(\hat{\mathcal{O}}_{Y_{\infty}}\cdot t^{-1}\right)^{\oplus 2}.$$
Recall $U = \mathcal{Y}^{\mathrm{Ig}}(\epsilon_0)$, and $w' \in \omega(U)$, and write $i_{\mathrm{HT}}(w') = (x_{\mathrm{HT}},y_{\mathrm{HT}})$. Recall that the canonical subgroup is trivialized by $e_{1,1}$ from (\ref{eindefinition}) on $U$ (see Definition \ref{Udefinition}). Thus by \cite[Lemma 2.14]{ChojeckiHansenJohansson} (whose proof shows that $e_{1,1}$ trivializes the canonical subgroup if $|z_{\mathrm{HT}}| > p^{p/(p^2-1)}$), we have $|z_{\mathrm{HT}}| > p^{p/(p^2-1)}$. (Note that our $z_{\mathrm{HT}}$ is equal to the reciprocal of $1/\frak{z}$ in Section 2.4 of op. cit., and the assumption of Lemma 2.14 of op. cit. can be equivalently stated as $|1/\frak{z}| > p^{p/(p^2-1)}$.) Thus $z_{\mathrm{HT}} = -x_{\mathrm{HT}}/y_{\mathrm{HT}} \in \hat{\mathcal{O}}_U(U)$ is well-defined. Thus the image of $i_{\mathrm{HT}}$ is generated by $(-z_{\mathrm{HT}},1)$. 

Consider the map
\begin{equation}\label{OVxmap}\mathcal{O}_{\mathcal{V}_x}(\mathcal{V}_x) \subset \mathcal{O}\mathbb{B}_{\mathrm{dR},\mathcal{V}_x}^+(\mathcal{V}_x) \overset{(\ref{changeofvarsV})}{\hookrightarrow} \mathbb{B}_{\mathrm{dR},\mathcal{V}_x}^+(\mathcal{V}_x)\llbracket q_{\mathrm{dR}}-1\rrbracket \xrightarrow{\theta_q} \mathbb{B}_{\mathrm{dR},\mathcal{V}_x}^+(\mathcal{V}_x).
\end{equation}
Recall that the open subset $\mathcal{V}_x \overset{(\ref{Vz})}{=} \{z_{\mathrm{HT}} \neq 0\} \subset Y_{\infty}$ is open and write
\begin{equation}\label{Vcunion}\mathcal{V}_x = \bigcup_{c > 0,\; c \in |\mathcal{O}_k|}U_c, \hspace{1cm} U_c := \{|z_{\mathrm{HT}}| \ge c\},
\end{equation}
where $|\mathcal{O}_k|$ denotes the set of $p$-adic absolute values of elements of $\mathcal{O}_k$ (see Definition \ref{kdefinition}). We now show that $U_c$ is affinoid perfectoid over $\mathrm{Spa}(k,\mathcal{O}_k)$ for any $c$. Recall that $\{|z_{\mathrm{HT}}| \ge 1\}$ is affinoid perfectoid over $\mathrm{Spa}(k,\mathcal{O}_k)$ by \cite[Theorem IV.1.1 (i)]{ScholzeTorsion}. By (\ref{zHTtransformationproperty}), we thus have that for any $c \in |\mathcal{O}_k|$ and any $n  \in \mathbb{Z}_{\ge 0}$ such that $p^nc \ge 1$, 
$$U_c \cdot g^{-n} = \{|z_{\mathrm{HT}}| \ge p^nc\} \subset \{|z_{\mathrm{HT}}| \ge 1\}.$$
Thus $U_c \cdot g^{-n}$ is a rational subset of the affinoid perfectoid $\{|z_{\mathrm{HT}}| \ge 1\}$ over $\mathrm{Spa}(k,\mathcal{O}_k)$, and thus is itself affinoid perfectoid by \cite[Theorem 6.3 (ii)]{Scholzeperf}. Since the $GL_2(\mathbb{Q}_p)$-action preserves the property of being affinoid perfectoid (see \cite[discussion after Definition III.3.5]{ScholzeTorsion}), we have that $U_c$ is affinoid perfectoid over $\mathrm{Spa}(k,\mathcal{O}_k)$. Thus, by \cite[Lemma 4.10 (iv)]{Scholze} we have
\begin{equation}\label{intermediatepadiccompletion}\hat{\mathcal{O}}_{\mathcal{V}_x}^+(U_c) = \widehat{\mathcal{O}_{\mathcal{V}_x}^+(U_c)}
\end{equation}
where the right-hand side is the $p$-adic completion of $\mathcal{O}_{\mathcal{V}_x}^+(U_c)$. Hence, by Definition 6.1 of op. cit. we have $\mathbb{B}_{\mathrm{inf}}(U_c) = W(\hat{\mathcal{O}}_{\mathcal{V}_x}^{+,\flat}(U_c))[1/p]$. By the definitions in loc. cit., we have 
$$\mathbb{B}_{\mathrm{dR},\mathcal{V}_x}^+(U_c) = \left(\varprojlim_nW(\hat{\mathcal{O}}_{\mathcal{V}_x}^{+,\flat}(U_c)) [1/p]/(\ker\theta)^n\right),$$
where $\theta: W(\hat{\mathcal{O}}_{\mathcal{V}_x}^{+,\flat}(U_c))[1/p] \twoheadrightarrow \hat{\mathcal{O}}_{\mathcal{V}_x}(U_c)$ is as in loc. cit. for every $n \in \mathbb{Z}_{\ge 0}$ we get a map
\begin{align*}\mathcal{O}_{\mathcal{V}_x}(\mathcal{V}_x) \overset{(\ref{OVxmap})}{\rightarrow} \mathbb{B}_{\mathrm{dR},\mathcal{V}_x}^+(\mathcal{V}_x)\rightarrow \mathbb{B}_{\mathrm{dR},\mathcal{V}_x}^+(U_c) &= \left(\varprojlim_nW(\hat{\mathcal{O}}_{\mathcal{V}_x}^{+,\flat}(U_c)) [1/p]/(\ker\theta)^n\right) \\
&\rightarrow W(\hat{\mathcal{O}}_{\mathcal{V}_x}^{+,\flat}(U_c))[1/p]/(\ker\theta)^n
\end{align*}
which factors through
$$\mathcal{O}_{\mathcal{V}_x}(U_c) \rightarrow W(\hat{\mathcal{O}}_{\mathcal{V}_x}^{+,\flat}(U_c))[1/p]/(\ker\theta)^n.$$
The target of this map is $p$-adically complete, and so the above map extends to 
$$\hat{\mathcal{O}}_{\mathcal{V}_x}(U_c) \overset{(\ref{intermediatepadiccompletion})}{=} \widehat{\mathcal{O}_{\mathcal{V}_x}^+(U_c)} [1/p] \rightarrow W(\hat{\mathcal{O}}_{\mathcal{V}_x}^{+,\flat}(U_c))[1/p]/(\ker\theta)^n.$$
These maps are clearly compatible over all $n$, and so we get an induced map
$$\iota_{V_c} : \hat{\mathcal{O}}_{\mathcal{V}_x}(U_c) \rightarrow  \mathbb{B}_{\mathrm{dR},\mathcal{V}_x}(U_c) \overset{(\ref{Bdecomposition''})}{=} \hat{\mathcal{O}}_{\mathcal{V}_x}(U_c)(\!(t)\!).$$
These maps glue over all $c > 0$ varying in $|\mathcal{O}_k|$, and so using (\ref{Vcunion}) we get a map
\begin{equation}\label{iotaVx}\iota_{\mathcal{V}_x} : \hat{\mathcal{O}}_{\mathcal{V}_x}(\mathcal{V}_x) \rightarrow   \hat{\mathcal{O}}_{\mathcal{V}_x}(\mathcal{V}_x)(\!(t)\!)\overset{(\ref{Bdecomposition''})}{=} \mathbb{B}_{\mathrm{dR},\mathcal{V}_x}(\mathcal{V}_x) .
\end{equation}
This gives
\begin{equation}\label{iHT}\omega^{\otimes -1} \otimes_{\mathcal{O}_Y}\hat{\mathcal{O}}_{\mathcal{V}_x}({\mathcal{V}_x}) \overset{i_{\mathrm{HT}}}{\hookrightarrow} H_{\text{\'{e}t}}^1(\mathcal{E}) \otimes_{\hat{\mathbb{Z}}_{p,Y}}\hat{\mathcal{O}}_{\mathcal{V}_x}({\mathcal{V}_x}) \overset{\iota_{\mathcal{V}_x}}{\rightarrow}H_{\text{\'{e}t}}^1(\mathcal{E}) \otimes_{\hat{\mathbb{Z}}_{p,Y}} \mathbb{B}_{\mathrm{dR},{\mathcal{V}_x}}({\mathcal{V}_x}). 
\end{equation}

\begin{definition}Let 
\begin{equation}\label{Bdefinition}\mathbb{B} := \mathbb{B}_{\mathrm{dR},{\mathcal{V}_x}}({\mathcal{V}_x})\llbracket q_{\mathrm{dR}}-1\rrbracket.
\end{equation}
\end{definition}

Then 
$$\mathcal{O}\mathbb{B}_{\mathrm{dR},{\mathcal{V}_x}}({\mathcal{V}_x}) \overset{(\ref{'triv})}{\subset} \mathbb{B}_{\mathrm{dR},{\mathcal{V}_x}}({\mathcal{V}_x})\llbracket X\rrbracket \overset{(\ref{changeofvars})}{=} \mathbb{B}.$$
Thus by \cite[Theorem 8.8]{Scholze},
\begin{equation}\label{comparisonmap2}H_{\text{\'{e}t}}^1(\mathcal{E}) \otimes_{\hat{\mathbb{Z}}_{p,Y}}\mathbb{B} \cong H_{\mathrm{dR}}^1(\mathcal{E}) \otimes_{\mathcal{O}_Y}\mathbb{B}
\end{equation}
induced by $i_{\mathrm{dR}}$ from (\ref{comparisonmap}). We get a map
\begin{equation}\label{lineHT}\omega^{-1} \otimes_{\mathcal{O}_Y}\hat{\mathcal{O}}_{\mathcal{V}_x}({\mathcal{V}_x}) \overset{(\ref{iHT})}{\rightarrow} H_{\text{\'{e}t}}^1(\mathcal{E}) \otimes_{\hat{\mathbb{Z}}_{p,Y}} \mathbb{B} \overset{i_{\mathrm{dR}}}{\cong} H_{\mathrm{dR}}^1(\mathcal{E}) \otimes_{\mathcal{O}_Y}\mathbb{B}.
\end{equation}

\begin{convention}In a slight abuse of notation, we let $z_{\mathrm{HT}}$ denote the element $i_{\mathrm{HT}}(z_{\mathrm{HT}}) \in \mathbb{B}_{\mathrm{dR},{\mathcal{V}_x}}^+({\mathcal{V}_x})$. We will use $z_{\mathrm{HT}}$ to split the Hodge-Tate filtration over ${\mathcal{V}_x}$ over the large ring $\mathbb{B}$ containing $\mathbb{B}_{\mathrm{dR},{\mathcal{V}_x}}^+({\mathcal{V}_x})$. Similarly, we let $z_{\mathrm{HT}}$ denote the image of $i_{\mathrm{HT}}(z_{\mathrm{HT}})$ under $\mathbb{B}_{\mathrm{dR},{\mathcal{V}_x}}({\mathcal{V}_x}) \subset \mathbb{B}$. 
\end{convention}

Under the isomorphism
$$H_{\mathrm{dR}}^1(\mathcal{E}) \otimes_{\mathcal{O}_Y}\mathbb{B} \overset{i_{\mathrm{dR}}}{\cong} H_{\text{\'{e}t}}^1(\mathcal{E})\otimes_{\hat{\mathbb{Z}}_{p,Y}}\mathbb{B}\underset{\sim}{\xrightarrow{(e_1,e_2)}} \mathbb{B}^{\oplus 2}, 
$$
we have
\begin{equation}\label{zdRzHTcoordinates}\omega \otimes_{\mathcal{O}_Y}\mathbb{B} \overset{i_{\mathrm{dR}}}{\cong} \mathbb{B}\cdot (-z_{\mathrm{dR}},1), \hspace{1cm} \omega^{\otimes -1} \otimes_{\mathcal{O}_Y}\mathbb{B} \overset{(\ref{lineHT})}{\cong} \mathbb{B} \cdot (-z_{\mathrm{HT}},1).
\end{equation}
\begin{proposition}We have
\begin{equation}\label{zdRzHTinvertible}z_{\mathrm{dR}}-z_{\mathrm{HT}} \in \mathbb{B}^{\times}.
\end{equation}
\end{proposition}

\begin{proof}Recall $\theta_q$ from (\ref{thetaq}) and $\theta_t$ from Definition \ref{evaluationdefinition}. By (\ref{Bdefinition}) and the fact that $\theta_q(q_{\mathrm{dR}}-1) = 0$, (\ref{zdRzHTinvertible}) is equivalent to 
$$\theta_t(\theta_q(z_{\mathrm{dR}}-z_{\mathrm{HT}})) \in \hat{\mathcal{O}}_{\mathcal{V}_x}(\mathcal{V}_x)^{\times}.$$
But
$$\theta_t(\theta_q(z_{\mathrm{dR}}-z_{\mathrm{HT}})) \overset{(\ref{thetaqzzero})}{=} - \theta_t(\theta_q(z_{\mathrm{HT}})) \overset{(\ref{iotaVx})}{=} -z_{\mathrm{HT}} \in \hat{\mathcal{O}}_{\mathcal{V}_x}(\mathcal{V}_x)^{\times}$$ 
where the last inclusion follows since $\mathcal{V}_x \overset{(\ref{Vz})}{=} \{z_{\mathrm{HT}} \neq 0\}$.



\end{proof}

By (\ref{zdRzHTinvertible}), (\ref{zdRzHTcoordinates}) induces an isomorphism
\begin{equation}\label{HTdecomposition}H_{\mathrm{dR}}^1(\mathcal{E}) \otimes_{\mathcal{O}_Y}\mathbb{B} \cong \left(\omega \otimes_{\mathcal{O}_Y}\mathbb{B}\right) \oplus \left(\omega^{\otimes -1} \otimes_{\mathcal{O}_Y}\mathbb{B}\right).
\end{equation}
Note that the second factor of the above decomposition is annihilated by the Gauss-Manin connection. 

\begin{definition}
Let
\begin{equation}\label{split}\mathrm{split} : H_{\mathrm{dR}}^1(\mathcal{E}) \otimes_{\mathcal{O}_Y}\mathbb{B} \overset{i_{\mathrm{dR}}}{\cong} H_{\text{\'{e}t}}^1(\mathcal{E}) \otimes_{\hat{\mathbb{Z}}_{p,Y}}\mathbb{B} \rightarrow \omega \otimes_{\mathcal{O}_Y} \mathbb{B}
\end{equation}
denote the projection onto the first coordinate. Since the second factor of (\ref{HTdecomposition}) is annihilated by the connection (given by the convolution of the Gauss-Manin connection and the natural connection on $\mathbb{B}$)
$$\nabla : H_{\mathrm{dR}}^1(\mathcal{E}) \otimes_{\mathcal{O}_Y}\mathbb{B} \rightarrow H_{\mathrm{dR}}^1(\mathcal{E}) \otimes_{\mathcal{O}_Y}\mathbb{B} \otimes_{\mathcal{O}_Y}\Omega_Y,$$
(\ref{split}) gives a horizontal splitting of the Hodge filtration (\ref{Hodgefiltrationinclusion}), i.e. a left inverse of $i_{\mathrm{dR}} : \omega \otimes_{\mathcal{O}_Y}\mathbb{B} \hookrightarrow H_{\mathrm{dR}}^1(\mathcal{E})\otimes_{\mathcal{O}_Y}\mathbb{B}$ whose kernel is annihilated by the Gauss-Manin connection. 
\end{definition}

The Hodge filtration (\ref{Hodgefiltrationinclusion}) induces a natural inclusion
$$\omega^{\otimes k} \hookrightarrow \mathrm{Sym}^kH_{\mathrm{dR}}^1(\mathcal{E}).$$
The Gauss-Manin connection
$$\nabla : \mathrm{Sym}^kH_{\mathrm{dR}}^1(\mathcal{E}) \rightarrow \mathrm{Sym}^kH_{\mathrm{dR}}^1(\mathcal{E}) \otimes_{\mathcal{O}_Y}\Omega_Y$$
along with the natural connection $\mathbb{B} \rightarrow \mathbb{B} \otimes_{\mathcal{O}_Y}\Omega_Y$ induces a connection
$$\nabla : \mathrm{Sym}^kH_{\mathrm{dR}}^1(\mathcal{E}) \rightarrow \mathrm{Sym}^kH_{\mathrm{dR}}^1(\mathcal{E}) \otimes_{\mathcal{O}_Y}\Omega_Y.$$

\begin{definition}[$p$-adic Maass-Shimura operator]\label{padicMSdefinition}\begin{enumerate}
\item Define 
$$\partial_k : \omega^{\otimes k}\otimes_{\mathcal{O}_Y}\mathbb{B} \rightarrow \omega^{\otimes k+2} \otimes_{\mathcal{O}_Y}\mathbb{B}$$
as the following composition:
\begin{align*}\omega^{\otimes k} \otimes_{\mathcal{O}_Y}\mathbb{B} \hookrightarrow \mathrm{Sym}^kH_{\mathrm{dR}}^1(\mathcal{E}) \otimes_{\mathcal{O}_Y}\mathbb{B} \xrightarrow{\nabla} \mathrm{Sym}^kH_{\mathrm{dR}}^1(\mathcal{E}) \otimes_{\mathcal{O}_Y}\mathbb{B} \otimes_{\mathcal{O}_Y}\Omega_Y &\xrightarrow{\mathrm{split}} \omega^{\otimes k} \otimes_{\mathcal{O}_Y} \mathbb{B} \otimes_{\mathcal{O}_Y} \Omega_Y \\
&\underset{\sim}{\xrightarrow{\mathrm{KS}}} \omega^{\otimes k+2} \otimes_{\mathcal{O}_Y}\mathbb{B}.
\end{align*}
Here $\mathrm{KS}$ is the Kodaira-Spencer isomorphism from (\ref{'KS}). Given $w' \in \omega^{\otimes k}\otimes_{\mathcal{O}_Y}\mathbb{B}$, write $w' = F \cdot w_{\mathrm{can}}^{\otimes k}$ where $F \in \mathbb{B}$. Then define 
$$d_kF \in \mathbb{B}$$
by 
\begin{equation}\label{partialdrelate}\partial_k(F \cdot w_{\mathrm{can}}^{\otimes k}) = d_kF \cdot w_{\mathrm{can}}^{\otimes k+2}.
\end{equation}
This gives a map
$$d_k : \mathbb{B} \rightarrow \mathbb{B}.$$

\item Let 
\begin{equation}\label{partialkjdefinition}\partial_k^j := \partial_{k+2(j-1)} \circ \partial_{k + 2(j-2)}\circ \cdots \circ \partial_{k+2} \circ \partial_k :  \omega^{\otimes k}\otimes_{\mathcal{O}_Y}\mathbb{B} \rightarrow \omega^{\otimes k+2j} \otimes_{\mathcal{O}_Y}\mathbb{B}
\end{equation}
\item Let 
\begin{equation}\label{dkjdefinition}d_k^j := d_{k+2(j-1)} \circ d_{k+2(j-2)} \circ \cdots \circ d_{k+2} \circ d_k : \mathbb{B} \rightarrow \mathbb{B}.
\end{equation}
\item Thus, (\ref{partialdrelate}), (\ref{partialkjdefinition}) and (\ref{dkjdefinition}) imply
\begin{equation}\label{partialdkjrelate}\partial_k^j(F \cdot w_{\mathrm{can}}^{\otimes k}) = d_k^jF \cdot w_{\mathrm{can}}^{\otimes k + 2j}.
\end{equation}
\end{enumerate}
\end{definition}

Recall the notion of a generalized $p$-adic modular form of weight $k$ on $\mathcal{V}_x$ from Definition \ref{generalizedpadicmodularformdefinition}. This is in particular an element 
$$F \in \mathbb{B}_{\mathrm{dR},\mathcal{V}_x}(\mathcal{V}_x)\llbracket q_{\mathrm{dR}}-1\rrbracket = \mathbb{B}.$$ 

\begin{corollary}\label{weightcorollary}Suppose $F \in \mathbb{B}$ is induced from a generalized $p$-adic modular form of weight $k$. Then $d_k^jF \in \mathbb{B}$ is a generalized $p$-adic modular form of weight $k + 2j$. Moreover, if $F|_{\mathcal{Y}^{\mathrm{Ig}}(\epsilon)}$ satisfies (\ref{Ftransformationidentity}) for a given $n \ge n(\epsilon)$ and given $\gamma \in \Gamma_{0,p}(p^n)$, then $d_k^jF|_{\mathcal{Y}^{\mathrm{Ig}}(\epsilon)}$ satisfies the identity (\ref{Ftransformationidentity}) for $\gamma \in \Gamma \in \Gamma_{0,p}(p^n)$ with $k + 2j$ in place of $k$. 
\end{corollary}

\begin{proof}This is immediate from the defining formula (\ref{dkjdefinition}) and a direct calculation using (\ref{Ftransformationidentity}) and (\ref{modulartransformationidentity}). 

\end{proof}

\begin{proposition}We have
\begin{equation}\label{dkjformula}\begin{split}d_k^j = \sum_{i = 0}^j\frac{\binom{j}{i}\binom{j+k-1}{i}i!}{(t(z_{\mathrm{dR}}-z_{\mathrm{HT}}))^i}\left(\frac{\nabla}{\nabla(t\cdot z_{\mathrm{dR}})}\right)^{j-i} &= \sum_{i = 0}^j\frac{\binom{j}{i}\binom{j+k-1}{i}i!}{(t(z_{\mathrm{dR}}-z_{\mathrm{HT}}))^i}\left(\frac{\nabla}{\nabla(\mathbf{z})}\right)^{j-i} \\
&= \sum_{i = 0}^j\frac{\binom{j}{i}\binom{j+k-1}{i}i!}{(t(z_{\mathrm{dR}}-z_{\mathrm{HT}}))^i}\left(\frac{q_{\mathrm{dR}}\nabla}{\nabla q_{\mathrm{dR}}}\right)^{j-i}.
\end{split}
\end{equation}
\end{proposition}

\begin{proof}This follows from a direct calculation. We give some details for the reader's convenience.
We have 
$$i_{\mathrm{dR}}(F \cdot w_{\mathrm{can}}^{\otimes k}) = F \cdot (-e_1z_{\mathrm{dR}} + e_2)^{\otimes k}.$$
Thus, using the Leibniz rule, we have
\begin{align*}i_{\mathrm{dR}}(\nabla(F\cdot w_{\mathrm{can}}^{\otimes k})) &= \nabla (i_{\mathrm{dR}}(F \cdot w_{\mathrm{can}}^{\otimes k})) \\
&= \frac{\nabla}{\nabla z_{\mathrm{dR}}}F \cdot (-e_1 z_{\mathrm{dR}} + e_2)^{\otimes k} \otimes \nabla z_{\mathrm{dR}} + k \cdot (-e_1) \cdot (-e_1z_{\mathrm{dR}}+e_2)^{\otimes k-1} \otimes \nabla z_{\mathrm{dR}}.
\end{align*}
Applying $\mathrm{split}$, we get
$$\mathrm{split}(i_{\mathrm{dR}}(\nabla(F \cdot w_{\mathrm{can}}^{\otimes k}))) = \left(\frac{\nabla}{\nabla z_{\mathrm{dR}}}F + \frac{k}{z_{\mathrm{dR}}-z_{\mathrm{HT}}}\right) \cdot i_{\mathrm{dR}}(w_{\mathrm{can}})^{\otimes k} \otimes \nabla z_{\mathrm{dR}}.$$
Applying Proposition \ref{'KSproposition}, we get 
$$i_{\mathrm{dR}}(\mathrm{split}(\nabla (F\cdot w_{\mathrm{can}}^{\otimes k}))) = \mathrm{split}(i_{\mathrm{dR}}(\nabla(F \cdot w_{\mathrm{can}}^{\otimes k}))) = \frac{1}{t}\left(\frac{\nabla}{\nabla z_{\mathrm{dR}}}F + \frac{k}{z_{\mathrm{dR}}-z_{\mathrm{HT}}}\right) \cdot i_{\mathrm{dR}}(w_{\mathrm{can}}^{\otimes k +2}).$$
By (\ref{comparisonmap2}), this proves the formula for $j = 1$. The general case is proven by an easy calculation and induction on $j$. 
\end{proof}

\begin{convention}We will henceforth abuse notation and denote $\nabla z_{\mathrm{dR}}$ by $dz_{\mathrm{dR}}$ and $\nabla q_{\mathrm{dR}}$ by $dq_{\mathrm{dR}}$. No confusion should arise as $\nabla : \mathbb{B} \rightarrow \mathbb{B}\otimes_{\mathcal{O}_{\mathcal{V}_x}(\mathcal{V}_x)}\Omega(\mathcal{V}_x)$ extends the exterior derivative $d : \mathcal{O}_{\mathcal{V}_x}(\mathcal{V}_x) \rightarrow \Omega_{\mathcal{V}_x}(\mathcal{V}_x)$. The reason for choosing this notation is to emphasize the appearance of $q_{\mathrm{dR}}$-logarithmic derivatives in the formula (\ref{dkjformula}). Such logarithmic derivatives are known by classical work of Kummer (\cite[p. 493]{Kummer}) to have an interpretation as moments of measures. In classical cases such as the measures of Kubota-Leopoldt (\cite{KubotaLeopoldt}), Deligne-Ribet (\cite{DeligneRibet}), Manin-Vishik (\cite{ManinVishik}) and Katz (\cite{KatzCM}), these moments are related to twists of central critical $L$-values by characters with positive Hodge-Tate weights. As we will later see, the $p$-adic $L$-functions we construct from power series in $q_{\mathrm{dR}}-1$ will also have $p$-adic Maass-Shimura derivatives (which can be viewed as ``generalized logarithmic derivatives'' in view of the formula (\ref{dkjformula2}) below) related to such twisted central critical $L$-values. 

Hence (\ref{dkjformula}) becomes
\begin{equation}\label{dkjformula2}\begin{split}d_k^j = \sum_{i = 0}^j\frac{\binom{j}{i}\binom{j+k-1}{i}i!}{(t(z_{\mathrm{dR}}-z_{\mathrm{HT}}))^i}\left(\frac{d}{d(t\cdot z_{\mathrm{dR}})}\right)^{j-i} &= \sum_{i = 0}^j\frac{\binom{j}{i}\binom{j+k-1}{i}i!}{(t(z_{\mathrm{dR}}-z_{\mathrm{HT}}))^i}\left(\frac{d}{d\mathbf{z}}\right)^{j-i} \\
&= \sum_{i = 0}^j\frac{\binom{j}{i}\binom{j+k-1}{i}i!}{(t(z_{\mathrm{dR}}-z_{\mathrm{HT}}))^i}\left(\frac{q_{\mathrm{dR}}d}{d q_{\mathrm{dR}}}\right)^{j-i}.
\end{split}
\end{equation}
\end{convention}

\begin{remark}Note the analogy between (\ref{dkjformula2}) and the formula for the real analytic Maass-Shimura operator from \cite{KatzCM} that sends weight $k$ nearly holomorphic modular forms to weight $k + 2j$ nearly holomorphic modular forms:
\begin{equation}\label{complexMSoperator}\frak{d}_k^j := \sum_{i = 0}^j\frac{\binom{j}{i}\binom{j+k-1}{i}i!}{(2\pi i(\tau - \bar{\tau}))^i}\left(\frac{d}{2\pi id\tau}\right)^{j-i}.
\end{equation}
Here, $\tau$ is the coordinate on the complex universal cover $\mathcal{H}^+ = \{\mathrm{Im}(\tau) > 0\}$ and $\bar{\tau}$ is its complex conjugate. The coordinate $z_{\mathrm{dR}}$ is the $p$-adic analogue of $\tau$, $z_{\mathrm{HT}}$ is the $p$-adic analogue of $\bar{\tau}$ and  $t$ (see (\ref{'tdefinition})) is the $p$-adic analogue of $2 \pi i$. Note that 
$$\frac{d}{dz_{\mathrm{dR}}}z_{\mathrm{HT}} = 0$$
since $z_{\mathrm{HT}} \in \mathcal{O}_{\hat{Y}_{\infty}}(\hat{\mathcal{V}}_x)$, $\hat{Y}_{\infty}$ is a perfectoid space and perfectoid spaces have no nontrivial differentials (see \cite[discussion below Theorem 2.8]{ScholzeCDM}). This is in analogy with the Cauchy-Riemann equation
$$\frac{d}{d\tau}\bar{\tau} = 0.$$
Similarly, one may view $\mathcal{O}_{\mathcal{V}_x}({\mathcal{V}_x})$ as analogous to the ring of holomorphic functions on $\mathcal{H}^+$, and $\mathbb{B}$ as analogous to the ring of real analytic functions on $\mathcal{H}^+$, over which we have the Hodge decomposition (\ref{HTdecomposition}).
\end{remark}

\begin{remark}\label{recoverASremark}Note that on $\mathcal{Y}^{\mathrm{Ig}} = \{z_{\mathrm{HT}} = \infty\}$, we have 
$$\frac{1}{z_{\mathrm{dR}}-z_{\mathrm{HT}}} = 0$$
and so (\ref{dkjformula2}) specializes to $\left(\frac{q_{\mathrm{dR}}d}{dq_{\mathrm{dR}}}\right)^j$. In view of Theorem \ref{STanalyticcontinuationtheorem} (2), this shows that $d_k^j|_{\mathcal{Y}^{\mathrm{Ig}}}$ acts as the ordinary Atkin-Serre operator $\theta_{\mathrm{AS}}^j = \left(\frac{q_{\mathrm{ST}}d}{dq_{\mathrm{ST}}}\right)^j$. 
\end{remark}

Recall the $\hat{\mathcal{O}}_{\mathcal{V}_x}(\mathcal{V}_x)$-module homomorphism 
$$\theta_t \circ \theta_q : \mathbb{B} = \mathbb{B}_{\mathrm{dR},\mathcal{V}_x}(\mathcal{V}_x)\llbracket q_{\mathrm{dR}}-1\rrbracket \rightarrow \hat{\mathcal{O}}_{\mathcal{V}_x}(\mathcal{V}_x)$$
from Definition \ref{evaluationdefinition}. 

\begin{definition}Define $\tilde{w}_{\mathrm{can}}$ to be the image of $w_{\mathrm{can}}$ under the extension of scalars $\otimes_{\mathbb{B},\theta_t\circ \theta_q}\hat{\mathcal{O}}_{\mathcal{V}_x}(\mathcal{V}_x)$, i.e.
\begin{equation}\label{tildewcan}\tilde{w}_{\mathrm{can}} := w_{\mathrm{can}}\otimes_{\mathbb{B},\theta_t \circ \theta_q}1\in \left(\omega(\mathcal{V}_x) \otimes_{\mathcal{O}_Y}\mathbb{B}\right) \otimes_{\mathbb{B},\theta_t\circ \theta_q}\hat{\mathcal{O}}_{\mathcal{V}_x}(\mathcal{V}_x) = \omega({\mathcal{V}_x}) \otimes_{\mathcal{O}_Y}\hat{\mathcal{O}}_{\mathcal{V}_x}({\mathcal{V}_x}),
\end{equation}
where $\otimes_{\mathbb{B},\theta_t\circ \theta_q}\hat{\mathcal{O}}_{\mathcal{V}_x}(\mathcal{V}_x)$ denotes that $\mathbb{B}$ acts on $\hat{\mathcal{O}}_{\mathcal{V}_x}(\mathcal{V}_x)$ in the tensor product through $\theta_t \circ \theta_q : \mathbb{B} \rightarrow \hat{\mathcal{O}}_{\mathcal{V}_x}(\mathcal{V}_x)$.
\end{definition}

\begin{proposition}$\tilde{w}_{\mathrm{can}} \in \omega \otimes_{\mathcal{O}_Y}\hat{\mathcal{O}}_{\mathcal{V}_x}(\mathcal{V}_x)$ is a generator (i.e. nowhere vanishing). 
\end{proposition}

\begin{proof}From Corollary \ref{wcangeneratorproposition3} we know that $w_{\mathrm{can}} \in \omega \otimes_{\mathcal{O}_Y} \mathbb{B}_{\mathrm{dR},{\mathcal{V}_x}}^+({\mathcal{V}_x})\llbracket q_{\mathrm{dR}}-1\rrbracket$ is a generator. Now the assertion follows because the invertible elements $f \in\mathbb{B}_{\mathrm{dR},{\mathcal{V}_x}}^+({\mathcal{V}_x})\llbracket q_{\mathrm{dR}}-1\rrbracket$ are exactly those elements such that the constant $\theta_q(f) \in \mathbb{B}_{\mathrm{dR},\mathcal{V}_x}^+(\mathcal{V}_x)$ is invertible, which is in turn true if and only if $\theta_t \circ \theta_q(f) = \theta \circ \theta_q(f) \in \hat{\mathcal{O}}_{\mathcal{V}_x}({\mathcal{V}_x})^{\times}$. 
\end{proof}



We will often consider values
$$\theta_t \circ \theta_q(d_k^jF)$$
for a weight $k$ generalized $p$-adic modular form $F \in \mathbb{B}$. 

\subsection{Values of $p$-adic Maass-Shimura operators at CM points}

Recall the notation of Section \ref{algebraicYsection}. As in \cite[Chapter 1.3.1]{YZZ}, let
\begin{equation}\label{mathbbYinfty}\mathbb{Y}_{\infty} = \varprojlim_{\Gamma' \subset D^{\times}(\mathbb{A}_{\mathbb{Q}}^{(\infty)})} \mathbb{Y}(\Gamma'),
\end{equation}
where $\Gamma' \subset D^{\times}(\mathbb{A}_{\mathbb{Q}}^{(\infty)})$ ranges over all compact open subgroups. Given an imaginary quadratic field $K/\mathbb{Q}$, fix an embedding $i : K \hookrightarrow D$, which induces an embedding 
\begin{equation}\label{idefinition}i : \mathbb{A}_K \hookrightarrow D(\mathbb{A}_{\mathbb{Q}}).
\end{equation} Note that $D^{\times}(\mathbb{A}_{\mathbb{Q}}^{(\infty)})$ acts on the right of $\mathbb{Y}_{\infty}$, and thus so does $\mathbb{A}_K^{\times,(\infty)}$. 

\begin{definition}\begin{enumerate}
\item Let 
\begin{equation}\label{mathbbYinftyK}\mathbb{Y}_{\infty}^{K^{\times}} \subset \mathbb{Y}_{\infty}
\end{equation}
denote the subscheme of fixed points of $K^{\times} \overset{i}{\hookrightarrow} D^{\times}$ under the right action. It is defined over $\mathbb{Q}$ (see Chapter 1.3.1 of op. cit.). We call any closed point in $\mathbb{Y}_{\infty}^{K^{\times}}$ a \emph{CM point for $K$}. 
\item By the theory of complex multiplication (see loc. cit.), we have 
$$\mathbb{Y}_{\infty}^{K^{\times}}(\overline{K}) = \mathbb{Y}_{\infty}^{K^{\times}}(K^{\mathrm{ab}}),$$
where $K^{\mathrm{ab}}/K$ denotes the maximal abelian extension. 
\item By Shimura's reciprocity law (see Chapter 3.1.2 of op. cit.), the right action of $\mathbb{A}_K^{\times,(\infty)}$ on $\mathbb{Y}_{\infty}^{K^{\times}}$ factors through the reciprocity map $\mathrm{rec}_K : \mathbb{A}_K^{\times} \rightarrow \mathrm{Gal}(K^{\mathrm{ab}}/K)$. 
\end{enumerate}
\end{definition}

Let $\pi : \mathcal{E} \rightarrow \mathbb{Y}$ denote the algebraic universal object (see Section \ref{algebraicYsection}; also recall that by Convention \ref{Yconvention}, we assume that the level $\Gamma$ of $\mathbb{Y}$ is neat). We will slightly the conflate notation of (\ref{omegaY}) and let $\omega = \pi_*\Omega_{\mathcal{E}/\mathbb{Y}}$ below. (No confusion should arise, as (\ref{omegaY}) is the $p$-adic analytification of $\omega$ here.)

Suppose $k \in \mathbb{Z}_{\ge 0}$ and $w' \in \omega^{\otimes k}(\mathbb{Y}_{\infty})$. Write
$$w' = F \cdot w_{\mathrm{can}}^{\otimes k}, \hspace{1cm} F \in \mathbb{B},$$
where $\mathbb{B}$ is as in (\ref{Bdefinition}). Recall that $\bigsqcup_{\pi_0(\mathbb{Y}_{\infty}(\mathbb{C})^{\mathrm{an}})}\mathcal{H}^+ \rightarrow \mathbb{Y}_{\infty}(\mathbb{C})^{\mathrm{an}}$ is the complex analytic universal cover, where $\mathbb{Y}_{\infty}(\mathbb{C})^{\mathrm{an}}$ is the complex analytic space associated to $\mathbb{Y}_{\infty}$ and $\pi_0$ is the component set. More precisely, $\mathcal{H}^+$ is the universal cover of every connected component of $\mathbb{Y}_{\infty}(\mathbb{C})^{\mathrm{an}}$. Then the elliptic curve (resp. false elliptic curve) underlying $\mathcal{E}|_{\mathbb{Y}_{\infty}(\mathbb{C})^{\mathrm{an}}} \times_{\mathbb{Y}_{\infty}(\mathbb{C})^{\mathrm{an}}}\mathcal{H}^+$ is $\mathbb{C}/(\mathbb{Z}\tau + \mathbb{Z})$ (resp. $\mathbb{C}^{\oplus 2}/\left(\iota_{\infty}(\mathcal{O}_D)\cdot \left(\begin{array}{ccc} \tau \\
1\\
\end{array}\right)\right)$ in the notation of Choice \ref{choice} below), and letting $z$ denote the standard coordinate on $\mathbb{C}$ we get a section
\begin{equation}\label{2piidz}2\pi idz \in \omega(\mathcal{H}^+).
\end{equation}
Write
$$w' = G \cdot (2\pi idz)^{\otimes k}, \hspace{1cm} G \in \mathcal{O}_{\mathcal{H}^+}(\mathcal{H}^+),$$
where $\mathcal{O}_{\mathcal{H}^+}$ denotes the sheaf of holomorphic functions on $\mathcal{H}^+$. 

Let $\mathcal{Y}_{\infty}$ denote the adic space over $\mathrm{Spa}(\mathbb{Q}_p,\mathbb{Z}_p)$ attached to $\mathbb{Y}_{\infty}\times_{\mathrm{Spec}(\mathbb{Q})}\mathrm{Spec}(\mathbb{Q}_p)$. Then we have a natural map of pro-adic spaces over $\mathrm{Spa}(\mathbb{Q}_p,\mathbb{Z}_p)$
\begin{equation}\label{analyticprojection}\mathcal{Y}_{\infty} \rightarrow Y_{\infty}.
\end{equation}

Given a neat $\Gamma'$ as in Definitions \ref{congruencesubgroups} and \ref{neatdefinition} with associated universal object $\pi : \mathcal{E}(\Gamma') \rightarrow \mathbb{Y}(\Gamma')$ as in the end of Section \ref{algebraicYsection}, let 
$$\omega_{\mathbb{Y}(\Gamma')} := \pi_*\Omega_{\mathcal{E}(\Gamma')/\mathbb{Y}(\Gamma')}.$$
Below, we will let 
$$\omega^{\otimes k}(\mathbb{Y}_{\infty}) = \omega_{\mathbb{Y}(\Gamma')}^{\otimes k}(\mathbb{Y}_{\infty})$$
for any $\Gamma' \subset \Gamma(Np^{\infty})$ (recall $\mathbb{Y}_{\infty} = \mathbb{Y}(\Gamma(Np^{\infty}))$ from the end of Section \ref{algebraicYsection}, recalling our choice of $\Gamma = \Gamma(N)$ from Convention \ref{Yconvention}). It is evident that $\omega^{\otimes k}(\mathbb{Y}_{\infty})$ is independent of the choice of such $\Gamma'$. 

\begin{theorem}[cf. Key Lemma 5.1.27 of \cite{KatzCM}]\label{CMcoincidetheorem}
Given $k \in \mathbb{Z}_{\ge 0}$ and $w' \in \omega^{\otimes k}(\mathbb{Y}_{\infty})$, let $F$ and $G$ be as above. Let us make the following assumptions and notational conventions. 
\begin{enumerate}
\item Suppose $p$ is inert or ramified in $K$ and suppose $\mathbf{y} \in \mathbb{Y}_{\infty}^{K^{\times}}(\overline{K})$ (i.e. $\mathbf{y}$ is a CM point for $K$). 
\item Suppose $\mathbf{y} \in \mathbb{Y}_{\infty}^{K^{\times}}$ whose $p$-adic analytification $\mathbf{y}^{\mathrm{an}} \in \mathcal{Y}_{\infty}$ has image under the map (\ref{analyticprojection}) 
$$y \in Y_{\infty}$$
which lies in 
$$U_{n(\epsilon_0)-1} \overset{(\ref{Um})}{=} U \cdot g^{n(\epsilon_0)-1} = \mathcal{Y}^{\mathrm{Ig}}(\epsilon_0) \cdot g^{n(\epsilon_0)-1} \overset{(\ref{gisomorphism})}{=} \mathcal{Y}^{\mathrm{Ig}}(p^{n(\epsilon_0)-1}\epsilon_0) \subset Y_{\infty},$$
where $U$ is as in Definition \ref{Udefinition} and $n(\epsilon_0)$ is as in (\ref{nepsilondefinition}). 
\item Fix a choice of generator
$$w_0 \in \omega(\mathbf{y}),$$
which is thus a differential defined over $K^{\mathrm{ab}}$. Recall $2\pi idz \in \omega(\mathcal{H}^+)$ from (\ref{2piidz}). Given $\mathbf{y} \in \mathbb{Y}$ with underlying false elliptic curve $A$, let $(2\pi idz)(\mathbf{y}) \in \omega(\mathbf{y}) = \Omega_{A/\mathbb{C}}$ denote the generator induced by the \'{e}taleness of $\bigsqcup_{\pi_0(\mathbb{Y}_{\infty}(\mathbb{C})^{\mathrm{an}})}\mathcal{H}^+ \rightarrow \mathbb{Y}_{\infty}(\mathbb{C})^{\mathrm{an}}$. By abuse of notation, we will write 
$$(2\pi idz)(y) = (2\pi idz)(\mathbf{y}).$$ 
\item Define $\Omega_p(y) \in \mathbb{C}_p^{\times}$ and $\Omega_{\infty}(y) \in \mathbb{C}^{\times}$ by
\begin{equation}\label{defineomegaperiods}\tilde{w}_{\mathrm{can}}(y) = \Omega_p(y) \cdot w_0, \hspace{1cm} (2\pi idz)(y) =  \Omega_{\infty}(y) \cdot w_0
\end{equation}
where $\tilde{w}_{\mathrm{can}}$ is as in (\ref{tildewcan}). Let $\frak{d}_k^j$ be as in (\ref{complexMSoperator}). 
\end{enumerate}

Then we have the following equality of elements of $\overline{\mathbb{Q}}$ for all $j \ge 0$:
\begin{equation}\label{compareMSvalues}\Omega_p(y)^{k+2j}\cdot \theta_t \circ \theta_q \circ d_k^jF(y) = \Omega_{\infty}(y)^{k+2j} \cdot\frak{d}_k^jG(y).
\end{equation}
\end{theorem}

\begin{remark}Note that since $n(\epsilon_0) \ge 1$ (see (\ref{nepsilondefinition}) and Definition \ref{Udefinition}), then 
$$U = \mathcal{Y}^{\mathrm{Ig}}(\epsilon_0) \subset \mathcal{Y}^{\mathrm{Ig}}(p^{n(\epsilon_0)-1}\epsilon_0) \overset{(\ref{gisomorphism})}{=} \mathcal{Y}^{\mathrm{Ig}}(\epsilon_0) \cdot g^{n(\epsilon_0)-1} = U_{n(\epsilon_0)-1}.$$
Thus Theorem \ref{CMcoincidetheorem} (2) applies to any CM point $\mathbf{y}$ with corresponding $y \in \mathcal{Y}^{\mathrm{Ig}}(\epsilon_0) = U$. On a first reading, one may also substitute $U_{n(\epsilon_0) -1}$ with $U$ in both the statement and the proof of Theorem \ref{CMcoincidetheorem} with no further change to either. 

\end{remark}

\begin{proof}[Proof of Theorem \ref{CMcoincidetheorem}]
The CM point $\mathbf{y}$ has an underlying elliptic curve  (resp. false elliptic curve) $A/\overline{\mathbb{Q}}$ which has endomorphism ring containing some order $\mathcal{O} \subset \mathcal{O}_K$ strictly containing $\mathbb{Z}$. Let 
$$[\cdot ]_A : \mathcal{O} \rightarrow \mathrm{End}(A/\overline{\mathbb{Q}})$$
denote the CM action restricted to $\mathcal{O}$.  
 Recall that by the theory of complex multiplication, the action $[\alpha]_A^*$ for $\alpha \in \mathcal{O}$ (where $[\alpha]_A^*$ is the pullback by $[\alpha]_A \in \mathrm{End}(A/\overline{\mathbb{Q}})$) on 
$$H^{1,0} = H^0(A,\Omega_A) \subset H_{\mathrm{dR}}^1(A/\overline{\mathbb{Q}})$$ is through multiplication by $\alpha$, and the action of $[\alpha]_A^*$ on 
$$H^{0,1} = H^1(A,\mathcal{O}_A) \subset H_{\mathrm{dR}}^1(A/\overline{\mathbb{Q}})$$
is through multiplication by $\overline{\alpha}$, where $x \mapsto \overline{x}$ is the nontrivial element of $\mathrm{Gal}(K/\mathbb{Q}) \overset{(\ref{fixembeddings})}{=} \mathrm{Gal}(K_p/\mathbb{Q}_p) \cong \mathbb{Z}/2$. (Here, the equality of Galois groups holds because $p$ is inert or ramified in $K/\mathbb{Q}$.) Thus we have an eigendecomposition 
\begin{equation}\label{CMeigendecomposition}H_{\mathrm{dR}}^1(A/\overline{\mathbb{Q}}) = H^{1,0} \oplus H^{0,1}
\end{equation} 
for the $\mathcal{O}$-action, where each $H^{1,0}$ and $H^{0,1}$ is an $\overline{\mathbb{Q}}$-vector space. 

Now let us recall our $p$-adic Hodge decomposition and study its specialization to the CM point $\mathbf{y}$. Recall $\mathbb{B} \overset{(\ref{Bdefinition})}{=} \mathbb{B}_{\mathrm{dR},\mathcal{V}_x}(\mathcal{V}_x)\llbracket q_{\mathrm{dR}}-1\rrbracket$. Let 
$$\mathcal{B} = \mathbb{B}_{\mathrm{dR},U_{n(\epsilon_0)-1}}(U_{n(\epsilon_0)-1})\llbracket q_{\mathrm{dR}}-1\rrbracket,$$
so that there is a natural map $\mathbb{B}\rightarrow \mathcal{B}$ induced by restriction to $U_{n(\epsilon_0)-1} \subset \mathcal{V}_x$. Tensoring the Hodge decomposition (\ref{HTdecomposition}) with $\otimes_{\mathbb{B}}\mathcal{B}$, we get
\begin{equation}\label{abovedecomposition1}H_{\mathrm{dR}}^1(\mathcal{E}) \otimes_{\mathcal{O}_Y}\mathcal{B} \cong \left(\omega \otimes_{\mathcal{O}_Y} \mathcal{B}\right) \oplus \left(\omega^{-1}\otimes_{\mathcal{O}_Y}\mathcal{B}\right).
\end{equation}
Recall the map
$$\theta_t \circ \theta_q : \mathbb{B} \twoheadrightarrow \hat{\mathcal{O}}_{\mathcal{V}_x}(\mathcal{V}_x)$$
from Definition \ref{evaluationdefinition}; we thus get an induced map 
$$\theta_t \circ \theta_q : \mathcal{B} \twoheadrightarrow \hat{\mathcal{O}}_{U_{n(\epsilon_0)-1}}(U_{n(\epsilon_0)-1})$$ 
by restriction to $U_{n(\epsilon_0)-1} \subset \mathcal{V}_x$. Tensoring (\ref{abovedecomposition1}) with $\otimes_{\mathbb{B},\theta_t \circ \theta_q} \hat{\mathcal{O}}_{U_{n(\epsilon_0)-1}}(U_{n(\epsilon_0)-1})$, we get
\begin{equation}\label{abovedecomposition2}H_{\mathrm{dR}}^1(\mathcal{E})\otimes_{\mathcal{O}_Y}\hat{\mathcal{O}}_{U_{n(\epsilon_0)-1}}(U_{n(\epsilon_0)-1})  \cong \left(\omega\otimes_{\mathcal{O}_Y} \hat{\mathcal{O}}_{U_{n(\epsilon_0)-1}}(U_{n(\epsilon_0)-1})\right) \oplus \left(\omega^{-1} \otimes_{\mathcal{O}_Y}\hat{\mathcal{O}}_{U_{n(\epsilon_0)-1}}(U_{n(\epsilon_0)-1})\right).
\end{equation}
Then $\theta_t \circ \theta_q : \mathcal{B} \twoheadrightarrow \hat{\mathcal{O}}_{U_{n(\epsilon_0)-1}}(U_{n(\epsilon_0)-1})$ induces a map 
\begin{equation}\label{Hodgethetatthetaq}(\theta_t \circ \theta_q) : H_{\mathrm{dR}}^1(\mathcal{E})|_{U_{n(\epsilon_0)-1}} \otimes_{\mathcal{O}_{U_{n(\epsilon_0)-1}}}\mathcal{B} \twoheadrightarrow H_{\mathrm{dR}}^1(\mathcal{E})\otimes_{\mathcal{O}_Y}\hat{\mathcal{O}}_{U_{n(\epsilon_0)-1}}(U_{n(\epsilon_0)-1})
\end{equation}
that maps the decomposition (\ref{abovedecomposition1}) to the decomposition (\ref{abovedecomposition2}). Since $U_{n(\epsilon_0)-1} = \mathcal{Y}^{\mathrm{Ig}}(p^{n(\epsilon_0)-1}\epsilon_0)$ (for this last equality, see (2) in the statement of this Theorem) and $n(p^{n(\epsilon_0)-1}\epsilon_0) = n(\epsilon_0) - (n(\epsilon_0)-1) = 1$, we have that any $\gamma \in \Gamma_{0,p}(p)$ satisfies 
$$U_{n(\epsilon_0)-1} \cdot \gamma = \mathcal{Y}^{\mathrm{Ig}}(p^{n(\epsilon_0)-1}\epsilon_0) \cdot \gamma \overset{(\ref{IgusaGaloisgroup})}{=} \mathcal{Y}^{\mathrm{Ig}}(p^{n(\epsilon_0)-1}\epsilon_0) = U_{n(\epsilon_0)-1}.$$
Hence the assumptions of Lemma \ref{thetaXGamma0commutelemma} are satisfied, and so by (\ref{gammathetacommute}) we have 
\begin{equation}\label{Hodgethetacommute}(\theta_t \circ \theta_q) \circ \gamma^* = \gamma^* \circ (\theta_t\circ \theta_q)
\end{equation}
for all $\gamma \in \Gamma_{0,p}(p)$, where $(\theta_t \circ \theta_q)$ is as in (\ref{Hodgethetatthetaq}). 

Now recall that $\mathbf{y}$ is our CM point and that $y$ is the image of its $p$-adic analytification $\mathbf{y}^{\mathrm{an}} \in \mathcal{Y}_{\infty}$ under $\mathcal{Y}_{\infty} \rightarrow Y_{\infty}$ from (\ref{analyticprojection}). Let $\mathcal{B}(y)$ denote the specialization of $\mathcal{B}$ to $y \in U_{n(\epsilon_0)-1}(\mathbb{C}_p,\mathcal{O}_{\mathbb{C}_p})$. Specializing (\ref{abovedecomposition1}) to $y$, we get 
\begin{equation}\label{abovedecomposition1'}H_{\mathrm{dR}}^1(\mathcal{E})\otimes_{\mathcal{O}_Y}\mathcal{B}(y) \cong \left(\omega \otimes_{\mathcal{O}_Y} \mathcal{B}(y)\right) \oplus \left(\omega^{-1} \otimes_{\mathcal{O}_Y}\mathcal{B}(y)\right),
\end{equation}
and specializing (\ref{abovedecomposition2}) to $y$ we get
\begin{equation}\label{abovedecomposition2'}H_{\mathrm{dR}}^1(\mathcal{E})\otimes_{\mathcal{O}_Y}\hat{\mathcal{O}}_{U_{n(\epsilon_0)-1}}(y)  \cong \left(\omega\otimes_{\mathcal{O}_Y} \hat{\mathcal{O}}_{U_{n(\epsilon_0)-1}}(y)\right) \oplus \left(\omega^{-1} \otimes_{\mathcal{O}_Y}\hat{\mathcal{O}}_{U_{n(\epsilon_0)-1}}(y)\right). 
\end{equation}
Recall that the first factor of (\ref{HTdecomposition}) is given by the Hodge filtration and the second factor by the Hodge-Tate filtration. Thus the first factor of (\ref{abovedecomposition1'}) is the Hodge filtration 
$$\Omega_{A/\mathbb{C}_p} \subset H_{\mathrm{dR}}^1(A/\mathbb{C}_p)$$
of the CM (false) elliptic curve $A$ underlying the point $y$ and the second factor is the Hodge-Tate filtration 
$$\mathrm{Lie}(A/\mathbb{C}_p) \subset H_{\text{\'{e}t}}^1(A/\mathbb{C}_p,\mathbb{Z}_p) \otimes_{\mathbb{Z}_p}\mathbb{C}_p(-1).$$
Therefore, by the definition of the CM action, the action of $\gamma^*$ for any $\gamma \in 1 + p\mathcal{O} \overset{(\ref{idefinition})}{\subset} \Gamma_{1,p}(p) \subset \Gamma_{0,p}(p)$ on the first factor of (\ref{abovedecomposition1'}) is through multiplication by $\gamma$ and the action on the second factor is through multiplication by $\overline{\gamma}$. Thus for any $w \in \omega\otimes_{\mathcal{O}_Y}\mathcal{B}(y)$ and any $\eta' \in \omega^{-1}\otimes_{\mathcal{O}_Y}\mathcal{B}(y)$ we have 
\begin{equation}\label{CMactions}[\gamma]_A^*w' = \gamma \cdot w', \hspace{1cm} [\gamma]_A^*\eta' = \overline{\gamma} \cdot \eta',
\end{equation}
where here ``$\gamma \cdot$'' denotes usual multiplication by $\gamma \in \mathcal{O}_{K_p}$.

Now let 
$$w_{\mathrm{can}} \in \omega \otimes_{\mathcal{O}_Y}\mathbb{B}_{\mathrm{dR},\mathcal{V}_x}^+(\mathcal{V}_x)\llbracket q_{\mathrm{dR}}-1\rrbracket$$ be the generator from (\ref{zqw2}); then 
$$\tilde{w} := w_{\mathrm{can}}|_{U_{n(\epsilon_0)-1}} \in \omega \otimes_{\mathcal{O}_Y}\mathbb{B}_{\mathrm{dR},U_{n(\epsilon_0)-1}}^+(U_{n(\epsilon_0)-1})\llbracket q_{\mathrm{dR}}-1\rrbracket$$
is a generator. Note that for any open $W \subset \mathcal{V}_x$, an element $f \in \mathbb{B}_{\mathrm{dR},\mathcal{V}_x}^+(W)\llbracket q_{\mathrm{dR}}-1\rrbracket$ is invertible if any only if $\theta_t\circ \theta_q(f) \in \hat{\mathcal{O}}_{\mathcal{V}_x}(W)^{\times}$ (see Definition \ref{evaluationdefinition}). Thus 
$$(\theta_t \circ \theta_q)(w_{\mathrm{can}}) \in \omega \otimes_{\mathcal{O}_Y}\hat{\mathcal{O}}_{\mathcal{V}_x}(\mathcal{V}_x)$$ 
and 
\begin{equation}\label{provisionalw}w := (\theta_t \circ \theta_q)(\tilde{w}) \in \omega \otimes_{\mathcal{O}_Y} \hat{\mathcal{O}}_{U_{n(\epsilon_0)-1}}(U_{n(\epsilon_0)-1})
\end{equation}
are generators. 
 Moreover, the specializations to $y \in U_{n(\epsilon_0)-1}(\mathbb{C}_p,\mathcal{O}_{\mathbb{C}_p})$
$$\tilde{w}(y)  \in \omega \otimes_{\mathcal{O}_Y}\mathcal{B}(y), \hspace{1cm} w(y) \in \omega \otimes_{\mathcal{O}_Y} \hat{\mathcal{O}}_{U_{n(\epsilon_0)-1}}(y)$$
are generators.

In all, we have 
\begin{align*}[\gamma]_A^*(w(y)) &= (\gamma^*w)(y) \overset{(\ref{provisionalw})}{=} \left(\gamma^*(\theta_t\circ \theta_q)(\tilde{w})\right)(y) \\
&\overset{(\ref{Hodgethetacommute})}{=} \left((\theta_t\circ \theta_q)\left(\gamma^*\tilde{w}\right)\right)(y) = (\theta_t\circ \theta_q)\left(\gamma^*\tilde{w}(y)\right) \\&\overset{(\ref{CMactions}), \; w' = \tilde{w}(y)}{=} (\theta_t\circ \theta_q)\left(\gamma \cdot \tilde{w}(y)\right) = \gamma \cdot (\theta_t\circ \theta_q)(\tilde{w}(y)) \\
&= \gamma \cdot (\theta_t\circ \theta_q)(\tilde{w})(y) \overset{(\ref{provisionalw})}{=} \gamma \cdot w(y).
\end{align*}
Let $\eta \in \omega^{-1} \otimes_{\mathcal{O}_Y}\hat{\mathcal{O}}_{U_{n(\epsilon_0)-1}}(U_{n(\epsilon_0)-1})$ be the Poincar\'{e} dual of $w(y)$. An entirely analogous calculation using (\ref{CMactions}) with $\eta' \in \omega^{-1}\otimes_{\mathcal{O}_Y}\mathcal{B}(y)$ equal to the Poincar\'{e} dual of $\tilde{w}(y)$ shows
$$[\gamma]_A^*\eta' = \overline{\gamma} \cdot \eta'.$$
Since $w(y)$ is a generator of the first factor of (\ref{abovedecomposition2'}) and $\eta'$ is a generator of the second factor of (\ref{abovedecomposition2'}), we thus see that $[\gamma]_A^*$ acts on the first factor by multiplication by $\gamma$ and on the second factor by multiplication by $\overline{\gamma}$. 
 Pick any $\gamma \in 1 + p\mathcal{O}$ with $\gamma \neq \overline{\gamma}$ (which exists since $\mathcal{O}$ strictly contains $\mathbb{Z}$). By the first and previous paragraphs, we see that $[\gamma]_A^*$ acts on the first factors of both (\ref{CMeigendecomposition}) and (\ref{abovedecomposition2'}) through multiplication by $\gamma$, and acts on the second factors through multiplication by $\overline{\gamma}$. 
Hence by uniqueness of the eigendecomposition for the action of $[\gamma]_A^*$, (\ref{abovedecomposition2'}) is equal to (\ref{CMeigendecomposition}) tensored with $\otimes_{\overline{\mathbb{Q}}_p}\hat{\mathcal{O}}_{U_{n(\epsilon_0)-1}}(y) = \otimes_{\overline{\mathbb{Q}}}\mathbb{C}_p$. The specialization to $y$ of the map $(\theta_t \circ \theta_q) \circ \mathrm{split}$ 
$$(\theta_t \circ \theta_q) \circ \mathrm{split}(y) : H_{\mathrm{dR}}^1(A/\mathbb{C}_p) = H_{\mathrm{dR}}^1(\mathcal{E}) \otimes_{\mathcal{O}_Y}\hat{\mathcal{O}}_{U_{n(\epsilon_0)-1}}(y) \rightarrow \omega \otimes_{\mathcal{O}_Y}\hat{\mathcal{O}}_{U_{n(\epsilon_0)-1}}(y)  = H^{1,0} \otimes_{\overline{\mathbb{Q}}}\mathbb{C}_p,$$
where $\mathrm{split}$ is as in (\ref{split}), is thus equal to the projection $H_{\mathrm{dR}}^1(A/\overline{\mathbb{Q}}_p) \rightarrow H^{1,0}$ given by (\ref{CMeigendecomposition}) tensored with $\otimes_{\overline{\mathbb{Q}}}\mathbb{C}_p$. 

Similarly, it is known that the specialization to a CM point of the real analytic Hodge decomposition coincides with the CM eigendecomposition (\ref{CMeigendecomposition}) after tensoring the latter with $\otimes_{\overline{\mathbb{Q}}}\mathbb{C}$, see \cite[5.1.27]{KatzCM} (note loc. cit. also proves the analogous $p$-adic statement in the case when $p$ is split in $K$). Thus the specializations to the CM point $\mathbf{y}$ of the $p$-adic splitting $\theta_t \circ \theta_q \circ \mathrm{split}$ defining $\theta_t \circ \theta_q \circ d_k^j$ and of the real analytic Hodge splitting defining $\frak{d}_k^j$ are equal to the splitting $H_{\mathrm{dR}}^1(A) \rightarrow H^{1,0}$ tensored with $\otimes_{\overline{\mathbb{Q}}}\mathbb{C}_p$ and tensored with $\otimes_{\overline{\mathbb{Q}}}\mathbb{C}$, respectively. Therefore, the same argument as in \cite[Proposition 1.12]{BDP} proves (\ref{compareMSvalues}). 

\end{proof}

\subsection{$p$-depletion of modular forms}\label{depletionsection}

\begin{definition}Define
\begin{equation}\label{UVdefinition}V_p := g \overset{(\ref{gdefinition})}{=} \left(\begin{array}{ccc} 1 & 0\\
0 & p\\
\end{array}\right) \in GL_2(\mathbb{Q}_p), \hspace{1cm} U_p := \frac{1}{p}\sum_{j= 0}^{p-1}\left(\begin{array}{ccc} 1 & j/p\\
0 & 1/p\\
\end{array}\right) \in \mathbb{Q}[GL_2(\mathbb{Q}_p)].
\end{equation}
These act on the right of $Y_{\infty}$, and are essentially the usual Atkin-Lehner $U/V$ Hecke operators, see \cite[Section 3.8]{BDP} for example. We caution the reader that our $U_p$ is $[p^{-1}]U$ in the notation of loc. cit., and $V_p$ is $[p]V$ in the notation of loc. cit.; thus in our setting, $[p] = \left(\begin{array}{ccc} p & 0\\
0 & p \\
\end{array}\right) \in GL_2(\mathbb{Q}_p)$ and 
$$T_p = [p]U_p + \frac{1}{p}V_p = U + \frac{[p]}{p}V,$$
where $T_p$ is the usual Hecke $T_p$-operator (see loc. cit.). Thus $U_p$ and $V_p$ act on sections of sheaves on $Y_{\infty}$ via pullback. 
\end{definition}

\begin{remark}Note that our convention to view $GL_2(\mathbb{Q}_p)$ as acting on the right (as in \cite{ScholzeTorsion} and \cite{ChojeckiHansenJohansson}) implies that given $\gamma_1,\gamma_2 \in GL_2(\mathbb{Q}_p)$ and a section $F \in \mathcal{F}(W)$, where $\mathcal{F}$ is a sheaf on $Y_{\text{pro\'{e}t}}/W$ and $W \rightarrow Y_{\infty}$ is a pro\'{e}tale open, such that $\gamma_i^* : \mathcal{F}(W) \rightarrow \mathcal{F}(W)$ for $i = 1,2$, we have 
$$(\gamma_1\gamma_2)^*f = \gamma_1^*\gamma_2^*f.$$
\end{remark}

Recall that $Y(\epsilon_0)$ is the image of $U = \mathcal{Y}^{\mathrm{Ig}}(\epsilon_0)$ (Definition \ref{Udefinition}) under the map $Y_{\infty} \rightarrow Y$ and that $\mathrm{Gal}(U/Y(\epsilon_0)) = \Gamma_{0,p}(p^{n(\epsilon_0)})$. 

\begin{definition}\label{flatdefinition}\begin{enumerate}
\item Given a Hecke eigenform $w' \in \omega^{\otimes k}(Y(\epsilon_0))$, we call
\begin{equation}\label{w'flat}w'^{\flat} := (U_p^*V_p^* - V_p^*U_p^*)w' \in \omega^{\otimes k}\otimes_{\mathcal{O}_Y}\mathbb{B}_{\mathrm{dR},Y}^+(Y_{\infty})\llbracket X/t\rrbracket
\end{equation}
the \emph{$p$-depletion of $w$'} (see \cite[Section 3.8]{BDP}). One calculates
\begin{equation}\label{UV}U_pV_p = \frac{1}{p}\sum_{j = 0}^{p-1}\left(\begin{array}{ccc} 1 & j\\
0 & 1\\
\end{array}\right) \in \mathbb{Q}[\mathrm{Gal}(U/Y(\epsilon_0))].
\end{equation}
Since $w' \in \omega^{\otimes k}(Y(\epsilon_0))$ and $\mathrm{Gal}(U/Y(\epsilon_0))$ acts trivially on $Y(\epsilon_0)$, we have
$$U_p^*V_p^*w' = w'.$$
Hence
$$w'^{\flat} = (1-V_p^*U_p^*)w'.$$
\item Recall (\ref{gammajpnm}),
$$\gamma_{j/p^n,m} := \left(\begin{array}{ccc} 1 & j/p^n\\
  0 & 1/p^m\\
  \end{array}\right).$$
One calculates
\begin{equation}\label{VU}V_pU_p = \frac{1}{p}\sum_{j = 0}^{p-1}\left(\begin{array}{ccc} 1 & j/p\\
0 & 1\\
\end{array}\right) = \frac{1}{p}\sum_{j = 0}^{p-1}\gamma_{j/p,0} \in \mathbb{Q}[GL_2(\mathbb{Q}_p)].
\end{equation}
\item Suppose now that $D = M_2(\mathbb{Q})$, and let $\overline{Y}(p^r)$ denote the compactification of $Y(p^r) = Y(\Gamma \cap \Gamma(p^r))$ (recall $\Gamma = \Gamma(N)$ from Convention \ref{Yconvention}). If moreover $w' \in \omega^{\otimes k}(\overline{Y}(p^r))$ for any $r \in \mathbb{Z}_{\ge 0}$ and $w'$ has $q$-expansion 
$$\sum_{n = 0}^{\infty}a_nq^n,$$
then $w'^{\flat}$ has $q$-expansion
$$\sum_{n = 0, p\nmid n}^{\infty}a_nq^n.$$
See \cite[(3.8.4)]{BDP}.
\end{enumerate}
\end{definition}




 Let $\mathcal{E}\rightarrow Y_{\infty}$ denote the universal object (universal (false) elliptic curve with $\mathcal{O}_D$-endomorphism structure and $\Gamma(Np^{\infty})$-level structure, see Convention \ref{Yconvention}). Let $f : \mathcal{E} \rightarrow \mathcal{E}/C_j$ denote the kernel of the isogeny underlying $\gamma_{j/p^n,m}$, let $(e_1,e_2)$ denote the $\Gamma(p^{\infty})$-level structure on $\mathcal{E}$, and let $(e_1',e_2') = (e_1,e_2) \cdot \gamma_{j/p^n,m}$. 
We get an induced morphism
$$f_* : T_p\mathcal{E} \otimes_{\mathbb{Z}_p}\mathbb{Q}_p \xrightarrow{\sim} T_p(\mathcal{E}/C_j).$$ 
Let 
$$i_j : \mathcal{E}/C_j \rightarrow \mathcal{E}$$
denote the classifying map of $\mathcal{E}/C_j$ (which exists by the universal property of $\mathcal{E} \rightarrow Y_{\infty}$), and let 
$$i_{j,*} : T_p(\mathcal{E}/C_j) \rightarrow T_p\mathcal{E}$$
denote the induced map on Tate modules.
From the definition of the $GL_2(\mathbb{Q}_p)$-action we have a diagram 
\begin{equation}\label{Udiagram}
\begin{tikzcd}[column sep = large]
\hat{\mathbb{Z}}_{p,Y_{\infty}}^{\oplus 2} \arrow{r}{(e_1,e_2)} \arrow{d}{\gamma_{j/p^n,m}} & T_p\mathcal{E}  \arrow{d}{f_*}  \\
  \hat{\mathbb{Z}}_{p,Y_{\infty}}^{\oplus 2} \arrow{r}{(e_1',e_2')} \arrow{d}{\mathrm{id}}  & T_p(\mathcal{E}/C_j) \arrow{d}{i_{j,*}} \\
  \hat{\mathbb{Z}}_{p,Y_{\infty}}^{\oplus 2} \arrow{r}{(e_1,e_2)}  &T_p\mathcal{E}
  \end{tikzcd}.
  \end{equation}
Here the left two vertical arrows are given by the action of elements of $GL_2(\mathbb{Q}_p)$ acting on $\hat{\mathbb{Z}}_{p,Y_{\infty}}^{\oplus 2}$ via left multiplication of matrices on column vectors. 

Recall $U = \mathcal{Y}^{\mathrm{Ig}}(\epsilon_0) \subset \mathcal{V}_x$ from Definition \ref{Udefinition}. Now move to the localized site $Y_{\text{pro\'{e}t}}/U$ ($U = \mathcal{Y}^{\mathrm{Ig}}(\epsilon_0)$ as in Definition \ref{Udefinition}) and assume that $n \le n(\epsilon_0) \le m$; then $U \cdot \gamma_{j/p^n,m} \subset U$ by (\ref{UU'gamma}). The commutativity of (\ref{Udiagram}) thus implies for $i = 1,2$, 
\begin{equation}\label{gammae}\gamma_{j/p^n,m}^*(e_i) = \gamma_{j/p^n,m}(e_i) = \begin{cases} e_1 & i = 1\\
\frac{j}{p^n}e_1 + \frac{1}{p^m}e_2 & i = 2\\
\end{cases}.
\end{equation}
We thus have
\begin{align*}i_{\mathrm{dR}}(\gamma_{j/p^n,m}^*w_{\mathrm{can}}) &= \gamma_{j/p^n,m}^*(i_{\mathrm{dR}}(w_{\mathrm{can}})) \overset{(\ref{iwcan})}{=} \gamma_{j/p^n,m}^*(-z_{\mathrm{dR}} e_1 + e_2) \\&\hspace{-1cm}\overset{(\ref{modulartransformationidentity}),(\ref{gammae})}{=} -\left(\frac{z_{\mathrm{dR}}}{p^m} + \frac{j}{p^n}\right)e_1 + \left(\frac{j}{p^n}e_1 + \frac{1}{p^m}e_2\right) = \frac{1}{p^m}\left(-z_{\mathrm{dR}}e_1 + e_2\right)  \overset{(\ref{iwcan})}{=}i_{\mathrm{dR}}\left(\frac{1}{p^m}w_{\mathrm{can}}\right).
\end{align*}
Thus, since $i_{\mathrm{dR}}$ is injective, we have 
\begin{equation}\label{gammawcan}\gamma_{j/p^n,m}^*w_{\mathrm{can}} = \frac{1}{p^m}w_{\mathrm{can}}.
\end{equation}

Recall
$$g = \left(\begin{array}{ccc} 1 & 0\\
0 & p\\
\end{array}\right).$$
Let $f^n : \mathcal{E} \rightarrow \mathcal{E}/C_{g^n}$ denote the isogeny underlying $g^n$. 
Let
$$f_* : T_p\mathcal{E} \xrightarrow{\sim} T_p(\mathcal{E}/C_{g^n})$$
be the induced morphism. Let  $(e_1',e_2') = (e_1,e_2)\cdot g^n$, let 
$$i_{g^n} : \mathcal{E}/C_{g^n} \rightarrow \mathcal{E}$$
be the classifying map and let 
$$i_{g^n,*} : T_p\mathcal{E}/C_{g^n} \rightarrow T_p\mathcal{E}$$
be the induced map on Tate modules. From the definition of the $GL_2(\mathbb{Q}_p)$-action (or the diagram in the proof of Lemma 2.11 of op. cit.), we have a diagram
\begin{equation}\label{Vdiagram}
\begin{tikzcd}[column sep = large]
\hat{\mathbb{Z}}_{p,Y_{\infty}}^{\oplus 2} \arrow{r}{(e_1,e_2)} \arrow{d}{g^n} & T_p\mathcal{E}  \arrow{d}{f^n_*} \\
  \hat{\mathbb{Z}}_{p,Y_{\infty}}^{\oplus 2} \arrow{r}{(e_1',e_2')} \arrow{d}{\mathrm{id}} & T_p(\mathcal{E}/C_{g^n})\arrow{d}{i_{g^n,*}} \\
  \hat{\mathbb{Z}}_{p,Y_{\infty}}^{\oplus 2} \arrow{r}{(e_1,e_2)} & T_p\mathcal{E}     \end{tikzcd}.
\end{equation}
Thus we have, for $i = 1,2$,
\begin{equation}\label{ge}(g^n)^*(e_i) = \begin{cases} e_1 & i = 1\\
p^ne_2 & i = 2
\end{cases}.
\end{equation}
Thus
\begin{align*}i_{\mathrm{dR}}((g^n)^*w_{\mathrm{can}}) &= (g^n)^*i_{\mathrm{dR}}(w_{\mathrm{can}}) \overset{(\ref{iwcan})}{=} (g^n)^*(-z_{\mathrm{dR}}e_1 + e_2) \\
&\overset{(\ref{modulartransformationidentity}), (\ref{ge})}{=} -p^nz_{\mathrm{dR}}e_1 + p^ne_2 = p^n(-z_{\mathrm{dR}}e_1 + e_2) = i_{\mathrm{dR}}(p^nw_{\mathrm{can}}).
\end{align*}
Since $i_{\mathrm{dR}}$ is injective, this implies 
\begin{equation}\label{gwcan}(g^n)^*w_{\mathrm{can}} = p^nw_{\mathrm{can}}.
\end{equation}

For the remainder of Section \ref{depletionsection} and starting in Assumption \ref{startinginassumption} of Section \ref{continuitysection}, we work under the following Assumption.

\begin{assumption}\label{nepsilonassumption}Assume that $1/(p+1) \le \epsilon_0 < p/(p+1)$ in Definition \ref{Udefinition}, so that 
$$n(\epsilon_0) \overset{(\ref{nepsilondefinition})}{=} 1.$$
\end{assumption}

Recall $U = \mathcal{Y}^{\mathrm{Ig}}(\epsilon_0)$ (Definition \ref{Udefinition}) so that 
$$\mathcal{Y}^{\mathrm{Ig}}(\epsilon_0/p) \cdot g \overset{(\ref{gisomorphism})}{=} \mathcal{Y}^{\mathrm{Ig}}(\epsilon_0) = U.$$
Since we assume $n(\epsilon_0) = 1$ (see Assumption \ref{nepsilonassumption}), then $n(\epsilon_0/p) = 2$.  By (\ref{UVdefinition}), (\ref{gammawcan}) with $m = n = n(\epsilon_0) = 1$ and (\ref{gwcan}), on $U$ we have 
\begin{equation}\label{VUwcan}\left(\begin{array}{ccc} 1 & j\\
0 & 1\\
\end{array}\right)^*w_{\mathrm{can}} = w_{\mathrm{can}}, \hspace{1cm} \left(\begin{array}{ccc} 1 & j/p\\
0 & 1 \\
\end{array}\right)^*w_{\mathrm{can}} = w_{\mathrm{can}}.
\end{equation}

\begin{definition}Suppose we are given $w' \in \omega^{\otimes k}(Y_{\infty})$. Write 
$$G = \frac{w'}{w_{\mathrm{can}}^{\otimes k}} \in \mathbb{B}_{\mathrm{dR},\mathcal{V}_x}(\mathcal{V}_x)\llbracket q_{\mathrm{dR}}-1\rrbracket.$$
\begin{enumerate}
\item Letting $w'^{\flat}$ be as in (\ref{w'flat}), define
\begin{equation}\label{Gflatgeneraldefinition}G^{\flat} := \frac{w'^{\flat}}{w_{\mathrm{can}}^{\otimes k}}  \in \mathbb{B}_{\mathrm{dR},\mathcal{V}_x}(\mathcal{V}_x)\llbracket q_{\mathrm{dR}}-1\rrbracket.
\end{equation}
\item From Definition \ref{flatdefinition}, we thus have
\begin{align*}G^{\flat}\cdot w_{\mathrm{can}}^{\otimes k} := w'^{\flat} := (U_p^*V_p^* - V_p^*U_p^*)w' &= (U_p^*V_p^* - V_p^*U_p^*)(G \cdot w_{\mathrm{can}}^{\otimes k}) \\
&\overset{(\ref{UV}), (\ref{VU})}{=} \frac{1}{p}\sum_{j = 0}^{p-1}\left(\left(\begin{array}{ccc} 1 & j \\
0 & 1 \\
\end{array}\right)^* - \left(\begin{array}{ccc} 1 & j/p\\
0 & 1\\
\end{array}\right)^*\right)(G \cdot w_{\mathrm{can}}^{\otimes k})\\
&\overset{(\ref{VUwcan})}{=} \frac{1}{p}\left(\sum_{j = 0}^{p-1}\left(\left(\begin{array}{ccc} 1 & j \\
0 & 1 \\
\end{array}\right)^* - \left(\begin{array}{ccc} 1 & j/p\\
0 & 1\\
\end{array}\right)^*\right)G\right)\cdot w_{\mathrm{can}}^{\otimes k} \\
&\overset{(\ref{UV}), (\ref{VU})}{=} (U_p^*V_p^* - V_p^*U_p^*)G \cdot w_{\mathrm{can}}^{\otimes k}.
\end{align*}
Thus we get the following expression for $G^{\flat}$ on $\mathcal{Y}^{\mathrm{Ig}}(\epsilon_0/p)$:
\begin{equation}\label{Gflatformula}G^{\flat}|_{\mathcal{Y}^{\mathrm{Ig}}(\epsilon_0/p)} := (U_p^*V_p^* - V_p^*U_p^*)G|_{\mathcal{Y}^{\mathrm{Ig}}(\epsilon_0/p)} \in \mathbb{B}_{\mathrm{dR},\mathcal{V}_x}(\mathcal{Y}^{\mathrm{Ig}}(\epsilon_0/p))\llbracket q_{\mathrm{dR}}-1\rrbracket.
\end{equation}
\end{enumerate}
\end{definition}

\subsection{Continuity of $p$-adic Maass-Shimura derivatives}\label{continuitysection}


For the purposes of constructing $p$-adic $L$-functions, it will be important to show that $\theta_t$-projections of the constant terms of the $p$-adic Maass-Shimura derivatives $d_k^{j}G$ of a weight $k \ge 0$ generalized $p$-adic modular form $G$ vary nicely in $j \in \mathbb{Z}/(p-1) \times \mathbb{Z}_p$. Namely, we will show that under certain assumptions on $F$, the function
$$j \mapsto \theta_t(\theta_q(d_k^{j}G))$$
varies $p$-adic continuously in the space of generalized $p$-adic modular forms. The $p$-adic $L$-functions we construct will essentially come from sums of specializations to CM points of continuous functions of the above type, thus giving functions which are continuous functions in the weight variable $j$.

Recall the Atkin-Serre operator $\theta_{\mathrm{AS}}$  (\cite[Section 3.4]{Brooks}) can be expressed locally on $\mathcal{Y}^{\mathrm{Ig}}$ as $\frac{q_{\mathrm{ST}}d}{dq_{\mathrm{ST}}}$. Here, $q_{\mathrm{ST}}$ is the Serre-Tate coordinate from Definition \ref{STexpansiondefinition}, which is locally defined on each formal neighborhood $M^{\mathrm{Ig}}(A_0,P_0)$.

For the rest of the paper, we consider
$$\mathbb{Z}_{\ge 0} \subset \mathbb{Z}/(p-1) \times \mathbb{Z}_p$$
to be embedded diagonally. Note that the image of this embedding is dense with respect to the product of the discrete and $p$-adic topologies on the target. 

\begin{proposition}\label{prop1}Suppose $G \in \mathbb{B}_{\mathrm{dR},\mathcal{V}_x}(U)\llbracket q_{\mathrm{dR}}-1\rrbracket$ is a generalized $p$-adic modular form of weight $k \in \mathbb{Z}_{\ge 0}$ (Definition \ref{generalizedpadicmodularformdefinition}). We have 
$$\theta_t(\theta_q(d_k^{j}G))|_{\mathcal{Y}^{\mathrm{Ig}}} = \theta_{\mathrm{AS}}^{j}G|_{\mathcal{Y}^{\mathrm{Ig}}}|_{q_{\mathrm{ST}} = 1}.$$
\end{proposition}

\begin{proof}From (\ref{dkjformula2}) and Remark \ref{recoverASremark} we see that $d_k^{j}G|_{\mathcal{Y}^{\mathrm{Ig}}} = \theta_{\mathrm{AS}}^{j}G|_{\mathcal{Y}^{\mathrm{Ig}}}$. We also have
$$\theta_t(\theta_q(d_k^{j}G))|_{\mathcal{Y}^{\mathrm{Ig}}} = \theta_t(\theta_{\mathrm{AS}}^{j}G|_{\mathcal{Y}^{\mathrm{Ig}}})|_{q_{\mathrm{dR}} = 1} \overset{(\ref{ordinaryintegral})}{=} (\theta_{\mathrm{AS}}^{\mathrm{j}}G|_{\mathcal{Y}^{\mathrm{Ig}}})|_{q_{\mathrm{ST}} = 1}.$$

\end{proof}

\begin{proposition}\label{prop2}For any $0 < \epsilon < p/(p+1)$ in the valuation group of $\mathcal{O}_k$, the formal scheme $\hat{\mathcal{Y}}^{\mathrm{Ig}}(\epsilon)^+ \rightarrow Y^+(\epsilon)$ is geometrically irreducible over every geometric irreducible component of $Y^+(\epsilon)$. In particular, for any $0 < \epsilon' \le \epsilon$, the open sub-formal scheme $\hat{\mathcal{Y}}^{\mathrm{Ig}}(\epsilon')^+ \subset \hat{\mathcal{Y}}^{\mathrm{Ig}}(\epsilon)^+$ is dense (in the formal topology). 
\end{proposition}

\begin{proof}The formal scheme $\hat{Y}_{\infty}^+\rightarrow \hat{Y}^+$ (all over $\mathrm{Spf}(\mathbb{Z}_p)$) is geometrically irreducible over every irreducible component of $\hat{Y}^+$. The open sub-formal scheme $\hat{Y}_{\infty}^+(\epsilon) \rightarrow Y^+(\epsilon)$ is geometrically irreducible over every geometric irreducible component of $Y^+(\epsilon)$. Now $\hat{\mathcal{Y}}^{\mathrm{Ig}}(\epsilon) \subset Y_{\infty}^+(\epsilon)$ is an open sub-formal scheme (see Definition \ref{hatmathcalYIgepsilonDefinition}), and so is also irreducible over each irreducible component of $Y^+(\epsilon)$. The density statements follow immediately because any open subset of an irreducible set is irreducible, and by the moduli interpretations, $\hat{\mathcal{Y}}^{\mathrm{Ig}}(\epsilon')^+ \subset \hat{\mathcal{Y}}^{\mathrm{Ig}}(\epsilon)^+$ intersects every geometric irreducible component of $\hat{\mathcal{Y}}^{\mathrm{Ig}}(\epsilon)^+$. In fact, it intersects every irreducible component of $\hat{Y}_{\infty}^+$; by \cite[Theorem 13.7.6]{KatzMazur}, the irreducible components of $\hat{Y}_{\infty}^+$ all intersect at each supersingular $\overline{\mathbb{F}}_p$-point, and so the assertion follows after noting that $\hat{\mathcal{Y}}^{\mathrm{Ig}}(\epsilon')^+$ contains a supersingular $\overline{\mathbb{F}}_p$-point since $\epsilon' > 0$. 

\end{proof}

\begin{proposition}\label{prop3}Suppose $G \in \mathbb{B}_{\mathrm{dR},\mathcal{V}_x}(U)\llbracket q_{\mathrm{dR}}-1\rrbracket$ is a weight $k$ generalized $p$-adic modular form (Definition \ref{generalizedpadicmodularformdefinition}). Further suppose that $G|_{\mathcal{Y}^{\mathrm{Ig}}} \in \hat{\mathcal{O}}_{\mathcal{Y}^{\mathrm{Ig}}}^+(\mathcal{Y}^{\mathrm{Ig}})$. Then if $j, j' \in \mathbb{Z}_{\ge 0}$ and $j \equiv j' \pmod{(p-1)p^n}$ then
$$\theta_{\mathrm{AS}}^{j}G|_{\mathcal{Y}^{\mathrm{Ig}}}|_{q_{\mathrm{ST}} = 1} \equiv \theta_{\mathrm{AS}}^{j'}G|_{\mathcal{Y}^{\mathrm{Ig}}}|_{q_{\mathrm{ST}} = 1} \pmod{p^n\hat{\mathcal{O}}_{\mathcal{Y}^{\mathrm{Ig}}}^+(\mathcal{Y}^{\mathrm{Ig}})}.$$
\end{proposition}

\begin{proof}Let $D \subset Y^{\mathrm{ord}}$ be an ordinary residue disc and let $\tilde{\mathcal{D}} = D\times_{Y^{\mathrm{ord}}}\mathcal{Y}^{\mathrm{Ig}} \rightarrow D$. As the $\tilde{\mathcal{D}}$ cover $\mathcal{Y}^{\mathrm{Ig}}$, it suffices to prove the Proposition on each $\tilde{\mathcal{D}}$. This is a classical result that can be found in our Shimura curve setting in \cite[Corollary 4.19]{Brooks}.
 
\end{proof}

Proposition \ref{prop3} shows that $\theta_{\mathrm{AS}}^{j}G|_{\mathcal{Y}^{\mathrm{Ig}}}|_{q_{\mathrm{ST}} = 1}$ for $j \in \mathbb{Z}/(p-1) \times \mathbb{Z}$ can be defined via taking a limit of $\theta_{\mathrm{AS}}^{j}G|_{\mathcal{Y}^{\mathrm{Ig}}}|_{q_{\mathrm{ST}} = 1}$ for $j \in \mathbb{Z}_{\ge 0}$ (because $\mathbb{Z}_{\ge 0} \subset \mathbb{Z}/(p-1) \times \mathbb{Z}_p$ is dense). However, it does not imply that the function
$$\mathbb{Z}_{\ge 0} \ni j \mapsto \theta_{\mathrm{AS}}^{j}G|_{\mathcal{Y}^{\mathrm{Ig}}}|_{q_{\mathrm{ST}} = 1} \in \hat{\mathcal{O}}_{\mathcal{Y}^{\mathrm{Ig}}}(\mathcal{Y}^{\mathrm{Ig}})$$
is continuous. For this, one must take the \emph{$p$-depletion} of $G|_{\mathcal{Y}^{\mathrm{Ig}}}$.

\begin{assumption}\label{startinginassumption}We now work under Assumption \ref{nepsilonassumption}, i.e. $n(\epsilon_0) = 1$, for the rest of this section; this is predominantly needed in order to use the formula (\ref{Gflatformula}) for $G^{\flat}$ (see (\ref{Gflatgeneraldefinition})) for $G$ as below. 
\end{assumption}

Supposing $G \in \mathbb{B}_{\mathrm{dR},\mathcal{V}_x}(\mathcal{Y}^{\mathrm{Ig}}(\epsilon_0))\llbracket q_{\mathrm{dR}}-1\rrbracket$ is a generalized $p$-adic modular form of weight $k$, let $w' = G \cdot w_{\mathrm{can}}^{\otimes k}$, and recall $G^{\flat} \in \mathbb{B}_{\mathrm{dR},\mathcal{V}_x}(\mathcal{Y}^{\mathrm{Ig}}(\epsilon_0/p))\llbracket q_{\mathrm{dR}}-1\rrbracket$ from (\ref{Gflatgeneraldefinition}). 
 If $G$ satisfies the identity (\ref{Ftransformationidentity}) for all $\gamma \in \Gamma_{1,p}(p^{n(\epsilon)})$, $0\le \epsilon < p/(p+1)$, a direct calculation using (\ref{Gflatformula}) shows that $G^{\flat}$ satisfies the identity (\ref{Ftransformationidentity}) for all $\gamma \in \Gamma_{1,p}(p^{n(\epsilon/p)})$ (note that $n(\epsilon/p) = n(\epsilon)  +1$, see (\ref{nepsilondefinition})). 

\begin{proposition}\label{prop4}In the situation of Proposition \ref{prop3} and further working under Assumption \ref{nepsilonassumption}, the function 
$$\mathbb{Z}_{\ge 0} \ni j \mapsto \theta_{\mathrm{AS}}^{j}G^{\flat}|_{\mathcal{Y}^{\mathrm{Ig}}}|_{q_{\mathrm{ST}} = 1} \in \hat{\mathcal{O}}_{\mathcal{Y}^{\mathrm{Ig}}}^+(\mathcal{Y}^{\mathrm{Ig}})$$
is continuous. 
\end{proposition}

\begin{proof}In view of Proposition \ref{prop1}, this follows from the analysis of \cite[Section 4.5]{Brooks}. Note that $\flat$ here corresponds to $p$-depletion in loc. cit.

\end{proof}

The following Theorem is one of our key applications of Theorem \ref{pintegraltheorem}. 

\begin{theorem}\label{pintegraltheorem2}Recall we are in the setting of Assumption \ref{nepsilonassumption}. Suppose $G \in \mathbb{B}_{\mathrm{dR},\mathcal{V}_x}(\mathcal{V}_x)\llbracket q_{\mathrm{dR}}-1\rrbracket$ is a generalized $p$-adic modular form of weight $k \in \mathbb{Z}_{\ge 0}$, and that it satisfies the identity (\ref{Ftransformationidentity}) for all $\gamma \in \Gamma_{1,p}(p^{n(\epsilon_0/p^{\alpha})})$ and $W = \mathcal{Y}^{\mathrm{Ig}}(\epsilon_0/p^{\alpha})$ for some $\alpha \in \mathbb{Z}_{\ge 0}$. Then:
\begin{enumerate}
\item $d_k^{j}G \in \mathbb{B}_{\mathrm{dR},\mathcal{V}_x}(\mathcal{V}_x)\llbracket q_{\mathrm{dR}}-1\rrbracket$ is a generalized $p$-adic modular form of weight $k +2j$, and moreover 
\begin{equation}\label{integralthetaG}\theta_t(\theta_q(d_k^{j}G))|_{\mathcal{Y}^{\mathrm{Ig}}(\epsilon_0/p^{\alpha})} \in \hat{\mathcal{O}}_{\mathcal{Y}^{\mathrm{Ig}}(\epsilon_0/p^{\alpha})}^+(\mathcal{Y}^{\mathrm{Ig}}(\epsilon_0/p^{\alpha})).
\end{equation}
\item 
Recall the notation of Proposition \ref{prop4}. Then the function
\begin{equation}\label{integralthetaGcontinuous}\mathbb{Z}_{\ge 0} \ni j \mapsto\theta_t(\theta_q(d_k^{j}G^{\flat}))|_{\mathcal{Y}^{\mathrm{Ig}}(\epsilon_0/p^{\alpha})} \in \hat{\mathcal{O}}_{\mathcal{Y}^{\mathrm{Ig}}(\epsilon_0/p^{\alpha})}^+(\mathcal{Y}^{\mathrm{Ig}}(\epsilon_0/p^{\alpha}))
\end{equation}
is continuous with respect to the $p$-adic topologies on the source and target. It extends continuously to a function $\mathbb{Z}/(p-1) \times \mathbb{Z}_p \rightarrow \hat{\mathcal{O}}_{\mathcal{Y}^{\mathrm{Ig}}(\epsilon_0/p^{\alpha
})}^+(\mathcal{Y}^{\mathrm{Ig}}(\epsilon_0/p^{\alpha}))$. 
\end{enumerate}
\end{theorem}

\begin{remark}\label{assumptionsatisfiedremark2}As in Remark \ref{assumptionsatisfiedremark}, the assumption that $G$ satisfy (\ref{Ftransformationidentity}) in Theorem \ref{pintegraltheorem2} holds if $w' = G \cdot w_{\mathrm{can}}^{\otimes k}$ is the pullback along $\mathcal{Y}^{\mathrm{Ig}}(\epsilon_0/p^{\alpha}) \rightarrow Y_1(\alpha+1,\epsilon_0/p^{\alpha})$ of an element of $\omega^{\otimes k}(Y_1(\alpha+1,\epsilon_0/p^{\alpha}))$. This will be the case in our main application of Theorem \ref{pintegraltheorem2}.
\end{remark}

\begin{proof}[Proof of Theorem \ref{pintegraltheorem2}]\textbf{(1)}: The fact that $d_k^{j}G \in \mathbb{B}_{\mathrm{dR},\mathcal{V}_x}(\mathcal{V}_x)\llbracket q_{\mathrm{dR}}-1\rrbracket$ is a generalized $p$-adic modular form of weight $k + 2j$ follows from Definition \ref{padicMSdefinition}. Corollary \ref{weightcorollary} implies that $d_k^{j}G$ satisfies (\ref{Ftransformationidentity}) with $k + 2j$ in place of $k$ for all $\gamma \in \Gamma_{1,p}(p^{n(\epsilon_0/p^{\alpha})})$.  
 Applying Theorem \ref{pintegraltheorem} to $d_k^{j}G$, we get (\ref{integralthetaG}). \\

\textbf{(2)}: By Propositions \ref{prop1} and  \ref{prop4} we have that 
$$\mathbb{Z}_{\ge 0} \ni j \mapsto \theta_t(\theta_q(d_k^{j}G^{\flat}))|_{\mathcal{Y}^{\mathrm{Ig}}} \in \hat{\mathcal{O}}_{\mathcal{Y}^{\mathrm{Ig}}}^+(\mathcal{Y}^{\mathrm{Ig}})$$
is continuous and moreover satisfies the following uniform continuity: if $j \equiv j' \pmod{(p-1)p^{n-1}}$ then
$$\theta_t(\theta_q(d_k^{j}G^{\flat}))|_{\mathcal{Y}^{\mathrm{Ig}}}  \equiv  \theta_t(\theta_q(d_k^{j'}G^{\flat}))|_{\mathcal{Y}^{\mathrm{Ig}}} \pmod{p^n\hat{\mathcal{O}}_{\mathcal{Y}^{\mathrm{Ig}}}^+(\mathcal{Y}^{\mathrm{Ig}})}.$$
Since for all $j \in \mathbb{Z}_{\ge 0}$
$$\theta_t(\theta_q(d_k^{j}G^{\flat}))|_{\mathcal{Y}^{\mathrm{Ig}}(\epsilon_0/p^{\alpha})} \in \hat{\mathcal{O}}_{\mathcal{Y}^{\mathrm{Ig}}(\epsilon_0/p^{\alpha})}^+(\mathcal{Y}^{\mathrm{Ig}}(\epsilon_0/p^{\alpha})) \rightarrow \varinjlim_{0 < \epsilon < \epsilon_0/p^{\alpha}}\hat{\mathcal{O}}_{\mathcal{Y}^{\mathrm{Ig}}(\epsilon)}^+(\mathcal{Y}^{\mathrm{Ig}}(\epsilon)),$$
and by Corollary \ref{completioncorollary} we have that $\hat{\mathcal{O}}_{\mathcal{Y}^{\mathrm{Ig}}}^+(\mathcal{Y}^{\mathrm{Ig}})$ is the $p$-adic completion of the direct limit in the previous displayed expression, then for all $m \in \mathbb{Z}_{\gg \alpha}$ we have that 
$$\theta_t(\theta_q(d_k^{j}G^{\flat}))|_{\mathcal{Y}^{\mathrm{Ig}}(\epsilon_0/p^m)} \equiv \theta_t(\theta_q(d_k^{j'}G^{\flat}))|_{\mathcal{Y}^{\mathrm{Ig}}(\epsilon_0/p^m)} \pmod{p^n\hat{\mathcal{O}}_{\mathcal{Y}^{\mathrm{Ig}}(\epsilon_0/p^m)}^+(\mathcal{Y}^{\mathrm{Ig}}(\epsilon_0/p^m))}.$$
By Proposition \ref{prop2}, for all $m \in \mathbb{Z}_{\gg \alpha}$ the open subset $\hat{\mathcal{Y}}^{\mathrm{Ig}}(\epsilon_0/p^m)^+ \subset \hat{\mathcal{Y}}^{\mathrm{Ig}}(\epsilon_0/p^{\alpha})^+$ is dense. However, the locus in $\hat{\mathcal{Y}}^{\mathrm{Ig}}(\epsilon_0/p^{\alpha})^+$ defined by
$$\theta_t(\theta_q(d_k^{j}G^{\flat})) \equiv \theta_t(\theta_q(d_k^{j'}G^{\flat})) \pmod{p^n}$$
is pro-Zariski (and thus adically) closed. Since the dense open subset $\hat{\mathcal{Y}}^{\mathrm{Ig}}(\epsilon_0/p^m)^+$ is contained in this locus, we have that the locus is all of $\hat{\mathcal{Y}}^{\mathrm{Ig}}(\epsilon_0/p^{\alpha})^+$, i.e.
$$\theta_t(\theta_q(d_k^{j}G^{\flat}))|_{\mathcal{Y}^{\mathrm{Ig}}(\epsilon_0/p^{\alpha})} \equiv \theta_t(\theta_q(d_k^{j'}G^{\flat}))|_{\mathcal{Y}^{\mathrm{Ig}}(\epsilon_0/p^{\alpha})} \pmod{p^n\hat{\mathcal{O}}_{\mathcal{Y}^{\mathrm{Ig}}(\epsilon_0/p^{\alpha})}^+(\mathcal{Y}^{\mathrm{Ig}}(\epsilon_0/p^{\alpha}))}.$$
This implies that
(\ref{integralthetaGcontinuous}) is continuous. The continuous extension to $\mathbb{Z}/(p-1) \times \mathbb{Z}_p$ assertion now immediately follows. 

\end{proof}

\section{Anticyclotomic $p$-adic $L$-functions for $GL_1/K$}\label{padicLfunctionsection}We continue to work under Convention \ref{Yconvention}.

\begin{assumption}\label{pramifiedassumption}For Sections \ref{padicLfunctionsection} and \ref{padicLfunctionsection2}, let $K/\mathbb{Q}$ be an imaginary quadratic field of class number 1. Hence $K = \mathbb{Q}(i)$ or $\mathbb{Q}(\sqrt{-p})$ for $p = 2, 3, 7, 11, 19, 43, 67$ or $163$. Let $p$ be the unique finite prime which ramifies in $K/\mathbb{Q}$, and let $\frak{p}$ be the unique prime of $\mathcal{O}_K$ above $p$. Thus $\frak{p}^2 = p\mathcal{O}_K$. 
\end{assumption}

\subsection{Notation}Henceforth, let $D_K \in \mathbb{Z}_{< 0}$ denote the fundamental discriminant of $K$. Given any ideal $\frak{n} \subset \mathcal{O}_K$, let $\mathcal{C}\ell(\frak{n})$ denote the ray class group of $K$ of modulus $\frak{n}$. 

\begin{definition}
Let
\begin{equation}\label{ebddefinition}\begin{split}\varepsilon := \begin{cases} 
1 & p = 2 \; \text{and} \; K = \mathbb{Q}(\sqrt{-2})\\
3 & p = 2 \; \text{and} \; K = \mathbb{Q}(i)\\
2 & p = 3\\
1 & p > 3
\end{cases},& \hspace{1cm} \beta := \begin{cases} 3 & p = 2 \; \text{and} \; K = \mathbb{Q}(\sqrt{-2})\\
2 & p = 2 \; \text{or} \; 3, \; \text{and} \;  K \neq \mathbb{Q}(\sqrt{-2}) \\
1 & p > 3\\
\end{cases},\\
&q := \begin{cases} 4 & p = 2\\
p & p > 2
\end{cases}.
\end{split}
\end{equation}
\end{definition}
\begin{remark}The constant $\varepsilon$ is not to be confused with the parameter $\epsilon$ first introduced in Section \ref{formalShimurasection}. 
\end{remark}

Note that the $p$-adic logarithm also induces a non-canonical isomorphism
\begin{equation}\label{logarithmequal}1 + q\mathbb{Z}_p \underset{\sim}{\xrightarrow{\log}}\mathbb{Z}_p.
\end{equation}

\begin{definition}\label{OKpfacts}\begin{enumerate}
\item Let
$$\Gamma := 1+\frak{p}^{\varepsilon}\mathcal{O}_{K_p}.$$
If $K \neq \mathbb{Q}(\sqrt{-2})$, then 
\begin{equation}\label{firstGamma}\Gamma \underset{\sim}{\xrightarrow{\log}} \mathcal{O}_{K_p}
\end{equation}
as $\mathcal{O}_{K_p}$-modules, given by the $p$-adic logarithm $\log$ (one can check that $\log$ is injective, and that $\Gamma$ is torsion-free). If $K = \mathbb{Q}(\sqrt{-2})$ (and thus $p = 2$), then
\begin{equation}\label{secondGamma}\Gamma \cong (1+\pi)^{\mathbb{Z}_p} \times \{\pm 1\} \times (1+q)^{\mathbb{Z}_p} \cong \{\pm 1\} \times \mathbb{Z}_p^{\oplus 2}
\end{equation}
as $\mathbb{Z}_p$-modules, for any uniformizer $\pi$ of $\mathcal{O}_{K_p}$. 

\item Let $\Delta = (\mathcal{O}_{K_p}^{\times})_{\mathrm{tors}}$ be the torsion part of $\mathcal{O}_{K_p}^{\times}$, which is equal to the group of roots of unity in $\mathcal{O}_{K_p}$:
$$\mu(\mathcal{O}_{K_p}) = \begin{cases} \mu_{p-1} & p > 3\\
\mu_6 & p = 3\\
\mu_4 & p = 2 \; \text{and} \; K = \mathbb{Q}(i)\\
\mu_2 & p = 2 \; \text{and} \; K = \mathbb{Q}(\sqrt{-2})\\
\end{cases}.$$
Let $\Delta[p^{\infty}]$ denote the $p$-primary part of $\Delta$. 

\end{enumerate}
\end{definition}




Given any algebraic Hecke character $\varrho : K^{\times}\backslash \mathbb{A}_K^{\times} \rightarrow \mathbb{C}^{\times}$, let $\frak{f}(\varrho) \subset \mathcal{O}_K$ denote its (exact) conductor. For the rest of this section, let 
$$\lambda : \mathbb{A}_K^{\times}/K^{\times} \rightarrow \mathbb{C}^{\times}$$
denote a fixed algebraic Hecke character of infinity type $(1,0)$. (See \cite[Chapter II.1]{deShalit} for definitions of these notions and background on algebraic Hecke characters.) Let 
$$\frak{f} = \frak{f}(\lambda).$$
In our main application, $\lambda$ will be the Hecke character $\lambda_E$ associated to an elliptic curve $E/\mathbb{Q}$ with CM by $\mathcal{O}_K$, or a twist thereof. Given any fractional ideal $\frak{n}$ of $\mathcal{O}_K$, let $\frak{n}^{(p)}$ denote the prime-to-$p$ part of $\frak{n}$.

\begin{convention}\label{avatarconvention}We will often conflate notation and also let $\lambda$ denote the $p$-adic avatar of $\lambda$. No confusion should arise as the target of $\lambda$ will be obvious from context.  
\end{convention}

\begin{assumption}\label{pconductorassumption}Henceforth, assume the following for $\lambda$, $\frak{f} = \frak{f}(\lambda)$ and $K$:
\begin{enumerate} 
\item Assume that 
$$\overline{\frak{f}} = \frak{f}.$$
Since $p$ is assumed to be ramified, this is equivalent to the equality on prime-to-$p$ parts
$$\overline{\frak{f}}^{(p)} = \frak{f}^{(p)}.$$
Since $p$ is the only finite prime ramified in $K/\mathbb{Q}$, all primes dividing $\frak{f}^{(p)}$ are inert or split in $K/\mathbb{Q}$. Since moreover $\frak{f}^{(p)} = \overline{\frak{f}}^{(p)}$, then $\frak{f}^{(p)} = f_0\mathcal{O}_K$ for a unique positive integer $f_0$. We moreover assume that $f_0 \ge 4$.\footnote{This assumption allows us to apply the main results of Sections \ref{Ysection}, \ref{ShimuraCurveSection} and \ref{zdRqdRqdRexpsection} when $f_0|N$, which we will soon assume (Assumption \ref{Nf0assumption}). Recall $N \ge 4$ was assumed in the aforementioned sections so that the Shimura curve $Y = Y(\Gamma(N))$ represents a fine moduli space, and also to employ the results of \cite{Buzzard}. (See Convention \ref{Yconvention}.) This assumption on $N$ can very likely be removed throughout the paper, for example by working with moduli stacks instead of moduli spaces.}
\item Letting $\mathbb{N}_L : \mathbb{A}_L^{\times}/L^{\times} \rightarrow \mathbb{C}^{\times}$ denote the norm character of a number field $L/\mathbb{Q}$ and letting
$$\eta := \lambda|_{\mathbb{A}_{\mathbb{Q}}^{\times}}\mathbb{N}_{\mathbb{Q}}^{-1}$$
denote the central character of $\lambda$, assume that $\eta$ is the quadratic character attached to $K/\mathbb{Q}$, i.e. $\eta$ is the Jacobi symbol attached to the fundamental discriminant $D_K \in\mathbb{Z}_{< 0}$ of $K$, i.e.
\begin{equation}\label{etaK}\eta = \left(\frac{D_K}{\cdot}\right).
\end{equation}
As we will work under the assumption (\ref{etaK}) for the rest of the paper, we will henceforth let $\eta$ denote $\left(\frac{D_K}{\cdot}\right)$ synonymously. In particular, $f(\eta) = |D_K| = p^{\beta}$ where $f(\eta)$ is the conductor of $\eta$.
\item Assume that there is some elliptic curve $E/\mathbb{Q}$ with CM by $\mathcal{O}_K$ with associated infinity type $(1,0)$ Hecke character $\lambda_E$ such that $\frak{f}(\lambda_E)|\frak{f}$. 
\end{enumerate}
\end{assumption}

\begin{example}\label{Eexample}Let $E/\mathbb{Q}$ be an elliptic curve with CM by $\mathcal{O}_K$, and let $\lambda_E : K^{\times}\backslash\mathbb{A}_K^{\times} \rightarrow \mathbb{C}^{\times}$ be the infinity type $(1,0)$ Hecke character attached to $E$, i.e. with $L(E/\mathbb{Q},s) = L(\lambda_E,s)$. Then 
\begin{equation}\label{centralcharacter}\lambda_E|_{\mathbb{A}_{\mathbb{Q}}^{\times}}\mathbb{N}_{\mathbb{Q}}^{-1} = \left(\frac{D_K}{\cdot}\right) 
\end{equation}
and $\lambda_E(\overline{x}) = \overline{\lambda}_E(x)$, where $x \mapsto \overline{x}$ denotes complex conjugation $\mathbb{A}_K^{\times} \xrightarrow{\sim} \mathbb{A}_K^{\times}$. Thus $\overline{\frak{f}(\lambda_E)} = \frak{f}(\lambda_E)$ and $\frak{f}(\lambda_E)^{(p)} = f_0\cdot \mathcal{O}_K$ for some $f_0 \in \mathbb{Z}_{> 0}$. Assume $f_0 \ge 4$; this further assumption is satisfied for all but finitely many isomorphism classes of $E/\mathbb{Q}$. Then 
$$\lambda = \lambda_E$$
satisfies Assumption \ref{pconductorassumption}. 


\end{example}


\begin{definition}
\begin{enumerate}
\item In the notation of Assumption \ref{pconductorassumption}, let
\begin{equation}\label{edefinition}e := 2 \cdot \mathrm{ord}_p(\frak{f}) = \mathrm{ord}_{\frak{p}}(\frak{f}).
\end{equation}
In other words, $\frak{p}^e$ is the exact power of $\frak{p}$ dividing $\frak{f} = \frak{f}(\lambda)$. Let 
\begin{equation}\label{adefinition}a := \left\lceil \frac{e}{2}\right\rceil.
\end{equation}
\item Consider the character $\lambda|_{\mathcal{O}_{K_{\frak{p}}}^{\times}}$, which by (\ref{ebddefinition}) factors through $\lambda|_{\mathcal{O}_{K_{\frak{p}}}^{\times}} : (\mathcal{O}_{K_{\frak{p}}}/\frak{p}^e)^{\times} \rightarrow \overline{\mathbb{Z}}^{\times}$. By Assumption \ref{pconductorassumption} (2), we have that 
$$\lambda|_{\mathcal{O}_{K_{\frak{p}}}^{\times} \cap \mathbb{A}_{\mathbb{Q}}^{\times}} = \eta|_{\mathbb{Z}_p^{\times}} : (\mathbb{Z}_p/(\mathbb{Z}_p \cap \frak{p}^e\mathcal{O}_{K_{\frak{p}}}))^{\times} = (\mathbb{Z}_p/p^{\lceil e/2\rceil}\mathbb{Z}_p)^{\times} \rightarrow \overline{\mathbb{Z}}^{\times}.$$
Since $\eta|_{\mathbb{Z}_p^{\times}} : \mathbb{Z}_p^{\times} \rightarrow \overline{\mathbb{Z}}^{\times}$ factors through $\mathbb{Z}_p^{\times} \twoheadrightarrow (\mathbb{Z}_p/p^{\beta}\mathbb{Z}_p)^{\times} \rightarrow \overline{\mathbb{Z}}^{\times}$, we thus have
\begin{equation}\label{betadelta}\beta \le \left\lceil \frac{e}{2}\right\rceil = a.
\end{equation}
\end{enumerate}
\end{definition}


\subsection{Supersingular CM points and their associated periods}\label{CMpointsection}
Let $D/\mathbb{Q}$ be a quaternion algebra \emph{split at $p$ and $\infty$}, i.e. $D \otimes_{\mathbb{Q}} \mathbb{Q}_p \cong M_2(\mathbb{Q}_p)$ and $D \otimes_{\mathbb{Q}} \mathbb{R} \cong M_2(\mathbb{R})$. Given a congruence subgroup $\Gamma \subset (\mathcal{O}_D \otimes_{\mathbb{Z}}\hat{\mathbb{Z}})^{\times}$ (Definition \ref{congruencesubgroups}), let $Y(\Gamma)$ be as in Section \ref{adicShimurasection}. Recall the notation of Convention \ref{Yconvention}, so that $Y = Y(\Gamma(N))$ where $(N,p) = 1$ and $N$ is coprime with the discriminant of $D$.

\begin{choice}Recall that $D_K \in \mathbb{Z}_{<0}$ is the fundamental discriminant of $K$. Henceforth, let
\begin{equation}\label{pichoice}\varpi := \frac{\frac{D_K}{\mathrm{ord}_p(q)} + \sqrt{D_K}}{2} \in \mathcal{O}_K.
\end{equation}
(Note that by (\ref{ebddefinition}) we have $\mathrm{ord}_p(q) = 2$ if $p = 2$ and $\mathrm{ord}_p(q) = 1$ if $p > 2$.) Then $\varpi \in \frak{p}$ and is a uniformizer of $\mathcal{O}_{K_p}$ and we have 
$$\mathcal{O}_{K_p} = \mathbb{Z}_p + \mathbb{Z}_p\varpi.$$
Viewing $\varpi \in \mathcal{H}^+ = \{\tau \in \mathbb{C} : \mathrm{Im}(\tau) > 0\}$, let
\begin{equation}\label{tau0}\tau_0 := \frac{1}{\varpi} \in \mathcal{H}^+.
\end{equation}
In particular as 
$$\varpi = \begin{cases} -1 + i & p = 2, K = \mathbb{Q}(i)\\
-2+\sqrt{-2} & p = 2, K = \mathbb{Q}(\sqrt{-2})\\
-\frac{p+1}{2} + \frac{1 +\sqrt{D_K}}{2} & p > 2\\
\end{cases},$$
by Assumption \ref{pramifiedassumption} we have
\begin{equation}\label{OKtrivialize}\mathcal{O}_K = \mathbb{Z} + \mathbb{Z}\varpi.
\end{equation}
\end{choice}

Recall our fixed maximal order $\mathcal{O}_D$ (Choice \ref{maximalorderchoice}). Pick an embedding
$$\iota_{\infty} : D \hookrightarrow M_2(\mathbb{R}).$$
Fix an embedding
\begin{equation}\label{Kembeddings} \rho : K \hookrightarrow D
\end{equation}
such that $\rho(\mathcal{O}_K) \subset \mathcal{O}_D$ and for any $\alpha \in K$, 
\begin{equation}\label{Kembeddingscondition}\rho(\alpha)_{\infty}\left(\begin{array}{ccc} \tau_0 \\
1\\
\end{array}\right) = \left(\begin{array}{ccc} \alpha \tau_0 \\
\alpha\\
\end{array}\right).
\end{equation}
Here $\rho(\alpha)_{\infty} = i_{\infty}(\rho(\alpha))$. The previous equation implies
\begin{equation}\label{fixpoint}\rho(\alpha)_{\infty}\cdot \tau_0 = \tau_0
\end{equation}
where ``$\cdot$'' denotes the left modular action
\begin{equation}\label{leftmodularaction}\left(\begin{array}{ccc} a & b\\
c & d\\
\end{array}\right)\cdot \tau = \frac{a\tau + b}{c\tau + d}, \hspace{1cm} \left(\begin{array}{ccc} a & b\\
c & d\\
\end{array}\right) \in GL_2(\mathbb{R}).
\end{equation}
(An embedding (\ref{Kembeddings}) satisfying the above is often called normalized.)
Then (\ref{Kembeddings}) induces an embedding
\begin{equation}\label{Kembeddingsadele}\rho : \mathbb{A}_K \hookrightarrow D(\mathbb{A}_{\mathbb{Q}}).
\end{equation}

Let 
$$\Gamma(N) \subset D^{\times}(\mathbb{A}_{\mathbb{Q}}^{(\infty)})$$
be as in Definition \ref{congruencesubgroups}. Henceforth, let 
$$Y = Y(\Gamma(N))$$
as in Convention \ref{Yconvention}. For the remainder of Section \ref{padicLfunctionsection}, we will operate under the following Assumption regarding $N, f_0$ and $D$.

\begin{assumption}\label{Nf0assumption}In the setting of Convention \ref{Yconvention}, we further assume that $N \in \mathbb{Z}_{\ge 4}$, $(N,p) = 1$ is such that $f_0|N$ where $f_0 \ge 4$ is as in Assumption \ref{pconductorassumption} (1). In particular, the quaternion algebra $D/\mathbb{Q}$ is split at all places dividing $pN\infty$.  
\end{assumption}



Let us work with a general open compact $\Gamma' \subset D^{\times}(\mathbb{A}_{\mathbb{Q}}^{(\infty)})$ briefly and recall the algebraic Shimura curve $\mathbb{Y}(\Gamma')$ over $\mathbb{Q}$ from Section \ref{algebraicYsection}. Recall the double coset description (\ref{doublequotient})
$$\mathbb{Y}(\Gamma')(\mathbb{C}) = D^{\times} \backslash \mathcal{H}^{\pm} \times D^{\times}(\mathbb{A}_{\mathbb{Q}}^{(\infty)})/\Gamma'.$$
Denote an element of the double quotient appearing above by $[z,\beta]$, where $z \in \mathcal{H}^+$ and $\beta \in D^{\times}(\mathbb{A}_{\mathbb{Q}}^{(\infty)})$ and $(z,\beta)$ is a double coset representative of $[z,\beta]$. 
For any supernatural number $M$, consider the principal congruence subgroup $\Gamma(M)$ from Definition \ref{congruencesubgroups}. Then (\ref{Kembeddings}) induces
\begin{equation}\label{classgroupembedding}\mathcal{C}\ell(M) = K^{\times}\backslash \mathbb{A}_K^{\times,(\infty)}/(1+M\hat{\mathcal{O}}_K) \hookrightarrow D^{\times}\backslash \mathcal{H}^{\pm} \times D^{\times}(\mathbb{A}_{\mathbb{Q}}^{(\infty)})/\Gamma(M) = \mathbb{Y}(\Gamma(M))(\mathbb{C}), \hspace{.6cm} [\frak{b}] \mapsto [\tau_0,\frak{b}]
\end{equation}
where $\hat{\mathcal{O}}_K = \mathcal{O}_K \otimes_{\mathbb{Z}}\hat{\mathbb{Z}}$ and $[\frak{b}] \in \mathcal{C}\ell(M)$ is an ideal class with representative $\frak{b} \in \mathbb{A}_K^{\times,(\infty)} \overset{(\ref{Kembeddingsadele})}{\hookrightarrow} D^{\times}(\mathbb{A}_{\mathbb{Q}}^{(\infty)})$. Given $\frak{b} \in \mathbb{A}_K^{\times,(\infty)}$, let $\sigma_{\frak{b}} \in \mathrm{Gal}(K^{\mathrm{ab}}/K)$ denote the (arithmetically normalized) Artin symbol (recalling here that $K^{\mathrm{ab}}$ denotes the maximal abelian extension of $K$). By Shimura's reciprocity law (\cite[Theorems 6.31 and 6.38]{Shimura}) we have $[\tau_0,1] \in \mathbb{Y}(\Gamma(M))(K^{\mathrm{ab}})$ and 
\begin{equation}\label{Shimuralaw}[\tau_0,1]^{\sigma_{\frak{b}}} = [\tau_0,\frak{b}] = [\tau_0,1] \cdot \frak{b}
\end{equation}
where the right action of $\frak{b}$ is given through right multiplication by $\mathbb{A}_K^{\times,(\infty)} \overset{(\ref{Kembeddingsadele})}{\hookrightarrow} D^{\times}(\mathbb{A}_{\mathbb{Q}}^{(\infty)})$ and (\ref{classgroupembedding}).

Recall Convention \ref{Yconvention}. In particular, $Y(\epsilon)$, $Y^+(\epsilon)$, etc. are as in Sections \ref{adicShimurasection} and \ref{Hassenbhdsection}. Let $\mathcal{Y}^{\mathrm{Ig}}(\epsilon)$ be as in (\ref{U}) and $\mathcal{Y}^{\mathrm{Ig}}(\epsilon)^+$ be as in (\ref{Uplus}). 

\begin{definition}\label{Ofdefinition}For any integer $f$, let $\mathcal{O}_f = \mathbb{Z} + f\mathcal{O}_K \subset \mathcal{O}_K$ denote the order of conductor $f$. Let $\mathcal{O}_{f,p} = \mathcal{O}_f \otimes_{\mathbb{Z}}\mathbb{Z}_p$. 
\end{definition}


\begin{choice}\label{choice}When $D = M_2(\mathbb{Q})$, let $A$ be an elliptic curve defined over $\mathcal{O}_K$ (recall $K$ has class number 1) with
$$A(\mathbb{C}) \cong \mathbb{C}/(\mathbb{Z}\tau_0 + \mathbb{Z}).$$
When $D \neq M_2(\mathbb{Q})$, let $A$ be an false elliptic curve defined over $\mathcal{O}_K$ with 
$$A(\mathbb{C}) \cong \mathbb{C}^{\oplus 2}/\left(\iota_{\infty}(\mathcal{O}_D)\cdot \left(\begin{array}{ccc} \tau_0\\
1\\
\end{array}\right)\right)$$
(this is the false elliptic curve $A_{\tau_0}$ of \cite[Section 2.5]{Brooks}). By the theory of complex multiplication (\cite[Chapter II.1.4]{deShalit}), $A$ has CM by $\mathcal{O}_K$ and is defined up to unique isomorphism over $\mathcal{O}_K$ (since $K$ has class number 1). Suppose $D = M_2(\mathbb{Q})$. Then there is some $\Omega \in \mathbb{C}^{\times}$ such that $\Omega(\mathbb{Z}\tau_0 + \mathbb{Z})$ is the period lattice of $A$. The point $[\tau_0,1] \in Y(\mathbb{C})$ is induced by the algebraic point 
$$(A,\frac{\Omega \tau_0}{N},\frac{\Omega}{N}) \in \mathbb{Y}(K(N)),$$
noting that $(\frac{\Omega\tau_0}{N},\frac{\Omega}{N})$ is a $\Gamma(N)$-level structure under $A(\mathbb{C}) = \mathbb{C}/(\Omega(\mathbb{Z}\tau_0+\mathbb{Z}))$. 

\end{choice}

\begin{remark}Note that in the totally split case $D = M_2(\mathbb{Q})$ one can take the choice $A = E$, where $E$ is the CM elliptic curve as in Assumption \ref{pconductorassumption} (3). We will make this particularly convenient choice later in Choice \ref{AEchoice}. 
\end{remark}

Given $m \in \mathbb{Z}_{> 0}$, let $K[m]$ denote the ring class field of conductor $m$. Thus (since $K$ has class number 1), 
$$\mathrm{Gal}(K[m]/K) \cong \mathcal{C}\ell(1)[m] \cong (\mathcal{O}_K/m\mathcal{O}_K)^{\times}/((\mathbb{Z}/m)^{\times}\mathcal{O}_K^{\times})$$
where $\mathcal{C}\ell(1)[m]$ is the ring class group of $K$ of conductor $m$. From the theory of complex multiplication and by examining formal groups, one sees that $A[\varpi]$ lifts the kernel of $p$-power Frobenius on $A$ modulo $\varpi\mathcal{O}_{K[f_0']}$ and that the leading coefficient of the isogeny of formal groups $\hat{A} \rightarrow \hat{A}/\hat{A}[\varpi]$ has $p$-adic absolute value $p^{-1/2}$. Thus (cf. \cite[Chapter 3]{Katzpamf})
$$|\mathrm{Ha}(A)| = p^{-1}/p^{-1/2} = p^{-1/2}.$$
Thus, $A$ has a canonical subgroup of order $p$. 

Recall that we will often suppress the $\Gamma = \Gamma(N)$-level structure from notation as per Convention \ref{tameconvention}.

\begin{choice}\label{CMpointchoice}
\begin{enumerate}
\item Henceforth fix $\epsilon_0 > 0$ in the valuation group of $k$ (see Definition \ref{kdefinition}) with
\begin{equation}\label{rchoice}1/(p+1) \le \epsilon_0  < p/(p+1).
\end{equation}
Thus $\epsilon_0$ satisfies the assumptions of Definition \ref{Udefinition} and moreover satisfies Assumption \ref{nepsilonassumption}:
$$n(\epsilon_0) \overset{(\ref{nepsilondefinition})}{=} 1.$$
\item Let $\varpi \in \mathcal{O}_K$ be as in (\ref{pichoice}). Choose a trivialization
$$(e_1,e_2) : \mathbb{Z}^{\oplus 2} \xrightarrow{\sim} e^1H_1(A(\mathbb{C}),\mathbb{Z})$$
(where $H_1$ denotes singular homology and $e^1 = \left(\begin{array}{ccc} 1 & 0\\
0 & 0\\
\end{array}\right)$ is as in Convention \ref{idempotentconvention}) such that the corresponding trivialization (induced by taking the dual of the Artin comparison isomorphism between singular and \'{e}tale cohomology)
$$(e_1,e_2) : \mathbb{Z}_p^{\oplus 2} \xrightarrow{\sim} T_pA$$
(see Convention \ref{idempotentconvention} (2) for the definition of $T_pA$) satisfies
\begin{equation}\label{erelation}[\varpi]_A(e_2) = e_1,
\end{equation}
where $[\cdot ]_A : \mathcal{O}_K \xrightarrow{\sim} \mathrm{End}(A/K)$ is the CM action. 
\item In particular, in the notation of (\ref{eindefinition}), we have that $e_{1,n} \in A[p^n]$ is a primitive $p^n/\varpi$-torsion point and $e_{2,n} \in A[p^n]$ is a primitive $p^n$-torsion point. 
Also, let $P$ be the $\Gamma = \Gamma(N)$-level structure on $A$ from Choice \ref{choice} (which we continue to suppress per Convention \ref{tameconvention}). 
\item Recall $\mathbb{Y}_{\infty}(\Gamma)$ from (\ref{algebraicYinftyGamma}) as well as the notation from Convention \ref{Yconvention} and let
$$y := (A,e_1,e_2) \in \mathbb{Y}_{\infty}(\Gamma)(K^{\mathrm{ab}}) \subset Y_{\infty}(\overline{\mathbb{Q}}_p,\overline{\mathbb{Z}}_p).$$

\item Recall $1/(p+1) \le \epsilon_0 < p/(p+1)$ (see (\ref{rchoice})). Since $\mathbb{Z}_p \cdot e_{1,1}  = A[\pi]$ by (\ref{erelation}) (see (\ref{eindefinition}) for the definition of the notation $e_{1,1}$), which is the canonical subgroup of $A$, then by Definition \ref{plusdefinitions} we have 
\begin{equation}\label{YIgin1} y \in \mathcal{Y}^{\mathrm{Ig}}(\epsilon_0)(\overline{\mathbb{Q}}_p,\overline{\mathbb{Z}}_p).
\end{equation}

\item Define
\begin{equation}\label{pointyn}y_n := y \cdot g^{-n} = y \cdot \left(\begin{array}{ccc} 1 & 0 \\
0 & 1/p^n\\
\end{array}\right) \in \mathbb{Y}(\Gamma(p^{\infty}))(K^{\mathrm{ab}}) \subset Y_{\infty}(\overline{\mathbb{Q}}_p,\overline{\mathbb{Z}}_p)
\end{equation}
and write 
$$y_n = (A_n,e_1^n,e_2^n).$$
where $(e_1^n,e_2^n) : \mathbb{Z}_p^{\oplus 2} \xrightarrow{\sim} T_pA_n$. Then $A_n$ has CM by the order $\mathcal{O}_{p^n}$, and from (\ref{pointyn}) and (\ref{leftmodularaction}) one has
\begin{equation}\label{An'complex}A_n(\mathbb{C}) \cong \begin{cases} \mathbb{C}/\left(\mathbb{Z}\frac{\tau_0}{p^n} + \mathbb{Z} \right) & D = M_2(\mathbb{Q})\\
\mathbb{C}/\left(\iota_{\infty}(\mathcal{O}_D)\cdot \left(\begin{array}{ccc} \frac{\tau_0}{p^n}\\
1\\
\end{array}\right)\right) & D \neq M_2(\mathbb{Q})\\
\end{cases},
\end{equation}
which is the false elliptic curve $A_{\tau_0/p^n}$ in \cite[Section 2.5]{Brooks}. 
\end{enumerate}

\end{choice}

\begin{remark}Note that since $p \ge 2$, one can always take the choice $\epsilon_0 = 1/2$ in (\ref{rchoice}). Later in Choice \ref{rchoice2} we will take this choice to simplify certain arguments that follow. 
\end{remark}


\begin{definition}\label{mixedclassgroupdefinition}
\begin{enumerate}
\item Recall we assumed that $K$ has class number 1. 
 For any integer $m \in \mathbb{Z}_{\ge 1}$ and any $\frak{f}' \subset \mathcal{O}_K$ with $(\frak{f}',m) = 1$, let
$$\mathcal{C}\ell(\frak{f}')[m] = \frac{(\mathcal{O}_K/\frak{f}')^{\times} \times (\mathcal{O}_K/m)^{\times}/(\mathbb{Z}/m)^{\times}}{\mathcal{O}_K^{\times}},$$
where $\mathcal{O}_K^{\times}$ maps diagonally to the product in the numerator. This is the mixed ring/ray class group of conductor $m$ and modulus $\frak{f}'$. 

\item Suppose that $m = p^{a+n}m'$, $(m',p) = 1$, where $a$ is as in (\ref{adefinition}) and $n \in \mathbb{Z}_{\ge 0}$. From (\ref{adefinition}) and (\ref{betadelta}) we see that the natural map $\mathcal{O}_K^{\times} \rightarrow (\mathcal{O}_K/p^{a+n})^{\times}$ is injective and so we have
$$\mathcal{C}\ell(\frak{f}')[p^{a+n}m'] \cong (\mathcal{O}_K/\frak{f}')^{\times} \times \left((\mathcal{O}_K/p^{a+n})^{\times}/((\mathbb{Z}/p^{a+n})^{\times}\mathcal{O}_K^{\times})\right)\times \left((\mathcal{O}_K/m')^{\times}/(\mathbb{Z}/m')^{\times}\right).$$
\end{enumerate}
\end{definition}

\begin{definition}\label{Sdefinition}
Recall $\frak{f}$ from Assumption \ref{pconductorassumption}, and that $\frak{f}^{(p)}$ is its prime-to-$p$ part. Fix a set of everywhere integral id\`{e}les
$$S \subset \mathbb{A}_K^{\times,(pN\infty)} \overset{\rho}{\subset} D^{\times}(\mathbb{A}_{\mathbb{Q}}^{(\infty)})$$
(i.e. $S$ is a set of finite id\`{e}les which are prime to $pN$) and such that $S$ is a full set of representatives of $\mathcal{C}\ell(\frak{f}^{(p)})[p^a]$ under the map
$$\mathbb{A}_K^{\times,(\infty)} \twoheadrightarrow K^{\times}\backslash \mathbb{A}_K^{\times,(\infty)}/\left((\mathbb{Z}_p + p^a\mathcal{O}_{K_{\frak{p}}})^{\times}\cdot \prod_{v\nmid \frak{p}\infty}(1+\frak{f}^{(p)}\mathcal{O}_{K_v})\right) \cong \mathcal{C}\ell(\frak{f}^{(p)})[p^a].$$
For simplicity, we will further assume that 
$$1 \in S$$
(which thus represents the trivial class in $\mathcal{C}\ell(\frak{f}^{(p)})[p^a]$). 
\end{definition}

\begin{choice}\label{choice2}
\begin{enumerate}
\item Given $\frak{b} \in S$ (see Definition \ref{Sdefinition}), let $\sigma_{\frak{b}} \in \mathrm{Gal}(K^{\mathrm{ab}}/K)$ denote the (arithmetically normalized) Artin symbol of $\frak{b}$. Letting $E$, $\lambda_E$ be as in Assumption \ref{pconductorassumption} (3), define
\begin{equation}\label{YIgin}y_{\frak{b}} := y \cdot \frak{b} \overset{(\ref{Shimuralaw})}{=} y^{\sigma_{\frak{b}}}\in \mathbb{Y}(\Gamma(p^{\infty}))(K^{\mathrm{ab}}) \subset Y_{\infty}(\overline{\mathbb{Q}}_p,\overline{\mathbb{Z}}_p).
\end{equation}
For $\frak{b} = 1$, we have $y_1 = y$ by Definition \ref{Sdefinition}. Since the id\`{e}le $\frak{b}$ has trivial component at $\frak{p}$ in $\mathcal{O}_{K_p}^{\times}$, the $p$-component 
$$\rho(\frak{b})_p \in \mathcal{O}_{K_p}^{\times} \overset{\rho}{\hookrightarrow} GL_2(\mathbb{Q}_p).$$ 

\item For any $\frak{b} \in S$, let $A_{\frak{b}} = A/A[\frak{b}]$ be the elliptic curve or false elliptic curve underlying $y_{\frak{b}}$.  Let 
\begin{equation}\label{pib}\pi_{\frak{b}} : A\rightarrow A_{\frak{b}}
\end{equation}
be the natural projection; this is equal to the isogeny underlying the right action of $\frak{b} \in S$. 

\item Since $\frak{b}$ is prime to $p$, we have 
$$y_{\frak{b}} \in \mathcal{Y}^{\mathrm{Ig}}(\epsilon_0)$$
from (\ref{YIgin1}), which by $\mathcal{Y}^{\mathrm{Ig}}(\epsilon_0/p^n) = \mathcal{Y}^{\mathrm{Ig}}(\epsilon_0) \cdot g^{-n}$ (from (\ref{gisomorphism})) implies 
\begin{equation}\label{YIgin2}y_{\frak{b},n} := y_{\frak{b}} \cdot g^{-n} \overset{(\ref{gdefinition})}{=} y_{\frak{b}} \cdot \left(\begin{array}{ccc} 1 & 0\\
0 & 1/p^n\\
\end{array}\right) \in \mathcal{Y}^{\mathrm{Ig}}(\epsilon_0/p^n)(\overline{\mathbb{Q}}_p,\overline{\mathbb{Z}}_p).
\end{equation}
When $\frak{b} = 1$, we let $y_n = y_{\frak{b},n}$. 


\item Let $A_{\frak{b},n}$ be the elliptic curve or false elliptic curve underlying $y_{\frak{b},n}$. When $\frak{b} = 1$, let $A_n = A_{\frak{b},n}$. Let 
\begin{equation}\label{pin2}\pi_{\frak{b},n} : A_{\frak{b},n} \rightarrow A_{\frak{b}}
\end{equation}
denote the natural isogeny given by division by the order-$p^n$ canonical subgroup. 

\end{enumerate}
\end{choice}


\begin{choice}
\begin{enumerate}
\item Let $A$ be the CM (false) elliptic curve as in Choice \ref{CMpointchoice}; then $A$ has a model $A/\overline{\mathbb{Q}}$. We henceforth fix a \emph{generating} invariant differential
\begin{equation}\label{fixdifferential}w_0(A) \in \Omega_{A/\overline{\mathbb{Q}}}.
\end{equation}
\item For any $\frak{b} \in S$, let
\begin{equation}\label{fixdifferentialb}w_0(A_{\frak{b}}) \in \Omega_{A_{\frak{b}}/\overline{\mathbb{Q}}}
\end{equation}
be the unique differential such that for the isogeny $\pi_{\frak{b}} : A \rightarrow A_{\frak{b}}$ from (\ref{pib}),
\begin{equation}\label{bpullback}\pi_{\frak{b}}^*w_0(A_{\frak{b}}) = w_0(A).
\end{equation}

\item Recall the isogeny $\pi_{\frak{b},n} : A_{\frak{b},n} \rightarrow A_{\frak{b}}$ from (\ref{pin2}), and let $\pi_{\frak{b},n}^{\vee}: A_{\frak{b}} \rightarrow A_{\frak{b},n}$ be its dual isogeny. Both $\pi_{\frak{b},n}$ and $\pi_{\frak{b},n}^{\vee}$ are cyclic degree $p^n$ isogenies. Define 
\begin{equation}\label{pullbacknormalization}w_0(A_{\frak{b},n}) := \frac{1}{p^n}\pi_{\frak{b},n}^*w_0(A_{\frak{b}}) \in \Omega_{A_{\frak{b},n}'/\overline{\mathbb{Q}}},
\end{equation}
so that 
$$(\pi_{\frak{b},n}^{\vee})^*w_0(A_{\frak{b},n}) = w_0(A_{\frak{b}}).$$
\end{enumerate}
\end{choice}





\begin{definition}\label{finalperiodsdefinition}Define the periods
$$\Omega_p(n) := \Omega_p(y_n) \in \mathbb{C}_p^{\times}, \hspace{1cm} \Omega_{\infty}(n) := \Omega_{\infty}(y_n) \in \mathbb{C}^{\times}$$
to be those from (\ref{defineomegaperiods}) defined with respect to the point 
$$y_n = (A_n,e_1^n,e_2^n) := (A,e_1,e_2) \cdot g^{-n}$$
from (\ref{YIgin2}) and the differential $w_0(A_n)$ from (\ref{pullbacknormalization}) (with $\frak{b} = 1 \in S$). 
\end{definition}

Consider the equations (\ref{defineomegaperiods}) for the point $y_{\frak{b},n}$ and the differential $w_0(A_{\frak{b},n})$:
\begin{equation}\label{diffeq1}\tilde{w}_{\mathrm{can}}(y_{\frak{b},n}) = \Omega_p(y_{\frak{b},n}) \cdot w_0(A_{\frak{b},n}), \hspace{1cm} (2\pi idz)(y_{\frak{b},n}) = \Omega_{\infty}(y_{\frak{b},n}) \cdot w_0(A_{\frak{b},n}).
\end{equation}
Since $\frak{b}$ is prime to $pN$ we have, by \cite[II.1.5 (15)]{deShalit},
\begin{equation}\label{compareeis}y_{\frak{b},n} = (A_{\frak{b},n},\pi_{n,\frak{b}}(e_1),\pi_{n,\frak{b}}(e_2)) = \pi_{n,\frak{b}}(y_n)
\end{equation}
where 
$$\pi_{n,\frak{b}} : A_n \rightarrow A_n/A_n[\frak{b}] \cong A_{\frak{b},n}$$
is the natural projection.
From (\ref{compareeis}), the fact that the Hodge filtration (\ref{Hodgefiltrationinclusion}) is preserved by isogenies, and the fact that $z_{\mathrm{dR}}$ measures the position of the Hodge filtration (\ref{iwcan}), we have 
$$z_{\mathrm{dR}}(y_{\frak{b},n}) = z_{\mathrm{dR}}(y_n).$$
Thus by (\ref{iwcan}) we have
$$\pi_{\frak{b},n}^*(w_{\mathrm{can}}(y_{\frak{b},n})) = w_{\mathrm{can}}(y_n).$$
Therefore, by (\ref{tildewcan}) we have
$$\pi_{\frak{b},n}^*(\tilde{w}_{\mathrm{can}}(y_{\frak{b},n})) = \tilde{w}_{\mathrm{can}}(y_n).$$
Similarly, $\pi_{\frak{b},n} : A_n(\mathbb{C}) = \mathbb{C}/\mathcal{O}_{p^n} \rightarrow \mathbb{C}/\frak{b}^{-1}\mathcal{O}_{p^n} = A_{\frak{b},n}(\mathbb{C})$ is the natural projection, and so we have
$$\pi_{\frak{b},n}^*((2\pi idz)(y_{\frak{b},n})) = (2\pi idz)(y_n).$$
Thus, applying $\pi_{\frak{b},n}^*$ to (\ref{diffeq1}) and using (\ref{bpullback}), we get
$$\tilde{w}_{\mathrm{can}}(y_n) = \Omega_p(y_{\frak{b},n}) \cdot w_0(A_n), \hspace{1cm} (2\pi idz)(y_n) = \Omega_{\infty}(y_{\frak{b},n}) \cdot w_0(A_n).$$
Thus the equations (\ref{defineomegaperiods}) for the point $y_n$ and the differential $w_0(A_n)$ imply:
\begin{equation}\label{Omegaynequal}\Omega_p(y_{\frak{b},n}) = \Omega_p(n), \hspace{1cm} \Omega_{\infty}(y_{\frak{b},n}) = \Omega_{\infty}(n)
\end{equation}
for all $\frak{b} \in S$.

\subsection{Form of the fundamental power series}Our $p$-adic $L$-functions will be constructed through considering $p$-adic Maass-Shimura deriviatves of a certain ``fundamental power series'' in $q_{\mathrm{dR}}-1$ coming from $q_{\mathrm{dR}}$-expansions of Eisenstein series at CM points. Let $\lambda$ be the $(1,0)$ Hecke over $K$ satisfying Assumption \ref{pconductorassumption}. Recall the full set of representatives $S \subset \mathbb{A}_K^{\times,(\infty)}$ of $\mathcal{C}\ell(\frak{f}^{(p)})[p^a]$ from Definition \ref{Sdefinition}. We will ultimately define the fundamental power series 
$$\mathcal{L}_{\lambda} \in \mathcal{O}_{\mathbb{C}_p} + (q_{\mathrm{dR}}-1)\mathbb{C}_p\llbracket q_{\mathrm{dR}}-1\rrbracket$$
as a sum
\begin{equation}\label{sum1}\mathcal{L}_{\lambda} = \sum_{\frak{b} \in S}\lambda^{-1}(\frak{b})\cdot\mu_{\frak{b}},
\end{equation}
where each $\mu_{\frak{b}} \in \mathcal{O}_{\mathbb{C}_p} + (q_{\mathrm{dR}}-1) \mathbb{C}_p\llbracket q_{\mathrm{dR}}-1\rrbracket$ is the $q_{\mathrm{dR}}$-expansion of an appropriate weight 1 Eisenstein series (see Section \ref{Eisensteinsection}) specialized to an appropriate CM point $y_{\frak{b},a}$ (see (\ref{YIgin2}) with $n = a$). 

\subsection{Eisenstein series}\label{Eisensteinsection}

Assume that the quaternion algebra $D = M_2(\mathbb{Q})$ in this section so that in particular $Y$ is a modular curve and $\mathcal{E} \rightarrow Y$ is the universal elliptic curve with $\Gamma = \Gamma(N)$-level structure (see Convention \ref{Yconvention}). Let $L \subset \mathbb{C}$ be a rank 2 $\mathbb{Z}$-lattice. For $k \in \mathbb{Z}_{\ge 1}$, let $E_k(z,L)$ be the Eisenstein series defined in \cite[II.3.1 (5)]{deShalit} (for $k = 1, 2$, one defines $E_k(z,L)$ as $\left(-\frac{d}{dz}\right)^k\Theta(z,L)$ where $\Theta(z,L)$ is defined as in Chapter II.2.1 (8) of op. cit.). Recall the quadratic character $\eta$ attached to the imaginary quadratic field $K$, which has conductor $|D_K| = p^{\beta}$. 

\begin{definition}[Weight 1 Eisenstein series]\label{Eisensteindefinition}Recall $\eta = \lambda|_{\mathbb{A}_{\mathbb{Q}}^{\times}}\mathbb{N}_{\mathbb{Q}}^{-1}$ is the central character of $\lambda$ (see Assumption \ref{pconductorassumption} (2)). 

Let 
\begin{equation}\label{bracketpdefinition}[p] := \left(\begin{array}{ccc} p & 0\\
0 & p\\
\end{array}\right) \in GL_2(\mathbb{Q})
\end{equation}
and $[p^{-1}] = [p]^{-1}$. This is compatible with the notation of $[p]$ from \cite[Section 3.8]{BDP}. Recall 
$$V_p := \left(\begin{array}{ccc} 1 & 0\\
0 & p \\
\end{array}\right) \in GL_2(\mathbb{Q})$$
from Definition \ref{flatdefinition}. 

\begin{enumerate}
\item Let 
$$L = \mathbb{Z}\tau + \mathbb{Z}$$
for $\tau \in \mathcal{H}^+$. Write
$$E_{1,\eta}(z;L) = \sum_{i = 0}^{p^a-1}\eta(i)\cdot E_1(iz,L) = \sum_{i = 0, (i,p) = 1}^{p^a-1}\eta(i)\cdot E_1(iz,L)$$
recalling for the second equality that $\eta(i) = 0$ if $(i,p) \neq 1$ by convention since $\eta$ has conductor $p^a$. Let
$$\mathbf{w}_{1,\eta}^{\mathrm{an}} = E_{1,\eta}(z;L)\cdot dz.$$

\item Let $\pi : \mathcal{E}|_{\mathbb{Y}(\Gamma_1(p^a))} \rightarrow \mathbb{Y}(\Gamma_1(p^a))$ denote the universal object (i.e. universal elliptic curve with $\Gamma_1(p^a)$-level structure). By GAGA and the $q$-expansion principle (\cite[(2.4.2)-(2.4.3)]{KatzImQuad}), $\mathbf{w}_{1,\eta}^{\mathrm{an}}$ is the complex analytification of an algebraic section 
$$\mathbf{w}_{1,\eta} \in \mathbf{\Gamma}(\Omega_{\mathcal{E}|_{\mathbb{Y}(\Gamma_1(p^a))}/\mathbb{Y}(\Gamma_1(p^a))}),$$
where $\mathcal{E}|_{\mathbb{Y}(\Gamma_1(p^a))} \rightarrow \mathbb{Y}(\Gamma_1(p^a))$ is the universal object (i.e. universal elliptic curve with $\Gamma_1(p^a)$-level structure).
\item Let $e_{2,a} : \mathbb{Y}(\Gamma_1(p^a)) \rightarrow \mathcal{E}|_{\mathbb{Y}(\Gamma_1(p^a))}[p^a]$ be the second element of the $\Gamma(p^a)$-level structure as in (\ref{eindefinition}), and consider 
\begin{equation}\label{tildew1eta}\tilde{w}_{1,\eta} := e_{2,a}^*\mathbf{w}_{1,\eta} \in \mathbf{\Gamma}(\omega_{\mathbb{Y}(\Gamma_1(p^a))})
\end{equation}
where 
$$\omega_{\mathbb{Y}(\Gamma_1(p^a))} := \pi_*\Omega_{\mathcal{E}|_{\mathbb{Y}(\Gamma_1(p^a))}/\mathbb{Y}(\Gamma_1(p^a))}.$$
In fact, letting $\overline{\mathbb{Y}(\Gamma_1(p^a))}$ denote the compactification of $\mathbb{Y}(\Gamma_1(p^a))$, it is known (\cite[p. 1043]{BDP}) that $\omega_{\mathbb{Y}(\Gamma_1(p^a))}$ extends to a line bundle $\omega_{\overline{\mathbb{Y}(\Gamma_1(p^a))}}$ on $\overline{\mathbb{Y}(\Gamma_1(p^a))}$ and that $w_{1,\eta}$ extends to a section
\begin{equation}\label{tildewintegral0}\tilde{w}_{1,\eta} \in \mathbf{\Gamma}(\omega_{\overline{\mathbb{Y}(\Gamma_1(p^a))}})
\end{equation}
(\cite[(2.5), Chapter III]{KatzImQuad}). Thus $\tilde{w}_{1,\eta}$ has a well-defined $q$-expansion at the cusp $\infty$, which we denote by $\tilde{w}_{1,\eta}(q)$. 

\item By a direct calculation on $q$-expansions using (\ref{tildew1eta}) (see \cite[Chapter 1.1]{BCDDPR}), we see that 
$$\tilde{w}_{1,\eta}(q) \in \mathbb{Z}_p\llbracket q\rrbracket.$$
Thus, by the $q$-expansion principle (\cite[2.2.5]{KatzImQuad}) we see that 
\begin{equation}\label{tildewintegral}\tilde{w}_{1,\eta} \in \mathbf{\Gamma}(\omega_{Y(\Gamma_1(p^a))^+}).
\end{equation}
Here, $\pi_+: \mathcal{E}^+|_{Y(\Gamma_1(p^a))^+} \rightarrow Y(\Gamma_1(p^a))^+$ is the universal object (universal elliptic curve with $\Gamma_1(p^a)$-Drinfeld level structure) and 
$$\omega_{Y(\Gamma_1(p^a))^+} := \pi_{+,*}\Omega_{\mathcal{E}^+|_{Y(\Gamma_1(p^a))^+}/Y(\Gamma_1(p^a))^+}$$
is the Hodge bundle on $Y(\Gamma_1(p^a))^+$ (cf. (\ref{omegaY})). 

\item From the $q$-expansion $\tilde{w}_{1,\eta}(q)$ (\cite[Section 1.1]{BCDDPR}), we also see that $\tilde{w}_{1,\eta}$ is a $U$-eigenvector of $U$-eigenvalue 1, where $U = [p]U_p$ is the Hecke operator from Definition \ref{flatdefinition}. Thus
\begin{equation}\label{Ueigenvalue1}U^*\tilde{w}_{1,\eta}|_{Y_{\infty}} = \tilde{w}_{1,\eta}|_{Y_{\infty}}.
\end{equation}

\item Let $\mathbb{Y}_{\infty}(\Gamma)$ be as in (\ref{algebraicYinftyGamma}) for $\Gamma = \Gamma(N)$ as in Convention \ref{Yconvention}. Similarly, recalling that $V_p = g$ as in Definition \ref{UVdefinition}, and recalling that the Hecke operator $[p]$ (\ref{bracketpdefinition}) corresponding to multiplication by $p$ commutes with both $U_p$ and $V_p$, we define 
\begin{equation}\label{tildewflat}\begin{split}\tilde{w}_{1,\eta}^{\flat} &:= (1 - V_p^*U_p^*)\tilde{w}_{1,\eta}|_{\mathbb{Y}_{\infty}(\Gamma)} = (1-V_p^*[p^{-1}]^*([p]U_p)^*)\tilde{w}_{1,\eta} = (1-V_p^*[p^{-1}]^*U^*)\tilde{w}_{1,\eta} \\
&\overset{(\ref{Ueigenvalue1})}{=} (1-(g[p^{-1}])^*)(g^a)^*w_{1,\eta}|_{\mathbb{Y}_{\infty}(\Gamma)} = ((g^a)^*-[p^{-1}]^*(g^{a+1})^*)w_{1,\eta}|_{\mathbb{Y}_{\infty}(\Gamma)}\\
&\hspace{8cm}\in \omega_{\mathbb{Y}(\Gamma_1(p^{a+1}))}(\mathbb{Y}_{\infty}(\Gamma)) \subset \omega(Y_{\infty})
\end{split}
\end{equation}
as in Definition \ref{flatdefinition}, where $\omega$ is as in (\ref{omegaY}). Then (\ref{tildewintegral0}), (\ref{tildewintegral}) and (\ref{tildewflat}) show
\begin{equation}\label{tildewintegral2}\tilde{w}_{1,\eta}^{\flat}  \in \omega_{\mathbb{Y}(\Gamma_1(p^a))}(\mathbb{Y}_{\infty}(\Gamma)) \cap \omega_+(Y_{\infty}^+)
\end{equation}
where $\omega_+$ is as in (\ref{omegaY}). (Here the above intersections are taken in $\omega(Y_{\infty})$.)

\item Recall $g \in GL_2(\mathbb{Q}_p)$ from (\ref{gdefinition}), and recall that $GL_2(\mathbb{Q}_p)$ defines a right action on $\mathbb{Y}_{\infty}(\Gamma)$. Define
\begin{equation}\label{qexpansionrelate}\begin{split}w_{1,\eta} &:= (g^a)^*(\tilde{w}_{1,\eta}|_{Y_{\infty}}) \overset{(\ref{tildewintegral0}), (\ref{tildewintegral})}{\in} \omega_{\mathbb{Y}(\Gamma_1(p^a))}(\mathbb{Y}_{\infty}(\Gamma))\cap \omega_+(Y_{\infty}^+), \\
w_{1,\eta}^{\flat} &:= (g^a)^*(\tilde{w}_{1,\eta}^{\flat}|_{Y_{\infty}}) \overset{(\ref{tildewintegral2})}{\in} \omega_{\mathbb{Y}(\Gamma_1(p^a))}(\mathbb{Y}_{\infty}(\Gamma))\cap \omega_+(Y_{\infty}^+).
\end{split}
\end{equation}
\end{enumerate}
\end{definition}

\begin{remark}We caution the reader that, despite what the notation might suggest, $w_{1,\eta}^{\flat}$ is \emph{not} defined by applying Definition \ref{flatdefinition} to $w_{1,\eta}|_{Y_{\infty}}$, rather by the formula (\ref{qexpansionrelate}). 
\end{remark}

\begin{proposition}\label{wVproposition}$w_{1,\eta}, w_{1,\eta}^{\flat} \in \omega_+(Y_{\infty}^+)$ are pullbacks along $Y_{\infty}^+ \rightarrow Y(\Gamma_1(p^a))^+$ of sections (which we will denote also by $w_{1,\eta}$ and $w_{1,\eta}^{\flat}$)
\begin{equation}\label{qexpansionrelate2}w_{1,\eta} \in \mathbf{\Gamma}(\omega_{Y(\Gamma_1(p^a))}) \hspace{1cm} \text{and} \hspace{1cm} w_{1,\eta}^{\flat} \in \mathbf{\Gamma}(\omega_{Y(\Gamma_1(p^{a+1}))}),
\end{equation}
and thus from (\ref{qexpansionrelate}) we have 
\begin{equation}\label{qexpansionrelate3}w_{1,\eta} \in \mathbf{\Gamma}(\omega_{Y(\Gamma_1(p^a))}) \cap \omega_+(Y_{\infty}^+) \hspace{1cm} \text{and} \hspace{1cm} w_{1,\eta}^{\flat} \in \mathbf{\Gamma}(\omega_{Y(\Gamma_1(p^{a+1}))}) \cap \omega_+(Y_{\infty}^+).
\end{equation}
\end{proposition}

\begin{proof}The second inclusion of (\ref{qexpansionrelate2}) follows immediately from the first inclusion and (\ref{qexpansionrelate}). To prove the first inclusion, we will show that for any text object $(A,P,e_1,e_2) \in \mathbb{Y}_{\infty}(\Gamma)(R)$, $R$ a $k$-algebra, $P$ a $\Gamma = \Gamma(N)$-level structure over $R$ and $(e_1,e_2)$ a $\Gamma_1(p^a)$-level structure over $R$, we have that 
$$w_{1,\eta}(A,P,e_1,e_2) = w_{1,\eta}((A,P,e_1,e_2) \cdot \gamma)$$
for every $\gamma \in \Gamma_{1,p}(p^a) \subset GL_2(\mathbb{Z}_p)$ (where $\Gamma_{1,p}(p^a)$ is as in (\ref{Gamma1pdefinition})). Since, by Definition \ref{congruencesubgroups},
$$\Gamma_{1,p}(p^{\alpha}) \cdot \prod_{\ell \neq p}GL_2(\mathbb{Z}_{\ell}) = \Gamma_1(p^a),$$
this will give the assertion.

Write $\gamma = \left(\begin{array}{ccc} a' & b'\\
c' & d'\\
\end{array}\right) \in \Gamma_{1,p}(p^a)$. Thus 
\begin{equation}\label{cdcongruences}c' \equiv 0 \pmod{p^a}, \hspace{1cm} d' \equiv 1 \pmod{p^a}.
\end{equation}
This implies 
\begin{equation}\label{subgroupequality}\langle a'e_{1,a} + c'e_{2,a}\rangle = \langle e_{1,a}\rangle \subset A[p^n],
\end{equation}
where $\langle \cdot \rangle$ as in Definition \ref{YGamma+definition} (5) (where $e_{i,a}$ is as in (\ref{eindefinition})). Recall that 
$$w_{1,\eta} \overset{(\ref{qexpansionrelate})}{=} (g^a)^*\tilde{w}_{1,\eta} \overset{(\ref{tildew1eta})}{=} (g^a)^*e_{2,a}^*\mathbf{w}_{1,\eta}$$
where $e_{2,a} : \mathbb{Y}_{\infty}(\Gamma) \rightarrow \mathcal{E}|_{\mathbb{Y}_{\infty}(\Gamma)}$ is the order-$p^a$ torsion section given by the second piece $e_2$ of the $\Gamma(p^{\infty})$-level structure on the universal object $\mathcal{E}|_{\mathbb{Y}_{\infty}(\Gamma)} \rightarrow \mathbb{Y}_{\infty}(\Gamma)$. Thus it suffices to show that the values of $e_{2,a}$ at $(A,P,e_1,e_2) \cdot g^a$ and $(A,P,e_1,e_2)\cdot \gamma g^a$ are equal, i.e.
\begin{equation}\label{needtoshowe}e_{2,a}(A,P,e_1,e_2)\cdot g^a = e_{2,a}((A,P,e_1,e_2) \cdot \gamma g^a).
\end{equation}
By the moduli interpretation of the $GL_2(\mathbb{Q}_p)$-action (\cite[Section 2.3]{ChojeckiHansenJohansson}) we have that 
$$(A,P,e_1,e_2) \cdot \gamma = (A,P,a'e_1 + c'e_2, b'e_1 + d'e_2).$$
Thus, letting 
$$f : A \rightarrow A/\langle a'e_{1,a} + c'e_{2,a}\rangle \overset{(\ref{subgroupequality})}{=} A/\langle e_{1,a}\rangle$$
denote the natural projection and $f^{\vee}$ denote its dual isogeny, we have
$$(A,P,e_1,e_2) \cdot g^a = (A/\langle e_{1,a}\rangle,f(P),\frac{1}{p^a},f(e_1),f(e_2))$$
and
$$(A,P,e_1,e_2) \cdot \gamma g^a = (A,P,a'e_1 + c'e_2,b'e_1 + d'e_2) \cdot g^a = (A/\langle e_{1,a}\rangle,f(P),\frac{1}{p^a}f(a'e_1 + c'e_2),f(b'e_1+d'e_2)).$$
Since 
\begin{equation}\label{fequality}f(b'e_1+d'e_2) \pmod{p^a} = f(b'e_{1,a} + d'e_{2,a}) = f(d'e_{2,a})\overset{(\ref{cdcongruences})}{=} f(e_{2,a}) = f(e_2) \pmod{p^a},
\end{equation}
we have 
\begin{align*}e_{2,a}((A,P,e_1,e_2) \cdot g^a) &= e_{2,a}(A,P,e_1,e_2) = (A,f(e_2) \pmod{p^a}) \overset{(\ref{fequality})}{=} (A,f(b'e_1+d'e_2) \pmod{p^a}) \\
&= e_{2,a}(A,P,\frac{1}{p^a}f(a'e_1 + c'e_2),f(c'e_1+d'e_2)) = e_{2,a}((A,P,e_1,e_2) \cdot \gamma g^a)
\end{align*}
which gives (\ref{needtoshowe}).

\end{proof}

\begin{definition}
\begin{enumerate}
\item Recall $\omega$ and $\omega_+$ from (\ref{omegaY}). Pulling back along $Y_{\infty}^+ \rightarrow Y(\Gamma_1(p^a))^+$, we get a section
$$w_{1,\eta} \in \omega_+(Y_{\infty}^+) \subset \omega(Y_{\infty}).$$
Let
$$w_{1,\eta}^{\flat} := (U_p^*V_p^* - V_p^*U_p^*)w_{1,\eta} \in \omega_+(Y_{\infty}^+) \subset \omega(Y_{\infty})$$
be defined as in Definition \ref{flatdefinition} for 
$$w' = w_{1,\eta} \in \omega_+(Y_{\infty}^+) \subset \omega(Y_{\infty}).$$
Note that since $U_p, V_p$ are in the local Hecke algebra at $p$ of $Y_{\infty}$, and $w_{1,\eta} \in \mathbf{\Gamma}(\omega_{Y(\Gamma_1(p^a))})$ from (\ref{qexpansionrelate2}), then in fact $w_{1,\eta}^{\flat}$ is the pullback along $Y_{\infty} \rightarrow Y(\Gamma_1(p^{a+1}))$ of a section
$$w_{1,\eta}^{\flat} \in \mathbf{\Gamma}(\omega_{Y(\Gamma_1(p^{a+1}))}).$$
\item For any $s \in \mathbb{Z}_{\ge 0}$, let
$$\pi_0(\mathbb{Y}(\Gamma_1(p^s))(\mathbb{C})^{\mathrm{an}})$$
denote the connected component set of the complex analytic space 
$$\mathbb{Y}(\Gamma_1(p^s))(\mathbb{C})^{\mathrm{an}}$$
attached to $\mathbb{Y}(\Gamma_1(p^s))(\mathbb{C})$. The complex analytic universal cover of each connected component of $\mathbb{Y}(\Gamma_1(p^s))(\mathbb{C})^{\mathrm{an}}$ is $\mathcal{H}^+ = \{\tau \in \mathbb{C} : \mathrm{Im}(\tau) > 0\}$. Define the complex analytic modular forms of weight 1
\begin{equation}\label{E1eta}E_{1,\eta} := \frac{w_{1,\eta}}{2\pi i dz} \in \prod_{\pi_0(\mathbb{Y}(\Gamma_1(p^a))(\mathbb{C})^{\mathrm{an}})}\mathcal{O}_{\mathcal{H}^+}(\mathcal{H}^+), \hspace{1cm} E_{1,\eta}^{\flat} := \frac{w_{1,\eta}^{\flat}}{2\pi i dz} \in \prod_{\pi_0(\mathbb{Y}(\Gamma_1(p^{a+1}))(\mathbb{C})^{\mathrm{an}})}\mathcal{O}_{\mathcal{H}^+}(\mathcal{H}^+).
\end{equation}
\end{enumerate}
\end{definition}

We will henceforth make frequent use of the following notation for certain open subsets $W \subset \mathcal{V}_x$ (see (\ref{Vz})), denoting the power series in $\hat{\mathcal{O}}_{W}(W)\llbracket q_{\mathrm{dR}}-1\rrbracket$ obtained from an element of $\hat{\mathcal{O}}_{W}(W)(\!(t)\!)\llbracket q_{\mathrm{dR}}-1\rrbracket$ (e.g. a $q_{\mathrm{dR}}$-expansion) by applying the $\hat{\mathcal{O}}_{W}(W)$-module homomorphism $\theta_t : \hat{\mathcal{O}}_{W}(W)(\!(t)\!) \rightarrow \hat{\mathcal{O}}_{W}(W)$ (see (\ref{thetat})) to the $\hat{\mathcal{O}}_{W}(W)(\!(t)\!)$-coefficients.

\begin{definition}\label{thetatpowerseriesdefinition}Let $W = U_m$ for some $m \in \mathbb{Z}$ (see (\ref{Um})) or $\mathcal{V}_x$ (see (\ref{Vz})). Thus in any case $W \subset \mathcal{V}_x$ is open. Recall $\theta_t : \hat{\mathcal{O}}_{W}(W)(\!(t)\!) \rightarrow \hat{\mathcal{O}}_{W}(W)$ from (\ref{thetat}). Given any element
$$F(q_{\mathrm{dR}}) \in \hat{\mathcal{O}}_{W}(W)(\!(t)\!)\llbracket q_{\mathrm{dR}}-1\rrbracket \overset{(\ref{BdecompositionUm})}{=} \mathbb{B}_{\mathrm{dR},W}(W)\llbracket q_{\mathrm{dR}}-1\rrbracket,$$
let 
$$\theta_t(F(q_{\mathrm{dR}})) \in \hat{\mathcal{O}}_{W}(W)\llbracket q_{\mathrm{dR}}-1\rrbracket$$
denote the power series obtained by applying $\theta_t$ to the coefficients of $F(q_{\mathrm{dR}})$, and fixing $q_{\mathrm{dR}}-1$. 
\end{definition}

\begin{definition}
Using (\ref{Gflatgeneraldefinition}), Definition \ref{'zqexpansions} and Definition \ref{thetatpowerseriesdefinition}, define
\begin{equation}\label{G1etadefinition}G_{1,\eta}^{\flat}(q_{\mathrm{dR}}) := \theta_t\left(w_{1,\eta}^{\flat}(q_{\mathrm{dR}})\right) \in \hat{\mathcal{O}}_{\mathcal{V}_x}(\mathcal{V}_x)\llbracket q_{\mathrm{dR}}-1\rrbracket,
\end{equation}
where $\mathcal{V}_x \subset Y_{\infty}$ is the open subset from (\ref{VxVy}). In other words, $G_{1,\eta}^{\flat}(q_{\mathrm{dR}})$ is the power series obtained by applying $\theta_t$ from Definition \ref{thetatpowerseriesdefinition} to the coefficients of 
$$w_{1,\eta}^{\flat}(q_{\mathrm{dR}}) \in \mathbb{B}_{\mathrm{dR},\mathcal{V}_x}(\mathcal{V}_x)\llbracket q_{\mathrm{dR}}-1\rrbracket  \overset{(\ref{Bdecomposition''})}{=}  \hat{\mathcal{O}}_{\mathcal{V}_x}(\!(t)\!)\llbracket q_{\mathrm{dR}}-1\rrbracket,$$
as defined in Definition \ref{thetatpowerseriesdefinition}. Note that by construction $w_{1,\eta}(q_{\mathrm{dR}})$ and $w_{1,\eta}^{\flat}(q_{\mathrm{dR}})$ are generalized $p$-adic modular forms of weight 1 in the sense of Definition \ref{generalizedpadicmodularformdefinition}.



\end{definition}

Note that $GL_2(\mathbb{Q})$ acts on the left of $\mathcal{H}^+$ via the modular action $\gamma \mapsto \gamma \cdot \tau$ from (\ref{leftmodularaction}). We then define a right action on $\mathcal{H}^+$ by 
$$\tau \cdot \gamma = \gamma^{\vee} \cdot \tau,$$
recalling $\gamma^{\vee} := \mathrm{det}(\gamma)\gamma^{-1}$. Thus we have a right action of $GL_2(\mathbb{Q})$ on
$$E_{1,\eta}\in \prod_{\pi_0(\mathbb{Y}(\Gamma_1(p^a))(\mathbb{C})^{\mathrm{an}})}\mathcal{O}_{\mathcal{H}^+}(\mathcal{H}^+)$$
via pullback, and acting on $w_{1,\eta} \in \omega(Y_{\infty})$ via the right $GL_2(\mathbb{Q}_p)$-action on $Y_{\infty}$. Then letting $V$ be the usual Atkin-Lehner $V$ operator (\cite[Section 3.7]{BDP}), we have $V = [p^{-1}]V_p$ as easily follows from the definitions of loc. cit. (see also Definition \ref{flatdefinition}). Recall $[p]$ from (\ref{bracketpdefinition}). 

\begin{proposition}\label{Vproposition}Then
$$E_{1,\eta}^{\flat} = (1-([p^{-1}]V_p)^*)E_{1,\eta}, \hspace{1cm} w_{1,\eta}^{\flat} = (1-([p^{-1}]V_p)^*)w_{1,\eta}.$$
\end{proposition}

\begin{proof}Recall (Definition \ref{flatdefinition})
$$E_{1,\eta}^{\flat} = (U_p^*V_p^* - V_p^*U_p^*)E_{1,\eta} = (1 - V_p^*U_p^*)E_{1,\eta}.$$
Note that $([p]U_p)^*E_{1,\eta} = E_{1,\eta}$, since $E_{1,\eta}$ is a $U$-eigenvector of $U = [p]U_p$-eigenvalue 1 (recall $U$ and $U_p$ are as in Definition \ref{flatdefinition}). Thus, since $[p]$ and $[p^{-1}]$ are in the center of $GL_2(\mathbb{Q}_p)$, 
$$(1-V_p^*U_p^*)E_{1,\eta} = (1- ([p^{-1}]V_p)^*([p]U_p)^*)E_{1,\eta} = (1-([p^{-1}]V_p)^*)E_{1,\eta}.$$
The proof of the identity for $w_{1,\eta}^{\flat}$ is identical.
\end{proof}


\subsection{The fundamental power series}

Recall the notation of Convention \ref{Gammaconvention}. By (\ref{qexpansionrelate3}) and Proposition \ref{wVproposition}, we have  
$$w_{1,\eta}^{\flat} \in \mathbf{\Gamma}(\omega_{Y(\Gamma_1(p^{a+1}))^+}) \cap \omega_+(Y_{\infty}^+) \subset \omega(Y_{\infty}).$$
In particular, we have
$$w_{1,\eta}^{\flat}|_{\mathcal{Y}^{\mathrm{Ig}}} \in \mathcal{O}_{\mathcal{Y}^{\mathrm{Ig}}(0)^+}(\mathcal{Y}^{\mathrm{Ig}}(0)^+).$$

Recall the open subset 
$$Y_1(a+1,\epsilon_0/p^a) \subset Y(\Gamma(N) \cap \Gamma_1(p^{a+1}))$$
from Convention \ref{Y1convention}. Observe that by pulling back $w_{1,\eta}^{\flat} \in \mathbf{\Gamma}(\omega_{Y(\Gamma_1(p^{a+1}))})$ along the map 
$$Y_1(a+1,\epsilon_0/p^a) \subset Y(\Gamma(N) \cap \Gamma_1(p^{a+1})) \rightarrow Y(\Gamma_1(p^{a+1}))$$
we can view $w_{1,\eta}^{\flat} \in \omega(Y_1(a+1,\epsilon_0/p^a))$. Thus by Remark \ref{assumptionsatisfiedremark2} and Proposition \ref{wVproposition}, $w_{1,\eta}^{\flat}$ satisfies the assumptions of Theorem \ref{pintegraltheorem2} with $\alpha = a$ and $k = 1$. Invoking Theorem \ref{pintegraltheorem2} (1) we get
\begin{equation}\label{wqdRexpansion}\theta_t(\theta_q(w_{1,\eta}^{\flat})) \in \hat{\mathcal{O}}_{\mathcal{Y}^{\mathrm{Ig}}(\epsilon_0/p^a)}^+(\mathcal{Y}^{\mathrm{Ig}}(\epsilon_0/p^a)).
\end{equation}
By (\ref{G1etadefinition}) and the fact that 
$$\theta_t \circ \theta_q(G_{1,\eta}^{\flat}(q_{\mathrm{dR}})) = \theta_q \circ \theta_t(G_{1,\eta}^{\flat}(q_{\mathrm{dR}}))$$
(see (\ref{thetaq}) and (\ref{thetat})), this implies
\begin{equation}\label{GqdRexpansion}\theta_q(G_{1,\eta}^{\flat}(q_{\mathrm{dR}}))  \in \mathcal{O}_{\mathcal{Y}^{\mathrm{Ig}}(\epsilon_0/p^a)}^+(\mathcal{Y}^{\mathrm{Ig}}(\epsilon_0/p^a)).
\end{equation}

\begin{definition}
Recall the point $y_{\frak{b},a} \in \mathcal{Y}^{\mathrm{Ig}}(\epsilon_0/p^a)(\overline{\mathbb{Q}}_p,\overline{\mathbb{Z}}_p)$ from (\ref{YIgin2}). Let 
$$G_{1,\eta}^{\flat}(y_{\frak{b,a}})(q_{\mathrm{dR}}) \in \mathbb{C}_p\llbracket q_{\mathrm{dR}}-1\rrbracket$$
denote the image of 
$$G_{1,\eta}^{\flat}(q_{\mathrm{dR}}) \in \hat{\mathcal{O}}_{\mathcal{V}_x}(\mathcal{V}_x)\llbracket q_{\mathrm{dR}}-1\rrbracket$$
under the specialization of coefficients 
$$\hat{\mathcal{O}}_{\mathcal{V}_x}(\mathcal{V}_x) \rightarrow \hat{\mathcal{O}}_{\mathcal{V}_x}(y_{\frak{b},a}) \subset \mathbb{C}_p.$$
Recall that $\theta_q(q_{\mathrm{dR}}) = 1$ (see (\ref{thetaq})), and so 
$$\theta_q(G_{1,\eta}^{\flat}(q_{\mathrm{dR}})) = G_{1,\eta}^{\flat}(q_{\mathrm{dR}})|_{q_{\mathrm{dR}} = 1}.$$
Thus the constant term of $G_{1,\eta}^{\flat}(y_{\frak{b},a})(q_{\mathrm{dR}})$ is 
$$G_{1,\eta}^{\flat}(y_{\frak{b},a})(q_{\mathrm{dR}})|_{q_{\mathrm{dR}} = 1} = \theta_q(G_{1,\eta}^{\flat}(y_{\frak{b},a})(q_{\mathrm{dR}})) = \theta_q(G_{1,\eta}^{\flat}(q_{\mathrm{dR}}))(y_{\frak{b},a}) \overset{(\ref{GqdRexpansion})}{\in} \mathcal{O}_{\mathbb{C}_p}.$$
Define
\begin{equation}\label{localmeasure}\mu_{\frak{b}} := G_{1,\eta}^{\flat}(y_{\frak{b},a})(q_{\mathrm{dR}}) \in \mathcal{O}_{\mathbb{C}_p} + (q_{\mathrm{dR}}-1)\mathbb{C}_p\llbracket q_{\mathrm{dR}}-1\rrbracket.
\end{equation}
Now following (\ref{sum1}) we define
\begin{equation}\label{globalmeasure}\mathcal{L}_{\lambda} := \sum_{\frak{b} \in S}\lambda^{-1}(\frak{b})\cdot \mu_{\frak{b}}\in \mathcal{O}_{\mathbb{C}_p} + (q_{\mathrm{dR}}-1)\mathbb{C}_p\llbracket q_{\mathrm{dR}} - 1\rrbracket.
\end{equation}
\end{definition}


\subsection{The $p$-adic $L$-function}

We will use the differential operator $d_k^j$ from (\ref{dkjformula2}) in order to construct ``derivatives'' of our fundamental power series, which will in turn be used to construct our $p$-adic $L$-function by $p$-adically varying $j$; $j$ will thus be the variable of the $p$-adic $L$-function. By invoking Theorem \ref{pintegraltheorem2}, we will show that the $p$-adic $L$-function varies continuously in $j \in \mathbb{Z}/(p-1) \times \mathbb{Z}_p$. 

Recalling Definition \ref{thetatpowerseriesdefinition}, define for $j \in \mathbb{Z}_{\ge 0}$
\begin{equation}\label{derivativeintegral}D_1^jG_{1,\eta}^{\flat}(q_{\mathrm{dR}}) := \theta_t\left(d_1^jw_{1,\eta}^{\flat}(q_{\mathrm{dR}})\right) \in \hat{\mathcal{O}}_{\mathcal{V}_x}(\mathcal{V}_x)\llbracket q_{\mathrm{dR}}-1\rrbracket.
\end{equation}
By Theorem \ref{pintegraltheorem2} (1) with $\alpha = a$ (as in (\ref{adefinition})) and $k = 1$ and Remark \ref{assumptionsatisfiedremark2} applied to $w_{1,\eta}^{\flat}$ (viewing $w_{1,\eta}^{\flat} \in \omega(Y(\Gamma_1(Np^a)))$ as in the discussion before (\ref{wqdRexpansion})), we have
\begin{equation}\label{derivativeintegral2}\theta_q(D_1^jG_{1,\eta}^{\flat}(q_{\mathrm{dR}})) = D_1^jG_{1,\eta}^{\flat}(q_{\mathrm{dR}})|_{q_{\mathrm{dR}} = 1} \in \hat{\mathcal{O}}_{\mathcal{Y}^{\mathrm{Ig}}(\epsilon_0/p^a)}^+(\mathcal{Y}^{\mathrm{Ig}}(\epsilon_0/p^a)). 
\end{equation}
Moreover, viewing $\mathbb{Z}_{\ge 0} \subset \mathbb{Z}/(p-1) \times \mathbb{Z}_p$ embedded diagonally, Theorem \ref{pintegraltheorem2} (3) implies that (\ref{derivativeintegral2}) extends to a continuous function 
$$\mathbb{Z}/(p-1) \times \mathbb{Z}_p \rightarrow \hat{\mathcal{O}}_{\mathcal{Y}^{\mathrm{Ig}}(\epsilon_0/p^a)}^+(\mathcal{Y}^{\mathrm{Ig}}(\epsilon_0/p^a)).$$

\begin{definition}For any $\frak{b} \in S$, let
\begin{equation}\label{derivativelocalmeasure}D_1^j\mu_{\frak{b}} = D_1^jG_{1,\eta}^{\flat}(y_{\frak{b},a})(q_{\mathrm{dR}}) \overset{(\ref{derivativeintegral}), (\ref{derivativeintegral2})}{\in} \mathcal{O}_{\mathbb{C}_p} + (q_{\mathrm{dR}}-1)\mathbb{C}_p\llbracket q_{\mathrm{dR}}-1\rrbracket,
\end{equation}
where $D_1^jG_{1,\eta}^{\flat}(y_{\frak{b},a})(q_{\mathrm{dR}})$ denotes the image of $D_1^jG_{1,\eta}^{\flat}(q_{\mathrm{dR}})$ under the specialization of coefficients
\begin{equation}\label{ybspecialize2}\hat{\mathcal{O}}_{\mathcal{Y}^{\mathrm{Ig}}(\epsilon_0/p^a)}(\mathcal{Y}^{\mathrm{Ig}}(\epsilon_0/p^a)) \rightarrow \hat{\mathcal{O}}_{\mathcal{Y}^{\mathrm{Ig}}(\epsilon_0)}(y_{\frak{b},a}) \subset \mathbb{C}_p,
\end{equation}
recalling $y_{\frak{b},a} \in \mathcal{Y}^{\mathrm{Ig}}(\epsilon_0/p^a)(\overline{\mathbb{Q}}_p,\overline{\mathbb{Z}}_p)$ from (\ref{YIgin2}). 
\end{definition}

Let $E/\mathbb{Q}$ be the elliptic curve with CM by $\mathcal{O}_K$ as in Assumption \ref{pconductorassumption} (3). Let $\lambda_E$ be its associated infinity type $(1,0)$ Hecke character, so that $\frak{f}(\lambda_E)|\frak{f}$. 

\begin{definition}For any $j \in \mathbb{Z}_{\ge 0}$, we define
\begin{equation}\label{globalmeasure2}D_1^j\mathcal{L}_{\lambda} = \sum_{\frak{b} \in S}(\lambda^{-1}\lambda_E^{-2j})(\frak{b})\cdot D_1^j\mu_{\frak{b}} \in \mathcal{O}_{\mathbb{C}_p} + (q_{\mathrm{dR}}-1)\mathbb{C}_p\llbracket q_{\mathrm{dR}}-1\rrbracket.
\end{equation}
Note that $D_1^j\mathcal{L}_{\lambda}$ implicitly depends on $E/\mathbb{Q}$. However, as the choice of $E/\mathbb{Q}$ will be obvious from context, we suppress it from notation. 

\end{definition}

Recall (e.g. from Definition \ref{OKpfacts} (2)) that
\begin{equation}\label{unitdecomposition}\mathcal{O}_{K_p}^{\times} = \begin{cases} \mathbb{Z}/(p-1) \times \mathbb{Z}_p^{\oplus 2} & p > 3\\
\mathbb{Z}/6 \times \mathbb{Z}_3^{\oplus 2} & K = \mathbb{Q}(\sqrt{-3})\\
\mathbb{Z}/4 \times \mathbb{Z}_2^{\oplus 2} & K = \mathbb{Q}(i)\\
\mathbb{Z}/2 \times \mathbb{Z}_2^{\oplus 2} & K = \mathbb{Q}(\sqrt{-2})\\
\end{cases}.
\end{equation}


We will need an elementary lemma demonstrating the continuity of $j \mapsto \lambda_E^j(\frak{b})$. Recall we embed $\mathbb{Z}_{\ge 0} \subset \mathbb{Z}/(p-1) \times \mathbb{Z}_p$ diagonally, and the image is dense with respect to the product of the discrete topology on the first factor and the $p$-adic topology on the second factor. 

\begin{lemma}\label{sequencelemma}For every $\frak{b} \in S$, the function $\mathbb{Z}_{\ge 0} \ni j \mapsto \lambda_E^j(\frak{b}) \in \mathcal{O}_{K_p}^{\times}$ extends to a continuous function $\mathbb{Z}/(p-1) \times \mathbb{Z}_p \rightarrow \mathcal{O}_{K_p}^{\times}$.

\end{lemma}

\begin{proof}Given a sequence $j_m \in \mathbb{Z}_{\ge 0}$ which tends to $j \in \mathbb{Z}/(p-1) \times \mathbb{Z}_p$. Thus for all $k \in \mathbb{Z}_{\ge 0}$, there exists $M \in \mathbb{Z}_{\ge 0}$ such that for all $m \ge M$, 
$$j-j_m \in \{0\} \times p^k\mathbb{Z}_p \subset \mathbb{Z}/(p-1) \times \mathbb{Z}_p.$$ Since $\lambda_E(\frak{b}) \in \mathcal{O}_{K_p}^{\times}$, (\ref{unitdecomposition}), (\ref{firstGamma}) and (\ref{secondGamma}) imply $\lambda_E^{j-j_m}(\frak{b}) \in 1 + p^{k-c}\cdot \mathbb{Z}_p^{\oplus 2}$ for some $c \in \mathbb{Z}_{\ge 0}$ independent of $k$. This gives the assertion.

\end{proof}

By Theorem \ref{pintegraltheorem2} (3), 
$$\mathbb{Z}_{\ge 0} \ni j \mapsto \theta_q(D_1^jG_{1,\eta}^{\flat}(q_{\mathrm{dR}})) = D_1^jG_{1,\eta}^{\flat}(q_{\mathrm{dR}})|_{q_{\mathrm{dR}} = 1} \in \hat{\mathcal{O}}_{\mathcal{Y}^{\mathrm{Ig}}(\epsilon_0/p^a)}^+(\mathcal{Y}^{\mathrm{Ig}}(\epsilon_0/p^a))$$
extends to a continuous function
\begin{equation}\label{intermediatecontinuous}\mathbb{Z}/(p-1) \times \mathbb{Z}_p \ni j \mapsto \theta_q(D_1^jG_{1,\eta}^{\flat}(q_{\mathrm{dR}})) = D_1^jG_{1,\eta}^{\flat}(q_{\mathrm{dR}})|_{q_{\mathrm{dR}} = 1} \in \hat{\mathcal{O}}_{\mathcal{Y}^{\mathrm{Ig}}(\epsilon_0/p^a)}^+(\mathcal{Y}^{\mathrm{Ig}}(\epsilon_0/p^a)).
\end{equation}
From Lemma \ref{sequencelemma}, we see that for all $\frak{b} \in S$,
$$\mathbb{Z}_{\ge 0} \ni j \mapsto \lambda_E^{-j}(\frak{b}) \in \mathcal{O}_{K_p}^{\times}$$
extends to a continuous function 
\begin{equation}\label{intermediatecontinuous2}\mathbb{Z}/(p-1)\times \mathbb{Z}_p \ni j \mapsto \lambda_E^{-j}(\frak{b}).
\end{equation}
By the continuity of (\ref{intermediatecontinuous2}) and the continuity of the specialization of (\ref{intermediatecontinuous}) along 
$$\hat{\mathcal{O}}_{\mathcal{Y}^{\mathrm{Ig}}(\epsilon_0/p^a)}^+(\mathcal{Y}^{\mathrm{Ig}}(\epsilon_0/p^a)) \rightarrow \hat{\mathcal{O}}_{\mathcal{Y}^{\mathrm{Ig}}(\epsilon_0/p^a)}^+(\mathcal{Y}^{\mathrm{Ig}}(\epsilon_0/p^a))(y_{\frak{b},a}) \subset \mathcal{O}_{\mathbb{C}_p}$$
for every $\frak{b} \in S$ (where $y_{\frak{b},a} \in \mathcal{Y}^{\mathrm{Ig}}(\epsilon_0/p^a)(\overline{\mathbb{Q}}_p,\overline{\mathbb{Z}}_p)$ is as in (\ref{YIgin2})), we see that
$$\mathbb{Z}_{\ge 0} \ni j \mapsto \theta_q(D_1^j\mathcal{L}_{\lambda}) = D_1^j\mathcal{L}_{\lambda}|_{q_{\mathrm{dR}} = 1} \in \mathcal{O}_{\mathbb{C}_p}$$
extends to a continuous function
\begin{equation}\label{Econtinuous}\mathbb{Z}/(p-1) \times \mathbb{Z}_p \ni j \mapsto \theta_q(D_1^j\mathcal{L}_{\lambda}) = D_1^j\mathcal{L}_{\lambda}|_{q_{\mathrm{dR}} = 1} \in \mathcal{O}_{\mathbb{C}_p}.
\end{equation}

\subsection{Anticyclotomic $GL_1/K$ $p$-adic $L$-functions}Let
\begin{equation}\label{cnotation}\varrho^c = \varrho \circ c
\end{equation}
for any Hecke character $\varrho$ over $K$, where $c : \mathbb{A}_K^{\times} \xrightarrow{\sim} \mathbb{A}_K^{\times}$ is complex conjugation. We will assume that the quaternion algebra $D = M_2(\mathbb{Q})$ for the rest of Section \ref{padicLfunctionsection} which in particular implies $Y$ is a modular curve (see Convention \ref{Yconvention}). 

\begin{theorem}\label{interpolation2}Suppose $\lambda$ satisfies Assumption \ref{pconductorassumption}. Consider the function
$$\mathbb{Z}/(p-1) \times \mathbb{Z}_p \ni j \mapsto D_1^j\mathcal{L}_{\lambda}|_{q_{\mathrm{dR}} = 1} \in \mathcal{O}_{\mathbb{C}_p}$$
from (\ref{Econtinuous}). Then we have the following interpolation property: there exists a constant $C_{\lambda} \in \overline{\mathbb{Q}}^{\times}$ depending only on $\lambda$ and the choice of $\varpi \in \mathcal{O}_K$ made in (\ref{pichoice}), such that for any $j \in \mathbb{Z}_{\ge 0} \subset \mathbb{Z}/(p-1) \times \mathbb{Z}_p$ (embedded diagonally), we have the following equality of elements of $\overline{\mathbb{Q}}$:
\begin{equation}\label{interp1twist}\Omega_p(a)^{1+2j}\cdot (D_1^j\mathcal{L}_{\lambda})|_{q_{\mathrm{dR}} = 1} = \Omega_{\infty}(a)^{1+2j}p^{aj}\varpi^{2j}\cdot \frac{(2\pi)^jj!}{\sqrt{|D_K|^j}}\cdot C_{\lambda}\cdot L((\lambda_E^c/\lambda_E)^j\lambda^{-1},0)\\.
\end{equation}
The quantities $\Omega_p(a) \in \mathbb{C}_p^{\times}$ and $\Omega_{\infty}(a) \in \mathbb{C}^{\times}$ are as in Definition \ref{finalperiodsdefinition}, and where $a \in \mathbb{Z}_{\ge 0}$ is as in (\ref{adefinition}). 
\end{theorem}

\begin{remark}By Assumption \ref{pconductorassumption} (2), we have that $\frak{p}|\frak{f}$ which implies $\frak{p}|\frak{f}((\lambda_E^c/\lambda_E)^j\lambda^{-1})$. Thus the Euler factor of $L((\lambda_E^c/\lambda_E)^j\lambda^{-1},0)$ at $\frak{p}$ is 1, and there is no Euler factor to remove as in \cite[Theorem II.4.14]{deShalit}. 
\end{remark}

\begin{remark}[Analysis of the interpolation formula]\label{interpolationcomparisonremark}We make a few remarks comparing the interpolation formula (\ref{interp1twist}) with the interpolation formula of \cite[Theorem II.4.14]{deShalit}, in particular explaining the appearance of the factor $p^{aj}\varpi^{2j}$ on the right-hand side of (\ref{interp1twist}).

\begin{enumerate}
\item The analogue of the complex period $\Omega \in \mathbb{C}^{\times}$ defined in \cite[(5.1.16)]{BDP}, which we here call $\Omega_{\infty} \in \mathbb{C}^{\times}$, is defined by the relation
\begin{equation}\label{period1}w_0 = \Omega_{\infty}\cdot (2 \pi i dz)(\mathbb{C}/(\mathbb{Z} + \mathbb{Z}\varpi p^a))
\end{equation}
on $\mathbb{C}/(\mathbb{Z}+ \mathbb{Z}\varpi p^a)$ (see loc. cit.); here $(2\pi idz)(\mathbb{C}/(\mathbb{Z} + \mathbb{Z}\varpi p^a))$ is the unique invariant differential on $\mathbb{C}/(\mathbb{Z} + \mathbb{Z}\varpi p^a)$ with
$$\int_{e_2}(2\pi i dz)(\mathbb{C}/(\mathbb{Z} + \mathbb{Z}\varpi p^a)) = 1,$$
where $(e_1,e_2) : \mathbb{Z}^{\oplus 2} \xrightarrow{\sim} H_1(\mathbb{C}/(\mathbb{Z}+ \mathbb{Z}\varpi p^a ),\mathbb{Z})$ is the basis $(1,\varpi p^a)$ of the period lattice $\mathbb{Z}\varpi p^a + \mathbb{Z}$. (Recall here that $H_1$ denotes singular homology.)

\item On the other hand, $\Omega_{\infty}(a) \in \mathbb{C}^{\times}$ in Theorem \ref{interpolation2} arises by the relation
\begin{equation}\label{period2}\Omega_{\infty}(a) \cdot w_0 = (2\pi idz)(\mathbb{C}/(\mathbb{Z}\tau_0/p^a + \mathbb{Z}))
\end{equation}
on $\mathbb{C}/(\mathbb{Z}\tau_0/p^a + \mathbb{Z}) \overset{(\ref{tau0})}{=} \mathbb{C}/(\mathbb{Z}(1/(\varpi p^a)) + \mathbb{Z})$ (note that $\Omega_{\infty}(a)$ is on the opposite side of $\Omega_{\infty}$ in (\ref{period1})); here $(2\pi idz)(\mathbb{C}/(\mathbb{Z}\tau_0/p^a + \mathbb{Z}))$ is the differential with
$$\int_{e_2'}(2\pi idz)(\mathbb{C}/(\mathbb{Z}\tau_0/p^a + \mathbb{Z})) = 1,
$$
where $(e_1',e_2') :  \mathbb{Z}^{\oplus 2} \xrightarrow{\sim} H_1(\mathbb{C}/(\mathbb{Z}\tau_0/p^a + \mathbb{Z}),\mathbb{Z})$ is the basis $(\tau_0/p^a,1)$ of the period lattice $\mathbb{Z}\tau_0/p^a + \mathbb{Z}$. 
\item Thus 
$$(2 \pi idz)(\mathbb{C}/(\mathbb{Z}\tau_0/p^a + \mathbb{Z})) = \frac{(2\pi idz)(\mathbb{C}/(\mathbb{Z}\varpi p^a + \mathbb{Z}))}{p^a\varpi},$$
and so (\ref{period1}) and (\ref{period2}) imply
$$\Omega_{\infty}^{-1} = \Omega_{\infty}(a)\cdot p^a\varpi.$$
Thus we have 
\begin{equation}\label{Omegainftys}\Omega_{\infty}^{-(1+2j)} = \Omega_{\infty}(a)^{1+2j} \cdot (p^a\varpi)^{1+2j}.
\end{equation}
\item Note that 
$$\mathrm{vol}(\mathcal{O}_{p^a}) \overset{(\ref{OKtrivialize}),\; \text{Definition} \; \ref{Ofdefinition}}{=} \mathrm{vol}(\mathbb{Z} + \mathbb{Z}\varpi p^a) = p^a\mathrm{Im}(\varpi) \overset{(\ref{pichoice})}{=} p^a\frac{\sqrt{|D_K|}}{2}.$$
Thus the term $\Omega_{\infty}(a)^{1+2j}p^{aj}\varpi^{2j}$ appearing on the right-hand side of (\ref{interp1twist}) can be rewritten as
\begin{align*}\Omega_{\infty}(a)^{1+2j}p^{aj}\varpi^{2j} \overset{(\ref{Omegainftys})}{=} \Omega_{\infty}^{-(1+2j)}\cdot p^{-a(1+j)}\varpi^{-1} &=  \Omega_{\infty}^{-(1+2j)} \cdot \mathrm{vol}(\mathcal{O}_{p^a})^{-j}\cdot \left(\frac{\sqrt{|D_K|}}{2}\right)^jp^{-a}\varpi^{-1}.
\end{align*}
\item Therefore we can rewrite (\ref{interp1twist}) as
\begin{equation}\label{interp1twistalternate}\Omega_p(a)^{1+2j}\cdot (D_1^j\mathcal{L}_{\lambda})|_{q_{\mathrm{dR}} = 1} = \Omega_{\infty}^{-(1+2j)}\cdot \mathrm{vol}(\mathcal{O}_{p^a})^{-j}\pi^jj!p^{-a}\varpi^{-1}C_{\lambda}\cdot L((\lambda_E^c/\lambda_E)^j\lambda^{-1},0).
\end{equation}
This is in direct analogy with the interpolation formula of \cite[Theorem II.4.14]{deShalit}. (Note that our $\Omega_p(a)$ is the analogue of $\Omega_p^{-1}$ in op. cit., where $\Omega_p$ is defined in Chapter II.4.4 of op. cit.)
\end{enumerate}
\end{remark}


\begin{proof}[Proof of Theorem \ref{interpolation2}]Recall
$$D_1^j\mu_{\frak{b}} \overset{(\ref{derivativelocalmeasure})}{=} D_1^jG_{1,\eta}^{\flat}(y_{\frak{b},a})(q_{\mathrm{dR}}).$$
We have
\begin{equation}\label{interpcalc'}\begin{split}D_1^j\mathcal{L}_{\lambda}|_{q_{\mathrm{dR}} = 1} &\overset{(\ref{globalmeasure2})}{=} \sum_{\frak{b} \in S}(\lambda^{-1}\lambda_E^{-2j})(\frak{b})\cdot D_1^j\mu_{\frak{b}}|_{q_{\mathrm{dR}} = 1} \\
&\overset{(\ref{derivativelocalmeasure})}{=} \sum_{\frak{b} \in S}(\lambda^{-1}\lambda_E^{-2j})(\frak{b})\cdot D_1^jG_{1,\eta}^{\flat}(y_{\frak{b},a})(q_{\mathrm{dR}})|_{q_{\mathrm{dR}}= 1} \\
&\overset{(\ref{qexpansionrelate}), (\ref{compareMSvalues})}{=} \frac{\Omega_{\infty}(a)^{1+2j}}{\Omega_p(a)^{1+2j}}\sum_{\frak{b} \in S}(\lambda^{-1}\lambda_E^{-2j})(\frak{b})\cdot\partial_1^j\left((g^a)^*E_{1,\eta}^{\flat}\right)(y_{\frak{b},a})\\
&= \frac{\Omega_{\infty}(a)^{1+2j}}{\Omega_p(a)^{1+2j}}\sum_{\frak{b} \in S}(\lambda^{-1}\lambda_E^{-2j})(\frak{b})\cdot \partial_1^j\left((g^a)^*E_{1,\eta}^{\flat}\right)(y_{\frak{b},a})
\end{split}
\end{equation}
Here when invoking (\ref{compareMSvalues}) (applied to $w' = w_{1,\eta}^{\flat} :=  (g^a)^*\tilde{w}_{1,\eta}^{\flat} \overset{(\ref{qexpansionrelate})}{\in} \omega_{\mathbb{Y}(\Gamma_1(p^a))}(\mathbb{Y}_{\infty}(\Gamma))$), we use that 
$$y_{\frak{b},a} = y_{\frak{b}} \cdot g^{-a}\overset{(\ref{YIgin2})}{\in} \mathcal{Y}^{\mathrm{Ig}}(\epsilon_0/p^a) \subset \mathcal{Y}^{\mathrm{Ig}}(\epsilon_0) = U \overset{(\ref{Uminclusion})}{\subset} U_{n(\epsilon_0)-1}$$
so that Assumption (2) of Theorem \ref{CMcoincidetheorem} is satisfied.  

Recall 
$$y_{\frak{b},a} = (A_{\frak{b},a},e_1^a,e_2^a) \overset{(\ref{YIgin})}{\in} \mathcal{Y}^{\mathrm{Ig}}(\epsilon_0/p^a).$$
In particular, $\mathbb{Z}_p \cdot (e_{1,1}^a \cdot \frak{b})$ (see (\ref{eindefinition})) is the canonical subgroup $A_{\frak{b},a}[\frak{p}]$ of $A_{\frak{b},a}$. Thus by Proposition \ref{Vproposition} and an analogous argument to that in \cite[proof of Theorem 5.9]{BDP}, we see that 
\begin{align*}\sum_{\frak{b} \in S}(\lambda^{-1}\lambda_E^{-2j})(\frak{b})\partial_1^j\left((g^a)^*E_{1,\eta}^{\flat}\right)(y_{\frak{b},a}) &= (1-(\lambda^{-1}\lambda_E^{-2j})(\frak{p}))\sum_{\frak{b} \in S}(\lambda^{-1}\lambda_E^{-2j})(\frak{b})\partial_1^j\left((g^a)^*E_{1,\eta}\right)(y_{\frak{b},a})\\
&=\sum_{\frak{b} \in S}(\lambda^{-1}\lambda_E^{-2j})(\frak{b})\partial_1^j\left((g^a)^*E_{1,\eta}\right)(y_{\frak{b},a}),
\end{align*}
the last equality using the fact that $\lambda^{-1}\lambda_E^{-2j}$ is ramified at $\frak{p}$ by Assumption \ref{pconductorassumption} (2), so that $\lambda^{-1}\lambda_E^{-2j}(\frak{p}) = 0$. 
We will show that
\begin{align*}\sum_{\frak{b} \in S}(\lambda^{-1}\lambda_E^{-2j})(\frak{b})\partial_1^j\left((g^a)^*E_{1,\eta}\right)(y_{\frak{b},a}) &= \sum_{\frak{b} \in S}(\lambda^{-1}\lambda_E^{-2j})(\frak{b})\partial_1^j\left((g^a)^*E_{1,\eta}\right)(y_{\frak{b}}) \\
&= \frac{2^jj!}{\sqrt{|D_K|^j}}p^{aj}\varpi^{2j}L^{(\frak{p})}((\lambda_E^c/\lambda_E)^j\lambda^{-1},0)C_{\lambda}
\end{align*}
for some constant $C_{\lambda}$ depending only on $\lambda$, $\lambda_E$ and $\varpi$.
The first equality is immediate. For the second equality, we use the computation (\cite[II.3.1 (5)-(6)]{deShalit}. The key step is the following: note that in the notation of II.2.1 (4) and II.3.1 (4) of op. cit., $A(L) = \pi^{-1} \cdot \mathrm{vol}(L)$ where $\mathrm{vol}(L) = \mathrm{Area}(\mathbb{C}/L)$, and $j$ in II.3.1 (5)-(6) of op. cit. is equal to our $-j$, and thus $A(L)^j$ in loc. cit. is $A(L)^{-j}$ in our notation)
\begin{align*}&\partial_1^j\left((g^a)^*E_{1,\eta}\right)(y_{\frak{b},a}) \\
&= \mathrm{vol}\left(\mathbb{C}/(\frak{b}^{-1}(\mathbb{Z}+\mathbb{Z}\tau_0/p^a))\right)^{-j}\pi^j j!\\
&\hspace{3.65cm}\sum_{i = 0}^{p^a-1}\sum_{(n,m) \in \mathbb{Z}^{\oplus 2}}\eta(i)\frac{\overline{\left(\frac{i\tau_0}{p^a} + \frak{b}^{-1}(n + m\tau_0)\right)}^j}{\left(\frac{i\tau_0}{p^a} + \frak{b}^{-1}(n + m\tau_0)\right)^{1+j}}\left|\left(\frac{i\tau_0}{p^a} + \frak{b}^{-1}(n + m\tau_0)\right)\right|^{-2s}|_{s = 0}\\
&\overset{(\ref{volumecomputation})}{=} |\mathbb{N}(\frak{b}\varpi)|^jp^{aj}\frac{\pi^j j!}{\mathrm{Im}(\varpi)^j}\\
&\hspace{3.42cm}\cdot \sum_{i = 0}^{p^a-1}\sum_{(n,m) \in \mathbb{Z}^{\oplus 2}}\eta(i)\frac{\overline{\left(\frac{i\tau_0}{p^a} + \frak{b}^{-1}(n + m\tau_0)\right)}^j}{\left(\frac{i\tau_0}{p^a} + \frak{b}^{-1}(n + m\tau_0)\right)^{1+j}}\left|\left(\frac{i\tau_0}{p^a} + \frak{b}^{-1}(n + m\tau_0)\right)\right|^{-2s}|_{s = 0}\\
&= |\mathbb{N}(\frak{b})|^j\frac{\pi^j j!}{\mathrm{Im}(\varpi)^j}p^{aj+1}\varpi^{1+2j}\\
&\hspace{2.5cm}\cdot |p^a\varpi|^{2s}\sum_{i = 0}^{p^a-1}\sum_{(n,m) \in \mathbb{Z}^{\oplus 2}}\eta(i)\frac{\overline{\left(i + \frak{b}^{-1}p^a(n\varpi + m)\right)}^j}{\left(i +  \frak{b}^{-1}p^a(n\varpi + m)\right)^{1+j}}\left|\left(i + \frak{b}^{-1}p^a(n\varpi + m)\right)\right|^{-2s}|_{s = 0}\\
&= |\mathbb{N}(\frak{b})|^j\frac{\pi^j j!}{\mathrm{Im}(\varpi)^j}p^{aj+1}\varpi^{1+2j}\\
&\hspace{2.5cm}\cdot|p^a\varpi|^{2s}\sum_{i = 0}^{p^a-1}\sum_{(n,m) \in \mathbb{Z}^{\oplus 2}}\eta(i)\frac{\overline{\left(i + \frak{b}^{-1}p^a(n\varpi + m)\right)}^j}{\left(i +  \frak{b}^{-1}p^a(n\varpi + m)\right)^{1+j}}\left|\left(i + \frak{b}^{-1}p^a(n\varpi + m)\right)\right|^{-2s}|_{s = 0}
\end{align*}
where ``$\overline{x}$'' denotes the complex conjugate of $x \in \mathbb{C}$, $\mathrm{Im}(\varpi)$ denotes the imaginary part of $\varpi \in \mathcal{H}^+$ (from (\ref{pichoice})), we recall that $\tau_0 = 1/\varpi$ (\ref{tau0}), and the right-hand side of the above expression is evaluated using Hecke's trick of analytic continuation (see \cite[Chapter III.3.4]{KatzImQuad}). Here, since 
$$\frak{b}^{-1}(\mathbb{Z} + \mathbb{Z}\tau_0/p^a) = \frak{b}^{-1}p^{-a}\varpi^{-1}(\mathbb{Z}\varpi p^a+ \mathbb{Z}),$$
the volume of $\mathbb{C}/(\frak{b}^{-1}(\mathbb{Z}+\mathbb{Z}\tau_0/p^a))$ is 
\begin{equation}\label{volumecomputation}\begin{split}\mathrm{vol}\left(\mathbb{C}/(\frak{b}^{-1}(\mathbb{Z}+\mathbb{Z}\tau_0/p^a))\right) = |\mathbb{N}(\frak{b}p^a\varpi)|^{-1} \cdot \mathrm{vol}(\mathbb{C}/(\mathbb{Z}\varpi p^a + \mathbb{Z})) &= |\mathbb{N}(\frak{b}p^a\varpi)|^{-1}\cdot\mathrm{Im}(p^a\varpi) \\
&= |\mathbb{N}(\frak{b}\varpi)|^{-1}p^{-a}\mathrm{Im}(\varpi),
\end{split}
\end{equation}
where $\mathbb{N}(\frak{b}p^a\varpi) \subset \mathbb{Z}$ denotes the ideal norm and $|\mathbb{N}(\frak{b}p^a\varpi)|$ denotes the unique generator in $\mathbb{Z}_{>0}$ of $\mathbb{N}(\frak{b}\varpi)$, and $\mathrm{Im}(\tau) \in \mathbb{R}$ denotes the imaginary part of $\tau \in \mathbb{C}$. Moreover 
$$\mathrm{Im}(\varpi) = \frac{\sqrt{|D_K|}}{2}$$
(see (\ref{pichoice})), recalling that $D_K \in \mathbb{Z}_{< 0}$ denotes the fundamental discriminant of $K$. Also note that the $p^a$-torsion point $ie_{2,a}$ corresponds to 
$$\frac{i\tau_0}{p^a} \in \mathbb{C}/(\mathbb{Z} +\mathbb{Z}\tau_0) \cong A(\mathbb{C}) \ni ie_{2,a}$$
under the complex uniformization. Thus, since $\eta = \lambda|_{\mathbb{A}_{\mathbb{Q}}^{\times}}\mathbb{N}_{\mathbb{Q}}^{-1} = \lambda_E|_{\mathbb{A}_{\mathbb{Q}}^{\times}}\mathbb{N}_{\mathbb{Q}}^{-1}$, we have 
\begin{align*}(\lambda^{-1}\lambda_E^{-2j})&(\frak{b})\partial_1^jE_{1,\eta}(y_{\frak{b}}) \\
&= \frac{(2\pi)^jj!}{\sqrt{|D_K|}^j}p^{aj+1}\varpi^{1+2j}|p^a\varpi|^{2s}\sum_{\frak{a} \subset \mathcal{O}_K, [\frak{a}] = [\frak{b}]}\lambda^{-1}(\frak{a})(\lambda_E^c/\lambda_E)^j(\frak{a})\left|\mathbb{N}(\frak{a})\right|^{-2s}|_{s = 0},\\
\end{align*}
where $[\frak{a}]$ denotes the class of an ideal $\frak{a} \subset \mathcal{O}_K$ in $\mathcal{C}\ell(\frak{f}^{(p)})[p^a]$, and $[\frak{b}] \in \mathcal{C}\ell(\frak{f}^{(p)})[p^a]$ is the ideal class of $\frak{b} \in S \subset \mathbb{A}_K^{\times,(pN\infty)}$. Together with (\ref{interpcalc'}), this finishes the proof. 

\end{proof}

\section{Anticyclotomic $p$-adic $L$-functions for $GL_2/\mathbb{Q}$}\label{padicLfunctionsection2}

Let $\lambda$ be as in Assumption \ref{pconductorassumption} and continue to assume that $(p,K)$ is as in Assumption \ref{pramifiedassumption}.

\begin{assumption}Assume for the remainder of Section \ref{padicLfunctionsection2} that the Hecke $L$-function $L(\lambda,s)$ has global root number -1 (in the sense of \cite[Section 4.5]{Tatesthesis}). 
\end{assumption}

\subsection{A ``good'' twist of $\lambda$}\label{goodtwistsection}

For our purposes, we will need to consider a certain finite order twist $\psi = \lambda/\chi_0$ of $\lambda$ as well as its associated theta series $\theta_{\psi}$. Our strategy for choosing a suitable twist follows closely the strategy of \cite[Section 3E]{BDP2}, and as in loc. cit. we will make use of this twist in order to circumvent the Heegner hypothesis. Recall we are working under Assumption \ref{Nf0assumption}; in particular $f_0|N$ where $f_0\mathcal{O}_K = \frak{f}^{(p)}$ from Assumption \ref{pconductorassumption} (1), and $(N,p) = 1$. For any finite prime $v$ of $\mathcal{O}_K$, consider the local character $\lambda|_{\mathcal{O}_{K_v}^{\times}} : \mathcal{O}_{K_v}^{\times} \rightarrow \overline{\mathbb{Q}}^{\times}$. If  $\lambda = \lambda_E$ as in Example \ref{Eexample}, by the theory of complex multiplication we have that $\lambda_{v}(\mathcal{O}_{K_v}^{\times}) \subset \mathcal{O}_K^{\times}$ (see \cite[Top of p. 170]{Silverman}). 

Recall the notation of $\varrho^c = \varrho \circ c$ from (\ref{cnotation}). Recall that given an algebraic Hecke character $\chi : K^{\times} \backslash \mathbb{A}_K^{\times} \rightarrow \mathbb{C}^{\times}$, we let $\frak{\frak{f}}(\chi) \subset\mathcal{O}_K$ denote its conductor. Let $\varepsilon(\varrho)$ denote the global root number of $\varrho$ and given a place $v$ of $K$ let $\varepsilon_v(\varrho)$ denote the local root number at $v$ (see \cite[Theorem 2.4.1, Section 4.5]{Tatesthesis}). Then $\varepsilon(\varrho) = \prod_v\varepsilon_v(\varrho)$ by loc. cit. We will need the following Lemma in order to later ensure (in particular) that the quaternion algebra $D/\mathbb{Q}$ we consider later (the unique $D/\mathbb{Q}$ satisfying (\ref{sign2})) is split at $p$. 

\begin{lemma}\label{modifylemma}
Choose any element 
$$\varepsilon \in \{\pm 1\}.$$
Then there exist infinitely many finite order Hecke characters 
$$\chi : K^{\times}\backslash\mathbb{A}_K^{\times} \rightarrow \overline{\mathbb{Z}}^{\times}$$ such that, letting $\chi_v = \chi|_{K_v^{\times}}$, 
\begin{enumerate}
\item $\chi_v|_{\mathcal{O}_{K_v}^{\times}} = \lambda_v|_{\mathcal{O}_v}^{\times}$ for all $v|pN$, 
\item $\varepsilon(\lambda^c\chi/\chi^c) = \varepsilon$.
\end{enumerate}
\end{lemma}

\begin{proof}

Any continuous character 
$$K^{\times}\backslash \mathbb{A}_K^{\times}/\mathbb{C}^{\times} = \left(\prod_{v\nmid \infty}\mathcal{O}_{K_v}^{\times}\right)/\mathcal{O}_K^{\times} \rightarrow \overline{\mathbb{Z}}^{\times}$$
(where the equality above uses the fact that $K$ has class number 1) induces a finite order Hecke character $K^{\times}\backslash\mathbb{A}_K^{\times} \rightarrow \overline{\mathbb{Z}}^{\times}$ (and vice versa). Thus there exist infinitely many finite order Hecke characters $\varrho : K^{\times}\backslash\mathbb{A}_K^{\times} \rightarrow \overline{\mathbb{Z}}^{\times}$ such that letting $\varrho_v = \varrho|_{K_v^{\times}}$, we have 
$$\varrho_v|_{\mathcal{O}_{K_v}^{\times}} = \lambda_v|_{\mathcal{O}_{K_v}^{\times}}$$
for all $v|pN$. For any such $\varrho$, we will show that for any prime $\ell\nmid pN\frak{f}(\varrho)$ inert in $K/\mathbb{Q}$, there are infinitely many finite order Hecke characters $\chi_{\ell} : K^{\times} \backslash \mathbb{A}_K^{\times} \rightarrow \overline{\mathbb{Z}}^{\times}$ of $\ell$-power conductor such that letting 
$$\chi = \varrho\chi_{\ell},$$
we have
\begin{equation}\label{rootnumberchangerho}\varepsilon(\lambda^c\chi/\chi^c) = \varepsilon.
\end{equation}
This will show that there are infinitely many $$\chi : K^{\times}\backslash\mathbb{A}_K^{\times} \rightarrow \overline{\mathbb{Z}}^{\times}$$
satisfying (1) and (2) in the statement of the Lemma.

Let $v$ be any finite place of $K$. By local root number stability (\cite[Section 4.5]{Deligne}), for all finite order $\chi_{1v}: K_v^{\times} \rightarrow \overline{\mathbb{Z}}^{\times}$, there exists $a_{\chi_{1,v}} \in K_v^{\times}$ such that for all finite order $\chi_{2,v}: K_v^{\times} \rightarrow \overline{\mathbb{Z}}^{\times}$ with $\mathrm{ord}_v(\frak{f}(\chi_{1,v})) \ge 2\mathrm{ord}_v(\frak{f}(\chi_{2,v}))$ we have
\begin{equation}\label{rootnumberstability}\varepsilon_v(\chi_{1,v}\chi_{2,v}) = \chi_{2,v}^{-1}(a_{\chi_{1,v}})\varepsilon_v(\chi_{1,v}).
\end{equation}
Let $\pi_v$ be a uniformizer of $K_v$ and write $a_{\chi_{1,v}} = \pi_v^{e'}u_v$ where $e' \in \mathbb{Z}$ and $u_v \in \mathcal{O}_{K_v}^{\times}$. If $\chi_{2,v}$ is unramified then $\chi_{2,v}^{-1}(a_{\chi_{1,v}}) = \chi_{2,v}^{-1}(\pi_v^{e'})$. By (4.5.5) of op. cit., we have that $e' = -\mathrm{ord}_v(\frak{f}(\chi_{1,v}))$, and hence 
\begin{equation}\label{unramifiedformula}\chi_{2,v}^{-1}(a_{\chi_{1,v}}) = \chi_{2,v}^{-1}(\pi_v^{e'}) = \chi_{2,v}(\frak{f}(\chi_{1,v})).
\end{equation}

Now let $\chi_1,\chi_2 : K^{\times}\backslash \mathbb{A}_K^{\times} \rightarrow \mathbb{C}^{\times}$ be two finite order Hecke characters and let $\chi_{1,v} = \chi_1|_{K_v^{\times}}$ and $\chi_{2,v} = \chi_2|_{K_v^{\times}}$. We have
$$\frac{\varepsilon(\chi_1\chi_2)}{\varepsilon(\chi_1)\varepsilon(\chi_2)} = \prod_v \frac{\varepsilon_v(\chi_1\chi_2)}{\varepsilon_v(\chi_1)\varepsilon_v(\chi_2)}.$$
Suppose $(\frak{f}(\chi_1),\frak{f}(\chi_2)) = 1$. Then from (\ref{rootnumberstability}) and (\ref{unramifiedformula}) we have 
\begin{equation}\label{globalrootformula}\varepsilon(\chi_1\chi_2) = \varepsilon(\chi_1)\varepsilon(\chi_2)\chi_1(\frak{f}(\chi_2))\chi_2(\frak{f}(\chi_1)).
\end{equation}

Now choose such a prime $\ell > 2$ inert in $K/\mathbb{Q}$ such that $(\ell,pN\frak{f}(\varrho)) = 1$. Note that $K_{\ell}^{\times} = \ell^{\mathbb{Z}} \times \mathcal{O}_{K_{\ell}}^{\times}$ and $\mathcal{O}_{K_{\ell}}^{\times} = \Delta_{\ell} \times \Gamma_{\ell}$ where $\Delta_{\ell} = \mathcal{O}_{K_{\ell},\mathrm{tors}}^{\times}$ is the torsion submodule and $\Gamma_{\ell} \cong \mathbb{Z}_{\ell}^{\oplus 2}$. Write $\Gamma_{\ell,+} \times \Gamma_{\ell,-}$ where $\Gamma_{+,\ell} \cong \mathbb{Z}_{\ell}$ is the factor on which conjugation acts via $\mathrm{Gal}(K_{\ell}/\mathbb{Q}_{\ell})$ acts as the identity and $\Gamma_{-,\ell} \cong \mathbb{Z}_{\ell}$ is the factor on which this conjugation acts via the inverse map $\gamma \mapsto \gamma^{-1}$. (Note that this splitting exists since $\ell > 2$.) Now consider any finite order character 
$$\tilde{\chi}_{\ell} : K^{\times} \backslash \mathbb{A}_K^{\times} \rightarrow \overline{\mathbb{Z}}^{\times}$$
such that $\tilde{\chi}_{\ell}|_{\Delta_{\ell} \times \Gamma_{+,\ell}} = 1$ and $\tilde{\chi}_{\ell}(\ell) = 1$. Thus $\tilde{\chi}_{\ell} = \tilde{\chi}_{\ell}|_{\Gamma_{-,\ell}}$ and we have $\tilde{\chi}_{\ell}^c = \tilde{\chi}_{\ell}^{-1}$ i.e. $\tilde{\chi}_{\ell}$ is anticyclotomic. 
Then by the first paragraph of this proof and the fact that $\mathcal{O}_K^{\times} \subset \Delta_{\ell}$, $\tilde{\chi}_{\ell}$ is in fact a global finite order Hecke character of $\ell$-power conductor
$$\tilde{\chi}_{\ell} : K^{\times}\backslash\mathbb{A}_K^{\times} \rightarrow \overline{\mathbb{Z}}^{\times}.$$

Note that since $\frak{f}(\tilde{\chi}_{\ell})$ is a power of $\ell$ and $\ell \nmid \frak{f}(\lambda\varrho^c/\varrho) = \frak{f}(\lambda^c\varrho/\varrho^c)$ (the last equality following from Assumption \ref{pconductorassumption} (1) and the fact that $(\varrho/\varrho^c)^c = (\varrho/\varrho^c)^{-1}$), we have $(\frak{f}(\tilde{\chi}_{\ell}),\frak{f}(\lambda^c\varrho/\varrho^c)) = 1$. Hence we may specialize (\ref{globalrootformula}) to $\chi_1 = \tilde{\chi}_{\ell}$ and $\chi_2 = \lambda^c\varrho/\varrho^c$. By Assumption \ref{pconductorassumption} (2) we have that 
$$(\lambda^c\varrho/\varrho^c)_{\ell}(\ell^n) = \lambda_{\ell}^c(\ell^n)\varrho_{\ell}(\ell)/\varrho_{\ell}(\ell^c) = \lambda_{\ell}((\ell^c)^n) = \lambda_{\ell}(\ell^n) = \eta(\ell^n) = (-1)^n$$
where the last equality follows because $\ell$ is inert in $K/\mathbb{Q}$ and $\eta$ is the quadratic character attached to $K/\mathbb{Q}$. Moreover, since $\frak{f}(\lambda\varrho/\varrho^c)^c = \frak{f}(\lambda\varrho^c/\varrho)$ by Assumption \ref{pconductorassumption} (1), we have 
$$\tilde{\chi}_{\ell}(\frak{f}(\lambda^c\varrho/\varrho^c)) = \tilde{\chi}_{\ell}(\frak{f}(\lambda^c\varrho/\varrho^c)^c) = \tilde{\chi}_{\ell}^c(\frak{f}(\lambda^c\varrho/\varrho^c)) = \tilde{\chi}_{\ell}^{-1}(\frak{f}(\lambda^c\varrho/\varrho^c))$$
which implies $\tilde{\chi}_{\ell}(\frak{f}(\lambda^c\varrho/\varrho^c))^2 = 1$. Since $\tilde{\chi}_{\ell} = \tilde{\chi}_{\ell}|_{\Gamma_{\ell}}$ has $\ell$-power order and $\ell > 2$, this implies 
$$\tilde{\chi}_{\ell}(\frak{f}(\lambda^c\varrho/\varrho^c)) = 1.$$
Moreover, since $\tilde{\chi}_{\ell}$ is anticyclotomic its functional equation of its Hecke $L$-series $L(\tilde{\chi}_{\ell},s)$ simply relates $L(\tilde{\chi}_{\ell},s)$ to itself and so we have 
$$\varepsilon(\tilde{\chi}_{\ell}) = 1.$$
Hence (\ref{globalrootformula}) with $\chi_1 = \tilde{\chi}_{\ell}$ and $\chi_2 =\lambda^c\varrho/\varrho^c$ reads
$$\varepsilon(\lambda^c\varrho/\varrho^c\tilde{\chi}_{\ell}) = \varepsilon(\lambda^c\varrho/\varrho^c)(-1)^{\mathrm{ord}_{\ell}(\frak{f}(\tilde{\chi}_{\ell}))}.$$
It is clear that there are infinitely many $\tilde{\chi}_{\ell}$ as above such that $\tilde{\chi}_{\ell}|_{\Gamma_{\ell,+}} = 1$, $\tilde{\chi}_{\ell} = \tilde{\chi}_{\ell}|_{\Gamma_{\ell,-}}$ and  $\mathrm{ord}_{\ell}(\frak{f}(\tilde{\chi}_{\ell})) \equiv 0 \pmod{2}$ (resp. $1 \pmod{2}$), and hence there are infinitely many such $\tilde{\chi}_{\ell}$ with $\varepsilon(\lambda^c\chi/\chi^c) = \varepsilon$. 
 Recall that $\Gamma_{-,\ell} \cong \mathbb{Z}_{\ell}$ and $\ell > 2$, and hence $\Gamma_{-,\ell}$ is 2-divisible. Since $\Gamma_{\ell,-} \cong \mathbb{Z}_{\ell}$ and $\tilde{\chi}_{\ell} = \tilde{\chi}_{\ell}|_{\Gamma_{-,\ell}}$, we may therefore find a unique $\chi_{\ell} : \Gamma_{\ell,-} \rightarrow \overline{\mathbb{Z}}^{\times}$ such that $\chi_{\ell}^2 = \tilde{\chi}_{\ell}$. Since $c$ acts on $\Gamma_{\ell,-}$ by $\gamma \mapsto \gamma^{-1}$, we see that $\chi_{\ell}/\chi_{\ell}^c = \chi_{\ell}^2 = \tilde{\chi}_{\ell}$. By the first paragraph of the proof, we see that $\chi_{\ell}$ is in fact a global Hecke character of $\ell$-power conductor $\chi_{\ell} : K^{\times} \backslash \mathbb{A}_K^{\times} \rightarrow \overline{\mathbb{Z}}^{\times}$. Hence there are infinitely many $\chi_{\ell}$ such that (\ref{rootnumberchangerho}) holds. 
\end{proof}


By Lemma \ref{modifylemma} with $\varepsilon = 1$, we may find infinitely many $\chi$ such that 
$$\chi_v|_{\mathcal{O}_{K_v}^{\times}} = \lambda_v|_{\mathcal{O}_{K_v}^{\times}}$$
for all $v|pN$ and 
$$\varepsilon(\lambda^c\chi/\chi^c) = 1.$$
Hence by the main theorem of \cite{Rohrlich}, from the set of infinitely many $\chi$ as above we may find one element, which we call $\chi_0$ from now on, such that
\begin{equation}\label{nonvanishingLvalue}L(\lambda^c\chi_0/\chi_0^c,1) \neq 0.
\end{equation}

We summarize all the properties our choice of $\chi_0$ satisfies.

\begin{choice}\label{chi0choice}Suppose we are given $\lambda$ as in Assumption \ref{pconductorassumption}. Let $\chi_0 : K^{\times}\backslash\mathbb{A}_K^{\times} \rightarrow \overline{\mathbb{Z}}^{\times}$ be a finite order Hecke character as in Lemma \ref{modifylemma}, which thus satisfies
\begin{equation}\label{localvcondition}\chi_{0,v}|_{\mathcal{O}_{K_v}^{\times}} = \lambda_v|_{\mathcal{O}_{K_v}^{\times}}
\end{equation}
for all places $v|pN$ of $K$, 
\begin{equation}\label{+1rootnumber}\varepsilon(\lambda^c\chi_0/\chi_0^c) = 1
\end{equation}
as well as (\ref{nonvanishingLvalue}).
\end{choice}

Recall that $\eta$ is the quadratic character attached to $K/\mathbb{Q}$. In particular, since $\lambda|_{\mathbb{A}_{\mathbb{Q}}^{\times}}\mathbb{N}_{\mathbb{Q}}^{-1} = \eta$ by Assumption \ref{pconductorassumption} (3), we have
\begin{equation}\label{pNsign}\chi_{0,\ell}(-1) \overset{(\ref{localvcondition})}{=} \lambda_{\ell}(-1) = \eta_{\ell}(-1)
\end{equation}
for all $\ell|pN$, where $\lambda_{\ell} = \prod_{v|\ell}\lambda_v$ and $\chi_{0,\ell} = \prod_{v|\ell} \chi_{0,v}$ with $v|\ell$ running over all places of $K$ above $\ell$. Note that since $\eta$ is an odd Dirichlet character with conductor $p^{\beta}$, the above expression is $1$ if $\ell \neq p$ and $-1$ if $\ell = p$.  Given $\chi_0$ as in Choice \ref{chi0choice}, we will consider the following twist of $\lambda$ by $\chi_0^{-1}$.

\begin{definition}Given $\lambda$ as in Assumption \ref{pconductorassumption}, define the twist 
\begin{equation}\label{psichoice}\psi := \lambda/\chi_0 : K^{\times}\backslash\mathbb{A}_K^{\times} \rightarrow \mathbb{C}^{\times}
\end{equation}
where $\chi_0$ is as in Choice \ref{chi0choice}.
\end{definition}
Thus $\psi$ is an algebraic Hecke character of infinity type $(1,0)$, and following Convention \ref{avatarconvention} we will also use $\psi$ to denote its $p$-adic avatar. By (\ref{localvcondition}) we have that $\frak{f}(\psi)$ is prime to $p$. 

\subsection{Rankin-Selberg pairs}\label{RSsection}Continue to suppose $\chi_0$ is as in Choice \ref{chi0choice}. Let $g$ be any $GL_2$-modular form 
 such that $(g,\chi_0)$ is a \emph{Rankin-Selberg pair}, i.e. a pair satisfying the compatibility 
$$\omega_g = \chi_0^{-1}|_{\mathbb{A}_{\mathbb{Q}}^{\times}}$$
where $\omega_g$ is the central character of $g$ (see \cite[Chapter 1.4.1]{YZZ}). Let $\pi_g$ denote the automorphic representation of $D^{\times}(\mathbb{A}_{\mathbb{Q}})$ associated with $g$. 
Let $\varepsilon(\pi_g,\chi_0,1/2)$ denote the global root number of the Rankin-Selberg $L$-function $L(\mathrm{JL}(\pi_g) \times \pi_{\chi_0},s)$ attached to the $GL_2 \times GL_2$-product $\mathrm{JL}(\pi_g) \times \pi_{\chi_0}$ over $\mathbb{Q}$ of the Jacquet-Langlands lifting $\mathrm{JL}(\pi_g)$ of $\pi_g$ and the automorphic representation $\pi_{\chi_0}$ of $GL_2(\mathbb{A}_{\mathbb{Q}})$ attached the theta series $\theta_{\chi_0}$ of $\chi_0$.\footnote{This is a different convention than that in \cite{YZZ}, which instead considers the base change $(\pi_g)_K$ to $K$ and the twist $(\pi_g)_K \otimes \chi_0$ by $\chi_0$. In particular, letting $\varepsilon((\pi_g)_K \otimes \chi_0,1/2)$ denote the global root number of $(\pi_g)_K \otimes \chi_0$ and $\varepsilon_v((\pi_g)_K \otimes \chi_0,1/2)$ denote the local root number of $(\pi_g)_K\otimes \chi_0$ at a place $v$ of $K$, we have $\varepsilon(\pi_g,\chi_0,1/2) = \varepsilon((\pi_g)_K \otimes \chi_0,1/2)\eta(-1)$ and $\varepsilon_v(\pi_g,\chi_0,1/2) = \varepsilon((\pi_g)_K\otimes \chi_0,1/2)\eta_v(-1)$, recalling here that $\eta$ is the quadratic character attached to $K/\mathbb{Q}$.}

Let $D$ be a quaternion algebra over $\mathbb{Q}$ which satisfies the Saito-Tunnell criterion (\cite[Theorem 1.3]{YZZ}, \cite{YZZerratum}) for every place $\ell$ of $\mathbb{Q}$:
\begin{equation}\label{sign2}\varepsilon_{\ell}(\pi_g,\chi_0,1/2) = \chi_{0,\ell}(-1)\eta_{\ell}(-1)\varepsilon_{\ell}(D),
\end{equation}
where 
$$\varepsilon_{\ell}(\pi_g,\chi_0,1/2) = \prod_{v|\ell}\varepsilon_v(\pi_g,\chi_0,1/2)$$
with $v|\ell$ running over places of $K$ above $\ell$, $\chi_{0,\ell} = \prod_{v|\ell}\chi_{0,v}$ also with $v|\ell$ running over places of $K$ above $\ell$, and $\varepsilon_{\ell}(D)$ is the local Hasse invariant at $\ell$ of $D/\mathbb{Q}$. 

In the case that $g = \theta_{\psi}$ is the weight 2 theta series attached to the infinity type $(1,0)$ Hecke character $\psi$ from (\ref{psichoice}), the Galois representation attached to $\pi_g$ is $\varrho_g|_{\mathrm{Gal}(\overline{K}/K)} \cong \psi \oplus \psi^c$. Hence by the Artin formalism,
\begin{equation}\label{Artinfactorization}L(\mathrm{JL}(\pi_g) \times \pi_{\chi_0},s-1/2) = L(\psi\chi_0,s)L(\psi^c\chi_0,s) = L(\lambda,s)L(\lambda^c\chi_0/\chi_0^c,s)
\end{equation}
and by \cite[Equations below (1.4.1) and (2.2.5)]{Gelbart}, we have 
\begin{equation}\label{Artinfactorizationrootnumber}\varepsilon(\pi_g,\chi_0,1/2) = \varepsilon(\lambda,1/2)\varepsilon(\lambda^c\chi_0/\chi_0^c,1/2), \hspace{1cm} \varepsilon_{\ell}(\pi_g,\chi_0,1/2) = \prod_{v|\ell}\varepsilon_v(\lambda,1/2)\varepsilon_v(\lambda^c\chi_0/\chi_0^c,1/2)
\end{equation}
where $v|\ell$ runs over all places of $K$ above a (finite or infinite) place $\ell$ of $\mathbb{Q}$. In particular, by our assumption $\varepsilon(\lambda) = -1$ and (\ref{+1rootnumber}), we have that the global root number attached to $\mathrm{JL}(\pi_g) \times \pi_{\chi_0}$ satisfies
\begin{equation}\label{globalsign}\varepsilon(\pi_g,\chi_0,1/2) = -1.
\end{equation}

Moreover, by (\ref{nonvanishingLvalue}), we have 
\begin{equation}\label{analyticrankequality}\mathrm{ord}_{s = 1/2}L(\mathrm{JL}(\pi_g) \times \pi_{\chi_0},s) = \mathrm{ord}_{s = 1}L(\lambda,s).
\end{equation}  
When $\lambda = \lambda_E$ we have $L(\lambda_E,s) = L(E/\mathbb{Q},s)$ and so this is equal to
$$\mathrm{ord}_{s = 1}L(\lambda_E,s) = \mathrm{ord}_{s=1}L(E/\mathbb{Q},s) =: r_{\mathrm{an}}(E/\mathbb{Q}).$$

\begin{proposition}\label{Dsplitproposition}Suppose $g = \theta_{\psi}$, where $\psi = \lambda/\chi_0$ is as in (\ref{psichoice}) (with $\chi_0$ as in Choice \ref{chi0choice}). Any $D/\mathbb{Q}$ satisfying (\ref{sign2}) has $\varepsilon_{\ell}(D) = 1$ for all places $\ell|pN\infty$. In particular, the discriminant of $D$ is prime to $pN\infty$. 
\end{proposition}

\begin{proof}
Suppose $v|pN$. Then $\chi_{0,v}|_{\mathcal{O}_{K_v}^{\times}} = \lambda_v|_{\mathcal{O}_{K_v}^{\times}}$ for all $v|pN$; in particular, at each prime $v|pN$ we have that $\chi_{0,v}$ and $\lambda_v$ are equivalent characters in the sense of \cite[Discussion after Definition 2.4.1]{Tatesthesis}. Thus by the definition of local root numbers (\cite[Section 2.5]{Tatesthesis}, \cite[Section 3]{Tate2}) we have 
\begin{equation}\label{sign1}\varepsilon_{\ell}(\pi_g,\chi_0,1/2) \overset{(\ref{Artinfactorizationrootnumber})}{=} \prod_{v|\ell}\varepsilon_v(\lambda)\varepsilon_v(\lambda^c\chi_0/\chi_0^c) = \prod_{v|\ell}\varepsilon_v(\lambda)\varepsilon_v(\lambda) = \prod_{\ell|v}\varepsilon_v(\lambda)^2 = 1.
\end{equation}

Thus we have, for all $\ell|pN$, 
$$\varepsilon_{\ell}(D) \overset{(\ref{sign2})}{=} \chi_{0,\ell}^{-1}(-1)\eta_{\ell}(-1)\varepsilon_{\ell}(\pi_g,\chi_0,1/2) \overset{(\ref{pNsign})}{=} \varepsilon_{\ell}(\pi_g,\chi_0,1/2) \overset{(\ref{sign1})}{=} 1.$$

When $\ell = \infty$ we have $\varepsilon_{\infty}(\pi_g,\chi_0,1/2) = -1$ since the weight of $g$ (which is 2) is greater than the weight of $\theta_{\chi_0}$ (which is 1), see \cite[p. 3]{Brooks}. Thus since $\chi_{0,\infty}(-1) = 1$ (since $\chi_0$ is a finite order Hecke character over the imaginary quadratic field $K$, which forces $\chi_{0,\infty} = 1$) and $\eta_{\infty}(-1) = -1$ (since $\eta$ is the quadratic character attached to the imaginary quadratic field $K/\mathbb{Q}$, which implies it is odd), we have 
$$\varepsilon_{\infty}(E) = \overset{(\ref{sign2})}{=} \chi_{0,\infty}^{-1}(-1)\eta_{\infty}(-1)\varepsilon_{\infty}(\pi_g,\chi_0,1/2) = 1.$$

Thus $\varepsilon_{\ell}(D) = 1$ for all $\ell|pN\infty$, which gives the assertion. 

\end{proof}


In summary, we have the following properties of our $\chi_0$ and $D$. 

\begin{choice}\label{twistchoice}When $g = \theta_{\psi}$ ($\psi$ as in (\ref{psichoice})) we chose a finite order Hecke character $\chi_0 : \mathbb{A}_K^{\times}/K^{\times} \rightarrow \overline{\mathbb{Z}}^{\times}$ as in Choice \ref{chi0choice} such that: 
\begin{enumerate}
\item $(g,\chi_0)$ is a Rankin-Selberg pair (as in the first paragraph of Section \ref{RSsection}) satisfying (\ref{sign2}),
\item $\varepsilon(\psi^c\chi_0,1) = 1$ and
\item $L(\psi^c\chi_0,1) \neq 0$.
\end{enumerate}
\end{choice}
Note that by these assumptions, the infinity type $(1,0)$ Hecke character $\psi \chi_0^c = (\lambda/\chi_0)\chi_0^c$ satisfies Assumption \ref{pconductorassumption} by (\ref{localvcondition}) and the fact that $(\chi_0/\chi_0^c)|_{\mathbb{A}_{\mathbb{Q}}^{\times}} = 1$. 

\begin{definition}\label{f'notation}Let
$$\frak{f}' := \frak{f}(\chi_0).$$
Let $f' \in \mathbb{Z}_{\ge 0}$ be the minimal positive integer such that 
$$\frak{f}'|f'.$$
Let $f_0'$ denote the prime-to-$p$ part of $f'$. Given a quaternion algebra $D/\mathbb{Q}$, write
$$f_0' = f_+'f_-'$$
where $f_+'$ is supported on primes prime to $\mathrm{disc}(D)$ and $f_-'$ is supported on primes dividing $\mathrm{disc}(D)$. 
\end{definition}

\begin{assumption}\label{f'assumption}For the rest of Section \ref{padicLfunctionsection2}, assume that 
$$f_0'p^a = f',$$
i.e. $a$ as in (\ref{adefinition}) is the exact power of $p$ dividing $f'$. 
\end{assumption}

Now assume $\chi_0$ is as in Choice \ref{chi0choice}. In particular, by (\ref{ebddefinition}) and Choice \ref{chi0choice} we have 
$$f_0\frak{p}^e = \frak{f}(\lambda) = \frak{f}|\frak{f}'.$$
In particular, from (\ref{adefinition}) we see that
$$f_0p^a|f', \hspace{1cm} f_0'p^a = f'.$$
Thus Assumption \ref{f'assumption} is satisfied in this case. As we will see in Proposition \ref{Dsplitproposition} below, we have that 
$$f_0|f_+'$$
when $g = \theta_{\psi}$ as in Choice \ref{twistchoice}.

For the remainder of Section \ref{padicLfunctionsection2} we make the following assumption on $N, f'$ and $D$. 

\begin{assumption}\label{Nf0assumption2}In the setting of Convention \ref{Yconvention}, we let $N \in \mathbb{Z}_{\ge 4}$,  be such that $f_+'|N$ and $(pf_-',N) = 1$ (in the notation of Definition \ref{f'notation}). In particular, the quaternion algebra $D/\mathbb{Q}$ is split at all places dividing $pN\infty$. 
\end{assumption}

\subsection{Yuan-Zhang-Zhang Heegner points}Recall $\mathbb{Y}_{\infty}$ from (\ref{mathbbYinfty}) and fix a point as in \cite[Section 1.3 of Introduction]{YZZ}
$$P \in \mathbb{Y}_{\infty}^{K^{\times}},$$
i.e. a CM point for $K$, which lies above the point $y \in \mathbb{Y}_{\infty}(\Gamma)(\overline{\mathbb{Q}})$ from Choice \ref{CMpointchoice} (4) under the natural projection $\mathbb{Y}_{\infty} \rightarrow \mathbb{Y}_{\infty}(\Gamma)$ (where $\mathbb{Y}_{\infty}(\Gamma)$ is as in (\ref{algebraicYinftyGamma})). Viewing 
$$\pi_g \in \mathrm{Hom}(\mathbb{Y}_{\infty},A_g) \otimes_{\mathbb{Z}} \mathbb{Q},$$
by (\ref{globalsign}) and (\ref{sign2}), from loc. cit. we get a Heegner point
$$P_{g,\chi_0} := \int_{\mathrm{Gal}(K^{\mathrm{ab}}/K)}\pi_g(P^{\sigma})\otimes \chi_0(\sigma) d\sigma \in (A_g \otimes \chi_0)^{\mathrm{Gal}(\overline{K}/K)} \otimes_{\mathbb{Z}} \mathbb{Q}.$$
As these points are more general than the usual notion of Heegner point found in the literature, we call such a point a \emph{Yuan-Zhang-Zhang Heegner point}. The nontriviality of this point will ultimately depend on property (3) of Choice \ref{twistchoice}. As explained above, the existence of a $\chi_0$ with this property ultimately follows from the \emph{ineffective} results of Rohrlich \cite{Rohrlich}, and hence the point $P_{g,\chi_0}$ is itself ineffective. 

\subsection{Anticyclotomic Rankin-Selberg $p$-adic $L$-functions}\label{GL2setupsection}Let $E/\mathbb{Q}$ be an elliptic curve with CM by $\mathcal{O}_K$ as in Assumption \ref{pconductorassumption} (3) so that letting $\lambda_E$ be its associated infinity type $(1,0)$ Hecke character, we have $\frak{f}(\lambda_E)^{(p)}|N$ (or equivalently $f_0|N$) by Assumption \ref{Nf0assumption}. 
Suppose that $(g,\chi_0)$ is a Rankin-Selberg pair and $D/\mathbb{Q}$ is a quaternion algebra split at all places dividing $pN\infty$, and recall that we are working under Assumption \ref{Nf0assumption2} for the remainder of Section \ref{padicLfunctionsection2}. 

Our main application of the constructions of this section (see the end of Section \ref{factorizationsection}) will concern the CM form case $g = \theta_{\psi}$ as in Choice \ref{twistchoice} with respect to the type $(1,0)$ character $\lambda = \lambda_E$ attached to a given elliptic curve $E/\mathbb{Q}$ with CM by $\mathcal{O}_K$ as in Example \ref{Eexample}. 

The next Theorem asserts the existence of a Rankin-Selberg $p$-adic $L$-function 
$$\mathcal{L}_{g \times \chi_0} \in \mathcal{O}_{\mathbb{C}_p} + (q_{\mathrm{dR}}-1)\mathbb{C}_p\llbracket q_{\mathrm{dR}}-1\rrbracket$$
whose $p$-adic Maass-Shimura derivatives 
$$D_0^j\mathcal{L}_{g \times \chi_0}\in \mathcal{O}_{\mathbb{C}_p} + (q_{\mathrm{dR}}-1)\mathbb{C}_p\llbracket q_{\mathrm{dR}}-1\rrbracket$$
have constant terms (i.e. values at $q_{\mathrm{dR}} = 1$) which interpolate square roots of positive Hodge-Tate weight-anticyclotomic twists of the central $L$-value of $(g,\chi_0)$. 

\begin{theorem}\label{GL2maintheorem}Let $(g,\chi_0)$ be as in Section \ref{RSsection} where $g$ satisfies Assumption \ref{gassumption} below. Consider the continuous function
$$\mathbb{Z}/(p-1) \times \mathbb{Z}_p \ni j \mapsto D_0^j\mathcal{L}_{g \times \chi_0}|_{q_{\mathrm{dR}} = 1} \in \mathcal{O}_{\mathbb{C}_p}$$
from (\ref{Gcontinuous}) below. Then we have the following interpolation formula. There exist $C_{g,\chi_0}, c_{g,\chi_0}, c_{g,\chi_0}' \in \overline{\mathbb{Q}}_p^{\times}$ which depend only on $(g,\chi_0)$ and the choice of $\varpi \in \mathcal{O}_K$ made in (\ref{pichoice}) such that for any $j \in \mathbb{Z}_{\ge 1} \subset \mathbb{Z}/(p-1) \times \mathbb{Z}_p$ (where $\mathbb{Z}_{\ge 1}$ is embedded diagonally), we have the following equality of elements of $\overline{\mathbb{Q}}$:
\begin{enumerate}
\item \begin{equation}\label{interpRankinSelberg}\Omega_p(a)^{4j} \cdot(D_0^j\mathcal{L}_{g\times \chi_0})^2|_{q_{\mathrm{dR}} = 1}= \Omega_{\infty}(a)^{4j}p^{2aj}\varpi^{4j} \cdot C_{g,\chi_0} \cdot L^{\mathrm{alg}}(g,\chi_0(\lambda_E^c/\lambda_E)^j,1).
\end{equation}
\item Letting $\mathcal{L}_{g \times \chi_0} := D_0^0\mathcal{L}_{g \times \chi_0}$, we have
 \begin{equation}\label{padicWaldspurger}\mathcal{L}_{g \times \chi_0}|_{q_{\mathrm{dR}} = 1} = c_{g,\chi_0} \cdot \log_{w_g^{\flat}}(P_{g,\chi_0})= c_{g,\chi_0}'\cdot \log_{w_g}(P_{g,\chi_0}) . 
\end{equation}
\end{enumerate}
Here, $\Omega_p(a) \in \mathbb{C}_p^{\times}$, $\Omega_{\infty}(a) \in \mathbb{C}^{\times}$ are as in Definition \ref{finalperiodsdefinition} (with $n = a$ and $a \in \mathbb{Z}_{\ge 0}$ as in (\ref{adefinition})). Moreover,
$$L(g,\chi,s) = L(\mathrm{JL}(\pi_g) \times \pi_{\chi},s-\frac{1}{2})$$
is the Rankin-Selberg $L$-function centered at 1, where 
$$L^{\mathrm{alg}}(g,\chi_0(\lambda_E^c/\lambda_E)^j,1) = \frac{2^{2j}\pi^{2j-1}\Gamma(j+1)\Gamma(j)}{\sqrt{|D_K|}^{2j}} L(g,\chi_0(\lambda_E^c/\lambda_E)^j,1)$$
denotes the algebraic normalization of the central $L$-value as in \cite[Paragraph after Definition 3.5]{CastellaHsieh} (note that in loc. cit., the central value is at $s = 1/2$ instead of at $s = 1$, corresponding to the unitary normalization $\mathrm{JL}(\pi_g) \times \pi_{\chi_0(\lambda_E^c/\lambda_E)^j}$, recalling theta $\pi_{\chi}$ denotes the automorphic representation of $GL_2(\mathbb{A}_{\mathbb{Q}})$ attached to the theta series $\theta_{\chi}$ of $\chi$).
\end{theorem}

\begin{remark}Suppose $g = \theta_{\psi}$ where $\psi$ is as in Choice \ref{twistchoice}. Given Assumption \ref{pconductorassumption} we have that $\lambda$ and $\lambda^c\chi_0/\chi_0^c$ are both ramified at $\frak{p}$. Thus (\ref{Artinfactorization}) implies that the Euler factor of $L^{\mathrm{alg}}(g,\chi_0(\lambda_E^c/\lambda_E)^j,1)$ at $\frak{p}$ is 1. Thus there is no Euler factor at $\frak{p}$ to remove as in \cite[Theorem 5.13]{BDP}.
\end{remark}

\begin{remark}We explain the appearance of the factor $p^{2aj}\varpi^{4j}$ on the right-hand side of (\ref{interpRankinSelberg}). From the discussion of Remark \ref{interpolationcomparisonremark} and (\ref{OmegaOmega'}), one can rewrite the right-hand side of (\ref{interpRankinSelberg}) as 
$$\Omega_p(a)^{4j} \cdot(D_0^j\mathcal{L}_{g\times \chi_0})^2|_{q_{\mathrm{dR}} = 1}= \Omega_{\infty}^{-4j} \cdot \mathrm{vol}(\mathcal{O}_{p^a})^{-2j} \pi^{2j-1}\Gamma(j+1)\Gamma(j)C_{g,\chi_0}p^{-2a}\varpi^{-2} \cdot L(g,\chi_0(\lambda_E^c/\lambda_E)^j,1)$$
with $\Omega_{\infty}$ as in (\ref{period1}). (Cf. with (\ref{interp1twistalternate}).) This is in direct analogy with \cite[(5.2.3-4)]{BDP} (see also Theorem 4.6 and 5.5 of op. cit.). (Note that our $p$-adic period $\Omega_p(a)$ in (\ref{interpRankinSelberg}) is the analogue of the $p$-adic period $\Omega_p^{-1}$ of op. cit., where $\Omega_p$ is defined in (5.2.2) of op. cit.)
\end{remark}

We will prove Theorem \ref{GL2maintheorem} in Section \ref{GL2maintheoremproofsection}.


\subsection{Form of the $p$-adic $L$-function}Following (\ref{sum1}), we will construct a ``fundamental power series''
$$\mathcal{L}_{g \times \chi_0}\in \mathcal{O}_{\mathbb{C}_p} + (q_{\mathrm{dR}}-1)\mathbb{C}_p\llbracket q_{\mathrm{dR}}-1 \rrbracket$$
as a sum 
\begin{equation}\label{sum1'}\mathcal{L}_{g \times \chi_0} = \sum_{\frak{b}\in S'}\chi_0(\frak{b})\cdot \mu_{g \times \chi_0, \frak{b}}
\end{equation}
for some power series 
$$\mu_{g \times \chi_0, \frak{b}} \in \mathcal{O}_{\mathbb{C}_p} + (q_{\mathrm{dR}}-1) \mathbb{C}_p\llbracket q_{\mathrm{dR}}-1\rrbracket$$
which are the $q_{\mathrm{dR}}$-expansions of $G^{\flat}$ from (\ref{Gflatdefinition}) specialized to the point $y_{\frak{b},a}$ from (\ref{YIgin2}).



\subsection{Construction of the $p$-adic $L$-function}

\begin{assumption}\label{gassumption}
\begin{enumerate}
\item Let $g$ be any $GL_2$-eigenform  (not necessarily a CM form) of level $\Gamma_1(Mp^{\alpha})$ where $(M,p) = 1$, which is a $U = [p]U_p$-eigenvector (see Definition \ref{flatdefinition} for the definitions of $U$ and $U_p$), in particular meaning $p$ divides the level of $g$, i.e. $\alpha \ge 1$. Let $\alpha_p \in \overline{\mathbb{Z}}$ be the $U$-eigenvalue. 
\item Assume that 
$$\alpha \le a$$
where $a$ is as in (\ref{adefinition}).
\item Assume that $g$ is normalized (has $q$-expansion $\sum_{n = 1}^{\infty}a_nq^n \in \overline{\mathbb{Z}}\llbracket q\rrbracket, a_1 = 1$).  
\end{enumerate}
\end{assumption}

If $g = \theta_{\psi}$ is as in Choice \ref{twistchoice}, then $\psi$ is in particular unramified at $\frak{p}$ by (\ref{localvcondition}) and so \cite[Proposition 3.13]{BDP2} implies 
$$\alpha = \beta \overset{(\ref{betadelta})}{\le} a,$$
and thus Assumption \ref{gassumption} is satisfied. 

\begin{definition}\label{wgdefinitions}
\begin{enumerate}
\item Let $\pi : \mathcal{E}^+ \rightarrow Y^+$ denote the universal object (universal false elliptic curve with $\mathcal{O}_D$-endomorphism structure, $\Gamma = \Gamma(N)$-level structure and $\Gamma(p^{\infty})$-level structure) and recall $\omega_+ = \pi_{+,*}\Omega_{\mathcal{E}^+/Y^+}$ from (\ref{omegaY}). Let
$$w_g \in \omega_+^{\otimes 2}(Y_{\infty}^+)$$
be the 1-form associated with the normalized Jacquet-Langlands correspondence of $g$ (where here normalized means $w_g \in \omega_+^{\otimes 2}(Y_{\infty}^+)$ and $p^{-r}w_g \not\in \omega_+^{\otimes 2}(Y_{\infty}^+)$ for any $r \in \mathbb{Q}_{> 0}$ and any element $p^{-r} \in \mathbb{C}_p$ with $\mathrm{ord}_p(p^{-r}) = -r$).  
By Assumption \ref{gassumption}, it is the pullback of a modular form of weight 2 and level $\Gamma_1(Mp^{\alpha})$ for some integer $M$ prime to $p$ and $\alpha \in \mathbb{Z}_{\ge 0} \cap \mathbb{Z}_{\le a}$, where $a$ is as in (\ref{adefinition}).  
\item Let 
$$w_g^{\flat} = (1-V_p^*U_p^*)w_g \in \omega_+^{\otimes 2}(Y_{\infty}^+)$$
as in Definition \ref{flatdefinition}; it is thus the pullback of a modular form of weight 2 and level $\Gamma_1(Mp^{\alpha+2})$. If moreover $g$ is a $U = [p]U_p$-eigenform of eigenvalue $\alpha_p$, then 
$$V_p^*U_p^*g = V_p^*[p^{-1}]^*(\alpha_p g) = \alpha_p[p^{-1}]^*V_p^*g$$
and so $w_g^{\flat}$ is the pullback of a modular form of weight 2 and level $\Gamma_1(Mp^{\alpha+1})$. 
We thus have
$$w_g^{\flat}|_{\mathcal{Y}^{\mathrm{Ig}}} \in \omega_+^{\otimes 2}|_{\mathcal{Y}^{\mathrm{Ig}}}(\mathcal{Y}^{\mathrm{Ig}}).$$
\end{enumerate}
\end{definition}


Recall Definition \ref{'zqexpansions}, Theorem \ref{U'Utheorem} and Definition \ref{thetatpowerseriesdefinition}. 

\begin{definition}\begin{enumerate}
\item Let
$$G_2^{\flat}(q_{\mathrm{dR}}) := \theta_t(w_g^{\flat}(q_{\mathrm{dR}})) \in \hat{\mathcal{O}}_{\mathcal{V}_x}(\mathcal{V}_x)\llbracket q_{\mathrm{dR}}-1\rrbracket.$$
Here, recall by Definition \ref{'zqexpansions} that $w_g^{\flat}(q_{\mathrm{dR}})$ is the $q_{\mathrm{dR}}$-expansion of the generalized $p$-adic modular form of weight 2 
\begin{equation}\label{Ggeneralizedpadicmodularform}\tilde{G}_2^{\flat} := \frac{w_g^{\flat}}{w_{\mathrm{can}}^{\otimes 2}} \in \mathbb{B}_{\mathrm{dR},\mathcal{V}_x}(\mathcal{V}_x)\llbracket q_{\mathrm{dR}}-1\rrbracket \overset{(\ref{Bdecomposition''})}{=} \hat{\mathcal{O}}_{\mathcal{V}_x}(\mathcal{V}_x)(\!(t)\!)\llbracket q_{\mathrm{dR}}-1\rrbracket
\end{equation}
attached to $w_g^{\flat}$, and $\theta_t(w_g^{\flat}(q_{\mathrm{dR}}))$ is written in the notation of Definition \ref{thetatpowerseriesdefinition}. 
\item Thus we may also write, in the notation of Definition \ref{thetatpowerseriesdefinition},
\begin{equation}\label{alsohave}G_2^{\flat}(q_{\mathrm{dR}}) = \theta_t(\tilde{G}_2^{\flat}(q_{\mathrm{dR}})) \in \hat{\mathcal{O}}_{\mathcal{V}_x}(\mathcal{V}_x)\llbracket q_{\mathrm{dR}}-1\rrbracket.
\end{equation}
\item Let $\mathbb{Y}_{\infty}(\Gamma)$ be as in (\ref{algebraicYinftyGamma}) with $\Gamma = \Gamma(N)$ as in Convention \ref{Yconvention}. Since $w_g \in \omega^{\otimes 2}(\mathbb{Y}_{\infty}(\Gamma))$, in fact $w_g^{\flat} \in \omega^{\otimes 2}(\mathbb{Y}_{\infty}(\Gamma))$.
\item Define
$$F_g := \frac{w_g}{2\pi id\tau} \in \mathcal{O}_{\mathcal{H}^+}(\mathcal{H}^+), \hspace{1cm} F_g^{\flat} = \frac{w_g^{\flat}}{2\pi id\tau} \in \mathcal{O}_{\mathcal{H}^+}(\mathcal{H}^+),$$
where $\tau$ is the standard coordinate on $\mathcal{H}^+$.
\end{enumerate}
\end{definition}

Henceforth, assume $g$ is a $U = [p]U_p$-eigenvector (see Definition \ref{flatdefinition} for the definitions of $U$ and $U_p$) of eigenvalue $\alpha_p$. Consider the $p$-adic logarithm $\log_{w_g}$, which by \cite{Coleman} is a locally analytic primitive of $w_g$. As in \cite[Section 3.8]{BDP}, one can verify that $(1-V_p^*U_p^*)g = (1 - \alpha_p([p^{-1}]V_p)^*)$ is the Coleman polynomial of $\log_w$ on $\mathcal{Y}^{\mathrm{Ig}}(\epsilon_0)$ (note that the order-$p$ canonical subgroup exists on $\mathcal{Y}^{\mathrm{Ig}}(\epsilon_0)$ since $n(\epsilon_0) = 1$, see (\ref{rchoice})), and hence
\begin{equation}\label{logintegral}\log_{w_g^{\flat}} \in \mathcal{O}_{\mathcal{Y}^{\mathrm{Ig}}(\epsilon_0)}^+(\mathcal{Y}^{\mathrm{Ig}}(\epsilon_0))
\end{equation}
is a rigid analytic primitive of $w_g^{\flat}$. Thus, since $\partial$ from (\ref{partialkjdefinition}) is just the exterior derivative $\mathcal{O}_{\mathcal{V}_x} \rightarrow \Omega_{\mathcal{V}_x}$, we have for any $j \in \mathbb{Z}_{\ge 1}$
\begin{equation}\label{primitiveequation}\partial_0\log_{w_g^{\flat}} = w_g^{\flat}, \hspace{1cm} d_0^j\log_{w_g^{\flat}} = d_2^{j-1}\tilde{G}_2.
\end{equation}

Moreover,
\begin{equation}\label{logwgintegral}\theta_t(\theta_q(\log_{w_g^{\flat}}))  \overset{(\ref{thetaq}), (\ref{thetat})}{=} \log_{w_g^{\flat}} \overset{(\ref{logintegral})}{\in} \hat{\mathcal{O}}_{\mathcal{Y}^{\mathrm{Ig}}(\epsilon_0)}^+(\mathcal{Y}^{\mathrm{Ig}}(\epsilon_0)).
\end{equation}

\begin{definition}
\begin{enumerate}
\item Let
\begin{equation}\label{Gflatdefinition}G^{\flat}(q_{\mathrm{dR}}) := \theta_t\left(\log_{w_g^{\flat}}(q_{\mathrm{dR}})\right) \in \hat{\mathcal{O}}_{\mathcal{Y}^{\mathrm{Ig}}(\epsilon_0)}(\mathcal{Y}^{\mathrm{Ig}}(\epsilon_0))\llbracket q_{\mathrm{dR}}-1\rrbracket.
\end{equation}
\item For any $j \in \mathbb{Z}_{\ge 0}$, define
\begin{equation}\label{derivativeGflatdefinition}D_0^jG^{\flat}(q_{\mathrm{dR}}) := \theta_t\left(d_0^j\left(\log_{w_g^{\flat}}(q_{\mathrm{dR}})\right)\right)
\end{equation}
so that $D_0^0G^{\flat}(q_{\mathrm{dR}}) = G^{\flat}(q_{\mathrm{dR}})$. We will usually restrict to $\mathcal{Y}^{\mathrm{Ig}}(\epsilon_0/p^{a}) \subset \mathcal{Y}^{\mathrm{Ig}}(\epsilon_0)$ unless otherwise noted for notational convenience and view 
$$D_0G^{\flat}(q_{\mathrm{dR}}) \in \hat{\mathcal{O}}_{\mathcal{Y}^{\mathrm{Ig}}(\epsilon_0/p^{a})}(\mathcal{Y}^{\mathrm{Ig}}(\epsilon_0/p^{a}))\llbracket q_{\mathrm{dR}}-1\rrbracket.$$
\end{enumerate}
\end{definition}

Recall $w_g^{\flat} \in \omega_+^{\otimes 2}(Y_{\infty}^+)$ (see Definition \ref{wgdefinitions} (2)) is a form of level $\Gamma_1(Mp^{\alpha+1})$ where $\alpha \in \mathbb{Z}$ with $0 \le \alpha \le a$ ($a$ as in (\ref{adefinition})). Recall the generalized $p$-adic modular form of weight 2 $\tilde{G}_2^{\flat}$ (see (\ref{Ggeneralizedpadicmodularform})) attached to $w_g^{\flat}$. Recall the open subset 
$$Y_1(a+1,\epsilon_0/p^{a}) \subset Y(\Gamma(N) \cap \Gamma_1(p^{a+1}))$$
from Convention \ref{Y1convention}. Observe that by pulling back $w_g^{\flat} \in \omega^{\otimes 2}(Y(\Gamma_1(Mp^{a+1})))$ along the map 
$$Y_1(a+1,\epsilon_0/p^{a}) \subset Y(\Gamma(N) \cap \Gamma_1(p^{a+1})) \rightarrow Y(\Gamma_1(Mp^{a+1}))$$
we can view 
$$w_g^{\flat} \in \omega^{\otimes 2}(Y_1(a+1,\epsilon_0/p^{a})).$$
Then by Remark \ref{assumptionsatisfiedremark2}, $\tilde{G}_2^{\flat}$ satisfies the assumptions of Theorem \ref{pintegraltheorem2} (1) with $k = 2$. Thus by Theorem \ref{pintegraltheorem2} (1) we have for all $j \in \mathbb{Z}_{\ge 1}$
\begin{equation}\label{logtildeGintegral}\begin{split}\theta_t(d_0^j\log_{w_g^{\flat}}(q_{\mathrm{dR}}))|_{q_{\mathrm{dR}} = 1} \overset{(\ref{thetaq})}{=} \theta_q(\theta_t(d_0^j\log_{w_g^{\flat}}(q_{\mathrm{dR}}))) &\overset{(\ref{Ggeneralizedpadicmodularform}), (\ref{primitiveequation})}{=} \theta_q(\theta_t(d_2^{j-1}\tilde{G}_2^{\flat}(q_{\mathrm{dR}}))) \\
&\hspace{.3cm}\overset{(\ref{integralthetaG})}{\in} \hat{\mathcal{O}}_{\mathcal{Y}^{\mathrm{Ig}}(\epsilon_0/p^{a})}^+(\mathcal{Y}^{\mathrm{Ig}}(\epsilon_0/p^{a})).
\end{split}
\end{equation}

Thus for any $j \in \mathbb{Z}_{\ge 1}$,
\begin{equation}\label{derivativeGflatintegral}\begin{split}D_0^jG^{\flat}(q_{\mathrm{dR}}) &\overset{(\ref{derivativeGflatdefinition})}{=} \theta_t\left(d_0^j\left(\log_{w_g^{\flat}}(q_{\mathrm{dR}})\right)\right) \\
&\overset{(\ref{logtildeGintegral})}{\in} \hat{\mathcal{O}}_{\mathcal{Y}^{\mathrm{Ig}}(\epsilon_0/p^{a})}^+(\mathcal{Y}^{\mathrm{Ig}}(\epsilon_0/p^{a})) + (q_{\mathrm{dR}}-1)\cdot \hat{\mathcal{O}}_{\mathcal{Y}^{\mathrm{Ig}}(\epsilon_0/p^{a})}(\mathcal{Y}^{\mathrm{Ig}}(\epsilon_0/p^{a})) \llbracket q_{\mathrm{dR}}-1\rrbracket.
\end{split}
\end{equation}
By Theorem \ref{pintegraltheorem2} (3), the function 
$$\mathbb{Z}_{\ge 1} \ni j \mapsto \theta_q(D_0^jG^{\flat}(q_{\mathrm{dR}})) \overset{(\ref{logtildeGintegral})}{=} \theta_q(\theta_t(d_0^j\left(\log_{w_g^{\flat}}(q_{\mathrm{dR}})\right))) \in \hat{\mathcal{O}}_{\mathcal{Y}^{\mathrm{Ig}}(\epsilon_0/p^{a})}^+(\mathcal{Y}^{\mathrm{Ig}}(\epsilon_0/p^{a}))$$
extends to a continuous function
\begin{equation}\label{intermediatecontinuous3}\mathbb{Z}_{\ge 1} \ni j \mapsto D_0^jG^{\flat}(q_{\mathrm{dR}})|_{q_{\mathrm{dR}} = 1} = \theta_q(D_0^jG^{\flat}(q_{\mathrm{dR}}))  \in \hat{\mathcal{O}}_{\mathcal{Y}^{\mathrm{Ig}}(\epsilon_0/p^{a})}^+(\mathcal{Y}^{\mathrm{Ig}}(\epsilon_0/p^{a})).
\end{equation}



Analogously as in Definition \ref{Sdefinition}, we define the following set. 

\begin{definition}\label{Sdefinition2}
Recall $N$ from Assumption \ref{Nf0assumption2} and $f_0'$ from Definition \ref{f'notation}. Fix a set of everywhere integral id\`{e}les
$$S' \subset \mathbb{A}_K^{\times,(pN\infty f_0')} \overset{\rho}{\subset} D^{\times}(\mathbb{A}_{\mathbb{Q}}^{(\infty)})$$
(i.e. $S'$ is a set of finite id\`{e}les which are prime to $pNf_0'$) and such that $S'$ is a full set of representatives of $\mathcal{C}\ell(N)[p^af_0']$ (recall the notation of Definition \ref{mixedclassgroupdefinition}) under the map
$$\mathbb{A}_K^{\times,(\infty)} \twoheadrightarrow K^{\times}\backslash \mathbb{A}_K^{\times,(\infty)}/\left((\mathbb{Z}_p + p^a\mathcal{O}_{K_{\frak{p}}})^{\times}\cdot \prod_{v\nmid \frak{p}\infty}(1+Nf_0'\mathcal{O}_{K_v})\right) \cong \mathcal{C}\ell(N)[p^af_0'].$$
For simplicity, we will further assume that 
$$1 \in S'$$
(which thus represents the trivial class in $\mathcal{C}\ell(N)[p^af_0']$). 
\end{definition}

We now will need to consider CM points for $\mathcal{O}_{p^af_0'} \subset \mathcal{O}_K$ (see Definition \ref{Ofdefinition}). Recall $\varpi$ from (\ref{pichoice}) and $\tau_0$ from (\ref{tau0}).  

\begin{choice}\label{choice}When $D = M_2(\mathbb{Q})$, let $A'$ be an elliptic curve defined over $\mathcal{O}_{K[f_0']}$ (where $K[m]/K$ denotes the ring class field of conductor $m$) with
$$A'(\mathbb{C}) \cong \mathbb{C}/(\mathbb{Z}\frac{\tau_0}{f_0'} + \mathbb{Z}).$$
When $D \neq M_2(\mathbb{Q})$, let $A'$ be an false elliptic curve defined over $\mathcal{O}_{K[f_0']}$ with 
$$A'(\mathbb{C}) \cong \mathbb{C}^{\oplus 2}/\left(\iota_{\infty}(\mathcal{O}_D)\cdot \left(\begin{array}{ccc} \frac{\tau_0}{f_0'}\\
1\\
\end{array}\right)\right)$$
(this is the false elliptic curve $A_{\tau_0/f_0'}$ of \cite[Section 2.5]{Brooks}). By the theory of complex multiplication (\cite[Chapter II.1.4]{deShalit}), $A'$ has CM by $\mathcal{O}_{f_0'}$. Suppose $D = M_2(\mathbb{Q})$. Then there is some $\Omega' \in \mathbb{C}^{\times}$ such that $\Omega'(\mathbb{Z}\tau_0/f_0' + \mathbb{Z})$ is the period lattice of $A'$. The point $[\tau_0/f_0',1] \in Y(\mathbb{C})$ is induced by the algebraic point 
$$(A,\frac{\Omega' \tau_0}{f_0'N},\frac{\Omega'}{N}) \in \mathbb{Y}(K(N)[f_0']),$$
noting that $(\frac{\Omega'\tau_0}{f_0'N},\frac{\Omega'}{N})$ is a $\Gamma(N)$-level structure under $A'(\mathbb{C}) = \mathbb{C}/(\Omega'(\mathbb{Z}\tau_0/f_0'+\mathbb{Z}))$. 

\end{choice}

From the theory of complex multiplication and by examining formal groups, one sees that $A'[\varpi]$ lifts the kernel of $p$-power Frobenius on $A'$ modulo $\varpi\mathcal{O}_{K[f_0']}$ and that the leading coefficient of the isogeny of formal groups $\hat{A}' \rightarrow \hat{A}'/\hat{A}'[\varpi]$ has $p$-adic absolute value $p^{-1/2}$. Thus (cf. \cite[Chapter 3]{Katzpamf})
$$|\mathrm{Ha}(A')| = p^{-1}/p^{-1/2} = p^{-1/2}.$$
Thus, $A'$ has a canonical subgroup of order $p$. Let $[\cdot]_{A'} : \mathcal{O}_{f_0'} \xrightarrow{\sim} \mathrm{End}(A'/\mathcal{O}_{K[f_0']})$ denote the CM action.

\begin{definition}Recall $1/(p+1) \le \epsilon_0 < p/(p+1)$ is as in (\ref{rchoice}). 
\begin{enumerate}
\item Choose a trivialization 
$$(e_1',e_2') : \mathbb{Z}^{\oplus 2} \xrightarrow{\sim} e^1H_1(A'(\mathbb{C}),\mathbb{C})$$
(where $H_1$ denotes singular homology and $e^1 = \left(\begin{array}{ccc} 1 & 0\\
0 & 0\\
\end{array}\right)$ is as in Convention \ref{idempotentconvention}) such that the corresponding trivialization
$$(e_1',e_2') : \mathbb{Z}_p^{\oplus 2} \xrightarrow{\sim} T_pA'$$
satisfies 
$$[\varpi]_{A'}(e_2) = e_1.$$
\item Thus $e_{1,1}$ trivializes the canonical subgroup of $A'$, and thus by Definition \ref{plusdefinitions} we have a point
$$y' = (A',\frac{\Omega' \tau_0}{f_0'N},\frac{\Omega'}{N},e_1',e_2') \in \mathcal{Y}^{\mathrm{Ig}}(\epsilon_0).$$
\item For $\frak{b} \in S'$ (see Definition \ref{Sdefinition2}), define 
\begin{equation}\label{YIgin2'}y_{\frak{b}}' = y' \cdot \frak{b} \in \mathcal{Y}^{\mathrm{Ig}}(\epsilon_0), \hspace{1cm} y_{\frak{b},a}' := y_{\frak{b}}' \cdot g^{-a} \overset{(\ref{gisomorphism})}{\in} \mathcal{Y}^{\mathrm{Ig}}(\epsilon_0/p^{a}).
\end{equation}
Here the first inclusion holds since $\frak{b} \in S'$ is prime to $p$, and thus the action $\cdot \frak{b}$ (see (\ref{Shimuralaw})) preserves canonical subgroups. 
\item Define
\begin{equation}\label{localmeasure'}\mu_{g \times \chi_0, \frak{b}} := G^{\flat}(y_{\frak{b},a}')(q_{\mathrm{dR}}) \overset{(\ref{derivativeGflatintegral})}{\in}\mathcal{O}_{\mathbb{C}_p} + (q_{\mathrm{dR}}-1)\mathbb{C}_p\llbracket q_{\mathrm{dR}}-1\rrbracket,
\end{equation}
where $G^{\flat}(y_{\frak{b},a}')(q_{\mathrm{dR}})$ denotes the specialization of the coefficients of the power series $$G^{\flat}(q_{\mathrm{dR}}) \in\hat{\mathcal{O}}_{\mathcal{Y}^{\mathrm{Ig}}(\epsilon_0/p^{a})}(\mathcal{Y}^{\mathrm{Ig}}(\epsilon_0/p^{a}))\llbracket q_{\mathrm{dR}}-1\rrbracket$$
from (\ref{Gflatdefinition}) along 
\begin{equation}\label{ybspecialize3}\hat{\mathcal{O}}_{\mathcal{Y}^{\mathrm{Ig}}(\epsilon_0/p^{a})}(\mathcal{Y}^{\mathrm{Ig}}(\epsilon_0/p^{a})) \rightarrow \hat{\mathcal{O}}_{\mathcal{Y}^{\mathrm{Ig}}(\epsilon_0/p^{a})}(y_{\frak{b},a}') \subset \mathbb{C}_p.
\end{equation}
\item Now define the Rankin-Selberg $p$-adic $L$-function as
\begin{equation}\label{globalmeasure'}\mathcal{L}_{g \times \chi_0} := \sum_{\frak{b} \in S'}\chi_0(\frak{b})\cdot\mu_{g \times \chi_0,\frak{b}}\in \mathcal{O}_{\mathbb{C}_p} + (q_{\mathrm{dR}}-1)\mathbb{C}_p\llbracket  q_{\mathrm{dR}}-1\rrbracket.
\end{equation}
\end{enumerate}
\end{definition}

We will also need to consider $p$-adic Maass-Shimura derivatives of $\mathcal{L}_{g \times \chi_0}$. 

\begin{definition}For $\frak{b} \in S'$ and $j \in \mathbb{Z}_{\ge 0}$, let
\begin{equation}\label{derivativelocalmeasure'}D_0^j\mu_{g \times \chi_0,\frak{b}} := D_0^jG^{\flat}(y_{\frak{b},a}')(q_{\mathrm{dR}}) \overset{(\ref{derivativeGflatintegral})}{\in} \mathcal{O}_{\mathbb{C}_p} + (q_{\mathrm{dR}}-1)\mathbb{C}_p\llbracket q_{\mathrm{dR}}-1\rrbracket,
\end{equation}
where $D_0^jG^{\flat}(y_{\frak{b},a}')(q_{\mathrm{dR}})$ denotes the specialization of the coefficients of the power series $$D_0^jG^{\flat}(q_{\mathrm{dR}}) \in \hat{\mathcal{O}}_{\mathcal{Y}^{\mathrm{Ig}}(\epsilon_0/p^{a})}(\mathcal{Y}^{\mathrm{Ig}}(\epsilon_0/p^{a}))\llbracket q_{\mathrm{dR}}-1\rrbracket$$
from (\ref{derivativeGflatdefinition}) along (\ref{ybspecialize3}). With $E/\mathbb{Q}$ the elliptic curve with CM by $\mathcal{O}_K$ as above, let
\begin{equation}\label{globalmeasureRS}D_0^j\mathcal{L}_{g \times \chi_0} := \sum_{\frak{b} \in S'}(\lambda_E^{-2j}\chi_0)(\frak{b}) \cdot D_0^j\mu_{g \times \chi_0,\frak{b}}.
\end{equation}
In particular, $D_0^0\mathcal{L}_{g \times \chi_0} = \mathcal{L}_{g \times \chi_0}$. 
\end{definition}

By (\ref{intermediatecontinuous2}) and the specialization of (\ref{intermediatecontinuous3}) along 
$$\hat{\mathcal{O}}_{\mathcal{Y}^{\mathrm{Ig}}(\epsilon_0/p^{a})}^+(\mathcal{Y}^{\mathrm{Ig}}(\epsilon_0/p^{a})) \rightarrow \hat{\mathcal{O}}_{\mathcal{Y}^{\mathrm{Ig}}(\epsilon_0/p^{a})}^+(y_{\frak{b},a}') \subset \mathcal{O}_{\mathbb{C}_p}$$
for every $\frak{b} \in S'$, we get that
\begin{equation}\label{Gcontinuous}\mathbb{Z}/(p-1)\times \mathbb{Z}_p \ni j \mapsto D_0^j\mathcal{L}_{g \times \chi_0}|_{q_{\mathrm{dR}} = 1} \in \mathcal{O}_{\mathbb{C}_p}
\end{equation}
is a continuous function. 

\subsection{Proof of Theorem \ref{GL2maintheorem}}\label{GL2maintheoremproofsection}

\begin{proof}[Proof of Theorem \ref{GL2maintheorem}]

\textbf{(1)}: Recall that $\theta_q(q_{\mathrm{dR}} -1) = 0$, and so
\begin{equation}\label{recallthetaX}\theta_q(D_0^jG^{\flat}(y_{\frak{b},a}')(q_{\mathrm{dR}})) = D_0^jG^{\flat}(y_{\frak{b},a}')(q_{\mathrm{dR}})|_{q_{\mathrm{dR}} = 1}.
\end{equation}
Analogously to (\ref{interpcalc'}), we have
\begin{equation}\label{interpcalcRankinSelberg}\begin{split}D_0^j\mathcal{L}_{g \times \chi_0}|_{q_{\mathrm{dR}} = 1} &= \sum_{\frak{b} \in S'}(\lambda_E^{-2j}\chi_0)(\frak{b})\cdot D_0^j\mu_{g \times \chi_0,\frak{b}}|_{q_{\mathrm{dR}} = 1} \\
&\overset{(\ref{derivativelocalmeasure'})}{=} \sum_{\frak{b} \in S'}(\lambda_E^{-2j}\chi_0)(\frak{b})\cdot D_0^jG^{\flat}(y_{\frak{b},a}')(q_{\mathrm{dR}})|_{q_{\mathrm{dR}} = 1} \\
&\overset{(\ref{recallthetaX})}{=} \sum_{\frak{b} \in S'}(\lambda_E^{-2j}\chi_0)(\frak{b})\cdot D_0^jG^{\flat}(y_{\frak{b},a}')(q_{\mathrm{dR}})|_{q_{\mathrm{dR}} = 1}\\
&\overset{(\ref{compareMSvalues})}{=} \sum_{\frak{b} \in S'}(\lambda_E^{-2j}\chi_0)(\frak{b})\frac{\Omega_{\infty}(y_{\frak{b},a}')^{2j}}{\Omega_p(y_{\frak{b},a}')^{2j}}\left(\partial_2^{j-1}F_g^{\flat}(y_{\frak{b},a}')\right),\\
\end{split}
\end{equation}
where in the last line, we use $D_0^jG^{\flat} = D_2^jw_g^{\flat}$, which we compare with $D_2F_g^{\flat}$ using (\ref{compareMSvalues}). Here, $\Omega_p(y_{\frak{b},a}')$ and $\Omega_{\infty}(y_{\frak{b},a}')$ are defined as in (\ref{defineomegaperiods}) with respect to the point 
$$y_{\frak{b},a}'   \overset{(\ref{YIgin2'})}{\in} \mathcal{Y}^{\mathrm{Ig}}(\epsilon_0/p^{a}) \subset \mathcal{Y}^{\mathrm{Ig}}(\epsilon_0) = U \overset{(\ref{rchoice}), (\ref{Um})}{=} U_{n(\epsilon_0)-1}$$
and, letting $A_{\frak{b},a}'$ be the underlying false elliptic curve of $y_{\frak{b},a}'$, with respect to any fixed generator 
\begin{equation}\label{fixdifferential3}w_0(A_{\frak{b},a}') \in \Omega_{A_{\frak{b},a}'/\overline{\mathbb{Q}}}
\end{equation}
Here note that we can also apply (\ref{compareMSvalues}) to the CM point $y_{\frak{b},a}'$, as it evidently satisfies Assumption (2) of Theorem \ref{CMcoincidetheorem}. 

We claim that there exists $\Omega'' \in \overline{\mathbb{Q}}^{\times}$ such that 
\begin{equation}\label{OmegaOmega'}\Omega_p(y_{\frak{b},a}') = \Omega''\cdot \Omega_p(a) \hspace{.5cm} \text{and} \hspace{.5cm} \Omega_{\infty}(y_{\frak{b},a}') = \Omega'' \cdot \Omega_{\infty}(a),
\end{equation}
where $\Omega_p(a)$ and $\Omega_{\infty}(a)$ are as in Definition \ref{finalperiodsdefinition}. To see this, note that the isogeny 
\begin{equation}\label{complexpiba'}\mathbb{C}^{\oplus 2}/\iota_{\infty}(\mathcal{O}_D)\cdot \left(\begin{array}{ccc} \frac{\tau_0}{f_0'p^a} \\
1\end{array}\right) \rightarrow \mathbb{C}^{\oplus 2}/\iota_{\infty}(\mathcal{O}_D)\cdot \left(\begin{array}{ccc} \frac{\tau_0}{p^a} \\
1\end{array}\right)\end{equation}
induces a degree $f_0'$ isogeny 
$$\pi_{\frak{b}',a}' : A_{\frak{b}',a}' \rightarrow A_{\frak{b},a}$$
defined over $K[f_0']$ for every $\frak{b}' \in S'$ mapping to the image of $\frak{b} \in S \subset \mathbb{A}_K^{\times,(\infty)} \rightarrow \mathcal{C}\ell(\frak{f}^{(p)})[p^a]$ under the map 
$$\mathbb{A}_K^{\times,(\infty)} \rightarrow \mathcal{C}\ell(\frak{f}^{(p)})[f_0'p^a] \rightarrow \mathcal{C}\ell(\frak{f}^{(p)})[p^a].$$ 
Let $y_{\frak{b},a} \in \mathcal{Y}^{\mathrm{Ig}}(\epsilon_0)(\overline{\mathbb{Q}}_p,\overline{\mathbb{Z}}_p)$ as in (\ref{YIgin2}). Since $(f_0',p) = 1$, we have that 
\begin{equation}\label{follows1}\pi_{\frak{b}',a}'^*(w_{\mathrm{can}}(y_{\frak{b},a})) = w_{\mathrm{can}}(y_{\frak{b}',a}').
\end{equation}
Similarly, since the differential $2\pi idz$ on $\mathbb{C}^{\oplus 2}/\iota_{\infty}(\mathcal{O}_D)\cdot \left(\begin{array}{ccc} \tau \\
1\end{array}\right)$ has period lattice $\mathbb{Z}\tau + \mathbb{Z}$, we see from (\ref{complexpiba'}) that the preimage the period lattice $\mathbb{Z}\tau_0/p^a + \mathbb{Z}$ of $(2\pi idz)(y_{\frak{b},a})$ under $\pi_{\frak{b}',a}'$ is the period lattice $\mathbb{Z}\tau_0/(f_0'p^a) + \mathbb{Z} $ of $(2\pi idz)(y_{\frak{b}',a}')$, and hence
\begin{equation}\label{follows2}\pi_{\frak{b}',a}'^*((2\pi idz)(y_{\frak{b},a})) = (2\pi idz)(y_{\frak{b}',a}').
\end{equation}
Finally, recall $w_0(A_{\frak{b}})$ from (\ref{fixdifferentialb}) and $w_0(A_{\frak{b}',a}')$ from (\ref{fixdifferential3}) and define $\Omega'' \in \overline{\mathbb{Q}}^{\times}$ by 
$$\Omega'' \cdot \pi_{\frak{b},a}'^*w_0(A_{\frak{b}}) = w_0(A_{\frak{b}',a}').$$
Now (\ref{OmegaOmega'}) follows from (\ref{compareeis}), (\ref{Omegaynequal}), (\ref{follows1}), (\ref{tildewcan}) and (\ref{follows2}). Note that although the quaternion algebras $D/\mathbb{Q}$ underlying $Y$ may be different in Sections \ref{padicLfunctionsection} and \ref{padicLfunctionsection2}, Definition \ref{finalperiodsdefinition} and (\ref{Omegaynequal}) do not depend on $D/\mathbb{Q}$ so long as the latter is split at $p$ and $\infty$, which is satisfied in both Sections (see Assumptions \ref{Nf0assumption} and \ref{Nf0assumption2}).

Now from (\ref{interpcalcRankinSelberg}) and (\ref{OmegaOmega'}) we have 
\begin{align*}
D_0^j\mathcal{L}_{g \times \chi_0}|_{q_{\mathrm{dR}} = 1} = \sum_{\frak{b} \in S'}(\lambda_E^{-2j}\chi_0)(\frak{b})\frac{\Omega_{\infty}(a)^{2j}}{\Omega_p(a)^{2j}}\left(\partial_2^{j-1}F_g^{\flat}(y_{\frak{b},a}')\right).
\end{align*}

Since $F_g^{\flat}$ is an eigenform for $U = [p]U_p$, say of eigenvalue $\alpha_p$, we have, analogously to Proposition \ref{Vproposition}, that $\partial_2^jF_g$ is a $U$-eigenform of eigenvalue $p^{j-1}\alpha_p$ and thus 
$$\partial_2^{j-1}(F_g^{\flat}) = (1-V_p^*U_p^*)\partial_2^{j-1}(F_g) = (1-[p^{-1}]V_p^*U^*)\partial_2^{j-1}(F_g) = (1-p^{j-1}\alpha_p ([p^{-1}]V_p)^*)\partial_2^{j-1}(F_g).$$
Thus by an argument analogous to \cite[Theorem 5.9]{BDP}, one can show that 
\begin{align*}&\left(\sum_{\frak{b} \in S'}(\lambda_E^{-2j}\chi_0)(\frak{b})\partial_2^{j-1}(F_g^{\flat})(y_{\frak{b},a}')\right)^2 = (1-p^{j-1}\alpha_p (\lambda_E^{-2j}\chi_0)(\frak{p}))^2\left(\sum_{\frak{b} \in S'}(\lambda_E^{-2j}\chi_0)(\frak{b})\partial_2^{j-1}(F_g)(y_{\frak{b},a}')\right)^2 \\
&=(1-p^{j-1}\alpha_p (\lambda_E^{-2j}\chi_0)(\frak{p}))^2 \left(\sum_{\frak{b} \in S'}(\lambda_E^{-2j}\chi_0)(\frak{b})\partial_2^{j-1}(F_g)(y_{\frak{b},a}')\right)^2 \\
&\overset{(\ref{An'complex})}{=}(1-p^{j-1}\alpha_p (\lambda_E^{-2j}\chi_0)(\frak{p}))^2 \left(\sum_{\frak{b} \in S'}(\lambda_E^{-2j}\chi_0)(\frak{b})\partial_2^{j-1}(F_g)\left(\mathbb{C}/i_{\infty}(\mathcal{O}_D)\left(\mathbb{Z}\tau_0/p^{a} + \mathbb{Z}\right),P,e_1,e_2\right)\right)^2\\
&=(1-p^{j-1}\alpha_p (\lambda_E^{-2j}\chi_0)(\frak{p}))^2(p^{a}\varpi)^{4j} \left(\sum_{\frak{b} \in S'}(\lambda_E^{-2j}\chi_0)(\frak{b})\partial_2^{j-1}(F_g)\left(\mathbb{C}/i_{\infty}(\mathcal{O}_D)\left(\mathbb{Z}p^{a}\varpi + \mathbb{Z}\right),P,e_1,e_2\right)\right)^2
\end{align*}
where the last equality follows because $\partial_2^{j-1}F_g$ has weight $2j$ and $\tau_0 = 1/\varpi$ (\ref{tau0}).

Finally, from the explicit Waldspurger formula (\cite[Theorem 1.4]{YZZ} and \cite[Theorem 1.8]{explicitGZ} with $c_1$ in loc. cit. taken to be $f_0'p^a$ where $f_0'$ is as in Definition \ref{f'notation} (recall also Assumption \ref{f'assumption}), cf. also \cite[Theorem 4.6]{BDP}) we have
\begin{align*}
&\left(\sum_{\frak{b} \in S'}(\lambda_E^{-2j}\chi_0)(\frak{b})\partial_2^{j-1}(F_g)\left(\mathbb{C}/i_{\infty}(\mathcal{O}_D)\left(\mathbb{Z}p^{a}\varpi + \mathbb{Z}\right),P,e_1,e_2\right)\right)^2 \\
&= \mathrm{vol}\left(\mathbb{C}/(\mathbb{Z}p^{a}\varpi + \mathbb{Z})\right)^{-2j} \cdot \pi^{2j-1}j!(j-1)!L(g,\chi_0(\lambda_E^c/\lambda_E)^j,1)\cdot C_{g,\chi_0}'\\
&= (p^{a}\mathrm{Im}(\varpi))^{-2j}\cdot \pi^{2j-1}j!(j-1)!L(g,\chi_0(\lambda_E^c/\lambda_E)^j,1)\cdot C_{g,\chi_0}'\\
&= p^{-2aj}\cdot L^{\mathrm{alg}}(g,\chi_0(\lambda_E^c/\lambda_E^j),1)\cdot C_{g,\chi_0}'
\end{align*}
for some constant $C_{g,\chi_0}' \in \overline{\mathbb{Q}}^{\times}$ depending only on $g$ and $\chi_0$. Here, we use that 
$$\mathrm{Im}(\varpi) \overset{(\ref{pichoice})}{=} \frac{\sqrt{|D_K|}}{2},$$
as well as the fact that $M$ is the prime-to-$p$ part of the conductor of $g$, so that $\overline{\lambda}_E(M) = \overline{\eta}(M) = \eta(M) = \lambda_E(M)$ and thus $(\lambda_E/\overline{\lambda}_E)^j(M) = 1$. 
 Together with the previous paragraph, this gives (\ref{interpRankinSelberg}). \\ 

\textbf{(2)}: Recall 
\begin{equation}\label{recallintegral}\log_{w_g^{\flat}} \overset{(\ref{logwgintegral})}{\in} \mathcal{O}_{\mathcal{Y}^{\mathrm{Ig}}(\epsilon_0)}^+(\mathcal{Y}^{\mathrm{Ig}}(\epsilon_0)).
\end{equation}
Thus
\begin{equation}\label{derivativeidentity2}\theta_t(\theta_q(d_0^j\log_{w_g^{\flat}}(q_{\mathrm{dR}}))) \overset{(\ref{Gcontinuous})}{=} \theta_q(D_0^jG^{\flat}(q_{\mathrm{dR}}))
\end{equation}
By (\ref{globalmeasure'}) and the continuity of (\ref{Gcontinuous}) in $j$, the first equality in the $p$-adic Waldspurger formula (\ref{padicWaldspurger}) will follow if we can show, letting $j = (p-1)p^{n-1}$ in (\ref{derivativeidentity2}) and taking the limit $\mathbb{Z}_{\ge 1} \ni n \rightarrow \infty$, that the limit exists and is equal to 
$$\log_{w_g^{\flat}} \in \hat{\mathcal{O}}_{\mathcal{Y}^{\mathrm{Ig}}(\epsilon_0)}(\mathcal{Y}^{\mathrm{Ig}}(\epsilon_0)).$$
The proof of this is based on an analogous argument to the proof of Theorem \ref{pintegraltheorem2} (2), which we give in the next paragraph.

For all $j \in \mathbb{Z}_{\ge 0}$, note that
$$\theta_t(\theta_q(d_0^{j}\log_{w_g^{\flat}}))|_{\mathcal{Y}^{\mathrm{Ig}}(\epsilon_0/p^{a})} \overset{(\ref{recallintegral}), (\ref{derivativeGflatintegral})}{\in} \hat{\mathcal{O}}_{\mathcal{Y}^{\mathrm{Ig}}(\epsilon_0/p^{a})}^+(\mathcal{Y}^{\mathrm{Ig}}(\epsilon_0/p^{a}))$$
where the case $j = 0$ follows from (\ref{recallintegral}) and the case $j \in \mathbb{Z}_{\ge 1}$ follows from (\ref{derivativeGflatintegral}) and (\ref{derivativeidentity2}). By Propositions \ref{prop1} and  \ref{prop4} we have that 
$$\mathbb{Z}_{\ge 0} \ni j \mapsto \theta_t(\theta_q(d_0^{j}\log_{w_g^{\flat}}))|_{\mathcal{Y}^{\mathrm{Ig}}} \in \hat{\mathcal{O}}_{\mathcal{Y}^{\mathrm{Ig}}}^+(\mathcal{Y}^{\mathrm{Ig}})$$
is continuous and moreover satisfies the following uniform continuity: if $j \equiv j' \pmod{(p-1)p^{n-1}}$ then
$$\theta_t(\theta_q(d_k^{j}\log_{w_g^{\flat}}))|_{\mathcal{Y}^{\mathrm{Ig}}}  \equiv  \theta_t(\theta_q(d_k^{j'}\log_{w_g^{\flat}}))|_{\mathcal{Y}^{\mathrm{Ig}}} \pmod{p^n\hat{\mathcal{O}}_{\mathcal{Y}^{\mathrm{Ig}}}^+(\mathcal{Y}^{\mathrm{Ig}})}.$$
Since
$$\theta_t(\theta_q(d_0^{j}\log_{w_g^{\flat}}))|_{\mathcal{Y}^{\mathrm{Ig}}(\epsilon_0/p^{a})} \in \hat{\mathcal{O}}_{\mathcal{Y}^{\mathrm{Ig}}(\epsilon_0/p^{a})}^+(\mathcal{Y}^{\mathrm{Ig}}(\epsilon_0/p^{a})) \rightarrow \varinjlim_{0 < \epsilon < \epsilon_0/p^{a}}\hat{\mathcal{O}}_{\mathcal{Y}^{\mathrm{Ig}}(\epsilon)}^+(\mathcal{Y}^{\mathrm{Ig}}(\epsilon))$$
and by Corollary \ref{completioncorollary} we have that $\hat{\mathcal{O}}_{\mathcal{Y}^{\mathrm{Ig}}}^+(\mathcal{Y}^{\mathrm{Ig}})$ is the $p$-adic completion of the direct limit in the previous displayed expression, then for any $n \in \mathbb{Z}_{\ge 1}$ we have for all $m \in \mathbb{Z}_{\ge a}$ that 
\begin{align*}\log_{w_g^{\flat}}|_{\mathcal{Y}^{\mathrm{Ig}}(\epsilon_0/p^m)} &\overset{(\ref{thetaq}), (\ref{thetat})}{=} \theta_t(\theta_q(\log_{w_g^{\flat}}))|_{\mathcal{Y}^{\mathrm{Ig}}(\epsilon_0/p^m)} \\
&\equiv \theta_t(\theta_q(d_0^{(p-1)p^{n-1}}\log_{w_g^{\flat}}))|_{\mathcal{Y}^{\mathrm{Ig}}(\epsilon_0/p^m)} \pmod{p^n\hat{\mathcal{O}}_{\mathcal{Y}^{\mathrm{Ig}}(\epsilon_0/p^m)}^+(\mathcal{Y}^{\mathrm{Ig}}(\epsilon_0/p^m))}.
\end{align*}
By Proposition \ref{prop2}, for all $m \in \mathbb{Z}_{\ge a}$ the open subset $\hat{\mathcal{Y}}^{\mathrm{Ig}}(\epsilon_0/p^m)^+ \subset \hat{\mathcal{Y}}^{\mathrm{Ig}}(\epsilon_0/p^{a})^+$ is dense. However, the locus in $\hat{\mathcal{Y}}^{\mathrm{Ig}}(\epsilon_0/p^{a})^+$ defined by
$$\log_{w_g^{\flat}}\equiv \theta_t(\theta_q(d_0^{(p-1)p^{n-1}}\log_{w_g^{\flat}})) \pmod{p^n}$$
is pro-Zariski (and thus adically) closed. Since the dense open subset $\hat{\mathcal{Y}}^{\mathrm{Ig}}(\epsilon_0/p^m)^+$ is contained in this locus, we have that the locus is all of $\hat{\mathcal{Y}}^{\mathrm{Ig}}(\epsilon_0/p^{a})^+$, i.e.
$$\log_{w_g^{\flat}}|_{\mathcal{Y}^{\mathrm{Ig}}(\epsilon_0/p^{a})} \equiv \theta_t(\theta_q(d_0^{(p-1)p^{n-1}}\log_{w_g^{\flat}}))|_{\mathcal{Y}^{\mathrm{Ig}}(\epsilon_0/p^{a})} \pmod{p^n\hat{\mathcal{O}}_{\mathcal{Y}^{\mathrm{Ig}}(\epsilon_0/p^{a})}^+(\mathcal{Y}^{\mathrm{Ig}}(\epsilon_0/p^{a}))}.$$
Letting $n \rightarrow \infty$ gives the assertion. 

Finally, we show the second equality in (\ref{padicWaldspurger}). Note that as $\log_{w_g}$ is a $U = [p]U_p$-eigenvector (see Definition \ref{flatdefinition} for the definitions of $U$ and $U_p$) of eigenvalue $\psi(\frak{p})/p$, we have
$$\log_{w_g^{\flat}} = (1-\frac{\psi(\frak{p})}{p}[p^{-1}]^*V_p^*)\log_{w_g},$$
from which, analogously to the computation in \cite[Theorem 5.9]{BDP}, we can compute as
$$\log_{w_g^{\flat}}(P_{g,\chi_0}) = (1-\frac{(\psi\chi_0)(\frak{p})}{p})\log_{w_g}(P_{g,\chi_0}) = (1-\frac{\lambda(\frak{p})}{p})\log_{w_g}(P_{g,\chi_0}) = \log_{w_g}(P_{g,\chi_0})$$
where the last equality follows since $\lambda$ is ramified at $\frak{p}$ (by Assumption \ref{pconductorassumption} (2)). 

\end{proof}

\subsection{A factorization of $p$-adic $L$-functions}\label{factorizationsection}In this section, we relate the $GL_1/K$ and $GL_2/\mathbb{Q}$ $p$-adic $L$-functions in the case where $g = \theta_{\psi}$. 

We will need a Lemma asserting continuity of the $j$-fold derivative.

\begin{lemma}\label{continuitylemma}The maps $\mathbb{Z}_{\ge 0} \rightarrow \mathcal{O}_{\mathbb{C}_p}$ defined by 
$$j \mapsto D_0^j\mathcal{L}_{g \times \chi_0}|_{q_{\mathrm{dR}} = 1}, \hspace{.5cm} j \mapsto D_1^j\mathcal{L}_{\lambda}|_{q_{\mathrm{dR}} = 1}, \hspace{.5cm} j \mapsto D_1^j\mathcal{L}_{\psi\chi_0^c}|_{q_{\mathrm{dR}} = 1}$$
extend to continuous functions $\mathbb{Z}/(p-1) \times \mathbb{Z}_p \rightarrow \mathcal{O}_{\mathbb{C}_p}$, where $\mathbb{Z} \subset \mathbb{Z}/(p-1) \times \mathbb{Z}_p$ is embedded diagonally.
\end{lemma}

\begin{proof}
 The continuity statement for $D_0^j\mathcal{L}_{g \times \chi_0}$ immediately follows from (\ref{globalmeasureRS}) and the continuity of $j \mapsto D_0^jG^{\flat}(y_{\frak{b},a}')(q_{\mathrm{dR}})|_{q_{\mathrm{dR}} = 1}$ (\ref{Gcontinuous}) and Lemma \ref{sequencelemma}. The proof of continuity of $j \mapsto D_1^j\mathcal{L}_{\lambda}|_{q_{\mathrm{dR}} = 1}$ and $j \mapsto D_1^j\mathcal{L}_{\psi\chi_0^c}|_{q_{\mathrm{dR}} = 1}$ is completely analogous, using (\ref{globalmeasure2}) and (\ref{Econtinuous}) (with $\lambda = \lambda_E$ and $\lambda = \psi\chi_0^c$) in place of (\ref{Gcontinuous}).

\end{proof}

Suppose now that $\lambda = \lambda_E$ is the infinity type $(1,0)$ Hecke character attached to an elliptic curve $E/\mathbb{Q}$ with CM by $\mathcal{O}_K$ (see Example \ref{Eexample}). Let $\psi, \chi_0$ be as in Choice \ref{twistchoice} and (\ref{chi0choice}) defined with respect to $\lambda = \lambda_E$. Consider the Rankin-Selberg pair $(g,\chi_0) = (\theta_{\psi},\chi_0)$. Then $\mathcal{L}_{g \times \chi_0}$ is well-defined. Moreover, $\lambda_E$ and $\psi\chi_0^c$ both satisfy Assumption \ref{pconductorassumption}, and so $\mathcal{L}_{\lambda_E}$ and $\mathcal{L}_{\psi\chi_0^c}$ are well-defined. In this case, we prove a factorization identity involving special values of our $p$-adic $L$-functions, which will be essential in our proof of the rank 1 $p$-converse theorem. 

\begin{theorem}For $(g,\chi_0)$ as above, we have the following identity: for all $j \in \mathbb{Z}/(p-1) \times \mathbb{Z}_p$,
\begin{equation}\label{factorization}D_1^{j-1}\mathcal{L}_{\lambda_E}|_{q_{\mathrm{dR}} = 1}\cdot D_1^j\mathcal{L}_{\psi\chi_0^c}|_{q_{\mathrm{dR}} = 1} = \left(D_0^j\mathcal{L}_{g\times \chi_0}|_{q_{\mathrm{dR}} = 1}\right)^2 \cdot C_{g \times \chi_0},
\end{equation}
where $C_{g\times \chi_0} \in \overline{\mathbb{Q}}_p^{\times}$ is some constant depending only on the pair $(g,\chi_0)$ (and independent of $j$). 
\end{theorem}

\begin{proof}From the interpolation identities (\ref{interp1twist}) and (\ref{interpRankinSelberg}), (\ref{factorization}) holds for all $j \in \mathbb{Z}_{\ge 1}$. Now the assertion follows from Lemma \ref{continuitylemma}.
\end{proof}

The following proposition uses the non-vanishing $L$-value assumption of Choice \ref{twistchoice} on the pair $(g,\chi_0)$; recall that the existence of such a $\chi_0$ used the deep (and ineffective) theorem of Rohrlich \cite{Rohrlich}. 

\begin{proposition}\label{nonzerotheorem}We have
\begin{equation}\label{nonzero}\mathcal{L}_{\psi\chi_0^c}|_{q_{\mathrm{dR} = 1}} \neq 0.
\end{equation}
\end{proposition}

\begin{proof}By Choice \ref{twistchoice}, 
$$L(\psi^{-1}(\chi_0^c)^{-1},0) = L(\psi^c\chi_0,1) \neq 0.$$
By (\ref{interp1twist}) with $j = 0$ and $\lambda = \psi\chi_0^c$, we have for some $C \in \mathbb{C}_p^{\times}$,
$$\mathcal{L}_{\psi\chi_0^c}|_{q_{\mathrm{dR}} = 1} = C \cdot \Omega_{\infty}(a) \cdot L^{(\frak{p})}(\psi^{-1}(\chi_0^c)^{-1},0)\cdot \Omega_p(a)^{-1}C_{\psi\chi_0^c} \neq 0.
$$
\end{proof}

The following Corollary will be key in linking the non-vanishing of a Heegner point to the corank 1 assumption on the Selmer group in the proof of the rank 1 $p$-converse theorem.

\begin{corollary}\label{corollaryHeegnerformula}We have
\begin{equation}\label{Heegnerpointidentity2}D_1^{-1}\mathcal{L}_{\lambda_E}|_{q_{\mathrm{dR}}=1} = \left(\mathcal{L}_{g\times \chi_0}\right)^2|_{q_{\mathrm{dR}} = 1} \cdot C' = \log_{w_g}^2(P_{g,\chi_0}) \cdot C''
\end{equation}
for some $C', C'' \in \mathbb{C}_p^{\times}$. Hence $D_1^{-1}\mathcal{L}_{\lambda_E}|_{q_{\mathrm{dR}} = 1} \neq 0$ if and only if the Heegner point 
$$P_{g,\chi_0} \in (A_g \otimes \chi_0)^{\mathrm{Gal}(\overline{K}/K)} \otimes \mathbb{Q}$$
is nontrivial. 
\end{corollary}

\begin{proof}Plugging $j = 0$ into (\ref{factorization}) we have 
$$(\mathcal{L}_{g \times \chi_0}|_{q_{\mathrm{dR}} = 1})^2  \cdot C_{g \times \chi_0}
= D_1^{-1}\mathcal{L}_{\lambda_E}|_{q_{\mathrm{dR}} = 1} \cdot \mathcal{L}_{\psi\chi_0^c}|_{q_{\mathrm{dR}} = 1}.$$
Now (\ref{Heegnerpointidentity2}) follows from (\ref{nonzero}) and (\ref{padicWaldspurger}). 


\end{proof}

\begin{remark}The identity (\ref{Heegnerpointidentity2}) can be viewed as a supersingular analogue of Rubin's formula (see \cite{Rubinformula}, also \cite{BDP2}). Under the assumption $\mathrm{corank}_{\mathbb{Z}_p}\mathrm{Sel}_{p^{\infty}}(E/\mathbb{Q}) = 1$, we will show that the left-hand side of (\ref{Heegnerpointidentity2}) is nonzero (see the proof of Theorem \ref{BSDrank1theorem}) using a ``Rubin-type main conjecture'' (\ref{RMC}), which we formulate and prove in the next sections.
\end{remark}

\section{Preparations from Classical Iwasawa Theory}\label{backgroundsection}

For this section, let $K/\mathbb{Q}$ be an arbitrary imaginary quadratic field. We do not assume that $K$ has class number 1 (as in Assumption \ref{pramifiedassumption}) until we make Choice \ref{Echoice} in Section \ref{Galoisextensionsection}. Continue to assume that $p$ is a finite prime that ramifies in $K/\mathbb{Q}$. For the rest of the paper, when given $\lambda$ as in Assumption \ref{pconductorassumption}, we will work under Assumption \ref{Nf0assumption} when invoking the objects of Convention \ref{Yconvention}.

\subsection{Coleman power series}\label{Colemansection}Let $K_p/\mathbb{Q}_p$ be an extension of degree $2$, let $L_p/K_p$ a finite unramified extension. (Here, $L_p$ does not yet refer to the $p$-adic completion under (\ref{fixembeddings}) of a number field $L/K$; later in Definition \ref{varphidefinition} we will take $L = K(\frak{f}_0)$ and $L_p = K(\frak{f}_0)_p$, where $\frak{f}_0 \subset \mathcal{O}_K$ is some ideal prime to $p$.) Then $\mathrm{Gal}(L_p/K_p)$ is cyclic and generated by the Frobenius element $\phi$ generating the Galois group of residue fields of $L_p/K_p$. Given a power series $g(X) \in L_p\llbracket X\rrbracket$. Let $g^{\phi}(X) \in L_p\llbracket X\rrbracket$ denote the power series obtained by applying $\phi$ to the coefficients of $g(X)$.

Let $F$ be a height 2 relative Lubin-Tate formal group with respect to $L_p/K_p$, in the sense of \cite[Chapter I.1]{deShalit}. In particular, $F$ is a formal group defined over $\mathcal{O}_{L_p}$ with $\mathrm{End}(F/\mathcal{O}_{L_p}) = \mathcal{O}_{K_p}$ and equipped with special $\phi$-linear endomorphism 
$$f : F \rightarrow F^{\phi},$$
where for any $m\in \mathbb{Z}$, $F^{\phi^m}$ denotes the formal group obtained by applying $\phi^m$ to the coefficients of $F$. Let 
$$f^{\phi^m} : F^{\phi^m} \rightarrow F^{\phi^{m+1}}$$
denote the $\phi$-linear endomorphism obtained by applying $\phi^m$ to the coefficients of $f$. Let 
$$[+] : F \times F \rightarrow F$$
denote the formal group law on $F$, and let $[a]$ denote formal multiplication by $a \in \mathcal{O}_{K_p}$. Let 
$$\varpi' := f'(0)$$
which is a uniformizer of $\mathcal{O}_{L_p}$, let $d := [L_p:K_p]$, and let 
\begin{equation}\label{fncomposition}f^n = f^{\phi^{n-1}} \circ f^{\phi^{n-2}} \circ \cdots \circ f^{\phi} \circ f,
\end{equation}
and note that $f^d = [\xi]$ for 
$$\xi := \Nm_{L_p/K_p}(\varpi') \in \mathcal{O}_{K_p}.$$
Given $\nu \in \mathcal{O}_{K_p}$, let $F[\nu]$ denote the $\nu$-torsion group scheme. Let $F[f^n]$ denote the $f^n$-torsion group scheme. Note that for any local parameter $\varpi$ of $\mathcal{O}_{K_p}$, we have $F[f^n] = F[\varpi^n]$.

A prototypical example of a relative Lubin-Tate group arises from the formal group of a CM elliptic curve. We will work in this setting extensively later. 

\begin{example}\label{CMexample}Suppose $A$ is an elliptic curve with CM by $\mathcal{O}_K$ defined over $\mathcal{O}_L$, where $L/K$ is a finite extension. Suppose $A$ has good reduction at all places above $p$, and suppose $L_p/K_p$ is unramified. Then the formal group $\hat{A}$ over $\mathcal{O}_{L_p}$ is an example of a height 2 relative Lubin-Tate group with respect to $L_p/K_p$. Let $\frak{p}$ be the prime of $K$ determined by (\ref{fixembeddings}). The special $\phi$-linear endomorphism is the projection 
$$f : \hat{A} \rightarrow \hat{A}/\hat{A}[\frak{p}] \cong \hat{A}^{\phi}.$$ 
\end{example}

Let $\varpi$ denote a uniformizer of $\mathcal{O}_{K_p}$.  Let $L_{p,n} = L_p(F[\varpi^n])$. Then the isomorphism
\begin{equation}\label{kappadefinition}\kappa : \mathrm{Gal}(L_{p,\infty}/L_p) \xrightarrow{\sim} \mathcal{O}_{K_p}^{\times}, \hspace{1cm} \sigma(x) = [\kappa(\sigma)](x) \hspace{.25cm} \forall x \in F[p^{\infty}],
\end{equation}
as defined as in \cite[Chapter I.3.3 (9)]{deShalit}, is the \emph{reciprocal} of the inverse of the local Artin symbol. Let 
$$\mathcal{U} = \varprojlim_n \mathcal{O}_{L_{p,n}}^{\times}, \hspace{1cm} \mathcal{U}^1 = \varprojlim_n \mathcal{O}_{L_{p,n}}^{\times,1}$$
with transition maps given by norm, where $\mathcal{O}_{L_{p,n}}^{\times,1}$ is the principal part of $\mathcal{O}_{L_{p,n}}^{\times}$. 

\begin{choice}\label{fbasis}Fix an $f$-compatible sequence 
$$\alpha_n \in F^{\phi^{-n}}[\varpi^n]$$
of generators, i.e. each $\alpha_n$ generates $F^{\phi^{-n}}[\varpi^n]$ as an $\mathcal{O}_{K_p}$-module, and 
\begin{equation}\label{fbasiscondition}f^{\phi^{-n}}(\alpha_n) = \alpha_{n-1}.
\end{equation}
\end{choice}
Sometimes $\alpha = (\alpha_n)$ is called a generator the Tate module, see \cite[Theorem I.2.2]{deShalit} and the ensuing discussion. We prefer the terminology ``$f$-basis'' in our discussion, as we will also simultaneously consider bases of usual Tate modules. 

Fixing a formal group parameter $X$ on $F$, letting $\mathbf{\Gamma}(F)$ denote the coordinate ring of $F$, we have 
$$\mathbf{\Gamma}(F) \cong \mathcal{O}_{L_p}\llbracket X\rrbracket,$$
using Convention \ref{Gammaconvention}. We have a multiplicative norm operator (see \cite[Theorem I.2.1]{deShalit})
$$N_f : \mathbf{\Gamma}(F) \rightarrow \mathbf{\Gamma}(F),$$
which is uniquely characterized by 
\begin{equation}\label{normidentitycharacterize}(N_fg) \circ f = \prod_{\varpi \in F[f]}g(X[+]\varpi).
\end{equation}
Here, $\mathbf{\Gamma}(F)^{\times}$ denotes the units in $\mathbf{\Gamma}(F)$. Let $\mathbf{\Gamma}(F)^{\times,N_f = \phi}$ denote the subgroup of $\mathbf{\Gamma}(F)^{\times}$ consisting of elements $g$ such that $N_fg = g^{\phi}$. 

The following fundamental theorem is due to Coleman \cite{Colemanpowerseries}, and gives a functorial way to convert norm-compatible units into power series. The construction is often referred to as ``Coleman power series''. 

\begin{theorem}[Coleman Power Series, see Chapter I.1.2 of \cite{deShalit}]\label{Colemanpowerseries}Fix a choice of $\{\alpha_n\}_n$ as in Choice \ref{fbasis}. There is a $\mathrm{Gal}(L_{p,\infty}/K_p)$-equivariant isomorphism
$$\mathcal{U} \xrightarrow{\sim} \mathbf{\Gamma}(F)^{\times,N_f = \phi} \cong \mathcal{O}_{L_p}\llbracket X\rrbracket^{\times,N_f = \phi},$$
characterized by
$$\beta = (\beta_n) \mapsto g_{\beta}(X), \hspace{1cm} g_{\beta}^{\phi^{-n}}(\alpha_n) = \beta_n.$$
Moreover, $g_{\beta}$ satisfies the following properties:
\begin{enumerate}
\item $g_{\beta_1\beta_2} = g_{\beta_1}g_{\beta_2}$,
\item for any $\sigma \in \mathrm{Gal}(L_{p,\infty}/L_p)$, $g_{\beta^{\sigma}} = g_{\beta} \circ [\kappa(\sigma)]$, 
\item $N_fg_{\beta} = g_{\beta}^{\phi}$. 
\end{enumerate}
\end{theorem}

The restriction of the isomorphism $\beta \mapsto g_{\beta}$ of Theorem \ref{Colemanpowerseries} to $\mathcal{U}^1 \subset \mathcal{U}$ factors through
$$\mathcal{U}^1 \rightarrow 1+(\frak{p},X)\mathcal{O}_{L_p}\llbracket X\rrbracket.$$
Recall the power series
\begin{equation}\label{logpowerseries}\log(1+X) = X - \frac{X^2}{2} + \frac{X^3}{3} - \ldots \in L_p\llbracket X\rrbracket.
\end{equation}
Hence for $\beta \in \mathcal{U}^1$, we have a well-defined power series
$$\log(g_{\beta}) \in L_p\llbracket X\rrbracket.$$

\begin{definition}Given a power series $G \in L_p\llbracket X\rrbracket$, we define
$$\widetilde{G}(X) := G(X) - \frac{1}{p}\sum_{\varpi \in F[\varpi]}G(X[+]\varpi) \in L_p\llbracket X\rrbracket.$$
\end{definition}

\begin{proposition}\label{tildelogproposition}
$$\widetilde{\log} g_{\beta} \in \mathcal{O}_{L_p}\llbracket X\rrbracket \cdot \frac{1}{\varpi} \subset \mathcal{O}_{L_p}\llbracket X\rrbracket [1/p].$$
\end{proposition}

\begin{proof}We follow the argument of \cite[Lemma I.3.3]{deShalit}. Since $g_{\beta}^p \equiv g_{\beta}^{\phi} \circ f \pmod{\varpi'\mathcal{O}_{L_p}\llbracket X\rrbracket}$, we have
$$g_{\beta}^p \equiv g_{\beta}^{\phi} \circ f \overset{\text{Theorem \ref{Colemanpowerseries} (3)}}{=} N_fg_{\beta} \circ f \overset{(\ref{normidentitycharacterize})}{=}  \prod_{\varpi \in F[f]}g_{\beta}(X[+]\varpi) \pmod{\varpi'\mathcal{O}_{L_p}\llbracket X\rrbracket}.$$
Taking $\log$ of both sides, noting that $pn|p^n$ for all $n \ge 1$ we get
$$p \cdot \log g_{\beta} \equiv \sum_{\varpi \in F[f]}\log g_{\beta}(X[+]\varpi) \pmod{\varpi'\mathcal{O}_{L_p}\llbracket X\rrbracket}.$$
Dividing by $p$ and recalling that $(\varpi')^2\mathcal{O}_{L_p} = p\mathcal{O}_{L_p}$ since $p$ is ramified in $K_p/\mathbb{Q}_p$, we arrive at the assertion. 
\end{proof}

\begin{definition}\label{completedgroupalgebradefinition}Given any ring $R$ and group $G$, let $R[G]$ denote the group algebra of $G$ over $R$. Let 
$$R\llbracket G\rrbracket := \varprojlim_I R[G]/I$$
where $I \subset R[G]$ runs over all ideals of finite index. If $R$ is a $\mathbb{Z}_p$-algebra, we let 
$$R\llbracket G\rrbracket [1/p] := R\llbracket G\rrbracket \otimes_{\mathbb{Z}_p}\mathbb{Q}_p.$$ 
\end{definition}

Hence we get a $\mathbb{Z}_p\llbracket \mathrm{Gal}(L_{p,\infty}/K_p)\rrbracket$-module homomorphism
\begin{equation}\label{tildelogmap}\widetilde{\log} g : \mathcal{U}^1 \rightarrow \mathcal{O}_{L_p}\llbracket X\rrbracket [1/p], \hspace{1cm} \beta \mapsto \widetilde{\log} g_{\beta} \in \mathcal{O}_{L_p}\llbracket X\rrbracket [1/p].
\end{equation}

Let 
$$\log_F = \log_F(X) \in L_p\llbracket X\rrbracket$$
be the unique normalized formal logarithm of $F$ (see \cite[Chapter I.1.1]{deShalit}), i.e. with $\log_F'(0) = 1$. Let
\begin{equation}\label{wF}w_F := d\log_F(X) = \log_F'(X)dX
\end{equation}
be the normalized invariant differential. 

\begin{definition}Recall the character 
$$\kappa : \mathrm{Gal}(\overline{K}_p/L_p) \twoheadrightarrow \mathrm{Gal}(L_{p,\infty}/L_p) \xrightarrow{\sim}\mathcal{O}_{K_p}^{\times}$$
from (\ref{kappadefinition}). Let $\mathcal{O}_{K_p}(\kappa^{-1})$ denote the rank 1 free $\mathcal{O}_{K_p}$-module with $\mathrm{Gal}(\overline{K}_p/K_p)$-action given by $\kappa^{-1}$. Given any $\mathbb{Z}_p\llbracket \mathrm{Gal}(\overline{K}_p/K_p)\rrbracket$-module $M$, let 
$$M \otimes_{\mathbb{Z}_p}\mathcal{O}_{K_p}(\kappa^{-1}).$$
\end{definition}

We thus get an $\mathcal{O}_{K_p}\llbracket \mathrm{Gal}(L_{p,\infty}/K_p)\rrbracket$-module homomorphism
\begin{equation}\label{dtildelogmap}\frac{d}{w_F}\widetilde{\log} g: \mathcal{U}^1(\kappa^{-1}) \rightarrow \mathcal{O}_{L_p}\llbracket X\rrbracket [1/p], \hspace{1cm} \beta \mapsto \frac{d}{w_F}\widetilde{\log} g_{\beta}.
\end{equation}

\subsection{Thickening Coleman power series (after Tsuji)}\label{thickensection}

Let 
$$F_0 := F \pmod{\varpi'\mathcal{O}_{L_p}}$$
denote the special fiber of $F$. We henceforth view $F_0$ as a height 2 formal group over $\overline{\mathbb{F}}_p$, and let
\begin{equation}\label{Wcompositum}W_{\mathcal{O}_{K_p}} := \mathcal{O}_{K_p}\cdot W(\overline{\mathbb{F}}_p)
\end{equation}
be the compositum of $W(\overline{\mathbb{F}}_p)$ and $\mathcal{O}_{K_p}$ in $\overline{\mathbb{Z}}_p$. Thus $W_{\mathcal{O}_{K_p}}$ is the $p$-adic completion of the maximal unramified extension of $\mathcal{O}_{K_p}$. Note that the residue field of $W_{\mathcal{O}_{K_p}}$ is $\overline{\mathbb{F}}_p$. Let $\mathrm{LT}$ denote the Lubin-Tate deformation space over $W_{\mathcal{O}_{K_p}}$ of $F_0/\overline{\mathbb{F}}_p$ (see \cite{LubinTate}), so that there is a non-canonical isomorphism 
$$\mathrm{LT} \cong \mathrm{Spf}(W_{\mathcal{O}_{K_p}}\llbracket T\rrbracket)$$
for some parameter $T$.  We will henceforth fix such an isomorphism, i.e. fix a parameter $T$ on $\mathrm{LT}$. Let
$$F^{\mathrm{univ}} \rightarrow \mathrm{LT}$$
denote the universal deformation of the height 2 formal group $F_0$. In particular, we have
\begin{equation}\label{universalcongruence}\left(F^{\mathrm{univ}} \pmod{(\varpi,T)W_{\mathcal{O}_{K_p}}\llbracket T\rrbracket}\right) \cong F_0, 
\end{equation}
where $\varpi$ is any uniformizer of $\mathcal{O}_{K_p}$. As explained in \cite[Section 1.1]{Tsuji}, $F^{\mathrm{univ}}$ has an $\mathcal{O}_{K_p}^{\times}$-action coming from the universal property, and $\mathrm{LT}$ also has an $\mathcal{O}_{K_p}^{\times}$-action that factors through $\mathcal{O}_{K_p}^{\times}/\mathbb{Z}_p^{\times}$ (i.e. $\mathbb{Z}_p^{\times}$ acts trivially on $\mathrm{LT}$). Moreover, these $\mathcal{O}_{K_p}^{\times}$-actions commute with the structure morphism $F \rightarrow \mathrm{LT}$. 




By (\ref{universalcongruence}) and the results of \cite[Chapter III.3.6-3.8]{Katzpamf} applied to the height 2, 1-dimensional formal group $F^{\mathrm{univ}} \rightarrow \mathrm{LT}$, we have that $F^{\mathrm{univ}}$ has a canonical subgroup $C\subset F^{\mathrm{univ}}[p]$ of order $p$.
Note that $F^{\mathrm{univ}}/C \rightarrow \mathrm{LT}$ is a deformation of 
$$F_0^{\phi} := F_0 \times_{\mathrm{Spec}(\overline{\mathbb{F}}_p),\phi}\mathrm{Spec}(\overline{\mathbb{F}}_p).$$
Thus, since
$$F^{\mathrm{univ}} \times_{\mathrm{Spec}(W_{\mathcal{O}_{K_p}}),\phi}\mathrm{Spec}(W_{\mathcal{O}_{K_p}}) =: F^{\mathrm{univ},\phi}\rightarrow \mathrm{LT}^{\phi} := \mathrm{LT} \times_{\mathrm{Spec}(W_{\mathcal{O}_{K_p}}),\phi}\mathrm{Spec}(W_{\mathcal{O}_{K_p}})$$
is the universal deformation of $F_0^{\phi}$, the universal property of $F^{\mathrm{univ},\phi} \rightarrow \mathrm{LT}^{\phi}$ gives maps 
$$F^{\mathrm{univ}}/C \rightarrow F^{\mathrm{univ},\phi}, \hspace{1cm} \mathrm{LT} \rightarrow \mathrm{LT}^{\phi}$$
fitting into the following diagram:
\begin{equation}\label{isogenydiagramF}
\begin{tikzcd}[column sep = huge]
F^{\mathrm{univ}} \arrow{dr} \arrow{r} & F^{\mathrm{univ}}/C \arrow{d} \arrow{r} & F^{\mathrm{univ},\phi} \arrow{d}\\
 & \mathrm{LT} \arrow{r} & \mathrm{LT}^{\phi}
\end{tikzcd}
\end{equation}
where the right square is a pullback diagram. 
Let
$$f^{\mathrm{univ}} : F^{\mathrm{univ}} \rightarrow F^{\mathrm{univ},\phi}$$
denote the top row of (\ref{isogenydiagramF}). From this morphism of formal groups, we get an associated multiplicative norm operator (cf. \cite[cf. p. 114]{Tsuji}, \cite[Chapter I.2.1]{deShalit})
$$N_{f^{\mathrm{univ}}} : \mathbf{\Gamma}(F^{\mathrm{univ}}) \rightarrow \mathbf{\Gamma}(F^{\mathrm{univ}})$$
satisfying
$$N_{f^{\mathrm{univ}}}g \circ f^{\mathrm{univ}} = \prod_{\alpha \in C}\tau_{\alpha}^*g,$$
where $\tau_{\alpha}^* : \mathbf{\Gamma}(F^{\mathrm{univ}}) \rightarrow \mathbf{\Gamma}(F^{\mathrm{univ}})$ denotes translation by $\alpha$. Note that although such sections $\alpha$ exist only on a cover or $F^{\mathrm{univ}}$, the right-hand side expression descends to an element of $\mathbf{\Gamma}(F^{\mathrm{univ}})$.

\begin{theorem}[cf. Section 4 of \cite{Tsuji}]\label{thickenproposition}
\begin{enumerate}
\item Given a Coleman power series 
$$g_{\beta} \in \mathbf{\Gamma}(F)^{\times,N_f = \phi},$$
there is a unique element 
$$g_{\beta}^{\mathrm{univ}} \in \mathbf{\Gamma}(F^{\mathrm{univ}})^{\times,N_{f^{\mathrm{univ}}} = \phi}$$
specializing to $g_{\beta}$ under pullback by $F \rightarrow F^{\mathrm{univ}}$. Hence we get a $\mathrm{Gal}(L_{p,\infty}/K_p)$-equivariant isomorphism 
\begin{equation}\label{thick}\mathrm{thicken} : \mathbf{\Gamma}(F)^{\times,N_f = \phi}\hat{\otimes}_{\mathcal{O}_{L_p}}W_{\mathcal{O}_{K_p}} \xrightarrow{\sim}\mathbf{\Gamma}(F^{\mathrm{univ}})^{\times, N_{f^{\mathrm{univ}}} = \phi}
\end{equation}
where ``$\hat{\otimes}$'' denotes $p$-adically completed tensor product.
\item Combining this with the isomorphism of Theorem \ref{Colemanpowerseries}, we get a morphism
\begin{equation}\label{thick2}
\mathrm{thicken} \circ g : \mathcal{U}\xrightarrow{\sim} \mathbf{\Gamma}(F)^{\times,N_f = \phi} \rightarrow \mathbf{\Gamma}(F^{\mathrm{univ}})^{\times, N_{f^{\mathrm{univ}}} = \phi}.
\end{equation}
\item Moreover, choosing any formal parameter $X$ on the formal group $F^{\mathrm{univ}}$, the isomorphism (\ref{thick2}) factors through
\begin{equation}\label{principalthick2}\mathcal{U}^1 \rightarrow \left(1 + (\varpi,T,X)\mathbf{\Gamma}(F^{\mathrm{univ}})\right)^{N_{f^{\mathrm{univ}}} = \phi}.
\end{equation}
\end{enumerate}
\end{theorem}

\begin{proof}Taking the $m$-fold composition of $\phi^{-1}N_{f^{\mathrm{univ}}}$, one has the operator 
$$(\phi^{-1}N_{f^{\mathrm{univ}}})^m : \mathbf{\Gamma}(F^{\mathrm{univ}}) \rightarrow \mathbf{\Gamma}(F^{\mathrm{univ}})$$
for all $m \ge 0$. As in \cite[Section 4]{Tsuji}, one chooses an arbitrary lift $\tilde{g}_{\beta}$ of $g_{\beta}$ to $\mathbf{\Gamma}(F^{\mathrm{univ}})$ and shows using standard properties of the norm operator that 
$$(\phi^{-1}N_{f^{\mathrm{univ}}})^{\infty}(\tilde{g}_{\beta}) = \lim_{m \rightarrow \infty}(\phi^{-1}N_{f^{\mathrm{univ}}})^{m}(\tilde{g}_{\beta})$$
is a well-defined element of $\mathbf{\Gamma}(F^{\mathrm{univ}})^{\times,N_{f^{\mathrm{univ}}} = \phi}$ independent of the choice of $\tilde{g}_{\beta}$. 
\end{proof}

We also thicken the constructions (\ref{tildelogmap}) and (\ref{dtildelogmap}). 

\begin{definition}Given any $\beta \in \mathcal{U}^1$, we can apply the power series $\log$ from (\ref{logpowerseries}) to (\ref{principalthick2}) to get  
$$\log g \in \mathbf{\Gamma}(F^{\mathrm{univ}}_{\eta}),$$
where $F^{\mathrm{univ}}_{\eta}$ is the adic generic fiber of $F^{\mathrm{univ}}$. Similarly, given any $G \in \mathbf{\Gamma}(F^{\mathrm{univ}}_{\eta})$, we denote
\begin{equation}\label{tildedefinition}\tilde{G} = G - \frac{1}{p}\sum_{\alpha \in F^{\mathrm{univ}}[f^{\mathrm{univ}}]}\tau_{\alpha}^*G.
\end{equation}
\end{definition}

\begin{proposition}Given any $\beta \in \mathcal{U}^1$, we have 
$$\widetilde{\log} (\mathrm{thicken} \circ g_{\beta}) \in \mathbf{\Gamma}(F^{\mathrm{univ}})[1/p].$$
\end{proposition}

\begin{proof}This is the same argument as in the proof of Proposition \ref{tildelogproposition}, replacing $f$ by $f^{\mathrm{univ}}$. 
\end{proof}

\begin{convention}\label{tildelogconvention}For notational convenience, given a power series $G$ such that $\log(G)$ is well-defined, we will denote $\widetilde{\log(G)}$ by $\widetilde{\log}(G)$. 
\end{convention}

\begin{definition}
\begin{enumerate}
\item Define the $\mathbb{Z}_p\llbracket \mathrm{Gal}(L_{p,\infty}/K_p)\rrbracket$-module homomorphism
\begin{equation}\label{thickentildelogmap} \widetilde{\log} (\mathrm{thicken} \circ g) : \mathcal{U}^1 \rightarrow \mathbf{\Gamma}(F^{\mathrm{univ}})[1/p].
\end{equation}


\item Using the $\mathbf{\Gamma}(\mathcal{O}_{\mathrm{LT}})[1/p]$-linear exterior derivative $d : \mathbf{\Gamma}(F^{\mathrm{univ}})[1/p] \rightarrow \mathbf{\Gamma}(\Omega_{F^{\mathrm{univ}}/\mathrm{LT}})[1/p]$, define the $\mathcal{O}_{K_p}\llbracket \mathrm{Gal}(L_{p,\infty}/K_p)\rrbracket$-module homomorphism
\begin{equation}\label{thickendtildelogmap} d\widetilde{\log} (\mathrm{thicken} \circ g) : \mathcal{U}^1(\kappa^{-1}) \rightarrow \mathbf{\Gamma}(\Omega_{F^{\mathrm{univ}}/\mathrm{LT}})[1/p].
\end{equation}
\end{enumerate}
\end{definition}

We end this section with some notation that we will need later. Recall the characteristic $p$ Lubin-Tate group $F_0$ from the previous section. Recall $\mathrm{LT}$ is the Lubin-Tate deformation space over $W_{\mathcal{O}_{K_p}}$ classifying deformations of $F_0$. Given any height 2 formal group $F_0'$ over $\overline{\mathbb{F}}_p$, let $F^{\mathrm{univ}}(F_0') \rightarrow \mathrm{LT}(F_0')$ denote the associated universal deformation, where $\mathrm{LT}(F_0')$ is the Lubin-Tate deformation space defined over $W_{\mathcal{O}_{K_p}}$. 

\begin{convention}\label{infiniteleveladicgenericfiberconvention}We will let $\mathrm{LT}_{\infty}(F_0')$ denote the \emph{adic generic fiber} of the usual infinite-level Lubin-Tate deformation space of $F_0'$ classifying deformations of $F_0'$ together with $\Gamma(p^{\infty})$-Drinfeld level structure (see \cite{Fargues}). We will use this convention for $\mathrm{LT}_{\infty}(F_0')$ as we will predominantly work with the adic generic fiber and seldom refer to the formal model considered in op. cit. Thus we have a morphism of pro-adic spaces
$$\mathrm{LT}_{\infty}(F_0') \rightarrow \mathrm{Spa}(W_{\mathcal{O}_{K_p}}[1/p],W_{\mathcal{O}_{K_p}}),$$
recalling that $W_{\mathcal{O}_{K_p}}$ is as in (\ref{Wcompositum}).
\end{convention}

\subsection{Embedding Lubin-Tate towers in towers of Shimura curves}Let $Y$ be as in Convention \ref{Yconvention}. Recall that $Y_{\infty}^{\mathrm{ss}} \subset Y_{\infty}$ is the supersingular locus of $Y_{\infty}$. By the standard Rapoport-Zink uniformization $Y_{\infty}^{\mathrm{ss}}$ (see \cite[discussion after Theorem III.1.2]{ScholzeTorsion}), we can write \begin{equation}\label{YssLT}Y_{\infty}^{\mathrm{ss}} = \bigsqcup_{F_0'} \mathrm{LT}_{\infty}(F_0')
\end{equation}
where $F_0'$ ranges over finitely many copies of the finitely many isomorphism classes of reductions of formal groups of supersingular elliptic curves, and $\mathrm{LT}_{\infty}(F_0')$ for each such $F_0'$ appears with finite multiplicity in the above disjoint union. In particular, $Y_{\infty}^{\mathrm{ss}}$ is a finite disjoint union of infinite-level Lubin-Tate towers $\mathrm{LT}_{\infty}(F_0')$.

Let
$$F_{\infty}^{\mathrm{univ}}(F_0') \rightarrow \mathrm{LT}_{\infty}(F_0')$$
denote $F^{\mathrm{univ}}(F_0') \rightarrow \mathrm{LT}(F_0')$ pulled back along $\mathrm{LT}_{\infty}(F_0') \rightarrow \mathrm{LT}(F_0')$. We will mainly be concerned with $F_0' = F_0^{\phi^m}$, where $F_0^{\phi^m}$ denotes applying $\phi^m$ to the coefficients of $F_0$. 

\begin{definition}\label{esectiondefinitions}
For any $m \in \mathbb{Z}$, let 
$$\mathrm{LT}^{\phi^m} := \mathrm{LT}(F_0^{\phi^m}),$$
$$\mathrm{LT}_{\infty}^{\phi^m} := \mathrm{LT}_{\infty}(F_0^{\phi^m}),$$
$$F^{\mathrm{univ},\phi^m} := F^{\mathrm{univ}}(F_0^{\phi^m}),$$
and
$$F_{\infty}^{\mathrm{univ},\phi^m} := F_{\infty}^{\mathrm{univ}}(F_0^{\phi^m}).$$

\end{definition}


\subsection{Semi-local Coleman map}\label{semilocalColemansection}We explain how to ``globalize'' the construction of Coleman power series by constructing a map from norm-compatible systems of semi-local units to power series, following \cite[Chapter II.4.5]{deShalit}.

\begin{definition}For any $\mathcal{O}_K$-ideal $\frak{n} \subset \mathcal{O}_K$, let $K(\frak{n})$ denote the ray class field over $K$ of modulus $\frak{n}$. Thus $\mathrm{Gal}(K(\frak{n})/K) \cong \mathcal{C}\ell(\frak{n})$ under the Artin reciprocity map.
Let 
$$R_n(\frak{n}) = \mathcal{O}_{K(\frak{n}\frak{p}^n)} \otimes_{\mathcal{O}_K}\mathcal{O}_{K_p} \cong \bigoplus_{\frak{P}|\frak{p}}\mathcal{O}_{K(\frak{n}\frak{f}^n),\frak{P}}$$
where $\frak{P}|\frak{p}$ runs over prime ideals of $\mathcal{O}_{K(\frak{n}\frak{p}^n)}$ above $\frak{p}$. We have norm maps $\mathrm{Nm}_n : R_n(\frak{n}) \rightarrow R_{n-1}(\frak{n})$. Let $R_n^{\times}(\frak{n})$ denote the group of units of $R_n(\frak{n})$. 
Let
\begin{equation}\label{semi-localunitsdefinition}\mathbb{U}(\frak{n}) := \varprojlim_{\mathrm{Nm}_n}R_n^{\times}(\frak{n}), \hspace{1cm} \mathbb{U}^1(\frak{n}) := \varprojlim_{\mathrm{Nm}_n}R_n^1(\frak{n})
\end{equation}
where $R_n^1(\frak{n})$ denotes the pro-$p$ part of $R_n^{\times}(\frak{n})$. In other words, $R_n^1(\frak{n})$ is the submodule of $R_n^{\times}(\frak{n})$ of semi-local principal units, i.e. elements whose projection to $\mathcal{O}_{K(\frak{n}\frak{f}^n),\frak{P}}^{\times}$ for each $\frak{P}|\frak{p}$ is principal.  
\end{definition}

For any $\mathcal{O}_K$-ideal $\frak{n} \subset \mathcal{O}_K$, let
$$w_{\frak{n}} := \#\{x \in \mathcal{O}_K^{\times} : x \equiv 1 \pmod{\frak{n}}\}.$$

\begin{choice}\label{FixCMdefinition}
Let $\frak{f}_0 \subset \mathcal{O}_K$ be an integral ideal prime to $p$. Further suppose that 
$$w_{\frak{f}_0} = 1.$$
Henceforth fix an elliptic curve $A_0$ such that
\begin{enumerate}
\item $A_0$ has CM by $\mathcal{O}_K$,
\item $A_0$ is defined over $K(\frak{f}_0)$, 
\item $A_0^{\mathrm{tors}} \subset A_0(K^{\mathrm{ab}})$,
\item $A_0$ has good reduction at all primes of $K(\frak{f}_0)$ not dividing $\frak{f}_0$. 
\end{enumerate}
Such an elliptic curve exists under our assumptions on $\frak{f}_0$ by \cite[Lemma II.1.4 (i)]{deShalit}.
\end{choice}

We will often work with the following choice of relative Lubin-Tate group $F$.
\begin{example}\label{Fchoice}
Let $\hat{A}_0/\mathcal{O}_{K(\frak{f}_0)_p}$ denote the formal group of $A_0/\mathcal{O}_{K(\frak{f}_0)_p}$. Then 
$$F = \hat{A}_0$$
is a relative Lubin-Tate group for the unramified extension $L_p = K(\frak{f}_0)_p/K_p$ (see Example \ref{CMexample}). 
\end{example}

Thus in any case,
\begin{equation}\label{Ufdecomposition}\mathbb{U}(\frak{f}_0) = \bigoplus_{\frak{P}|\frak{p}}\mathcal{U}, \hspace{1cm} \mathbb{U}^1(\frak{f}_0) = \bigoplus_{\frak{P}|\frak{p}} \mathcal{U}^1
\end{equation}
where $\frak{P}|\frak{p}$ ranges over all primes of $\mathcal{O}_{K(\frak{f_0})}$ above $\frak{p}$. 

Let $R = \mathcal{O}_{K(\frak{f}_0)} \otimes_{\mathcal{O}_K}\mathcal{O}_{K,\frak{p}}$, and note that $R\llbracket T\rrbracket  = \bigoplus_{\frak{P}|\frak{p}}\mathcal{O}_{K(\frak{f}_0),\frak{P}}\llbracket T\rrbracket $. By applying Theorem \ref{Colemanpowerseries} to each factor of (\ref{Ufdecomposition}), we get a semi-local Coleman map
\begin{equation}\label{globalColemanpowerseries}\mathbb{U}(\frak{f}_0) \rightarrow R\llbracket X\rrbracket^{\times}, \hspace{1cm} \beta \mapsto g_{\beta},
\end{equation}
which factors through a map
$$\mathbb{U}^1(\frak{f}_0) \rightarrow 1 + (\frak{p},X)R\llbracket X\rrbracket.$$
Applying (\ref{tildelogmap}) to each factor of (\ref{Ufdecomposition}), we get a map
$$\widetilde{\log} g : \mathbb{U}^1(\frak{f}_0) \rightarrow R\llbracket X\rrbracket [1/p].$$
Our fixed embedding $\overline{\mathbb{Q}} \hookrightarrow \overline{\mathbb{Q}}_p$ from (\ref{fixembeddings}) specifies exactly one $\frak{P}|\frak{p}$, and so projecting onto the corresponding component we get an $\mathcal{O}_{K_p}\llbracket \mathrm{Gal}(K(\frak{f}_0p^{\infty})/K(\frak{f}_0))\rrbracket$-module homomorphism
\begin{equation}\label{tildelogmap2}\widetilde{\log} g : \mathbb{U}^1(\frak{f}_0) \rightarrow \mathcal{O}_{K(\frak{f}_0)_p}\llbracket X\rrbracket [1/p].
\end{equation}

Composing (\ref{thickentildelogmap}) with the projection $\mathbb{U}^1(\frak{f}_0) \rightarrow \mathcal{U}^1$ from (\ref{Ufdecomposition}) to the component $\mathcal{U}$ corresponding to the above distinguished $\frak{P}|\frak{p}$, we get an $\mathcal{O}_{K_p}\llbracket \mathrm{Gal}(K(\frak{f}_0p^{\infty})/K(\frak{f}_0))\rrbracket$-module homomorphism
\begin{equation}\label{globalthickentildelogmap}\widetilde{\log}\left(\mathrm{thicken}(g)\right) : \mathbb{U}^1(\frak{f}_0) \rightarrow \mathbf{\Gamma}(\Omega_{F^{\mathrm{univ}}/\mathrm{LT}})[1/p].
\end{equation}

\begin{definition}\label{varphidefinition}Retain the setting and notation of Choice \ref{FixCMdefinition}. 
\begin{enumerate}
\item By \cite[Lemma II.1.4 (ii)]{deShalit} there exists a Hecke character 
$$\lambda_{A_0} : K^{\times}\backslash \mathbb{A}_K^{\times} \rightarrow \mathbb{C}^{\times}$$
of infinity type $(1,0)$ and conductor $\frak{f}_0$ such that
$$\psi_{A_0/K(\frak{f}_0)} = \lambda_{A_0} \circ \mathrm{Nm}_{K(\frak{f}_0)/K}$$
where $\psi_{A_0/K(\frak{f}_0)} : K(\frak{f}_0)^{\times}\backslash\mathbb{A}_{K(\frak{f}_0)}^{\times} \rightarrow \mathbb{C}^{\times}$ is the Hecke character associated with $A_0/K(\frak{f}_0)$ by the theory of complex multiplication (see \cite[Chapter II.1.4]{deShalit}). 
\item Moreover, since $\frak{f}_0 = \frak{f}(\lambda_{A_0})$, by \cite[Proposition II.1.6]{deShalit} we have 
\begin{equation}\label{raytorsionequal}K(\frak{f}_0\frak{p}^n) = K(\frak{f}_0)(A_0[\frak{f}_0\frak{p}^n]).
\end{equation}

\item The $p$-adic avatar of $\lambda_{A_0}$ (which we continue to denote by $\lambda_{A_0}$, following Convention \ref{avatarconvention}) can be viewed, via Artin reciprocity, as a character
$$\lambda_{A_0} : \mathrm{Gal}(K(\frak{f}_0)(A_0[\frak{f}_0p^{\infty}])/K(\frak{f}_0)) = \mathrm{Gal}(K(\frak{f}_0p^{\infty})/K) \rightarrow \mathcal{O}'^{\times}$$
for some finite extension $\mathcal{O}'/\mathcal{O}_{K_p}$, which restricts to an isomorphism
$$\lambda_{A_0}|_{\mathrm{Gal}(K(\frak{f}_0p^{\infty})/K(\frak{f}_0))} : \mathrm{Gal}(K(\frak{f}_0p^{\infty})/K(\frak{f}_0)) \xrightarrow{\sim} \mathcal{O}_{K_p}^{\times}.$$
\item For the rest of the this definition, let $L = K(\frak{f}_0)$ and $F = \hat{A}_0$. Under the natural equality 
$$\mathrm{Gal}(K(\frak{f}_0p^{\infty})/K(\frak{f}_0)) = \mathrm{Gal}(L_{p,\infty}/L_p)$$
induced by (\ref{fixembeddings}) and using the fact that $K(\frak{f}_0p^{\infty}) = K(\frak{f}_0)(A[\frak{f}_0p^{\infty}])/K(\frak{f}_0)$ is totally ramified at all places above $p$, we have 
\begin{equation}\label{kappavarphiequal}\lambda_{A_0}|_{\mathrm{Gal}(K(\frak{f}_0p^{\infty})/K(\frak{f}_0))} = \kappa
\end{equation}
from (\ref{kappadefinition}).
\end{enumerate}
\end{definition}

Let 
$$\mathbb{U}^1(\frak{f}_0) \rightarrow \mathcal{U}^1$$ be the projection onto the component of (\ref{Udecomposition}) corresponding to the $\frak{P}|\frak{p}$ induced by (\ref{fixembeddings}). Tensoring this projection with $\otimes_{\mathbb{Z}_p}\mathcal{O}_{K_p}(\kappa^{-1})$, we get a projection
$$\mathbb{U}^1(\frak{f}_0)(\kappa^{-1}) \rightarrow \mathcal{U}^1(\kappa^{-1}).$$
Precomposing (\ref{thickendtildelogmap}) with this projection, we get an $\mathcal{O}_{K_p}\llbracket \mathrm{Gal}(K(\frak{f}_0p^{\infty})/K(\frak{f}_0))\rrbracket$-module homomorphism
\begin{equation}\label{globalthickendtildelogmap}d\widetilde{\log}\left(\mathrm{thicken}(g)\right) : \mathbb{U}^1(\frak{f}_0)(\kappa^{-1}) \rightarrow \mathbf{\Gamma}(\Omega_{F^{\mathrm{univ}}/\mathrm{LT}})[1/p].
\end{equation}
 
\subsection{Functoriality of norm-compatible systems of units}

Let $\frak{g}|\frak{h}$ be integral $\mathcal{O}_K$-ideals, which are not necessarily prime to $p$ and with no assumptions on $w_{\frak{g}}, w_{\frak{h}}$. Clearly, we have $K(\frak{g}\frak{p}^n)\subset K(\frak{h}\frak{p}^n)$ and $R_n(\frak{g}) \subset R_n(\frak{h})$ for all $n \ge 0$. It is clear from Artin reciprocity that for all $n \gg 0$, the degree $[K(\frak{h}\frak{p}^n):K(\frak{g}\frak{p}^n)]$ stabilizes, and $[K(\frak{f}_0\frak{p}^n):K(\frak{f}_0\frak{p}^{n-1})] = p$ and $[K(\frak{g}_0\frak{p}^n):K(\frak{g}_0\frak{p}^{n-1})] = p$ for all $n \gg 0$. 
Thus we get commutative diagrams for all $n \gg 1$
 \begin{equation}\label{Unormdiagram1}
\begin{tikzcd}[column sep = huge]
R_n^{\times}(\frak{g}) \arrow{d}{\mathrm{Nm}_n} \arrow[hook]{r} & R_n^{\times}(\frak{h}) \arrow{d}{\mathrm{Nm}_n}\\
  R_{n-1}^{\times}(\frak{g}) \arrow[hook]{r} & R_{n-1}^{\times}(\frak{h})\\
   \end{tikzcd}, \hspace{1cm} 
   \begin{tikzcd}[column sep = huge]
R_n^1(\frak{g}) \arrow{d}{\mathrm{Nm}_n} \arrow[hook]{r} & R_n^1(\frak{h}) \arrow{d}{\mathrm{Nm}_n}\\
  R_{n-1}^1(\frak{g}) \arrow[hook]{r} & R_{n-1}^1(\frak{h})\\
   \end{tikzcd},
\end{equation}
and commutative diagrams for all $n \ge 1$
\begin{equation}\label{Unormdiagram2}
\begin{tikzcd}[column sep = huge]
R_n^{\times}(\frak{h}) \arrow{d}{\mathrm{Nm}_n} \arrow{rr}{\mathrm{Nm}_{K(\frak{h}\frak{p}^n)/K(\frak{g}\frak{p}^n)}} & & R_n^{\times}(\frak{g}) \arrow{d}{\mathrm{Nm}_n}\\
  R_{n-1}^{\times}(\frak{h}) \arrow{rr}{\mathrm{Nm}_{K(\frak{h}\frak{p}^{n-1})/K(\frak{g}\frak{p}^{n-1})}} & & R_{n-1}^{\times}(\frak{g})\\
   \end{tikzcd}, \hspace{1cm} 
   \begin{tikzcd}[column sep = huge]
R_n^1(\frak{h}) \arrow{d}{\mathrm{Nm}_n} \arrow{rr}{\mathrm{Nm}_{K(\frak{h}\frak{p}^n)/K(\frak{g}\frak{p}^n)}} && R_n^1(\frak{g}) \arrow{d}{\mathrm{Nm}_n}\\
  R_{n-1}^1(\frak{h}) \arrow{rr}{\mathrm{Nm}_{K(\frak{h}\frak{p}^{n-1})/K(\frak{g}\frak{p}^{n-1})}} && R_{n-1}^1(\frak{g})\\
   \end{tikzcd}.
\end{equation}
Thus, from (\ref{Unormdiagram1}) we get natural inclusions
\begin{equation}\label{Uinclusion}i_{\frak{g},\frak{h}} : \mathbb{U}(\frak{g}) \hookrightarrow \mathbb{U}(\frak{h}), \hspace{1cm} i_{\frak{g},\frak{h}} : \mathbb{U}^1(\frak{g}) \hookrightarrow \mathbb{U}^1(\frak{h}),
\end{equation}
and from (\ref{Unormdiagram2}) we get maps
\begin{equation}\label{Unorm}\mathrm{Nm}_{\frak{h}/\frak{g}} : \mathbb{U}(\frak{h}) \rightarrow \mathbb{U}(\frak{g}), \hspace{1cm} \mathrm{Nm}_{\frak{h}/\frak{g}} : \mathbb{U}^1(\frak{h}) \rightarrow \mathbb{U}^1(\frak{g}).
\end{equation}

\subsection{Galois extensions associated to CM elliptic curves}\label{Galoisextensionsection}

\begin{choice}\label{Echoice}Assume from now on that $K$ has class number 1, and let $E/\mathbb{Q}$ be a fixed elliptic curve with CM by $\mathcal{O}_K$. We continue to assume that $p$ is a finite prime ramified in $K/\mathbb{Q}$; in fact, there is a unique such prime since $K$ is one of the fields in Assumption \ref{pramifiedassumption}.
\end{choice}

\begin{assumption}\label{lambdalambdaEassumption}Let $\lambda_E : \mathbb{A}_K^{\times}/K^{\times} \rightarrow \mathbb{C}^{\times}$ be the type (1,0) Hecke character associated with $E$ by the theory of complex multiplication (\cite[Chapter II.1.4]{deShalit}), and let
$$\frak{f}_E = \frak{f}(\lambda_E).$$
As explained in Example \ref{Eexample}, $\frak{f}_E^{(p)} = f_0\mathcal{O}_K$ for some $f_0 \in \mathbb{Z}_{> 0}$. Assume henceforth that $f_0 \ge 4$ so that $\lambda_E$ satisfies Assumption \ref{pconductorassumption}. 
\end{assumption}

Note the assumption $f_0 \ge 4$ is satisfied for all but finitely many isomorphism classes of $E/\mathbb{Q}$.

\begin{definition}\label{mathcalKndefinition}For any $n \in \mathbb{Z}_{\ge 0} \cup \{\infty\}$, let 
$$\mathcal{K}_n := K(E[\frak{p}^n]).$$
\end{definition}
Using Artin reciprocity, we can view the $p$-adic avatar of $\lambda_E$ (which we continue to denote by $\lambda_E$, following Convention \ref{avatarconvention}) as a character
$$\lambda_E : \mathrm{Gal}(K(\frak{f}_0p^{\infty})/K) \rightarrow \mathcal{O}_{K_p}^{\times}.$$
The fact that the image of $\lambda_E$ is contained in $\mathcal{O}_{K_p}^{\times}$ follows from \cite[II.1.5 (15)]{deShalit} (see also Proposition \ref{lambdafactorproposition} below). Recall the notation from (\ref{edefinition})
$$e := \mathrm{ord}_{\frak{p}}(\frak{f}_E).$$
Then $K(\frak{f}_0\frak{p}^n) = K(E[\frak{f}_0\frak{p}^n])$ for all $n \ge e$ by Proposition II.1.6 of op. cit. Thus
\begin{equation}\label{Kninclusion}\mathcal{K}_n = K(E[\frak{p}^n]) \subset K(E[\frak{f}_0\frak{p}^n]) = K(\frak{f}_0\frak{p}^n)
\end{equation}
for all $n \ge e$.

\begin{proposition}\label{lambdafactorproposition}The character $\lambda_E : \mathrm{Gal}(K(\frak{f}_0p^{\infty})/K) \rightarrow \mathcal{O}_{K_p}^{\times}$ factors through the restriction $\mathrm{Gal}(K(\frak{f}_0p^{\infty})/K) \twoheadrightarrow \mathrm{Gal}(\mathcal{K}_{\infty}/K)$ to induce a character
\begin{equation}\label{lambdaE}\lambda_E : \mathrm{Gal}(\mathcal{K}_{\infty}/K) \rightarrow \mathcal{O}_{K_p}^{\times}.
\end{equation}
\end{proposition}

\begin{proof}Given any ideal $\frak{b} \subset \mathcal{O}_K$ with $(\frak{b},\frak{f}_E) = 1$, let $\sigma_{\frak{b}} = (\cdot,K(\frak{f}_0p^{\infty})/K) \in \mathrm{Gal}(K(\frak{f}_0p^{\infty})/K)$ denote the Artin symbol. By \cite[II.1.5 (11)]{deShalit} with $F = K$ and $\frak{f} = \frak{f}_E$, we have  
$$\sigma_{\frak{b}}(u) = [\lambda_E(\sigma_{\frak{b}})]_E(u) \hspace{1cm} \forall u \in E[p^{\infty}]$$
where $[\cdot]_E : \mathcal{O}_K \xrightarrow{\sim} \mathrm{End}(E/K)$ is the CM action. However, the left-hand side is equal to $\sigma_{\frak{b}}|_{\mathcal{K}_{\infty}}(u)$, where $|_{\mathcal{K}_{\infty}}$ denotes restriction to $\mathcal{K}_{\infty} = K(E[p^{\infty}]) \subset K(\frak{f}_0p^{\infty})$. This shows that $\lambda_E(\sigma_{\frak{b}})$ only depends on $\sigma_{\frak{b}}|_{\mathcal{K}_{\infty}}$, which gives the assertion.
\end{proof}

We now define modules of norm-compatible systems of semi-local units attached to $E$ that will be of utmost importance to our discussion.
\begin{definition}\label{UEdefinition}Let
$$\mathbb{U} = \varprojlim_n (\mathcal{O}_{\mathcal{K}_n}\otimes_{\mathbb{Z}}\mathbb{Z}_p)^{\times}, \hspace{1cm} \mathbb{U}^1 = \varprojlim_n (\mathcal{O}_{\mathcal{K}_n}\otimes_{\mathbb{Z}}\mathbb{Z}_p)^{\times,1},$$
where $(\mathcal{O}_{\mathcal{K}_n}\otimes_{\mathbb{Z}}\mathbb{Z}_p)^{\times,1}$ denotes the pro-$p$ part of $(\mathcal{O}_{\mathcal{K}_n}\otimes_{\mathbb{Z}}\mathbb{Z}_p)^{\times}$.
\end{definition}

\begin{definition}Let $K_{\infty}/K$ be the unique maximal $\mathbb{Z}_p^{\oplus 2}$-extension of $K$. Define
\begin{equation}\label{GammaK}\Gamma_K := \mathrm{Gal}(K_{\infty}/K) \cong \mathbb{Z}_p^{\oplus 2}.
\end{equation}
Let
$$\Gamma_{K,n} = \Gamma_K/\Gamma_K^{p^n}.$$
Let $K_n/K$ be the unique subextension of $K_{\infty}/K$ with
$$\mathrm{Gal}(K_n/K) \cong (\mathbb{Z}/p^n)^{\oplus 2}.$$
In other words, $K_n$ is the fixed field of $\Gamma_{K,n}$. 
\end{definition}

\begin{proposition}\label{GammaKtotallyramified}\
\begin{enumerate}
\item Assume that $M$ is any totally imaginary number field with class number prime to $p$. Then letting $M_{\infty}/M$ be the compositum of all $\mathbb{Z}_p$-extensions of $M$, we have that $M_{\infty}/M$ is totally ramified at all primes above $p$ and totally unramified at all finite primes not above $p$. 
\item In particular, for $M$ an imaginary quadratic field with class number prime to $p$, $M_{\infty}/M$ is totally ramified at all primes above $p$ and totally unramified at all primes not above $p$. Thus $\Gamma_M = \mathrm{Gal}(M_{\infty}/M)$ is equal to its decomposition subgroup at any prime of $\mathcal{O}_{M_{\infty}}$ above $p$.
\end{enumerate}
\end{proposition}

\begin{proof}It is clear that (2) immediately follows from (1). We now prove (1). Let $M \subset L \subset M_{\infty}$ be the maximal subextension totally unramified at all places above $p$; in particular, $[L:M] = p^n$ for some $n \in \mathbb{Z}_{\ge 0}$. Since $\mathrm{Gal}(M_{\infty}/M) \cong \mathbb{Z}_p^{\oplus 2}$, and $\mathbb{Z}_p$-extensions are totally unramified at all finite primes not dividing $p$ (\cite[Proposition 13.2]{Washington}), and $L/M$ is totally unramified at all archimedean places since $M$ is totally imaginary, we thus have that $L/M$ is totally unramified at all places. Thus $L$ is contained in the Hilbert class field of $M$ which by our  assumption has degree over $M$ prime to $p$. Thus $L = M$. 

\end{proof}

\begin{proposition}The composition 
$$\mathrm{Gal}(K_{\infty}K(\frak{f}_0)/K(\frak{f}_0)) \twoheadrightarrow \mathrm{Gal}(K_{\infty}/(K(\frak{f}_0)\cap K_{\infty})) \subset \Gamma_K$$
is an isomorphism. Thus, it induces an identification
\begin{equation}\label{Kinftyrestriction}\mathrm{Gal}(K_{\infty}K(\frak{f}_0)/K(\frak{f}_0)) = \Gamma_K.
\end{equation}
\end{proposition}

\begin{proof}For the first assertion, we need to show that $K(\frak{f}_0) \cap K_{\infty} = K$. First, since $\Gamma_K = \mathrm{Gal}(K_{\infty}/K) \cong \mathbb{Z}_p^{\oplus 2}$, the image of $\mathcal{O}_{K_v}^{\times}$ under the Artin reciprocity map $\mathbb{A}_K^{\times} \twoheadrightarrow \Gamma_K$ is trivial for all $v \nmid p$. Thus $K_{\infty}/K$ is unramified outside $p$, and so since $(\frak{f}_0,p) = 1$, we have $K(\frak{f}_0)\cap K_{\infty} = K(1) = K$, where the last equality follows from our assumption that $K$ has class number 1. 

\end{proof}

Recall that, by reciprocity (since $w_{\frak{f}_0} = 1$),
\begin{equation}\label{reciprocityOKp}\mathrm{Gal}(K(\frak{f}_0p^{\infty})/K(\frak{f}_0)) \cong \mathcal{O}_{K_p}^{\times} \cong \mathcal{O}_{K_p}^{\times,\mathrm{tors}} \times \mathbb{Z}_p^{\oplus 2}
\end{equation}
where $\mathcal{O}_{K_p}^{\times,\mathrm{tors}}\subset \mathcal{O}_{K_p}^{\times}$ is the torsion subgroup. The subgroup $$\mathrm{Gal}(K(\frak{f}_0p^{\infty})/K_{\infty}K(\frak{f}_0)) \subset \mathrm{Gal}(K(\frak{f}_0p^{\infty})/K(\frak{f}_0))$$
maps to the trivial element through the surjection 
$$\mathrm{Gal}(K(\frak{f}_0p^{\infty})/K(\frak{f}_0)) \twoheadrightarrow \mathrm{Gal}(K_{\infty}K(\frak{f}_0)/K(\frak{f}_0)) \overset{(\ref{Kinftyrestriction})}{=} \Gamma_K \cong \mathbb{Z}_p^{\oplus 2}.$$
Thus
\begin{equation}\label{reciprocityOKp2}\mathrm{Gal}(K(\frak{f}_0p^{\infty})/K_{\infty}K(\frak{f}_0)) \subset \mathcal{O}_{K_p}^{\times,\mathrm{tors}}
\end{equation}
under (\ref{reciprocityOKp}). On the other hand, we have an exact sequence
$$1 \rightarrow \mathrm{Gal}(K(\frak{f}_0p^{\infty})/K(p^{\infty})K(\frak{f}_0)) \rightarrow \mathrm{Gal}(K(\frak{f}_0p^{\infty})/K_{\infty}K(\frak{f}_0)) \rightarrow \mathrm{Gal}(K(p^{\infty})K(\frak{f}_0)/K_{\infty}K(\frak{f}_0)) \rightarrow 1.$$
By Artin reciprocity we have 
$$\mathrm{Gal}(K(\frak{f}_0p^{\infty})/K(p^{\infty})K(\frak{f}_0)) \cong \mathcal{O}_{K_p}^{\times}, \hspace{1cm}\mathrm{Gal}(K(p^{\infty})K(\frak{f}_0)/K_{\infty}K(\frak{f}_0)) \cong \mathcal{O}_{K_p}^{\times,\mathrm{tors}}/\mathcal{O}_K^{\times}.$$
Thus (\ref{reciprocityOKp2}) must in fact be an isomorphism. This implies that there exists a decomposition
\begin{equation}\label{existdecomposition}\mathrm{Gal}(K(\frak{f}_0p^{\infty})/K(\frak{f}_0)) \cong \mathrm{Gal}(K(\frak{f}_0p^{\infty})/K_{\infty}K(\frak{f}_0)) \times \mathrm{Gal}(K_{\infty}K(\frak{f}_0)/K(\frak{f}_0)).
\end{equation}

\begin{definition}\label{decompositionchoices}
\begin{enumerate}
\item Let
\begin{equation}\label{DeltaA}\Delta_{A_0} := \mathrm{Gal}(K(\frak{f}_0p^{\infty})/K_{\infty}). 
\end{equation}
\item Using the existence of (\ref{existdecomposition}), choose and fix forever identifications
\begin{equation}\label{fixA}\begin{split}
\mathrm{Gal}(K(\frak{f}_0p^{\infty})/K(\frak{f}_0)) &= \mathrm{Gal}(K(\frak{f}_0p^{\infty})/K_{\infty}K(\frak{f}_0)) \times \mathrm{Gal}(K_{\infty}K(\frak{f}_0)/K(\frak{f}_0)) \\
&\overset{(\ref{Kinftyrestriction})}{=} \mathrm{Gal}(K(\frak{f}_0p^{\infty})/K_{\infty}K(\frak{f}_0)) \times \Gamma_K.
\end{split}
\end{equation}
\item Let $\Gamma' \subset \mathrm{Gal}(K(\frak{f}_0p^{\infty})/K)$ be the image of 
$$\Gamma_K \overset{x \mapsto (1,x)}{\hookrightarrow} \mathrm{Gal}(K(\frak{f}_0p^{\infty})/K_{\infty}K(\frak{f}_0)) \times\Gamma_K \overset{(\ref{fixA})}{=} \mathrm{Gal}(K(\frak{f}_0p^{\infty})/K(\frak{f}_0)) \subset \mathrm{Gal}(K(\frak{f}_0p^{\infty})/K).$$
Since $\Gamma' \cong \mathbb{Z}_p^{\oplus 2}$ is torsion free, we have $\Delta_{A_0} \cap \Gamma' = \{1\}$ and so we have a natural inclusion 
$$\Delta_{A_0} \times\Gamma' \subset \mathrm{Gal}(K(\frak{f}_0p^{\infty})/K).$$
We claim that this inclusion is in fact an equality. To see this, recall the natural surjection 
$$\mathrm{Gal}(K(\frak{f}_0p^{\infty})/K) \twoheadrightarrow \mathrm{Gal}(K_{\infty}/K) = \Gamma_K$$
induced by restriction from $K(\frak{f}_0p^{\infty})$ to $K_{\infty}$, whose kernel is $\Delta_{A_0}$. By restriction to $\Gamma'$ and using the fact that $\Delta_{A_0} \cap \Gamma' = \{1\}$, we get a map
$$\Gamma' \subset \mathrm{Gal}(K(\frak{f}_0p^{\infty})/K)/\Delta_{A_0} \xrightarrow{\sim} \Gamma_K.$$
By (\ref{Kinftyrestriction}),  the above map is surjective. This gives 
$$\Delta_{A_0} \times\Gamma' = \mathrm{Gal}(K(\frak{f}_0p^{\infty})/K).$$ 
Identifying $\Gamma' \overset{(\ref{Kinftyrestriction})}{=} \Gamma_K$, we thus get a decomposition
\begin{equation}\label{fixA2}
\mathrm{Gal}(K(\frak{f}_0p^{\infty})/K) = \Delta_{A_0} \times \Gamma_K.
\end{equation}
By construction, (\ref{fixA}) and (\ref{fixA2}) are compatible under the inclusion $\mathrm{Gal}(K(\frak{f}_0p^{\infty})/K(\frak{f}_0)) \subset \mathrm{Gal}(K(\frak{f}_0p^{\infty})/K)$.


\item Viewing $\lambda_{A_0}$ from Definition \ref{varphidefinition} as a character $\lambda_{A_0} : \mathrm{Gal}(K(\frak{f}_0p^{\infty})/K) \rightarrow \overline{\mathbb{Q}}_p^{\times}$ via Artin reciprocity, and using (\ref{fixA2}) to view $\Delta_{A_0} \subset \mathrm{Gal}(K(\frak{f}_0p^{\infty})/K)$, we let
\begin{equation}\label{chiA}\chi_{A_0} := \lambda_{A_0}|_{\Delta_{A_0}}.
\end{equation}

\item Let
\begin{equation}\label{DeltaK}\Delta_K := \mathrm{Gal}(\mathcal{K}_{\infty}/K_{\infty}).
\end{equation}
Applying the natural projection $\mathrm{Gal}(K(\frak{f}_0p^{\infty})/K_{\infty}) \twoheadrightarrow \mathrm{Gal}(\mathcal{K}_{\infty}/K)$ to (\ref{fixA2}), we get a decomposition
\begin{equation}\label{fixK}\mathrm{Gal}(\mathcal{K}_{\infty}/K) = \Delta_K \times \Gamma_K.
\end{equation} 

\item Viewing $\lambda_E$ as a character $\lambda_E : \mathrm{Gal}(\mathcal{K}_{\infty}/K) \rightarrow \overline{\mathbb{Q}}_p^{\times}$ via Artin reciprocity, and using (\ref{fixK}) to view $\Delta_K \subset \mathrm{Gal}(\mathcal{K}_{\infty}/K)$, let
\begin{equation}\label{chiE}\chi_E := \lambda_E|_{\Delta_K}.
\end{equation}
\end{enumerate}
\end{definition}

We now fix a particular choice of $\frak{f}_0$ for our applications. 
\begin{choice}\label{f0choice}Choose
$$\frak{f}_0 = \frak{f}_E^{(p)},$$
which we recall is the prime-to-$p$ part of $\frak{f}_E = \frak{f}(\lambda_E)$. Since $\lambda_E$ satisfies Assumption \ref{pconductorassumption} by Assumption \ref{lambdalambdaEassumption}, we have $\frak{f}_0 = f_0\mathcal{O}_K$ with $f_0 \ge 4$. Easy casework shows that this implies $w_{\frak{f}_0} = 1$.
\end{choice} 


We now show that under the decompositions of Definition \ref{decompositionchoices}, we have $\lambda_E/\chi_E = \lambda_{A_0}/\chi_{A_0}$.

\begin{proposition}\label{sameCMcharacterproposition}We have 
\begin{equation}\label{twistsame}\lambda_E/\chi_E = \lambda_{A_0}/\chi_{A_0}
\end{equation}
as continuous characters $\Gamma_K \rightarrow \mathcal{O}_{K_p}^{\times}$, using (\ref{fixA2}) and (\ref{fixK}) to view these characters on $\Gamma_K$. 
\end{proposition}

\begin{proof}By the theory of complex multiplication (\cite[Theorem 5.11]{Rubin2}), we see that $\lambda_E/\chi_E$ maps $\Gamma_K = \mathrm{Gal}(K_{\infty}/K)$ into $\Gamma'$ with finite cokernel, where $\Gamma' \subset \mathcal{O}_{K_p}^{\times}$ is the maximal free $\mathbb{Z}_p$-submodule. Similarly $\lambda_{A_0}/\chi_{A_0}$ maps 
$$\Gamma_K \overset{(\ref{Kinftyrestriction})}{=} \mathrm{Gal}(K_{\infty}K(\frak{f}_0)/K(\frak{f}_0)) \rightarrow \Gamma'$$
with finite cokernel, where the identity $\mathrm{Gal}(K_{\infty}K(\frak{f}_0)/K(\frak{f}_0)) = \Gamma_K$ is induced by restriction from the compositum $K_{\infty}K(\frak{f}_0)$ to $K_{\infty}$. (The fact that this restriction is an isomorphism follows from the fact that $K(\frak{f}_0)$ and $K_{\infty}$ are linearly disjoint, which in turn follows from our assumption that $K$ has class number 1.) Moreover, since $\lambda_E$ and $\lambda_{A_0}$ differ by a finite order character (both are algebraic Hecke characters of infinity type $(1,0)$), we have that $\lambda_E/\chi_E$ and $\lambda_{A_0}/\chi_{A_0}$ differ by a finite order character. Since the images of both $\lambda_E/\chi_E$ and $\lambda_{A_0}/\chi_{A_0}$ lie in the torsion-free group $\Gamma'$, this finite order character must be trivial, and so we have $\lambda_E/\chi_E = \lambda_{A_0}/\chi_{A_0}$ on all of $\Gamma_K$. 

\end{proof}

Finally, we will need the following variants of (\ref{Unormdiagram1}) and (\ref{Unormdiagram2}). Recall the inclusion $\mathcal{K}_n \subset K(\frak{f}_0\frak{p}^n)$ for all $n \ge e$ from (\ref{Kninclusion}). 
Therefore
$$(\mathcal{O}_{\mathcal{K}_n} \otimes_{\mathbb{Z}}\mathbb{Z}_p)^{\times} \subset R_n^{\times}(\frak{f}_0), \hspace{1cm} (\mathcal{O}_{\mathcal{K}_n} \otimes_{\mathbb{Z}}\mathbb{Z}_p)^{\times,1} \subset R_n^1(\frak{f}_0)$$
for all $n \ge e$. Moreover, the degree $[K(\frak{f}_0\frak{p}^n) : \mathcal{K}_n]$ stabilizes for all $n \gg e$. Thus we get commutative diagrams for all $n \gg e$
 \begin{equation}\label{UEnormdiagram1}
\begin{tikzcd}[column sep = huge]
(\mathcal{O}_{\mathcal{K}_n}\otimes_{\mathbb{Z}}\mathbb{Z}_p)^{\times} \arrow{d}{\mathrm{Nm}_n} \arrow[hook]{r} & R_n^{\times}(\frak{f}_0) \arrow{d}{\mathrm{Nm}_n}\\
  (\mathcal{O}_{\mathcal{K}_{n-1}}\otimes_{\mathbb{Z}}\mathbb{Z}_p)^{\times} \arrow[hook]{r} & R_{n-1}^{\times}(\frak{f}_0)\\
   \end{tikzcd}, \hspace{1cm} 
   \begin{tikzcd}[column sep = huge]
(\mathcal{O}_{\mathcal{K}_n}\otimes_{\mathbb{Z}}\mathbb{Z}_p)^{\times,1} \arrow{d}{\mathrm{Nm}_n} \arrow[hook]{r} & R_n^1(\frak{f}_0) \arrow{d}{\mathrm{Nm}_n}\\
 (\mathcal{O}_{\mathcal{K}_{n-1}}\otimes_{\mathbb{Z}}\mathbb{Z}_p)^{\times,1} \arrow[hook]{r} & R_{n-1}^1(\frak{f}_0)\\
   \end{tikzcd},
\end{equation}
and for all $n \ge 1$, commutative diagrams
\begin{equation}\label{UEnormdiagram2}
\begin{tikzcd}[column sep = huge]
R_n^{\times}(\frak{f}_0) \arrow{d}{\mathrm{Nm}_n} \arrow{rr}{\mathrm{Nm}_{K(\frak{f}_0\frak{p}^n)/\mathcal{K}_n}} & & (\mathcal{O}_{\mathcal{K}_n}\otimes_{\mathbb{Z}}\mathbb{Z}_p)^{\times}  \arrow{d}{\mathrm{Nm}_n}\\
  R_{n-1}^{\times}(\frak{f}_0) \arrow{rr}{\mathrm{Nm}_{K(\frak{f}_0\frak{p}^{n-1})/\mathcal{K}_{n-1}}} & & (\mathcal{O}_{\mathcal{K}_{n-1}}\otimes_{\mathbb{Z}}\mathbb{Z}_p)^{\times} \\
   \end{tikzcd},\\
\end{equation}
\begin{equation}\label{UEnormdiagram3}
   \begin{tikzcd}[column sep = huge]
R_n^1(\frak{f}_0) \arrow{d}{\mathrm{Nm}_n} \arrow{rr}{\mathrm{Nm}_{K(\frak{f}_0\frak{p}^n)/\mathcal{K}_n}} && (\mathcal{O}_{\mathcal{K}_n}\otimes_{\mathbb{Z}}\mathbb{Z}_p)^{\times,1}  \arrow{d}{\mathrm{Nm}_n}\\
  R_{n-1}^1(\frak{f}_0) \arrow{rr}{\mathrm{Nm}_{K(\frak{f}_0\frak{p}^{n-1})/\mathcal{K}_{n-1}}} && (\mathcal{O}_{\mathcal{K}_{n-1}}\otimes_{\mathbb{Z}}\mathbb{Z}_p)^{\times,1} \\
   \end{tikzcd}.
\end{equation}

The diagrams (\ref{UEnormdiagram1}) fit together to give a map of $\mathbb{Z}_p\llbracket \mathrm{Gal}(K(\frak{f}_0p^{\infty})/K)\rrbracket$-modules
\begin{equation}\label{UEinclusion}i_{\frak{f}_0} : \mathbb{U}^1 \hookrightarrow \mathbb{U}^1(\frak{f}_0)
\end{equation}
and the diagrams (\ref{UEnormdiagram2}) and (\ref{UEnormdiagram3}) give a map of $\mathbb{Z}_p\llbracket \mathrm{Gal}(K(\frak{f}_0p^{\infty})/K)\rrbracket$-modules
\begin{equation}\label{UEnorm}\mathrm{Nm}_{\frak{f}_0} : \mathbb{U}^1(\frak{f}_0) \rightarrow \varprojlim_{n \ge 0}(\mathcal{O}_{\mathcal{K}_n} \otimes_{\mathbb{Z}}\mathbb{Z}_p)^{\times,1} = \mathbb{U}^1.
\end{equation}

\subsection{General Iwasawa modules and their twists}Let us now introduce notation to denote the Iwasawa algebras that will feature most prominently in our discussion. 

\begin{definition}\label{Lambdadefinition}Recall the notation of Definition \ref{completedgroupalgebradefinition} and $\Gamma_K \cong \mathbb{Z}_p^{\oplus 2}$ from (\ref{GammaK}). 
\begin{enumerate}
\item Henceforth, given a $p$-adically complete $\mathbb{Z}_p$-algebra $R$, let 
$$\widetilde{\Lambda}_R = R\llbracket \mathrm{Gal}(\mathcal{K}_{\infty}/K)\rrbracket, \hspace{1cm} \Lambda_R[1/p] = \Lambda \otimes_{\mathbb{Z}_p}\mathbb{Q}_p$$
and
$$\Lambda_R = R\llbracket \Gamma_K\rrbracket, \hspace{1cm} \Lambda_R[1/p] = \Lambda_R \otimes_{\mathbb{Z}_p}\mathbb{Q}_p.$$
There are thus a natural projections
$$\widetilde{\Lambda}_R \twoheadrightarrow \Lambda_R, \hspace{1cm} \widetilde{\Lambda}_R[1/p] \rightarrow \Lambda_R[1/p].$$
When $R = \mathbb{Z}_p$, we will simply write $\widetilde{\Lambda}_{\mathbb{Z}_p} = \widetilde{\Lambda}$, $\widetilde{\Lambda}_{\mathbb{Z}_p}[1/p] = \widetilde{\Lambda}[1/p]$, $\Lambda_{\mathbb{Z}_p} = \Lambda$ and $\Lambda_{\mathbb{Z}_p}[1/p] = \Lambda[1/p]$. 

\item Given a character $\rho : \Gamma_K \rightarrow R^{\times}$, let $(\Gamma_K-\rho(\Gamma_K)) \subset \Lambda_R[1/p]$ denote the kernel of the map
$$\Lambda_R[1/p] \rightarrow R[1/p], \hspace{1cm} \gamma \mapsto \rho(\gamma),\hspace{.25cm} \forall \gamma \in \Gamma_K.$$
When $\rho = \mathbf{1}$, the trivial character on $\Gamma_K$, we let $(\Gamma_K - \mathbf{1}(\Gamma_K)) = (\Gamma_K - 1)$. Given a $\Lambda_R[1/p]$-module $M$, let 
$$M/(\Gamma_K-\rho(\Gamma_K)) = M \otimes_{\Lambda_R[1/p]}\Lambda_R[1/p]/(\Gamma_K-\rho(\Gamma_K)).$$

\end{enumerate}
\end{definition}

We now define our notation isotypic components. Note that we invert $p$ when defining $\chi$-isotypic components. Recall the notation of Definition \ref{completedgroupalgebradefinition}. 

\begin{definition}\label{isotypicdefinition}\begin{enumerate}
\item Given a group $G$, let $\hat{G}$ denote the group of $\mathbb{C}_p^{\times}$-valued characters on it.

\item Recall $\Delta_A$ and $\Delta_K$ from Definition \ref{decompositionchoices}. Given a $p$-adically complete ring $R$ and an $\widetilde{\Lambda}_R$-module $M$ and a character $\chi \in \hat{\Delta}_K$, let $R[\chi]$ denote the extension of $R$ generated by the values of $\chi$ and let
$$M_{\chi} := \{x \in M \otimes_{R} R[\chi][1/p] : \delta(x) = \chi(x), \hspace{.25cm} \forall \delta \in \Delta_K\}$$
denote the $\chi$-isotypic component. $M_{\chi}$ can be viewed both as an $\widetilde{\Lambda}_{R[\chi]}[1/p]$-module and as an $\Lambda_{R[\chi]}[1/p]$-module using (\ref{fixK}). 

\item Let $R_{\Delta_K}/R$ be the extension generated by the values of all $\chi \in \hat{\Delta}_K$. 
We have a canonical isotypic decomposition
$$M \otimes_R R_{\Delta_K}[1/p] = \bigoplus_{\chi \in \hat{\Delta}_K}M_{\chi}\otimes_{R[\chi][1/p]}R_{\Delta_K}[1/p]$$
given by 
$$m = \sum_{\chi \in \hat{\Delta}_K}e_{\chi}(m) \mapsto (e_{\chi}(m) \otimes 1)_{\chi \in \hat{\Delta}_K},$$
where 
\begin{equation}\label{echiK}e_{\chi} = \frac{1}{\#\Delta_K}\sum_{\delta \in \Delta_K}\chi^{-1}(\delta)\delta.
\end{equation}

\item Given a $p$-adically complete ring $R$ and an $R\llbracket \mathrm{Gal}(K(\frak{f}_0p^{\infty})/K)\rrbracket$-module $M$ and a character $\chi \in \hat{\Delta}_{A_0}$,  let 
$$M_{\chi} := \{x \in M \otimes_R R[\chi][1/p] : \delta(x) = \chi(x), \hspace{.25cm} \forall \delta \in \Delta_{A_0}\}$$
denote the $\chi$-isotypic component. $M_{\chi}$ can be viewed both as an $R[\chi]\llbracket \mathrm{Gal}(K(\frak{f}_0p^{\infty})/K)\rrbracket [1/p]$-module and as an $\Lambda_{R[\chi]}[1/p]$-module. 

\item Let $R_{\Delta_{A_0}}[\chi]/R$ be the extension generated by the values of all $\chi \in \hat{\Delta}_{A_0}$. We have a canonical isotypic decomposition
$$M \otimes_R R_{\Delta_{A_0}}[1/p] = \bigoplus_{\chi \in \hat{\Delta}_{A_0}}M_{\chi}\otimes_{R[\chi][1/p]}R_{\Delta_{A_0}}[1/p]$$
given by 
$$m = \sum_{\chi \in \hat{\Delta}_{A_0}}e_{\chi}(m) \mapsto (e_{\chi}(m) \otimes 1)_{\chi \in \hat{\Delta}_{A_0}},$$
where 
\begin{equation}\label{echiA}e_{\chi} = \frac{1}{\#\Delta_{A_0}}\sum_{\delta \in \Delta_{A_0}}\chi^{-1}(\delta)\delta.
\end{equation}


\end{enumerate}
\end{definition}

Recall we view 
$$\mathrm{Gal}(K(\frak{f}_0p^{\infty})/K_{\infty}) = \Delta_{A_0} \subset \mathrm{Gal}(K(\frak{f}_0p^{\infty})/K)$$
via (\ref{fixA2}). By Proposition \ref{lambdafactorproposition}, $\lambda_E|_{\mathrm{Gal}(K(\frak{f}_0p^{\infty})/K_{\infty})}$ factors through the character $\chi_E : \mathrm{Gal}(\mathcal{K}_{\infty}/K_{\infty}) \rightarrow \mathcal{O}_{K_p}^{\times}$ from (\ref{chiE}). Thus (\ref{UEinclusion}) descends to the $\chi_E$-isotypic component to give a map of $\Lambda_{\mathcal{O}_{K_p}}[1/p]$-modules
\begin{equation}\label{UEinclusionchi}i_{\frak{f}_0,\chi_E} : \mathbb{U}_{\chi_E}^1 \hookrightarrow \mathbb{U}^1(\frak{f}_0)_{\chi_E}
\end{equation}
and
\begin{equation}\label{UEnormchi}\mathrm{Nm}_{\frak{f}_0,\chi_E} : \mathbb{U}^1(\frak{f}_0)_{\chi_E} \rightarrow \mathbb{U}_{\chi_E}^1.
\end{equation}

We will need the following notion of twisting by a character. Recall the notation of Definition \ref{completedgroupalgebradefinition}. 

\begin{definition}\label{globalcharactertwistdefinition}
\begin{enumerate}
\item Let $G$ be any group. For any character $\varrho : G \rightarrow \mathcal{O}'^{\times}$ for some ring $\mathcal{O}'$, let $\mathcal{O}'(\varrho)$ denote the rank 1 free $\mathcal{O}'$-module with $G$-action given by $\varrho$. 
\item Given any $p$-adically complete $\mathbb{Z}_p$-algebra $R$, a profinite group $G$, and any $R\llbracket G\rrbracket$-module $M$ and a $p$-adically complete $R$-algebra $\mathcal{O}'$, define the $\mathcal{O}'\llbracket G\rrbracket$-module
$$M(\varrho) := M\hat{\otimes}_{R}\mathcal{O}'(\varrho)$$
where ``$\hat{\otimes}$'' denotes $p$-adically completed tensor product. 
\end{enumerate}
\end{definition}

As the following combination of Definitions \ref{isotypicdefinition} and \ref{globalcharactertwistdefinition} will appear, we explain it in detail. 

\begin{definition}\label{Mtwistdefinition}
\begin{enumerate}
\item Given a $\widetilde{\Lambda}$-module $M$, as $\chi_E$ is a character on $\Delta_K = \mathrm{Gal}(\mathcal{K}_{\infty}/K_{\infty})$ and the extension of $\mathbb{Z}_p$ generated by $\chi_E$ is $\mathcal{O}_{K_p}$, we get a well-defined $e_{\chi_E}\widetilde{\Lambda}$-module 
$$M_{\chi_E} := e_{\chi_E}M$$
from Definition \ref{isotypicdefinition}, where $e_{\chi_E}$ is as in (\ref{echiK}) with $\chi = \chi_E$. 
\item Viewing $M_{\chi_E}$ as a $\widetilde{\Lambda}_{\mathcal{O}_{K_p}}[1/p]$-module and viewing $\lambda_E$ as a character 
$$\lambda_E : \mathrm{Gal}(\mathcal{K}_{\infty}/K) \overset{(\ref{fixK})} = \Delta_K \times \Gamma_K \rightarrow \mathcal{O}_{K_p}^{\times},$$
we can twist $M_{\chi_E}$ as in Definition \ref{globalcharactertwistdefinition} to get a $\widetilde{\Lambda}_{\mathcal{O}_{K_p}}[1/p]$-module
$$M_{\chi_E}(\lambda_E^{-1}) := M_{\chi_E} \otimes_{\widetilde{\Lambda}_{\mathcal{O}_{K_p}}} \otimes_{\mathcal{O}_{K_p}} \mathcal{O}_{K_p}(\lambda_E^{-1}).$$
Since $\chi_E = \lambda_E|_{\Delta_K}$, the $\widetilde{\Lambda}_{\mathcal{O}_{K_p}}[1/p]$-action on $M_{\chi_E}(\lambda_E^{-1})$ factors through the projection $\widetilde{\Lambda}_{\mathcal{O}_{K_p}}[1/p] \twoheadrightarrow \Lambda_{\mathcal{O}_{K_p}}[1/p]$, we can also view $M_{\chi_E}(\lambda_E^{-1})$ as a $\Lambda_{\mathcal{O}_{K_p}}[1/p]$-module (which we often prefer). 
\end{enumerate}
\end{definition}

\subsection{Coleman map on $\mathbb{U}^1$}

Let $\mathbf{1}_{A_0}$ denote the trivial character on $\Delta_{A_0}$. 
Tensoring (\ref{UEinclusionchi}) with $\otimes_{\mathcal{O}_{K_p}}\mathcal{O}_{K_p}((\lambda_E/\chi_E)^{-1})$, we get a map of $\Lambda_{\mathcal{O}_{K_p}} [1/p]$-modules 
 \begin{equation}\label{1ftwist}i_{\frak{f}_0,\chi_E} : \mathbb{U}_{\chi_E}^1(\lambda_E^{-1}) \rightarrow \mathbb{U}^1(\frak{f}_0)_{\chi_E}(\lambda_E^{-1}) = \mathbb{U}^1(\frak{f}_0)_{\mathbf{1}_{A_0}}((\lambda_E/\chi_E)^{-1}) \overset{(\ref{twistsame})}{=}  \mathbb{U}^1(\frak{f}_0)_{\mathbf{1}_{A_0}}((\lambda_{A_0}/\chi_{A_0})^{-1}).
\end{equation}
The first equality in the first equation follows immediately after noting that the values of each of $\lambda_E$ and $\lambda_E/\chi_E$ generate $\mathcal{O}_{K_p}$ over $\mathbb{Z}_p$. Similarly tensoring (\ref{UEnormchi}) with $\otimes_{\mathcal{O}_{K_p}} \mathcal{O}_{K_p}((\lambda_E/\chi_E)^{-1})$, we get a map of $\Lambda_{\mathcal{O}_{K_p}}[1/p]$-modules
\begin{equation}\label{UEnormchitwist}\mathrm{Nm}_{\frak{f}_0,\chi_E} : \mathbb{U}^1(\frak{f}_0)_{\mathbf{1}_{A_0}}((\lambda_{A_0}/\chi_{A_0})^{-1}) \overset{(\ref{twistsame})}{=} \mathbb{U}^1(\frak{f}_0)_{\mathbf{1}_{A_0}}((\lambda_E/\chi_E)^{-1}) = \mathbb{U}^1(\frak{f}_0)_{\chi_E}(\lambda_E^{-1}) \rightarrow  \mathbb{U}_{\chi_E}^1(\lambda_E^{-1}).
\end{equation}

\begin{remark}\label{isotypicdifferremark}The reader may find the notation ``$\mathbb{U}^1(\frak{f}_0)_{\mathbf{1}_{A_0}}((\lambda_{A_0}/\chi_{A_0})^{-1}) $'' strange, and ask why we do not write it as $\mathbb{U}^1(\frak{f}_0)_{\chi_{A_0}}(\lambda_{A_0}^{-1})$ in parallel with $\mathbb{U}^1(\frak{f}_0)_{\chi_E}(\lambda_E^{-1})$. This is because in the notation of Definition \ref{isotypicdefinition} and \ref{globalcharactertwistdefinition}, these objects differ by an extension of scalars: Let $\mathcal{O}_{K_p}[\chi_{A_0}]$ be the finite extension of $\mathcal{O}_{K_p}$ generated by the values of $\chi_{A_0}$. Then in the notation of Definition \ref{isotypicdefinition}, we have
$$\mathbb{U}^1(\frak{f}_0)_{\mathbf{1}_{A_0}}((\lambda_{A_0}/\chi_{A_0})^{-1}) \otimes_{\mathcal{O}_{K_p}}\mathcal{O}_{K_p}[\chi_{A_0}] = \mathbb{U}^1(\frak{f}_0)_{\chi_{A_0}}(\lambda_{A_0}^{-1}).$$

\end{remark}

Note under that the decomposition (\ref{fixA}) we view $\Gamma_K \subset \mathrm{Gal}(K(\frak{f}_0p^{\infty})/K(\frak{f}_0))$. Thus (\ref{kappavarphiequal}) we have 
$$\lambda_{A_0}/\chi_{A_0} = \kappa|_{\Gamma_K}$$
as characters on $\Gamma_K$. From this and (\ref{globalthickendtildelogmap}), we get a map
\begin{equation}\label{globalthickendtildelogmap'}d\widetilde{\log}\left(\mathrm{thicken}(g)\right) : \mathbb{U}^1(\frak{f}_0)_{\mathbf{1}_{A_0}}((\lambda_{A_0}/\chi_{A_0})^{-1}) \rightarrow \mathbf{\Gamma}(\Omega_{F^{\mathrm{univ}}/\mathrm{LT}})[1/p].
\end{equation}
Precomposing (\ref{globalthickendtildelogmap'}) with (\ref{1ftwist}), we get an $\Lambda_{\mathcal{O}_{K_p}}[1/p]$-module homomorphism
\begin{equation}\label{EtwistColeman}d\mathrm{Log} : \mathbb{U}_{\chi_E}^1(\lambda_E^{-1}) \rightarrow \mathbf{\Gamma}(F^{\mathrm{univ}})[1/p], \hspace{1cm} \beta \mapsto d\left(\widetilde{\log}(\mathrm{thicken}(g_{i_{\frak{f}_0,\chi_E}(\beta)}))\right). 
\end{equation}
This map will be of fundamental interest to us. 

\subsection{Elliptic units and their Coleman power series}

Continue to assume $\frak{f}_0$ is as in Choice \ref{f0choice}.

\begin{definition}\label{siegelthetadefinition}\begin{enumerate}
\item Let $q_z = e^{2\pi i z}$, let $L$ be a lattice $\mathbb{Z}\tau + \mathbb{Z}$, $\tau \in \mathcal{H}^+$ and let $q_{\tau} = \mathrm{exp}(2 \pi i \tau)$. Let 
$$A(L) = \pi^{-1}\mathrm{Area}(\mathbb{C}/L) = \pi^{-1}\mathrm{Im}(\tau)$$
as in \cite[Chapter II.2.1 (4)]{deShalit}. 
\item As in Chapter II.2.1 (8) of op. cit., we have the Siegel $\theta$-function 
$$\theta(z,L) = e^{6A(L)^{-1}z(z-\bar{z})}\cdot q_{\tau}(q_z^{1/2}-q_z^{-1/2})^{12} \cdot \prod_{n = 1}^{\infty}\left((1-q_{\tau}^nq_z)(1-q_{\tau}^nq_z^{-1})\right)^{12}.$$
\item Let $\frak{a} \subset \mathcal{O}_K$ be an ideal prime to $6\frak{f}_0\frak{p}$. Let $L_0$ be the period lattice of $A_0$ (see Choice \ref{FixCMdefinition}). Thus, after possibly replacing $A_0$ by one of its Galois conjugates, we may assume
$$L_0 = \Omega_0\frak{f}_0$$
for some $\Omega_0 \in \mathbb{C}^{\times}$ (see \cite[II.4.3 (6)]{deShalit}). 
\item Let
$$\Theta(z;L_0,\frak{a}) := \theta(z,L_0)^{\mathbb{N}\frak{a}}/\theta(z,\frak{a}^{-1}L_0)$$
where $\mathbb{N}\frak{a}$ is the positive generator of the ideal $\mathrm{Nm}_{K/\mathbb{Q}}(\frak{a}) \subset \mathbb{Z}$.
\end{enumerate} 
\end{definition}

\begin{definition}\label{xiEdefinition}As explained in Definition \ref{siegelthetadefinition}, we may assume without loss of generality that $L_0 = \Omega_0 \frak{f}_0$ for some $\Omega_0 \in \mathbb{C}^{\times}$. In particular, $\Omega_0$ is a primitive $\frak{f}_0$-torsion point of $\mathbb{C}/L_0$ (see Choice \ref{f0choice} and the discussion above). Define norm-compatible systems of elliptic units
\begin{equation}\label{ellipticunitea}e(\frak{a};\frak{f}_0) = \varprojlim_n \Theta(\Omega_0;\frak{p}^nL_0,\frak{a}) \in \mathbb{U}(\frak{f}_0), \hspace{1cm} e^1(\frak{a};\frak{f}_0) \in\mathbb{U}^1(\frak{f}_0)
\end{equation}
where $e^1(\frak{a};\frak{f}_0)$ is the image of $e(\frak{a};\frak{f}_0)$ under the pro-$p$ projection $\mathbb{U}(\frak{f}_0) \rightarrow \mathbb{U}^1(\frak{f}_0)$. 
\end{definition}



We have the following Proposition (which is ultimately a consequence of the definition (\ref{ellipticunitea})) describing the Coleman power series of $e(\frak{a};\frak{f}_0)$. Recall the integer $f_0 \in \mathbb{Z}_{> 0}$ with $\frak{f}_0 = f_0\mathcal{O}_K$ (Choice \ref{f0choice}).

\begin{proposition}[Proposition II.4.9 of \cite{deShalit}]\label{e1aThetaproposition}Recall $L_0 = \Omega_0\frak{f}_0$ so that $\Omega_0$ is a $\frak{f}_0$-torsion (or equivalently $f_0$-torsion) point on $\mathbb{C}/L_0$. Let $P(z)$ be the Taylor series expansion of 
$$\Theta(\Omega_0-z;L_0,\frak{a}) \in K(\frak{f}_0)\llbracket z\rrbracket,$$
and let $\log_{A_0} : A_0(\mathbb{C}_p) \rightarrow \mathbb{C}_p$ be the $p$-adic formal group logarithm.
The Coleman power series $g_{e(\frak{a};\frak{f}_0)}$ from (\ref{globalColemanpowerseries}) of $e(\frak{a};\frak{f}_0)$ is 
$$g_{e(\frak{a};\frak{f}_0)}(X) = P(\log_{A_0}(X)).$$
Moreover, 
$$g_{e(\frak{a};\frak{f}_0)}/g_{e^1(\frak{a};\frak{f}_0)}$$
is a constant in $\mathcal{O}_{K(\frak{f}_0)_p}^{\times}$. 
\end{proposition}

For convenience (in order to compute the thickening of the Coleman power series of $e^1(\frak{f}_0;\frak{a})$), we will henceforth make the following choice of $\frak{a}$.

\begin{choice}\label{achoice}Henceforth, fix an ideal $\frak{a} \subset \mathcal{O}_K$ prime to $6\frak{f}_0\frak{p}N$ (where $N$ is as in Assumption \ref{Nf0assumption}) such that $\frak{a} = \mathbf{a} \cdot \mathcal{O}_K$ for some $\frak{a} \in \mathbb{Z}_{> 0}$. In particular, $\mathbb{N}\frak{a} = \mathbf{a}^2 \cdot \mathbb{Z}$. 
\end{choice}



We now define a universal version of the $f$-basis from Choice \ref{fbasis} appearing in the statement of Theorem \ref{Colemanpowerseries}.

\begin{definition}\label{universalalphadefinition}
\begin{enumerate}
\item Recall that, from (\ref{YssLT}), for any $m \in\mathbb{Z}$ we may view $\mathrm{LT}_{\infty}^{\phi^m} \subset Y_{\infty}$ and identify the universal objects $F_{\infty}^{\mathrm{univ},\phi^m}\rightarrow \mathrm{LT}_{\infty}^{\phi^m}$ and $\hat{\mathcal{E}}|_{\mathrm{LT}_{\infty}^{\phi^m}} \rightarrow \mathrm{LT}_{\infty}^{\phi^m}$:
\begin{equation}\label{FEidentify}F_{\infty}^{\mathrm{univ},\phi^m} = \hat{\mathcal{E}}|_{\mathrm{LT}_{\infty}^{\phi^m}}
\end{equation}
where $\hat{E}$ is the formal group of the universal object $\mathcal{E} \rightarrow Y_{\infty}$, i.e. the universal elliptic curve with $\Gamma = \Gamma(N)$-level structure (recall Assumption \ref{Nf0assumption}, in particular $(N,p) = 1$ and $f_0|N$ so that the $\Gamma$-level structure determines a $\Gamma(f_0)$-level structure) and $\Gamma(p^{\infty})$-level structure. 
\item Let 
$$P = (P_1,P_2) : (\mathbb{Z}/f_0)_Y^{\oplus 2} \xrightarrow{\sim} \mathcal{E}[f_0]$$
denote the universal $\Gamma(f_0)$-level structure and let 
$$(e_1,e_2) : \hat{\mathbb{Z}}_{p,Y_{\infty}}^{\oplus 2} \xrightarrow{\sim} T_p\mathcal{E}$$
denote the universal $\Gamma(p^{\infty})$-level structure, and as in (\ref{eindefinition}) let $e_{i,n} = e_i \pmod{p^n}$. 
\item Let 
\begin{equation}\label{eNpi}e_{f_0p^i} \in \mathcal{E}[f_0p^i](Y_{\infty})
\end{equation}
denote the unique $f_0p^i$-torsion point corresponding to 
$$(P_1,e_{2,i}) \in \mathcal{E}[f_0](Y_{\infty}) \times \mathcal{E}[p^i](Y_{\infty})$$
under the canonical decomposition $\mathcal{E}[f_0p^i] = \mathcal{E}[f_0] \times \mathcal{E}[p^i]$. 
\item Let 
$$f^{\phi^m,\mathrm{univ}} : \mathcal{E}|_{\mathrm{LT}_{\infty}^{\phi^m}} \rightarrow \mathcal{E}|_{\mathrm{LT}_{\infty}^{\phi^{m+1}}}$$
denote the isogeny given by division by the canonical subgroup of $\mathcal{E}|_{\mathrm{LT}_{\infty}^{\phi^m}} \rightarrow \mathrm{LT}_{\infty}^{\phi^m}$. (Note that the canonical subgroup exists on $\mathrm{LT}_{\infty}^{\phi^m}$ by the discussion at the beginning of Section \ref{thickensection}.) This isogeny induces an isomorphism 
\begin{equation}\label{Ntorsionisomorphism}f^{\phi^m,\mathrm{univ}} : \mathcal{E}|_{\mathrm{LT}_{\infty}^{\phi^m}}[f_0] \xrightarrow{\sim} \mathcal{E}|_{\mathrm{LT}_{\infty}^{\phi^{m+1}}}[f_0]
\end{equation}
since $(f_0,p) = 1$. Let 
$$(f^{\phi^m,\mathrm{univ}})^i := f^{\mathrm{univ},\phi^{m+i-1}} \circ f^{\mathrm{univ},\phi^{m+i-2}} \circ \cdots \circ f^{\mathrm{univ},\phi^{m+1}} \circ f^{\mathrm{univ},\phi^m}$$
(cf. (\ref{fncomposition})). Let 
$$P_1^{\phi^{-m}} \in \mathcal{E}|_{\mathrm{LT}_{\infty}^{\phi^{-m}}}[f_0](\mathrm{LT}_{\infty})$$
be the unique order-$f_0$ point such that 
$$(f^{\mathrm{univ},\phi^{-m}})^m(P^{\phi^{-m}}) = P_1 \in \mathcal{E}|_{\mathrm{LT}_{\infty}}[f_0](\mathrm{LT}_{\infty})$$
under the isomorphism (\ref{Ntorsionisomorphism}). (Recall $P = (P_1,P_1)$ is the universal $\Gamma(f_0)$-level structure on $\mathcal{E}$.)

\item For any $m \in \mathbb{Z}$, consider the open subset 
$$\mathrm{LT}_{\infty}^{\phi^m} \cap \mathcal{Y}^{\mathrm{Ig}}(1/2) \subset \mathrm{LT}_{\infty}^{\phi^m}.$$
Recalling $e_{f_0p^i}$ from (\ref{eNpi}), for any $m \in \mathbb{Z}_{\ge 0}$ define
\begin{equation}\label{alphabasis}\alpha_{2,m} := P_1^{\phi^{-2m}} - e_{f_0p^m}|_{\mathrm{LT}_{\infty}^{\phi^{-2m}}\cap \mathcal{Y}^{\mathrm{Ig}}(1/2)} \in \mathcal{E}|_{\mathrm{LT}_{\infty}^{\phi^{-2m}}}[p^m](\mathrm{LT}_{\infty}\cap \mathcal{Y}^{\mathrm{Ig}}(1/2)).
\end{equation}
Thus, since
$$\ker\left((f^{\mathrm{univ},\phi^{-2m}})^2\right) = \mathcal{E}|_{\mathrm{LT}_{\infty}^{\phi^{-2m}}}[p],$$
by induction we have 
$$(f^{\mathrm{univ},\phi^{-2m}})^2(\alpha_{2,m}) = \alpha_{2,m-1}$$ 
for all $m \in \mathbb{Z}_{\ge 1}$ and $\alpha_{2,0} = 0$ (cf. (\ref{fbasiscondition})). Since 
$$\mathcal{E}|_{\mathrm{LT}_{\infty}^{\phi^{-2m}}}[p^{\infty}] = \hat{\mathcal{E}}|_{\mathrm{LT}_{\infty}^{\phi^{-2m}}}[p^{\infty}]$$
by the supersingularity of 
$$\mathcal{E}|_{\mathrm{LT}_{\infty}^{\phi^{-2m}}} \rightarrow \mathrm{LT}_{\infty}^{\phi^{-2m}} \subset Y_{\infty}^{\mathrm{ss}},$$
(\ref{FEidentify}) then shows that 
\begin{align*}\alpha_{2,m} &\in F_{\infty}^{\mathrm{univ},\phi^{-2m}}[(f^{\mathrm{univ},\phi^{-2m}})^{2m}](\mathrm{LT}_{\infty} \cap \mathcal{Y}^{\mathrm{Ig}}(1/2)) = F_{\infty}^{\mathrm{univ},\phi^{-2m}}[p^m](\mathrm{LT}_{\infty} \cap \mathcal{Y}^{\mathrm{Ig}}(1/2)),
\end{align*}
where $F_{\infty}^{\mathrm{univ},\phi^{-2m}}[(f^{\mathrm{univ},\phi^{-2m}})^{2m}] \subset F_{\infty}^{\mathrm{univ},\phi^{-2m}}$ denotes the kernel of $(f^{\mathrm{univ},\phi^{-2m}})^{2m}$. 

\end{enumerate}
\end{definition}

\begin{remark}The construction of (\ref{alphabasis}) is in parallel to the construction of the $f$-basis $(\omega_n)_n$ in \cite[Chapter II.4.4]{deShalit}. One can view our construction as a thickened version of loc. cit. One may also think of $(\alpha_{2,m})_m$ as a universal version of the choice of $f$-basis on $F$ from Choice \ref{fbasis}.

\end{remark}


\begin{proposition}\label{Eisensteinnumber}We have
$$-\frac{d}{\log_{A_0}'(X)dX}\log g_{e^1(\frak{a};\frak{f}_0)} = -\frac{d}{\log_{A_0}'(X)dX}\log g_{e(\frak{a};\frak{f}_0)}(X) = 12 \cdot E_1(\Omega_0-z;L_0,\frak{a})$$
with $z = \log_{A_0}(X)$.
\end{proposition}

\begin{proof}This follows from Proposition \ref{e1aThetaproposition} and \cite[II.3.1(7)]{deShalit}.

\end{proof}

We also have thickened versions of these results. Recall the universal object $\mathcal{E} \rightarrow \mathbb{Y}$ (Section \ref{algebraicYsection}). Using (\ref{YssLT}) identify $\mathcal{E}|_{\mathrm{LT}_{\infty}} = F_{\infty}^{\mathrm{univ}}$ and let $z = \log_{F_{\infty}^{\mathrm{univ}}}(X)$ denote the resulting coordinate on $\mathcal{E}|_{\mathrm{LT}_{\infty}}$, where $\log_{F_{\infty}^{\mathrm{univ}}}$ is the normalized formal logarithm on $F_{\infty}^{\mathrm{univ}}$ (cf. \cite[II.4.9]{deShalit}). Let $\mathcal{E}(\mathbb{C})^{\mathrm{an}} \rightarrow \mathbb{Y}(\mathbb{C})^{\mathrm{an}}$ denote the complex analytification of $\mathcal{E}$, and recall that the (componentwise) complex analytic universal cover of $\mathbb{Y}(\mathbb{C})^{\mathrm{an}}$ is 
$$\mathbf{H}^+ := \bigsqcup_{\pi_0(\mathbb{Y}(\mathbb{C})^{\mathrm{an}})}\mathcal{H}^+$$
(where, as before, $\pi_0(\mathbb{Y}(\mathbb{C})^{\mathrm{an}})$ denotes the component set of the complex analytic space $\mathbb{Y}(\mathbb{C})^{\mathrm{an}}$ and $\mathcal{H}^+ = \{\tau \in \mathbb{C} : \mathrm{Im}(\tau) > 0\}$). Let $z$ be the standard coordinate on $\mathbb{C}$, which induces a coordinate $z$ on 
$$\mathcal{E}(\mathbb{C})^{\mathrm{an}}|_{\mathbf{H}^+} = \bigsqcup_{\pi_0(\mathbb{Y}(\mathbb{C})^{\mathrm{an}})}\mathbb{C}/(\mathbb{Z}\tau + \mathbb{Z}),$$
where $\tau$ is the coordinate on $\mathcal{H}^+$.

\begin{proposition}\label{ThickenColemanspecialize}
\begin{enumerate}
\item 
In the setting of Theorem \ref{thickenproposition} and Definition \ref{universalalphadefinition}, we have 
\begin{equation}\label{thickenTheta}\mathrm{thicken}\left(g_{e(\frak{a};\frak{f}_0)}\right)  = \Theta(\Omega_0-z;\Omega_0f_0(\mathbb{Z}\tau + \mathbb{Z}),\frak{a}) = [-1]_{\mathcal{E}}^*\tau_{P_1}^*\left(\frac{\Theta(z;\mathbf{a}^{-1}(\mathbb{Z}\tau + \mathbb{Z}))}{\Theta(z;\mathbb{Z}\tau + \mathbb{Z})^{\mathbb{N}\frak{a}}}\right)
\end{equation}
where $\tau_{P_1} : \mathcal{E} \rightarrow \mathcal{E}$ denotes translation by $P_1 \in \mathcal{E}[f_0]$, and $[-1]_{\mathcal{E}} : \mathcal{E} \rightarrow \mathcal{E}$ denotes multiplication by $-1$ under the group law $[\cdot]_{\mathcal{E}}$ of the universal elliptic curve $\mathcal{E}$. 
(Note that $\Theta(\Omega_0-z;\Omega_0f_0(\mathbb{Z}\tau + \mathbb{Z}),\frak{a}) \in \mathbf{\Gamma}(\mathcal{O}_{\mathcal{E}^{\mathrm{univ}}}^{\times})$ and is not just a section of $\mathcal{O}_{\mathcal{E}(\mathbb{C})^{\mathrm{an}}}^{\times}$, by \cite[Chapter II.1]{Tsuji}.) Moreover, 
$$\mathrm{thicken}\left(g_{e(\frak{a};\frak{f}_0)}\right)/\mathrm{thicken}\left(g_{e^1(\frak{a};\frak{f}_0)}\right)$$
is a constant in $\mathcal{O}_{K(\frak{f}_0)_p}^{\times}$. 

\item Let $\frak{a} \subset \mathcal{O}_K$ and $\mathbf{a} \in \mathbb{Z}_{> 0}$ be as in Choice \ref{achoice} and let 
$$\alpha_{2,a} \in F_{\infty}^{\mathrm{univ},\phi^{-2a}}[p^a](\mathrm{LT}_{\infty} \cap \mathcal{Y}^{\mathrm{Ig}}(1/2))$$
be as in (\ref{alphabasis}). Then we have the following equality in $\mathcal{O}_{\mathcal{Y}^{\mathrm{Ig}}(1/2)}(\mathrm{LT}_{\infty} \cap \mathcal{Y}^{\mathrm{Ig}}(1/2))$:
\begin{equation}\label{thickenTheta2}\alpha_{2,a}^*\mathrm{thicken}\left(g_{e(\frak{a};\frak{f}_0)}^{\phi^{-2a}}\right) = e_{2,a}^*\left(\frac{\Theta(z;\mathbf{a}^{-1}f_0(\mathbb{Z}\tau + \mathbb{Z}))}{\Theta(z;f_0(\mathbb{Z}\tau + \mathbb{Z}))^{\mathbb{N}\frak{a}}}\right).
\end{equation}

\item Recall $U_p$ and $V_p$ from Definition \ref{flatdefinition}. Let $d : F^{\mathrm{univ}} \rightarrow \Omega_{F^{\mathrm{univ}}/\mathrm{LT}}$ denote the exterior derivative. Then
\begin{equation}\label{thickenEisenstein}\begin{split}\alpha_{2,a}^*d\widetilde{\log}\left(\mathrm{thicken}\left(g_{e^1(\frak{a};\frak{f}_0)}^{\phi^{-2a+1}}\right)\right) 
&= \alpha_{2,a}^*d\widetilde{\log}\left(\mathrm{thicken}\left(g_{e(\frak{a};\frak{f}_0)}^{\phi^{-2a}}\right)\right) \\
&= (1-V_p^*U_p^*)\alpha_{2,a}^*d\log\left(\mathrm{thicken}\left(g_{e(\frak{a};\frak{f}_0)}^{\phi^{-2a}}\right)\right) \\
&= (1-V_p^*U_p^*)e_{2,a}^*d\log\left(\frac{\Theta(z;\mathbf{a}^{-1}f_0(\mathbb{Z}\tau + \mathbb{Z}))}{\Theta(z;f_0(\mathbb{Z}\tau + \mathbb{Z}))^{\mathbb{N}\frak{a}}}\right)\\
&\hspace{-1.6cm}=  (1-V_p^*U_p^*)e_{2,a}^*\left(d\log\left(\Theta(z;\mathbf{a}^{-1}f_0(\mathbb{Z}\tau + \mathbb{Z}))\right) - d\log\left(\Theta(z;f_0(\mathbb{Z}\tau + \mathbb{Z}))^{\mathbb{N}\frak{a}}\right)\right).
\end{split}
\end{equation}
Here, $\log (\mathrm{thicken}(g_{e(\frak{a};\frak{f}_0)}))$ is defined using the Iwasawa branch of the $p$-adic logarithm as follows: $\log (\mathrm{thicken}(g_{e^1(\frak{a};\frak{f}_0)}))$ is defined as usual using (\ref{logpowerseries}) and (\ref{principalthick2}), 
$$\log\left(g_{e(\frak{a};\frak{f}_0)}\right) = \log\left(g_{e^1(\frak{a}l\frak{f}_0)}\right) + 
\log\left(g_{e(\frak{a};\frak{f}_0)}/g_{e^1(\frak{a};\frak{f}_0)}\right)$$
where $\log\left(g_{e(\frak{a};\frak{f}_0)}/g_{e^1(\frak{a};\frak{f}_0)}\right) \in \mathcal{O}_{K(\frak{f}_0)_p}$ is well-defined since $g_{e(\frak{a};\frak{f}_0)}/g_{e^1(\frak{a};\frak{f}_0)} \in \mathcal{O}_{K(\frak{f}_0)_p}^{\times}$, and $\widetilde{\log}$ is as in Convention \ref{tildelogconvention}.

\item Then for any $0 \le i \le p^a-1$,
\begin{equation}\label{thickenEisenstein2}\sum_{i = 0}^{p^a-1}\eta(i)(i\alpha_{2,a})^*d\widetilde{\log}\left(\mathrm{thicken}\left(g_{e^1(\frak{a};\frak{f}_0)}\right)\right) = 12\left(\tilde{w}_{1,\eta}^{\flat}(z,\mathbf{a}^{-1}f_0L) - \mathbb{N}\frak{a} \cdot \tilde{w}_{1,\eta}^{\flat}(z,f_0L)\right).
\end{equation}
\end{enumerate}
\end{proposition}

\begin{proof}\textbf{(1)}: The function $\Theta(\Omega_0-z;\Omega_0f_0(\mathbb{Z}\tau + \mathbb{Z}),\frak{a})$ specializes to $\Theta(\Omega_0-z;L_0,\frak{a})$ upon taking $\tau = \tau_0$ (\ref{tau0}) and noting that 
$$\Omega_0f_0(\mathbb{Z}\tau_0 + \mathbb{Z}) = \Omega_0 \frak{f}_0 = L_0.$$
Hence, to prove the first equality of (\ref{thickenTheta}), it suffices to check that the following identity holds in $\mathbf{\Gamma}(F^{\mathrm{univ}})$:
$$N_{f^{\mathrm{univ}}}(\Theta(\Omega_0-z;\Omega_0f_0(\mathbb{Z}+ \mathbb{Z}\tau),\frak{a})) = \Theta(\Omega_0-z;\Omega_0f_0(\mathbb{Z}\tau + \mathbb{Z}),\frak{a})^{\phi},$$
where the superscript ``$\phi$'' denotes applying the lift of Frobenius $\phi \in \mathrm{Gal}(K(\frak{f}_0)_p/K_p)$ to the $\mathcal{O}_{K(\frak{f}_0)_p}$-coefficients in 
$$\mathbf{\Gamma}(F^{\mathrm{univ}}) \cong \mathcal{O}_{K(\frak{f}_0)_p}\llbracket T, X\rrbracket.$$
This identity itself follows from the fact that the kernel of $f^{\mathrm{univ}}$ is the canonical subgroup of $F^{\mathrm{univ}}$, and standard properties of Kato-Siegel $\Theta$-functions (\cite[Chapter II, Proposition 1.1 and p. 150]{Tsuji}). 

The second equality of (\ref{thickenTheta}) follows after observing that $\Omega_0$ maps to the $f_0$-torsion point under the uniformization $\mathcal{E}(\mathbb{C})^{\mathrm{an}}|_{\mathcal{H}^+} \cong \mathbb{C}/(\Omega_0f_0(\mathbb{Z}+ \mathbb{Z}\tau))$. 

The constant statement follows from the unique lifting statement Theorem \ref{thickenproposition} and the constant statement of Proposition \ref{Eisensteinnumber}.\\

\textbf{(2)}: From (\ref{alphabasis}) and (\ref{thickenTheta}) we have 
$$\alpha_{2,a}^*\mathrm{thicken}\left(g_{e(\frak{a};\frak{f}_0)}^{\phi^{-2a}}\right)(X) = e_{f_0p^a}^*\left(\frac{\Theta(z;\mathbf{a}^{-1}(\mathbb{Z}\tau + \mathbb{Z}))}{\Theta(z;(\mathbb{Z}\tau + \mathbb{Z}))^{\mathbb{N}\frak{a}}}\right)$$
where $e_{f_0p^a} \in \mathcal{E}[f_0p^a](Y_{\infty})$ is as in (\ref{eNpi}). Now (\ref{thickenTheta2}) follows from the fact that $[f_0]_{\mathcal{E}}(e_{f_0p^a}) = e_{2,a}$ by definition. \\

\textbf{(3)}: Every equality in (\ref{thickenEisenstein}) is formal except for the second and third equalities. For the second equality, first note that 
$$d\widetilde{\log}\left(\mathrm{thicken}(g_{e(\frak{a};\frak{f}_0)}^{\phi^{-2a}})\right) \overset{(\ref{tildedefinition})}{=} \left(1 - \frac{1}{p} \sum_{\alpha \in F^{\mathrm{univ},\phi^{-2a}}[f^{\mathrm{univ},\phi^{-2a}}]}\tau_{\alpha}^*\right)d\log\left(\mathrm{thicken}(g_{e(\frak{a};\frak{f}_0)}^{\phi^{-2a}})\right)$$
where 
$$\tau_{\alpha} : F^{\mathrm{uhiv},\phi^{-2a}} \rightarrow F^{\mathrm{univ},\phi^{-2a}}$$
denotes translation by $\alpha$, and then observe that by the moduli interpretations of $U_p$ and $V_p$ from Definition \ref{flatdefinition},
$$\alpha_{2,a}^*\left(1 - \frac{1}{p} \sum_{\alpha \in F^{\mathrm{univ},\phi^{-2a}}[f^{\mathrm{univ},\phi^{-2a+1}}]}\tau_{\alpha}^*\right) = (1-V_p^*U_p^*)\alpha_{2,a}^*.$$

The third equality of (\ref{thickenEisenstein}) follows from (\ref{thickenTheta2}). \\


\textbf{(4)}: The identity (\ref{thickenEisenstein2}) follows from (\ref{thickenEisenstein}), (\ref{tildewflat}) and the relation (cf. \cite[Chapter II.3.1 (7)]{deShalit})
$$\frac{d}{dz}\log\Theta(z;\mathbb{Z}\tau + \mathbb{Z}) = 12 \cdot E_1(z;\mathbb{Z}\tau + \mathbb{Z}).$$

\end{proof}

\subsection{The fundamental elliptic unit $\xi_E$}In this section, we will define a special elliptic unit that will show up in our explicit reciprocity law (\ref{explicitreciprocity}). 

\begin{proposition}\label{DeltaKproposition} The composition 
$$\Delta_K \subset \mathrm{Gal}(\mathcal{K}_{\infty}/K) \twoheadrightarrow \mathrm{Gal}(\mathcal{K}_{2n+2a}/K)$$
factors through an \emph{isomorphism} 
$$\Delta_K \xrightarrow{\sim} \mathrm{Gal}(\mathcal{K}_{2n+2a}/(\mathcal{K}_{2n+2a} \cap K_{\infty}))$$
for all $n \ge 0$. 
\end{proposition}

\begin{proof}The restriction map
$$\mathrm{res} : \mathrm{Gal}(\mathcal{K}_{\infty}/\mathcal{K}_{2n+2a}) \rightarrow \mathrm{Gal}(K_{\infty}/(\mathcal{K}_{2n+2a}\cap K_{\infty}))$$
is surjective, and so since both the source and target are free $\mathbb{Z}_p$-modules of rank 2 it is an isomorphism. The decomposition (\ref{fixK}) thus induces an identification
$$\mathrm{Gal}(\mathcal{K}_{\infty}/(\mathcal{K}_{2n+2a} \cap K_{\infty})) = \Delta_K \times \mathrm{Gal}(K_{\infty}/(\mathcal{K}_{2n+2a} \cap K_{\infty})) \overset{\mathrm{res}^{-1}}{\cong} \Delta_K \times \mathrm{Gal}(\mathcal{K}_{\infty}/\mathcal{K}_{2n+2a}).$$
Now taking the quotient by $\mathrm{Gal}(\mathcal{K}_{\infty}/\mathcal{K}_{2n+2a})$ of both sides gives the Proposition.
\end{proof}


\begin{definition}[The fundamental elliptic unit $\xi_E$]


\begin{enumerate}
\item Recall the map $\mathrm{Nm}_{\frak{f}_0} : \mathbb{U}^1(\frak{f}_0) \rightarrow \mathbb{U}^1$ from (\ref{UEnorm}) and the element $e^1(\frak{a};\frak{f}_0) \in \mathbb{U}^1(\frak{f}_0)$ from (\ref{ellipticunitea}). 
Recall $\chi_E$ from (\ref{chiE}) and let $e_{\chi_E}$ be the associated projector from (\ref{echiA}).
Then 
$$e_{\chi_E}(e^1(\frak{a};\frak{f}_0)) \in \mathbb{U}^1(\frak{f}_0)_{\chi_E}.$$
\item By Proposition \ref{lambdafactorproposition}, the map (\ref{UEnorm}) descends to the $\chi_E$-isotypic component
$$\mathrm{Nm}_{\frak{f}_0,\chi_E} : \mathbb{U}^1(\frak{f}_0)_{\chi_E} \rightarrow \mathbb{U}_{\chi_E}^1.$$
Now define the \emph{fundamental elliptic unit attached to $E$}
\begin{equation}\label{xiE}\begin{split}\xi_E
&:= \left(\mathrm{Nm}_{\frak{f}_0,\chi_E}(e_{\chi_E}(e^1(\frak{a};\frak{f}_0))) \right)\otimes 1 \in \mathbb{U}_{\chi_E}^1\otimes_{\mathcal{O}_{K_p}}\mathcal{O}_{K_p}(\lambda_E^{-1}) = \mathbb{U}_{\chi_E}^1(\lambda_E^{-1}).
\end{split}
\end{equation}


\item Recall the map $i_{\frak{f}_0,\chi_E} : \mathbb{U}_{\chi_E}^1(\lambda_E^{-1})\rightarrow \mathbb{U}^1(\frak{f}_0)_{\chi_E}(\lambda_E^{-1})$ from (\ref{1ftwist}). We have 
$$i_{\frak{f}_0,\chi_E}\left(\xi_E\right) \in \mathbb{U}^1(\frak{f}_0)_{\chi_E}(\lambda_E^{-1}).
$$


\end{enumerate}
\end{definition}

Recall the map $d\mathrm{Log} : \mathbb{U}_{\chi_E}^1(\lambda_E^{-1}) \rightarrow \mathbf{\Gamma}(\Omega_{F^{\mathrm{univ}}/\mathrm{LT}})[1/p]$ from (\ref{EtwistColeman}) and the section $\mathbf{e} : \mathrm{LT}_{\infty}^{\phi^{-2a}}\rightarrow F_{\infty}^{\mathrm{univ},\phi^{-2a}}$ from (\ref{esection2}) below. We have 
$$d\mathrm{Log}(\xi_E) \in \mathbf{\Gamma}(F^{\mathrm{univ}})[1/p] \subset \mathbf{\Gamma}(F_{\infty}^{\mathrm{univ}}).$$
We will later consider the $q_{\mathrm{dR}}$-expansion of the pulled back section
$$\mathbf{e}^*\left(d\mathrm{Log}(\xi_E)^{\phi^{-2a}}\right) \in \mathbf{\Gamma}(\mathrm{LT}_{\infty}^{\phi^{-2a}}),$$
where $d\mathrm{Log}(\xi_E)^{\phi^{-2a}}$ denotes the extension of scalars of $d\mathrm{Log}(\xi_E)$ by $\phi^{-2a}$ (see (\ref{Frobeniusapplication}) below). In our explicit reciprocity law (\ref{explicitreciprocity}), we will show that after specializing the coefficients of this $q_{\mathrm{dR}}$-expansion to the CM point $y$ from (\ref{usey2}), we essentially obtain the $p$-adic $L$-function $\mathcal{L}_{\lambda_E}$ from Theorem \ref{interpolation2}.

\subsection{Preparations for defining the reciprocity map}
In this section, we make a few preparations for defining the reciprocity map in Section \ref{thereciprocitymapsection}.

\begin{convention}\label{Fconvention}For the rest of the paper, we will work with the choice of $F$ from Example \ref{Fchoice}. Thus the associated Lubin-Tate space $\mathrm{LT}$ classifies deformations of the special fiber $\tilde{\hat{A}}_0 := \hat{A}_0 \pmod{\varpi'\mathcal{O}_{K(\frak{f}_0)_p}}$ of the formal group $\hat{A}_0/\mathcal{O}_{K(\frak{f}_0)_p}$.
\end{convention}

\begin{assumption}\label{Yassumption}For the rest of the paper, let $Y$ be as in Convention \ref{Yconvention} subject to the condition $f_0|N$ from Assumption \ref{Nf0assumption}. 
\end{assumption}

\begin{choice}\label{rchoice2}Henceforth, make the choice of $\epsilon_0 = 1/2$ in (\ref{rchoice}). Since $p$ is ramified in $K$, 1/2 is in the valuation group of $K_p \subset k = \widehat{\mathbb{Q}_p(\mu_{p^{\infty}})}$. Note that the last inclusion follows from our assumption that $K$ has class number 1, so that $K = \mathbb{Q}(\sqrt{-p})$ with $p \equiv 3 \pmod{4}$ or $K = \mathbb{Q}(i)$ and thus $K \subset \mathbb{Q}(\mu_{p^{\infty}})$.
\end{choice}

Let 
\begin{equation}\label{usey}y = (A,e_1,e_2) \overset{(\ref{YIgin1})}{\in} \mathcal{Y}^{\mathrm{Ig}}(1/2)(\overline{\mathbb{Q}}_p,\overline{\mathbb{Z}}_p)
\end{equation}
be as in Choice \ref{choice} with the choice of $\epsilon_0 = 1/2$ from Choice \ref{rchoice2} (we continue to suppress the $\Gamma = \Gamma(N)$-level structure per Convention \ref{tameconvention}), and view $y \in \mathrm{LT}_{\infty}$ using the fact that $Y_{\infty}^{\mathrm{ss}} \cong \bigsqcup \mathrm{LT}_{\infty}$, i.e. the supersingular locus $Y_{\infty}^{\mathrm{ss}}$ of $Y_{\infty}$ is a finite union of adic generic fibers $\mathrm{LT}_{\infty}$ of infinite-level Lubin-Tate towers. (See \cite[discussion after Theorem III.1.2]{ScholzeTorsion}.) Under the inclusion $\mathrm{LT}_{\infty} \subset Y_{\infty}^{\mathrm{ss}} \subset \mathcal{V}_x$, we naturally identify the formal group $F_{\infty}^{\mathrm{univ}} \rightarrow \mathrm{LT}_{\infty}$ with the formal group attached to the connected $p$-divisible group $\mathcal{E}[p^{\infty}]|_{\mathrm{LT}_{\infty}}$, where $\mathcal{E} \rightarrow Y_{\infty}$ is the universal object and $\mathcal{E}[p^{\infty}]$ is as in Convention \ref{idempotentconvention} (i.e. $\mathcal{E}[p^{\infty}]$ is the $p$-divisible group obtained by applying $e^1$ to the usual $p$-divisible group of $\mathcal{E}$).

Here, as we are working in the split quaternion algebra case (i.e. $D = M_2(\mathbb{Q})$), $A$ is an elliptic curve defined over $K$ with CM by $\mathcal{O}_K$. Since $A$ was allowed to be any such elliptic curve, we henceforth fix the following convenient choice. 

\begin{choice}\label{AEchoice}Recall our elliptic curve $E/\mathbb{Q}$ with CM by $\mathcal{O}_K$ from Assumption \ref{pconductorassumption} (3). Henceforth take the choice $A = E$ in Choice \ref{choice}. Thus
$$y = (E,e_1,e_2) \in \mathcal{Y}^{\mathrm{Ig}}(1/2)(\overline{\mathbb{Q}}_p,\overline{\mathbb{Z}}_p).$$
\end{choice} 

Letting $E^+$ be a minimal local good integral model of $E$, which is defined over a finite extension $\mathcal{O}'$ of $\mathcal{O}_{K_p}$, we get a point $y^+ = (E^+,e_1^+,e_2^+) \in\mathcal{Y}^{\mathrm{Ig}}(1/2)^+(\overline{\mathbb{Z}}_p,\overline{\mathbb{Z}}_p)$, where $(e_1^+,e_2^+) : \mathbb{Z}_p^{\oplus 2} \rightarrow T_pE^+$ is a Drinfeld level structure. Let $A_0$ be as in Choice \ref{FixCMdefinition} for $\frak{f}_0 = \frak{f}_E^{(p)}$ (see Choice \ref{f0choice}). 
 Then since $\lambda_E/\lambda_{A_0}$ is a finite order character, $E$ and $A_0$ are twists of each other i.e. isomorphic over $\overline{\mathbb{Q}}$. Thus the $p$-adic integral model $E^+$ and the $p$-adic integral model $A_0^+$ of $A_0$ reduce to the same point in $Y^+(\overline{\mathbb{F}}_p)$. Equivalently, $E$ and $A_0$ lie in the same $p$-adic residue disc in $Y$, and thus belong to the same Lubin-Tate deformation space $\mathrm{LT}(F_0)$ where 
 $$F_0 = \tilde{\hat{A}}_0 := \hat{A}_0 \pmod{\varpi'\mathcal{O}_{K(\frak{f}_0)_p}}.$$
 In the notation of Definition \ref{esectiondefinitions} for $F_0 = \tilde{\hat{A}}_0$, we thus have
$$y \in \mathrm{LT}_{\infty}(\overline{\mathbb{Q}}_p,\overline{\mathbb{Z}}_p),$$
and so by (\ref{usey}) we have 
\begin{equation}\label{usey2}y \in (\mathrm{LT}_{\infty} \cap \mathcal{Y}^{\mathrm{Ig}}(1/2))(\overline{\mathbb{Q}}_p,\overline{\mathbb{Z}}_p).
\end{equation}

Recall that 
$$F^{\mathrm{univ}} \rightarrow \mathrm{LT}$$ 
denotes the universal deformation of $F_0 = F \pmod{\varpi'} = \hat{A}_0 \pmod{\varpi'\mathcal{O}_{K(\frak{f}_0)_p}} = \tilde{\hat{A}}_0$ (see Example \ref{Fchoice}). Recall the notation of Definition \ref{esectiondefinitions}. 
In particular,
$$F^{\mathrm{univ},\phi^m} \rightarrow \mathrm{LT}^{\phi^m}$$
is the universal deformation of $F_0^{\phi^m} = \tilde{\hat{A}}_0^{\phi^m}$. 

\begin{definition}\label{lambdaFunivdefinition} For any $m \in \mathbb{Z}$, define an inclusion
\begin{equation}\label{bracketFuniv}[\cdot ]_{F^{\mathrm{univ},\phi^m}} : \mathrm{Gal}(K(\frak{f}_0p^{\infty})/K(\frak{f}_0)) \hookrightarrow \mathrm{Aut}(F^{\mathrm{univ},\phi^m}).
\end{equation}
as follows. 

\begin{enumerate}
\item By the main theorem of complex multiplication (\cite[Chapter II.1.3 (11)]{deShalit}), we have an inclusion
$$\mathrm{Gal}(K(\frak{f}_0p^{\infty})/K(\frak{f}_0)) = \mathrm{Gal}(K(\frak{f}_0)(A_0^{\phi^m}[\frak{f}_0p^{\infty}])/K(\frak{f}_0)) \subset \mathrm{Aut}(A_0^{\phi^m}) \hookrightarrow \mathrm{Aut}(\tilde{\hat{A}}_0^{\phi^m}),$$
where the last inclusion follows as endomorphisms of $A_0^{\phi^m}$ are determined by their special fibers by the N\'{e}ron mapping property. As $F^{\mathrm{univ},\phi^m}$ is the universal deformation of $\tilde{\hat{A}}_0^{\phi^m}$, from the universal property (\cite[Section 1.1]{Tsuji}) we get inclusions
$$\mathrm{Aut}(\tilde{\hat{A}}_0^{\phi^m}) \hookrightarrow \mathrm{Aut}(F^{\mathrm{univ},\phi^m}/W_{\mathcal{O}_{K_p}}), \hspace{1cm} \mathrm{Aut}(\tilde{\hat{A}}_0^{\phi^m}) \hookrightarrow \mathrm{Aut}(\mathrm{LT}^{\phi^m}/W_{\mathcal{O}_{K_p}})$$
where $\mathrm{Aut}(X/W_{\mathcal{O}_{K_p}})$ denotes the group of automorphisms of a formal scheme $X \rightarrow \mathrm{Spf}(W_{\mathcal{O}_{K_p}})$ that preserve the structure map $X \rightarrow \mathrm{Spf}(W_{\mathcal{O}_{K_p}})$. 
\item The above inclusions are compatible (i.e. are $W_{\mathcal{O}_{K_p}}$-linear) with respect to the natural map 
$$\mathrm{Aut}(\mathrm{LT}^{\phi^m}/W_{\mathcal{O}_{K_p}}) \rightarrow \mathrm{Aut}(F^{\mathrm{univ},\phi^m}/W_{\mathcal{O}_{K_p}})$$
induced by the $W_{\mathcal{O}_{K_p}}$-linear map $F^{\mathrm{univ},\phi^m} \rightarrow \mathrm{LT}^{\phi^m}$.

\item The composition of the last two inclusions is our desired inclusion $[\cdot]_{F^{\mathrm{univ},\phi^m}}$. Using
$$\Gamma_K \overset{(\ref{fixA})}{\subset}  \mathrm{Gal}(K(\frak{f}_0p^{\infty})/K(\frak{f}_0)),$$
we may also consider the restrictions
$$[\cdot ]_{F^{\mathrm{univ},\phi^m}} : \Gamma_K \hookrightarrow \mathrm{Aut}(F^{\mathrm{univ},\phi^m}/W_{\mathcal{O}_{K_p}}), \hspace{1cm} [\cdot]_{\mathrm{LT}^{\phi^m}} : \Gamma_K  \hookrightarrow \mathrm{Aut}(\mathrm{LT}^{\phi^m}/W_{\mathcal{O}_{K_p}}).$$
\end{enumerate}
\end{definition}

In fact, under the global Artin reciprocity isomorphism $\mathcal{O}_{K_p}^{\times} \cong \mathrm{Gal}(K(\frak{f}_0p^{\infty})/K(\frak{f}_0))$, the action $[\cdot]_{F^{\mathrm{univ},\phi^{m}}}$ coincides with the $\mathcal{O}_{K_p}^{\times}$ action on $F^{\mathrm{univ},\phi^{m}}$ defined by the universal property (see \cite[Section 1.1]{Tsuji}). Since the action of $\phi$ on $F^{\mathrm{univ}}$ commutes with the $\mathcal{O}_{K_p}^{\times}$-action on $F^{\mathrm{univ}}$ (see loc. cit.; one can also see this commutativity using \cite[I.3.7 (14)]{deShalit}), we have 
\begin{equation}\label{bracketFunivsequal}[\cdot ]_{F^{\mathrm{univ}}} = [\cdot ]_{F^{\mathrm{univ},\phi^m}}
\end{equation}
for all $m \in \mathbb{Z}$.

\begin{definition}\label{gamma12definition}Let $\gamma_1,\gamma_2$ be a $\mathbb{Z}_p$-basis of $\Gamma_K \cong \mathbb{Z}_p^{\oplus 2}$. 
\end{definition}
Using Propoposition \ref{DeltaKproposition}, we have an isomorphism 
\begin{equation}\label{K2adecomposition}\mathrm{Gal}(\mathcal{K}_{2a}/K) = \Delta_K \times \mathrm{Gal}((\mathcal{K}_{2a} \cap K_{\infty})/K).
\end{equation}
Using (\ref{fixK}), we thus see $\mathrm{Gal}((\mathcal{K}_{2a} \cap K_{\infty})/K)$ as a quotient of $\Gamma_K$ and so can write 
\begin{equation}\label{K2adecompositionbasis}\mathrm{Gal}((\mathcal{K}_{2a} \cap K_{\infty})/K) = \frac{\gamma_1^{\mathbb{Z}_p}}{\gamma_1^{p^{k_1}\mathbb{Z}_p}} \times \frac{\gamma_2^{\mathbb{Z}_p}}{\gamma_2^{p^{k_2}\mathbb{Z}_p}} \cong (\mathbb{Z}/p^{k_1}) \times (\mathbb{Z}/p^{k_2})
\end{equation}
for some $k_1,k_2 \in \mathbb{Z}_{\ge 0}$. (Here, recall $\mathcal{K}_n$ is defined in Definition \ref{mathcalKndefinition} and $a$ is as in (\ref{adefinition}).)

\subsection{The reciprocity map}\label{thereciprocitymapsection}
In this section, we use the material developed in the previous sections to construct a $\Lambda_{\mathcal{O}_{K_p}}[1/p]$-linear map 
$$\delta : \mathbb{U}_{\chi_E}^1(\lambda_E^{-1}) \rightarrow \mathbb{C}_p\llbracket q_{\mathrm{dR}}-1\rrbracket,$$
where the action of $\Lambda_{\mathcal{O}_{K_p}}[1/p]$ on the target is through 
$$\Lambda_{\mathcal{O}_{K_p}}[1/p] \twoheadrightarrow \Lambda_{\mathcal{O}_{K_p}}[1/p]/(\Gamma_K-1) = K_p \subset \mathbb{C}_p$$
(see Definition \ref{Lambdadefinition}). 
One may think of this $\delta$ as a ``$q_{\mathrm{dR}}$-thickening'' of the reciprocity map $\delta$ from \cite{RubinMC}, or else that of the logarithmic derivative map $\delta$ from \cite{Rubin3}. (The two maps of both op. cit. are in fact equal up to a nonzero multiple by Wiles's reciprocity law, \cite[Chapter I.4]{deShalit}.) We will also define certain $p$-adic Maass-Shimura derivatives $D_1^j\delta$ of our $\delta$.

For brevity, let $\mathbf{\Gamma}(X)$ denote the coordinate ring of an affinoid adic space $X$. Also recall the notation of Convention \ref{Gammaconvention}; i.e., for sheaf $\mathcal{F}$, $\mathbf{\Gamma}(\mathcal{F})$ denotes its global sections.  

\begin{definition}Recall the torsion point 
$$\alpha_{2,a} : \mathrm{LT}_{\infty} \rightarrow F_{\infty}^{\mathrm{univ},\phi^{-2a}}$$
from (\ref{alphabasis}), so that using Definition (\ref{bracketFuniv}) we get a torsion point 
$$[\gamma]_{F^{\mathrm{univ},\phi^{-2a}}}(\alpha_{2,a}) : \mathrm{LT}_{\infty} \rightarrow F_{\infty}^{\mathrm{univ},\phi^{-2a}}$$
for any $\gamma \in \Gamma_K$. Let $\gamma_1, \gamma_2 \in \Gamma_K$ from Definition \ref{gamma12definition}. Define
\begin{equation}\label{esection2}\begin{split} \mathbf{e} &:= \sum_{m = 0}^{p^{k_1}-1}\sum_{n = 0}^{p^{k_2}-1}[\gamma_1^m\gamma_2^n]_{F^{\mathrm{univ},\phi^{-2a}}}(\alpha_{2,a}) \otimes_{\mathbb{Z}_p} \lambda_E^{-1}(\gamma_1^m\gamma_2^n) \in F_{\infty}^{\mathrm{univ},\phi^{-2a}}[p^a](\mathrm{LT}_{\infty}) \otimes_{\mathbb{Z}_p}\mathcal{O}_{K_p}.
\end{split}
\end{equation}
Using the pullback 
$$[\gamma_1^m\gamma_2^n]_{F^{\mathrm{univ},\phi^{-2a}}}(\alpha_{2,a})^* : \mathbf{\Gamma}(F_{\infty}^{\mathrm{univ},\phi^{-2a}}) \rightarrow \mathbf{\Gamma}(\mathrm{LT}_{\infty}),$$
we get an induced map
$$\mathbf{e}^* : \mathbf{\Gamma}(F_{\infty}^{\mathrm{univ},\phi^{-2a}}) \rightarrow \mathbf{\Gamma}(\mathrm{LT}_{\infty})$$
which in turn induces
$$\mathbf{e}^* : \mathbf{\Gamma}(\Omega_{F^{\mathrm{univ},\phi^{-2a}}/\mathrm{LT}_{\infty}^{\phi^{-2a}}}|_{F_{\infty}^{\mathrm{univ},\phi^{-2a}}}) \rightarrow \mathbf{\Gamma}(\omega|_{\mathrm{LT}_{\infty}})$$
where $\omega$ is the Hodge bundle from (\ref{omegaY}). 
\end{definition}

In the above Definition, we employ the specific power $\phi^{-2a}$ of Frobenius $\phi$ in order to make use of Proposition \ref{ThickenColemanspecialize}.


\begin{definition}
\begin{enumerate}
\item Recall the notation of Definition \ref{esectiondefinitions}, and recall that 
$$\frac{1}{p} \in \mathrm{\Gamma}(F_{\infty}^{\mathrm{univ}})$$
by Convention \ref{infiniteleveladicgenericfiberconvention}, so that we have a natural map 
$$\mathbf{\Gamma}(F^{\mathrm{univ}})[1/p] \rightarrow \mathbf{\Gamma}(F_{\infty}^{\mathrm{univ}}).$$
Recall the map $$d\mathrm{Log} : \mathbb{U}^1(\frak{f}_0)_{\chi_E}(\lambda_E^{-1})  \rightarrow \mathbf{\Gamma}(F^{\mathrm{univ}})[1/p]$$ from (\ref{EtwistColeman}). Recalling $W_{\mathcal{O}_{K_p}}$ from (\ref{Wcompositum}), let 
\begin{equation}\label{Frobeniusapplication}\begin{split}\phi^{-2a} : \mathbf{\Gamma}(\Omega_{F^{\mathrm{univ}}/\mathrm{LT}}|_{F_{\infty}^{\mathrm{univ}}}) &\rightarrow \mathbf{\Gamma}(\Omega_{F^{\mathrm{univ}}/\mathrm{LT}}|_{F_{\infty}^{\mathrm{univ}}})\otimes_{W_{\mathcal{O}_{K_p}},\phi^{-2a}} W_{\mathcal{O}_{K_p}}\\
&\cong \mathbf{\Gamma}(\Omega_{F^{\mathrm{univ},\phi^{-2a}}/\mathrm{LT}^{\phi^{-2a}}}|_{F_{\infty}^{\mathrm{univ},\phi^{-2a}}})
\end{split}
\end{equation}
denote the map $f \mapsto f \otimes 1$. In other words, one applies $\phi^{-2a} : W_{\mathcal{O}_{K_p}} \xrightarrow{\sim} W_{\mathcal{O}_{K_p}}$ to the $W_{\mathcal{O}_{K_p}}$-coefficients of $w \in  \mathbf{\Gamma}(\Omega_{F^{\mathrm{univ}}/\mathrm{LT}}|_{F_{\infty}^{\mathrm{univ}}})$. 
\item Define a composition of maps
\begin{equation}\label{delta0}\begin{split}\delta_0 : \mathbb{U}_{\chi_E}^1(\lambda_E^{-1}) & \xrightarrow{d\mathrm{Log}} \mathbf{\Gamma}(\Omega_{F^{\mathrm{univ}}/\mathrm{LT}})[1/p] \rightarrow \mathbf{\Gamma}(\Omega_{F^{\mathrm{univ}}/\mathrm{LT}}|_{F_{\infty}^{\mathrm{univ}}})\\
&\xrightarrow{\phi^{-2a}} \mathbf{\Gamma}(\Omega_{F^{\mathrm{univ},\phi^{-2a}}/\mathrm{LT}^{\phi^{-2a}}}|_{F_{\infty}^{\mathrm{univ},\phi^{-2a}}})\xrightarrow{\mathbf{e}^*}\mathbf{\Gamma}(\omega|_{\mathrm{LT}_{\infty}}) \\&\xrightarrow{w \mapsto w/w_{\mathrm{can}}}\mathbb{B}_{\mathrm{dR},\mathcal{V}_x}^+(\mathrm{LT}_{\infty})\llbracket q_{\mathrm{dR}}-1\rrbracket.
\end{split}
\end{equation}
Here $\omega$ is the Hodge bundle from (\ref{omegaY}) and the map $w \mapsto w/w_{\mathrm{can}}$ is defined by dividing any $w \in \mathbf{\Gamma}(\omega|_{\mathrm{LT}_{\infty}})$ by the restriction $w_{\mathrm{can}}|_{\mathrm{LT}_{\infty}}$ to the open subset $\mathrm{LT}_{\infty}\subset \mathcal{V}_x$ of the generator 
$$w_{\mathrm{can}} \in \omega \otimes_{\mathcal{O}_Y}\mathbb{B}_{\mathrm{dR},\mathcal{V}_x}^+(\mathcal{V}_x)\llbracket q_{\mathrm{dR}}-1\rrbracket$$
from Proposition \ref{wcangeneratorproposition3}. 
\end{enumerate}
\end{definition}

We now take the $q_{\mathrm{dR}}$-expansion of $\delta_0$. Recall the definition of $q_{\mathrm{dR}}$-expansions from Definition \ref{'zqfunctionexpansions}.  
Applying this definition to the section $\delta_0(\beta)$ defined on the open subset
$$W = \mathrm{LT}_{\infty} \subset \mathcal{V}_x \overset{(\ref{U'U})}{=} \mathbf{U}',$$
we get 
$$\delta_0(\beta)(q_{\mathrm{dR}}) \in \mathbb{B}_{\mathrm{dR},\mathcal{V}_x}^+(\mathrm{LT}_{\infty})\llbracket q_{\mathrm{dR}}-1\rrbracket.$$
Recalling the $p$-adic Maass-Shimura operator $d_k^j : \mathbb{B} \rightarrow \mathbb{B}$ from (\ref{dkjdefinition}) (noting $\mathrm{LT}_{\infty} \subset \mathcal{V}_x$ is open), we get
$$d_k^j : \mathbb{B}_{\mathrm{dR},\mathcal{V}_x}(\mathrm{LT}_{\infty})\llbracket q_{\mathrm{dR}}-1\rrbracket \rightarrow  \mathbb{B}_{\mathrm{dR},\mathcal{V}_x}(\mathrm{LT}_{\infty})\llbracket q_{\mathrm{dR}}-1\rrbracket.$$
Thus we have 
$$d_1^j\delta_0(\beta)(q_{\mathrm{dR}}) \in \mathbb{B}_{\mathrm{dR},\mathcal{V}_x}(\mathrm{LT}_{\infty})\llbracket q_{\mathrm{dR}}-1\rrbracket$$
for every $j \in \mathbb{Z}_{\ge 0}$. 

Now let
$$\theta_t\left(d_1^j\left(\delta(\beta)(q_{\mathrm{dR}})\right)\right) \in \hat{\mathcal{O}}_{\mathcal{V}_x}(\mathrm{LT}_{\infty})\llbracket q_{\mathrm{dR}}-1\rrbracket$$
be as in Definition \ref{thetatpowerseriesdefinition}. Now recall our point $y \in (\mathrm{LT}_{\infty} \cap \mathcal{Y}^{\mathrm{Ig}}(1/2))(\overline{\mathbb{Q}}_p,\overline{\mathbb{Z}}_p)$ from (\ref{usey2}). Let
$$\theta_t(d_1^j\delta(\beta)(y)(q_{\mathrm{dR}})) \in \mathbb{C}_p\llbracket q_{\mathrm{dR}}-1\rrbracket$$
be obtained by evaluating the coefficients of $\theta_t(d_1^j\delta(\beta)(q_{\mathrm{dR}}))$ along the specialization 
$$\hat{\mathcal{O}}_{\mathrm{LT}_{\infty}}(\mathrm{LT}_{\infty}) \rightarrow \hat{\mathcal{O}}_{\mathrm{LT}_{\infty}}(y) \subset \mathbb{C}_p.$$



Recall that we view $\lambda_E : \mathrm{Gal}(\mathcal{K}_{\infty}/K) \rightarrow \mathcal{O}_{K_p}^{\times}$ as a character $\lambda_E : \Gamma_K \rightarrow \mathcal{O}_{K_p}^{\times}$ by restricting to $\Gamma_K \overset{(\ref{fixK})}{\subset} \mathrm{Gal}(\mathcal{K}_{\infty}/K)$. In fact, this restriction is equal to the character $\lambda_E/\chi_E : \Gamma_K \rightarrow \mathcal{O}_{K_p}^{\times}$, where $\chi_E$ is from (\ref{chiE}). However, we will prefer the shorter notation $\lambda_E$ for the sake of brevity. 

\begin{definition} Define, for every $j \in \mathbb{Z}_{\ge 0}$
\begin{equation}\label{delta}\begin{split}&D_1^j\delta : \mathbb{U}_{\chi_E}^1(\lambda_E^{-1}) \rightarrow \mathbb{C}_p\llbracket q_{\mathrm{dR}}-1\rrbracket,\\
&D_1^j\delta(\beta) := \sum_{0 \le m \le p^{k_1}-1, 0 \le n \le p^{k_1}-1, \sigma \in \Delta_{A_0}}\lambda_E^{-2j}(\gamma_1^m\gamma_2^n\sigma)\cdot \theta_t(d_1^j\delta_0(\beta^{\gamma_1^m\gamma_2^n\sigma}))(y)(q_{\mathrm{dR}}).
\end{split}
\end{equation}
When $j = 0$, we simply write $\delta = D_1^0\delta$. 
\end{definition}


For our purposes, we will soon define a ``Coleman primitive'' of the value $\delta|_{q_{\mathrm{dR}}=1}$ of the above map $\delta$ at $q_{\mathrm{dR}} = 1$ as the following limit:
$$\delta'(\cdot) = \lim_{m \rightarrow \infty}D_1^{-1 + p^m(p-1)}\delta(\cdot)|_{q_{\mathrm{dR}} = 1}.$$
However, it is not \emph{a priori} clear that $j \mapsto D_1^j\delta(\cdot)|_{q_{\mathrm{dR}} = 1}$ has good enough $p$-adic continuity properties so that this limit converges. It will turn out that the restriction of $D_1^j\delta(\cdot)$ to a rank 1 subspace $U_0 \subset \mathbb{U}_{\chi_E}^1(\lambda_E^{-1})$ will have such continuity properties, and thus $\delta'$ will be well-defined on $U_0$. (See (\ref{delta'}) below.) The subspace $U_0$ is defined as the $\Lambda_{\mathcal{O}_{K_p}}[1/p]$-saturation in $\mathbb{U}_{\chi_E}^1(\lambda_E^{-1})$ of the module of elliptic units $\overline{\mathcal{C}}_{\chi_E}^1(\lambda_E^{-1})$ (see (\ref{saturation})), and thus is intrinsic to $\mathbb{U}_{\chi_E}^1(\lambda_E^{-1})$. The continuity of $D_1^j\delta|_{U_0}(\cdot)|_{q_{\mathrm{dR}} = 1}$ in $j$ is ``inherited'' from the continuity of $D_1^j\delta|_{\overline{\mathcal{C}}_{\chi_E}^1(\lambda_E^{-1})}(\cdot)|_{q_{\mathrm{dR}} = 1}$ in $j$; this latter continuity is a consequence of the explicit reciprocity law (\ref{explicitreciprocity}) and the continuity in $j$ of the $p$-adic $L$-function $j \mapsto D_1^j\mathcal{L}_{\lambda_E}|_{q_{\mathrm{dR}} = 1}$ (see Lemma \ref{continuitylemma}).

\section{Rubin-type Main Conjectures and Explicit Reciprocity Laws}\label{RMCsection}

In this section, we formulate the Rubin-type main conjectures we need and relate them to the $p$-adic $L$-function of Section \ref{padicLfunctionsection} via an explicit reciprocity law. Recall that $E/\mathbb{Q}$ is our previously fixed elliptic curve with CM by $\mathcal{O}_K$ (Choice \ref{Echoice}). In particular, $K$ has class number 1 and $p$ continues to be the unique finite prime ramified in $K/\mathbb{Q}$.  

\subsection{Review of the relevant Iwasawa modules}
Recall (see Definition \ref{UEdefinition})
$$\mathbb{U} = \varprojlim_n (\mathcal{O}_{\mathcal{K}_n}\otimes_{\mathbb{Z}}\mathbb{Z}_p)^{\times} \;, \hspace{1cm} \mathbb{U}^1 = \varprojlim_n (\mathcal{O}_{\mathcal{K}_n}\otimes_{\mathbb{Z}}\mathbb{Z}_p)^{\times,1}$$
denote the module of norm-compatible systems of local units attached to $E$. Let
$$\mathcal{E} = \varprojlim_n \mathcal{O}_{\mathcal{K}_n}^{\times}\;, \hspace{1cm} \mathcal{E}^1 = \varprojlim_n \mathcal{O}_{\mathcal{K}_n}^{\times,1}$$
denote the module of norm-compatible systems of global units, where 
$$\mathcal{O}_{\mathcal{K}_n}^{\times,1} = \mathcal{O}_{\mathcal{K}_n}^{\times} \cap (\mathcal{O}_{\mathcal{K}_n}\otimes_{\mathbb{Z}}\mathbb{Z}_p)^{\times,1}$$
denotes the principal part of $\mathcal{O}_{\mathcal{K}_n}^{\times}$. Let $\overline{\mathcal{E}}^1$ denote the $p$-adic closure of $\mathcal{E}^1$ inside $\mathbb{U}^1$. Let $M_{\infty}/\mathcal{K}_{\infty}$ be the maximal pro-$p$ abelian extension of $\mathcal{K}_{\infty}$ unramified at all places of $\mathcal{K}_{\infty}$ not above $\frak{p}$, let $N_{\infty}/\mathcal{K}_{\infty}$ denote the maximal pro-$p$ abelian extension of $\mathcal{K}_{\infty}$ unramified everywhere, and let $\mathcal{X} = \mathrm{Gal}(M_{\infty}/\mathcal{K}_{\infty})$ and $\mathcal{Y} = \mathrm{Gal}(N_{\infty}/\mathcal{K}_{\infty})$. 
Recall our fundamental exact sequence
$$0 \rightarrow \overline{\mathcal{E}}^1\rightarrow \mathbb{U}^1 \xrightarrow{\mathrm{rec}} \mathcal{X} \rightarrow \mathcal{Y} \rightarrow 0.$$

\begin{convention}\label{UEconvention}Note that $\mathbb{U}$, $\mathcal{E}$, etc. all depend on the CM elliptic curve $E$. In the rare occasion when we wish to emphasize the dependence on $E$, we will add the subscript ``$E$'', e.g. $\mathbb{U}_E$, $\mathcal{E}_E$, etc.  
\end{convention}

\begin{definition}[Module of elliptic units]\label{ellipticunitsdefinition}Recall $a$ from (\ref{adefinition}). Let 
$\mathcal{C}^1(\frak{f}_0)$ be the module $\mathcal{C}_{\frak{f}_0}$ defined as in \cite[Chapter III.1.3 (4)]{deShalit}. Then 
$$\mathcal{C}^1(\frak{f}_0) \subset \mathbb{U}^1(\frak{f}_0).$$
Let 
$$\mathcal{C}^1 \subset \mathcal{E}^1$$
be the image of the map
$\mathrm{Nm}_{\frak{f}_0} : \mathbb{U}^1(\frak{f}_0) \rightarrow \mathbb{U}^1$ from (\ref{UEnorm}); one easily sees that indeed the image of $\mathcal{C}^1$ is contained in $\mathcal{E}^1$. Note that this definition of $\mathcal{C}^1$ conforms with the module of elliptic units associated with the tower $\mathcal{K}_{\infty}/K$ as defined in \cite[Definition 3.2, Section 5.2]{JohnsonLeungKings}.
 Let 
 $$\overline{\mathcal{C}}^1 \subset \mathbb{U}^1$$
 denote the $p$-adic closure of $\mathcal{C}^1$ in $\mathbb{U}^1$. 
\end{definition} 
Recall the notation $\widetilde{\Lambda}$ from Definition \ref{Lambdadefinition}. In our setting where $p$ is ramified in $K$, $\overline{\mathcal{E}}^1$ has $\widetilde{\Lambda}$-rank 1, $\overline{\mathcal{C}}^1$ has $\widetilde{\Lambda}$-rank 1, $\mathbb{U}^1$ has $\widetilde{\Lambda}$-rank 2, $\mathcal{X}$ has $\widetilde{\Lambda}$-rank 1, and $\mathcal{Y}$ has $\widetilde{\Lambda}$-rank 0. Dividing by $\overline{\mathcal{C}}^1$, we get an exact sequence of $\widetilde{\Lambda}$-modules
\begin{equation}\label{fundamentalES}0 \rightarrow \overline{\mathcal{E}}^1/\overline{\mathcal{C}}^1 \rightarrow \mathbb{U}^1/\overline{\mathcal{C}}^1 \rightarrow \mathcal{X} \rightarrow \mathcal{Y} \rightarrow 0,
\end{equation}
where $\overline{\mathcal{E}}^1/\overline{\mathcal{C}}^1$ has $\widetilde{\Lambda}$-rank 0, $\mathbb{U}^1/\overline{\mathcal{C}}^1$ has $\widetilde{\Lambda}$-rank 1, $\mathcal{X}$ has $\widetilde{\Lambda}$-rank 1, and $\mathcal{Y}$ has $\widetilde{\Lambda}$-rank 0. 

\subsection{Rubin-type main conjectures}Recall that $\chi_E = \lambda_E|_{\Delta_K}$ (see Definition \ref{decompositionchoices}). In the following discussion, we will extensively use the notation $M_{\chi_E}(\lambda_E^{-1})$ of Definition \ref{Mtwistdefinition}.  We will often consider the cases $M = \overline{\mathcal{E}}^1, \overline{\mathcal{C}}^1, \mathbb{U}^1, \mathcal{X}, \mathcal{Y}$. 

\begin{proposition}\label{ranksproposition}$\overline{\mathcal{E}}_{\chi_E}^1(\lambda_E^{-1})$ is a torsion-free $\Lambda_{\mathcal{O}_{K_p}}[1/p]$-module of rank 1 and $\overline{\mathcal{C}}_{\chi_E}^1(\lambda_E^{-1})$ is a free $\Lambda_{\mathcal{O}_{K_p}}[1/p]$-modules of rank 1, and $\mathbb{U}_{\chi_E}^1(\lambda_E^{-1})$ is a free $\Lambda_{\mathcal{O}_{K_p}}[1/p]$-module of rank 2. 
\end{proposition}

\begin{proof}See \cite[Corollary 7.8, Lemma 11.8]{RubinMC}. Note that in our setting, we have inverted $p$, and so there is no issue with taking $\chi_E$-isotypic components using (\ref{echiK}). 
\end{proof}

Recall the notation $\Lambda_{\mathcal{O}_{K_p}}[1/p]$ from Definition \ref{Lambdadefinition}. Taking the $\chi_E$-isotypic component of (\ref{fundamentalES}) and twisting by $\lambda_E^{-1}$ (see Definition \ref{Mtwistdefinition}), we get an exact sequence of $\Lambda_{\mathcal{O}_{K_p}}[1/p]$-modules
\begin{equation}\label{fundamentalEStwist}0 \rightarrow \overline{\mathcal{E}}_{\chi_E}^1(\lambda_E^{-1})/\overline{\mathcal{C}}_{\chi_E}^1(\lambda_E^{-1}) \rightarrow \mathbb{U}_{\chi_E}^1(\lambda_E^{-1})/\overline{\mathcal{C}}_{\chi_E}^1(\lambda_E^{-1}) \rightarrow \mathcal{X}_{\chi_E}(\lambda_E^{-1}) \rightarrow \mathcal{Y}_{\chi_E}(\lambda_E^{-1}) \rightarrow 0.
\end{equation}

In Section \ref{MCproofsection} of the Appendix, we use work of Johnson-Leung-Kings (\cite{JohnsonLeungKings}) on equivariant main conjectures to prove:

\begin{theorem}[Rational Elliptic Units Main Conjecture]\label{EMC}$$\mathrm{char}_{\Lambda_{\mathcal{O}_{K_p}}[1/p]}\left((\overline{\mathcal{E}}^1/\overline{\mathcal{C}}^1)_{\chi_E}(\lambda_E^{-1})\right) = \mathrm{char}_{\Lambda_{\mathcal{O}_{K_p}}[1/p]}(\mathcal{Y}_{\chi_E}(\lambda_E^{-1})).$$
\end{theorem}

Admitting Theorem \ref{EMC}, we would then expect to get a main conjecture relating $\mathbb{U}_{\chi_E}^1(\lambda_E^{-1})/\overline{\mathcal{C}}_{\chi_E}^1(\lambda_E^{-1})$ and $\mathcal{X}_{\chi_E}(\lambda_E^{-1})$. However, these latter $\Lambda_{\mathcal{O}_{K_p}}[1/p]$-modules have rank 1, and so have vanishing characteristic ideals and thus do not admit an obvious nontrivial main conjecture relating them. To remedy this, one can formulate a main conjecture in the following way, suggested by Rubin's strategy in the latter half of \cite[Section 11]{RubinMC}. \emph{Choose} a decomposition (cf. Lemma 11.9 of op. cit.)
\begin{equation}\label{Udecomposition}\mathbb{U}_{\chi_E}^1(\lambda_E^{-1}) \cong U_1 \oplus U_2
\end{equation}
where each $U_i \cong \Lambda_{\mathcal{O}_{K_p}}[1/p]$ and such that $U_2 \cap \overline{\mathcal{E}}_{\chi_E}^1(\lambda_E^{-1}) = 0$. Then $\mathbb{U}_{\chi_E}^1(\lambda_E^{-1})/(\overline{\mathcal{C}}_{\chi_E}^1(\lambda_E^{-1}),U_2)$ and $\mathcal{X}_{\chi_E}(\lambda_E^{-1})/\mathrm{rec}(U_2)$ are torsion $\Lambda_{\mathcal{O}_{K_p}}[1/p]$-modules, and (\ref{fundamentalEStwist}) gives an exact sequence of torsion $\Lambda_{\mathcal{O}_{K_p}}[1/p]$-modules
\begin{equation}\label{fundamentalEStwist2}0 \rightarrow \overline{\mathcal{E}}_{\chi_E}^1(\lambda_E^{-1})/\overline{\mathcal{C}}_{\chi_E}^1(\lambda_E^{-1}) \rightarrow \mathbb{U}_{\chi_E}^1(\lambda_E^{-1})/(\overline{\mathcal{C}}_{\chi_E}^1(\lambda_E^{-1}),U_2) \rightarrow \mathcal{X}_{\chi_E}(\lambda_E^{-1})/\mathrm{rec}(U_2) \rightarrow \mathcal{Y}_{\chi_E}(\lambda_E^{-1}) \rightarrow 0.
\end{equation}
Theorem \ref{EMC} now implies the following, which we call a ``Rubin-type main conjecture''.

\begin{corollary}[Rubin-Type Main Conjecture]\label{RMC}Fixing a decomposition as in (\ref{Udecomposition}) satisfying the above conditions, we have
$$\mathrm{char}_{\Lambda_{\mathcal{O}_{K_p}}[1/p]}\left(\mathbb{U}_{\chi_E}^1(\lambda_E^{-1})/(\overline{\mathcal{C}}_{\chi_E}^1(\lambda_E^{-1}),U_2)\right) = \mathrm{char}_{\Lambda_{\mathcal{O}_{K_p}}[1/p]}\left(\mathcal{X}_{\chi_E}(\lambda_E^{-1})/\mathrm{rec}(U_2)\right).$$
\end{corollary}

\begin{proof}This follows from (\ref{fundamentalEStwist2}), Theorem \ref{EMC} and additivity of $\Lambda_{\mathcal{O}_{K_p}}[1/p]$-characteristic ideals in exact sequences. 
\end{proof}

\subsection{Relating Rubin-type main conjectures to $L$-values and $p$-adic $L$-functions: outline}

The flexibility to vary the choice of $U_2$ in (\ref{Udecomposition}) is crucial in our proofs of rank 0 and rank 1 $p$-converse theorems. For the rank 0 case (see Section \ref{rank0section}), we will take $U_2$ to be the kernel of the map $\delta|_{q_{\mathrm{dR}} = 1}$ for $\delta$ in (\ref{delta}). This is essentially the same choice of $U_2$ taken in \cite[Section 11]{RubinMC}, where $U_2$ is the kernel of a certain logarithmic derivative of Coleman power series map (denoted by $\delta$ in Proposition 11.7 of op. cit.). In fact, our $\delta|_{q_{\mathrm{dR}} = 1}$ is essentially equal to $\delta$ of loc. cit. up to a nonzero multiple. One can show, using Wiles's explicit reciprocity law (\cite{Wiles}, \cite[Chapter I.4]{deShalit}), that 
$$\delta|_{q_{\mathrm{dR}} = 1}(\overline{\mathcal{C}}_{\chi_E}^1(\lambda_E^{-1})) = C \cdot L(\lambda_E^{-1},0) = C \cdot L(E/\mathbb{Q},1).$$
The assumption $\mathrm{corank}_{\mathbb{Z}_p}\mathrm{Sel}_{p^{\infty}}(E/\mathbb{Q}) = 0$ implies, via Wiles's explicit reciprocity law, that $U_2 \cap \overline{\mathcal{C}}_{\chi_E}^1(\lambda_E^{-1}) = 0$ and hence $U_2 \cap \mathcal{E}_{\chi_E}^1(\lambda_E^{-1}) = 0$, so that $U_2$ satisfies the assumptions of (\ref{Udecomposition}). Moreover, Theorem \ref{RMC} implies 
$$\delta|_{q_{\mathrm{dR}} = 1}(\overline{\mathcal{C}}_{\chi_E}^1(\lambda_E^{-1})) \neq 0,$$
and as a result we obtain the rank 0 $p$-converse theorem (cf. \cite[Theorem 11.18]{RubinMC}). 

For the rank 1 case, we choose $U_2$ with $U_2 \cap \overline{\mathcal{E}}_{\chi_E}^1(\lambda_E^{-1}) = 0$ and $U_1$ to be the kernel of $\delta|_{q_{\mathrm{dR}} = 1}$; note that this was the choice of $U_2$ in the rank 0 situation described in the previous paragraph. Then from (\ref{delta'}) below, we have a map
$$\delta' : U_1 \rightarrow \mathbb{C}_p$$
satisfying the ``explicit reciprocity law''
$$\delta'(\overline{\mathcal{C}}_{\chi_E}^1(\lambda_E^{-1})) = C_E'\cdot D_1^{-1}\mathcal{L}_{\lambda_E}|_{q_{\mathrm{dR}} = 1}$$
for some $C_E' \in \mathbb{C}_p^{\times}$ (see Theorem \ref{Colemanmapthm} below). The assumption $\mathrm{corank}_{\mathbb{Z}_p}\mathrm{Sel}_{p^{\infty}}(E/\mathbb{Q}) = 1$ implies (see proof of Theorem \ref{BSDrank1theorem})
\begin{align*}D_1^{-1}\mathcal{L}_{\lambda_E}|_{q_{\mathrm{dR}} = 1} &= \delta'(\overline{\mathcal{C}}_{\chi_E}^1(\lambda_E^{-1}))/(\Gamma_K-\lambda_E^{-2}(\Gamma_K)) \\
&\overset{\text{Theorem \ref{RMC}}}{=} \mathrm{char}_{\Lambda_{\mathcal{O}_{K_p}}[1/p]}\left(\mathcal{X}_{\chi_E}(\lambda_E^{-1})/\mathrm{rec}(U_2)\right)/(\Gamma_K-\lambda_E^{-2}(\Gamma_K)) \neq 0
\end{align*}
where $M/(\Gamma_K-\lambda_E^{-2}(\Gamma_K))$ is written in the notation of Definition \ref{Lambdadefinition} for $\rho = \lambda_E^{-2}$. Now (\ref{Heegnerpointidentity2}) gives the $p$-converse theorem.

\subsection{Choice of decomposition of module of norm-compatible systems of semi-local units: rank 1 case}\label{rank1U2choicesection}We seek to find an appropriate submodule $U' \subset \mathbb{U}_{\chi_E}^1(\lambda_E^{-1})$ to quotient out by to formulate our main conjecture. As mentioned above, the choice of $U'$ depends on whether $\mathrm{corank}_{\mathbb{Z}_p}\mathrm{Sel}_{p^{\infty}}(E/\mathbb{Q})$ is 0 or 1. In the rank 0 case, $U'$ will be a direct summand such that $\mathbb{U}_{\chi_E}^1(\lambda_E^{-1}) = U' \oplus U_2$ where $U_2$ is the kernel of the Kummer map $\delta$ of \cite[Proposition 11.4]{RubinMC} (see also Definition \ref{deltaEdefinition}); thus $\mathbb{U}_{\chi_E}^1(\lambda_E^{-1})/U'$ will reduce modulo the augmentation ideal to the dual of the usual Selmer group (which is torsion under the rank 0 assumption). In the rank 1 case, $U'$ will itself be the kernel of the above map $\delta$ of loc. cit., and $\mathbb{U}_{\chi_E}^1(\lambda_E^{-1})/U'$ will specialize to the dual of a smaller Selmer group; this $U'$ will also have an intrinsic description as the saturation in $\mathbb{U}_{\chi_E}^1(\lambda_E^{-1})$ of the submodule of elliptic units. We will first discuss the choice of $U'$ in the rank 1 case, and discuss the rank 0 case in Section \ref{rank0preview}. 

\begin{definition}Let 
\begin{equation}\label{saturation}U_0 = \{x \in \mathbb{U}_{\chi_E}^1(\lambda_E^{-1}): \exists \alpha \in \Lambda_{\mathcal{O}_{K_p}}[1/p], \alpha x \in \xi_E\cdot \Lambda_{\mathcal{O}_{K_p}}[1/p] = \overline{\mathcal{C}}_{\chi_E}^1(\lambda_E^{-1})\}.
\end{equation}
That is, $U_0$ is the $\Lambda_{\mathcal{O}_{K_p}}[1/p]$-saturation of the $\Lambda_{\mathcal{O}_{K_p}}[1/p]$-submodule
$$\Lambda_{\mathcal{O}_{K_p}}[1/p]\cdot \xi_E \subset \mathbb{U}_{\chi_E}^1(\lambda_E^{-1}).$$
\end{definition}

\begin{proposition}\label{freeproposition}$U_0$ is a free rank 1 $\Lambda_{\mathcal{O}_{K_p}}[1/p]$-module.
\end{proposition}

\begin{proof}By definition, $\mathbb{U}_{\chi_E}^1(\lambda_E^{-1})/U_0$ is $\Lambda_{\mathcal{O}_{K_p}}[1/p]$-torsion-free, and by freeness of $\mathbb{U}_{\chi_E}^1(\lambda_E^{-1})$ we see that $U_0$ has rank 1. 

We now proceed along the same lines as in the proof of \cite[Lemma 4.1]{Rubin3}. Let $\beta_1,\beta_2$ be a $\Lambda_{\mathcal{O}_{K_p}}[1/p]$-basis of $\mathbb{U}_{\chi_E}^1(\lambda_E^{-1})$. Since $\Gamma_K \cong \mathbb{Z}_p^{\oplus 2}$, we have by the Amice transform (\cite[I.3.1 (1)]{deShalit})
$$\Lambda_{\mathcal{O}_{K_p}}[1/p] \cong \mathcal{O}_{K_p}\llbracket T_1, T_2\rrbracket [1/p],$$
which is a UFD. Let $Z \subset \mathbb{U}_{\chi_E}^1(\lambda_E^{-1})$ be a maximal free submodule of $U_0$. Then since $\Lambda_{\mathcal{O}_{K_p}}[1/p]$ is a UFD, $\mathbb{U}_{\chi_E}^1(\lambda_E^{-1})/Z$ is torsion-free by maximality of $Z$, and hence so is the submodule $U_0/Z$. However, $U_0/Z$ has rank 0 and so is torsion and torsion-free, meaning $U_0 = Z$. 
\end{proof}


\begin{choice}\label{beta0choice}Using Proposition \ref{freeproposition}, choose and fix a $\Lambda_{\mathcal{O}_{K_p}}[1/p]$-basis $\beta_0$ of $U_0$. 
\end{choice}

\begin{choice}\label{Udecompositionchoice}Choose any decomposition of $\Lambda_{\mathcal{O}_{K_p}}[1/p]$-modules $\mathbb{U}_{\chi_E}^1(\lambda_E^{-1}) = U_1 \oplus U_2$ such that each $U_i$ is $\Lambda_{\mathcal{O}_{K_p}}[1/p]$-free of rank 1 and
\begin{equation}\label{globalintersect0}U_2 \cap \overline{\mathcal{E}}_{\chi_E}^1(\lambda_E^{-1}) = 0.
\end{equation}
Such a decomposition with $U_2$ as above will be fixed later in (\ref{kernelequalities}) when $r_p = 0$ and in Choice \ref{U2finallyfixed} when $r_p = 1$.
\end{choice} 

\begin{proposition}\label{reciprocityU2}
$$\mathrm{rec}(U_2) \subset \mathcal{X}_{\chi_E}(\lambda_E^{-1})$$
is $\Lambda_{\mathcal{O}_{K_p}}[1/p]$-free of rank 1. 
\end{proposition}

\begin{proof}This follows from the identity $\ker(\mathrm{rec}) =  \overline{\mathcal{E}}_{\chi_E}^1(\lambda_E^{-1})$ and (\ref{globalintersect0}). 
\end{proof}

Let $\beta_i$ be a $\Lambda_{\mathcal{O}_{K_p}}[1/p]$-basis of $U_i$ for $i = 1,2$. Then we can write
$$\beta_0 = \lambda_1\beta_1 + \lambda_2\beta_2.$$
Let 
$$\mathrm{pr}_1 : \mathbb{U}_{\chi_E}^1(\lambda_E^{-1}) = U_1 \oplus U_2 \rightarrow U_1$$
denote projection onto the first factor. 
Then 
\begin{equation}\label{multiple1}\mathrm{pr}_1(\beta_0) = \lambda_1\beta_1.
\end{equation}
In the rank 1 situation, we may take $U_1 = U_0$, and hence $\lambda_1 = 1$, see Choice \ref{U2finallyfixed} below.

Recall the element $\xi_E \in \mathbb{U}_{\chi_E}^1(\lambda_E^{-1})$ from (\ref{xiE}). Clearly, by construction, $\xi_E \in \overline{\mathcal{C}}_{\chi_E}^1(\lambda_E^{-1})$. 

\begin{proposition}\label{xiEgeneratorproposition}$\xi_E \in \overline{\mathcal{C}}_{\chi_E}^1(\lambda_E^{-1})$ is a $\Lambda_{\mathcal{O}_{K_p}}[1/p]$-generator.
\end{proposition}

\begin{proof}This will follow from the construction of $\xi_E$. First, note that $e_{\chi_E}(e^1(\frak{a};\frak{f}_0))$ is a $\Lambda_{\mathcal{O}_{K_p}}[1/p]$-generator of $\overline{\mathcal{C}}_{\chi_E}^1(\frak{f}_0)$, where $\overline{\mathcal{C}}^1(\frak{f}_0)$ is the $p$-adic closure in $\mathbb{U}^1(\frak{f}_0)$ of $\mathcal{C}^1(\frak{f}_0)$ (see Definition \ref{ellipticunitsdefinition}). This follows from the definition of $\mathcal{C}_{\frak{f}_0}$ from \cite[Chapter III.1.4]{deShalit} and the fact that $\chi_E$ is nontrivial on the decomposition group of all primes above $\frak{p}$ of $\Delta_K$ (cf. \cite[Lemma 11.5 (ii), Lemma 11.8]{RubinMC}). Now by Definition \ref{ellipticunitsdefinition},
$$\overline{\mathcal{C}}_{\chi_E}^1 = \mathrm{Nm}_{\frak{f}_0,\chi_E}(\overline{\mathcal{C}}_{\chi_E}^1(\frak{f}_0)).$$
Thus, since $\xi_E = \mathrm{Nm}_{\frak{f}_0,\chi_E}(e_{\chi_E}(e^1(\frak{a};\frak{f}_0)))$ by definition (see (\ref{xiE})), we are done.
\end{proof}

Using Proposition \ref{xiEgeneratorproposition}, write
\begin{equation}\label{multiple2}\xi_E = \lambda_0\beta_0
\end{equation}
for $\lambda_0 \in \Lambda_{\mathcal{O}_{K_p}}[1/p]$. Then 
$$\mathrm{pr}_1(\xi_E) = \lambda_0\mathrm{pr}_1(\beta_0).$$

\begin{corollary}\label{explicitcharacteristicidealcorollary}$$\lambda_0\lambda_1 = \mathrm{char}_{\Lambda_{\mathcal{O}_{K_p}}[1/p]}\left(\mathcal{X}_{\chi_E}(\lambda_E^{-1})/\mathrm{rec}(U_2)\right).$$
\end{corollary}

\begin{proof}We have 
\begin{align*}\lambda_0\lambda_1 \overset{(\ref{multiple1}), (\ref{multiple2})}{=} \mathrm{char}_{\Lambda_{\mathcal{O}_{K_p}}[1/p]}\left(U_1/(\mathrm{pr}_1(\xi_E))\right) &= \mathrm{char}_{\Lambda_{\mathcal{O}_{K_p}}[1/p]}\left(\mathbb{U}_{\chi_E}^1(\lambda_E^{-1})/(\overline{\mathcal{C}}_{\chi_E}^1(\lambda_E^{-1}),U_2)\right) \\
&\overset{\text{Theorem \ref{RMC}}}{=} \mathrm{char}_{\Lambda_{\mathcal{O}_{K_p}}[1/p]}\left(\mathcal{X}_{\chi_E}(\lambda_E^{-1})/\mathrm{rec}(U_2)\right).
\end{align*}
\end{proof}

\subsection{Explicit reciprocity laws}
\begin{definition}\label{characterevaluation}For any continuous character $\varrho : \Gamma_K \rightarrow \mathcal{O}_{\mathbb{C}_p}^{\times}$, we have a natural evaluation map
$$\varrho : \Lambda_{\mathcal{O}_{\mathbb{C}_p}}[1/p] \rightarrow \mathbb{C}_p, \hspace{1cm} \gamma \mapsto \gamma(\varrho) := \varrho(\gamma),\hspace{.25cm} \forall \gamma \in \Gamma_K.$$
Given $\alpha \in \Lambda_{\mathcal{O}_{\mathbb{C}_p}}[1/p]$ we denote the image of $\alpha$ under this map by $\alpha(\varrho)$. 
\end{definition}


\begin{theorem}[Explicit Reciprocity Law]\label{Colemanmapthm}The maps (\ref{delta}) induce maps (which we will also denote by $D_1^j\delta$)
$$D_1^j\delta : \mathbb{U}_{\chi_E}^1(\lambda_E^{-1}) \hat{\otimes}_{\mathcal{O}_{K_p}}\mathcal{O}_{\mathbb{C}_p} \rightarrow \mathbb{C}_p\llbracket q_{\mathrm{dR}}-1\rrbracket$$
satisfying the following. 
\begin{enumerate}
\item For all $\alpha \in \Lambda_{\mathcal{O}_{\mathbb{C}_p}} [1/p]$, all $\beta \in \mathbb{U}_{\chi_E}^1(\lambda_E^{-1}) \hat{\otimes}_{\mathcal{O}_{K_p}}\mathcal{O}_{\mathbb{C}_p}$ and all $j \in \mathbb{Z}_{\ge 0}$,
\begin{equation}\label{equivariance}D_1^j\delta(\alpha\beta) = \alpha(\lambda_E^{2j})\cdot D_1^j\delta(\beta)
\end{equation}
where $\alpha(\lambda_E^{2j})$ is written in the notation of Definition \ref{characterevaluation}.
\item Recall that the free $\Lambda_{\mathcal{O}_{K_p}}[1/p]$-module $\overline{\mathcal{C}}_{\chi_E}^1(\lambda_E^{-1})$ has a generator $\xi_E$ (Proposition \ref{xiEgeneratorproposition}). Recall $D_1^j\mathcal{L}_{\lambda_E}$ defined in (\ref{globalmeasure2}). There exists $C_E \in \mathbb{C}_p^{\times}$ such that all $j \in \mathbb{Z}_{\ge 0}$, we have  
\begin{equation}\label{explicitreciprocity}D_1^j\delta(\xi_E)|_{q_{\mathrm{dR}} = 1} = C_E\cdot (\lambda_E^{1+2j}(\frak{a}) - \mathbb{N}\frak{a})\cdot D_1^j\mathcal{L}_{\lambda_E}|_{q_{\mathrm{dR}} = 1}.
\end{equation} 
\end{enumerate}
\end{theorem}


\begin{proof}


\textbf{(1)}: Recall the $p$-adic Maass-Shimura operator $d_k^j$ from (\ref{dkjformula2}). Recall $U = \mathcal{Y}^{\mathrm{Ig}}(\epsilon_0)$ from Definition \ref{Udefinition} (as well as our choice of $\epsilon_0 = 1/2$ from Choice \ref{rchoice2}). For any $F \in \mathcal{O}_{U}(W)$ for a pro\'{e}tale open $W \rightarrow U$, recall 
$$\theta_t\left(d_k^jF(q_{\mathrm{dR}})\right) \in \hat{\mathcal{O}}_{U}(W)\llbracket q_{\mathrm{dR}}-1\rrbracket$$
from Definition \ref{thetatpowerseriesdefinition}. We abbreviate this by $\theta_td_k^j(F)$ in what follows.   
 For shorthand, when $W \subset \mathcal{Y}^{\mathrm{Ig}}(\epsilon_0)$ is an open and $y' \in W(\mathbb{C}_p,\mathcal{O}_{\mathbb{C}_p})$ (as well as our choice $\epsilon_0 = 1/2$ from Definition \ref{rchoice2}), and let 
$$\theta_{t,y'}d_k^j(F) \in \mathbb{C}_p\llbracket q_{\mathrm{dR}}-1\rrbracket$$
denote 
the power series obtained by specializing the coefficients of 
$$\theta_td_k^jF \in \hat{\mathcal{O}}_{U}(W)\llbracket q_{\mathrm{dR}}-1\rrbracket$$
along 
$$\hat{\mathcal{O}}_{U}(W) \rightarrow \hat{\mathcal{O}}_{U}(y') \subset \mathbb{C}_p.$$
Recall the point $y \in \mathrm{LT}_{\infty}(\overline{\mathbb{Q}}_p,\overline{\mathbb{Z}}_p)$ from (\ref{usey2}). We will consider 
$$W = \mathrm{LT}_{\infty} \cap \mathcal{Y}^{\mathrm{Ig}}(1/2)$$
and $\theta_{t,y'}$ for $y' = y$ in what follows.

Recall our basis $\gamma_1,\gamma_2$ of $\Gamma_K$. 
Let $d\mathrm{Log}(\beta)^{\phi^{-2a}}$ denote the image of $d\mathrm{Log}(\beta) \in \mathbf{\Gamma}(F_{\infty}^{\mathrm{univ}})$ under the map 
$$\mathbf{\Gamma}(F_{\infty}^{\mathrm{univ}}) \xrightarrow{\phi^{-2a}} \mathbf{\Gamma}(F_{\infty}^{\mathrm{univ},\phi^{-2a}})$$
from (\ref{Frobeniusapplication}).

Recall our embedding 
$$\Gamma_K \overset{(\ref{fixA})}{\hookrightarrow} \mathrm{Gal}(K(\frak{f}_0p^{\infty})/K(\frak{f}_0))$$
as well as our embedding $\rho : K \hookrightarrow D$ from (\ref{Kembeddings}) determined by $\tau_0 = \varpi \overset{(\ref{pichoice})}{\in} \frak{p}$. From this latter inclusion and (\ref{Kembeddingscondition}) we have 
\begin{equation}\label{varpiGamma0}\rho(1+\varpi) \in \Gamma_{0,p}(p).
\end{equation}
Thus, given
$$\gamma \in \Gamma_K \overset{[\cdot]_{F^{\mathrm{univ}}}}{\hookrightarrow} 1+\varpi\mathcal{O}_{K_p} \overset{(\ref{varpiGamma0})}{\hookrightarrow}\Gamma_{0,p}(p),$$
we have 
\begin{equation}\label{gammalambdaFuniv}\begin{split}\theta_{t,y}([\gamma]_{F^{\mathrm{univ}}}^*w_{\mathrm{can}}|_{\mathrm{LT}_{\infty}}) = \theta_t(\theta_q([\gamma]_{F^{\mathrm{univ},}}^*w_{\mathrm{can}}))(y) \overset{(\ref{Hodgethetacommute})}{=} \theta_t(\theta_q(w_{\mathrm{can}}))(y \cdot \gamma) 
= \lambda_E(\gamma) \cdot \theta_t(\theta_q(w_{\mathrm{can}}))(y)
\end{split}
\end{equation}
where the last equality follows since $y = (E,e_1,e_2)$ (see (\ref{usey})) and $E$ has CM by $\mathcal{O}_K$. 

Let $\beta \mapsto \beta^{\gamma}$ denote the action of $\gamma \in \Gamma_K$ on $\beta \in \mathbb{U}_{\chi_E}^1(\lambda_E^{-1})$; recall this action is the twist by $\lambda_E^{-1}$ of the usual Galois action of $\Gamma_K$ on $\mathbb{U}_{\chi_E}^1$. Then we have  
\begin{equation}\label{esectionlambda}\begin{split}
D_1^j\delta(\beta^{\gamma})&\overset{(\ref{delta})}{=} \sum_{0\le m \le p^{k_1}-1}\sum_{0 \le n \le p^{k_2}-1}\sum_{\sigma \in \Delta_{A_0}}\theta_{t,y}\left(d_1^j\left(\frac{\mathbf{e}^*d\mathrm{Log}(\beta^{\gamma})^{\phi^{-2a}}}{w_{\mathrm{can}}|_{\mathrm{LT}_{\infty}}}\right)\right)\\
&\overset{(\ref{partialdkjrelate}), (\ref{esection2})}{=} \sum_{m = 0}^{p^{k_1}-1}\sum_{n = 0}^{p^{k_2}-1}\sum_{\sigma \in \Delta_{A_0}}\lambda_E^{-2j}(\gamma_1^m\gamma_2^n\sigma)\lambda_E^{-1}(\gamma_1^m\gamma_2^n\gamma)\\
&\hspace{4cm}\cdot \theta_{t,y}\left(\frac{\partial_1^j\left([\gamma\gamma_1^m\gamma_2^n\sigma]_{F^{\mathrm{univ},\phi^{-2a}}}(\alpha_{2,a})^*\left(d\mathrm{Log}(\beta)^{\phi^{-2a}}\right)\right)}{w_{\mathrm{can}}^{\otimes 1 + 2j}|_{\mathrm{LT}_{\infty}}}\right)\\
&= \lambda_{E}^{2j}(\gamma)\cdot \sum_{m = 0}^{p^{k_1}-1}\sum_{n = 0}^{p^{k_2}-1}\sum_{\sigma \in \Delta_{A_0}}\lambda_{E}^{-2j}(\gamma\gamma_1^m\gamma_2^n\sigma)\lambda_E^{-1}(\gamma_1^m\gamma_2^n\gamma)\\
&\hspace{4cm}\cdot \theta_{t,y}\left(\frac{\partial_1^j\left([\gamma\gamma_1^m\gamma_2^n\sigma]_{F^{\mathrm{univ},\phi^{-2a}}}(\alpha_{2,a})^*\left(d\mathrm{Log}(\beta)^{\phi^{-2a}}\right)\right)}{w_{\mathrm{can}}^{\otimes 1 + 2j}|_{\mathrm{LT}_{\infty}}}\right)\\
&\overset{(\ref{gammalambdaFuniv})}{=} \lambda_{E}^{2j}(\gamma)\cdot \sum_{m = 0}^{p^{k_1}-1}\sum_{n = 0}^{p^{k_2}-1}\sum_{\sigma \in \Delta_{A_0}}\lambda_{E}^{-2j}(\gamma_1^m\gamma_2^n\sigma)\lambda_E^{-1}(\gamma_1^m\gamma_2^n)\\
&\hspace{4cm}\cdot \theta_{t,y}\left(\frac{\partial_1^j\left([\gamma\gamma_1^m\gamma_2^n\sigma]_{F^{\mathrm{univ},\phi^{-2a}}}(\alpha_{2,a})^*\left(d\mathrm{Log}(\beta)^{\phi^{-2a}}\right)\right)}{[\gamma]_{F^{\mathrm{univ}}}^*w_{\mathrm{can}}^{\otimes 1 + 2j}|_{\mathrm{LT}_{\infty}}}\right)\\
&\overset{(\ref{bracketFunivsequal})}{=} \lambda_{E}^{2j}(\gamma)\cdot \sum_{m = 0}^{p^{k_1}-1}\sum_{n = 0}^{p^{k_2}-1}\sum_{\sigma \in \Delta_{A_0}}\lambda_{E}^{-2j}(\gamma_1^m\gamma_2^n\sigma)\lambda_E^{-1}(\gamma_1^m\gamma_2^n)\\
&\hspace{4cm} \cdot \theta_{t,y}\left(\frac{\partial_1^j\left([\gamma_1^m\gamma_2^n\sigma]_{F^{\mathrm{univ},\phi^{-2a}}}(\alpha_{2,a})^*\left(d\mathrm{Log}(\beta)^{\phi^{-2a}}\right)\right)}{w_{\mathrm{can}}^{\otimes 1 + 2j}|_{\mathrm{LT}_{\infty}}}\right)\\
&\overset{(\ref{partialdkjrelate}), (\ref{esection2}), (\ref{delta})}{=} \lambda_E^{2j}(\gamma) \cdot \delta(\beta).
\end{split}
\end{equation}
where in the penultimate equality we use the fact that 
$$[\gamma_1^m\gamma_2^n]_{F^{\mathrm{univ},\phi^{-2a}}}(\alpha_{2,a}) \mapsto [\gamma\gamma_1^m\gamma_2^n]_{F^{\mathrm{univ},\phi^{-2a}}}(\alpha_{2,a})$$
is a permutation of the set of torsion points 
$$\{[\gamma_1^m\gamma_2^n]_{F^{\mathrm{univ},\phi^{-2a}}}(\alpha_{2,a})\}_{0 \le m \le p^{k_1}-1, 0 \le n \le p^{k_2}-1} \subset F^{\mathrm{univ},\phi^{-2a}}[p^a](\mathrm{LT}_{\infty}).$$
Now (1) follows for $\alpha = \gamma$.
 The case for general $\alpha \in \Lambda_{\mathcal{O}_{\mathbb{C}_p}}[1/p]$ follows immediately from the additivity of 
 $$\Lambda_{\mathcal{O}_{\mathbb{C}_p}}[1/p] \ni \alpha \mapsto D_1^j\delta(\alpha\cdot\beta) \in \mathbb{C}_p\llbracket q_{\mathrm{dR}}-1\rrbracket.$$\\

\textbf{(2)}: Recall $\Delta_{A_0} = \mathrm{Gal}(K(\frak{f}_0p^{\infty})/K_{\infty})$ from (\ref{DeltaA}) and $\Delta_K =\mathrm{Gal}(\mathcal{K}_{\infty}/K_{\infty})$ from (\ref{DeltaK}), so that there is a natural map $\Delta_{A_0} \twoheadrightarrow \Delta_K$. Recall $\gamma_1, \gamma_2 \in \Gamma_K$ from Definition \ref{gamma12definition}, and consider the set
\begin{equation}\label{considertheset}\Delta_{A_0} \times \{\gamma_1^m\gamma_2^n\}_{0 \le m \le p^{k_1}-1, 0\le n \le p^{k_2}-1} \overset{(\ref{fixA2})}{\subset} \mathrm{Gal}(K(\frak{f}_0p^{\infty})/K).
\end{equation}
Recall that the global Artin reciprocity map gives a surjection 
$$\mathbb{A}_K^{\times,(\infty)} \twoheadrightarrow \mathrm{Gal}(K(\frak{f}_0p^{\infty})/K);$$
thus from (\ref{fixA}) we get a surjection $\mathbb{A}_K^{\times,(\infty)} \twoheadrightarrow \Delta_{A_0}$. Let $\mathbf{S} \subset \mathbb{A}_K^{\times,(\infty)}$ be a set of integral prime-to-$pN$ id\`{e}les such that 
\begin{equation}\label{Sbijection}\mathbf{S} \subset \mathbb{A}_K^{\times,(\infty)} \twoheadrightarrow \Delta_{A_0}  \times \{\gamma_1^m\gamma_2^n\}_{0 \le m \le p^{k_1}-1, 0\le n \le p^{k_2}-1}
\end{equation}
is a bijection as a map of sets. 

Given $\sigma \in \Delta_K$, let 
$$\frak{b}_{\sigma}\in \mathbf{S}$$
be any element representing an element of $\Delta_{A_0}$ lying above $\sigma$ under $\Delta_{A_0} \twoheadrightarrow \Delta_K$. Similarly, given any 
$$\gamma \in \{\gamma_1^m\gamma_2^n\}_{0 \le m \le p^{k_1}-1, 0\le n \le p^{k_2}-1},$$
let 
$$\frak{b}_{\gamma} \in \mathbf{S}$$
be the unique element representing it. 

We need the following intermediate Lemma. 
\begin{lemma}\label{Srayclasslemma}Recall $a$ from (\ref{adefinition}). $\mathbf{S} \subset \mathbb{A}_K^{\times,(\infty)}$ surjects onto $\mathrm{Gal}(K(\frak{f}_0p^a)/K)$ under the global Artin reciprocity map 
$$\mathbb{A}_K^{\times,(\infty)} \twoheadrightarrow \mathrm{Gal}(K^{\mathrm{ab}}/K) \twoheadrightarrow \mathrm{Gal}(K(\frak{f}_0p^a)/K).$$
\end{lemma}

\begin{proof}Throughout the proof we will let ``$\overset{\mathrm{rec}}{\cong}$'' denote an isomorphism induced by the Artin reciprocity map 
$$\mathbb{A}_K^{\times,(\infty)} \twoheadrightarrow \mathrm{Gal}(K^{\mathrm{ab}}/K)$$
and let ``$\overset{\mathrm{rec}^{-1}}{\cong}$'' denote the inverse of such an isomorphism. Note that
$$\Delta_{A_0} \times \Gamma_K \overset{(\ref{fixA2})}{=} \mathrm{Gal}(K(\frak{f}_0p^{\infty})/K) \overset{\mathrm{rec}^{-1}}{\cong} \left((\mathcal{O}_K/\frak{f}_0)^{\times}/\mathcal{O}_K^{\times}\right) \times \mathcal{O}_{K_p}^{\times}$$
where the second isomorphism uses the fact that $w_{\frak{f}_0} = 1$ by Choice \ref{f0choice} so that $\mathcal{O}_K^{\times} \subset (\mathcal{O}_K/\frak{f}_0)^{\times}$. This gives 
\begin{equation}\label{reciprocitytors}\Delta_{A_0} \overset{\mathrm{rec}^{-1}}{\cong} \left((\mathcal{O}_K/\frak{f}_0)^{\times}/\mathcal{O}_K^{\times}\right) \times (\mathcal{O}_{K_p}^{\times})_{\mathrm{tors}}
\end{equation}
under the reciprocity map, $M_{\mathrm{tors}}$ denotes the torsion submodule of $M$. 

Since 
$$\mathrm{Gal}(K(\frak{f}_0p^a)/K) \overset{\mathrm{rec}^{-1}}{\cong} \left((\mathcal{O}_K/\frak{f}_0)^{\times}/\mathcal{O}_K^{\times}\right) \times (\mathcal{O}_{K_p}/p^a\mathcal{O}_{K_p})^{\times},$$
the assertion follows if we can show that the composition 
\begin{align*}\Delta_{A_0} \times \{\gamma_1^m\gamma_2^n\}_{0 \le m \le p^{k_1-1}, 0 \le n \le p^{k_2-1}} \subset \mathrm{Gal}(K(\frak{f}_0p^{\infty})/K) &\twoheadrightarrow \mathrm{Gal}(K(\frak{f}_0p^a)/K) \\
&\overset{\mathrm{rec}^{-1}}{\cong} \left((\mathcal{O}_K/\frak{f}_0)^{\times}/\mathcal{O}_K^{\times}\right) \times (\mathcal{O}_{K_p}/p^a\mathcal{O}_{K_p})^{\times}
\end{align*}
is surjective. By (\ref{reciprocitytors}), we see that 
$$\Delta_{A_0} \hspace{.5cm} \text{surjects onto} \hspace{.5cm} \left((\mathcal{O}_K/\frak{f}_0)^{\times}/\mathcal{O}_K^{\times}\right) \times (\mathcal{O}_{K_p}^{\times})_{\mathrm{tors}}$$
under the above map. It remains to show that 
$$\{\gamma_1^m\gamma_2^n\}_{0 \le m \le p^{k_1-1}, 0 \le n \le p^{k_2-1}} \hspace{.5cm} \text{surjects onto} \hspace{.5cm} (\mathcal{O}_{K_p}/p^a\mathcal{O}_{K_p})^{\times}/(\mathcal{O}_{K_p}^{\times})_{\mathrm{tors}}$$
under the above map. By Definition \ref{gamma12definition}, it surjects onto $\mathrm{Gal}((\mathcal{K}_{2a}\cap K_{\infty})/K)$, which, since $$K(p^a) \subset \mathcal{K}_{2a} = K(E[p^a])$$
by the theory of complex multiplication, in turn surjects onto $\mathrm{Gal}((K(p^{2a})\cap K_{\infty})/K)$. We will show that 
\begin{equation}\label{Kinftyrayclass}\mathrm{Gal}((K(p^{2a}) \cap K_{\infty})/K) \overset{(\mathrm{rec}^{-1})}{\cong} ((\mathcal{O}_{K_p}/p^a\mathcal{O}_{K_p})^{\times})/(\mathcal{O}_{K_p}^{\times})_{\mathrm{tors}},
\end{equation}
which then gives the assertion. 

By Proposition \ref{GammaKtotallyramified} with $M = K$ we have $K_{\infty} \subset K(p^{\infty})$. Since 
$$\mathcal{O}_{K_p}^{\times} \cong (\mathcal{O}_{K_p}^{\times})_{\mathrm{tors}} \times \mathbb{Z}_p^{\oplus 2},$$
the map 
$$\mathcal{O}_{K_p}^{\times}/\mathcal{O}_K^{\times} \overset{\mathrm{rec}}{\cong} \mathrm{Gal}(K(p^{\infty})/K) \twoheadrightarrow \mathrm{Gal}(K_{\infty}/K) \cong \mathbb{Z}_p^{\oplus 2}$$
factors through
$$\mathcal{O}_{K_p}^{\times}/(\mathcal{O}_{K_p}^{\times})_{\mathrm{tors}} \overset{\mathrm{rec}}{\cong} \mathrm{Gal}(K_{\infty}/K).$$
Now applying the projection $\mathrm{Gal}(K_{\infty}/K) \twoheadrightarrow \mathrm{Gal}((K(p^a) \cap K_{\infty})/K)$ to the right-hand side, which corresponds under reciprocity to reducing the left-hand side modulo $p^a\mathcal{O}_{K_p}$, we get (\ref{Kinftyrayclass}).

\end{proof}

Continuing the proof of Theorem \ref{Colemanmapthm} (2), by Lemma \ref{Srayclasslemma} $\mathbf{S}$ surjects onto $\mathrm{Gal}(K(\frak{f}_0p^a)/K)$ under Artin reciprocity, and thus it surjects onto $\mathrm{Gal}(K(\frak{f}_0)[p^a]/K)$ where $K(\frak{f}_0)[p^a]/K$ is the class field of the mixed ring/ray class group $\mathcal{C}\ell(\frak{f}_0)[p^a]$ (Definition \ref{mixedclassgroupdefinition}). Fix a subset $S \subset \mathbf{S}$ such that the composition 
$$S \subset \mathbf{S} \xrightarrow{\mathrm{rec}} \mathrm{Gal}(K(\frak{f}_0p^{\infty})/K) \twoheadrightarrow \mathrm{Gal}(K(\frak{f}_0)[p^a]/K)$$
is a bijection of sets. Take this choice of $S$ in Definition \ref{Sdefinition}. We may write
\begin{equation}\label{reciprocityrelate}\mathrm{rec}(\mathbf{S}) = \bigsqcup_{i = 0, (i,p) = 1}^{p^a-1}\sigma_i\cdot \mathrm{rec}(S)
\end{equation}
where $\sigma_i \in \mathrm{Gal}(K(\frak{f}_0p^a)/K)$ is the Artin symbol of the principal ideal $(i) = i\mathcal{O}_K$.

We will use the notation of Definition \ref{thetatpowerseriesdefinition} below. Observe that we may view $\chi_E : \Delta_K \rightarrow \mathcal{O}_{K_p}^{\times}$ (see (\ref{chiE})) as a character $\chi_E : \Delta_{A_0} \rightarrow \mathcal{O}_{K_p}^{\times}$ via pullback along $\Delta_{A_0} \twoheadrightarrow \Delta_K$ (see (\ref{fixA2}) and (\ref{fixK})). Recall $\partial_k^j$ from (\ref{partialkjdefinition}). By the definition (\ref{xiE}), Proposition \ref{Eisensteinnumber} and (\ref{EtwistColeman}), we have 
\begin{align*}&D_1^j\delta(\xi_E)|_{q_{\mathrm{dR}} = 1} \overset{(\ref{delta}), (\ref{xiE})}{=}\sum_{m = 0}^{p^{k_1}-1}\sum_{n = 0}^{p^{k_2}-1}\sum_{\sigma \in \Delta_{A_0}}\lambda_E^{-(1+2j)}(\gamma_1^m\gamma_2^n\sigma)\\
&\hspace{3cm}\cdot\theta_{t,y}d_1^j\left(\frac{[\gamma_1^m\gamma_2^n\sigma]_{F^{\mathrm{univ},\phi^{-2a}}}(\alpha_{2,a})^*\left(d\widetilde{\log}\left(\mathrm{thicken}(g_{e^1(\frak{a};\frak{f}_0)}^{\phi^{-2a}})\right)\right)}{w_{\mathrm{can}}|_{\mathrm{LT}_{\infty}}}\right)|_{q_{\mathrm{dR}} = 1} \\
&\overset{(\ref{Sbijection})}{=} \sum_{\frak{b} \in \mathbf{S}}\lambda_E^{-(1+2j)}(\frak{b})\cdot\theta_{t,y}d_1^j\left(\frac{[\sigma_{\frak{b}}]_{F^{\mathrm{univ},\phi^{-2a}}}(\alpha_{2,a})^*\left(d\widetilde{\log}\left(\mathrm{thicken}(g_{e^1(\frak{a};\frak{f}_0)}^{\phi^{-2a}})\right)\right)}{w_{\mathrm{can}}|_{\mathrm{LT}_{\infty}}}\right)|_{q_{\mathrm{dR}} = 1} \\
&\overset{(\ref{reciprocityrelate})}{=} \sum_{\frak{b} \in S} \sum_{i = 0, (i,p) = 1}^{p-1}\lambda_E^{-(1+2j)}(i\frak{b})\cdot\theta_{t,y}d_1^j\left(\frac{[\sigma_i\sigma_{\frak{b}}]_{F^{\mathrm{univ},\phi^{-2a}}}(\alpha_{2,a})^*\left(d\widetilde{\log}\left(\mathrm{thicken}(g_{e^1(\frak{a};\frak{f}_0)}^{\phi^{-2a}})\right)\right)}{w_{\mathrm{can}}|_{\mathrm{LT}_{\infty}}}\right)|_{q_{\mathrm{dR}} = 1} \\
&\overset{(\ref{centralcharacter})}{=} \sum_{\frak{b} \in S}\lambda_E^{-(1+2j)}(\frak{b})\\
&\hspace{2.1cm}\cdot \sum_{i = 0, (i,p) = 1}^{p-1}\eta^{-(1+2j)}(i)\theta_{t,y}d_1^j\left(\frac{[\sigma_i\sigma_{\frak{b}}]_{F^{\mathrm{univ},\phi^{-2a}}}(\alpha_{2,a})^*\left(d\widetilde{\log}\left(\mathrm{thicken}(g_{e^1(\frak{a};\frak{f}_0)}^{\phi^{-2a}})\right)\right)}{w_{\mathrm{can}}|_{\mathrm{LT}_{\infty}}}\right)|_{q_{\mathrm{dR}} = 1} \\
&\overset{(\ref{etaK})}{=} \sum_{\frak{b} \in S}\lambda_E^{-(1+2j)}(\frak{b})\sum_{i = 0, (i,p) = 1}^{p-1}\eta(i)\cdot\theta_{t,y}d_1^j\left(\frac{[\sigma_i\sigma_{\frak{b}}]_{F^{\mathrm{univ},\phi^{-2a}}}(\alpha_{2,a})^*\left(d\widetilde{\log}\left(\mathrm{thicken}(g_{e^1(\frak{a};\frak{f}_0)}^{\phi^{-2a}})\right)\right)}{w_{\mathrm{can}}|_{\mathrm{LT}_{\infty}}}\right)|_{q_{\mathrm{dR}} = 1} \\
&\overset{(\ref{thickenEisenstein2})}{=} 12\frac{\#\Delta_{A_0}}{\#\Delta_K}\cdot \sum_{\frak{b} \in S}\lambda_E^{-(1+2j)}(\frak{b})\cdot \theta_t\left(d_1^j\left(\frac{[\sigma_{\frak{a}\frak{b}}]_{F^{\mathrm{univ}}}^*\tilde{w}_{1,\eta}^{\flat}}{w_{\mathrm{can}}|_{\mathrm{LT}_{\infty}}} - \mathbb{N}\frak{a}\frac{[\sigma_{\frak{b}}]_{F^{\mathrm{univ}}}^*\tilde{w}_{1,\eta}^{\flat}}{w_{\mathrm{can}}|_{\mathrm{LT}_{\infty}}}\right)(y)(q_{\mathrm{dR}})\right)|_{q_{\mathrm{dR}} = 1}\\
&= 12\frac{\#\Delta_{A_0}}{\#\Delta_K}(\lambda_E^{1+2j}(\frak{a}) - \mathbb{N}\frak{a})\cdot \sum_{\frak{b} \in S}\lambda_E^{-(1+2j)}(\frak{b})\cdot \theta_t\left(d_1^j\left(\frac{[\sigma_{\frak{b}}]_{F^{\mathrm{univ}}}^*\tilde{w}_{1,\eta}^{\flat}}{w_{\mathrm{can}}|_{\mathrm{LT}_{\infty}}}\right)(y)(q_{\mathrm{dR}})\right)|_{q_{\mathrm{dR}} = 1}\\
&\overset{(\ref{YIgin})}{=} 12\frac{\#\Delta_{A_0}}{\#\Delta_K}(\lambda_E^{1+2j}(\frak{a}) - \mathbb{N}\frak{a})  \cdot \sum_{\frak{b} \in S}\lambda_E^{-(1+2j)}(\frak{b})\cdot \theta_t\left(d_1^j\left(\frac{\tilde{w}_{1,\eta}^{\flat}}{w_{\mathrm{can}}|_{\mathrm{LT}_{\infty}}}\right)(y_{\frak{b}})(q_{\mathrm{dR}})\right)|_{q_{\mathrm{dR}} = 1}\\
&= 12\frac{\#\Delta_{A_0}}{\#\Delta_K}(\lambda_E^{1+2j}(\frak{a}) - \mathbb{N}\frak{a})  \cdot \sum_{\frak{b} \in S}\lambda_E^{-(1+2j)}(\frak{b})\cdot \theta_t\left(d_1^j\left(\frac{(g^a)^*\tilde{w}_{1,\eta}^{\flat}}{(g^a)^*w_{\mathrm{can}}}\right)(y_{\frak{b}}\cdot g^{-a})(q_{\mathrm{dR}})\right)|_{q_{\mathrm{dR}} = 1}\\
&\overset{(\ref{tildewflat}), (\ref{tildew1eta})}{=}12\frac{\#\Delta_{A_0}}{\#\Delta_K}(\lambda_E^{1+2j}(\frak{a}) - \mathbb{N}\frak{a})  \cdot \sum_{\frak{b} \in S}\lambda_E^{-(1+2j)}(\frak{b})\cdot \theta_t\left(d_1^j\left(\frac{w_{1,\eta}^{\flat}}{(g^a)^*w_{\mathrm{can}}}\right)(y_{\frak{b}} \cdot g^{-a})(q_{\mathrm{dR}})\right)|_{q_{\mathrm{dR}} = 1}\\
&\overset{(\ref{gwcan})}{=} \frac{12}{p^a}\frac{\#\Delta_{A_0}}{\#\Delta_K}(\lambda_E^{1+2j}(\frak{a}) - \mathbb{N}\frak{a})  \cdot \sum_{\frak{b} \in S}\lambda_E^{-(1+2j)}(\frak{b})\cdot \theta_t\left(d_1^j\left(\frac{w_{1,\eta}^{\flat}}{w_{\mathrm{can}}}\right)(y_{\frak{b}} \cdot g^{-a})(q_{\mathrm{dR}})\right)|_{q_{\mathrm{dR}} = 1}\\
&\overset{(\ref{YIgin2})}{=}\frac{12}{p^a}\frac{\#\Delta_{A_0}}{\#\Delta_K}(\lambda_E^{1+2j}(\frak{a}) - \mathbb{N}\frak{a})  \cdot \sum_{\frak{b} \in S}\lambda_E^{-(1+2j)}(\frak{b})\cdot \theta_t\left(d_1^j\left(\frac{w_{1,\eta}^{\flat}}{w_{\mathrm{can}}}\right)(y_{\frak{b},a})(q_{\mathrm{dR}})\right)|_{q_{\mathrm{dR}} = 1}\\
&\overset{(\ref{derivativeintegral})}{=}\frac{12}{p^a}\frac{\#\Delta_{A_0}}{\#\Delta_K}(\lambda_E^{1+2j}(\frak{a}) - \mathbb{N}\frak{a})  \cdot \sum_{\frak{b} \in S}\lambda_E^{-(1+2j)}(\frak{b})\cdot D_1^jG_{1,\eta}^{\flat}(y_{\frak{b},a})(q_{\mathrm{dR}})|_{q_{\mathrm{dR}} = 1}\\
&\overset{(\ref{globalmeasure2})}{=}\frac{12}{p^a}\frac{\#\Delta_{A_0}}{\#\Delta_K}(\lambda_E^{1+2j}(\frak{a}) - \mathbb{N}\frak{a})  \cdot D_1^j\mathcal{L}_{\lambda_E}|_{q_{\mathrm{dR}} = 1}.
\end{align*}

\end{proof}

Let $U_0 \subset \mathbb{U}_{\chi_E}^1(\lambda_E^{-1})$ be as in (\ref{saturation}). Recall the basis $\beta_0$ of $U_0$, and $\lambda_0$ from Choice \ref{beta0choice}.

\begin{proposition}\label{integrateproposition}Recall our basis $\beta_0 \in U_0$ (see Choice \ref{beta0choice}), and suppose that $\lambda_0 \in \Lambda_{\mathcal{O}_{K_p}}[1/p]$ from (\ref{multiple2}) satisfies
\begin{equation}\label{1notzero}\lambda_0(\lambda_E^{2j}) \neq 0
\end{equation}
where 
$$\lambda_E : \mathrm{Gal}(\mathcal{K}_{\infty}/K) \overset{(\ref{fixK})}{=} \Delta_K \times \Gamma_K \rightarrow \mathcal{O}_{K_p}^{\times}$$
is the character from (\ref{lambdaE}), and $\lambda_0(\lambda_E^{2j})$ is written in the notation of Definition \ref{characterevaluation}. For any $j \in \mathbb{Z}/(p-1)\times \mathbb{Z}_p$, let 
$$\{j(i)\} \subset \mathbb{Z}_{\ge 0} \subset \mathbb{Z}/(p-1) \times \mathbb{Z}_p$$
be any sequence converging to $j$. 
Then have that 
\begin{equation}\label{alimit}\lim_{j(i) \rightarrow j}D_1^{j(i)}\delta|_{U_0}(\beta_0)|_{q_{\mathrm{dR}} = 1}
\end{equation}
converges to a value in $\mathbb{C}_p$.
\end{proposition}

\begin{proof}Recall $\lambda_0 \beta = \xi_E$ (see (\ref{multiple2})). Thus for any $j \in \mathbb{Z}_{\ge 0}$, 
\begin{equation}\label{equivariancecalculation}\begin{split}\lambda_0(\lambda_E^{2j})\cdot D_1^j\delta|_{U_0}(\beta)|_{q_{\mathrm{dR}} = 1} = \lambda_0(\lambda_E^{2j})\cdot D_1^j\delta|_{U_0}(\beta)|_{q_{\mathrm{dR}} = 1} &\overset{(\ref{equivariance})}{=} D_1^j\delta|_{U_0}(\xi_E)|_{q_{\mathrm{dR}} = 1} \\
&\overset{(\ref{explicitreciprocity})}{=} C_E \cdot (\lambda_E^{1+2j}(\frak{a}) - \mathbb{N}\frak{a})\cdot D_1^j\mathcal{L}_{\lambda_E}|_{q_{\mathrm{dR}} = 1}
\end{split}
\end{equation}
with $C_E \in \mathbb{C}_p^{\times}$.



From (\ref{equivariancecalculation}) and (\ref{1notzero}), we have
\begin{equation}\label{equivariancecalculation2}\lambda_0(\lambda_E^{2j}) \cdot D_1^j\delta|_{U_0}(\beta_0)|_{q_{\mathrm{dR}} = 1} = C_E \cdot (\lambda_E^{1+2j}(\frak{a}) - \mathbb{N}\frak{a})\cdot D_1^j\mathcal{L}_{\lambda_E}|_{q_{\mathrm{dR}} = 1}.
\end{equation}
Now dividing both sides by $\lambda_0(\lambda_E^{2j}) \neq 0$, the assertion follows from Lemma \ref{continuitylemma}.

\end{proof}

\begin{definition}Define\footnote{The notation suggests that one view $\delta'$ as the ``anticyclotomic derivative'' of $\delta$. In fact, $\delta'$ is a limit of derivatives given by the formula 
$$\delta' = \lim_{m \rightarrow \infty}D_1^{-1+p^m(p-1)}\delta|_{q_{\mathrm{dR}} = 1}.$$
On the other hand the above formula suggests that $\delta'$ can be viewed as a kind of $p$-adic primitive of $\delta|_{q_{\mathrm{dR}} = 1}$.
This is reflected in the literature in the terminologies ``$p$-adic Gross-Zagier formula'' (cf. \cite{BDP}) and ``$p$-adic Waldspurger formula'' (cf. \cite{LZZ}).}
\begin{equation}\label{delta'}\delta' := D_1^{-1}\delta|_{U_0}|_{q_{\mathrm{dR}} = 1} := \lim_{m \rightarrow \infty}D_1^{-1+(p-1)p^m}\delta|_{U_0}|_{q_{\mathrm{dR}} = 1} : U_0\hat{\otimes}_{\mathcal{O}_{K_p}}\mathcal{O}_{\mathbb{C}_p}  \rightarrow \mathbb{C}_p.
\end{equation}
This is well-defined by Proposition \ref{integrateproposition} for $j = -1$.
\end{definition}

We summarize some key properties of $\delta'$.

\begin{corollary}The map $\delta'$ satisfies:
\begin{enumerate}
\item For all $\alpha \in \Lambda_{\mathcal{O}_{\mathbb{C}_p}}[1/p]$ all $\beta \in U_0\hat{\otimes}_{\mathcal{O}_{K_p}}\mathcal{O}_{\mathbb{C}_p}$, 
\begin{equation}\label{equivariance2}\delta'(\alpha\beta) = \alpha(\lambda_E^{-2}) \cdot \delta'(\beta).
\end{equation}
\item We also have
\begin{equation}\label{explicitreciprocity2}\delta'(\xi_E) = C_E \cdot (\lambda_E^{-1}(\frak{a}) - \mathbb{N}\frak{a})\cdot   D_1^{-1}\mathcal{L}_{\lambda_E}|_{q_{\mathrm{dR}} = 1}.
\end{equation}
for some $C_E \in \mathbb{C}_p^{\times}$. Moreover,
\begin{equation}\label{Lvalueiff}\delta'(\xi_E) \neq 0 \iff D_1^{-1}\mathcal{L}_{\lambda_E}|_{q_{\mathrm{dR}} = 1} \neq 0.
\end{equation}
\end{enumerate}
\end{corollary}

\begin{proof}This follows from (1) and (2) of Theorem \ref{Colemanmapthm}, (\ref{equivariancecalculation}) and taking $j = -1 + p^m(p-1)$ and the limit $m \rightarrow \infty$ in (\ref{explicitreciprocity}).  The equivalence (\ref{Lvalueiff}) follows from the observation that $\lambda_E^{-1}(\frak{a}) \neq \mathbb{N}\frak{a}$ because $\frak{a} \subset \mathcal{O}_K$ is a proper integral ideal. 

\end{proof}


\subsection{Choice of decomposition of module of norm-compatible systems of semi-local units: rank 0 case}\label{rank0preview}Our choice of $U_0$ in the rank 0 case will follow that of \cite[Section 11]{RubinMC}. We will choose 
$$U_2 = \ker \delta|_{q_{\mathrm{dR}} = 1}.$$
As in loc. cit., using Wiles's explicit reciprocity law describing the local Kummer pairing, one can relate $U_2$ to the Kummer local condition at $p$ describing the global Selmer group $\mathrm{Sel}_{p^{\infty}}(E/\mathbb{Q})$. In particular, under the assumption $\mathrm{corank}_{\mathbb{Z}_p}\mathrm{Sel}_{p^{\infty}}(E/\mathbb{Q}) = 0$ we will show in Section \ref{rank0section} that $U_2 \cap \overline{\mathcal{E}}_{\chi_E}^1(\lambda_E^{-1}) = 0$ and that there is a map
$$\mathrm{Hom}(\mathcal{X}_{\chi_E}/\mathrm{rec}(U_2),K_p/\mathcal{O}_{K_p})^{\mathrm{Gal}(\mathcal{K}_{\infty}/K)} \rightarrow \mathrm{Sel}_{p^{\infty}}(E/\mathbb{Q})$$
with finite kernel and cokernel. By Theorem \ref{RMC} and (\ref{explicitreciprocity}) with $j = 0$ (noting that 
$$\lambda_E^0(\frak{a}) - \mathbb{N}\frak{a}  = 1-\mathbb{N}\frak{a} \neq 0$$
since $\frak{a} \subset \mathcal{O}_K$ is a proper ideal), this implies 
$$C \cdot L(E/\mathbb{Q},1) = \mathcal{L}_{\lambda_E}|_{q_{\mathrm{dR}} = 1} \neq 0,$$
for some $C \in \mathbb{C}_p^{\times}$, and hence $L(E/\mathbb{Q},1) \neq 0$ which gives the rank 0 $p$-converse theorem. 

\section{Rank 0 $p$-Converse Theorem}\label{rank0section}

Let $E/\mathbb{Q}$ be an elliptic curve with CM by $\mathcal{O}_K$ as in Choice \ref{Echoice} (thus $K$ has class number 1), and $p$ a prime that ramifies in $K/\mathbb{Q}$. Descent in the rank 0 case essentially entails replacing the map ``$\delta$'' in \cite[Proposition 11.7]{RubinMC} with the map 
$$\delta|_{U_0}|_{q_{\mathrm{dR}} = 1} : U_0 \rightarrow \mathbb{C}_p,$$ and following largely the same arguments as in Section 11 of op. cit.. In essence, our map $\delta|_{U_0}$ is the $q_{\mathrm{dR}}$-expansion of a thickening of $\delta$ from op. cit., which is in turn the logarithmic derivative of Coleman power series. 

\subsection{The Selmer group via local class field theory}For certain reasons of convenience, we will emphasize the $GL_1/K$ viewpoint in this section rather than the $GL_2/\mathbb{Q}$-viewpoint (for example, viewing $E[p^{\infty}]$ as the $\mathrm{Gal}(\overline{K}/K)$-module $(K_p/\mathcal{O}_{K_p})(\lambda_E)$ (where $\lambda_E : \mathrm{Gal}(\mathcal{K}_{\infty}/K) \rightarrow \mathcal{O}_{K_p}^{\times}$ is the character from (\ref{lambdaE})) rather than as the standard $\mathrm{Gal}(\overline{\mathbb{Q}}/\mathbb{Q})$-module). 

\begin{convention}\label{groupcohomologyconvention}Let $M$ be a field and let $W$ be a topological $\mathrm{Gal}(\overline{M}/M)$-module. For any $i \in \mathbb{Z}_{\ge 0}$, we will let
$$H^i(M,W) := H^i(\mathrm{Gal}(\overline{M}/M),W).$$
Similarly, given a Galois extension $L/M$ and a topological $\mathrm{Gal}(L/M)$-module $W$, for any $i \in \mathbb{Z}_{\ge 0}$ let
$$H^i(L/M,W) := H^i(\mathrm{Gal}(L/M),W).$$
\end{convention}


Let $\varpi \in \mathcal{O}_K$ from (\ref{pichoice}), which is in particular a uniformizer of $\mathcal{O}_{K_p}$. For any place $v$ of $K$, consider the short exact sequence of $\mathcal{O}_K[\mathrm{Gal}(\overline{K}_v/K_v)]$-modules 
$$0 \rightarrow E[\varpi^n] \rightarrow E\xrightarrow{\varpi^n} E \rightarrow 0.$$
From the associated long exact sequence we get a short exact sequence
\begin{equation}\label{localdescent}0 \rightarrow E(K_v) \otimes_{\mathcal{O}_K} \mathcal{O}_K/\varpi^n\mathcal{O}_K \rightarrow  H^1(K_v,E[\varpi^n]) \rightarrow H^1(K_v,E)[\varpi^n] \rightarrow 0.
\end{equation}
We also extend this definition to the case $n = \infty$ (obtained by taking a direct limit as $n \rightarrow \infty$ of (\ref{localdescent})), in which case we get a sequence
$$0 \rightarrow E(K_v) \otimes_{\mathcal{O}_K} K_p/\mathcal{O}_{K_p} \rightarrow H^1(K_v,W(\lambda_E)) \rightarrow H^1(K_v,E)[p^{\infty}] \rightarrow 0.$$



\begin{definition}\label{GL1Selmerdefinition}
\begin{enumerate}
\item Let $\rho : \mathrm{Gal}(\mathcal{K}_{\infty}/K) \rightarrow \mathcal{O}_{K_p}^{\times}$ be any continuous character. Let $$T(\rho) := \mathcal{O}_{K_p}(\rho),$$
which is the free rank 1 $\mathcal{O}_{K_p}$-module with $\mathrm{Gal}(\mathcal{K}_{\infty}/K)$ acting through $\rho$. Let
$$V(\rho) := T(\rho)\otimes_{\mathbb{Z}_p}\mathbb{Q}_p \hspace{1cm} W(\rho) := V(\rho)/T(\rho) = T(\rho) \otimes_{\mathbb{Z}_p} \mathbb{Q}_p/\mathbb{Z}_p.$$
\item For every place $v$ of $K$, fix an algebraic closure $\overline{K}_v$ of the $v$-adic completion $K_v$ and fix an embedding $\overline{K} \subset \overline{K}_v$ (when $v = \frak{p}$, take the embedding determined by (\ref{fixembeddings})). Let $G_v \subset \mathrm{Gal}(\mathcal{K}_{\infty}/K)$ denote the decomposition group corresponding to these choices of embeddings.
\item 
We define the \emph{Selmer group} to be 
$$S(\lambda_E) := \ker\left(\prod_v\mathrm{res}_v' : H^1(K,W(\lambda_E)) \rightarrow \prod_v H^1(G_v,E)\right)$$
where the product runs over all places $v$ of $K$. Here, the maps $\mathrm{res}_v' : H^1(K,W(\lambda_E)) \rightarrow H^1(G_v,E)$ are defined to be the compositions
$$H^1(K,W(\lambda_E)) \xrightarrow{\mathrm{res}_v} H^1(G_v,W(\lambda_E)) \rightarrow H^1(G_v,E)$$
where 
$$\mathrm{res}_v : H^1(K,W(\lambda_E)) \rightarrow H^1(G_v,W(\lambda_E))$$
is the restriction homomorphism induced by $G_v \subset \mathrm{Gal}(\overline{K}/K)$ and 
$$H^1(G_v,W(\lambda_E)) \rightarrow H^1(G_v,E)$$
is the 
second arrow of (\ref{localdescent}) in the case where $n = \infty$. 
\item 
We define the \emph{relaxed Selmer group} to be 
$$S'(\rho) := \ker\left(\prod_{v\nmid\frak{p}}\mathrm{res}_{v} : H^1(K,W(\rho)) \rightarrow \prod_{v\nmid\frak{p}}H^1(G_{v},W(\rho))\right),$$
where the product runs over all places $v$ of $K$ not dividing $\frak{p}$. Here $\mathrm{res}_{v} : H^1(K,W(\rho)) \rightarrow H^1(G_{v},W(\rho))$ is the restriction homomorphism induced by $G_{v} \subset \mathrm{Gal}(\overline{K}/K)$. 
\end{enumerate}
\end{definition}

In this section, we will primarily be interested in the case $\rho = \lambda_E$ from (\ref{lambdaE}), in which case $W(\lambda_E) = E[p^{\infty}]$ as $\mathrm{Gal}(\overline{K}/K)$-modules. 

\begin{remark}Our notation conforms with that of \cite[Section 11]{RubinMC}, where the Selmer groups $S$ and $S'$ in op. cit. correspond to $S(\lambda_E)$ and $S'(\lambda_E)$ in our setting.
\end{remark}

\begin{remark}One can also define analogues $S(\rho)$ of the Selmer group $S(\lambda_E)$ for general $\rho$ using Bloch-Kato's Galois-theoretic descriptions of Selmer groups \cite{BlochKato}.
\end{remark}

Recall our notation for all $n \in \mathbb{Z}_{\ge 0} \cup \{\infty\}$ 
$$\mathcal{K}_n = K(E[\frak{p}^n])$$
and let
$$\mathcal{K}_{n,p} := K_p(E[\frak{p}^n]).$$
Recall the identification 
$$\mathrm{Gal}(\mathcal{K}_{\infty}/K) \overset{(\ref{fixK})}{=} \Gamma_K \times \Delta_K.$$
Similarly, fix an identification 
\begin{equation}\label{fixKp}\mathrm{Gal}(\mathcal{K}_{\infty,p}/K_p) = \Gamma_K \times \Delta_{K,p}
\end{equation}
such that under the inclusion $\mathrm{Gal}(\mathcal{K}_{\infty,p}/K_p) \overset{(\ref{fixembeddings})}{\subset} \mathrm{Gal}(\mathcal{K}_{\infty}/K)$, we have $\Delta_{K,p} \subset \Delta_K$ and $\Gamma_K$ maps isomorphically onto $\Gamma_K$. Note that this is possible since $\Gamma_K = \mathrm{Gal}(K_{\infty}/K)$ is equal to its own decomposition subgroup at the prime of $\mathcal{O}_{K_{\infty}}$ above $\frak{p}$ fixed by (\ref{fixembeddings}), by Proposition \ref{GammaKtotallyramified} with $M = K$.

\begin{definition}\label{slashisotypicdefinition}\begin{enumerate}
\item Given a group $H$ and an $H$-module $M$, let $M^H \subset M$ denote the submodule consisting of elements fixed by every element of $H$. 
\item Given a $p$-adically complete $\mathbb{Z}_p$-algebra $R$ and an $R\llbracket \mathrm{Gal}(\mathcal{K}_{\infty}/K)\rrbracket$-module and a character $\chi \in \hat{\Delta}_K$ (recall $\hat{\Delta}_K$ is the group of $\mathbb{C}_p^{\times}$-valued characters on $\Delta_K$, per Definition \ref{isotypicdefinition}), let
$$M|_{\chi} := M \otimes_{R\llbracket \mathrm{Gal}(\mathcal{K}_{\infty}/K)\rrbracket, \chi} R[\chi]\llbracket \Gamma_K\rrbracket,$$
where $R[\chi]$ is the extension of $R$ obtained by adjoining the values of $\chi$, and the tensor product maps $\chi : \Delta_K \rightarrow R[\chi]^{\times}$. In other words, $M|_{\chi}$ is the maximal subquotient of $M$ on which $\Delta_K$ acts through $\chi$. 

\item Note that $M|_{\chi}$ is different from $M_{\chi}$ from Definition \ref{isotypicdefinition}, as $M|_{\chi}$ is an $R\llbracket \Gamma_K\rrbracket$-module and subquotient of $M$ and $M_{\chi}$ is an $R\llbracket \Gamma_K\rrbracket [1/p]$-submodule and subquotient of $M \otimes_{\mathbb{Z}_p} \mathbb{Q}_p$. It is easy to see that there is a natural identification 
$$M|_{\chi} \otimes_{\mathbb{Z}_p}\mathbb{Q}_p \xrightarrow{\sim} M_{\chi}.$$ 

\item Similarly, given an $R\llbracket \mathrm{Gal}(\mathcal{K}_{\infty,p}/K_p)\rrbracket$-module and a character $\chi \in \hat{\Delta}_{K,p}$, let 
$$M|_{\chi} := M \otimes_{R\llbracket \mathrm{Gal}(\mathcal{K}_{\infty,p}/K_p)\rrbracket, \chi} R[\chi]\llbracket \Gamma_K\rrbracket,$$
where $R[\chi]$ is as above.

\item Given an $R\llbracket \mathrm{Gal}(\mathcal{K}_{\infty,p}/K_p)\rrbracket$-module $M$ and a character $\chi \in \hat{\Delta}_{K,p}$ (recall (\ref{fixKp})), we similarly define $M|_{\chi}$ (using the same definition as in (2) with $\Delta_K$ replaced by $\Delta_{K,p}$). We can similarly define $M_{\chi}$ as in Definition \ref{isotypicdefinition}, and we have a natural identification $M|_{\chi} \otimes_{\mathbb{Z}_p}\mathbb{Q}_p \xrightarrow{\sim} M_{\chi}$. 
\end{enumerate}
\end{definition}


For the remainder of this section, all $\mathrm{Hom}$s are in the category of $\mathcal{O}_{K_p}$-modules unless otherwise indicated.

\begin{proposition}[Theorem 11.2 of \cite{RubinMC}]\label{relaxSelmertheorem}
The restriction map 
$$H^1(K,W(\lambda_E)) \rightarrow H^1(K_{\infty},W(\lambda_E))$$
induces a map with finite kernel and cokernel
$$S'(\lambda_E) \rightarrow \mathrm{Hom}(\mathcal{X},W(\lambda_E))^{\mathrm{Gal}(\mathcal{K}_{\infty}/K)} = \mathrm{Hom}(\mathcal{X}|_{\chi_E},W(\lambda_E))^{\mathrm{Gal}(\mathcal{K}_{\infty}/K)}.$$
\end{proposition}

\begin{proof}This is well-known, and follows from the following observations: Note that $\mathcal{X} = \mathrm{Gal}(M_{\infty}/\mathcal{K}_{\infty})$ and that $\mathcal{K}_{\infty} = K(E[p^{\infty}])$ and $M_{\infty}/\mathcal{K}_{\infty}$ is the maximal pro-$p$-abelian extension unramified outside $\frak{p}$, so that $\mathrm{Gal}(\overline{K}/\mathcal{K}_{\infty})$ acts trivially on $W(\lambda_E)$ and so we have the ``Kummer'' isomorphism
\begin{equation}\label{Kummerinclusion}H^1(\mathcal{K}_{\infty},W(\lambda_E)) \cong \mathrm{Hom}(\mathrm{Gal}(\overline{K}/\mathcal{K}_{\infty}),W(\lambda_E)) \supset \mathrm{Hom}(\mathcal{X},W(\lambda_E)) = \mathrm{Hom}(\mathcal{X}|_{\chi_E},W(\lambda_E)).
\end{equation}
Since $M_{\infty}/\mathcal{K}_{\infty}$ is unramified outside $\frak{p}$, by \cite[Proposition 1.1]{RubinCompositio} we have (in the notation of Definition \ref{GL1Selmerdefinition})
$$S'(\lambda_E)(\mathcal{K}_{\infty}) = \mathrm{Hom}(\mathcal{X},W(\lambda_E)) = \mathrm{Hom}(\mathcal{X}|_{\chi_E},W(\lambda_E)).$$
Now the assertion follows from a standard descent argument using the fact that $\mathrm{Gal}(\overline{K}/\mathcal{K}_{\infty})$ acts trivially on $W(\lambda_E)$ and using the inflation-restriction exact sequence 
\begin{align*}0 \rightarrow H^1(\mathrm{Gal}(\mathcal{K}_{\infty}/K),W(\lambda_E)) \xrightarrow{\mathrm{inf}} H^1(K,W(\lambda_E)) &\xrightarrow{\mathrm{res}} H^1(\mathcal{K}_{\infty},W(\lambda_E))^{\mathrm{Gal}(\mathcal{K}_{\infty}/K)} \\
&\rightarrow H^2(\mathrm{Gal}(\mathcal{K}_{\infty}/K),W(\lambda_E)).
\end{align*}
See \cite[Lemma 1.2]{RubinCompositio} for details on this last step.

\end{proof}

\begin{lemma}[cf. Lemma 11.6 of \cite{RubinMC}]\label{restrictionlemma}The natural restriction map 
$$H^1(K_p,W(\lambda_E)) \xrightarrow{\mathrm{res}} H^1(\mathcal{K}_{\infty,p},W(\lambda_E))^{\mathrm{Gal}(\mathcal{K}_{\infty,p}/K_p)}$$
induces, via Artin reciprocity, a map
\begin{equation}\label{localHom'}H^1(K_p,W(\lambda_E)) \rightarrow \mathrm{Hom}(\mathbb{U}^1|_{\chi_E},W(\lambda_E))^{\mathrm{Gal}(\mathcal{K}_{\infty}/K)}
\end{equation}
of $\Lambda_{\mathcal{O}_{K_p}}$-modules with finite kernel and cokernel.
\end{lemma}

\begin{remark}The above map is not necessarily an isomorphism (unlike in loc. cit.) because we do not necessarily have $E(K_{\frak{p}})[\frak{p}] = 0$ in our setting (whereas the assumptions of loc. cit. force this). 
\end{remark}

\begin{proof}[Proof of Lemma \ref{restrictionlemma}]We will use the notation of Convention \ref{groupcohomologyconvention} throughout the proof. By the inflation-restriction exact sequence, noting that $\mathrm{Gal}(\overline{K}_p/\mathcal{K}_{\infty,p})$ acts trivially on $W(\lambda_E)$ (recall that $\lambda_E : \mathrm{Gal}(\mathcal{K}_{\infty}/K) \rightarrow \mathcal{O}_{K_p}^{\times}$ is the character from (\ref{lambdaE})), we have an exact sequence
\begin{align*}0 &\rightarrow H^1(\mathcal{K}_{\infty,p}/K_p,W(\lambda_E)) \xrightarrow{\mathrm{inf}} H^1(K_p,W(\lambda_E)) \\
&\xrightarrow{\mathrm{res}} H^1(\mathcal{K}_{\infty,p},W(\lambda_E))^{\mathrm{Gal}(\mathcal{K}_{\infty,p}/K_p)} \rightarrow H^2(\mathcal{K}_{\infty,p}/K_p,W(\lambda_E)).
\end{align*} 
From \cite[Lemma 2.2]{RubinSha}, we see that $H^1(\mathcal{K}_{\infty,p}/K_p,W(\lambda_E))$ is finite. By local duality and the fact that $W(\lambda_E)$ is isomorphic to $\mathrm{Hom}(W(\lambda_E),\mathcal{O}_{K_p}(1))$ via the Weil pairing (where $\mathcal{O}_{K_p}(1) = \mathcal{O}_{K_p} \otimes_{\mathbb{Z}_p}\mathbb{Z}_p(1) = \mathcal{O}_{K_p}\otimes_{\mathbb{Z}_p}\varprojlim_n\mu_{p^n}$ denotes the Tate twist of $\mathcal{O}_{K_p}$), we have 
$$E[p^{\infty}](K_p) = H^0(\mathcal{K}_{\infty,p}/K_p,W(\lambda_E)) \cong H^2(\mathcal{K}_{\infty,p}/K_p,W(\lambda_E)),$$
which is finite. Hence the map 
\begin{equation}\label{restrictionfinite}\mathrm{res} : H^1(K_p,W(\lambda_E)) \xrightarrow{\mathrm{res}} H^1(\mathcal{K}_{\infty,p},W(\lambda_E))^{\mathrm{Gal}(\mathcal{K}_{\infty,p}/K_p)}
\end{equation}
has finite kernel and cokernel. We will finish by showing that the target of (\ref{restrictionfinite}) is isomorphic, under the Artin reciprocity map, to
$$\mathrm{Hom}(\mathbb{U}^1|_{\chi_E},W(\lambda_E))^{\mathrm{Gal}(\mathcal{K}_{\infty}/K)}.$$

Since $\mathrm{Gal}(\overline{K}_p/\mathcal{K}_{\infty,p})$ acts trivially on $W(\lambda_E)$, we have
\begin{equation}\label{trivialGaloisaction}\begin{split}H^1(\mathcal{K}_{\infty,p},W(\lambda_E))^{\mathrm{Gal}(\mathcal{K}_{\infty,p}/K_p)} &= \mathrm{Hom}(\mathrm{Gal}(\overline{K}_p/\mathcal{K}_{\infty,p}),W(\lambda_E))^{\mathrm{Gal}(\mathcal{K}_{\infty,p}/K_p)} \\
&= \mathrm{Hom}(\mathrm{Gal}(\mathcal{K}_{\infty,p}^{\mathrm{ab}}/\mathcal{K}_{\infty,p})|_{\chi_E},W(\lambda_E))^{\mathrm{Gal}(\mathcal{K}_{\infty,p}/K_p)}
\end{split}
\end{equation}
where $\mathcal{K}_{\infty,p}^{\mathrm{ab}}$ denotes the maximal abelian extension of $\mathcal{K}_{\infty,p}$. Let
$$\mathcal{U}_E^1 := \varprojlim_{\mathrm{Nm}_n} \mathcal{O}_{\mathcal{K}_{n,p}}^{\times,1},$$
where $\mathcal{O}_{\mathcal{K}_{n,p}}^{\times,1}$ denotes the principal units in $\mathcal{O}_{\mathcal{K}_{n,p}}^{\times}$.  Note that $\mathbb{U}_E^1$ is isomorphic to a direct product of finitely many copies of $\mathcal{U}_E^1$ (cf. (\ref{Ufdecomposition})), with one direct factor corresponding to the prime ideal of $\mathcal{O}_{\mathcal{K}_{\infty}}$ above $\frak{p}$ fixed by (\ref{fixembeddings}). Let $\mathbb{U}^1 \rightarrow \mathcal{U}_E^1$ be the projection onto this factor. By local class field theory, since $K$ has class number 1, we have a map
\begin{equation}\label{localCFT}\mathbb{U}^1 \times \hat{\mathbb{Z}} \rightarrow \mathcal{U}_E^1 \times \hat{\mathbb{Z}} \xrightarrow{\sim} \mathrm{Gal}(\mathcal{K}_{\infty,p}^{\mathrm{ab}}/\mathcal{K}_{\infty,p}).
\end{equation}
By Lemma \cite[11.5(ii)]{RubinMC}, we have that $\chi_E$ is nontrivial on the decomposition group of $\frak{p}$, and so $\hat{\mathbb{Z}}|_{\chi_E} = 1$. Moreover by (\ref{fixKp}) the projection $\mathbb{U}^1 \rightarrow \mathcal{U}_E^1$ induces an isomorphism of $\Lambda_{\mathcal{O}_{K_p}}$-modules 
\begin{equation}\label{chiquotientiso}\mathbb{U}^1|_{\chi_E} \xrightarrow{\sim}\mathcal{U}_E^1|_{\chi_E}.
\end{equation} Hence (\ref{localCFT}) gives an isomorphism
\begin{equation}\label{localreciprocityisomorphism}\mathbb{U}^1|_{\chi_E} \xrightarrow{\sim} \mathcal{U}_E^1|_{\chi_E} \xrightarrow{\sim} \mathrm{Gal}(\mathcal{K}_{\infty,p}^{\mathrm{ab}}/\mathcal{K}_{\infty,p})|_{\chi_E}.
\end{equation}
This, along with (\ref{trivialGaloisaction}) and the compatibility of (\ref{fixK}) and (\ref{fixKp}), gives (\ref{localHom'}). 

\end{proof}

This has the following consequence. Given a group $G$, let $G_{\mathrm{div}} \subset G$ denote the maximal divisible subgroup. 

\begin{corollary}
The natural restriction map 
$$H^1(K_p,W(\lambda_E)) \xrightarrow{\mathrm{res}} H^1(\mathcal{K}_{\infty,p},W(\lambda_E))^{\mathrm{Gal}(\mathcal{K}_{\infty,p}/K_p)}$$
induces an isomorphism
\begin{equation}\label{localHom}H^1(K_p,W(\lambda_E))_{\mathrm{div}} \xrightarrow{\sim} \mathrm{Hom}(\mathbb{U}_{\chi_E}^1,W(\lambda_E))^{\mathrm{Gal}(\mathcal{K}_{\infty}/K)}.
\end{equation}
\end{corollary}

\begin{proof}From (\ref{localHom'}), we have a map with finite kernel and cokernel
\begin{align*}H^1(K_p,W(\lambda_E)) &\rightarrow  \mathrm{Hom}(\mathbb{U}^1|_{\chi_E}(\lambda_E^{-1}),K_p/\mathcal{O}_{K_p})^{\mathrm{Gal}(\mathcal{K}_{\infty}/K)} \\
&= \mathrm{Hom}(\left(\mathbb{U}^1|_{\chi_E}(\lambda_E^{-1})\right)^{\mathrm{Gal}(\mathcal{K}_{\infty}/K)},K_p/\mathcal{O}_{K_p})
\end{align*}
where 
$$\mathbb{U}^1|_{\chi_E}(\lambda_E^{-1}) := \mathbb{U}^1|_{\chi_E} \otimes_{\mathcal{O}_{K_p}}\mathcal{O}_{K_p}(\lambda_E^{-1}).$$
Hence we get a map of finitely-generated $\mathcal{O}_{K_p}$-modules with finite kernel and cokernel
$$\left(\mathbb{U}^1|_{\chi_E}(\lambda_E^{-1})\right)^{\mathrm{Gal}(\mathcal{K}_{\infty}/K)} \rightarrow \mathrm{Hom}(H^1(K_p,W(\lambda_E)),K_p/\mathcal{O}_{K_p}).$$
Given a finitely generated $\mathcal{O}_{K_p}$-module $M$, let $M_{\mathrm{free}}$ denote the $\mathcal{O}_{K_p}$-free part. Tensoring with $\otimes_{\mathbb{Z}_p}\mathbb{Q}_p$ and using the fact that 
$$\mathbb{U}^1|_{\chi_E} \otimes_{\mathbb{Z}_p}\mathbb{Q}_p = \mathbb{U}_{\chi_E}^1$$
(Definition \ref{slashisotypicdefinition} (3)), we then get an isomorphism
\begin{align*}\left(\mathbb{U}_{\chi_E}^1(\lambda_E^{-1})\right)^{\mathrm{Gal}(\mathcal{K}_{\infty}/K)} &\xrightarrow{\sim} \mathrm{Hom}(H^1(K_p,W(\lambda_E)),K_p/\mathcal{O}_{K_p})_{\mathrm{free}} \\
&= \mathrm{Hom}(H^1(K_p,W(\lambda_E))_{\mathrm{div}},K_p/\mathcal{O}_{K_p}).
\end{align*}
Applying $\mathrm{Hom}(\cdot,K_p/\mathcal{O}_{K_p})$ again, we arrive at (\ref{localHom}). 

\end{proof}

The $\chi_E$-isotypic component $\mathbb{U}_{\chi_E}^1 \xrightarrow{\mathrm{rec}} \mathcal{X}_{\chi_E}$ of the reciprocity map $\mathbb{U}^1 \xrightarrow{\mathrm{rec}} \mathcal{X}$ induces a map 
$$\mathrm{Hom}(\mathcal{X}_{\chi_E},W(\lambda_E)) \rightarrow \mathrm{Hom}(\mathbb{U}_{\chi_E}^1,W(\lambda_E)).$$
We denote this map by $f \rightarrow f|_{\mathbb{U}_{\chi_E}^1}$. From (\ref{localHom}) have the following commutative diagram (cf. \cite[p. 62]{RubinMC}):
\begin{equation}\label{Selmergroupdiagram}
\hspace{-.95cm}\begin{tikzcd}[column sep =small]
     &   &  &  \mathrm{Hom}(\mathcal{X}_{\chi_E},W(\lambda_E))^{\mathrm{Gal}(\mathcal{K}_{\infty}/K)}\arrow{d} \arrow{dr}{} & \\
     & 0 \arrow{r} & \left(E(K_p) \otimes_{\mathcal{O}_K}K_p/\mathcal{O}_{K_p}\right)_{\mathrm{div}}\arrow{r}{} \arrow{dr}{\varphi}& H^1(K_p,W(\lambda_E))_{\mathrm{div}} \arrow{d}{\cong} \arrow{r}& \left(H^1(K_p,E)[\frak{p}^{\infty}]\right)_{\mathrm{div}}\arrow{r}& 0\\
     & & & \mathrm{Hom}(\mathbb{U}_{\chi_E}^1,W(\lambda_E))^{\mathrm{Gal}(\mathcal{K}_{\infty}/K)} & 
\end{tikzcd}
\end{equation}
where the middle row is the exact sequence obtained by taking the divisible part of (\ref{localdescent}) with $v = \frak{p}$ and $n = \infty$. 
 By (\ref{Selmergroupdiagram}), and the fact that the image of the Kummer map
$$E(K_p)\otimes_{\mathcal{O}_K}K_p/\mathcal{O}_{K_p} \rightarrow H^1(K_p,W(\lambda_E))$$
is contained in $H^1(K_p,W(\lambda_E))_{\mathrm{div}}$ (see \cite{BlochKato}), we have that it is equal to the image of 
$$\left(E(K_p)\otimes_{\mathcal{O}_K}K_p/\mathcal{O}_{K_p}\right)_{\mathrm{div}} \rightarrow H^1(K_p,W(\lambda_E))_{\mathrm{div}}.$$
Hence by Proposition \ref{relaxSelmertheorem}, 
\begin{equation}\label{Selmercharacterization}S(\lambda_E)_{\mathrm{div}} = \{f \in \mathrm{Hom}(\mathcal{X}_{\chi_E},W(\lambda_E))^{\mathrm{Gal}(\mathcal{K}_{\infty}/K)} : f|_{\mathbb{U}_{\chi_E}^1} \in \mathrm{im}(\varphi)\}.
\end{equation}



\subsection{Kummer maps and twisting}\label{Kummertwistsection}

Recall $\frak{f}_E$ is the conductor of $\lambda_E$. Recall that the positive integer $f_0$ generates $\frak{f}_E^{(p)} = \frak{f}(\lambda_E)^{(p)}$ (see Choice \ref{f0choice}), which by \cite[Proposition 3.13]{BDP2} and the fact that $K$ is one of the imaginary quadratic fields listed in Assumption \ref{pramifiedassumption} implies that $f_0^2$ is the prime-to-$p$ part of the conductor of $E/\mathbb{Q}$. Recall that for many of our results concerning the construction of $p$-adic $L$-functions, we made the technical assumption (Assumption \ref{pconductorassumption} (1), Assumption \ref{lambdalambdaEassumption}) that $f_0 \ge 4$. 


\begin{choice}\label{A0'choice}Fix an ideal $\frak{f}_0' \subset \mathcal{O}_K$ with $(\frak{f}_0',\frak{p}) = 1$, $\frak{f}_E^{(p)}|\frak{f}_0'$ with $w_{\frak{f}_0'} = 1$. 
\begin{enumerate}
\item By \cite[Theorem II.1.4]{deShalit}, there is a type (1,0) algebraic Hecke character $\lambda_{A_0'}$ as well as an associated elliptic curve $A_0'/K(\frak{f}_0')$ with CM by $\mathcal{O}_K$; in the notation of loc. cit., we have
$$\psi_{A_0'} = \lambda_{A_0'} \circ \mathrm{Nm}_{K(\frak{f}_0')/K}.$$
(For example, we could take $A_0 = A_0'$ from Choice \ref{FixCMdefinition} but we desire to work in a slightly more general setting later, specifically in the proof of Theorem \ref{beta0nonzerotheorem}.) 
\item Let $F = \hat{A}_0'$ be a relative Lubin-Tate group for the unramified extension $L_p/K_p$ where 
$$L = K(\frak{f}_0').$$

\item Note that $L(A_0'[\frak{f}_0'\frak{p}^n]) = K(\frak{f}_0'\frak{p}^n)$ for any $n \in \mathbb{Z}_{\ge 0} \cup \{\infty\}$ by \cite[Lemma I.1.3 (ii)]{deShalit}. Fix a decomposition 
\begin{equation}\label{fixA'}\mathrm{Gal}(K(\frak{f}_0'p^{\infty})/K) \cong \Delta_{A_0'} \times \Gamma_K
\end{equation}
as in (\ref{fixA2}) (i.e. $\Delta_{A_0'} = \mathrm{Gal}(K(\frak{f}_0'p^{\infty})/K_{\infty})$) which maps to the decomposition (\ref{fixK}) under the restriction map $\mathrm{Gal}(K(\frak{f}_0'p^{\infty})/K) \twoheadrightarrow \mathrm{Gal}(\mathcal{K}_{\infty}/K)$. 
\item Let 
$$\chi_{A_0'} = \lambda_{A_0'}|_{\Delta_{A_0'}},$$ 
as in (\ref{chiA}), where we recall $K_{\infty}/K$ denotes the unique $\mathbb{Z}_p^{\oplus 2}$-extension of $K$. By the same argument as in the proof of Proposition \ref{sameCMcharacterproposition}, we have
\begin{equation}\label{sameCMcharacter2}\lambda_{A_0'}/\chi_{A_0'} = \lambda_E/\chi_E.
\end{equation}
\end{enumerate}
\end{choice}

\begin{choice}\label{generatorchoice}Henceforth, fix an $\mathcal{O}_{K_p}$-non-torsion element of $E(K_p)$. 
This induces an identification
\begin{equation}\label{generatorchoiceiso}\mathrm{Hom}(E(K_p)\otimes_{\mathcal{O}_K}K_p/\mathcal{O}_{K_p},K_p/\mathcal{O}_{K_p}) \otimes_{\mathbb{Z}_p}\mathbb{Q}_p = \mathrm{Hom}(K_p/\mathcal{O}_{K_p},K_p/\mathcal{O}_{K_p})\otimes_{\mathbb{Z}_p}\mathbb{Q}_p = K_p.
\end{equation}
Also fix an $\mathcal{O}_{K_p}$-non-torsion element of $A_0'(L_p)$. 
This induces an identification
\begin{equation}\label{generatorchoiceiso'}\mathrm{Hom}(A_0'(L_p) \otimes_{\mathcal{O}_K}K_p/\mathcal{O}_{K_p},K_p/\mathcal{O}_{K_p})\otimes_{\mathbb{Z}_p}\mathbb{Q}_p = \mathrm{Hom}(L_p/\mathcal{O}_{L_p},K_p/\mathcal{O}_{K_p})\otimes_{\mathbb{Z}_p}\mathbb{Q}_p = L_p.
\end{equation}
\end{choice}

\begin{definition}\label{deltaEdefinition}\begin{enumerate}
\item Let 
$$\delta_E : \mathbb{U}_{\chi_E}^1(\lambda_E^{-1}) \rightarrow \mathrm{Hom}(E(K_p)\otimes_{\mathcal{O}_K} K_p/\mathcal{O}_{K_p},K_p/\mathcal{O}_{K_p}) \otimes_{\mathbb{Z}_p}\mathbb{Q}_p \overset{(\ref{generatorchoiceiso})}{=} K_p,$$
be induced by pullback via $\varphi$ from (\ref{Selmergroupdiagram}) (cf. \cite[Proposition 11.7]{RubinMC}). 

\item Let $A_0'/L_p$ be as in Choice \ref{A0'choice}, where $L = K(\frak{f}_0')$. Viewing $\lambda_{A_0'}$ as a character 
$$\lambda_{A_0'} : \mathrm{Gal}(\overline{K}/K) \rightarrow \mathbb{C}_p^{\times},$$
we have that 
$$\lambda_{A_0'}|_{\mathrm{Gal}(\overline{K}/L)} = \psi_{A_0'}$$
is $\mathcal{O}_{K_p}^{\times}$-valued. Define an $\mathcal{O}_{K_p}\llbracket \mathrm{Gal}(L(A_0'[\frak{f}_0'p^{\infty}])/L)\rrbracket$-module 
$$W(\lambda_{A_0'}) = \left(K_p/\mathcal{O}_{K_p}\right)(\lambda_{A_0'})$$
as $K_p/\mathcal{O}_{K_p}$ with $\mathrm{Gal}(\overline{K}/L)$-action given by $\lambda_{A_0'}$. 

\item Recall $\chi_{A_0'} = \lambda_{A_0'}|_{\Delta_{A_0'}}$, where the restriction to $\Delta_{A_0'}$ is defined using the decomposition (\ref{fixA'}). Then $\chi_{A_0'}|_{\mathrm{Gal}(\overline{K}/L)}$ is also $\mathcal{O}_{K_p}^{\times}$-valued. Define, analogously to Definition \ref{slashisotypicdefinition}, an $\mathcal{O}_{K_p}\llbracket \Gamma_K\rrbracket$-module
$$\mathbb{U}_{A_0'}^1|_{\chi_{A_0'}} = \mathbb{U}_{A_0'}^1 \otimes_{\mathbb{Z}_p\llbracket \mathrm{Gal}(K(\frak{f}_0')(A_0'[\frak{f}_0'p^{\infty}])/L)\rrbracket,\chi_{A_0'}}\mathcal{O}_{K_p}\llbracket \Gamma_K\rrbracket.$$
\item Using a completely analogous argument to the proof of (\ref{localHom'}), one gets a map with finite kernel and cokernel
$$H^1(L_p,W(\lambda_{A_0'})) \rightarrow \mathrm{Hom}(\mathbb{U}_{A_0'}^1|_{\chi_{A_0'}},W(\lambda_{A_0'}))^{\mathrm{Gal}(L_p(A_0'[\frak{f}_0'p^{\infty}])/L_p)}.$$
By essentially the same argument as in the proof of (\ref{localHom}), one gets an isomorphism
$$H^1(L_p,W(\lambda_{A_0'}))_{\mathrm{div}} \xrightarrow{\sim} \mathrm{Hom}(\mathbb{U}_{A_0',\chi_{A_0'}}^1,W(\lambda_{A_0'}))^{\mathrm{Gal}(L_p(A_0'[\frak{f}_0'p^{\infty}])/L_p)}.$$
One gets an induced diagram for $A_0'$ analogous to (\ref{Selmergroupdiagram}), and thus a map 
$$\varphi_{A_0'} : \left(A_0'(L_p) \otimes_{\mathcal{O}_K}K_p/\mathcal{O}_{K_p}\right)_{\mathrm{div}} \rightarrow \mathrm{Hom}(\mathbb{U}_{A_0',\chi_{A_0'}}^1,W(\lambda_{A_0'}))^{\mathrm{Gal}(L_p(A_0'[\frak{f}_0'p^{\infty}])/L_p)}$$
which induces
$$\delta_{A_0'} : \mathbb{U}_{A_0',\chi_{A_0'}}^1(\lambda_{A_0'}^{-1}) \rightarrow   \mathrm{Hom}(A_0'(L_p) \otimes_{\mathcal{O}_K} K_p/\mathcal{O}_{K_p},K_p/\mathcal{O}_{K_p}) \otimes_{\mathbb{Z}_p}\mathbb{Q}_p \overset{(\ref{generatorchoiceiso'})}{=} L_p \xrightarrow{\mathrm{trace}_{L_p/K_p}} K_p.$$


\end{enumerate}
\end{definition}

Observe that since 
\begin{equation}\label{f'equal}K(\frak{f}_0')(A_0'[\frak{f}_0'\frak{p}^n]) = K(\frak{f}_0'\frak{p}^n)
\end{equation}
for all $n \gg 0$ by \cite[Proposition II.1.6]{deShalit}, then 
\begin{equation}\label{U'sequal}\mathbb{U}^1(\frak{f}_0') = \mathbb{U}_{A_0'}^1.
\end{equation}
By (\ref{sameCMcharacter2}), we have that $\lambda_{A_0'}/\chi_{A_0'}$ takes values in $\mathcal{O}_{K_p}^{\times}$. Let $\mathbf{1}_{A_0'}$ be the trivial character on $\Delta_{A_0'}$. Recall $\Lambda_{\mathcal{O}_{K_p}}[1/p]$ from Definition \ref{Lambdadefinition}. Then we have a well-defined $\Lambda_{\mathcal{O}_{K_p}}[1/p]$-module
$$\mathbb{U}_{A_0',\chi_{A_0'}}^1(\lambda_{A_0'}^{-1})_0 := \mathbb{U}_{A_0',\mathbf{1}_{A_0'}}^1((\lambda_{A_0'}/\chi_{A_0'})^{-1}) :=  \mathbb{U}_{A_0'}^1 \otimes_{\mathbb{Z}_p\llbracket \Delta_{A_0'} \times \Gamma_K\rrbracket} \mathbb{Z}_p\llbracket \Gamma_K\rrbracket \otimes_{\mathcal{O}_{K_p}}K_p(\chi_{A_0'}/\lambda_{A_0'}).$$
Note that 
$$\mathbb{U}_{A_0',\chi_{A_0'}}^1(\lambda_{A_0'}^{-1})_0 = \mathbb{U}^1(\frak{f}_0')_{\mathbf{1}_{A_0'}}((\lambda_{A_0'}/\chi_{A_0'})^{-1})$$
in light of (\ref{U'sequal}), and moreover
$$\mathbb{U}_{A_0',\chi_{A_0'}}^1(\lambda_{A_0'}^{-1})_0 \otimes_{K_p}L_p = \mathbb{U}_{A_0',\chi_{A_0'}}^1(\lambda_{A_0'}^{-1})$$
(cf. Remark \ref{isotypicdifferremark}). 

Similarly, define a $\Lambda_{\mathcal{O}_{K_p}}[1/p]$-module 
\begin{align*}\mathrm{Gal}(K^{\mathrm{ab}}/K(\frak{f}_0')(A_0'[\frak{f}_0'p^{\infty}])&)_{\chi_{A_0'}}(\lambda_{A_0'}^{-1})_0 := \mathrm{Gal}(K^{\mathrm{ab}}/K(\frak{f}_0')(A_0'[\frak{f}_0'p^{\infty}]))_{\mathbf{1}_{A_0'}}((\lambda_{A_0'}/\chi_{A_0'})^{-1}) \\
&\hspace{-.3cm}:= \mathrm{Gal}(K^{\mathrm{ab}}/K(\frak{f}_0')(A_0'[\frak{f}_0'p^{\infty}])) \otimes_{\mathbb{Z}_p\llbracket \Delta_{A_0'} \times \Gamma_K\rrbracket} \mathbb{Z}_p\llbracket \Gamma_K\rrbracket  \otimes_{\mathcal{O}_{K_p}}K_p(\chi_{A_0'}/\lambda_{A_0'})
\end{align*}
which satisfies 
$$\mathrm{Gal}(K^{\mathrm{ab}}/K(\frak{f}_0')(A_0'[\frak{f}_0'p^{\infty}]))_{\chi_{A_0'}}(\lambda_{A_0'}^{-1})_0 \otimes_{K_p}L_p = \mathrm{Gal}(K^{\mathrm{ab}}/K(\frak{f}_0')(A_0'[\frak{f}_0'p^{\infty}]))_{\chi_{A_0'}}(\lambda_{A_0'}^{-1}).$$

Let 
$$i_{\frak{f}_0',\chi_E} : \mathbb{U}_{\chi_E}^1(\lambda_E^{-1}) \rightarrow \mathbb{U}^1(\frak{f}_0')_{\mathbf{1}_{A_0'}}((\lambda_{A_0'}/\chi_{A_0'})^{-1}) = \mathbb{U}_{A_0',\chi_{A_0'}}^1(\lambda_{A_0'}^{-1})_0$$
be as in (\ref{1ftwist}). Let 
$$\mathrm{Nm}_{\frak{f}_0',\chi_E} : \mathbb{U}_{A_0',\chi_{A_0'}}^1(\lambda_{A_0'}^{-1})_0 = \mathbb{U}^1(\frak{f}_0')_{\mathbf{1}_{A_0'}}((\lambda_{A_0'}/\chi_{A_0'})^{-1}) \rightarrow \mathbb{U}_{\chi_E}^1(\lambda_E^{-1})$$
be as in (\ref{UEnormchitwist}).

\begin{proposition}\label{reciprocitydiagramproposition}\begin{enumerate}
\item There are $\alpha_1,\alpha_2 \in K_p^{\times}$ such that there is commutative diagram of $\Lambda_{\mathcal{O}_{K_p}}[1/p]$-equivariant maps
\begin{equation}\label{reciprocitydiagram}
\begin{tikzcd}[column sep =large]
     & \mathbb{U}_{\chi_E}^1(\lambda_E^{-1})\hspace{.5cm} \arrow{r}{\delta_E} \arrow[hook]{d}{i_{\frak{f}_0',\chi_E}}&K_p \arrow{d}{\alpha_1} \\
     & \mathbb{U}_{A_0',\chi_{A_0'}}^1(\lambda_{A_0'}^{-1})_0 \arrow{r}{\delta_{A_0'}} \arrow{d}{\mathrm{Nm}_{\frak{f}_0',\chi_E}}& K_p\arrow{d}{\alpha_2} \\
      & \mathbb{U}_{\chi_E}^1(\lambda_E^{-1})\hspace{.5cm}\arrow{r}{\delta_E}  & K_p
     \end{tikzcd}.
\end{equation}
Here the right vertical arrows denote multiplication by $\alpha_1$ and $\alpha_2$ respectively
 and $\Lambda_{\mathcal{O}_{K_p}}[1/p]$ acts on the target $K_p$ of $\delta_E$ and $\delta_{A_0'}$ through 
 $$\Lambda_{\mathcal{O}_{K_p}}[1/p] \twoheadrightarrow \Lambda_{\mathcal{O}_{K_p}}[1/p]/(\Gamma_K-1) = K_p.$$

\item Similarly, we have a commutative diagram of $\Lambda_{\mathcal{O}_{K_p}}[1/p]$-modules induced by the global Artin reciprocity maps 
$$\mathrm{rec}_E: \mathbb{U}^1 \rightarrow \mathrm{Gal}(K^{\mathrm{ab}}/K(E[p^{\infty}])) \hspace{.25cm} \text{and} \hspace{.25cm} \mathrm{rec}_{A_0'} : \mathbb{U}_{A_0'}^1 \rightarrow \mathrm{Gal}(K^{\mathrm{ab}}/K(\frak{f}_0')(A_0'[\frak{f}_0'p^{\infty}])):$$
\begin{equation}\label{reciprocitydiagram2}
\begin{tikzcd}[column sep =large]
     & \mathbb{U}_{\chi_E}^1(\lambda_E^{-1}) \hspace{.5cm} \arrow{r}{\mathrm{rec}_E} \arrow[hook]{d}{i_{\frak{f}_0',\chi_E}}& \hspace{.5cm}\mathrm{Gal}(K^{\mathrm{ab}}/K(E[p^{\infty}]))_{\chi_E}(\lambda_E^{-1})\arrow{d}{\text{transfer}}\\
     & \mathbb{U}_{A_0',\chi_{A_0'}}^1(\lambda_{A_0'}^{-1})_0 \hspace{.5cm}\arrow{r}{\mathrm{rec}_{A_0'}} \arrow{d}{\mathrm{Nm}_{\frak{f}_0',\chi_E}}& \hspace{.5cm}\mathrm{Gal}(K^{\mathrm{ab}}/K(\frak{f}_0')(A_0'[\frak{f}_0'p^{\infty}]))_{\chi_{A_0'}}(\lambda_{A_0'}^{-1})_0 \arrow[hook]{d} \\
      & \mathbb{U}_{\chi_E}^1(\lambda_E^{-1}) \hspace{.5cm}\arrow{r}{\mathrm{rec}_E}  & \hspace{.5cm} \mathrm{Gal}(K^{\mathrm{ab}}/K(E[p^{\infty}]))_{\chi_E}(\lambda_E^{-1})
     \end{tikzcd}.
\end{equation}

\item Moreover, each arrow in the sequence of morphisms given by the sequence of left vertical arrows in both diagrams
\begin{equation}\label{norminclusionsequence}\mathbb{U}_{\chi_E}^1(\lambda_E^{-1}) \xrightarrow{i_{\frak{f}_0',\chi_E}} \mathbb{U}_{A_0',\chi_{A_0'}}^1(\lambda_{A_0'}^{-1}) \xrightarrow{\mathrm{Nm}} \mathbb{U}_{\chi_E}^1(\lambda_E^{-1})
\end{equation}
is an isomorphism of $\Lambda_{\mathcal{O}_{K_p}}[1/p]$-modules. Since $i_{\frak{f}_0',\chi_E}$ is induced by the natural inclusion 
$$i_{\frak{f}_0'} : \mathbb{U}^1 \hookrightarrow \mathbb{U}^1(\frak{f}_0') \overset{(\ref{f'equal})}{=} \mathbb{U}_{A_0'}^1$$ from (\ref{UEinclusion}), then we get a natural identification of $\Lambda_{\mathcal{O}_{K_p}}[1/p]$-modules
\begin{equation}\label{norminclusionidentification}\mathbb{U}_{\chi_E}^1(\lambda_E^{-1}) = \mathbb{U}_{A_0',\chi_{A_0'}}^1(\lambda_{A_0'}^{-1}).
\end{equation}
\end{enumerate}
\end{proposition}

\begin{proof}First we show (1) and (2). Observe that the composition of the left vertical arrows in both diagrams is multiplication by a constant in $K_p^{\times}$; this implies that the top left vertical arrows in (\ref{reciprocitydiagram}) and (\ref{reciprocitydiagram2}) are injective. Now the commutativity of both diagrams follows immediately from the restriction/corestriction functorialities of the local and Artin reciprocity maps and (\ref{sameCMcharacter2}).

Now we show (3). Recall that by Proposition \ref{ranksproposition}, $\mathbb{U}_{\chi_E}^1(\lambda_E^{-1}) \cong \Lambda_{\mathcal{O}_{K_p}}[1/p]^{\oplus 2}$. By \cite[Corollary 7.8, Lemma 11.8]{RubinMC} we also have $\mathbb{U}_{\chi_{A_0'}}^1(\lambda_{A_0'}^{-1}) \cong \Lambda_{\mathcal{O}_{K_p}}[1/p]^{\oplus 2}$. In particular, each arrow of (\ref{norminclusionsequence}) has a well-defined $\Lambda_{\mathcal{O}_{K_p}}[1/p]$-module determinant after fixing $\Lambda_{\mathcal{O}_{K_p}}[1/p]$-bases of the sources and targets. Since the composition of the arrows in (\ref{norminclusionsequence}) is multiplication by an element in $K_p^{\times}$, the product of the determinants of each arrow of (\ref{norminclusionsequence}) is an element in $K_p^{\times}$. This implies that the determinant of each arrow is a unit in $\Lambda_{\mathcal{O}_{K_p}}[1/p]^{\times}$, and thus each arrow is an isomorphism of $\Lambda_{\mathcal{O}_{K_p}}[1/p]$-modules. Now (\ref{norminclusionidentification}) immediately follows. 

\end{proof}

We will need the following Lemma in the next section. First, recall that $E/\mathbb{Q}$ is an elliptic curve with CM by $\mathcal{O}_K$, with associated infinity type $(1,0)$ Hecke character $\lambda_E$ of conductor $\frak{f} = \frak{f}_0\frak{p}^e$ (\ref{edefinition}) where $(\frak{f}_0,\frak{p}) = 1$, and suppose $\frak{f}_0' \subset \mathcal{O}_K$ is an ideal with $(\frak{f}_0',\frak{p}) = 1$, $\frak{f}_0|\frak{f}_0'$ and $w_{\frak{f}_0'} = 1$. Recall our notation $\mathcal{K}_n = K(E[\frak{p}^n])$. Recall that if $n \ge e$ then $K(\frak{f}_0'\frak{p}^n) = K(E[\frak{f}_0'\frak{p}^n])$ by \cite[Proposition II.1.6]{deShalit}. Thus when $n \ge e$ we get an induced restriction map
$$\mathrm{Gal}(K(\frak{f}_0'\frak{p}^n)/K) = \mathrm{Gal}(K(E[\frak{f}_0'\frak{p}^n])/K) \twoheadrightarrow \mathrm{Gal}(\mathcal{K}_n/K).$$

From the Artin reciprocity map and the fact that $w_{\frak{f}_0'} = 1$, we have an isomorphism for any $n \in \mathbb{Z}_{\ge 0} \cup \{\infty\}$
\begin{equation}\label{rayisomorphism}(\mathcal{O}_{K_p}/\frak{p}^n\mathcal{O}_{K_p})^{\times} \times \left((\mathcal{O}_K/\frak{f}_0')^{\times}/\mathcal{O}_K^{\times}\right) \cong \mathrm{Gal}(K(\frak{f}_0'\frak{p}^n)/K)
\end{equation}
such that the second factor $\left((\mathcal{O}_K/\frak{f}_0')^{\times}/\mathcal{O}_K^{\times}\right)$ maps isomorphically onto $\mathrm{Gal}(K(\frak{f}_0')/K)$ under the restriction map $\mathrm{Gal}(K(\frak{f}_0'\frak{p}^n)/K) \twoheadrightarrow \mathrm{Gal}(K(\frak{f}_0')/K)$. Thus we get induced splittings
\begin{equation}\label{rayclasssplitting}\mathrm{Gal}(K(\frak{f}_0'\frak{p}^n)/K(\frak{f}_0')) \times \mathrm{Gal}(K(\frak{f}_0')/K) \cong \mathrm{Gal}(K(\frak{f}_0'\frak{p}^n)/K).
\end{equation}
Now consider the subgroup $\mathrm{Gal}(K(\frak{f}_0')_p/K_p) \subset \mathrm{Gal}(K(\frak{f}_0')/K)$ (where the inclusion is induced by (\ref{fixembeddings})). Since the image of this inclusion is the decomposition group of the prime of $\mathcal{O}_{K(\frak{f}_0')}$ above $\frak{p}$ fixed by (\ref{fixembeddings}), then we have that 
$$\mathrm{Gal}(K(\frak{f}_0')_p/K_p) \subset \mathrm{Gal}(K(\frak{f}_0')/K) \subset \mathrm{Gal}(K(\frak{f}_0'\frak{p}^n)/K(\frak{f}_0')) \times \mathrm{Gal}(K(\frak{f}_0')/K) \overset{(\ref{rayclasssplitting})}{\cong} \mathrm{Gal}(K(\frak{f}_0'\frak{p}^n)/K)$$
factors through
\begin{equation}\label{rayfactors}\mathrm{Gal}(K(\frak{f}_0')_p/K_p) \subset \mathrm{Gal}(K(\frak{f}_0'\frak{p}^n)_p/K_p).
\end{equation}

\begin{lemma}\label{intermediatelemma'}Suppose $n \ge e$. Then the image of the composition
\begin{equation}\label{raycomposition}\begin{split}\mathrm{Gal}(K(\frak{f}_0')_p/K_p) &\subset \mathrm{Gal}(K(\frak{f}_0')/K) \subset \mathrm{Gal}(K(\frak{f}_0'\frak{p}^n)/K(\frak{f}_0')) \times \mathrm{Gal}(K(\frak{f}_0')/K) \\
&\overset{(\ref{rayclasssplitting})}{\cong} \mathrm{Gal}(K(\frak{f}_0'\frak{p}^n)) = \mathrm{Gal}(K(E[\frak{f}_0'\frak{p}^n])/K) \twoheadrightarrow \mathrm{Gal}(\mathcal{K}_n/K)
\end{split}
\end{equation}
is trivial.

\end{lemma}

\begin{proof}By (\ref{rayfactors}), the map (\ref{raycomposition}) factors as 
$$\mathrm{Gal}(K(\frak{f}_0')_p/K_p) \rightarrow \mathrm{Gal}(\mathcal{K}_{n,p}/K_p) \subset \mathrm{Gal}(\mathcal{K}_n/K).$$ Now we claim that $\mathcal{K}_{n,p}/K_p$ is totally ramified; admitting this claim, we see that since $K(\frak{f}_0)_p/K_p$ is totally unramified, then the image of the first arrow of the previous displayed map must be trivial, which shows (\ref{raycomposition}) has trivial image. To prove the claim, view $E$ as defined over $\mathcal{O}_{K_p}$ and observe that since $E[\frak{p}^n] = \hat{E}[\frak{p}^n]$, where $\hat{E}$ is the formal group of $E$, and the formal group is the kernel of the reduction map $E \rightarrow E \pmod{\frak{p}\mathcal{O}_{K_p}}$, then the residue field of $\mathcal{K}_{n,p}$ is $\mathcal{O}_{K_p}/\frak{p}\mathcal{O}_{K_p} = \mathbb{F}_p$.

\end{proof}

\subsection{The value on elliptic units}

Recall our map $\delta$ from (\ref{delta}), which induces a map
$$\delta|_{q_{\mathrm{dR}} = 1} : \mathbb{U}_{\chi_E}^1(\lambda_E^{-1}) \rightarrow \mathbb{C}_p,$$
where as before ``$|_{q_{\mathrm{dR}} = 1}$'' denotes plugging in $q_{\mathrm{dR}} = 1$. 

\begin{proposition}[cf. Theorem 11.17 of \cite{RubinMC}]\label{babyinterpolationproposition}Let $\xi_E \in \overline{\mathcal{C}}_{\chi_E}^1$ denote the generator from (\ref{xiE}), Proposition \ref{xiEgeneratorproposition}, and let $\Omega_{\infty}(E)$ denote the N\'{e}ron period of $E/\mathbb{Q}$. We have 
\begin{equation}\label{deltascoincide}\delta_E = C_0\cdot \delta|_{q_{\mathrm{dR}} = 1}
\end{equation}
for some $C_0 \in \mathbb{C}_p^{\times}$,
\begin{equation}\label{kernelscoincide}\ker(\delta_E) = \ker(\delta|_{q_{\mathrm{dR}} = 1}),
\end{equation}
and
\begin{equation}\label{babyinterpolation}\delta_E(\xi_E) = \mathcal{L}_{\lambda_E}|_{q_{\mathrm{dR}} = 1} \cdot C_1 = \frac{L(E/\mathbb{Q},1)}{\Omega_{\infty}(E)} \cdot C_2
\end{equation}
for some constants $C_1, C_2 \in \mathbb{C}_p^{\times}$.
\end{proposition}

\begin{remark}\label{babyinterpolationremark}In fact, the equality 
$$\delta_E(\xi_E)  = \frac{L(E/\mathbb{Q},1)}{\Omega_{\infty}(E)} \cdot C_2$$
can be proven without the assumption $f_0 \ge 4$ from Assumption \ref{lambdalambdaEassumption}, see \cite[Theorem 11.17]{RubinMC}. Note that one does not need the assumption $E(K_p)[p] = 0$ as in loc. cit., since \cite[Theorem 5.1]{RubinSha} gives a general formula for computing $C_2$. 

\end{remark}

\begin{proof}[Proof of Proposition \ref{babyinterpolationproposition}]It is clear that (\ref{kernelscoincide}) follows immediately from (\ref{deltascoincide}). Recall $i_{\frak{f}_0}$ from (\ref{UEinclusion}) and $i_{\frak{f}_0,\chi_E}$ from (\ref{1ftwist}). By (\ref{reciprocitydiagram}) for $A_0' = A_0$ where $A_0$ is as in Choice \ref{FixCMdefinition}, we have that 
$$\delta_E = \alpha_1 \cdot \delta_{A_0} \circ i_{\frak{f}_0,\chi_E}$$
for some $\alpha_1 \in K_p^{\times}$. Therefore to prove (\ref{deltascoincide}) it suffices to show
$$\delta_{A_0} \circ i_{\frak{f}_0,\chi_E} = C_0' \cdot \delta_{q_{\mathrm{dR}} = 1}$$
for some $C_0' \in \mathbb{C}_p^{\times}$, and to prove
(\ref{babyinterpolation}) it suffices to show 
$$\delta_{A_0}(i_{\frak{f}_0,\chi_E}(\xi_E)) = \mathcal{L}_{\lambda_E} \cdot C_1' = L(E/\mathbb{Q},1) \cdot C_2'$$
for some $C_1', C_2' \in \mathbb{C}_p^{\times}$. Recall $w_F$ from (\ref{wF}) is the normalized invariant differential on $F = \hat{A}_0$.  

Recall the Frobenius 
\begin{equation}\label{phiinclusion}\phi \in \mathrm{Gal}(K(\frak{f}_0)_p/K_p) \subset \mathrm{Gal}(K(\frak{f}_0)/K) \overset{(\ref{rayclasssplitting})}{\hookrightarrow} \mathrm{Gal}(K(\frak{f}_0\frak{p}^{\infty})/K)
\end{equation}
(taking $n = \infty$ and $\frak{f}_0' = \frak{f}_0$ in (\ref{rayclasssplitting})). We will use (\ref{phiinclusion}) to view $\phi \in \mathrm{Gal}(K(\frak{f}_0\frak{p}^{\infty})/K)$. 
By Lemma \ref{intermediatelemma'} with $\frak{f}_0' = \frak{f}_0$, we have that $\phi \overset{(\ref{phiinclusion})}{\in} \mathrm{Gal}(K(\frak{f}_0\frak{p}^{\infty})/K)$ maps to the trivial element under $\mathrm{Gal}(K(\frak{f}_0\frak{p}^{\infty})/K) \twoheadrightarrow \mathrm{Gal}(\mathcal{K}_n/K)$. Hence for any
$$\beta \in \mathbb{U}^1 \overset{i_{\frak{f}_0}}{\hookrightarrow} \mathbb{U}_{A_0}^1$$
the element $\phi \overset{(\ref{phiinclusion})}{\in} \mathrm{Gal}(K(\frak{f}_0\frak{p}^{\infty})/K)$ satisfies
$$i_{\frak{f}_0}(\beta)^{\phi} = i_{\frak{f}_0}(\beta).$$
Thus the Coleman power series $g_{i_{\frak{f}_0}(\beta)} \in \mathcal{O}_{K(\frak{f}_0)_p}\llbracket X\rrbracket$ satisfies 
\begin{equation}\label{Colemanequality}g_{i_{\frak{f}_0}(\beta)}^{\phi}(X) = g_{i_{\frak{f}_0}(\beta)^{\phi}}([\kappa^{-1}(\phi)]_{f,\phi(f)}(X)) = g_{i_{\frak{f}_0}(\beta)}([\kappa^{-1}(\phi)]_{f,\phi(f)}(X))
\end{equation}
where 
$$f : \hat{A}_0 \rightarrow \hat{A}_0/\hat{A}_0[\varpi] = \hat{A}_0^{\phi}$$
is the natural quotient viewed as an morphism of formal groups, the 1-cocycle $\kappa$ and its image under $[\cdot]_{f,\phi(f)}$
$$\kappa : \mathrm{Gal}(K(\frak{f}_0\frak{p}^{\infty})_p/K_p) \rightarrow \mathcal{O}_{K(\frak{f}_0)_p}^{\times}, \hspace{1cm} [\kappa]_{f,\phi(f)} : \hat{A}_0 \rightarrow \hat{A}_0^{\phi}$$
are as in \cite[1.3.7 (14)]{deShalit}, and the first equality of (\ref{Colemanequality}) follows from I.3.7 (15) of op. cit. and the fact that 
$$\phi \overset{(\ref{phiinclusion})}{\in}\mathrm{Gal}(K(\frak{f}_0\frak{p}^{\infty})/K)$$
has trivial projection onto the factor $\mathrm{Gal}(K(\frak{f}_0\frak{p}^n)/K(\frak{f}_0))$ of (\ref{rayclasssplitting}). Thus for any
$$\beta \in \mathbb{U}_{\chi_E}^1(\lambda_E^{-1}) \overset{i_{\frak{f}_0,\chi_E}}{\hookrightarrow} \mathbb{U}_{A_0,\mathbf{1}_{A_0}}((\lambda_{A_0}/\chi_{A_0})^{-1}) \rightarrow \mathbb{U}_{A_0,\chi_{A_0}}^1(\lambda_{A_0}^{-1}),$$
we have 
\begin{equation}\label{nophieffect}\frac{d}{w_F}\log g_{i_{\frak{f}_0,\chi_E}(\beta)}^{\phi}(0) \overset{(\ref{Colemanequality})}{=} \frac{d}{w_F}\log g_{i_{\frak{f}_0,\chi_E}(\beta)}([\kappa^{-1}(\phi)]_{f,\phi(f)}(0)) = \kappa^{-1}(\phi)\cdot\frac{d}{w_F}\log g_{i_{\frak{f}_0,\chi_E}(\beta)}(0).
\end{equation}
Thus by \cite[Lemma I.3.3]{deShalit}, we have
\begin{equation}\label{nophieffect2}\begin{split}\frac{d}{w_F}\widetilde{\log} g_{i_{\frak{f}_0,\chi_E}(\beta)}(0) &= \frac{d}{w_F}\log g_{i_{\frak{f}_0,\chi_E}(\beta)}(0) - \frac{f'(0)}{p}\cdot\frac{d}{w_F}\log g_{i_{\frak{f}_0,\chi_E}(\beta)}^{\phi}(0) \\
&\overset{(\ref{nophieffect})}{=} \left(1 - \frac{f'(0)\kappa^{-1}(\phi)}{p}\right)\cdot\frac{d}{w_F}\log g_{i_{\frak{f}_0,\chi_E}(\beta)}(0).
\end{split}
\end{equation}

By Wiles's explicit reciprocity law (\cite[Chapter I.4]{deShalit}) applied to the formal group $F = \hat{A}_0$ (which has good reduction over $\mathcal{O}_{K(\frak{f}_0)_p}$), we have
$$\delta_{A_0}(\beta) = C_0' \cdot \frac{d}{w_F}\log g_{\beta}$$
for some constant $C_0' \in \mathbb{C}_p^{\times}$ independent of $\beta$. 
By (\ref{delta}) and (\ref{nophieffect2}), we thus have 
$$\delta_{A_0} \circ i_{\frak{f}_0,\chi_E} = C_0''\cdot \delta|_{q_{\mathrm{dR}} = 1}$$
for some constant $C_0'' \in \mathbb{C}_p^{\times}$, which gives (\ref{deltascoincide}). Moreover, one computes
\begin{align*}\begin{split}\delta_{A_0}(i_{\frak{f}_0,\chi_E}(\xi_E)) = C_3'\frac{d}{w_F}\log g_{i_{\frak{f}_0}(\xi_E)}(0) = C_4'\frac{d}{w_F}\widetilde{\log} g_{i_{\frak{f}_0}(\xi_E)}(0) \overset{(\ref{delta})}{=} C_4''\delta|_{q_{\mathrm{dR}} = 1}\delta(\xi_E) \overset{(\ref{explicitreciprocity})}{=} C_4''\mathcal{L}_{\lambda_E}|_{q_{\mathrm{dR}} = 1}
\end{split}
\end{align*}
for some constants $C_3', C_4',C_4'' \in \mathbb{C}_p^{\times}$. Here in the last line, we apply (\ref{explicitreciprocity}) with $j = 0$, noting that 
$$\lambda_E^0(\frak{a}) - \mathbb{N}\frak{a} = 1- \mathbb{N}\frak{a} \neq 0$$
since $\frak{a} \subset \mathcal{O}_K$ is a proper ideal. This gives the first equality in (\ref{babyinterpolation}).

The second equality in (\ref{babyinterpolation}) now follows from (\ref{interp1twist}).
                                                                                                                                                                                                                                                                                                                               
\end{proof}

\subsection{Description of the Selmer group via Wiles's explicit reciprocity law}Continue to retain the choice of Choice \ref{generatorchoice}. We now choose an appropriate decomposition of $\mathbb{U}_{\chi_E}^1(\lambda_E^{-1})$, as in Section \ref{rank0preview}. 

\begin{proposition}[cf. Lemma 11.9 of \cite{RubinMC}]\label{splitproposition}Suppose we are in the setting of Section \ref{Kummertwistsection}, and $A_0'$ is as in Choice \ref{A0'choice}. There exist decompositions
$$\mathbb{U}_{\chi_E}^1(\lambda_E^{-1}) = U_1 \oplus U_2, \hspace{1cm} \mathbb{U}_{A_0',\chi_{A_0'}}^1(\lambda_{A_0'}^{-1})_0 = U_{1,A_0'} \oplus U_{2,A_0'}$$
where $U_i \cong \Lambda_{\mathcal{O}_{K_p}}[1/p]$ and $U_{i,A_0'} \cong \Lambda_{\mathcal{O}_{K_p}}[1/p]$ for $i = 1,2$, and such that 
\begin{equation}\label{kernelequalities}U_2 = \mathrm{ker}(\delta_E), \hspace{1cm} U_{2,A_0'} = \mathrm{ker}(\delta_{A_0'}).
\end{equation}
\end{proposition}

\begin{proof}As recalled in the proof of Proposition \ref{reciprocitydiagramproposition}, both $\mathbb{U}_{\chi_E}^1(\lambda_E^{-1}) \cong \Lambda_{\mathcal{O}_{K_p}}[1/p]^{\oplus 2}$ and $\mathbb{U}_{\chi_{A_0'}}^1(\lambda_{A_0'}^{-1}) \cong \Lambda_{\mathcal{O}_{K_p}}[1/p]^{\oplus 2}$. Now choose an arbitrary $\Lambda_{\mathcal{O}_{K_p}}[1/p]$-basis $\beta_1, \beta_2$ of $\mathbb{U}_{\chi_E}^1(\lambda_E^{-1})$ and an arbitrary $\Lambda_{\mathcal{O}_{K_p}}[1/p]$-basis $\beta_{1,A_0'}, \beta_{2,A_0'}$ of $\mathbb{U}_{A_0',\chi_{A_0'}}^1(\lambda_{A_0'}^{-1})$. Note that the morphisms of $\Lambda_{\mathcal{O}_{K_p}}[1/p]$-modules
$$\delta_E : \mathbb{U}_{\chi_E}^1(\lambda_E^{-1}) \rightarrow K_p, \hspace{1cm} \delta_{A_0'} : \mathbb{U}_{A_0',\chi_{A_0'}}^1(\lambda_{A_0'}^{-1}) \rightarrow K_p$$
are surjective since $1/p \in \Lambda_{\mathcal{O}_{K_p}}[1/p]$. Hence 
$$\delta_E(\beta_1) = \alpha\cdot \delta_E(\beta_2), \hspace{1cm} \delta_{A_0'}(\beta_{1,A_0'}) = \alpha_{A_0'}\cdot\delta_{A_0'}(\beta_{2,A_0'})$$
for some $\alpha, \alpha_{A_0'} \in K_p$. Now let $U_1$ be the $\Lambda_{\mathcal{O}_{K_p}}[1/p]$-module generated by $\beta_1$ and $U_2$ be the $\Lambda_{\mathcal{O}_{K_p}}[1/p]$-module generated by $\beta_1 - \alpha \beta_2$, and let $U_{1,A_0'}$ be the $\Lambda_{\mathcal{O}_{K_p}}[1/p]$-module generated by $\beta_{1,A_0'}$ and $U_{2,A_0'}$ be the $\Lambda_{\mathcal{O}_{K_p}}[1/p]$-module generate by $\beta_{1,A_0'} - \alpha_{A_0'}\beta_{2,A_0'}$. Then $\mathbb{U}_{\chi_E}^1(\lambda_E^{-1}) = U_1 \oplus U_2$ and $\mathbb{U}_{A_0',\chi_{A_0'}}^1(\lambda_{A_0'}^{-1})_0 = U_{1,A_0'} \oplus U_{2,A_0'}$ and these decompositions satisfy (\ref{kernelequalities}). 

\end{proof}

\begin{corollary}The identification $\mathbb{U}_{\chi_E}^1(\lambda_E^{-1}) = \mathbb{U}_{A_0',\chi_{A_0'}}^1(\lambda_{A_0'}^{-1})$ from (\ref{norminclusionidentification}) induces identifications of $\Lambda_{\mathcal{O}_{K_p}}[1/p]$-modules
\begin{equation}\label{kernelidentity}\ker(\delta_E) = \ker(\delta_{A_0'}), \hspace{1cm} \ker(\mathrm{rec}_E) =\ker(\mathrm{rec}_{A_0'}).
\end{equation}
\end{corollary}

\begin{proof}By the commutativity of (\ref{reciprocitydiagram}), the sequence of maps (\ref{norminclusionsequence}) induces a sequence of maps of $\Lambda_{\mathcal{O}_{K_p}}[1/p]$-modules
$$\ker(\delta_E) \xrightarrow{i_{\frak{f}_0',\chi_E}} \ker(\delta_{A_0'}) \xrightarrow{\mathrm{Nm}_{\frak{f}_0',\chi_E}} \ker(\delta_E)$$
whose composition is multiplication by an element in $K_p^{\times}$. Since all the $\Lambda_{\mathcal{O}_{K_p}}[1/p]$-modules appearing in this sequence of maps are free of rank 1, the determinants of all these maps of $\Lambda_{\mathcal{O}_{K_p}}[1/p]$ is well-defined after fixing $\Lambda_{\mathcal{O}_{K_p}}[1/p]$-bases of the sources and targets. The product of the determinants of these maps is a constant in $K_p^{\times}$, and hence each determinant must lie in $\Lambda_{\mathcal{O}_{K_p}}[1/p]^{\times}$ which implies all the maps are isomorphisms. Under (\ref{norminclusionidentification}), this gives the first equality of (\ref{kernelidentity}). 

Similarly, by the commutativity of (\ref{reciprocitydiagram}), we get a sequence of maps of $\Lambda_{\mathcal{O}_{K_p}}[1/p]$-modules
\begin{equation}\label{kernelintermediate}\ker(\mathrm{rec}_E) \xrightarrow{i_{\frak{f}_0',\chi_E}} \ker(\mathrm{rec}_{A_0'}) \xrightarrow{\mathrm{Nm}_{\frak{f}_0',\chi_E}} \ker(\mathrm{rec}_E)
\end{equation}
whose composition is multiplication by an element $\alpha' \in K_p^{\times}$; in particular, the first arrow $i_{\frak{f}_0',\chi_E}$ is injective. We will show that $i_{\frak{f}_0,',\chi_E}$ is also surjective. 

Suppose $\beta \in \ker(\mathrm{rec}_{A_0'})$ and assume for sake of contradiction that 
$$\beta \not\in i_{\frak{f}_0',\chi_E}(\ker(\mathrm{rec}_E)).$$
Observe $\ker(\mathrm{rec}_E) = \overline{\mathcal{E}}_{\chi_E}^1(\lambda_E^{-1})$ is a torsion-free $\Lambda_{\mathcal{O}_{K_p}}[1/p]$-module of rank 1 by Proposition \ref{ranksproposition}. Moreover, \cite[Corollary 7.8, Lemma 11.8]{RubinMC} also implies that $\ker(\mathrm{rec}_{A_0'})= \overline{\mathcal{E}}_{A_0',\chi_{A_0'}}^1(\lambda_{A_0'}^{-1})$ is a torsion-free $\Lambda_{\mathcal{O}_{K_p}}[1/p]$-module of rank 1. Thus $\ker(\mathrm{rec}_{A_0'})/i_{\frak{f}_0',\chi_E}(\ker(\mathrm{rec}_E))$ is torsion. Hence there exists a non-zero divisor $\lambda \in \Lambda_{\mathcal{O}_{K_p}}[1/p]$ (which just means $\lambda \neq 0$ since $\Lambda_{\mathcal{O}_{K_p}} \cong \mathcal{O}_{K_p}\llbracket X\rrbracket [1/p]$ is an integral domain) such that 
$$\lambda\cdot\beta \in i_{\frak{f}_0',\chi_E}(\ker(\mathrm{rec}_E)).$$
Applying $\mathrm{Nm}_{\frak{f}_0',\chi_E}$ from (\ref{kernelintermediate}), we thus get an equality of elements of $\ker(\mathrm{rec}_E)$
$$\lambda \cdot \mathrm{Nm}_{\frak{f}_0',\chi_E}(\beta) = \mathrm{Nm}_{\frak{f}_0',\chi_E}(\lambda\cdot\beta) = \alpha'\lambda \cdot \beta,$$
which implies 
$$\lambda\cdot\mathrm{Nm}_{\frak{f}_0',\chi_E}(\beta) - \alpha'\lambda\cdot\beta = 0.$$
Applying $i_{\frak{f}_0',\chi_E}$ from (\ref{kernelintermediate}) and observing that $i_{\frak{f}_0',\chi_E}(\lambda \cdot \beta) = \lambda \cdot \beta$, we then get
\begin{align*}\lambda\cdot \left(i_{\frak{f}_0',\chi_E}(\mathrm{Nm}_{\frak{f}_0',\chi_E}(\beta)) - \alpha'\cdot \beta\right) &= \lambda\cdot i_{\frak{f}_0',\chi_E}(\mathrm{Nm}_{\frak{f}_0',\chi_E}(\beta)) - \alpha' \cdot i_{\frak{f}_0',\chi_E}(\lambda\cdot\beta) \\
&= i_{\frak{f}_0',\chi_E}(\lambda\cdot\mathrm{Nm}_{\frak{f}_0',\chi_E}(\beta) - \alpha'\lambda\cdot\beta) = 0
\end{align*}
in $\ker(\mathrm{rec}_{A_0'})$. Since $\ker(\mathrm{rec}_{A_0'})$ is torsion-free and $\lambda \neq 0$, we thus have
$$\alpha'\cdot \beta = i_{\frak{f}_0',\chi_E}(\mathrm{Nm}_{\frak{f}_0',\chi_E}(\beta)) \in i_{\frak{f}_0',\chi_E}(\ker(\mathrm{rec}_E)).$$
Dividing by $\alpha' \in K_p^{\times}$ we thus get 
$$\beta = \alpha'^{-1} \cdot  i_{\frak{f}_0',\chi_E}(\mathrm{Nm}_{\frak{f}_0',\chi_E}(\beta)) \in i_{\frak{f}_0',\chi_E}(\ker(\mathrm{rec}_E)),$$
a contradiction. Thus we have shown that the first arrow of (\ref{kernelintermediate}) is an isomorphism, which under (\ref{norminclusionidentification}) gives the second equality of (\ref{kernelidentity}).


\end{proof}

\begin{proposition}[cf. Proposition 11.10 of \cite{RubinMC}]Let 
$$\delta W(\lambda_E) := \{\text{maps} \; u \mapsto \delta_E(u)v : v \in K_p/\mathcal{O}_{K_p}\}.$$
In the setting of (\ref{Selmergroupdiagram}), we have 
\begin{equation}\label{keyreciprocityequality2}\mathrm{im}(\varphi) =  \delta W(\lambda_E) = \mathrm{Hom}(\mathbb{U}_{\chi_E}^1(\lambda_E^{-1})/\ker(\delta_E),K_p/\mathcal{O}_{K_p})^{\mathrm{Gal}(\mathcal{K}_{\infty}/K)}.
\end{equation}
\end{proposition}

\begin{proof}From the definitions we have
\begin{equation}\label{preliminaryinclusions}\mathrm{im}(\varphi) =  \delta W(\lambda_E) \subset \mathrm{Hom}(\mathbb{U}_{\chi_E}^1(\lambda_E^{-1})/\ker(\delta_E),K_p/\mathcal{O}_{K_p})^{\mathrm{Gal}(\mathcal{K}_{\infty}/K)}
\end{equation}
As abelian groups, we have
$$\mathrm{im}(\varphi) \cong \left(E(K_p)\otimes_{\mathcal{O}_K}K_p/\mathcal{O}_{K_p}\right)_{\mathrm{div}} \cong K_p/\mathcal{O}_{K_p},$$
$$\delta W(\lambda_E) \cong K_p/\mathcal{O}_{K_p}$$
and
\begin{align*}&\mathrm{Hom}(\mathbb{U}_{\chi_E}^1(\lambda_E^{-1})/\ker(\delta_E),K_p/\mathcal{O}_{K_p})^{\mathrm{Gal}(\mathcal{K}_{\infty}/K)} \cong \mathrm{Hom}(\mathcal{O}_{K_p},K_p/\mathcal{O}_{K_p}) = K_p/\mathcal{O}_{K_p}.
\end{align*}
So all terms of (\ref{preliminaryinclusions}) are isomorphic to $K_p/\mathcal{O}_{K_p}$, and hence all the inclusions are actually equalities.

\end{proof}

A key consequence is the following.

\begin{theorem}\label{Selmertheorem}There is a natural identification
\begin{equation}\label{Selmergroupcharacterization}S(\lambda_E)_{\mathrm{div}} = \mathrm{Hom}(\mathcal{X}_{\chi_E}(\lambda_E^{-1})/\mathrm{rec}(\ker(\delta_E)),K_p/\mathcal{O}_{K_p})^{\mathrm{Gal}(\mathcal{K}_{\infty}/K)}.
\end{equation}
Hence,
\begin{equation}\label{Selmergroupcharacterizationorder}\#S(\lambda_E)_{\mathrm{div}} = \#\mathrm{Hom}(\mathcal{X}_{\chi_E}(\lambda_E^{-1})/\mathrm{rec}(\ker(\delta_E)),K_p/\mathcal{O}_{K_p})^{\mathrm{Gal}(\mathcal{K}_{\infty}/K)}.
\end{equation}
\end{theorem}

\begin{proof}
This follows immediately from (\ref{keyreciprocityequality2}) and (\ref{Selmercharacterization}).
\end{proof}

\begin{convention}Given $\alpha, \beta \in \mathbb{Q} \cup \{\infty\}$, we will let 
$$\alpha \sim \beta$$
denote the relation
$$\alpha < \infty \hspace{1cm} \text{if and only if} \hspace{1cm} \beta < \infty.$$
\end{convention}

\begin{proposition}[cf. Theorem 11.16 of \cite{RubinMC}]We have
\begin{equation}\label{Selmerorderequality}\begin{split}&\#(\mathrm{Hom}(\mathcal{X}_{\chi_E}(\lambda_E^{-1})/\mathrm{rec}(\ker(\delta_E)),K_p/\mathcal{O}_{K_p})^{\mathrm{Gal}(\mathcal{K}_{\infty}/K)}) \\
&\hspace{4cm}\sim \#(\mathrm{Hom}(\mathbb{U}_{\chi_E}^1(\lambda_E^{-1})/(\overline{\mathcal{C}}_{\chi_E}^1(\lambda_E^{-1}),\ker(\delta_E)),K_p/\mathcal{O}_{K_p})^{\mathrm{Gal}(\mathcal{K}_{\infty}/K)}).
\end{split}
\end{equation}
\end{proposition}

\begin{proof}This follows from the same arguments as in \cite[proof of Theorem 11.16]{RubinMC}, using the Rubin-type Main Conjecture (Theorem \ref{RMC}), Lemma 6.2 of op. cit. (tensored with $\otimes_{\mathbb{Z}_p}\mathbb{Q}_p$) and the fact that $\mathcal{X}$ has no nonzero pseudonull submodules by \cite[Theorem 2]{GreenbergRank} (see also \cite[Theorem 5.3 (v) and Lemma 11.12]{RubinMC}).
\end{proof}





\subsection{The rank 0 $p$-converse theorem for CM elliptic curves with $p$ ramified in the CM field}

Theorem \ref{Selmertheorem} gives us the rank 0 $p$-converse theorem. In fact, we prove a slightly stronger statement, showing that both (\ref{GZKimplication}) and (\ref{pconverseimplication}) hold for $r = 0$ (in particular recovering part of the main result of \cite{CoatesWiles}).


\begin{theorem}[Rank 0 $p$-Converse Theorem]\label{BSDrank0theorem}Suppose $E/\mathbb{Q}$ is an elliptic curve with CM by $\mathcal{O}_K$ and $p$ is ramified in $K$. Then 
$$r_p(E/\mathbb{Q}) := \mathrm{corank}_{\mathbb{Z}_p}\mathrm{Sel}_{p^{\infty}}(E/\mathbb{Q}) = 0 \iff r_{\mathrm{an}}(E/\mathbb{Q}) := \mathrm{ord}_{s = 1}L(E/\mathbb{Q},s) = 0.$$

\end{theorem}

\begin{proof}Recall that $f_0^2$ (where $f_0 \in \mathbb{Z}_{> 0}$ is as in Assumption \ref{pconductorassumption} and Example \ref{Eexample}) is the prime-to-$p$ part of the conductor of $E/\mathbb{Q}$ (\cite[Proposition 3.13]{BDP2}). When $f_0 \le 3$, there are finitely many isomorphism classes of elliptic curves for which to check the assertion. Namely, for $p = 2$ all elliptic curves with CM by $\mathbb{Z}[i]$ or $\mathbb{Z}[\sqrt{-2}]$ of conductor $\le 3^2\cdot 2^8 = 2304$, $p = 3$ all elliptic curves with CM by $\mathbb{Z}[\zeta_3]$ of conductor $\le 2^2 \cdot 3^5 = 972$, when $p = 7$ all elliptic curves with CM by $\mathcal{O}_{\mathbb{Q}(\sqrt{-7})}$ of conductor $\le 3^2\cdot 7^2 = 441$, when $p = 11$ all elliptic curves with CM by $\mathcal{O}_{\mathbb{Q}(\sqrt{-11})}$ of conductor $\le 3^2\cdot 11^2 = 1089$, when $p = 19$ all elliptic curves with CM by $\mathcal{O}_{\mathbb{Q}(\sqrt{-19})}$ of conductor $\le 3^2\cdot 19^2 = 3249$, when $p = 43$ all elliptic curves with CM by $\mathcal{O}_{\mathbb{Q}(\sqrt{-43})}$ of conductor $\le 3^2\cdot 43^2 = 16641$, $p = 67$ all elliptic curves with CM by $\mathcal{O}_{\mathbb{Q}(\sqrt{-67})}$ of conductor $\le 3^2\cdot 67^2 = 40401$, and $p = 163$ all elliptic curves with CM by $\mathcal{O}_{\mathbb{Q}(\sqrt{-163})}$ of conductor $\le 3^2\cdot 163^2 = 239121$. The assertion in all of these cases has been verified by the LMFDB Collaboration, \cite{LMFDB}, which calculates the Selmer, algebraic and analytic ranks of all elliptic curves over $\mathbb{Q}$ up to conductor 500000.

Now assume $f_0 \ge 4$. We have
\begin{align*}\#S(\lambda_E)_{\mathrm{div}} &\overset{(\ref{Selmergroupcharacterizationorder})}{=} \#(\mathrm{Hom}(\mathcal{X}_{\chi_E}(\lambda_E^{-1})/(\mathrm{rec}(\mathrm{ker}(\delta_E))),K_p/\mathcal{O}_{K_p})^{\mathrm{Gal}(\mathcal{K}_{\infty}/K)}) \\
&\overset{(\ref{Selmerorderequality})}{\sim}\#(\mathrm{Hom}(\mathbb{U}_{\chi_E}^1(\lambda_E^{-1})/(\ker(\delta_E),\overline{\mathcal{C}}_{\chi_E}^1(\lambda_E^{-1})),K_p/\mathcal{O}_{K_p})^{\mathrm{Gal}(\mathcal{K}_{\infty}/K)}).
\end{align*}

Recall $\mathbb{U}_{\chi_E}^1(\lambda_E^{-1}) = U_1 \oplus U_2$ from Proposition \ref{splitproposition}, where $U_i \cong \Lambda_{\mathcal{O}_{K_p}}[1/p]$ for $i = 1,2$ and $U_2 = \mathrm{ker}(\delta_E)$, and let $\mathrm{pr}_i : \mathbb{U}_{\chi_E}^1(\lambda_E^{-1}) \rightarrow U_i$ be the natural projection. Recall also the $\Lambda_{\mathcal{O}_{K_p}}[1/p]$-equivariant map 
$$\delta_E : \mathbb{U}_{\chi_E}^1(\lambda_E^{-1}) \rightarrow K_p$$
from Definition \ref{deltaEdefinition} (where $\Lambda_{\mathcal{O}_{K_p}}[1/p]$ acts on the target $K_p$ through $\Lambda_{\mathcal{O}_{K_p}}[1/p] \twoheadrightarrow \Lambda_{\mathcal{O}_{K_p}}[1/p]/(\Gamma_K-1) = K_p$), which is surjective since $1/p \in \Lambda_{\mathcal{O}_{K_p}}[1/p]$. Consider the composition 
$$U_1\overset{\delta_E}{\twoheadrightarrow} K_p \twoheadrightarrow K_p/\left(C_2 \cdot L(E/\mathbb{Q},1)/\Omega_{\infty}(E)\cdot K_p\right).$$
By (\ref{babyinterpolation}) and Proposition \ref{xiEgeneratorproposition}, this factors through a $\Lambda_{\mathcal{O}_{K_p}}[1/p]$-equivariant map
$$U_1/\mathrm{pr}_1(\overline{\mathcal{C}}_{\chi_E}^1(\lambda_E^{-1})) = U_1/(\Lambda_{\mathcal{O}_{K_p}}[1/p]\cdot \mathrm{pr}_1(\xi_E)) \twoheadrightarrow K_p/\left(C_2 \cdot L(E/\mathbb{Q},1)/\Omega_{\infty}(E)\cdot K_p\right),$$
where $\Lambda_{\mathcal{O}_{K_p}}[1/p]$ acts on the target $K_p$ through $\Lambda_{\mathcal{O}_{K_p}}[1/p] \twoheadrightarrow \Lambda_{\mathcal{O}_{K_p}}[1/p]/(\Gamma_K-1) = K_p$. By the $\Lambda_{\mathcal{O}_{K_p}}[1/p]$-equivariance, the fact that $\Gamma_K$ acts trivially on the target, and $U_1 \cong \Lambda_{\mathcal{O}_{K_p}}[1/p]$, we thus see that the previous map induces an isomorphism of $K_p$-vector spaces
$$\left(U_1/\mathrm{pr}_1(\overline{\mathcal{C}}_{\chi_E}^1(\lambda_E^{-1}))\right)/(\Gamma_K-1) \xrightarrow{\sim} K_p/\left(C_2 \cdot L(E/\mathbb{Q},1)/\Omega_{\infty}(E)\cdot K_p\right)$$
(where the left-hand side uses the notation of Definition \ref{Lambdadefinition}). From this we see that 
\begin{equation}\label{descentLvalue}\left(U_1/\mathrm{pr}_1(\overline{\mathcal{C}}_{\chi_E}^1(\lambda_E^{-1}))\right)/(\Gamma_K-1) \cong \begin{cases} 0 & L(E/\mathbb{Q},1) \neq 0\\
K_p & L(E/\mathbb{Q},1) = 0\\
\end{cases}.
\end{equation}

Now we have
\begin{align*}&\#(\mathrm{Hom}((\mathbb{U}_{\chi_E}^1(\lambda_E^{-1})/(\ker(\delta_E),\overline{\mathcal{C}}_{\chi_E}^1(\lambda_E^{-1})),K_p/\mathcal{O}_{K_p})^{\mathrm{Gal}(\mathcal{K}_{\infty}/K)})) \\
&\overset{(\ref{kernelequalities})}{=} \#(\mathrm{Hom}(U_1/\mathrm{pr}_1(\overline{\mathcal{C}}_{\chi_E}^1(\lambda_E^{-1})),K_p/\mathcal{O}_{K_p})^{\mathrm{Gal}(\mathcal{K}_{\infty}/K)})\\
&= \#\left(\mathrm{Hom}\left(\left(U_1/\mathrm{pr}_1(\overline{\mathcal{C}}_{\chi_E}^1(\lambda_E^{-1}))\right)/(\Gamma_K-1),K_p/\mathcal{O}_{K_p}\right)\right)\overset{(\ref{descentLvalue})}{=} \begin{cases} 1 & L(E/\mathbb{Q},1) \neq 0\\
\infty & L(E/\mathbb{Q},1) = 0\\
\end{cases}.
\end{align*}
Hence in all we have 
$$\#S(\lambda_E)_{\mathrm{div}} \sim \begin{cases} 1 & L(E/\mathbb{Q},1) \neq 0\\
\infty & L(E/\mathbb{Q},1) = 0\\
\end{cases}.$$
Now the assertion immediately follows since 
$$S(\lambda_E) = \mathrm{Sel}_{p^{\infty}}(E/\mathbb{Q}) \otimes_{\mathbb{Z}_p}\mathcal{O}_{K_p}$$
by Shapiro's lemma. For this last step, note that since $E/\mathbb{Q}$ has CM by $\mathcal{O}_K$, the $\mathcal{O}_{K_p}[\mathrm{Gal}(\overline{\mathbb{Q}}/\mathbb{Q})]$-module $E[p^{\infty}] \otimes_{\mathbb{Z}_p}\mathcal{O}_{K_p}$ is the induced module from the $\mathcal{O}_{K_p}[\mathrm{Gal}(\overline{K}/K)]$-module $W(\lambda_E)$, i.e. we have an isomorphism of $\mathcal{O}_{K_p}[\mathrm{Gal}(\overline{\mathbb{Q}}/\mathbb{Q})]$-modules
\begin{equation}\label{inducedformula}E[p^{\infty}] \otimes_{\mathbb{Z}_p}\mathcal{O}_{K_p} \cong W(\lambda_E) \otimes_{\mathcal{O}_{K_p}[\mathrm{Gal}(\overline{K}/K)]}\mathcal{O}_{K_p}[\mathrm{Gal}(\overline{\mathbb{Q}}/\mathbb{Q})].
\end{equation}
(Here, $\mathrm{Gal}(\overline{\mathbb{Q}}/\mathbb{Q})$ acts trivially on $\mathcal{O}_{K_p}$ on the left-hand side.)

\end{proof}


\section{Rank 1 $p$-Converse Theorem}\label{rank1section}

\subsection{The rank 1 $p$-converse theorem for CM elliptic curves with $p$ ramified in the CM field}

Recall $E/\mathbb{Q}$ is an elliptic curve with CM by $\mathcal{O}_K$ as in Choice \ref{Echoice}, where $K$ is an imaginary quadratic fields of class number 1. We continue to assume that $p$ is the unique finite prime ramified in $K/\mathbb{Q}$. We recall the definition of the usual and relaxed Selmer groups for $E/\mathbb{Q}$.

\begin{definition}\label{unramifiedrelaxedSelmerdefinition}
\begin{enumerate}
\item For each place $\ell$ of $\mathbb{Q}$, fix an algebraic closure $\overline{\mathbb{Q}}_{\ell}$ of the $\ell$-adic completion $\mathbb{Q}_{\ell}$ and fix an embedding $\overline{\mathbb{Q}} \subset \overline{\mathbb{Q}}_{\ell}$ (for $\ell = p$, we continue to use (\ref{fixembeddings})). Let $G_{\ell} = \mathrm{Gal}(\overline{\mathbb{Q}}_{\ell}/\mathbb{Q}_{\ell})$ be the corresponding decomposition group.
\item Recall the notation of Convention \ref{groupcohomologyconvention}. Define for all $n \in \mathbb{Z}_{\ge 0} \cup \{\infty\}$,
\begin{align*}\mathrm{Sel}_{p^n}(E/\mathbb{Q}) := \ker\left(\prod_{\ell} \mathrm{res}_{\ell}' : H^1(\mathbb{Q},E[p^n]) \rightarrow \prod_{\ell} H^1(G_{\ell},E)\right)
\end{align*}
and
$$\mathrm{Sel}_{p^n}^{\mathrm{rel}}(E/\mathbb{Q}) := \ker\left(\prod_{\ell \neq p} \mathrm{res}_{\ell}' : H^1(\mathbb{Q},E[p^n]) \rightarrow \prod_{\ell \neq p} H^1(G_{\ell},E)\right).
$$
Here, the maps $\mathrm{res}_{\ell}' : H^1(\mathbb{Q},E[p^n]) \rightarrow H^1(G_{\ell},E)$ are defined to be the compositions
\begin{equation}\label{Kummerdefinecomposition}H^1(\mathbb{Q},E[p^n]) \xrightarrow{\mathrm{res}_{\ell}} H^1(G_{\ell},E[p^n]) \rightarrow H^1(G_{\ell},E),
\end{equation}
where 
$$\mathrm{res}_{\ell} : H^1(\mathbb{Q},E[p^n]) \rightarrow H^1(G_{\ell},E[p^n])$$ is the restriction homomorphism induced by $G_{\ell} \subset \mathrm{Gal}(\overline{\mathbb{Q}}/\mathbb{Q})$ and 
$$H^1(G_{\ell},E[p^n]) \rightarrow H^1(G_{\ell},E)$$
is induced by the long exact sequence associated with the short exact sequence of $G_{\ell}$-modules 
$$0 \rightarrow E[p^n] \rightarrow E \xrightarrow{p^n} E \rightarrow 0.$$
\end{enumerate}
\end{definition}

Immediately from Definition \ref{unramifiedrelaxedSelmerdefinition}, we have for any $n \in \mathbb{Z}_{\ge 0} \cup \{\infty\}$
\begin{equation}\label{Selmerinclusions}
\mathrm{Sel}_{p^n}(E/\mathbb{Q}) \subset \mathrm{Sel}_{p^n}^{\mathrm{rel}}(E/\mathbb{Q}).
\end{equation}

\begin{remark}\label{Kummerremark}Let 
$$\delta_{\ell,n} : E(\mathbb{Q}_{\ell}) \otimes_{\mathbb{Z}}\mathbb{Z}/p^n \rightarrow H^1(\mathbb{Q}_{\ell},E[p^n])$$
be the usual Kummer map (here, when $n = \infty$ we let $\mathbb{Z}/p^{\infty} = \mathbb{Q}_p/\mathbb{Z}_p$). Let $\mathrm{im}(\delta_{\ell,n})$ denote the image of $\delta_{\ell,n}$. Then the long exact sequence attached to the short exact sequence of $G_{\ell}$-modules 
$$0 \rightarrow E[p^n] \rightarrow E \xrightarrow{p^n} E \rightarrow 0$$
implies 
$$\mathrm{im}(\delta_{\ell,n}) = \ker\left(H^1(G_{\ell},E[p^n]) \rightarrow H^1(G_{\ell},E)\right)$$
for all places $\ell$. Therefore, (\ref{Kummerdefinecomposition}) implies 
\begin{align*}\mathrm{Sel}_{p^n}(E/\mathbb{Q}) := \ker\left(\prod_{\ell} \mathrm{res}_{\ell} : H^1(\mathbb{Q},E[p^n]) \rightarrow \prod_{\ell} H^1(G_{\ell},E[p^n])/\mathrm{im}(\delta_{\ell,n})\right)
\end{align*}
and
$$\mathrm{Sel}_{p^n}^{\mathrm{rel}}(E/\mathbb{Q}) := \ker\left(\prod_{\ell \neq p} \mathrm{res}_{\ell} : H^1(\mathbb{Q},E[p^n]) \rightarrow \prod_{\ell \neq p} H^1(G_{\ell},E[p^n])/\mathrm{im}(\delta_{\ell,n})\right).
$$

In particular, $\mathrm{Sel}_{p^n}(E/\mathbb{Q})$ is the usual $p^n$-Selmer group of $E/\mathbb{Q}$ (\cite[Chapter X.4]{Silverman}) defined by the Kummer local conditions.
\end{remark}

We also have the following alternate descriptions of the $p^{\infty}$-Selmer groups from Definition \ref{unramifiedrelaxedSelmerdefinition}. 
\begin{proposition}\label{alternatedescriptionproposition}
$$\mathrm{Sel}_{p^{\infty}}(E/\mathbb{Q}) = \ker\left(\prod_{\ell} \mathrm{res}_{\ell} : H^1(\mathbb{Q},E[p^{\infty}]) \rightarrow H^1(G_p,E[p^{\infty}])/\mathrm{im}(\delta_{p,\infty}) \times \prod_{\ell \neq p} H^1(G_{\ell},E[p^{\infty}])\right)$$
and
$$\mathrm{Sel}_{p^{\infty}}^{\mathrm{rel}}(E/\mathbb{Q}) := \ker\left(\prod_{\ell \neq p} \mathrm{res}_{\ell} : H^1(\mathbb{Q},E[p^{\infty}]) \rightarrow \prod_{\ell \neq p} H^1(G_{\ell},E[p^{\infty}])\right).
$$
\end{proposition}

\begin{proof}Since $E(\mathbb{Q}_{\ell})$ contains a pro-$\ell$ subgroup of finite index, if $\ell \neq p$ we have
$$E(\mathbb{Q}_{\ell}) \otimes_{\mathbb{Z}} \mathbb{Q}_p/\mathbb{Z}_p = 0.$$
In particular, $\mathrm{im}(\delta_{\ell,\infty})  = 0$. Now the assertion follows from the descriptions of the $p^n$-Selmer groups in Remark \ref{Kummerremark} with $n = \infty$.
\end{proof}

Recall the notation of $S(\lambda_E)$ and $S'(\rho)$ from Definition \ref{GL1Selmerdefinition}. Without loss of generality, we may choose the algebraic closures $\overline{\mathbb{Q}}_{\ell}$ and the embeddings $\overline{\mathbb{Q}} \subset \overline{\mathbb{Q}}_{\ell}$ in Definition \ref{unramifiedrelaxedSelmerdefinition} to be equal to the algebraic closures $\overline{K}_v$ and the embeddings $\overline{K} \subset \overline{K}_v$ from Definition \ref{GL1Selmerdefinition}. From (\ref{inducedformula}) and Shapiro's lemma we have that 
$$H^1(K,(K_p/\mathcal{O}_{K_p})(\lambda_E)) = H^1(\mathbb{Q},E[p^{\infty}] \otimes_{\mathbb{Z}_p}\mathcal{O}_{K_p}).$$
Thus Proposition \ref{alternatedescriptionproposition} implies 
\begin{equation}\label{Shapiro}\mathrm{Sel}_{p^{\infty}}(E/\mathbb{Q})\otimes_{\mathbb{Z}_p}\mathcal{O}_{K_p} = S(\lambda_E), \hspace{1cm} \mathrm{Sel}_{p^{\infty}}^{\mathrm{rel}}(E/\mathbb{Q})\otimes_{\mathbb{Z}_p}\mathcal{O}_{K_p} = S'(\lambda_E).
\end{equation}



We have the following descent results. Below, we will use the notation $M/(\Gamma_K - \rho(\Gamma_K))$ of Definition \ref{Lambdadefinition} (2) for $\rho = \lambda_E^{-2}$. For the remainder of this section, $\mathrm{Hom}$ will be taken in the category of $\mathcal{O}_{K_p}$-modules unless otherwise indicated. 

\begin{proposition}\label{relaxtwistproposition}
\begin{enumerate}
\item We have an isomorphism of $\mathcal{O}_{K_p}$-modules
\begin{equation}\label{relaxtwist}S'(\lambda_E) \cong S'(\lambda_E^{-1}).
\end{equation}
\item We have an isomorphism of $K_p$-vector spaces
$$\mathrm{Hom}(S'(\lambda_E^{-1})_{\mathrm{div}},K_p/\mathcal{O}_{K_p})\otimes_{\mathbb{Z}_p}\mathbb{Q}_p = \mathcal{X}_{\chi_E}(\lambda_E^{-1})/(\Gamma_K - \lambda_E^{-2}(\Gamma_K)).$$
\item We have an isomorphism of $K_p$-vector spaces
\begin{equation}\label{relaxcontrol2}\mathrm{Hom}(\mathrm{Sel}_{p^{\infty}}^{\mathrm{rel}}(E/\mathbb{Q})_{\mathrm{div}}\otimes_{\mathbb{Z}_p}\mathcal{O}_{K_p},K_p/\mathcal{O}_{K_p})\otimes_{\mathbb{Z}_p}\mathbb{Q}_p \cong \mathcal{X}_{\chi_E}(\lambda_E^{-1})/(\Gamma_K - \lambda_E^{-2}(\Gamma_K)).
\end{equation}
\end{enumerate}
\end{proposition}

\begin{proof}

\textbf{(1)}: Let $G_K = \mathrm{Gal}(\overline{K}/K)$ and for each place $v$ of $K$ let $G_v = \mathrm{Gal}(\overline{K}_v/K_v)$. Given a group $G$, let $G^{\mathrm{ab}}$ denote its maximal abelian quotient. Note that $\lambda_E : G_K \rightarrow \mathcal{O}_{K_p}^{\times}$ factors through $\lambda_E : G_K^{\mathrm{ab}} \rightarrow \mathcal{O}_{K_p}^{\times}$, and recall that in the notation of Definition \ref{GL1Selmerdefinition}, $W(\lambda_E^n)$ is the $G_K$-module $K_p/\mathcal{O}_{K_p}$ on which $G_K$ acts through $G_K \twoheadrightarrow \mathrm{Gal}(\mathcal{K}_{\infty}/K) \xrightarrow{\lambda_E^n} \mathcal{O}_{K_p}^{\times}$. Then 
$$\mathbf{i} : G_K^{\mathrm{ab}} \rightarrow G_K^{\mathrm{ab}}, \hspace{1cm} \gamma \mapsto \gamma^{-1}$$
is an isomorphism, and induces an isomorphism
\begin{equation}\label{groupcohomologyisos}H^1(K,W(\lambda_E)) = H^1(G_K^{\mathrm{ab}},W(\lambda_E)) \overset{\mathbf{i}}{\cong} H^1(G_K^{\mathrm{ab}},W(\lambda_E^{-1})) = H^1(K,W(\lambda_E^{-1})).
\end{equation}
Similarly, we get isomorphisms
\begin{align*}H^1(G_K^{\mathrm{ab}}/G_v^{\mathrm{ab}},W(\lambda_E)) &\overset{\mathbf{i}}{\cong} H^1(G_K^{\mathrm{ab}}/G_v^{\mathrm{ab}},W(\lambda_E^{-1})),\\
H^1(G_v,W(\lambda_E)) = H^1(G_v^{\mathrm{ab}},W(\lambda_E)) &\overset{\mathbf{i}}{\cong} H^1(G_v^{\mathrm{ab}},W(\lambda_E^{-1})) = H^1(G_v,W(\lambda_E^{-1})).
\end{align*}
From the inflation-restriction sequence attached to $G_v^{\mathrm{ab}} \subset G_K^{\mathrm{ab}} \rightarrow G_K^{\mathrm{ab}}/G_v^{\mathrm{ab}}$, we get identifications
\begin{equation}\label{unramifiedlocalconditionisos}\ker\left(\mathrm{res}_v : H^1(K,W(\lambda_E)) \rightarrow H^1(G_v,W(\lambda_E))\right) \overset{\mathbf{i}}{\cong} \ker\left(\mathrm{res}_v : H^1(K,W(\lambda_E^{-1})) \rightarrow H^1(G_v,W(\lambda_E^{-1}))\right)
\end{equation}
which are compatible with (\ref{groupcohomologyisos}). Now the assertion follows from Definition \ref{GL1Selmerdefinition} and (\ref{unramifiedlocalconditionisos}). 
\\

\textbf{(2)}: This follows from the same argument as that of Proposition \ref{relaxSelmertheorem}, using the fact that $\mathrm{Gal}(\overline{K}/\mathcal{K}_{\infty})$ acts trivially on $W(\lambda_E^{-1})$, and so
\begin{align*}H^1(\mathcal{K}_{\infty},W(\lambda_E^{-1})) &\cong \mathrm{Hom}(\mathrm{Gal}(\overline{K}/\mathcal{K}_{\infty}),W(\lambda_E^{-1})) = \mathrm{Hom}(\mathrm{Gal}(\overline{K}/\mathcal{K}_{\infty}),W(\lambda_E)) \otimes_{\mathcal{O}_{K_p}}\mathcal{O}_{K_p}(\lambda_E^{-2}) \\
&\hspace{-1.8cm}\overset{(\ref{Kummerinclusion})}{\supset} \mathrm{Hom}(\mathcal{X}|_{\chi_E},W(\lambda_E)) \otimes_{\mathcal{O}_{K_p}}\mathcal{O}_{K_p}(\lambda_E^{-2}) = \mathrm{Hom}(\mathcal{X}|_{\chi_E},W(\lambda_E)\otimes_{\mathcal{O}_{K_p}}\mathcal{O}_{K_p}((\lambda_E/\chi_E)^{-2})),
\end{align*}
recalling, in the last equality, that $\lambda_E/\chi_E = \lambda_E|_{\Gamma_K}$ by (\ref{chiE}). 
\\

\textbf{(3)}: This follows from $\mathrm{Sel}_{p^{\infty}}^{\mathrm{rel}}(E/\mathbb{Q}) \otimes_{\mathbb{Z}_p}\mathcal{O}_{K_p} = S'(\lambda_E)$ and parts (1) and (2). 

\end{proof}

Recall 
$$r_p(E/\mathbb{Q}) := \mathrm{corank}_{\mathbb{Z}_p}\mathrm{Sel}_{p^{\infty}}(E/\mathbb{Q}).$$

We continue to assume $E/\mathbb{Q}$ is as in Assumption \ref{lambdalambdaEassumption}, so that $f_0 \ge 4$. 


\begin{proposition}\label{kernelsubspaceproposition}Suppose $r_p(E/\mathbb{Q}) = 1$. Recall $U_0$ from (\ref{saturation}), and $\delta_E$ from Definition \ref{deltaEdefinition}. We have
\begin{equation}\label{kernelsubspace}U_0 = \ker(\delta|_{q_{\mathrm{dR}}  = 1}) \overset{(\ref{kernelscoincide})}{=} \ker(\delta_E).
\end{equation}
In particular, $U_0$ is a direct summand of $\mathbb{U}_{\chi_E}^1(\lambda_E^{-1})$. 
\end{proposition}

\begin{proof}Since $r_p(E/\mathbb{Q}) = \mathrm{corank}_{\mathbb{Z}_p}\mathrm{Sel}_{p^{\infty}}(E/\mathbb{Q}) = 1$, by the $p^{\infty}$-Selmer corank/analytic rank parity conjecture (or ``$p$-parity conjecture'', known for all elliptic curves over $\mathbb{Q}$ by \cite{Nekovar}, \cite{Kim} and \cite{DokchitserDokchitser}, see \cite{Dokchitser} for an overview), we have $w(E/\mathbb{Q}) = -1$ and so $L(E/\mathbb{Q},1) = 0$. Thus, by (\ref{babyinterpolation}) we have $\delta(\xi_E)|_{q_{\mathrm{dR}} = 1} = 0$.\footnote{We could also invoke Theorem \ref{BSDrank0theorem} to prove this.}  Hence 
$$\xi_E \in \ker(\delta|_{q_{\mathrm{dR}} = 1}). $$
By Proposition \ref{splitproposition} (or \cite[Lemma 11.9]{RubinMC}), $\ker(\delta|_{q_{\mathrm{dR}} = 1}) = \ker(\delta_E)$ is a direct summand of $\mathbb{U}_{\chi_E}^1(\lambda_E^{-1})$ (recall our $\delta|_{q_{\mathrm{dR}} = 1}$ is a multiple of $\delta_E$ by (\ref{deltascoincide}), and the latter is the twist by $\lambda_E^{-1}$ of the reciprocity map ``$\delta$'' in Proposition 11.7 of op. cit.). Thus by (\ref{saturation}) we have 
$$\ker(\delta|_{q_{\mathrm{dR}} = 1}) = \ker(\delta_E) \subset U_0.$$
Choose a decomposition $\mathbb{U}_{\chi_E}^1(\lambda_E^{-1}) = \ker(\delta_E) \oplus U'$, where $U' \cong \Lambda_{\mathcal{O}_{K_p}}[1/p]$. Then the image of $U_0 \subset \mathbb{U}_{\chi_E}^1(\lambda_E^{-1}) \twoheadrightarrow U'$ is $U_0/\ker(\delta_E)$. By Proposition \ref{freeproposition}, this latter module is $\Lambda_{\mathcal{O}_{K_p}}[1/p]$-torsion. Since $U'$ is $\Lambda_{\mathcal{O}_{K_p}}[1/p]$-torsion-free we have $U_0/\ker(\delta_E) = 0$, which gives (\ref{kernelsubspace}) and the direct summand statement. 

\end{proof}

\begin{choice}\label{U2finallyfixed}Suppose $r_p(E/\mathbb{Q}) = 1$. By Proposition \ref{kernelsubspaceproposition}, $U_0$ is a direct summand of $\mathbb{U}_{\chi_E}^1(\lambda_E^{-1})$. Henceforth, fix a choice of decomposition
\begin{equation}\label{U0U2decomposition}\mathbb{U}_{\chi_E}^1(\lambda_E^{-1}) = U_0 \oplus U_2,
\end{equation}
so that in particular $U_2 \cap \overline{\mathcal{E}}_{\chi_E}^1(\lambda_E^{-1}) = 0$, and $U_2$ satisfies the condition (\ref{globalintersect0}) of Choice \ref{Udecompositionchoice}. In particular, we may and do take $U_1 = U_0$, and so $\lambda_1 = 1$, $\lambda_2 = 0$ in the notation of Section \ref{rank1U2choicesection}.
\end{choice}

\begin{remark}\label{U2remark}We remark that $\ker(\delta_E)$ plays different roles in the rank 0 situation (see \cite[Section 11]{RubinMC}, where $\ker(\delta_E)$ is denoted by ``$U_2$'' in the notation of Lemma 11.8 of op. cit., also noting that this notation differs from our use of the notation ``$U_2$'') and in the rank 1 situation. In the former situation, it plays the role of the (Kummer) local condition defining the usual Selmer group (\cite[Chapter X.4]{Silverman}), and hence takes the role of $U_2$ in Corollary \ref{RMC}. In the latter situation, it plays the role of $U_1 = U_0$, the domain of our ``derivative'' operator $\delta'$ from (\ref{delta'}), and a different local condition $U_2$ is chosen for Corollary \ref{RMC}. 
\end{remark}



The following Theorem makes use of our $r_p(E/\mathbb{Q}) = 1$ assumption. In essence, we use this assumption and (\ref{kernelsubspace}) to identify $U_0$ with the kernel $\ker(\delta_E)$ of the reciprocity map $\delta_E$ from Definition \ref{deltaEdefinition} and thus show $\delta'(\beta_0) \neq 0$ where $\beta_0$ is our previously fixed $\Lambda_{\mathcal{O}_{K_p}}[1/p]$-basis of $U_0$ (see Choice \ref{beta0choice}). The argument proceeds via proof by contradiction:  if $\delta'(\beta_0) = 0$ then the functoriality of the reciprocity map (\ref{reciprocitydiagram}) implies $\delta'$ would be the zero map for \emph{every} elliptic curve $E'/\mathbb{Q}$ with CM by $\mathcal{O}_K$. However, by previous formulas (namely the Rubin-type formula (\ref{Heegnerpointidentity2})), this would imply that every such $E'/\mathbb{Q}$ has rank not equal to 1, which is clearly known to be false (we exhibit a list of examples known to have rank 1).

\begin{theorem}\label{beta0nonzerotheorem}Suppose $r_p(E/\mathbb{Q}) = 1$. 
\begin{enumerate}
\item Letting $\delta_E$ be as in Definition \ref{deltaEdefinition}, we have 
\begin{equation}\label{Ukerdeltainclusion}\overline{\mathcal{E}}_{\chi_E}^1(\lambda_E^{-1}) \subset \ker(\delta_E)
\end{equation}
and 
\begin{equation}\label{UE}\left(\ker(\delta_E)/\overline{\mathcal{E}}_{\chi_E}^1(\lambda_E^{-1})\right)/(\Gamma_K-\lambda_E^{-2}(\Gamma_K)) = 0.
\end{equation}
\item Moreover for $\beta_0$ as in Choice \ref{beta0choice} and $\delta'$ as in (\ref{delta'}),
\begin{equation}\label{beta0nonzero}\delta'(\beta_0)\neq 0.
\end{equation}
\end{enumerate}
\end{theorem}

\begin{proof}

Let $U_0$ be as in (\ref{saturation}), and recall the decomposition \ref{U0U2decomposition}. Since $\overline{\mathcal{E}}_{\chi_E}^1(\lambda_E^{-1})/\overline{\mathcal{C}}_{\chi_E}^1(\lambda_E^{-1})$ is $\Lambda_{\mathcal{O}_{K_p}}[1/p]$-torsion by Proposition \ref{ranksproposition}, from (\ref{saturation}) we see and the fact that $\overline{\mathcal{C}}_{\chi_E}^1(\lambda_E^{-1})$ is free we see that $\overline{\mathcal{E}}_{\chi_E}^1(\lambda_E^{-1}) \subset U_0 \overset{(\ref{kernelsubspace})}{=} \ker(\delta_E)$. This gives (\ref{Ukerdeltainclusion}).

We will prove (\ref{UE}) and (\ref{beta0nonzero}) at the same time. For the remainder of this proof, we will adopt Convention \ref{UEconvention}. Let $E'/\mathbb{Q}$ be \emph{any} elliptic curve with CM by $\mathcal{O}_K$ with $r_p(E'/\mathbb{Q}) = 1$, and let $\lambda_{E'}$ be its associated infinity type $(1,0)$ algebraic Hecke character. Let $f_0'$ be the positive integer generating $\frak{f}_{E'}^{(p)}$ (see Assumption \ref{pconductorassumption} (1)) and assume $f_0' \ge 4$. Let 
$$U_0' \subset \mathbb{U}_{E',\chi_{E'}}^1(\lambda_{E'}^{-1})$$
be as in (\ref{saturation}) with $E'$ in place of $E$, and continue to let $U_0 \subset \mathbb{U}_{E,\chi_E}^1(\lambda_E^{-1})$ be as in (\ref{saturation}) for our given $E$. Letting $\frak{f}_0' \subset \mathcal{O}_K$ be the least common multiple of $\frak{f}_E^{(p)}$ and $\frak{f}_{E'}^{(p)}$, we can find $A_0'$ as in Choice \ref{A0'choice} for both $E$ and $E'$ simultaneously. We thus have a natural identification of $\Lambda_{\mathcal{O}_{K_p}}[1/p]$-modules 
$$U_0 \overset{(\ref{kernelsubspace})}{=} \ker(\delta_E) \overset{(\ref{kernelidentity}) \; \text{for $E$}}{=}\ker(\delta_{A_0'})\overset{(\ref{kernelidentity}) \; \text{for $E'$}}{=} \ker(\delta_{E'}) \overset{(\ref{kernelsubspace})}{=} U_0'.$$
Hence $\beta_0$ induces a basis of $U_0'$. Moreover, since $\overline{\mathcal{E}}_{E,\chi_E}^1(\lambda_E^{-1})$ is the kernel of the Artin reciprocity map $\mathrm{rec}_E : \mathbb{U}_{E,\chi_E}^1(\lambda_E^{-1}) \rightarrow \mathrm{Gal}(K^{\mathrm{ab}}/K(E[p^{\infty}]))$ (here, we are briefly writing ``$\mathrm{rec}_E$'' instead of ``$\mathrm{rec}$'' as before in order to emphasize the dependence on $E$), we have a natural identification of $\Lambda_{\mathcal{O}_{K_p}}[1/p]$-modules
$$\overline{\mathcal{E}}_{E,\chi_E}^1(\lambda_E^{-1}) = \ker(\mathrm{rec}_E) \overset{(\ref{kernelidentity}) \; \text{for $E$}}{=} \ker(\mathrm{rec}_{A_0'}) \overset{(\ref{kernelidentity}) \; \text{for $E'$}}{=} \ker(\mathrm{rec}_{E'}) = \overline{\mathcal{E}}_{E',\chi_{E'}}^1(\lambda_{E'}^{-1}).$$
Write, as in (\ref{multiple2}), $\lambda_0'\beta_0 = \xi_{E'}$ for $\lambda_0' \in \Lambda_{\mathcal{O}_{K_p}}[1/p]$. 

We now prove (\ref{UE}) and (\ref{beta0nonzero}). Assume, for sake of contradiction, that 
\begin{equation}\label{possibilities}\left(\ker(\delta_E)/\overline{\mathcal{E}}_{\chi_E}^1(\lambda_E^{-1})\right)/(\Gamma_K-\lambda_E^{-2}(\Gamma_K)) \neq 0 \hspace{.25cm} \text{or} \hspace{.25cm} \delta'(\beta_0) = 0.
\end{equation}
Assume that the first case of (\ref{possibilities}) holds. Then the discussion at the end of the previous paragraph shows that 
$$\left(\ker(\delta_{E'})/\overline{\mathcal{E}}_{E',\chi_{E'}}^1(\lambda_{E'}^{-1})\right)/(\Gamma_K-\lambda_E^{-2}(\Gamma_K)) = \left(\ker(\delta_E)/\overline{\mathcal{E}}_{E,\chi_E}^1(\lambda_E^{-1})\right)/(\Gamma_K-\lambda_E^{-2}(\Gamma_K)) \neq 0.$$
Now from the surjection
$$\left(U_{0,E'}/\overline{\mathcal{C}}_{E',\chi_{E'}}^1(\lambda_{E'}^{-1})\right)/(\Gamma_K-\lambda_E^{-2}(\Gamma_K)) \twoheadrightarrow \left(\ker(\delta_{E'})/\overline{\mathcal{E}}_{E',\chi_{E'}}^1(\lambda_{E'}^{-1})\right)/(\Gamma_K-\lambda_E^{-2}(\Gamma_K)) \neq 0,$$
we see from Proposition \ref{xiEgeneratorproposition} that 
$$\lambda_0'(\lambda_E^{-2}) = \xi_{E'}(\lambda_E^{-2})/\beta_0(\lambda_E^{-2}) = \left(U_{0,E'}/\overline{\mathcal{C}}_{E',\chi_{E'}}^1/(\lambda_{E'}^{-1})\right)/(\Gamma_K-\lambda_E^{-2}(\Gamma_K)) \neq 0,$$
where $\lambda_0'(\lambda_E^{-2})$, $\beta_0(\lambda_E^{-2})$ and $\xi_{E'}(\lambda_E^{-2})$ are written in the notation of Definition \ref{characterevaluation}, and $\lambda_E$ is as in (\ref{lambdaE}). 

Thus, in either case of (\ref{possibilities}), we see that
$$0 = \lambda_0'(\lambda_E^{-2})\delta'(\beta_0) \overset{(\ref{equivariance2})}{=} \delta'(\xi_{E'}) \overset{(\ref{explicitreciprocity2})}{=} C_{E'}\cdot (\lambda_{E'}^{-1}(\frak{a}) - \mathbb{N}\frak{a})\cdot  D_1^{-1}\mathcal{L}_{\lambda_{E'}}|_{q_{\mathrm{dR}} = 1}.$$
Since $\lambda_{E'}^{-1}(\frak{a}) - \mathbb{N}\frak{a} \neq 0$ (since $\frak{a} \subset \mathcal{O}_K$ is a proper integral ideal), we have 
$$D_1^{-1}\mathcal{L}_{\lambda_{E'}}|_{q_{\mathrm{dR}} = 1} = 0.$$
Let $\lambda_{E'}$ be the infinity type $(1,0)$ Hecke character associated with $E'$, and let $\chi_0'$ be a finite order character chosen as in (\ref{chi0choice}) with respect to $\lambda = \lambda_{E'}$. Hence, by (\ref{Heegnerpointidentity2}) and the previous displayed equation, we see that the Heegner point 
$$P_{\theta_{\lambda_{E'}/\chi_0'},\chi_0'} = 0,$$
and hence by the Gross-Zagier formula for Shimura curves \cite[Theorem 1.2]{YZZ} and (\ref{analyticrankequality}), we see that $\mathrm{ord}_{s = 1}L(E'/\mathbb{Q},s) \neq 1$. 

Since $E'/\mathbb{Q}$ as in the second paragraph was arbitrary, we have 
$$\mathrm{ord}_{s = 1}L(E'/\mathbb{Q},s) \neq 1 \hspace{.25cm} \text{for \emph{any} $E'/\mathbb{Q}$ such that $E/K$ has CM by $\mathcal{O}_K$ and $r_p(E'/\mathbb{Q}) = 1$}.$$
However, this is clearly not true, as for any imaginary quadratic field $K$ of class number 1 and $p$ (the unique finite prime) ramified in $K$, 
$$\text{$\exists \; E'/\mathbb{Q}$ such that $E/K$ has CM by $\mathcal{O}_K$ and such that} \hspace{.3cm} \mathrm{ord}_{s = 1}L(E'/\mathbb{Q},s) = 1 \hspace{.3cm} \text{and} \hspace{.25cm} r_p(E'/\mathbb{Q}) = 1.$$
For example, computations by the LMFDB collaboration (\cite{LMFDB}) supply the following examples (indexed by Cremona's labelling):
\begin{enumerate}
\item For $K = \mathbb{Q}(i)$, $p = 2$, take $E' = 256b1: y^2 = x^3 - 2x$.
\item For $K = \mathbb{Q}(\sqrt{-2})$, $p = 2$, take $E' = 256a1: y^2 = x^3 + x^2 - 3x + 1$.
\item For $K = \mathbb{Q}(\sqrt{-3})$, $p = 3$, take $E' = 225a2: y^2 + y = x^3 + 1$.
\item For $K = \mathbb{Q}(\sqrt{-7})$, $p = 7$, take $E' = 441d1: y^2+xy+y=x^3-x^2-20x+46$.
\item For $K = \mathbb{Q}(\sqrt{-11})$, $p = 11$, take $E' = 121b1: y^2 + y = x^3 - x^2 - 7x + 10$.
\item For $K = \mathbb{Q}(\sqrt{-19})$, $p = 19$, take $E' = 361a1: y^2+y=x^3-38x+90$.
\item For $K = \mathbb{Q}(\sqrt{-43})$, $p = 43$, take $E' = 1849a1: y^2+y=x^3-860x+9707$.
\item For $K = \mathbb{Q}(\sqrt{-67})$, $p = 67$, take $E' = 4489a1: y^2+y=x^3-7370x+243528$.
\item For $K = \mathbb{Q}(\sqrt{-163})$ $p = 163$, take $E' = 26569a1: y^2+y=x^3-2174420x+1234136692$.
\end{enumerate}
Hence we have a contradiction, and (\ref{UE}) and (\ref{beta0nonzero}) are established.

\end{proof}

We will need the following Lemma.

\begin{lemma}\label{Selmercharacterizationtwistlemma}Suppose that $r_p(E/\mathbb{Q}) = 1$. Then
$$\left(\mathcal{X}_{\chi_E}(\lambda_E^{-1})/\mathrm{rec}\left(\ker(\delta_E)\right)\right)/(\Gamma_K-\lambda_E^{-2}(\Gamma_K))$$
is a 1-dimensional $K_p$-vector space.
\end{lemma}

\begin{proof}Throughout the proof, we will let
$$W' := W(\lambda_E)\otimes_{\mathcal{O}_{K_p}}\mathcal{O}_{K_p}((\lambda_E/\chi_E)^{-2}).$$
Analogously to (\ref{Selmergroupcharacterization}), we will show that there is an isomorphism
\begin{equation}\label{Selmergroupcharacterizationtwist}\begin{split}S(\lambda_E)_{\mathrm{div}} &\cong \mathrm{Hom}(\mathcal{X}_{\chi_E}(\lambda_E^{-1})/\mathrm{rec}(\ker(\delta_E)),W((\lambda_E/\chi_E)^{-2}))^{\mathrm{Gal}(\mathcal{K}_{\infty}/K)},
\end{split}
\end{equation}
which implies
$$\mathrm{Hom}(S(\lambda_E),K_p/\mathcal{O}_{K_p}) \otimes_{\mathcal{O}_{K_p}} K_p \cong \left(\mathcal{X}_{\chi_E}(\lambda_E^{-1})/\mathrm{rec}\left(\ker(\delta_E)\right)\right)/(\Gamma_K-\lambda_E^{-2}(\Gamma_K)). 
$$
Given this, the left-hand side of (\ref{Selmergroupcharacterizationtwist}) has $K_p$-dimension $r_p(E/\mathbb{Q}) = 1$ by (\ref{Shapiro}), which gives the assertion. 


As in the proof of Proposition \ref{relaxtwistproposition}, let $\mathbf{i} : \mathrm{Gal}(\overline{K}_p/K_p)^{\mathrm{ab}} \xrightarrow{\sim} \mathrm{Gal}(\overline{K}_p/K_p)^{\mathrm{ab}}$ be the isomorphism $\gamma \mapsto \gamma^{-1}$. We have isomorphisms
$$H^1(K_p,W(\lambda_E)) = H^1(\mathrm{Gal}(\overline{K}_p/K_p)^{\mathrm{ab}},W(\lambda_E)) \overset{\mathbf{i}}{\cong} H^1(\mathrm{Gal}(\overline{K}_p/K_p)^{\mathrm{ab}},W(\lambda_E^{-1})) = H^1(K_p,W(\lambda_E^{-1})),$$
\begin{align*}H^1(\mathcal{K}_{\infty,p},W(\lambda_E)) &= H^1(\mathrm{Gal}(\overline{K}_p/\mathcal{K}_{\infty,p})^{\mathrm{ab}},W(\lambda_E)) \\
&\overset{\mathbf{i}}{\cong} H^1(\mathrm{Gal}(\overline{K}_p/\mathcal{K}_{\infty,p})^{\mathrm{ab}},W(\lambda_E^{-1})) = H^1(\mathcal{K}_{\infty,p},W(\lambda_E^{-1})).
\end{align*}
These give induced isomorphisms
\begin{equation}\label{givesaninducedisomorphism}\begin{split}H^1(K_p,W(\lambda_E)) &\cong H^1(K_p,W(\lambda_E^{-1})),\\
H^1(\mathcal{K}_{\infty,p},W(\lambda_E))^{\mathrm{Gal}(\mathcal{K}_{\infty,p}/K_p)} &\cong H^1(\mathcal{K}_{\infty,p},W(\lambda_E^{-1}))^{\mathrm{Gal}(\mathcal{K}_{\infty,p}/K_p)}.
\end{split}
\end{equation}
From (\ref{givesaninducedisomorphism}) and (\ref{restrictionfinite}), we see that the restriction map
\begin{align*}H^1(K_p,W(\lambda_E^{-1})) \xrightarrow{\mathrm{res}}H^1(\mathcal{K}_{\infty,p},W(\lambda_E^{-1}))^{\mathrm{Gal}(\mathcal{K}_{\infty,p}/K_p)}
\end{align*}
has finite kernel and cokernel. Given a $\mathrm{Gal}(\mathcal{K}_{\infty,p}/K_p) \overset{(\ref{fixKp})}{=} \Delta_{K,p} \times \Gamma_K$-module $M$, let $M/(\Delta_{K,p}-1)$ denote the $\Delta_{K,p}$-coinvariants of $M$. By replacing $\lambda_E$ with $\lambda_E^{-1}$ in the second paragraph of the proof of Lemma \ref{restrictionlemma}, and recalling that $\lambda_E/\chi_E$ is trivial on $\Delta_{K,p} \overset{(\ref{fixKp})}{\subset} \mathrm{Gal}(\mathcal{K}_{\infty,p}/K_p)$ and $\Delta_K \overset{(\ref{fixK})}{\subset} \mathrm{Gal}(\mathcal{K}_{\infty}/K)$, we see that this restriction map induces an isomorphism
\begin{equation}\label{localHomtwist}\begin{split}H^1(K_p,W(\lambda_E^{-1}))_{\mathrm{div}} &\xrightarrow{\sim} \mathrm{Hom}(\mathcal{U}_E^1,W(\lambda_E^{-1}))^{\mathrm{Gal}(\mathcal{K}_{\infty,p}/K_p)} = \mathrm{Hom}(\mathcal{U}_E^1(\lambda_E),K_p/\mathcal{O}_{K_p})^{\mathrm{Gal}(\mathcal{K}_{\infty,p}/K_p)} \\
&= \mathrm{Hom}(\mathcal{U}_E^1(\lambda_E)/(\Delta_{K,p}-1),K_p/\mathcal{O}_{K_p})^{\mathrm{Gal}(\mathcal{K}_{\infty,p}/K_p)} \\
&= \mathrm{Hom}(\mathcal{U}_{E,\chi_E}^1(\lambda_E^{-1}) \otimes_{\mathcal{O}_{K_p}}\mathcal{O}_{K_p}((\lambda_E/\chi_E)^2),K_p/\mathcal{O}_{K_p})^{\mathrm{Gal}(\mathcal{K}_{\infty,p}/K_p)}\\
&= \mathrm{Hom}(\mathcal{U}_{E,\chi_E}^1,W')^{\mathrm{Gal}(\mathcal{K}_{\infty,p}/K_p)} \overset{(\ref{chiquotientiso})}{\cong} \mathrm{Hom}(\mathbb{U}_{\chi_E}^1,W')^{\mathrm{Gal}(\mathcal{K}_{\infty}/K)}.
\end{split}
\end{equation}
(Here in the last isomorphism, we use the fact that $\mathcal{U}_{E,\chi_E}^1 = \mathcal{U}_E^1|_{\chi_E} \otimes_{\mathbb{Z}_p}\mathbb{Q}_p\overset{(\ref{chiquotientiso})}{\cong} \mathbb{U}^1|_{\chi_E}\otimes_{\mathbb{Z}_p}\mathbb{Q}_p = \mathbb{U}_{\chi_E}^1$, see Definition \ref{slashisotypicdefinition} (3) and (5).)

Precomposing (\ref{localHomtwist}) with the first isomorphism in (\ref{givesaninducedisomorphism}), we get a diagram analogous to (\ref{Selmergroupdiagram})
\begin{equation}\label{Selmergroupdiagramtwist}
\hspace{-.75cm}\begin{tikzcd}[column sep =small]
     &   &  &  \mathrm{Hom}(\mathcal{X}_{\chi_E},W')^{\mathrm{Gal}(\mathcal{K}_{\infty}/K)}\arrow{d} \arrow{dr}{} & \\
     & 0 \arrow{r} & \left(E(K_p) \otimes_{\mathcal{O}_K}K_p/\mathcal{O}_{K_p}\right)_{\mathrm{div}}\arrow{r}{} \arrow{dr}{\varphi'}& H^1(K_p,W(\lambda_E))_{\mathrm{div}} \arrow{d}{\cong} \arrow{r}& \left(H^1(K_p,E)[\frak{p}^{\infty}]\right)_{\mathrm{div}}\arrow{r}& 0\\
     & & & \mathrm{Hom}(\mathbb{U}_{\chi_E}^1,W')^{\mathrm{Gal}(\mathcal{K}_{\infty}/K)} & 
\end{tikzcd}.
\end{equation}
As in the discussion preceding (\ref{Selmergroupdiagram}), we denote the map
$$\mathrm{Hom}(\mathcal{X}_{\chi_E},W') \rightarrow \mathrm{Hom}(\mathbb{U}_{\chi_E}^1,W')$$
by $f \mapsto f|_{\mathbb{U}_{\chi_E}^1}$. By Proposition \ref{relaxtwistproposition} (2), we thus have (cf. (\ref{Selmercharacterization})), 
\begin{equation}\label{Selmercharacterizationtwist}S(\lambda_E)_{\mathrm{div}} \cong \{f \in \mathrm{Hom}(\mathcal{X}_{\chi_E},W')^{\mathrm{Gal}(\mathcal{K}_{\infty}/K)} : f|_{\mathbb{U}_{\chi_E}^1} \in \mathrm{im}(\varphi')\}.
\end{equation}
By an argument analogous to the proof of (\ref{keyreciprocityequality2}), we have
\begin{equation}\label{keyreciprocityequality2twist}\mathrm{im}(\varphi') = \mathrm{Hom}(\mathbb{U}_{\chi_E}^1(\lambda_E^{-1})/\mathrm{ker}(\delta_E),W((\lambda_E/\chi_E)^{-2}))^{\mathrm{Gal}(\mathcal{K}_{\infty}/K)}.
\end{equation}
Now (\ref{Selmergroupcharacterizationtwist}) follows from (\ref{Selmercharacterizationtwist}) and  (\ref{keyreciprocityequality2twist}). 

\end{proof}

Let $\mathrm{Hom}_{\mathbb{Z}_p}$ denote homomorphisms in the category of $\mathbb{Z}_p$-modules. (Recall that throughout this section, we take $\mathrm{Hom}$ to be in the category of $\mathcal{O}_{K_p}$-modules unless otherwise noted.) We prove the following general result on the dimension of $\mathrm{Hom}_{\mathbb{Z}_p}(\mathrm{Sel}_{p^{\infty}}^{\mathrm{rel}}(E/\mathbb{Q}),\mathbb{Q}_p/\mathbb{Z}_p)\otimes_{\mathbb{Z}_p}K_p$, which is mentioned in \cite[Remark before Section 5]{RubinCompositio}. Although we still work under Assumption \ref{lambdalambdaEassumption} as we wish to invoke (\ref{babyinterpolation}) in our proof, this assumption can be removed (see Remark \ref{babyinterpolationremark}). 

\begin{proposition}\label{controlthm}Suppose $r_p(E/\mathbb{Q}) = 1$. Then 
$$\mathcal{X}_{\chi_E}(\lambda_E^{-1})/(\Gamma_K-\lambda_E^{-2}(\Gamma_K)) \overset{(\ref{relaxcontrol2})}{=} \mathrm{Hom}_{\mathbb{Z}_p}(\mathrm{Sel}_{p^{\infty}}^{\mathrm{rel}}(E/\mathbb{Q}),\mathbb{Q}_p/\mathbb{Z}_p)\otimes_{\mathbb{Z}_p}K_p$$
is a 1-dimensional $K_p$-vector space. 
\end{proposition}

\begin{proof}
Let $\delta_E$ be as in Definition \ref{deltaEdefinition}. 
 From (\ref{fundamentalEStwist}) and (\ref{Ukerdeltainclusion}) we get an exact sequence
$$0 \rightarrow \overline{\mathcal{E}}_{\chi_E}^1(\lambda_E^{-1}) \rightarrow \ker(\delta_E) \xrightarrow{\mathrm{rec}} \mathcal{X}_{\chi_E}(\lambda_E^{-1}) \rightarrow \mathcal{X}_{\chi_E}(\lambda_E^{-1})/\mathrm{rec}(\ker(\delta_E)) \rightarrow 0,$$
recalling that $\mathrm{ker}(\mathrm{rec}) = \overline{\mathcal{E}}_{\chi_E}^1(\lambda_E^{-1})$.
Reducing this exact sequence modulo $(\Gamma_K-\lambda_E^{-2}(\Gamma_K))$, we get an exact sequence of $K_p$-vector spaces
\begin{equation}\label{reducedexactsequence}\begin{split}\overline{\mathcal{E}}_{\chi_E}^1(\lambda_E^{-1})/(\Gamma_K-\lambda_E^{-2}(\Gamma_K)) &\rightarrow \ker(\delta_E)/(\Gamma_K-\lambda_E^{-2}(\Gamma_K)) \xrightarrow{\mathrm{rec}} \mathcal{X}_{\chi_E}(\lambda_E^{-1})/(\Gamma_K-\lambda_E^{-2}(\Gamma_K)) \\
&\rightarrow \left(\mathcal{X}_{\chi_E}(\lambda_E^{-1})/\mathrm{rec}\left(\ker(\delta_E)\right)\right)/(\Gamma_K-\lambda_E^{-2}(\Gamma_K)) \rightarrow 0.
\end{split}
\end{equation}
We have 
$$\left(\ker(\delta_E)/\overline{\mathcal{E}}_{\chi_E}(\lambda_E^{-1})\right)/(\Gamma_K-\lambda_E^{-2}(\Gamma_K)) \overset{(\ref{UE})}{=} 0.$$
Hence (\ref{reducedexactsequence}) induces an exact sequence 
$$0 \rightarrow \mathcal{X}_{\chi_E}(\lambda_E^{-1})/(\Gamma_K-\lambda_E^{-2}(\Gamma_K)) \rightarrow \left(\mathcal{X}_{\chi_E}(\lambda_E^{-1})/\mathrm{rec}\left(\ker(\delta_E)\right)\right)/(\Gamma_K-\lambda_E^{-2}(\Gamma_K)) \rightarrow 0.$$
By Lemma \ref{Selmercharacterizationtwistlemma}, the third term of this exact sequence is 1-dimensional. This gives the assertion. 
\end{proof}

Reducing (\ref{fundamentalEStwist}) modulo $(\Gamma_K-\lambda_E^{-2}(\Gamma_K))$, we get an exact sequence
\begin{equation}\label{reducedfundamentalEStwist}\begin{split}\left(\mathbb{U}_{\chi_E}^1(\lambda_E^{-1})/\overline{\mathcal{C}}_{\chi_E}^1(\lambda_E^{-1})\right)/(\Gamma_K-\lambda_E^{-2}(\Gamma_K))&\xrightarrow{\mathrm{rec}} \mathcal{X}_{\chi_E}(\lambda_E^{-1})/(\Gamma_K-\lambda_E^{-2}(\Gamma_K)) \\
&\rightarrow \mathcal{Y}_{\chi_E}(\lambda_E^{-1})/(\Gamma_K-\lambda_E^{-2}(\Gamma_K)) \rightarrow 0.
\end{split}
\end{equation}

\begin{proposition}\label{nonvanishingU2}Suppose $r_p(E/\mathbb{Q}) = 1$. We have 
$$\mathrm{rec}(U_2)/(\Gamma_K-\lambda_E^{-2}(\Gamma_K)) = \mathcal{X}_{\chi_E}(\lambda_E^{-1})/(\Gamma_K-\lambda_E^{-2}(\Gamma_K)).$$ 
\end{proposition}

\begin{proof}We have $\mathbb{U}_{\chi_E}^1(\lambda_E^{-1}) \overset{(\ref{U0U2decomposition})}{=} U_0 \oplus U_2$. By (\ref{reducedfundamentalEStwist}), we have a map of $K_p$-vector spaces
\begin{equation}\label{vssurjection2}\mathrm{rec}: U_0/(\Gamma_K -\lambda_E^{-2}(\Gamma_K)) \oplus U_2/(\Gamma_K-\lambda_E^{-2}(\Gamma_K)) \rightarrow \mathcal{X}_{\chi_E}(\lambda_E^{-1})/(\Gamma_K-\lambda_E^{-2}(\Gamma_K)).
\end{equation}
Since $U_0 \cong U_2 \cong \Lambda_{\mathcal{O}_{K_p}}[1/p]$, we have 
$$U_i/(\Gamma_K-\lambda_E^{-2}(\Gamma_K)) \cong K_p$$
for $i = 0, 2$. By Proposition \ref{controlthm}, the target of the above map is a 1-dimensional $K_p$-vector space. As in the proof of Proposition \ref{controlthm} in the $r_p(E/\mathbb{Q}) = 1$ case, we see that the kernel of (\ref{vssurjection2}) is 
$$\ker(\delta_E)/(\Gamma_K-\lambda_E^{-2}(\Gamma_K)) \overset{(\ref{kernelsubspace})}{=} U_0/(\Gamma_K-\lambda_E^{-2}(\Gamma_K)).$$
Hence we get an injective map 
$$\mathrm{rec} : U_2/(\Gamma_K-\lambda_E^{-2}(\Gamma_K)) \hookrightarrow \mathcal{X}_{\chi_E}(\lambda_E^{-1})/(\Gamma_K-\lambda_E^{-2}(\Gamma_K))$$
of 1-dimensional $K_p$-vector spaces, which is thus also surjective. This gives the assertion.



\end{proof}

\begin{corollary}\label{nonvanishingcorollary}Suppose $r_p(E/\mathbb{Q}) = 1$. We have 
$$\lambda_0(\lambda_E^{-2}) = \mathrm{char}_{\Lambda_{\mathcal{O}_{K_p}}[1/p]}\left(\mathcal{X}_{\chi_E}(\lambda_E^{-1})/\mathrm{rec}(U_2)\right)/(\Gamma_K-\lambda_E^{-2}(\Gamma_K)) \neq 0,$$
where $\lambda_0(\lambda_E^{-2})$ is written in the notation of Definition \ref{characterevaluation}, where $\lambda_E$ is as in (\ref{lambdaE}). 
\end{corollary}

\begin{proof}The equality follows from Corollary \ref{explicitcharacteristicidealcorollary} with our choice of $\lambda_1 = 1$ from Choice \ref{U2finallyfixed}. For the non-equality, we have an exact sequence
$$U_2 \xrightarrow{\mathrm{rec}} \mathcal{X}_{\chi_E}(\lambda_E^{-1}) \rightarrow \mathcal{X}_{\chi_E}(\lambda_E^{-1})/\mathrm{rec}(U_2) \rightarrow 0.$$
Reducing this modulo $(\Gamma_K -\lambda_E^{-2}(\Gamma_K))$, we get
$$U_2/(\Gamma_K-\lambda_E^{-2}(\Gamma_K)) \xrightarrow{\mathrm{rec}} \mathcal{X}_{\chi_E}(\lambda_E^{-1})/(\Gamma_K-\lambda_E^{-2}(\Gamma_K))\rightarrow \left(\mathcal{X}_{\chi_E}(\lambda_E^{-1})/\mathrm{rec}(U_2)\right)/(\Gamma_K-\lambda_E^{-2}(\Gamma_K))\rightarrow 0.$$
By Proposition \ref{nonvanishingU2}, we thus have 
\begin{equation}\label{XUzero}(\mathcal{X}_{\chi_E}(\lambda_E^{-1})/\mathrm{rec}(U_2))/(\Gamma_K-\lambda_E^{-2}(\Gamma_K)) = 0.
\end{equation}
Now since $\mathcal{X}_{\chi_E}(\lambda_E^{-1})/\mathrm{rec}(U_2)$ is $\Lambda_{\mathcal{O}_{K_p}}[1/p]$-torsion, we have an exact sequence of $\Lambda_{\mathcal{O}_{K_p}}[1/p]$-modules
$$0 \rightarrow \mathcal{M} \rightarrow \mathcal{X}_{\chi_E}(\lambda_E^{-1})/\mathrm{rec}(U_2) \rightarrow \bigoplus_i\frac{\Lambda_{\mathcal{O}_{K_p}}[1/p]}{(f_i)}  \rightarrow \mathcal{N} \rightarrow 0$$
where $\mathcal{M}$ and $\mathcal{N}$ are pseudonull. However, since $\mathcal{X}_{\chi_E}(\lambda_E^{-1})$ has no pseudonull submodules (\cite[Theorem 5.3(v)]{RubinMC}), and $\mathrm{rec}(U_2)$ is a free $\Lambda_{\mathcal{O}_{K_p}}[1/p]$-module of rank 1 (Proposition \ref{reciprocityU2}), then $\mathcal{X}_{\chi_E}(\lambda_E^{-1})/\mathrm{rec}(U_2)$ also has no pseudonull submodules. Thus $\mathcal{M} = 0$. Thus we have an exact sequence
$$0 \rightarrow \mathcal{X}_{\chi_E}(\lambda_E^{-1})/\mathrm{rec}(U_2) \rightarrow \bigoplus_i\frac{\Lambda_{\mathcal{O}_{K_p}}[1/p]}{(f_i)}  \rightarrow \mathcal{N} \rightarrow 0.$$
Now reducing modulo $(\Gamma_K-\lambda_E^{-2}(\Gamma_K))$, we have 
$$0 \overset{(\ref{XUzero})}{=} (\mathcal{X}_{\chi_E}(\lambda_E^{-1})/\mathrm{rec}(U_2))/(\Gamma_K-\lambda_E^{-2}(\Gamma_K)) \rightarrow \bigoplus_i\frac{K_p}{(f_i(\lambda_E^{-2}))}  \rightarrow \mathcal{N}/(\Gamma_K-\lambda_E^{-2}(\Gamma_K)) \rightarrow 0$$
where 
$$f_i(\lambda_E^{-2}) := f_i \pmod{(\Gamma_K-\lambda_E^{-2}(\Gamma_K))}.$$
Note that $\mathcal{N}$ is $\Lambda_{\mathcal{O}_{K_p}}[1/p]$-torsion, since the $(0)$-localization $\mathcal{N}_{(0)} = 0$ by pseudonullity. Thus $\mathcal{N}/(\Gamma_K-\lambda_E^{-2}(\Gamma_K))$ is $\Lambda_{\mathcal{O}_{K_p}}[1/p]/(\Gamma_K-\lambda_E^{-2}(\Gamma_K)) = K_p$-torsion, i.e. is 0. Thus from the previous exact sequence, we have
$$\bigoplus_i\frac{K_p}{(f_i(\lambda_E^{-2}))} = 0,$$
and so
$$\mathrm{char}_{\Lambda_{\mathcal{O}_{K_p}}[1/p]}\left(\mathcal{X}_{\chi_E}(\lambda_E^{-1})/\mathrm{rec}(U_2)\right)/(\Gamma_K-\lambda_E^{-2}(\Gamma_K)) = \prod_i f_i(\lambda_E^{-2}) \neq 0.$$
\end{proof}

\begin{theorem}[Rank 1 $p$-Converse Theorem]\label{BSDrank1theorem}Let $E/\mathbb{Q}$ be an elliptic curve with CM by $\mathcal{O}_K$ and suppose $p$ is ramified in $K$. Then
$$r_p(E/\mathbb{Q}) := \mathrm{corank}_{\mathbb{Z}_p}\mathrm{Sel}_{p^{\infty}}(E/\mathbb{Q}) = 1 \implies r_{\mathrm{an}}(E/\mathbb{Q}) = r_{\mathrm{alg}}(E/\mathbb{Q}) = 1 \hspace{.25cm} \text{and} \hspace{.25cm} \#\Sh(E/\mathbb{Q}) < \infty.$$ 
\end{theorem}

\begin{proof}Recall that $N^2$ is the prime-to-$p$ part of the conductor of $E/\mathbb{Q}$. When $f_0 \le 3$, there are finitely many isomorphism classes of elliptic curves for which to check the assertion. As remarked in the proof of Theorem \ref{BSDrank0theorem}, this has been done by the LMFDB collaboration, \cite{LMFDB}. 

Now assume $f_0 \ge 4$. As recalled in the proof of Proposition \ref{kernelsubspaceproposition}, by previous work on the parity conjecture, we have the global root number of $E/\mathbb{Q}$ is -1, and hence the global root number of $\lambda_E$ is -1. By Corollary \ref{nonvanishingcorollary}, we see that $\lambda_0(\lambda_E^{-2}) \neq 0$. In particular, the assumption (\ref{1notzero}) is satisfied. From (\ref{beta0nonzero}), we have $\delta'(\beta_0) \neq 0$. 
Since $\xi_E = \lambda_0\beta_0$, we see from (\ref{equivariance2}) and (\ref{explicitreciprocity2}), that 
$$C_E \cdot (\lambda_E^{-1}(\frak{a}) - \mathbb{N}\frak{a})\cdot D_1^{-1}\mathcal{L}_{\lambda_E}|_{q_{\mathrm{dR}} = 1} =  \delta'(\xi_E)|_{q_{\mathrm{dR}} = 1} = \lambda_0(\lambda_E^{-2})\delta'(\beta_0) = \lambda_0(\lambda_E^{-2})\delta'(\beta_0) \neq 0.$$
Now the statement follows from (\ref{Heegnerpointidentity2}).  
\end{proof}

\subsection{Sylvester's conjecture}\label{Sylvestersection}

Recall the cubic twist family of elliptic curves
$$E_d : x^3 + y^3 = d.$$
Sylvester conjectured in 1879 (\cite{Sylvester}) that for $d$ prime, if $d \equiv 4, 7, 8 \pmod{9}$ then $E_d$ has a solution over $\mathbb{Q}$. As an immediate corollary of Theorem \ref{BSDrank1theorem}, we prove this conjecture, as well as results on analytic rank. 

\begin{theorem}\label{Sylvesterthm}Suppose $r_3(E_d/\mathbb{Q}) \le 1$. Then 
\begin{equation}\label{cubicconclusion}r_{\mathrm{an}}(E_d/\mathbb{Q}) = r_{\mathrm{alg}}(E_d/\mathbb{Q}) = r_3(E_d/\mathbb{Q}) \hspace{.25cm} \text{and} \hspace{.25cm} \#\Sh(E_d/\mathbb{Q}) < \infty.
\end{equation}
Thus if $r_3(E_d/\mathbb{Q}) = 0$ then there do not exist $x, y \in \mathbb{Q}$ such that $x^3 + y^3 = d$, and if $r_3(E_d/\mathbb{Q}) = 1$ then there exist $x, y \in \mathbb{Q}$ such that $x^3 + y^3 = d$. 

In particular:
\begin{enumerate}
\item For any prime $d \equiv 2,5 \pmod{9}$, there do not exist $x,y \in \mathbb{Q}$ such that $x^3 + y^3 = d$. Moreover, $r_{\mathrm{an}}(E_d/\mathbb{Q}) = 0$. 
\item Sylvester's conjecture is true. That is, for any prime $d \equiv 4,7,8 \pmod{9}$, there exist $x,y \in \mathbb{Q}$ such that $x^3 + y^3 = d$. Moreover, $r_{\mathrm{an}}(E_d/\mathbb{Q}) = 1$. 
\end{enumerate}
\end{theorem}

\begin{remark}Previously, the $d \equiv 4,7 \pmod{9}$ case of Sylvester's conjecture was announced by Elkies \cite{Elkies}, though the full proof remains unpublished. See also the article of Dasgupta-Voight \cite{DasguptaVoight}, which gives another proof of Elkies's result under additional assumptions.
\end{remark}

\begin{proof}[Proof of Theorem \ref{Sylvesterthm}]Note that $E_d$ has CM by $\mathbb{Z}[\sqrt{-3}]$. Now since $r_3(E_d/\mathbb{Q}) \le 1$, (\ref{cubicconclusion}) follows immediately from Theorems \ref{BSDrank0theorem} and \ref{BSDrank1theorem} with $p = 3$ and $K = \mathbb{Q}(\sqrt{-3})$. 

By standard 3-descent (see \cite{DasguptaVoight}, \cite{Satge}), we have for primes $d \neq 3$,
$$r_3(E_d) := \mathrm{corank}_{\mathbb{Z}_3}(\mathrm{Sel}_{3^{\infty}}(E_d)) \le \begin{cases} 0 & d \equiv 2,5 \pmod{9}\\
1 & d \equiv 4,7,8 \pmod{9}\\
2 & d \equiv 1 \pmod{9}\\
\end{cases}.$$
Letting $w_{E_d}$ denote the sign of the functional equation of $L(E_d/\mathbb{Q},s)$, we have (see \cite{DasguptaVoight}) for primes $d \neq 3$,
$$w_{E_d} = \begin{cases} -1 & d \equiv 4,7,8 \pmod{9}\\
1 & \text{else}\\
\end{cases}.$$
Together with known results on the parity conjecture (\cite{Nekovar}, \cite{Kim}, \cite{DokchitserDokchitser}, and \cite{Dokchitser}) the above implies
\begin{equation}\label{r3}r_3(E_d) = \begin{cases} 0 & d \equiv 2,5 \pmod{9}\\
1 & d \equiv 4,7,8 \pmod{9}\\
\end{cases}.
\end{equation}

Now we can prove both assertions.\\

\textbf{(1)}: From the first case of (\ref{r3}), we see that $r_3(E_d) = 0$. Now the assertion follows from (\ref{cubicconclusion}) and the fact that $E_d(\mathbb{Q})_{\mathrm{tors}} = 0$ since $d > 2$ (see \cite{DasguptaVoight}).\\

\textbf{(2)}: From the second case of (\ref{r3}), we see that $r_3(E_d) = 1$. Now the assertion follows from (\ref{cubicconclusion}). 

\end{proof}

\subsection{Goldfeld's conjecture for the congruent number family}\label{Goldfeldsection}

For a general elliptic curve 
$$E : y^2 = x^3 + ax + b,$$ 
recall $r_{\mathrm{an}}(E/\mathbb{Q}) = \mathrm{ord}_{s =1}L(E/\mathbb{Q},s)$ denotes the analytic rank. For any integer $d$, let
$$E^d : y^2 = x^3 + ad^2x + bd^3$$
be the quadratic twist by $\mathbb{Q}(\sqrt{d})$. Then $E$ is isomorphic to $E^d$ over $\mathbb{Q}(\sqrt{d})$, but not over $\mathbb{Q}$ if $\mathbb{Q}(\sqrt{d}) \neq \mathbb{Q}$. The celebrated conjecture of Goldfeld states that:
\begin{conjecture}[Goldfeld's Conjecture \cite{Goldfeld}]\label{Goldfeldconjecture}For $r = 0,1$, 
$$\lim_{X \rightarrow \infty} \frac{\#\left\{0 < |d| < X :  d \; \text{squarefree}, \; r_{\mathrm{an}}(E^d/\mathbb{Q}) = r\right\}}{\#\left\{0 < |d| < X :  d \; \text{squarefree}\right\}} = \frac{1}{2}.$$
\end{conjecture}

To the author's best knowledge, the most general unconditional results towards Goldfeld's conjecture are \cite{Kriz2}, \cite{KrizLi}, \cite{KrizLi2}, \cite{CastellaGrossiLeeSkinner}, \cite{OnoSkinner}, \cite{PerelliPomykala} and \cite{Smith}. In particular, the results of op. cit. imply the ``$2^{\infty}$-Selmer analogue'' of Goldfeld's conjecture, in which the analytic ranks $r_{\mathrm{an}}$ are replaced with $r_2 = \mathrm{corank}_{\mathbb{Z}_2}\mathrm{Sel}_{2^{\infty}}$. As an immediate corollary of the results of op. cit. combined with Theorems \ref{BSDrank0theorem} and \ref{BSDrank1theorem}, we establish Goldfeld's conjecture for certain CM elliptic curves among those considered in op. cit., including the congruent number family. It also resolves the congruent number problem (Conjecture \ref{congruentnumberproblem}) in 100\% of cases.  

\begin{theorem}\label{congruentnumberthm}

Suppose $E/\mathbb{Q}$ is an elliptic curve with $E(\mathbb{Q})[2] \cong (\mathbb{Z}/2)^{\oplus 2}$ and no cyclic 4-isogeny defined over $\mathbb{Q}$. Suppose further that $E$ has CM by $K = \mathbb{Q}(i)$ or $\mathbb{Q}(\sqrt{-2})$. Then:
\begin{enumerate}
\item Goldfeld's conjecture (Conjecture \ref{Goldfeldconjecture}) is true for $E$, and $\#\Sh(E^d) < \infty$ for 100\% of squarefree $d$. 

\item Let $E^d : y^2 = x^3 - d^2x$. If $r_2(E^d/\mathbb{Q}) \le 1$, then 
\begin{equation}\label{congruentconclusion}r_{\mathrm{an}}(E^d/\mathbb{Q}) = r_{\mathrm{alg}}(E^d/\mathbb{Q}) = r_2(E^d/\mathbb{Q}) \hspace{.25cm} \text{and} \hspace{.25cm} \#\Sh(E^d/\mathbb{Q}) < \infty.
\end{equation}
In particular, if $r_2(E^d/\mathbb{Q}) = 0$ then $|d|$ is not a congruent number and if $r_2(E^d/\mathbb{Q}) = 1$ then $|d|$ is a congruent number. 

\item 100\% of squarefree positive integers $d \equiv 1,2, 3 \pmod{8}$ are not congruent numbers and 100\% of squarefree positive integers $d \equiv 5,6,7 \pmod{8}$ are congruent numbers, and Goldfeld's conjecture is true for the congruent number family $E^d: y^2 = x^3 - d^2x$.
\end{enumerate}
\end{theorem}

\begin{proof}The 2-descent result \cite[Theorem 1.2]{Smith} implies that for $r = 0$ or 1,
$$r_2(E^d/\mathbb{Q}) := \mathrm{corank}_{\mathbb{Z}_2}\mathrm{Sel}_{2^{\infty}}(E^d/\mathbb{Q}) = r$$
for $50\%$ of squarefree $d$ (ordered by $|d|$). By assumption, each $E^d$ has CM by $\mathcal{O}_K$. Now applying Theorem \ref{BSDrank0theorem} and Theorem \ref{BSDrank1theorem} for $p = 2$ to each $E^d$ with $r_2(E^d/\mathbb{Q}) = r$, we arrive at Goldfeld's conjecture for $E$ as well as the finiteness of $\#\Sh(E^d)$ for 100\% of squarefree $d$. 

For the congruent number family $E^d : y^2 = x^3 - d^2x$, which has CM by $\mathbb{Z}[i]$, if $r_2(E^d/\mathbb{Q}) \le 1$ then (\ref{congruentconclusion}) immediately follows from Theorems \ref{BSDrank0theorem} and \ref{BSDrank1theorem} with $p = 2$ and $K = \mathbb{Q}(i)$. As discussed in \cite{Tunnell} or \cite{Smith}, root number considerations show that
$$r_2(E^d/\mathbb{Q}) = 0 \hspace{.25cm} \text{for 100\% of squarefree positive $d \equiv 1,2,3 \pmod{8}$},$$
and
$$r_2(E^d/\mathbb{Q}) = 1 \hspace{.25cm} \text{for 100\% of squarefree positive $d \equiv 5,6,7 \pmod{8}$}.$$
Thus, Theorem \ref{BSDrank0theorem} implies 
$$r_{\mathrm{an}}(E^d/\mathbb{Q}) = r_{\mathrm{alg}}(E^d/\mathbb{Q}) = 0 \hspace{.25cm} \text{for 100\% of squarefree positive $d \equiv 1,2,3 \pmod{8}$},$$
and Theorem \ref{BSDrank1theorem} implies 
$$r_{\mathrm{an}}(E^d/\mathbb{Q}) = r_{\mathrm{alg}}(E^d/\mathbb{Q}) = 1 \hspace{.25cm} \text{for 100\% of squarefree positive $d \equiv 5,6,7 \pmod{8}$}.$$

By the discussion of \cite[Introduction]{Tunnell}, we have for all positive integers $d$,
$$r_{\mathrm{alg}}(E^d/\mathbb{Q}) = 0 \iff \; \text{$d$ is not a congruent number}.$$
Thus \emph{a fortiori}
$$r_{\mathrm{alg}}(E^d/\mathbb{Q}) = 0 \implies \text{$d$ is not a congruent number},$$
$$r_{\mathrm{alg}}(E^d/\mathbb{Q}) = 1 \implies \; \text{$d$ is a congruent number}.$$
The results on congruent numbers follow immediately from this and the discussion of the previous paragraph.

\end{proof}




\section{Appendix: Elliptic Units Main Conjecture via the Equivariant Main Conjecture}\label{EMCsection}
In this Appendix, we prove Theorem \ref{EMC} (the rational elliptic units main conjecture). In \cite{RubinMC}, Rubin proves this conjecture in many cases, under relatively mild assumptions which are however not always satisfied in our situation (e.g. $p|\#\mathcal{O}_K^{\times}$). To circumvent the technical assumptions of Rubin's Euler system argument, we appeal to the results of Johnson-Leung and Kings (\cite{JohnsonLeungKings}) on \emph{equivariant} main conjectures. In particular, the results of Section 5 of op. cit. imply Theorem \ref{EMC} contingent on the vanishing of the total $\mu$-invariant of $\mathrm{Gal}(M_{\infty}'/K_{\infty}')$ for an appropriate extension $K_{\infty}'/K$ where $M_{\infty}'/K_{\infty}'$ is the maximal pro-$p$ abelian extension unramified outside all places above $\frak{p}$. Here, $K_{\infty}'/K$ being an appropriate extension means $\mathrm{Gal}(K_{\infty}'/K) \cong \mathbb{Z}_p \times \Delta'$ where $\Delta'$ finite abelian. Hence we are reduced to showing the vanishing of this $\mu$-invariant. 

In the $p$ split in $K$ situation with $\frak{p}$ the prime of $K$ above $p$ fixed by (\ref{fixembeddings}), the authors of op. cit. take the choice $K_{\infty}' = K(\frak{p}^{\infty})$ and invoke results of Robert (\cite{Robert}) on the vanishing of the total $\mu$-invariant. In the $p$ nonsplit situation, we take $K_{\infty}' = K(\mu_{f_0p^{\infty}})$ where $f_0 \in \mathbb{Z}$ is an $\mathcal{O}_K$-generator of $\frak{f}_E^{(p)}$, and introduce a new argument showing that the $\mu$-invariant of $\mathrm{Gal}(M_{\infty}'/K_{\infty}')$ is equal to the $\mu$-invariant of a product of Kubota-Leopoldt $p$-adic $L$-functions. This latter argument comes down to computing the $\mu$-invariant as the $p$-adic valuation of a sum of logarithms of elliptic units (i.e. proving an ``algebraic $p$-adic Kronecker limit formula'') following the valuation computation of \cite{CoatesWilesHurwitz}, and using Gross-Kersey's ``factorizing'' method from \cite{Gross} and \cite{Kersey} in order to show these logarithms of elliptic units are products of special values of Kubota-Leopoldt $p$-adic $L$-functions.\footnote{We note that Rubin previously extended Gross's argument to the $p$-ramified case in \cite[proof of Corollary 5]{Rubincongruence}.} Then the vanishing of the $\mu$-invariant follows from the theorem of Ferrero-Washington \cite{FerreroWashington} (see also \cite{Sinnott}), which says that the products of Kubota-Leopoldt $p$-adic $L$-functions appearing have vanishing $\mu$-invariant. In the next sections, we carry out this argument. 

\subsection{The cyclotomic algebraic $\mu$-invariant}
We continue the notation of the previous sections. In particular $K$ is an imaginary quadratic field with class number 1 in which $p$ is ramified. 

\begin{definition}\label{cyclotomicsetting}\begin{enumerate}
\item For the remainder of this section, let $f \in \mathbb{Z}_{> 0}$ be prime to $p$, let $n \in \mathbb{Z}_{\ge 0} \cup \{\infty\}$
$$K \subset L^+ \subset K(\mu_f), \hspace{1cm} L_n^+ = L^+(\mu_{p^n}), \hspace{1cm} \text{and} \hspace{1cm}L_{\infty}^+ = L^+(\mu_{p^{\infty}}).$$
Note that since $K(\mu_f) \subset \mathbb{Q}(\mu_{fp^{\infty}})$ by Assumption \ref{pramifiedassumption}, then $L^+/\mathbb{Q}$ is abelian. Henceforth choose and fix a decomposition
$$\mathrm{Gal}(L_{\infty}^+/K) \cong \Delta_+ \times \Gamma_+'$$
where $\Gamma_+' = \mathrm{Gal}(K_{\infty}^+/K)$ with $K_{\infty}^+$ the maximal $\mathbb{Z}_p$-subextension of $K(\mu_{p^{\infty}})$ and  $\Delta_+ = \mathrm{Gal}(L_{\infty}^+/K_{\infty}^+)$ is a finite abelian group.
\item Let 
$$q := \begin{cases} p & p > 2\\
 4 & p = 2
 \end{cases}, \hspace{1cm} s := \mathrm{ord}_p(q) = \begin{cases} 1 & p > 2\\
 2 & p = 2
 \end{cases}.$$
\item For $n \in \mathbb{Z}_{\ge 0} \cup \{\infty\}$, let $M_n^+$ be the maximal pro-$p$ abelian extension of $L_n^+$ which is unramified outside the places above $p$, let $N_n^+$ be the maximal pro-$p$ abelian extension of $L_n^+$ which is unramified everywhere. In particular, $L_{\infty}^+ \subset M_n^+$. Let $K_n^+/K$ be the unique subextension of $K_{\infty}^+/K$ with $\mathrm{Gal}(K_n^+/K) \cong \mathbb{Z}/p^n$. Let
$$\mathcal{X}^+ := \mathrm{Gal}(M_{\infty}^+/L_{\infty}^+) = \varprojlim_n\mathrm{Gal}(M_n^+/L_n^+).$$
As $L_{\infty}^+/\mathbb{Q}$ is an abelian extension of $\mathbb{Q}$ which is a finite extension of the cyclotomic $\mathbb{Z}_p$-extension of $\mathbb{Q}$, by \cite{GreenbergRank}, $\mathcal{X}^+$ is a torsion $\mathbb{Z}_p\llbracket\mathrm{Gal}(L_{\infty}^+/K)\rrbracket$-module.

\item Let 
$$\Gamma_+ := \mathrm{Gal}(L_{\infty}^+/L_s^+) \subset \mathrm{Gal}(L_{\infty}^+/L^+)$$
which is the maximal subgroup isomorphic to $\mathbb{Z}_p$. Since $K$ has class number 1, then $K_{\infty}^+ \cap K(1) = K$ (where $K(1)$ is the Hilbert class field of $K$) and so the natural restriction map
$$\Gamma_+ \rightarrow \Gamma_+'$$
is the identity. Then since $\Gamma_+^{p^n} = \mathrm{Gal}(L_{\infty}^+/L_{n+s}^+)$ for any $n \in \mathbb{Z}_{\ge 0} \cup \{\infty\}$,
\begin{equation}\label{finitelevel}(\mathcal{X}^+)_{\Gamma_+^{p^n}} = \mathrm{Gal}(M_{n+s}^+/L_{\infty}^+).
\end{equation}
Note that $\mathcal{X}^+$ and $(\mathcal{X}^+)_{\Gamma_+^{p^n}}$ are $\mathbb{Z}_p\llbracket \mathrm{Gal}(L_{\infty}^+/K)\rrbracket$-modules. 

\item Given a group $G$, recall $\hat{G}$ denotes its $\mathbb{C}_p^{\times}$-valued character group per Definition \ref{isotypicdefinition}. For any $\chi \in \hat{\Delta}_+$, we can consider the maximal quotient of $\mathcal{X}^+$ through which $\Delta_+$ acts through $\chi$:
$$\mathcal{X}_{\chi}^+ := \mathcal{X}^+ \otimes_{\mathbb{Z}_p[\Delta_+], \chi} \mathbb{Z}_p[\chi]\llbracket \Gamma_+'\rrbracket,$$
where $\mathbb{Z}_p[\chi]$ denotes the extension of $\mathbb{Z}_p$ obtained by adjoining the values of $\chi$.
\end{enumerate}
\end{definition}



\subsection{Computation of algebraic cyclotomic $\mu$-invariant: a valuation calculation}\label{valuationsection}We follow the strategy of \cite{CoatesWilesHurwitz} for computing 
$$\mathrm{ord}_p(\#(\mathcal{X}^+)_{\Gamma_+^{p^n}}) = \mathrm{ord}_p(\#\mathrm{Gal}(M_{n+s}^+/L_{\infty}^+)).$$

We need some lemmas and propositions which are refinements of results from op. cit. We follow the strategy of \cite[Chapter III.2]{deShalit} throughout this section.

\begin{definition}[$p$-adic regulator]\label{regulatordefinition}Let $F/K$ be any abelian extension of degree $r$. Let $\sigma_1, \ldots,\sigma_r$ be the embeddings $F \hookrightarrow \mathbb{C}_p$. (Note that all embeddings induce the same embedding $K_p \hookrightarrow \mathbb{C}_p$, since there is only one prime above $p$.) Let $E$ be a subgroup of finite index in $\mathcal{O}_F^{\times}$, and choose generators $e_1,\ldots,e_{r-1}$ for $E/E_{\mathrm{tors}}$. Then the $p$-adic regulator of $E$ is defined to be 
$$R_p(E) = \mathrm{det}(\log \sigma_i(e_j))_{1 \le i,j\le r-1}.$$
This is well-defined up to sign because $\log\mathrm{Nm}_{F/K}(e) = 0$ for $e \in E$. As shorthand, let 
$$R_p(F) = R_p(\mathcal{O}_F^{\times}).$$
\end{definition}

\begin{theorem}[\cite{Brumer}]\label{Brumertheorem}Assume $F/K$ is abelian (as is the case in Definition \ref{regulatordefinition}). Then $R_p(E) \neq 0$.
\end{theorem}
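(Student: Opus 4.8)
The plan is to deduce this from Baker--Brumer's $p$-adic analogue of the Baker--Feldman theorem on linear independence of $p$-adic logarithms of algebraic numbers. First I would reduce to the statement that the rows $(\log\sigma_i(e_1),\ldots,\log\sigma_i(e_{r-1}))$, $1 \le i \le r-1$, are linearly independent over $\overline{\mathbb{Q}}_p$; equivalently, that the $\overline{\mathbb{Q}}_p$-span of the vectors $v_j = (\log\sigma_1(e_j),\ldots,\log\sigma_{r-1}(e_j)) \in \overline{\mathbb{Q}}_p^{\,r-1}$ (or rather, working with the full set of $r$ embeddings and the hyperplane cut out by the norm relation $\sum_i \log\sigma_i(e) = 0$) is all of the relevant $(r-1)$-dimensional subspace. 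Because $F/K$ is abelian, $\Gal(F/K)$ acts on $\mathcal{O}_F^\times \otimes \overline{\mathbb{Q}}_p$, and this action is what lets one pass between ``regulator nonzero'' and ``no nontrivial $\overline{\mathbb{Q}}_p$-linear relation among the $\log\sigma_i(e_j)$ compatible with the Galois structure''; the key input is that a unit $e$ with $\log\sigma_i(e) = 0$ for all $i$ must be a root of unity.

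The heart of the argument is exactly that last claim: if $e \in \mathcal{O}_F^\times$ satisfies $\log\sigma_i(e) = 0$ for all embeddings $\sigma_i : F \hookrightarrow \mathbb{C}_p$, then $e \in \mu(F)$. Suppose not; then some power $e^{p^m}$ lies in $1 + \mathfrak{P}$ for each prime $\mathfrak{P} \mid p$ of $F$ (after adjusting by the Teichmüller part), and $\log\sigma_i(e^{p^m}) = p^m\log\sigma_i(e) = 0$ still. But on the principal units $1+\mathfrak{P}$ the $p$-adic logarithm is injective modulo roots of unity — more precisely $\log$ is injective on $1+\mathfrak{P}^{e'}$ for $e'$ large enough relative to the absolute ramification (cf. the isomorphism \eqref{logisomorphism} used earlier in this paper) — so $e^{p^m} = 1$, contradicting $e \notin \mu(F)$. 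This shows the only relations among $\{\log\sigma_i(e_j)\}$ forced on us are the norm relations, hence the $(r-1)\times(r-1)$ matrix $(\log\sigma_i(e_j))$ is nonsingular, i.e. $R_p(E) \ne 0$. Since $E$ has finite index in $\mathcal{O}_F^\times$, $R_p(\mathcal{O}_F^\times) = [\mathcal{O}_F^\times : E]^{-1}\cdot(\text{nonzero})\cdot R_p(E)$ up to sign, so $R_p(F) \ne 0$ as well.

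The main obstacle is that the injectivity-of-$\log$ argument alone only shows that a unit killed by \emph{all} archimedean-style $p$-adic logarithms is torsion; it does \emph{not} by itself show the regulator determinant is nonzero, because the vanishing of the determinant would correspond to a $\overline{\mathbb{Q}}_p$-linear (not $\mathbb{Z}$-linear) relation among the columns, and there is no reason a priori that such a relation must come from an actual unit. This is precisely where Brumer's transcendence input (the $p$-adic Baker theorem) is genuinely needed: it guarantees that $\overline{\mathbb{Q}}_p$-linear relations among $p$-adic logarithms of algebraic numbers are already $\overline{\mathbb{Q}}$-linear, and then the Galois-module structure of $\mathcal{O}_F^\times \otimes \mathbb{Q}$ in the abelian case (Minkowski's unit theorem plus the fact that each $\overline{\mathbb{Q}}$-rational relation can be realized on an honest unit after clearing denominators) forces any such relation to be the norm relation. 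So the proof I would write is: (1) state the reduction to linear independence of the $\log\sigma_i(e_j)$ modulo norm relations; (2) invoke Brumer's $p$-adic Baker theorem to replace $\overline{\mathbb{Q}}_p$-relations by $\overline{\mathbb{Q}}$-relations; (3) use the injectivity of $\log$ on principal units (as above) together with Minkowski's theorem to conclude no nontrivial such relation exists; (4) conclude $R_p(E) \ne 0$, hence $R_p(F) \ne 0$. I would cite \cite{Brumer} for the transcendence statement and treat steps (1),(3),(4) as the routine bookkeeping.
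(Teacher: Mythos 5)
The paper does not prove this statement: it is imported verbatim from \cite{Brumer}, as the citation in the theorem header indicates, and no argument is given in the text. So there is no ``paper's own proof'' to compare against; what you have written is a reconstruction of Brumer's actual argument.

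As such a reconstruction, your sketch is essentially correct and hits the three genuine ingredients: the reduction to $\overline{\mathbb{Q}}_p$-linear independence of the $\log\sigma_i(e_j)$ modulo the norm relation, Brumer's $p$-adic analogue of Baker's theorem (which descends $\overline{\mathbb{Q}}_p$-linear relations among $p$-adic logarithms of algebraic numbers to $\overline{\mathbb{Q}}$-linear ones), and the Galois-module argument of Ax that rules out nontrivial $\overline{\mathbb{Q}}$-relations using the abelian structure. You are also right to flag that the injectivity of $\log$ on principal units only shows a unit with all logarithms zero is torsion, and that this alone cannot touch the determinant; the transcendence input is irreducible. The one place your sketch is thin is step (3): ``Minkowski's theorem plus clearing denominators'' papers over the actual mechanism, which is Ax's use of the $\mathbb{Q}[\Gal(F/K)]$-module structure of $\mathcal{O}_F^\times\otimes\mathbb{Q}$ and character theory to convert an abstract $\overline{\mathbb{Q}}$-linear relation into a concrete nontrivial unit killed by all $p$-adic logarithms, which is then dispatched by the injectivity argument. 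That is more than routine bookkeeping and is the step where the abelian hypothesis is genuinely used, so if you were to write this out you would want to make it precise. But for the purposes of the paper, the correct move is simply to cite \cite{Brumer}, as the author does.
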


\begin{definition}\label{Ddefinition} We follow the notation and presentation of \cite[Chapter III.2.4]{deShalit}. Fix an abelian extension $F/K$ of degree $r$ as in Definition \ref{regulatordefinition}. For each prime $\frak{P}$ of $F$ above $\frak{p}$ (the unique prime of $K$ above $p$), let $w_{\frak{P}}$ denote the number of $p$-power roots of unity in $F_{\frak{P}}$. 
\begin{enumerate}
\item Let 
$$\Phi = F\otimes_K K_{\frak{p}} = \prod_{\frak{P}|\frak{p}}F_{\frak{P}},$$
and let 
$$U = (\mathcal{O}_F \otimes_{\mathcal{O}_K}\mathcal{O}_{K_{\frak{p}}})^{\times,1} = \prod_{\frak{P}|\frak{p}}\mathcal{O}_{F_{\frak{P}}}^{\times,1}$$
be the group of principal units in $\Phi$. Then the $p$-adic logarithm gives a homomorphism
$$\log : U \rightarrow \Phi$$
whose kernel has order $\prod_{\frak{P}|\frak{p}}w_{\frak{P}}$, and whose image is an open subgroup of $\Phi$. 
\item Let $E \subset \mathcal{O}_F^{\times}$ be a subgroup of finite index, and let 
$$D = E\cdot \langle 1+q\rangle,$$
and let $\overline{D}$ be its closure in $\Phi^{\times}$, and $\langle \overline{D}\rangle$ the projection of $\overline{D}$ to $U$. 

\item We write $\Phi(F), U(F), E(F), D(F)$ and $\overline{D}(F)$ when we with to emphasize the particular $F$ underlying them. Let $\Delta_{\frak{p}}(F/K)$ denote the sum over $\frak{P}|\frak{p}$ of the relative discriminants of $F_{\frak{P}}/K_{\frak{p}}$. 

\item Given any number field $F$, let $h(F)$ denote its class number and let $h(F)[p^{\infty}]$ denote the $p$-primary part of the class number.
\end{enumerate}
\end{definition}

\begin{definition}Suppose we are in the situation of Definition \ref{Ddefinition} with $F = L_n^+$ as in Definition \ref{cyclotomicsetting}. Then letting $U_n = U(L_n^+)$ and $E_n = E(L_n^+)$, Artin reciprocity gives
$$U_n/\langle \overline{E}_n\rangle \cong \mathrm{Gal}(M_n^+/N_n^+).$$
Here the bar denotes $p$-adic closure. Recall that $N_n^+$ (resp. $M_n^+$) is the maximal pro-$p$ abelian extension unramified everywhere (resp. unramified outside places above $p$) of $L_n^+$. Note that $N_n^+ \cap L_{\infty}^+ = L_n^+$ and $L_{\infty}^+ \subset M_n$. Let $Y_n \subset U_n$ be the subgroup such that
\begin{equation}\label{CFTsubgroup}Y_n/\langle \overline{E}_n\rangle \cong \mathrm{Gal}(M_n^+/N_n^+L_{\infty}^+).
\end{equation}
\end{definition}
Recall that $L_n^+ = LK_n^+$, and $K_n^+ \subset K_{\infty}^+$ is such that $[K_n^+:K] = p^n$. Recall that $\frak{p}$ is the unique prime of $\mathcal{O}_K$ above $p$. Consider the norm map $\mathrm{Nm}_{L_n^+/K} : U_n \rightarrow 1+\frak{p}\mathcal{O}_{K_{\frak{p}}}$. 

\begin{lemma}[cf. Lemma III.2.6 of \cite{deShalit}]\label{CWlemma2}We have $Y_n = \ker\mathrm{Nm}_{L_n^+/K}|_{U_n}$.
\end{lemma}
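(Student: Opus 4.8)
\textbf{Proof proposal for Lemma \ref{CWlemma2}.}

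The plan is to follow de Shalit's argument in \cite[Lemma III.2.6]{deShalit} closely, since the setup here is essentially the same, only with $L_n^+$ in place of the fields appearing there. First I would unwind both sides. The subgroup $Y_n \subset U_n$ is defined by (\ref{CFTsubgroup}) via class field theory: it is the preimage under $U_n \twoheadrightarrow U_n/\langle\overline{E}_n\rangle \cong \mathrm{Gal}(M_n^+/N_n^+)$ of the subgroup $\mathrm{Gal}(M_n^+/N_n^+L_\infty^+)$, i.e. $Y_n/\langle\overline{E}_n\rangle$ is the kernel of the restriction map $\mathrm{Gal}(M_n^+/N_n^+) \to \mathrm{Gal}(L_\infty^+/L_n^+)$. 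On the other hand, $\mathrm{Nm}_{L_n^+/\mathbb{Q}}|_{U_n}$ (by which the statement means $\ker(\mathrm{Nm}_{L_n^+/\mathbb{Q}}|_{U_n})$, or rather the subgroup of $U_n$ on which this norm is trivial, matching the notation of loc. cit.) is the subgroup of semilocal principal units whose norm down to $1+p\mathbb{Z}_p$ is trivial.

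The key point is the identification of $\mathrm{Gal}(L_\infty^+/L_n^+)$ with (a quotient of) $1+\frak{p}\mathcal{O}_{K_p}$ via the reciprocity map, together with the fact that $L_\infty^+ = L K_\infty^+$ where $K_\infty^+/K$ is the cyclotomic $\mathbb{Z}_p$-extension, and that $K_n^+/\mathbb{Q}$ is abelian. The plan is: (1) Describe $M_n^+ \supset L_\infty^+$ and note that the local reciprocity map at $p$ gives a surjection $U_n \to \mathrm{Gal}(L_\infty^+ L_n^+{}^{\mathrm{something}}/L_n^+)$ factoring the global one; (2) Use the compatibility of the local Artin maps for $L_n^+$ and for $\mathbb{Q}$ (norm-compatibility of class field theory) to see that the composite $U_n \xrightarrow{\mathrm{rec}} \mathrm{Gal}(M_n^+/L_n^+) \to \mathrm{Gal}(L_\infty^+/L_n^+)$ is, under the identification $\mathrm{Gal}(L_\infty^+/L_n^+) \hookrightarrow \mathrm{Gal}(\mathbb{Q}(\mu_{p^\infty})/\mathbb{Q}(\mu_{p^n}))$-ish via the cyclotomic character, exactly the map $u \mapsto \mathrm{rec}_{\mathbb{Q}}(\mathrm{Nm}_{L_n^+/\mathbb{Q}}(u))$ restricted to the relevant piece; (3) Conclude that $Y_n$, being the preimage of the kernel of the restriction to $\mathrm{Gal}(L_\infty^+/L_n^+)$, is exactly the subgroup where $\mathrm{Nm}_{L_n^+/\mathbb{Q}}$ acts trivially, i.e. $Y_n = \{u \in U_n : \mathrm{Nm}_{L_n^+/\mathbb{Q}}(u) = 1\}$, which is the assertion.

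The main obstacle I anticipate is bookkeeping around the precise quotient of $1+\frak{p}\mathcal{O}_{K_p}$ through which $\mathrm{Gal}(L_\infty^+/L_n^+)$ is realized, and verifying that the norm map $\mathrm{Nm}_{L_n^+/\mathbb{Q}} : U_n \to 1+\frak{p}\mathcal{O}_{K_p}$ has image landing in (and surjecting onto, up to finite index) the cyclotomic direction $1+p\mathbb{Z}_p$ rather than the anticyclotomic one — this is where the hypothesis that $K_n^+ \subset K(\mu_{p^\infty})$ with $K_n^+/\mathbb{Q}$ abelian is used, since it forces the local extension $L_{n,\frak{P}}^+/\mathbb{Q}_p$ to be abelian and hence the local norm to be controlled by the cyclotomic character. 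Since all of this is carried out in detail in \cite[Chapter III.2]{deShalit} for the analogous fields, I expect the proof to reduce to checking that these compatibilities survive the base change to $L = K(\frak{f})$, which is routine given $(\frak{f},p)=1$.
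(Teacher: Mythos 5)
Your proposal is correct and follows essentially the same route as the paper. Both arguments interpret $Y_n = \ker(\mathrm{Nm}_{L_n^+/\mathbb{Q}}|_{U_n})$ (you rightly flagged that the literal equality in the statement is notational shorthand), identify $U_n/Y_n \cong \mathrm{Gal}(L_\infty^+/L_n^+)$ via class field theory, invoke the norm-compatibility/functoriality of the Artin symbol to rewrite $(u,L_\infty^+/L_n^+)$ as $(\mathrm{Nm}_{L_n^+/\mathbb{Q}}(u),K_\infty^+/\mathbb{Q})$, and finish by observing that the id\`ele $\mathrm{Nm}_{L_n^+/\mathbb{Q}}(u)$ is supported only at $p$, where the reciprocity map for $K_\infty^+/\mathbb{Q}$ is injective.
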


\begin{proof}We first show the inclusion ``$\subset$''. If $u \in Y_n$, then since 
$$U_n/Y_n \cong \mathrm{Gal}(N_n^+L_{\infty}^+/N_n^+) \cong \mathrm{Gal}(L_{\infty}^+/L_n^+),$$
we have $(u,L_{\infty}^+/L_n^+) = 1$ (as usual, denoting the Artin symbol of an abelian extension $E'/E$ by $(\cdot,E'/E)$). Then by the functoriality of the Artin symbol, we get 
$$(\mathrm{Nm}_{L_n^+/K}(u),L_{\infty}^+/\mathbb{Q}) = (u,L_{\infty}^+/L_n^+)  = 1.$$ Hence the id\`{e}le $\mathrm{Nm}_{L_n^+/K}(u)$, which is 1 outside of $p$, is necessarily 1. The argument in reverse shows the inclusion ``$\supset$''.

\end{proof}

Let $D_n = D(L_n^+)$, and similarly let $\overline{D}_n = \overline{D}(L_n^+)$. For any $n \ge 1$, let
\begin{equation}\label{dndefinition}d(n) := \mathrm{ord}_p([L_n^+:L^+]) = \begin{cases}n-1 & p > 3\\
\max(n-2,0) & p = 2, K = \mathbb{Q}(i)\\
n-1 & p = 2, K = \mathbb{Q}(\sqrt{-2}),\; 1 \le n \le 2\\
n-2 & p = 2, K = \mathbb{Q}(\sqrt{-2}),\; n \ge 3
\end{cases}.
\end{equation}

For integers $\alpha, \beta$, let ``$p^{\alpha}||b$'' denote the relation ``$p^{\alpha}$ strictly divides $b$'' (i.e. $p^{\alpha}|b$ but $p^{\alpha+1} \nmid b$).

\begin{lemma}\label{CWlemma1}Assume that we are in the situation of Definition \ref{cyclotomicsetting}, i.e. with $F = L_n^+$. Let $p^e||[L^+:K]$, so that $p^{e+d(n)}||[L_n^+:K]$. Then we have $[\log(U_n) : \log(\langle \overline{D}_n\rangle)] < \infty$, and in fact for any $n \ge 1$,
$$\mathrm{ord}_p\left([\log(U_n) : \log(\langle \overline{D}_n\rangle)]\right) = \mathrm{ord}_p\left(\frac{qp^{e+d(n)}R_p(L_n^+)}{\sqrt{\Delta_{\frak{p}}(L_n^+/K)}} \cdot \prod_{\frak{P}|\frak{p}}(w_{\frak{P}}\mathbb{N}(\frak{P}))^{-1}\right),$$
where $\frak{P}$ runs over all primes of $L_n^+$ above $\frak{p}$.
\end{lemma}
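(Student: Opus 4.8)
\textbf{Proof plan for Lemma \ref{CWlemma1}.} The plan is to follow the strategy of \cite[Lemma III.2.5 and surrounding discussion]{deShalit} and \cite{CoatesWilesHurwitz}, computing the index $[\log(U_n):\log(\langle\overline{D}_n\rangle)]$ as a product of local and global contributions via a volume computation. First I would observe that $\log$ kills the torsion in $U_n$ (which has order $\prod_{\frak{P}|\frak{p}}w_{\frak{P}}$) and the torsion in $\langle\overline{D}_n\rangle$, and that $\log(U_n)$ is a full-rank $\mathcal{O}_{K_p}$-lattice in $\Phi(L_n^+)$, so the index in question is finite and equals a ratio of covolumes. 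The covolume of $\log(U_n)$ in $\Phi(L_n^+)$ with respect to a suitably normalized Haar measure is governed by the local different/discriminant, contributing the factor $\sqrt{\Delta_{\frak{p}}(L_n^+/K)}^{-1}$ together with the ramification factors $\prod_{\frak{P}|\frak{p}}\mathbb{N}(\frak{P})^{-1}$ coming from the fact that $\log$ maps $U_n$ onto an open subgroup of $\Phi$ whose index is pinned down by the valuation of $p$ and the local degrees (this is the standard computation of $[\,\mathcal{O}_{\Phi}:\log U_n\,]$).

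Next I would handle the global contribution from $D_n = E_n\cdot\langle 1+q\rangle$. Since $D_n$ has finite index in $\mathcal{O}_{L_n^+}^{\times}\cdot\langle 1+q\rangle$ and contains a subgroup of rank equal to $\mathrm{rank}(\mathcal{O}_{L_n^+}^{\times}) = [L_n^+:K]-1$ plus the extra cyclic factor $\langle 1+q\rangle$ (which accounts for the ``$+1$'' in the rank, i.e.\ the norm-to-$K$ direction that the units miss), the covolume of $\log(\langle\overline{D}_n\rangle)$ is, up to the index $[\mathcal{O}_{L_n^+}^{\times}\cdot\langle 1+q\rangle : D_n]$ which we absorb into $E_n$, equal to the $p$-adic regulator $R_p(L_n^+)$ times the contribution $q$ of the generator $1+q$ of $\langle 1+q\rangle$ sitting in $\Gamma_+' \cong 1+q\mathbb{Z}_p$. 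The factor $p^{n+\delta}$ then enters because $\langle\overline{D}_n\rangle$, projected along the norm $\mathrm{Nm}_{L_n^+/\mathbb{Q}}$ (cf.\ Lemma \ref{CWlemma2}), lands in $(1+q\mathbb{Z}_p)$ but only hits the subgroup of index $p^{n+\delta}$ there, since $\mathrm{Gal}(L_n^+/K)$ has order $p^{n+\delta}$ and the cyclotomic norm is surjective onto $\Gamma_+'/\Gamma_+'^{\,p^{n+\delta}}$ on the nose; concretely one compares $\mathrm{Nm}_{L_n^+/\mathbb{Q}}(1+q) = (1+q)^{[L_n^+:\mathbb{Q}]}$-type contributions against the full regulator lattice. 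Assembling the covolume ratio gives exactly
$$\mathrm{ord}_p\!\left([\log(U_n):\log(\langle\overline{D}_n\rangle)]\right) = \mathrm{ord}_p\!\left(\frac{qp^{n+\delta}R_p(L_n^+)}{\sqrt{\Delta_{\frak{p}}(L_n^+/K)}}\prod_{\frak{P}|\frak{p}}(w_{\frak{P}}\mathbb{N}(\frak{P}))^{-1}\right).$$

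The main obstacle I anticipate is bookkeeping the several independent normalization choices correctly: the normalization of Haar measure on $\Phi(L_n^+)$ relative to $\mathcal{O}_{K_p}$, the precise index $[\mathcal{O}_\Phi : \log U_n]$ (where the $\mathbb{N}(\frak{P})$ and $w_{\frak{P}}$ factors live and must not be double-counted), and the exact power of $p$ contributed by the norm map from $L_n^+$ down to $\mathbb{Q}$ versus down to $K$ --- this is where the $p^{n+\delta}$ (as opposed to $p^n$ or $p^{2(n+\delta)}$) must be pinned down, using $p^\delta\,\|\,[L:K]$ and the total ramification of $K_n^+/K$ at $\frak{p}$. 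I would resolve this by working first in the simplest case $L = K$, $\delta = 0$, where the formula reduces to the classical Coates--Wiles regulator index computation in \cite[Chapter III.2]{deShalit}, checking the formula there verbatim, and then propagating the auxiliary conductor $\frak{f}$ and the extra $p$-power $p^\delta$ through the tower by the multiplicativity of regulators, discriminants, and local degrees in towers. The finiteness claim $[\log(U_n):\log(\langle\overline{D}_n\rangle)]<\infty$ is immediate once one knows $R_p(L_n^+)\neq 0$, which is Brumer's theorem \cite{Brumer} quoted just above.
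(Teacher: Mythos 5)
Your proposal is aimed at the same mathematical content as the paper's proof, but the paper simply cites \cite[Lemma 8]{CoatesWilesHurwitz} directly, noting that the only modification needed is to replace $\epsilon_d = 1+p$ with $\epsilon_d = 1+q$ (to cover $p=2$). You are in effect trying to reconstruct what the cited lemma actually proves, via the covolume/index computation, which is indeed the content of Coates--Wiles. In that sense the route is the same; the difference is simply that the paper outsources the computation.

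However, your sketch has a concrete imprecision that would need to be resolved to count as a proof, and which you flag but do not fully address. The placement of the $w_{\frak{P}}$ factors is not derived. You say at the outset that $\log$ kills the torsion in $U_n$, of order $\prod_{\frak{P}\mid\frak{p}}w_{\frak{P}}$, and you gesture at the $w_{\frak{P}}$ appearing in "the standard computation of $[\mathcal{O}_\Phi : \log U_n]$." But notice: the observation about $\ker(\log)$ on $U_n$ is what relates $[U_n : \langle\overline{D}_n\rangle]$ to $[\log U_n : \log\langle\overline{D}_n\rangle]$ (this is precisely the content of Corollary~\ref{CWcorollary}, proved via the snake lemma, where the factor $w(L_n^+)$ appears instead of $\prod w_{\frak{P}}$). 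For Lemma~\ref{CWlemma1} itself, which compares $\log U_n$ to $\log\langle\overline{D}_n\rangle$ directly, the $w_{\frak{P}}$ factors must instead come from the precise local identity for $[\mathcal{O}_{L_{\frak{P}}} : \log U^1_{\frak{P}}]$ in terms of $\mathbb{N}(\frak{P})$ and $w_{\frak{P}}$; the sign of the $w_{\frak{P}}$ exponent in that identity is exactly the kind of thing that is easy to get backwards, and you do not compute it. Similarly, the claim that the norm to $\mathbb{Q}$ contributes exactly $p^{n+\delta}$ (as opposed to $p^n$ or $p^{2(n+\delta)}$ or some variant tied to $[L_n^+ : \mathbb{Q}]$ rather than $[L_n^+ : K]$) is asserted but not pinned down. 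Your plan to first verify the case $L=K$, $\delta=0$ against the classical Coates--Wiles formula and then propagate via multiplicativity of regulators and discriminants is the right safeguard; carrying it out would close these gaps, but the proposal as written does not carry it out.
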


\begin{proof}By Theorem \ref{Brumertheorem} and the abelianness of $L_n^+/K$ we have $R_p(L_n^+) \neq 0$. Now the argument of \cite[Lemma 8]{CoatesWilesHurwitz} goes through using $\varepsilon_d = 1+q$ in place of $\varepsilon_d = 1+p$. 
\end{proof}

\begin{corollary}\label{CWcorollary}Retain the situation of Lemma \ref{CWlemma1}. Let $w(E)$ be the number of roots of unity in $E$. Then for any $n \ge 1$,
$$\mathrm{ord}_p\left([U_n : \langle \overline{D}_n\rangle]\right) = \mathrm{ord}_p\left(\frac{qp^{e+d(n)}R_p(L_n^+)}{w(L_n^+)\sqrt{\Delta_{\frak{p}}(L_n^+/K)}}\cdot\prod_{\frak{P}|\frak{p}}\mathbb{N}(\frak{P})^{-1}\right),$$
where $\frak{P}$ runs over all primes of $L_n^+$ above $\frak{p}$.
\end{corollary}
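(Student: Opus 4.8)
The statement of Corollary \ref{CWcorollary} is a formal consequence of Lemma \ref{CWlemma1} once one accounts for the torsion subgroup. The plan is to deduce the index $[U_n : \langle \overline{D}_n\rangle]$ from the index $[\log(U_n) : \log(\langle \overline{D}_n\rangle)]$ computed in Lemma \ref{CWlemma1}, by carefully tracking the kernels of the $p$-adic logarithm on both groups.

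\textbf{Key steps.} First, recall from the definitions that $\log : U_n \to \Phi(L_n^+)$ has kernel of order $\prod_{\frak{P}|\frak{p}} w_{\frak{P}}$, namely the group of $p$-power roots of unity in $\Phi(L_n^+) = \prod_{\frak{P}|\frak{p}} L_{n,\frak{P}}^+$. Restricting $\log$ to $\langle \overline{D}_n\rangle$, its kernel is $\langle\overline{D}_n\rangle \cap \mu_{p^\infty}(\Phi(L_n^+))$; since $D_n = E_n \cdot \langle 1+q\rangle$ and the roots of unity in $L_n^+$ are already contained in $E_n$ (or more precisely, one arranges $E_n$ to contain $(\mathcal{O}_{L_n^+}^\times)_{\mathrm{tors}}$, or accounts for the finite-index discrepancy $w(E)$), this kernel has order $w(L_n^+)$ up to the factor $w(E)$. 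Second, I would write the multiplicativity of the index in the exact sequences
\begin{equation*}
1 \to \ker(\log|_{U_n}) \to U_n \to \log(U_n) \to 1, \qquad 1 \to \ker(\log|_{\langle\overline{D}_n\rangle}) \to \langle\overline{D}_n\rangle \to \log(\langle\overline{D}_n\rangle) \to 1,
\end{equation*}
which gives
\begin{equation*}
[U_n : \langle\overline{D}_n\rangle] = [\log(U_n) : \log(\langle\overline{D}_n\rangle)] \cdot \frac{\#\ker(\log|_{U_n})}{\#\ker(\log|_{\langle\overline{D}_n\rangle})} = [\log(U_n) : \log(\langle\overline{D}_n\rangle)] \cdot \frac{\prod_{\frak{P}|\frak{p}} w_{\frak{P}}}{w(L_n^+)}
\end{equation*}
(absorbing $w(E)$ as in loc. cit.). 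Third, substituting the formula of Lemma \ref{CWlemma1} and taking $\mathrm{ord}_p$, the factor $\prod_{\frak{P}|\frak{p}} w_{\frak{P}}$ cancels against the $\prod_{\frak{P}|\frak{p}} w_{\frak{P}}^{-1}$ appearing there, leaving exactly
\begin{equation*}
\mathrm{ord}_p\left([U_n : \langle\overline{D}_n\rangle]\right) = \mathrm{ord}_p\left(\frac{qp^{n+\delta}R_p(L_n^+)}{w(L_n^+)\sqrt{\Delta_{\frak{p}}(L_n^+/K)}}\prod_{\frak{P}|\frak{p}}\mathbb{N}(\frak{P})^{-1}\right),
\end{equation*}
which is the assertion. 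This is precisely the passage from \cite[Lemma 8]{CoatesWilesHurwitz} to its corollary, adapted with $1+q$ in place of $1+p$.

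\textbf{Main obstacle.} The only genuinely delicate point is the bookkeeping of roots of unity: one must be careful that $w(L_n^+)$ (roots of unity in the number field $L_n^+$, or in the subgroup $E$) is the right quantity dividing out, as opposed to $\prod_{\frak{P}|\frak{p}} w_{\frak{P}}$ (roots of unity in the semilocal completion $\Phi(L_n^+)$), and that the discrepancy between $E$ and the full unit group — measured by $w(E)$ and the relevant regulator index — is consistent with the definition of $R_p(L_n^+)$ used. Since we are only computing $p$-adic valuations and the index $[\mathcal{O}_F^\times : E]$ contributes a bounded amount that can be normalized away exactly as in loc. cit., this is a routine (if somewhat fiddly) verification rather than a conceptual difficulty; all the analytic input is already contained in Lemma \ref{CWlemma1} and the nonvanishing theorem of Brumer.
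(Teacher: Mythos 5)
Your proposal is correct and matches the paper's approach: the paper invokes the snake lemma applied to the commuting diagram of $\log$ maps relating $\langle\overline{D}_n\rangle \subset U_n$ and $\log(\langle\overline{D}_n\rangle) \subset \log(U_n)$, citing \cite[Lemma 9]{CoatesWilesHurwitz} for the bookkeeping of torsion. Your accounting of the two kernels — $\prod_{\frak{P}|\frak{p}} w_{\frak{P}}$ for the semilocal principal units versus the $p$-part of $w(L_n^+)$ for the image of the diagonally embedded global roots of unity in $E_n = \mathcal{O}_{L_n^+}^\times$ — is precisely the cancellation that converts the formula of Lemma \ref{CWlemma1} into the stated corollary.
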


\begin{proof}The assertion is an immediate consequence of Lemma \ref{CWlemma1} using the snake lemma, see \cite[Lemma 9]{CoatesWilesHurwitz} for details.
\end{proof}



\begin{proposition}[cf. Proposition III.2.7 of \cite{deShalit}]Retain the notation of Lemma \ref{CWcorollary}. We have for all $n \gg 0$
\begin{equation}\label{orderproposition}\mathrm{ord}_p\left([M_n^+:L_{\infty}^+]\right) = \mathrm{ord}_p\left(\frac{qp^{d(n)}h(L_n^+)R_p(L_n^+)}{w(L_n^+) \sqrt{\Delta_{\frak{p}}(L_n^+/K)}}\cdot \prod_{\frak{P}|\frak{p}}(1-\mathbb{N}(\frak{P})^{-1})\right),
\end{equation}
where the product on the right-hand side runs over primes of $L_n^+$ above $\frak{p}$. 
\end{proposition}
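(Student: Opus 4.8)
The plan is to follow the strategy of Coates--Wiles as presented in \cite[Proposition III.2.17]{deShalit}, assembling the formula \eqref{orderproposition} from the local and class-field-theoretic ingredients already in place. First I would set up the fundamental diagram of $L_n^+$-modules: by Artin reciprocity $U_n/\langle \overline{E}_n\rangle \cong \mathrm{Gal}(M_n^+/N_n^+)$, and via Lemma \ref{CWlemma2} the subgroup $Y_n = \mathrm{Nm}_{L_n^+/\mathbb{Q}}|_{U_n}$ cuts out the sub-extension so that $Y_n/\langle \overline{E}_n\rangle \cong \mathrm{Gal}(M_n^+/N_n^+L_{\infty}^+)$. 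Combining these with the standard exact sequence relating $\mathrm{Gal}(M_n^+/L_{\infty}^+)$ to $\mathrm{Gal}(M_n^+/N_n^+L_{\infty}^+)$ and $\mathrm{Gal}(N_n^+L_{\infty}^+/L_{\infty}^+)$ — the latter being a quotient of the class group of $L_n^+$, contributing the $h(L_n^+)$ (that is, its $p$-part) factor — I would reduce the computation of $\mathrm{ord}_p([M_n^+:L_{\infty}^+])$ to computing $\mathrm{ord}_p([U_n : \langle \overline{D}_n\rangle])$ together with the index $[Y_n : \langle \overline{E}_n \rangle\cdot(\text{something})]$ and the contribution of the cyclotomic $\mathbb{Z}_p$-line. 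Here the subtle point is bookkeeping the roles of $\langle 1+q\rangle$ (accounting for the $q$ factor) and of $D_n = E_n\cdot\langle 1+q\rangle$ versus $E_n$ alone, which is exactly why the auxiliary group $D_n$ was introduced.

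Next I would invoke Corollary \ref{CWcorollary} directly: it already gives
$$\mathrm{ord}_p([U_n : \langle \overline{D}_n\rangle]) = \mathrm{ord}_p\left(\frac{qp^{n+\delta}R_p(L_n^+)}{w(L_n^+)\sqrt{\Delta_{\frak{p}}(L_n^+/K)}}\prod_{\frak{P}|\frak{p}}\mathbb{N}(\frak{P})^{-1}\right).$$
What remains is to track how passing from $U_n/\langle \overline{D}_n\rangle$ to $\mathrm{Gal}(M_n^+/L_{\infty}^+)$ replaces $p^{\delta}$ by $p^{e-f}$ and the factor $\prod_{\frak{P}|\frak{p}}\mathbb{N}(\frak{P})^{-1}$ by $\prod_{\frak{P}|\frak{p}}(1-\mathbb{N}(\frak{P})^{-1})$. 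The exponent change is a matter of comparing the degree $p^{n+\delta}$ (where $p^{\delta}\|[L:K]$) against the degrees $p^f = [(K(1)\cap K_{\infty}^+):K]$ and $p^e = [(L\cap K_{\infty}^+):K]$ that measure how much of the cyclotomic $\mathbb{Z}_p$-extension is already captured inside $L$ and inside the Hilbert class field; this is precisely the intersection-of-towers argument recorded in Definition \ref{cyclotomicsetting}. The change from $\mathbb{N}(\frak{P})^{-1}$ to $1-\mathbb{N}(\frak{P})^{-1}$ arises because in computing $[Y_n:\langle\overline{E}_n\rangle]$ one must account for the local norm index $[U_n : Y_n\cdot(\text{units})]$ — i.e. the image of the semilocal norm to the cyclotomic side — where an Euler-type factor $(1-\mathbb{N}(\frak{P})^{-1})$ naturally appears for each prime $\frak{P}|\frak{p}$ being ramified/split in the relevant way.

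I would then combine all these to obtain \eqref{orderproposition}. The main obstacle I anticipate is not any single deep input — all the analytic number theory (non-vanishing of $R_p$, finiteness of $\mathrm{Gal}(M_n^+/L_{\infty}^+)$) is already available from \cite{Brumer} and the preceding lemmas — but rather the careful tracking of indices through the chain of reciprocity isomorphisms and snake-lemma diagrams, making sure the powers of $p$ coming from $[L_n^+:K]=p^{n+\delta}$, from the intersections $L\cap K_{\infty}^+$ and $K(1)\cap K_{\infty}^+$, and from the roots of unity $w(L_n^+)$ and $w_{\frak{P}}$ are each counted exactly once and with the correct sign. This is where the Coates--Wiles argument, as adapted in \cite[Chapter III.2]{deShalit}, must be followed line by line; since in our setting $p$ is inert or ramified in $K$ so there is a unique prime $\frak{p}$ above $p$, and $L_n^+/L$ is totally ramified (standard cyclotomic theory), the diagrams simplify somewhat relative to the general case, and I expect the argument of \cite[Proposition III.2.17]{deShalit} to go through \emph{mutatis mutandis}.
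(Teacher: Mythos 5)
Your plan follows exactly the strategy the paper actually uses (de Shalit, Prop.\ III.2.17): combine Corollary \ref{CWcorollary} with the class-field-theoretic chain $[M_n^+:L_{\infty}^+] = [M_n^+:N_n^+L_{\infty}^+]\,[N_n^+L_{\infty}^+:L_{\infty}^+]$, identify $[N_n^+L_{\infty}^+:L_{\infty}^+]=[N_n^+:L_n^+]=h(L_n^+)[p^{\infty}]$, and relate $Y_n/\langle\overline{E}_n\rangle$ to $\mathrm{Gal}(M_n^+/N_n^+L_{\infty}^+)$ via (\ref{CFTsubgroup}) and Lemma \ref{CWlemma2}. The paper carries this out by applying the snake lemma to the $2\times 3$ diagram (\ref{normdiagram}), where the key computation is $\mathrm{Nm}_{L_n^+/\mathbb{Q}}(\langle\overline{D}_n\rangle)=1+q^2p^{n+\delta-1}\mathbb{Z}_p$ (using $\mathrm{Nm}_{L_n^+/K}(E_n)=1$) against $\mathrm{Nm}_{L_n^+/\mathbb{Q}}(U_n)=1+qp^{n+f-e-1}\mathbb{Z}_p$; the exponent shift $p^{n+\delta}\rightsquigarrow p^{n+e-f}$ falls out of the index of these two cyclotomic subgroups, just as you anticipate. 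So the skeleton of your argument is correct and matches the paper.

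However, one heuristic in your sketch is wrong and would have sent you chasing a nonexistent contribution. You claim that the passage from $\prod_{\frak{P}|\frak{p}}\mathbb{N}(\frak{P})^{-1}$ in Corollary \ref{CWcorollary} to $\prod_{\frak{P}|\frak{p}}(1-\mathbb{N}(\frak{P})^{-1})$ in (\ref{orderproposition}) reflects a genuine local-norm Euler factor that must be ``accounted for.'' It does not: since every $\frak{P}|\frak{p}$ has residue characteristic $p$, each $\mathbb{N}(\frak{P})$ is a power of $p$, hence $\mathbb{N}(\frak{P})-1$ is a $p$-adic unit and
$\mathrm{ord}_p\bigl(1-\mathbb{N}(\frak{P})^{-1}\bigr)=\mathrm{ord}_p\bigl(\mathbb{N}(\frak{P})^{-1}\bigr)$.
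The factor $1-\mathbb{N}(\frak{P})^{-1}$ appears in the \emph{statement} purely for cosmetic reasons (to resemble the analytic class number formula); it contributes nothing new to the $p$-adic valuation. All the nontrivial arithmetic of the index shift lives in the diagram (\ref{normdiagram}) and the two cyclotomic norm images, not in a modification of the Euler product. With that correction your proposal is sound.
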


\begin{proof}Let $D_n = E_n \cdot \langle 1+q\rangle$. Then since $\mathrm{Nm}_{L_n^+/K}(E_n) = 1$ and $e+d(n) = \mathrm{ord}_p([L_n^+:K])$, we have $\mathrm{Nm}_{L_n^+/K}(\langle \overline{D}_n\rangle) = 1 + qp^{e+d(n)}\mathcal{O}_{K_{\frak{p}}}$. Now we have a diagram
\begin{equation}\label{normdiagram}
\begin{tikzcd}[column sep = large]
     &0 \arrow{r}  & \langle \overline{E}_n\rangle  \arrow{r} \arrow{d}& \langle \overline{D}_n\rangle \arrow{r}{\mathrm{Nm}_{L_n^+/K}} \arrow{d}{}& 1 + qp^{e+d(n)}\mathcal{O}_{K_{\frak{p}}} \arrow{r} \arrow{d} & 0\\
     & 0 \arrow{r} & Y_n \arrow{r}{} & U_n \arrow{r}{\mathrm{Nm}_{L_n^+/K}} & 1+qp^{d(n)}\mathcal{O}_{K_{\frak{p}}} \arrow{r}& 0
\end{tikzcd},
\end{equation}
where the horizontal rows are exact and the vertical arrows are injective, by the above discussion and Lemma \ref{CWlemma2}. Here, the exactness at the fourth term in the bottom row follows from observing that $L_n^+/L^+$ is totally ramified at all primes above $\frak{p}$, and $L^+/K$ is unramified above $\frak{p}$ since $L \subset K(\mu_f)$, $(f,p) = 1$; thus the ramification index of $\frak{p}$ in $L_n^+/K$ is $d(n) = [L_n^+:L^+]$. Since $\mathrm{ord}_p([L_n^+:K]) = e+d(n)$, (\ref{orderproposition}) follows from Lemma \ref{CWcorollary}, (\ref{CFTsubgroup}), the identity
$$[M_n^+:L_{\infty}^+] = [M_n^+:N_n^+L_{\infty}^+][N_n^+L_{\infty}^+:L_{\infty}^+],$$
and the fact that 
$$[N_n^+L_{\infty}^+:L_{\infty}^+] = [N_n^+:L_n^+] = h(L_n^+)[p^{\infty}].$$ 

\end{proof}

\begin{definition}\label{characteristicidealsdefinition}Recall $\Gamma_+'= \mathrm{Gal}(K_{\infty}^+/K) \cong \mathbb{Z}_p$ and $\Delta_+ = \mathrm{Gal}(L_{\infty}^+/K_{\infty}^+)$ as well as our fixed decomposition $\mathrm{Gal}(L_{\infty}^+/K) = \Gamma_+' \times \Delta_+$. For $\chi \in \hat{\Delta}_+$, recall the torsion $\mathbb{Z}_p[\chi]\llbracket \Gamma_+'\rrbracket$-module $(\mathcal{X}^+)_{\chi}$. We let
$$f_{\chi}^+ := \mathrm{char}_{\mathbb{Z}_p[\chi]\llbracket \Gamma_+'\rrbracket}(\mathcal{X}^+_{\chi}).$$
Denote the $\mu$-invariant by $\mu_{\mathrm{inv}}(f_{\chi}^+)$ and the $\lambda$-invariant by $\lambda_{\mathrm{inv}}(f_{\chi}^+)$.
\end{definition}

\begin{corollary}\label{algebraicasymptoticcorollary}We have for all $n \gg 0$,
\begin{equation}\label{algebraicasymptotic}\sum_{\chi \in \hat{\Delta}_+}\mu_{\mathrm{inv}}(f_{\chi}^+)\cdot (p-1)p^{n-1} + \sum_{\chi \in \hat{\Delta}_+}\lambda_{\mathrm{inv}}(f_{\chi}^+) = 1 + \mathrm{ord}_p\left(\frac{hR_p}{w\sqrt{\Delta}}(L_n^+/K) / \frac{hR_p}{w\sqrt{\Delta}}(L_{n-1}^+/K)\right).
\end{equation}
\end{corollary}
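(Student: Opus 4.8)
The plan is to deduce the asymptotic inequality (\ref{algebraicasymptotic}) from the order formula (\ref{orderproposition}) together with the interpretation of $(\mathcal{X}')^+_\chi$ as a torsion $\Lambda(\Gamma_+,\mathcal{O}_{L_p,\chi})$-module via the surjection (\ref{specialquotient}). First I would recall the standard Iwasawa-module input: for a finitely generated torsion $\Lambda(\Gamma_+,\mathcal{O}_{L_p,\chi})$-module $M$ with characteristic power series having $\mu$-invariant $\mu_{\mathrm{inv}}$ and $\lambda$-invariant $\lambda_{\mathrm{inv}}$, one has $\mathrm{ord}_p(\#M_{\Gamma_+^{p^n}}) = \mu_{\mathrm{inv}}\, p^{n}\cdot e_0 + \lambda_{\mathrm{inv}}\, n \cdot e_0 + O(1)$ for $n \gg 0$, where $e_0 = \mathrm{ord}_p(\#\mathcal{O}_{L_p,\chi}/p)$ accounts for the residue degree; more precisely the successive quotients grow like $\mu_{\mathrm{inv}}\,p^{n-1}(p-1)e_0 + \lambda_{\mathrm{inv}}\,e_0$. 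Applying this to $M = (\mathcal{X}')^+_\chi$ and relating $\Gamma_+^{p^n}$-coinvariants to the finite-level $\mathrm{Gal}(M_n^+/L_\infty^+)$ via (\ref{finitelevel}) and the identification $\Gamma_+ = (\Gamma_+')^{p^t}$, together with the surjection (\ref{specialquotient}) which gives an inequality (not equality) of the relevant $p$-adic valuations, would produce the left-hand side of (\ref{algebraicasymptotic}) bounded by the growth of $\mathrm{ord}_p([M_{n+1}^+:L_\infty^+]) - \mathrm{ord}_p([M_n^+:L_\infty^+])$.

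Next I would plug in (\ref{orderproposition}): since $\mathrm{ord}_p([M_n^+:L_\infty^+]) = \mathrm{ord}_p\!\left(\frac{qp^{n+e-f}h(L_n^+)R_p(L_n^+)}{w(L_n^+)\sqrt{\Delta_{\frak{p}}(L_n^+/K)}}\prod_{\frak{P}|\frak{p}}(1-\mathbb{N}(\frak{P})^{-1})\right)$, taking the difference of consecutive terms the factor $qp^{n+e-f}$ contributes exactly $\mathrm{ord}_p(p) = 1$ to $\mathrm{ord}_p\big([M_{n+1}^+:L_\infty^+] / [M_n^+:L_\infty^+]\big)$, while the Euler-factor product $\prod_{\frak{P}|\frak{p}}(1-\mathbb{N}(\frak{P})^{-1})$ stabilizes for $n$ large (the primes above $\frak{p}$ in $L_n^+$ and their residue norms become eventually constant up to $p$-adic units, since $L_\infty^+/L$ is totally ramified at $\frak{p}$), so it contributes $0$ to the difference for $n \gg 0$. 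What remains is precisely $\mathrm{ord}_p\!\left(\frac{hR_p}{w\sqrt{\Delta}}(L_{n+1}^+/K) \big/ \frac{hR_p}{w\sqrt{\Delta}}(L_n^+/K)\right)$, giving the right-hand side of (\ref{algebraicasymptotic}), and the extra $+1$ on the right absorbs the contribution of $qp^{n+e-f}$.

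The main obstacle I anticipate is bookkeeping the constant $e_0$ coming from the residue field extension $\mathcal{O}_{L_p,\chi}/\mathcal{O}_{L_p}$ and making sure the normalization of $\mu_{\mathrm{inv}}$, $\lambda_{\mathrm{inv}}$ in Definition \ref{characteristicidealsdefinition} matches the growth formula with the factor $p^{t+n-1}(p-1)$ appearing in (\ref{algebraicasymptotic}); one must check that the $p^t$ reflects the index $[\Gamma_+' : \Gamma_+] = p^t$ and that the $\mu$-invariant is measured with respect to $\Gamma_+$ (not $\Gamma_+'$), which accounts for the shift $n \mapsto t+n$ in the exponent. A secondary subtlety is that (\ref{specialquotient}) is only a surjection, so we genuinely get $\le$ rather than $=$: this is what lets us bound $\mu_{\mathrm{inv}}(f_\chi^+) p^{t+n-1}(p-1) + \lambda_{\mathrm{inv}}(f_\chi^+)$ from above by the growth of $\mathrm{ord}_p(\#\mathrm{Gal}(M_n^+/L_\infty^+))$, which is the source of the inequality. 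Everything else—the stabilization of the Euler factors, the extraction of the single power of $p$ from $qp^{n+e-f}$, and the asymptotic translation of coinvariant sizes into Iwasawa invariants—is routine and I would not belabor it.
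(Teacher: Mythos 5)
Your proposal is correct and takes essentially the same route as the paper: invoke Proposition (\ref{orderproposition}) for $\mathrm{ord}_p([M_n^+:L_\infty^+])$, observe that the ratio of consecutive levels yields the constant $+1$ from the $p^{n+e-f}$ factor while the Euler factors $\prod_{\frak{P}\mid\frak{p}}(1-\mathbb{N}(\frak{P})^{-1})$ stabilize for $n\gg 0$ (so the RHS of (\ref{algebraicasymptotic}) equals $\mathrm{ord}_p\bigl([M_{n+1}^+:L_\infty^+]/[M_n^+:L_\infty^+]\bigr)$), then translate via (\ref{finitelevel}), the surjection (\ref{specialquotient}) (source of the inequality), and the Weierstrass-preparation growth formula with the index $[\Gamma_+':\Gamma_+]=p^t$ producing the exponent shift. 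The paper gives exactly this argument in compressed form; your worry about the residue-degree constant $e_0$ is the only point where you go slightly beyond what the paper spells out, but the normalization in the statement is consistent with the intended bookkeeping and does not change the structure of the argument.
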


\begin{proof}By (\ref{orderproposition}), the right-hand side of (\ref{algebraicasymptotic}) is $\mathrm{ord}_p([M_n^+:L_{\infty}^+]/[M_{n-1}^+:L_{\infty}^+])$ for all $n \gg 0$. Now (\ref{algebraicasymptotic}) follows from (\ref{finitelevel}), the Weierstrass preparation theorem (using the standard definitions of $\mu_{\mathrm{inv}}$ and $\lambda_{\mathrm{inv}}$, see \cite[Chapter III.2.1, Lemma III.2.2]{deShalit}) and (\ref{dndefinition}). 
\end{proof}

\subsection{Computation of the analytic cyclotomic $\mu$-invariant}We continue to follow the strategy of \cite[Chapter III.2]{deShalit}.



\begin{definition}\label{Kroneckernotation}\begin{enumerate}
\item For any character $\varepsilon \in \widehat{\mathrm{Gal}(L_{\infty}^+/K)}$, let $\frak{g} = \mathrm{cond}(\varepsilon)$, and let $g$ be the smallest positive integer in $\frak{g}$. Then define (cf. \cite[III.2.10 (14)]{deShalit})
$$S_p(\varepsilon) := -\frac{1}{12gw_{\frak{g}}}\sum_{c \in \mathcal{C}\ell(\frak{g})}\varepsilon^{-1}(c)\log \varphi_{\frak{g}}(c)$$
where $\varphi_{\frak{g}}(c)$ is the Robert invariant as in II.2.6 (17) of op. cit. 
\item Let $f' > 0$ be the smallest positive integer such that $L^+ \subset K(\mu_{f'})$. Thus $f'|f$, $f'$ is prime to $p$, $L_n^+ \subset K(\mu_{f'p^n})$, and $f'p^n$ is the conductor of $L_n^+$ over $\mathbb{Q}$ for all $n \gg 0$. Fix $(\zeta_{f'p^n})_n$ a compatible system of $f'p^n\mathrm{th}$ roots of unity, and for a character $\varepsilon \in \widehat{\mathrm{Gal}(L_{\infty}^+/K)}$ of conductor dividing $f'p^n$ define the Gauss sum by
$$g(\varepsilon) := \frac{1}{f'p^n}\sum_{\sigma \in \mathrm{Gal}(L_n^+/K)}\varepsilon(a)\zeta_{f'p^n}^{-a}.$$
Let
$$S_n := \{\varepsilon \in \widehat{\mathrm{Gal}(L_{\infty}^+/K)} : p^n||\mathrm{cond}(\varepsilon)\}.$$
\end{enumerate}
\end{definition}

\begin{proposition}In the notation of Definition \ref{Kroneckernotation}, for all $n \gg 0$ we have
\begin{equation}\label{analyticorder}\mathrm{ord}_p\left(\prod_{\varepsilon \in S_n}g(\varepsilon)S_p(\varepsilon)\right) = \mathrm{ord}_p\left(\frac{hR_p}{w\sqrt{\Delta_{\frak{p}}}}(L_n^+)/\frac{hR_p}{w\sqrt{\Delta_{\frak{p}}}}(L_{n-1}^+)\right).
\end{equation}
\end{proposition}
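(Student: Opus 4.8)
The plan is to compute both sides of \eqref{analyticorder} via the $p$-adic analytic class number formula and match them against the $p$-adic Kronecker limit formula \eqref{Kroneckerlimitformula}. First I would invoke the $p$-adic analytic class number formula for the abelian extension $L_n^+/K$, which expresses the product $\prod_{\epsilon}S_p(\epsilon)$, taken over the appropriate set of characters of $\mathrm{Gal}(L_n^+/K)$ of conductor dividing $\frak{f}\frak{p}^n$, in terms of the arithmetic invariants $h(L_n^+)$, $R_p(L_n^+)$, $w(L_n^+)$ and $\sqrt{\Delta_{\frak{p}}(L_n^+/K)}$; this is precisely the $GL_1/K$ analogue of the classical Kubota–Leopoldt/Colmez formula as presented in \cite[Chapter III.2]{deShalit}. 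The point is that the ``new at level $n$'' characters $\epsilon \in S_n = \{\epsilon \in \hat{\Gamma}' : \frak{p}^n \| \mathrm{cond}(\epsilon)\}$ contribute exactly the \emph{ratio} of the level-$n$ and level-$(n-1)$ class number formula quantities, since the characters of conductor dividing $\frak{f}\frak{p}^{n-1}$ account for $\frac{hR_p}{w\sqrt{\Delta_{\frak{p}}}}(L_{n-1}^+)$. So the identity \eqref{analyticorder} should emerge by taking the ratio of the class number formula applied at levels $n$ and $n-1$ and reading off only the $S_n$-part.

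The key steps, in order: (1) State carefully the $p$-adic analytic class number formula at each finite level $L_n^+$, using Robert's invariants $\phi_{\frak{g}}$ (Definition \ref{Robertinvariantdefinition}) and the Gauss sum factors $G(\epsilon)$ to package the special values; this requires matching the normalization of $S_p(\epsilon)$ from Definition \ref{Kroneckernotation} with the value $\mathcal{L}_{p,\frak{g}}(\chi)$ appearing in \eqref{Kroneckerlimitformula}, up to the Euler factor $(1-\chi^{-1}(\frak{p})/\mathbb{N}(\frak{p}))$ and the $G(\chi^{-1})$ term. (2) Take the quotient of the level-$n$ and level-$(n-1)$ formulas. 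The characters contributing at level $n$ but not level $n-1$ are exactly those in $S_n$ (together with possibly some fixed finite set of tamely ramified twists, which for $n \gg 0$ stabilize and hence do not affect the $\mathrm{ord}_p$ up to a bounded constant absorbed into the ``$n \gg 0$''). (3) Conclude \eqref{analyticorder}, using that the Euler factors $(1-\epsilon^{-1}(\frak{p})/\mathbb{N}(\frak{p}))$ for $\epsilon \in S_n$ are $p$-adic units (since $\epsilon$ is ramified at $\frak{p}$ for $n \ge 1$, so $\epsilon^{-1}(\frak{p})$ is not defined / the Euler factor degenerates appropriately — more precisely one uses that these factors contribute trivially to the valuation), so they drop out of the $\mathrm{ord}_p$ computation.

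The main obstacle I anticipate is bookkeeping the precise relationship between $S_p(\epsilon)$, $G(\epsilon)$ and the $p$-adic $L$-function values $\mathcal{L}_{p,\frak{f}}(\chi)$, and correctly handling the contribution of the factors $\frac{qp^{n+\cdot}}{\cdots}$ and $\prod_{\frak{P}|\frak{p}}\mathbb{N}(\frak{P})^{-1}$ that appear in the algebraic side \eqref{orderproposition} but must be reconciled against the analytic class number formula — i.e.\ ensuring the normalizing powers of $p$ and the discriminant terms match on both sides. Since \cite[Chapter III.2]{deShalit} carries out exactly this type of argument in the ordinary CM case, and the only genuinely new input here is that the relevant $p$-adic $L$-function is the supersingular Katz-type measure $\mu_{\mathrm{glob}}(\frak{f})$ constructed in the previous sections (whose interpolation property \eqref{babyinterpolation} and Kronecker limit formula \eqref{Kroneckerlimitformula} have the same shape as in the ordinary case), the argument of \cite[Proof of Proposition III.2.18]{deShalit} goes through \emph{mutatis mutandis}; the substantive verification is that no extra Euler factors or period discrepancies arise from the height-2 setting, which follows from self-duality of $F_f$ (Remark \ref{selfdualremark}) guaranteeing $\mu_{p^\infty} \subset L_{p,\infty}$ and hence the expected rationality and integrality properties.
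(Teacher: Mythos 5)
Your proposal is correct and follows essentially the same route as the paper, which simply cites \cite[Chapter III.2.11]{deShalit}: one applies the $p$-adic analytic class number formula at the finite levels $L_n^+$ and $L_{n-1}^+$, takes the ratio so that only the characters of exact conductor $\frak{p}^n$ survive, and identifies those factors with $G(\epsilon)S_p(\epsilon)$ via the $p$-adic Kronecker limit formula \eqref{Kroneckerlimitformula}. You have correctly unpacked the argument de Shalit's proof carries out, and your identification of the two potential hazards (normalization mismatches in the Euler/Gauss-sum factors, and the reconciliation of the $qp^{n+\cdot}$ and $\prod\mathbb{N}(\frak{P})^{-1}$ terms against the algebraic side) are exactly the points one must check when verifying that the argument transfers to the supersingular setting.
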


\begin{proof}This follows from the same argument as in \cite[Chapter III.2.11]{deShalit}, which is a standard calculation using the analytic class number formula. 
\end{proof}

\subsection{Relating to Kubota-Leopoldt $p$-adic $L$-functions}We will show that the total $\mu$-invariant of $\mathcal{X}^+$ is 0 by showing it is equal to the total $\mu$-invariant of a product of Kubota-Leopoldt $p$-adic $L$-functions, which is 0 by the fundamental result of Ferrero-Washington (\cite{FerreroWashington}, see also \cite{Sinnott}). 
The following Proposition relates the quantities $S_p(\varepsilon)$ to a special value of a product of Kubota-Leopoldt $p$-adic $L$-functions. This is reminiscent of a theorem of Gross \cite{Gross} and Kersey \cite{Kersey} on the factorization of the cyclotomic Katz $p$-adic $L$-function into a product of two Kubota-Leopoldt $p$-adic $L$-functions.

\begin{definition}Recall $\varepsilon \in \widehat{\mathrm{Gal}(L_{\infty}^+/K)}$. As $L_{\infty}^+/\mathbb{Q}$ is an abelian extension, there are two characters 
$$\varepsilon_+, \varepsilon_+\eta \in \widehat{\mathrm{Gal}(L_{\infty}^+/\mathbb{Q})}$$
restricting to $\varepsilon$ on $\mathrm{Gal}(L_{\infty}^+/K) \subset \mathrm{Gal}(L_{\infty}^+/\mathbb{Q})$, recalling here that $\eta$ is the quadratic character attached to $K/\mathbb{Q}$ (\ref{etaK}). Without loss of generality, let $\varepsilon_+$ denote the even extension, i.e. with $\varepsilon_+(-1) = 1$ and let $\varepsilon_- = \varepsilon_+\eta$. 
\end{definition}

\begin{proposition}\label{KLproposition}For all $n \gg 0$ and any $\varepsilon \in S_n$, we have
$$g(\varepsilon)S_p(\varepsilon) = \frac{L_p(\varepsilon_-^{-1}\omega,0)}{2}\frac{L_p(\varepsilon_+,1)}{2},$$
where $L_p(\chi,s)$ denotes the Kubota-Leopoldt $p$-adic $L$-function (see \cite[Chapter 5]{Washington}). 
\end{proposition}

\begin{proof}We note that for all $n \gg 0$, 
$$S_n \subset \widehat{\mathrm{Gal}(L_n^+/K)}.$$
Letting $\varepsilon \in S_n$ and $\frak{g} = \mathrm{cond}(\varepsilon)$ we have $w_{\frak{g}} = 1$ for all $n \gg 0$. By the same argument as in  \cite[proof of Theorem 3.1, (3.6)-(3.7)]{Gross} using the Leopoldt formula for the value $L'(\chi,0)$ of a Dirichlet character $\chi$ (\cite{KatzImQuad}) and Kronecker limit formula (\cite{Stark}), we have
$$g(\varepsilon)S_p(\varepsilon) = -f'p^nB_{1,\varepsilon_-}\frac{g(\varepsilon_+)}{4}\sum_{a \in \mathcal{G}}\varepsilon_+^{-1}(a)\log_p(C^+(a)),$$
where 
$$C^+(a) = (1-\zeta_{f'p^n}^a)(1-\zeta_{f'p^n}^{-a})$$
is the cyclotomic unit of \cite{Gross} and $(\mathbb{Z}/(f'p^n))^{\times} \twoheadrightarrow \mathcal{G}$ is the quotient such that $\mathcal{G} \cong \mathrm{Gal}(L_n^+/\mathbb{Q})$ via the Artin reciprocity map. Alternatively, the above identity follows from \cite[Theorem 2.1]{Kersey}. The right-hand side of the above equation is, by the $p$-adic Leopoldt formula and standard interpolation property for Kubota-Leopoldt $p$-adic $L$-functions (\cite{FerreroGreenberg}), equal to 
$$\frac{L_p(\varepsilon_-^{-1}\omega,0)}{2}\frac{L_p(\varepsilon_+,1)}{2}.$$
\end{proof}

\subsection{Vanishing of the cyclotomic $\mu$-invariant}

\begin{corollary}\label{analyticmuvanishing}Recall the notation of Definition \ref{characteristicidealsdefinition}. Then for any $\chi \in \hat{\Delta}_+$, we have
$$\mu_{\mathrm{inv}}(f_{\chi}^+) = 0.$$
As a consequence, the total $\mu$-invariant 
$$\mu_{\mathrm{inv}}(\mathcal{X}^+) := \sum_{\chi \in \hat{\Delta}_+}\mu_{\mathrm{inv}}(f_{\chi}^+) = 0.$$
\end{corollary}

\begin{proof}This follows from (\ref{algebraicasymptotic}), (\ref{analyticorder}) and Proposition \ref{KLproposition} and the vanishing of the $\mu$-invariant of all nonzero branches of half the Kubota-Leopoldt $p$-adic $L$-function $\frac{1}{2}L_p$ first established by Ferrero-Washington \cite{FerreroWashington} (see also \cite{Sinnott} for another proof).


\end{proof}

\subsection{Proof of the rational Elliptic Units Main Conjecture}\label{MCproofsection}In this section, we prove Theorem \ref{EMC} (when $p$ is ramified in $K/\mathbb{Q}$ and $K$ has class number 1) by invoking \cite[Theorem 5.7]{JohnsonLeungKings}, using as input the vanishing of the cyclotomic algebraic $\mu$-invariant. Recall in our setting (see Choice \ref{f0choice}) that $E/\mathbb{Q}$ has CM by $\mathcal{O}_K$, with $\frak{f}_E = \frak{f}(\lambda_E)$ and that $\frak{f}_E^{(p)} = f_0\mathcal{O}_K$ for some integer $f_0 \in \mathbb{Z}_{\ge 4}$. Recall our notation $\mathcal{K}_n = K(E[\frak{p}^n])$ (Definition \ref{mathcalKndefinition}). 

First, a quick calculation using the fact that $K$ is one of the imaginary quadratic fields listed in Assumption \ref{pramifiedassumption} shows that 
\begin{equation}\label{deltadecomposition}\mathrm{Gal}(\mathcal{K}_{\infty}/K) \cong \mathrm{Gal}(\mathcal{K}_{\infty}/K(\mu_{p^{\infty}})) \times \mathrm{Gal}(K(\mu_{p^{\infty}})/K), \hspace{1cm} \mathrm{Gal}(\mathcal{K}_{\infty}/K(\mu_{p^{\infty}})) \cong \Delta \times \mathcal{H}
\end{equation}
for some finite abelian group $\Delta$ and subgroup 
$$\mathbb{Z}_p \cong \mathcal{H} \subset \mathrm{Gal}(\mathcal{K}_{\infty}/K(\mu_{p^{\infty}})).$$
Henceforth fix such isomorphisms as in (\ref{deltadecomposition}) that are compatible with each other under the inclusion 
$$\mathrm{Gal}(\mathcal{K}_{\infty}/K(\mu_{p^{\infty}})) \subset \mathrm{Gal}(\mathcal{K}_{\infty}/K).$$
Thus,
$$\mathbb{Z}_p\llbracket \mathrm{Gal}(\mathcal{K}_{\infty}/K(\mu_{p^{\infty}}))\rrbracket \cong \mathbb{Z}_p[\Delta]\llbracket X\rrbracket.$$
Let $\Delta[p^{\infty}] \subset \Delta$ denote the $p$-power torsion subgroup and consider the prime-to-$p$ quotient
$$\Delta' := \Delta/\Delta[p^{\infty}].$$
Then (\ref{deltadecomposition}) shows that $\Delta$ is abstractly a subgroup of $\mathcal{O}_K^{\times}$, so since the latter is isomorphic to either $\mathbb{Z}/2, \mathbb{Z}/4$ or $\mathbb{Z}/6$ we have a decomposition
\begin{equation}\label{Deltadecomposition}\Delta \cong \Delta[p^{\infty}] \times \Delta',
\end{equation}
which we henceforth fix. Moreover, $\Delta'$ is either trivial or isomorphic to $\mathbb{Z}/2$ and in the latter case we have $p > 2$.

Before beginning the proof of Theorem \ref{EMC}, we collect a few general lemmas on $\mathcal{O}[\Delta]\llbracket X\rrbracket$-modules, where $\mathcal{O}/\mathbb{Z}_p$ is a finite extension. Give $\mathcal{O}[\Delta]\llbracket X\rrbracket$ the inverse limit topology as in \cite[Chapter 5]{Neukirch}. That is, the topology is generated by 
$$\{\varpi^k\mathcal{O}[\Delta]\llbracket X\rrbracket + (X^n)\}_{n,k},$$
where $\varpi$ is a uniformizer of $\mathcal{O}$. 

\begin{lemma}\label{maximalideallemma}Suppose $\frak{M}$ is an open maximal ideal of $\mathcal{O}[\Delta]\llbracket X\rrbracket$. Then $\frak{M}$ contains $X,(\varpi)$ and $\delta-1$ for any $\delta \in \Delta[p^{\infty}]$.
\end{lemma}

\begin{proof}By standard results on commutative algebra on maximal ideals of power series ring, we have $X \in \frak{M}$ and $\mathbf{m} = \frak{M} \cap \mathcal{O}[\Delta]$ is a maximal ideal of $\mathcal{O}[\Delta]$. Since $\frak{M}$ is open and $X \in \frak{M}$, we have $\varpi^k \in \frak{M}$ for some $k \ge 0$. Thus, $\varpi \in \frak{M}$, and so $\varpi\mathcal{O}[\Delta] \subset \mathbf{m}$. Thus $\mathbf{m}$ maps to a maximal ideal $\mathbf{m}'$ under 
$$\mathcal{O}[\Delta] \rightarrow \mathcal{O}/(\varpi)[\Delta] = \mathbb{F}_{p^n}[\Delta]$$
for some $n \ge 1$. Now we show that $\mathbf{m}'$ contains $\delta -1$ for any $\delta \in \Delta[p^{\infty}]$. The quotient $\mathbb{F}_{p^n}[\Delta]/\mathbf{m}'$ must be a finite extension $\mathbf{k}$ of $\mathbb{F}_{p^n}$, and the quotient $\mathbb{F}_{p^n}[\Delta] \rightarrow \mathbf{k}$ must map $\Delta \rightarrow \mathbf{k}^{\times}$. Since $\mathbf{k}^{\times}$ has order prime to $p$, this last map maps $\Delta[p^{\infty}]$ to 1. Thus $\delta-1$ is in its kernel, $\mathbf{m}'$, for any $\delta \in \Delta[p^{\infty}]$. This finishes the proof. 

\end{proof}

\begin{lemma}\label{reduceM'lemma}Suppose $M$ is a topological $\mathcal{O}[\Delta]\llbracket \mathbb{Z}_p\rrbracket$-module. Then $M$ is finitely generated if and only if $M \otimes_{\mathcal{O}[\Delta]\llbracket \mathbb{Z}_p\rrbracket}\mathcal{O}[\Delta']\llbracket X\rrbracket$ is a finitely generated $\mathcal{O}[\Delta']\llbracket \mathbb{Z}_p\rrbracket$-module. 
\end{lemma}

\begin{proof}$\impliedby$ : Let 
$$\mathrm{rad} \subset \mathcal{O}[\Delta]\llbracket \mathbb{Z}_p\rrbracket$$
be the radical, i.e. the intersection of all open maximal ideals of $\mathcal{O}[\Delta]\llbracket \mathbb{Z}_p\rrbracket$ (which is itself a closed ideal). By the Nakayama lemma for completed group rings (\cite[5.2.18]{Neukirch}), $M$ is finitely generated if any only if $M/(\mathrm{rad}\cdot M)$ is finitely generated.  By Lemma \ref{maximalideallemma}, 
$$\mathcal{I} := (X,\delta)_{\delta \in \Delta[p^{\infty}]} \subset \mathrm{rad},$$
and so 
$$M \otimes_{\mathcal{O}[\Delta]\llbracket X\rrbracket}\mathcal{O}[\Delta']\llbracket X\rrbracket = M/\mathcal{I}M \twoheadrightarrow M/(\mathrm{rad}\cdot M).$$
Now the assertion follows from the assumption.\\

$\implies$: This follows immediately from the surjection $M \twoheadrightarrow M \otimes_{\mathcal{O}[\Delta]\llbracket X\rrbracket}\mathcal{O}[\Delta']\llbracket X\rrbracket$ and the finite generatedness of $M$.

\end{proof}


Write
\begin{equation}\label{M}M := H^2(\mathcal{O}_K[1/fp],\mathcal{O}_{K_p}\llbracket \mathrm{Gal}(\mathcal{K}_{\infty}/K)\rrbracket(1)).
\end{equation}
Let 
\begin{equation}\label{Linfty}L_{\infty} = \mathcal{K}_{\infty}^{\Delta[p^{\infty}]}
\end{equation}
be the fixed field of $\Delta[p^{\infty}]$ under (\ref{deltadecomposition}). Then by \cite[1.6.5 (3)]{FukayaKato}, we have 
\begin{equation}\label{M'}M' := M \otimes_{\mathcal{O}_{K_p}[\Delta]\llbracket X\rrbracket}\mathcal{O}_{K_p}[\Delta']\llbracket X\rrbracket \cong H^2(\mathcal{O}_K[1/fp],\mathcal{O}_{K_p}\llbracket \mathrm{Gal}(L_{\infty}/K)\rrbracket (1)).
\end{equation}

Recall 
$$\Delta' = \Delta/\Delta[p^{\infty}]$$
is either trivial or $\mathbb{Z}/2$ and in the latter case we have $p \neq 2$. Recall $\mathcal{H} \cong \mathbb{Z}_p$ from (\ref{deltadecomposition}). 
Then using (\ref{deltadecomposition}) and (\ref{Deltadecomposition}), the subgroup 
$$\mathcal{H} \times \Delta[p^{\infty}] \times \mathrm{Gal}(K(\mu_{p^{\infty}})/K) \subset \mathrm{Gal}(\mathcal{K}_{\infty}/K)$$
has fixed field
$$L := \mathcal{K}_{\infty}^{\mathcal{H} \times \Delta[p^{\infty}] \times \mathrm{Gal}(K(\mu_{p^{\infty}})/K)},$$
is an extension of $K$ of degree at most 2 and $\mathrm{Gal}(L/K) \cong \Delta'$. Note that $\mathcal{K}_{\infty}/\mathbb{Q}$ is evidently Galois as the elliptic curve $E$ is defined over $\mathbb{Q}$, and so $\mathrm{Gal}(L/\mathbb{Q})$ is Galois of degree dividing 4, and is thus abelian. Since $K \subset L$, complex conjugation induces an element of order 2 in $\mathrm{Gal}(L/\mathbb{Q})$ that does not fix $K$, and which is thus not contained in the order 1 or 2 subgroup $\mathrm{Gal}(L/K) \subset \mathrm{Gal}(L/\mathbb{Q})$. We thus have that $\mathrm{Gal}(L/\mathbb{Q}) = \mathbb{Z}/2$ or $(\mathbb{Z}/2)^{\oplus 2}$ and in the latter case $p \neq 2$. Therefore, $L = \mathbb{Q}(\sqrt{-p},\sqrt{m})$ or $\mathbb{Q}(i,\sqrt{m})$ for some $m \in \mathbb{Z}$ such that $\mathbb{Q}(\sqrt{m})$ is ramified only possibly at primes dividing $fp\infty$. Moreover, if $\mathbb{Q}(\sqrt{m}) \neq \mathbb{Q}$ then $p \neq 2$. Suppose $p$ is ramified in $\mathbb{Q}(\sqrt{m})$, so that in particular $p \neq 2$. Then the field $\mathbb{Q}(\sqrt{-pm}) \subset L$ is unramified at $p$ since $p \neq 2$, and ramified only possibly at primes dividing $f\infty$. Thus in any case, after possibly replacing $m$ by $-pm$, we have found a subfield $K' \subset L$ of degree at most 2 over $\mathbb{Q}$ with $K' \cap K = \mathbb{Q}$, $L = KK'$ and such that $K'$ is unramified at $p$ and ramified possibly at primes dividing $f\infty$. In particular, $K' \subset \mathbb{Q}(\mu_f)$.

Recall $f_0$ from the beginning of this section (i.e. $f_0 \in\mathbb{Z}_{>0}$ with $f_0\mathcal{O}_K = \frak{f}_E^{(p)}$). In the setting of Definition \ref{cyclotomicsetting}, let 
$$f = f_0$$
and let
$$L^+ = L = KK' \subset K(\mu_{f_0}).$$
From (\ref{deltadecomposition}) and (\ref{Deltadecomposition}), we have 
$$\mathrm{Gal}(L_{\infty}/L_{\infty}^+) \cong \mathcal{H} \cong \mathbb{Z}_p.$$
By \cite[1.6.5 (3)]{FukayaKato}, we have 
$$M_+' := M' \otimes_{\mathcal{O}_{K_p}\llbracket \mathrm{Gal}(L_{\infty}/K)\rrbracket} \mathcal{O}_{K_p}\llbracket \mathrm{Gal}(L_{\infty}^+/K)\rrbracket \cong H^2(\mathcal{O}_K[1/f_0p],\mathcal{O}_{K_p}\llbracket \mathrm{Gal}(L_{\infty}^+/K)\rrbracket (1)).
$$
 Let ``f.g.'' denote ``finitely generated''. By the Nakayama lemma for completed group rings (\cite[5.2.18]{Neukirch}), we see that 
$$\text{$M_+'$ is a f.g. $\mathcal{O}_{K_p}$-module} \iff \text{$M'$ is a f.g. $\mathcal{O}_{K_p}\llbracket \mathrm{Gal}(L_{\infty}/L_{\infty}^+)\rrbracket \cong \mathcal{O}_{K_p}\llbracket \mathbb{Z}_p\rrbracket$-module}.$$
By Lemma \ref{reduceM'lemma}, 
 $$\text{$M$ from (\ref{M}) is a f.g. $\mathcal{O}_{K_p}\llbracket \mathrm{Gal}(\mathcal{K}_{\infty}/L_{\infty}^+)\rrbracket$-module} \iff \text{$M'$ is a f.g. $\mathcal{O}_{K_p}\llbracket \mathrm{Gal}(L_{\infty}/L_{\infty}^+) \rrbracket$-module}.$$
 Thus, 
\begin{equation}\label{finalequivalence0}\text{$M$ is a f.g. $\mathcal{O}_{K_p}\llbracket \mathrm{Gal}(\mathcal{K}_{\infty}/L_{\infty}^+)\rrbracket$-module} \iff \text{$M_+'$ is a f.g. $\mathcal{O}_{K_p}$-module}.
\end{equation} 

We establish one final equivalence. Note that 
$$\mathrm{Gal}(\mathcal{K}_{\infty}/L_{\infty}^+) \cong \Delta[p^{\infty}] \times \mathcal{H}$$
under (\ref{deltadecomposition}). Using this isomorphism, consider the fixed field of $\mathcal{H}$
$$F_{\infty} := \mathcal{K}_{\infty}^{\mathcal{H}}.$$
Since $\Delta[p^{\infty}] \cong \mathrm{Gal}(F_{\infty}/L_{\infty}^+)$ is finite, we have that 
$$\text{$M$ is a f.g. $\mathcal{O}_{K_p}\llbracket \mathrm{Gal}(\mathcal{K}_{\infty}/L_{\infty}^+)\rrbracket$-module} \iff \text{$M$ is a f.g. $\mathcal{O}_{K_p}\llbracket  \mathrm{Gal}(\mathcal{K}_{\infty}/F_{\infty})\rrbracket$-module}.$$
Thus by (\ref{finalequivalence0}), 
\begin{equation}\label{finalequivalence}\text{$M$ is a f.g. $\mathcal{O}_{K_p}\llbracket \mathrm{Gal}(\mathcal{K}_{\infty}/F_{\infty})\rrbracket$-module} \iff \text{$M_+'$ is a f.g. $\mathcal{O}_{K_p}$-module}.
\end{equation}

Now we can finally prove Theorem \ref{EMC}. Recall $\frak{f}_E = \frak{f}(\lambda_E)$, and $\frak{f}_E^{(p)} = (f_0)$ from Choice \ref{f0choice}.

\begin{proof}[Proof of Theorem \ref{EMC}]We follow the notation of \cite[Section 5.5]{JohnsonLeungKings} throughout this proof unless otherwise noted. Let $$\Omega = \mathcal{O}_{K_p}\llbracket \mathrm{Gal}(\mathcal{K}_{\infty}/K)\rrbracket, \hspace{1cm} \Omega(\frak{f}_E^{(p)}) = \mathcal{O}_{K_p}\llbracket \mathrm{Gal}(K(\frak{f}_E^{(p)}p^{\infty})/K)\rrbracket.$$
Recall $\mathrm{Gal}(\mathcal{K}_{\infty}/K) \cong \Delta_K \times \Gamma_K$ by (\ref{fixK}). Let 
$$\mathcal{J}_{\Omega} \subset \Omega$$ 
denote the annihilator of $\mathbb{Z}_p[\Delta_K](1)$, where ``(1)'' here and below denotes Tate twist. Let 
$${}_{\frak{a}}\zeta(\frak{f}_E^{(p)}) := \varprojlim_n {}_{\frak{a}}\zeta_{\frak{f}_E^{(p)}\frak{p}^n} \in H^1(\mathcal{O}_K[1/f_0p],\Omega(\frak{f}_E^{(p)})(1)) \otimes_{\Omega(\frak{f}_E^{(p)})}\mathrm{Frac}(\Omega(\frak{f}_E^{(p)}))$$
where ${}_{\frak{a}}\zeta_{\frak{f}_E^{(p)}\frak{p}^n}$ is as in Section 3.2 of op. cit. Define, using the map 
$$\mathrm{Nm}_{\frak{f}_E^{(p)}} : \mathbb{U}^1(\frak{f}_E^{(p)}) \rightarrow \mathbb{U}^1$$
from (\ref{UEnorm}),
$${}_{\frak{a}}\zeta_E := \mathrm{Nm}_{\frak{f}_E^{(p)}}({}_{\frak{a}}\zeta(\frak{f}_E^{(p)})) \in \mathbb{U}^1$$
and
$$\zeta_E := (\mathbb{N}\frak{a} - \sigma_{\frak{a}})^{-1} \cdot {}_{\frak{a}}\zeta_E \in H^1(\mathcal{O}_K[1/f_0p],\Omega(1)) \otimes_{\Omega}\mathrm{Frac}(\Omega).$$
Thus
$$\mathcal{J}_{\Omega} \cdot (\zeta_E) \subset H^1(\mathcal{O}_K[1/f_0p],\Omega(1)).$$ The main result on the equivariant main conjecture we will need is:

\begin{theorem}[Theorem 5.7 of \cite{JohnsonLeungKings}]\label{equivarianttheorem}In the above setting, assume that 
\begin{equation}\label{5.5}H^2(\mathcal{O}_K[1/f_0p],\Omega(1))_{\frak{q}} = 0
\end{equation}
for every height 1 prime ideal of $\Omega$ containing $p$. Then
\begin{equation}\label{5.7}\mathrm{det}_{\Omega}\left(H^1(\mathcal{O}_K[1/f_0p],\Omega(1))/(\mathcal{J}_{\Omega}\cdot(\zeta(\mathcal{K}_{\infty})))\right) = \mathrm{det}_{\Omega}\left(H^2(\mathcal{O}_K[1/f_0p],\Omega(1))\right).
\end{equation}
\end{theorem}

Note that strictly speaking, Theorem 5.7 of op. cit. is only explicitly stated for full ray class field towers $K(\frak{f}p^{\infty})/K$ for any ideal $\frak{f} \subset \mathcal{O}_K$. However, the formulation and proof of the Theorem applies \emph{mutatis mutandis} for the tower $\mathcal{K}_{\infty}/K$. The statement and the proof of the corresponding $\Lambda$-main conjectures remain the same. The only nontrivial point to check is that the argument of the proof of Lemma 4.6 of op. cit. goes through for $\mathcal{K}_{\infty}/K$. However, this follows upon noting that the ramification group of the prime $\frak{p}$ of $\mathcal{O}_K$ above $p$ has finite index in $\mathrm{Gal}(\mathcal{K}_{\infty}/K)$; this follows, for example, from Proposition \ref{GammaKtotallyramified} with $M = K$ and the fact that $\mathcal{K}_{\infty}/K_{\infty}$ is a finite extension.

Theorem \ref{EMC} immediately follows from Theorem \ref{equivarianttheorem} and 
\begin{enumerate}
\item taking the $\chi_E$-isotypic component (see Definition \ref{isotypicdefinition} with respect to (\ref{chiE})) of (\ref{5.7}) (note that this inverts $p$),
\item twisting by $\lambda_E^{-1}$ (see Definitions \ref{globalcharactertwistdefinition} and \ref{Mtwistdefinition}), and 
\item applying Lemma 5.8 of op. cit. and the discussion immediately following it.
\end{enumerate}

The verification of (\ref{5.5}) will follow from Lemma 5.11 of op. cit. after we show that $M$ from (\ref{M}) is a finitely generated $\mathcal{O}_{K_p}\llbracket \mathrm{Gal}(\mathcal{K}_{\infty}/F_{\infty})\rrbracket$-module (i.e. taking $\mathcal{H} = \mathrm{Gal}(\mathcal{K}_{\infty}/F_{\infty}) \cong \mathbb{Z}_p$ in loc. cit.). By (\ref{finalequivalence}), this in turn follows once we show that $M_+'$ is a finitely generated $\mathcal{O}_{K_p}$-module. This latter finite-generatedness will ultimately follow from our results on the vanishing of the total $\mu$-invariant of $\mathcal{X}^+$ (Corollary \ref{analyticmuvanishing}), using the argument of Corollary 5.10 of op. cit. \emph{mutatis mutandis}. We explain this in detail below. 

By Corollary \ref{analyticmuvanishing}, we have that $\mathcal{X}^+$ is a finitely generated $\mathbb{Z}_p$-module. Recall that $N_n^+$ is the maximal unramified pro-$p$ abelian extension of $L_n^+$, and 
$$\mathcal{Y}^+ = \varprojlim_n \mathrm{Gal}(N_n^+/L_n^+).$$
The natural surjection $\mathcal{X}^+ \twoheadrightarrow \mathcal{Y}^+$ implies that $\mathcal{Y}^+$ is also a finitely generated $\mathbb{Z}_p$-module. Moreover, it is clear that as 
$$L_{\infty}^+ = L^+(\mu_{p^{\infty}}) = K(\mu_{f_0p^{\infty}}) = \mathbb{Q}(\mu_{f_0p^{\infty}})$$
and all primes have finite splitting index in cyclotomic fields, then all primes of $K \subset \mathbb{Q}(\mu_{p^{\infty}})$ have finite splitting index in $L_{\infty}^+/K$. 

Hence by the exact sequence from the proof of Corollary 5.10 of op. cit.,
$$\mathcal{Y}^+ \otimes_{\mathbb{Z}_p}\mathcal{O}_{K_p} \rightarrow M_+'  = H^2(\mathcal{O}_K[1/f_0p],\mathcal{O}_{K_p}\llbracket \mathrm{Gal}(L_{\infty}^+/K)\rrbracket(1)) \rightarrow \bigoplus_{v|f_0p}\mathcal{O}_{K_p}\llbracket \mathrm{Gal}(L_{\infty}^+/K)/D_v\rrbracket,$$
and using the above noted fact that every prime $v|f_0p$ has finite splitting index in $L_{\infty}^+/K$ which implies that each $\mathrm{Gal}(L_{\infty}^+/K)/D_v$ is finite, we see that $M_+'$ is a finitely generated $\mathcal{O}_{K_p}$-module. 

We are done with the proof of Theorem \ref{EMC}.
\end{proof}




\end{document}